\renewcommand{\leq}{\le}
\renewcommand{\geq}{\ge}
\newcommand{\e}{\text{e}}
\newcommand{\I}{\mathds 1}
\def\d{{\rm d}}
\def\<{\langle}
\def\>{\rangle}
\newtheorem{claim}{Claim}
\newtheorem{theorem}{Theorem}[section]
\newtheorem{lemma}[theorem]{Lemma}
\newtheorem{proposition}[theorem]{Proposition}
\newtheorem{corollary}[theorem]{Corollary}
\numberwithin{equation}{section}
\theoremstyle{definition}
\newtheorem{definition}[theorem]{Definition}
\newtheorem{remark}[theorem]{Remark}
\begin{document}
\allowdisplaybreaks
\title[Uniqueness of the critical long-range percolation metrics]
{\bfseries  Uniqueness of the critical long-range percolation metrics}

\author{Jian Ding \qquad  Zherui Fan  \qquad  Lu-Jing Huang}

\thanks{\emph{J. Ding:}
School of Mathematical Sciences, Peking University, Beijing, China.
  \texttt{dingjian@math.pku.edu.cn}}
\thanks{\emph{Z. Fan:}
School of Mathematical Sciences, Peking University, Beijing, China.
  \texttt{2301110064@pku.edu.cn}}

\thanks{\emph{L.-J. Huang:}
School of Mathematics and Statistics\&Key Laboratory of Analytical Mathematics and Applications (Fujian Normal University), Ministry of Education,  Fuzhou, China.
  \texttt{huanglj@fjnu.edu.cn}}


\date{}
\maketitle

\begin{abstract}

    In this work, we study the random metric for the critical long-range percolation on $\mathds{Z}^d$. A recent work by B\"aumler \cite{Baumler22} implies the subsequential scaling limit, and our main contribution is to prove that the subsequential limit is uniquely characterized by a natural list of axioms. Our proof method is hugely inspired by recent works of Gwynne and Miller \cite{GM21}, and Ding and Gwynne \cite{DG23} on the uniqueness of Liouville quantum gravity metrics.

\noindent \textbf{Keywords:} Long-range percolation, random metric, scaling limit

\medskip

\noindent \textbf{MSC 2020: 60K35, 05C12, 82B27, 82B43}

\end{abstract}
\allowdisplaybreaks

\tableofcontents

\section{Introduction}\label{section1}

The critical long-range percolation model (LRP) is a model of percolation on $\mathds{Z}^d$, where all edges $\langle {\bm i},{\bm j}\rangle $ with $\|{\bm i}-{\bm j}\|_{1}=1$ (i.e. ${\bm i}$ and ${\bm j}$ are nearest neighbors) occur independently with probability $p_1$ and an edge $\langle {\bm i},{\bm j}\rangle$ with $\|{\bm i}-{\bm j}\|_{1}>1$ (which will be referred to as a long edge in what follows) occurs independently with probability
\begin{equation}\label{connectprob}
    1-\exp\left\{-\beta \int_{V_1({\bm i})}\int_{V_1({\bm j})}\frac{1}{|{\bm u}-{\bm v}|^{2d}}\d {\bm u}\d {\bm v}\right\}.
\end{equation}
Here $V_1({\bm i})$ is a cube in $\mathds{R}^d$ with center ${\bm i}$ and side length 1, $\| \cdot\|_1$ is the $\ell^1$-norm and $|\cdot|$ is the $\ell^2$-norm. We also call this model a discrete $\beta$-LRP model or a critical long-range bond percolation model. In this paper, we consider the particular case for $p_1= 1$, where the connectivity is trivial. Instead, we will focus on the metric properties of this percolation model and the trivial connectivity coming from the assumption of $p_1=1$ provides a level of simplification on challenging questions for metrics. In addition, our particular choice for the form of the connecting probability follows that in \cite{Baumler22} with the purpose of offering scaling invariant properties since we will study the scaling limit of the metrics. 

Denote by $\widehat{d}(\cdot,\cdot)$ the graph distance (also known as the chemical distance) on this discrete model, where each of the nearest neighbor edges and the long edges is counted as weight 1. It has been shown in \cite{Baumler22} (see also \cite{Ding-Sly13} for the  one-dimensional case) that $d(\bm 0,{\bm n})\asymp |{\bm n}|^\theta$ for some $\theta=\theta(\beta,d)$ as $|{\bm n}|\to\infty$. Our main result shows the existence and uniqueness of the scaling limit of the chemical distance $\widehat{d}(\lfloor n\cdot\rfloor, \lfloor n\cdot\rfloor)$ (here $\lfloor n{\bm x}\rfloor=(\lfloor n{\bm x}^1\rfloor,\cdots,\lfloor n{\bm x}^d\rfloor)$ for $n\in\mathds{N}$ and ${\bm x}=({\bm x}^1,\cdots,{\bm x}^d)\in\mathds{R}^d$).

\begin{theorem}\label{realmaintheorem}
    Let $\widehat{d}$ be the chemical distance on the discrete $\beta$-LRP model. Let $\widehat{a}_n$ be the median of $\widehat{d}({\bm 0},n{\bm 1})$ {\rm(}here ${\bm 1}=(1,1,\cdots,1)\in\mathds{R}^d${\rm )} and $\widehat{D}_n=\widehat{a}_n^{-1}\widehat{d}(\lfloor n\cdot\rfloor, \lfloor n\cdot\rfloor)$. Then there exists a unique random metric $D$ on $\mathds{R}^d$ such that $\widehat{D}_n$ converges to $D$ in law with respect to the topology of local uniform convergence on $\mathds{R}^{2d}$.
\end{theorem}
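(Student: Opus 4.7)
The plan is to follow the strategy that Gwynne--Miller and Ding--Gwynne developed for the uniqueness of the Liouville quantum gravity metric, adapted to the long-range percolation (LRP) setting. The starting point is the work of B\"aumler, which already produces subsequential scaling limits of $\widehat{D}_n$ together with polynomial upper and lower tail bounds on $\widehat{d}(\bm 0,\bm n)/|\bm n|^\theta$. Therefore, I would first distill from this tightness a short list of axioms that any subsequential limit $D$ must satisfy: (i) $D$ is a random length metric on $\R^d$, continuous with respect to the Euclidean topology; (ii) $D$ is measurable with respect to the Poisson process of long edges (together with a possible independent randomness from the nearest-neighbour structure of the discrete model, which in the limit is averaged out); (iii) translation and axis-permutation invariance inherited from $\Z^d$; (iv) a scaling relation $D(n\cdot,n\cdot) \stackrel{\mathrm{d}}{=} a_n\, D(\cdot,\cdot)$ in an appropriate sense, which follows from the scale-invariance of the connection probability in \eqref{connectprob}; and (v) sharp moment bounds on $D(\bm x,\bm y)$ with matched upper and lower tails around the typical distance. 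The key observation, as in the LQG case, is that these axioms should be strong enough to identify $D$ uniquely up to a deterministic multiplicative constant, and the normalization by the median of $\widehat{d}(\bm 0,n\bm 1)$ pins down the constant.

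Next, I would set up the core uniqueness step. Let $D$ and $\widetilde D$ be any two subsequential limits satisfying the axioms, coupled on the same probability space driven by the same Poisson process of long edges (this is where the measurability axiom is crucial: both metrics are deterministic functionals of the same environment). Define
\[
c_* = \operatorname{ess\,inf}_{\bm x,\bm y} \frac{\widetilde D(\bm x,\bm y)}{D(\bm x,\bm y)}, \qquad c^* = \operatorname{ess\,sup}_{\bm x,\bm y} \frac{\widetilde D(\bm x,\bm y)}{D(\bm x,\bm y)},
\]
or, more accurately, suitable translation-invariant and scale-invariant analogues (for instance essential infima and suprema of crossing distances across unit boxes). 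By translation invariance and an ergodicity argument coming from the independence of long edges at disjoint scales, both are deterministic constants in $(0,\infty)$, and by symmetry one has $c_*\le 1\le c^*$. The whole game is to upgrade $c_*=c^*$.

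The main technical work, and what I expect to be the principal obstacle, is the \emph{bootstrap} (or ``confluence of geodesics'') step: show that if $c_*<c^*$, one can construct a positive-probability event on which $\widetilde D(\bm x,\bm y) \le (c^*-\varepsilon) D(\bm x,\bm y)$ for some pair in a macroscopic region, contradicting the definition of $c^*$. In the LQG setting this uses RSW-type estimates for distances and a quantitative version of the fact that geodesics pass through many ``good'' boxes; here the analogue must be built from the conditional independence structure of the LRP long edges, using that the process of long edges inside disjoint boxes is independent and has a scale-invariant law. Concretely, I would: (a) prove Russo--Seymour--Welsh inequalities for $D$-crossings of rectangles in $\R^d$, exploiting axioms (iii) and (iv) together with the upper and lower tail bounds from axiom (v); (b) use these to show, with probability bounded away from zero, that inside any fixed box there is a network of ``shortcut'' long edges whose endpoints are pairwise close in both $D$ and $\widetilde D$; (c) splice such shortcuts into a $D$-near-geodesic between two given points to manufacture a $\widetilde D$-path which is strictly shorter than $c^* D(\bm x,\bm y)$. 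Step (b) is the hardest: one has to pay a subtle amount of regularity to ensure that the shortcuts used are \emph{atypically} efficient for $\widetilde D$ relative to $D$, and the argument mimics the percolation-style ``enhancement'' of geodesics from Ding--Gwynne.

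Once $c_*=c^*$ is established, one concludes $\widetilde D = c D$ almost surely for a deterministic $c>0$. Passing to the limit through the normalization $\widehat{D}_n = \widehat a_n^{-1}\widehat d(\lfloor n\cdot\rfloor,\lfloor n\cdot\rfloor)$ and using that $\widehat a_n$ is the median of $\widehat d(\bm 0,n\bm 1)$ forces $c=1$ for any two subsequential limits, so the full sequence converges. The tightness in the local uniform topology on $\R^{2d}$, needed to upgrade subsequential convergence in finite-dimensional distributions to convergence of metrics, is then a consequence of the moment bounds already used in axiom (v) together with a Kolmogorov-type continuity argument.
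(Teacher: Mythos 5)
Your high-level skeleton — axiomatize subsequential limits, define optimal bi-Lipschitz constants between two limits coupled on the same environment, run a bootstrap to force $c_*=c^*$, then normalize by the median — matches the paper's strategy at the coarsest level. However, there are two substantive places where your plan as written would not go through.

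\textbf{The scaling axiom cannot be verified for subsequential limits.} Your axiom (iv), an exact scaling relation $D(n\cdot,n\cdot)\stackrel{\mathrm d}{=}a_n D$, is precisely the point the paper flags as \emph{not} directly checkable. Since we only have subsequential convergence of $D_n = a_n^{-1}d_{(1/n,\infty)}$ and rescaling space by $r$ changes the cutoff $1/n$ to $1/(nr)$, one cannot conclude that a subsequential limit satisfies the exact spatial scaling. The paper therefore introduces a \emph{weaker} list of axioms (translation invariance plus tightness across scales from above and below, Axioms IV', V1', V2') that \emph{can} be verified for subsequential limits, proves uniqueness under the weaker list, and only then deduces the exact scaling covariance a posteriori as a corollary of uniqueness (via the functional equation $C(r_1r_2)=C(r_1)C(r_2)$). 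Your proposal implicitly assumes (iv) holds at the outset, which would leave a genuine gap.

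\textbf{The bootstrap mechanism.} Your steps (a)--(c) lean on RSW-type box-crossing inequalities and on geodesics passing through many good boxes in a continuity-driven way (``confluence of geodesics''). In LRP the geodesics in the scaling limit are \emph{discontinuous} in the Euclidean topology (long edges produce jumps), so the annulus-based confluence arguments that work for LQG — where a continuous path hitting a small ball must traverse a surrounding good annulus — do not apply here. Even if a path hits a small cube, it need not hit a neighborhood of that cube. The paper instead deploys a genuine multi-scale construction (nice $\to$ very nice $\to$ very very nice $\to$ super good cubes) so that ``good'' cubes are laid out densely enough that discontinuous paths cannot avoid them, and replaces the Weyl-scaling/bump-function trick of the LQG papers (which you have no analogue of) by randomly adding long edges from an independent Poisson process to attract geodesics through good pairs of sub-cubes. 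The quantitative contradiction is then obtained not from a geometric splicing estimate but from a counting argument: one compares the number of finite subsets $Z$ for which an ``add-edges'' event $\mathsf{F}_{Z,\varepsilon}$ occurs with the number for which a ``delete-edges'' event $\mathsf{G}^-_{Z,\varepsilon}$ occurs, and controls the Radon--Nikodym derivative between the modified and unmodified edge processes. This machinery has no counterpart in your proposal, and RSW for $D$-crossings is neither proved in the paper nor an obvious substitute given the jumps.

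One smaller point: you mention an independent randomness from the nearest-neighbor structure that is ``averaged out in the limit''; in the regime considered here $p_1=1$, so there is no such extra randomness and the metric is (after the uniqueness argument) a measurable function of the Poisson edge process alone.
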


In fact, we will show that the scaling limit satisfies a list of axioms which uniquely characterize the metric (see Definition \ref{strongLRPmetric} and Theorem \ref{dis-maintheorem}).

\begin{figure}[htbp]
\centering
\subfigure{\includegraphics[scale=0.3]{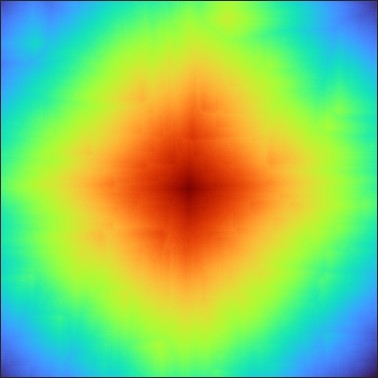}}
\quad
\subfigure{\includegraphics[scale=0.3]{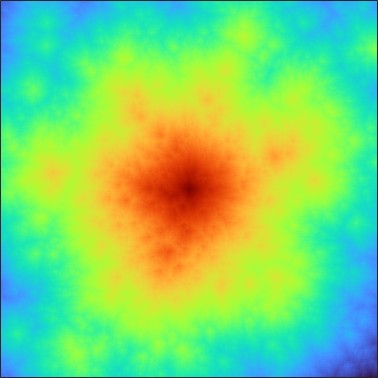}}
\quad
\subfigure{\includegraphics[scale=0.3]{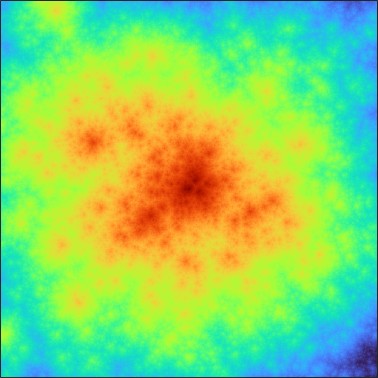}}
\quad
\subfigure{\includegraphics[scale=0.3]{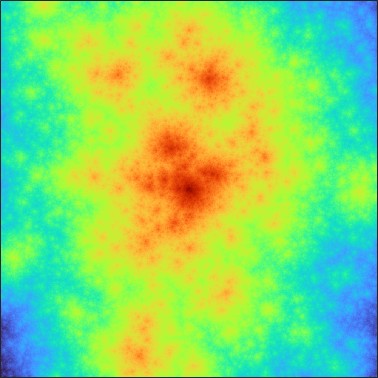}}
\quad
\subfigure{\includegraphics[scale=0.3]{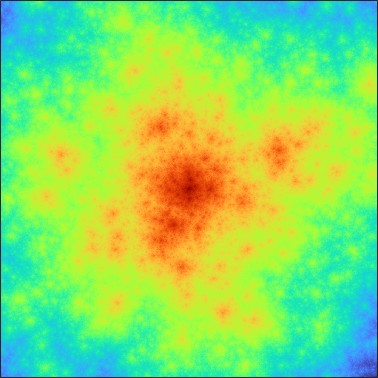}}
\quad
\subfigure{\includegraphics[scale=0.3]{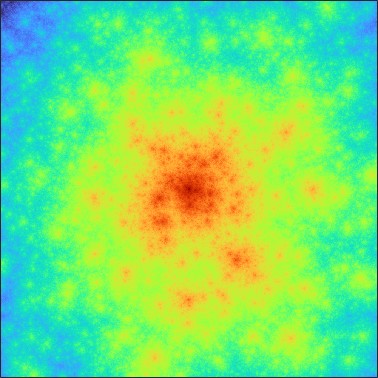}}
\caption{Simulation of $\beta$-LRP metric balls for $\beta=0.01$ (top left), $\beta=0.1$ (top middle), $\beta=0.5$ (top right), $\beta=1$ (bottom left), $\beta=2$ (bottom middle) and $\beta=5$ (bottom right). 
The centers of the above cubes are ${\bm 0}$.
The colors on the graph, ranging from warm (red) to cool (blue), represent the $\beta$-LRP distance from ${\bm 0}$, from small to large.
The simulation was produced using discrete LRP on a $1000\times 1000$ subset of $\mathds{Z}^2$.
}
\label{MN1}
\end{figure}

\begin{figure}[htbp]
\centering
\subfigure{\includegraphics[scale=0.3]{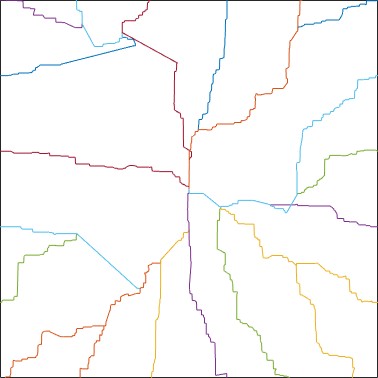}}
\quad
\subfigure{\includegraphics[scale=0.3]{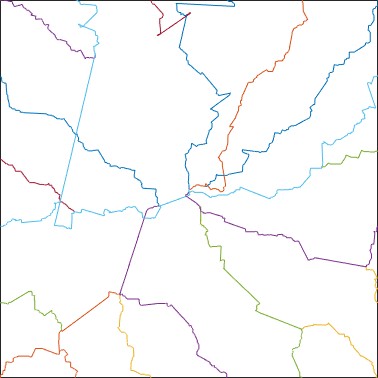}}
\quad
\subfigure{\includegraphics[scale=0.3]{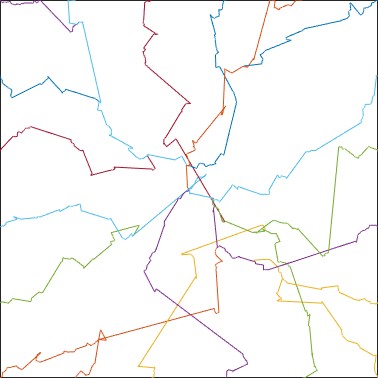}}
\quad
\subfigure{\includegraphics[scale=0.3]{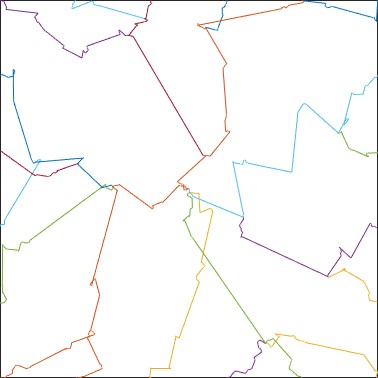}}
\quad
\subfigure{\includegraphics[scale=0.3]{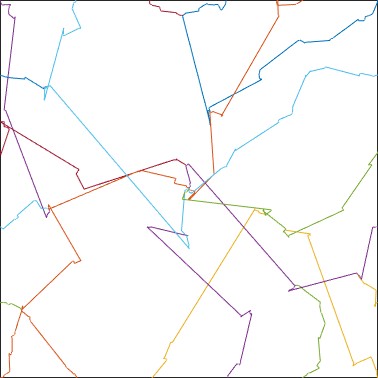}}
\quad
\subfigure{\includegraphics[scale=0.3]{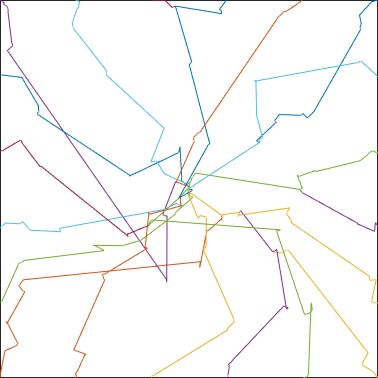}}
\caption{Simulation of $\beta$-LRP geodesics starting at ${\bm 0}$ (the center of the cube) in dimension 2 for $\beta=0.01$ (top left), $\beta=0.1$ (top middle), $\beta=0.5$ (top right), $\beta=1$ (bottom left), $\beta=2$ (bottom middle) and $\beta=5$ (bottom right). In the drawing, straight lines typically represent jumps via long edges. The simulation was produced using discrete LRP on a $1000\times 1000$ subset of $\mathds{Z}^2$.
}
\label{MN2}
\end{figure}

\begin{figure}[htbp]
\centering
\subfigure{\includegraphics[scale=0.2]{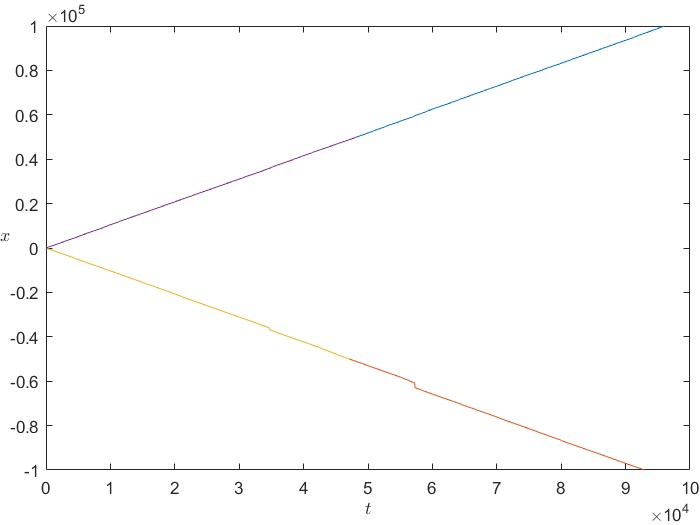}}
\quad
\subfigure{\includegraphics[scale=0.2]{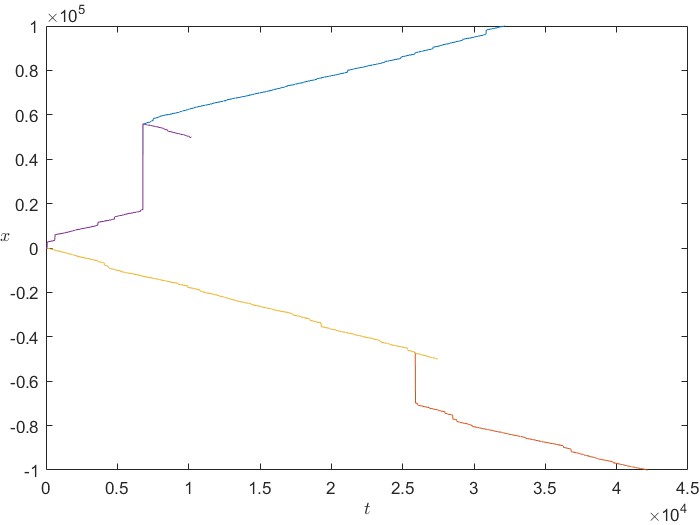}}
\quad
\subfigure{\includegraphics[scale=0.2]{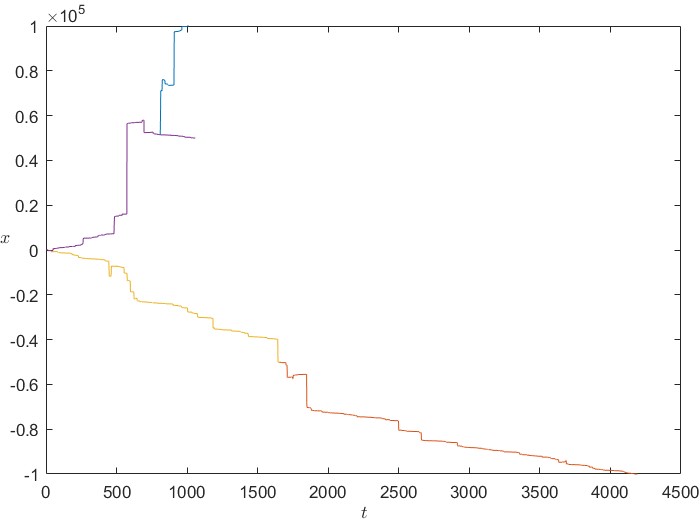}}
\quad
\subfigure{\includegraphics[scale=0.2]{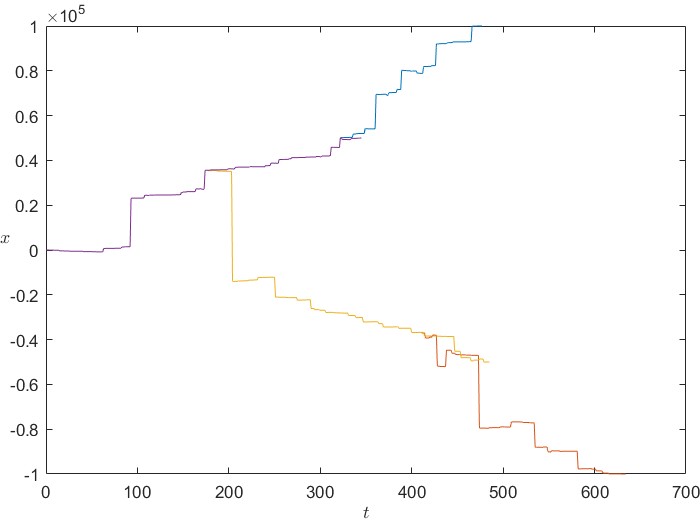}}
\quad
\subfigure{\includegraphics[scale=0.2]{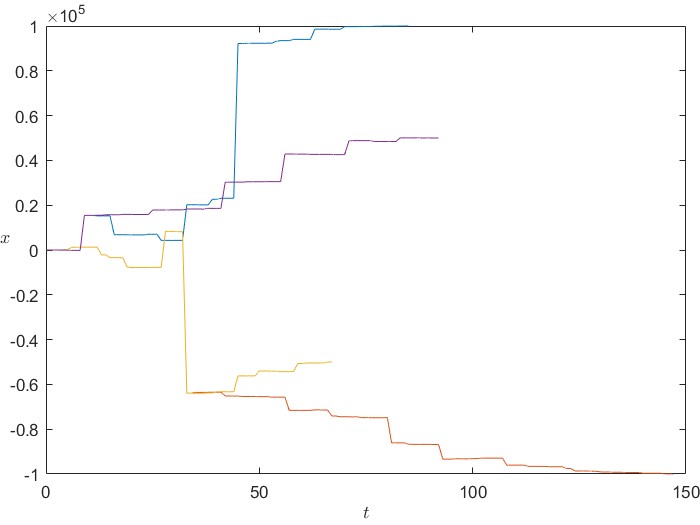}}
\quad
\subfigure{\includegraphics[scale=0.2]{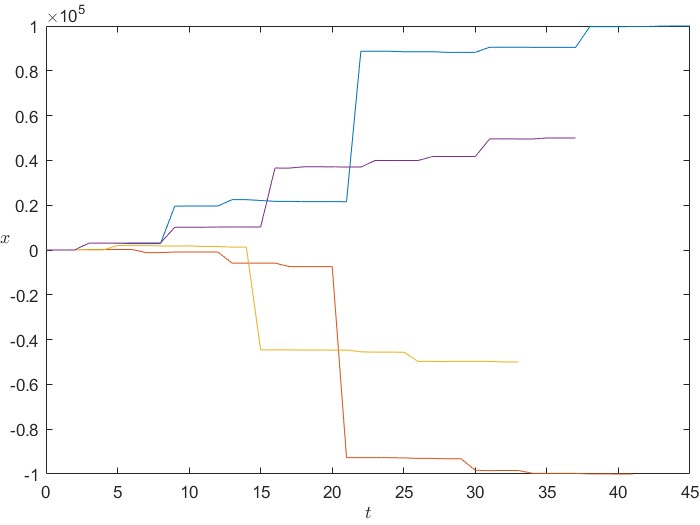}}
\caption{Simulation of $\beta$-LRP geodesics starting at 0 in dimension 1 for $\beta=0.01$ (top left), $\beta=0.1$ (top middle), $\beta=0.5$ (top right), $\beta=1$ (bottom left), $\beta=2$ (bottom middle) and $\beta=5$ (bottom right). In these plots, we denote the time $t$ of the geodesic in horizontal coordinates and the position $x$ of the geodesic at time $t$ in vertical coordinates. The simulation was produced using discrete LRP on $[-10^5,10^5]\cap\mathds{Z}$.
}
\label{MN3}
\end{figure}

\begin{figure}[htbp]
    \centering
    \subfigure{\includegraphics[scale=0.2]{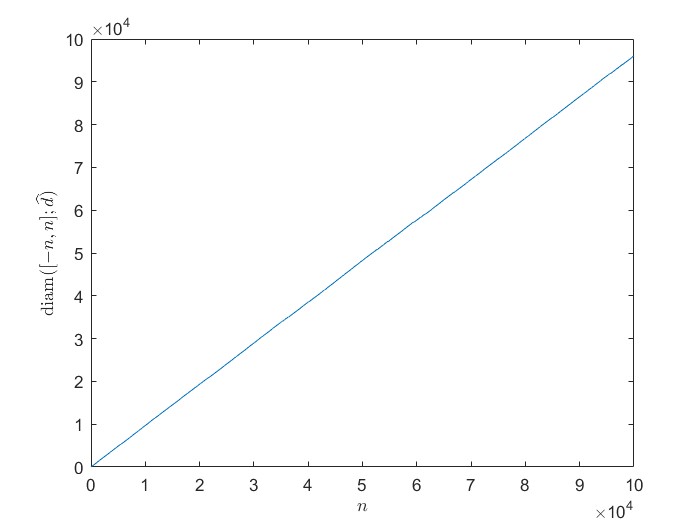}}
    \quad
    \subfigure{\includegraphics[scale=0.2]{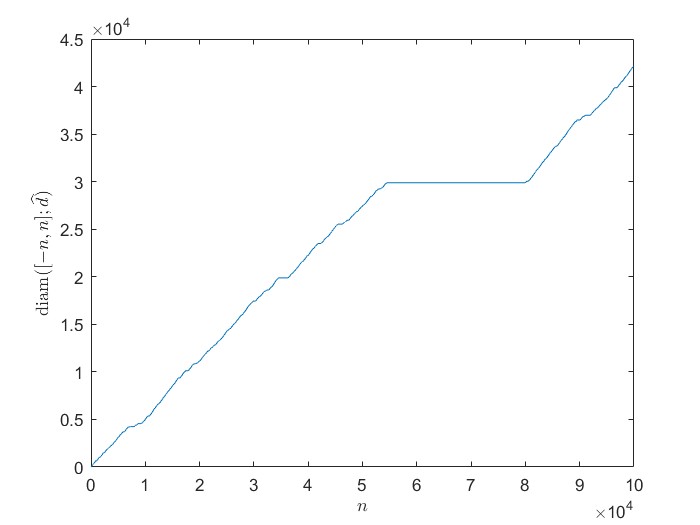}}
    \quad
    \subfigure{\includegraphics[scale=0.2]{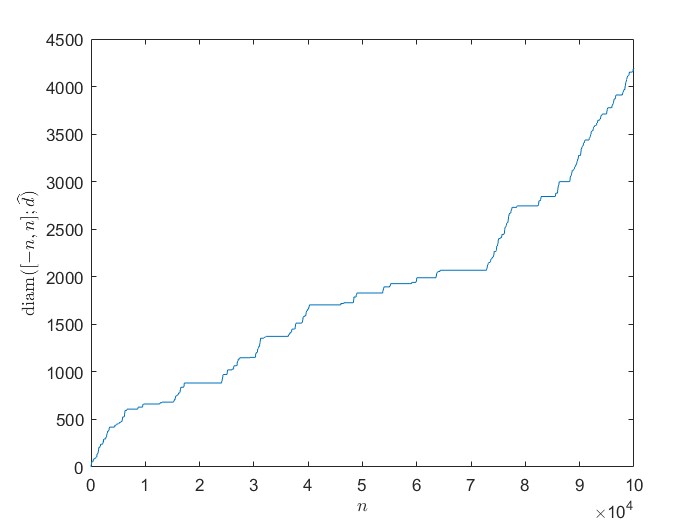}}
    \quad
    \subfigure{\includegraphics[scale=0.2]{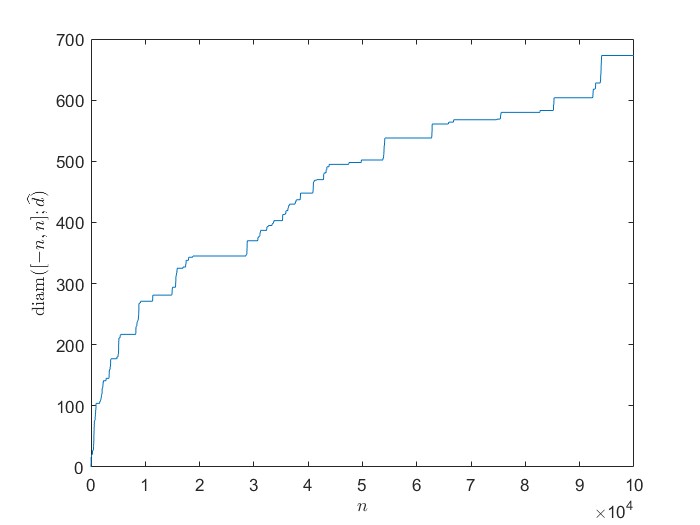}}
    \quad
    \subfigure{\includegraphics[scale=0.2]{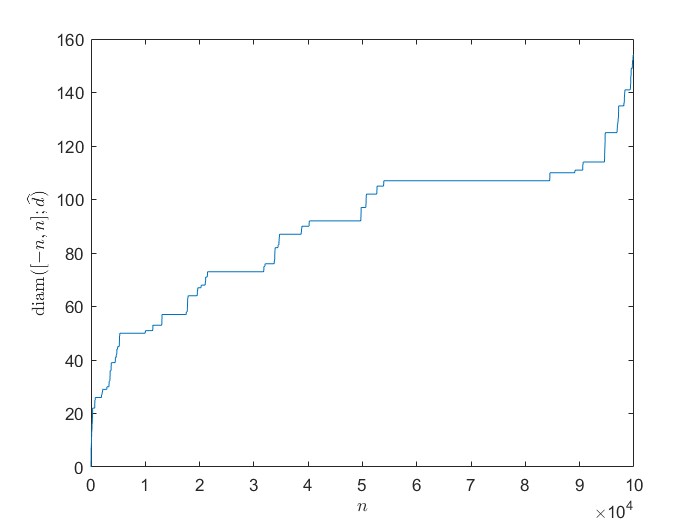}}
    \quad
    \subfigure{\includegraphics[scale=0.2]{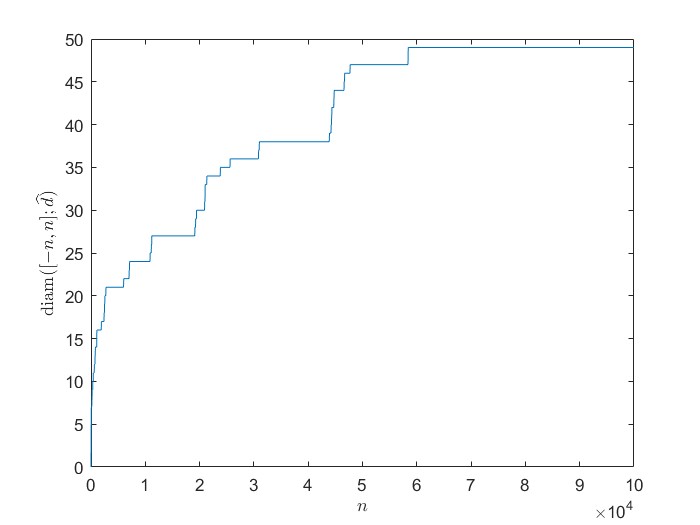}}
    \caption{Simulation of $\beta$-LRP distances in dimension 1 for $\beta=0.01$ (top left), $\beta=0.1$ (top middle), $\beta=0.5$ (top right), $\beta=1$ (bottom left), $\beta=2$ (bottom middle) and $\beta=5$ (bottom right).
In these figures, we show the growth of the diameter $[-n,n]$ with respect to $\widehat{d}$ as $n$ increases.  The simulation was produced using discrete LRP on $[-10^5,10^5]\cap\mathds{Z}$.
    }
    \label{MN4}
    \end{figure}

\subsection{Related work}\label{relatedwork}

Long-range percolation, introduced in \cite{S83,Zhangpuli1983}, is a percolation model on $\mathds{Z}^d$ where each pair of vertices can be connected with a bond.
To be more precise, consider the sequence $\{p_{\bm k}\}_{\bm k\in\mathds{Z}^d}$, where $p_{\bm k}\in [0,1]$ and $p_{\bm k}=p_{\bm k'}$ for all $\bm k,\bm k'\in\mathds{Z}^d$ such that $|\bm k|=|\bm k'|$. We also assume that
$$
0<\beta:=\lim_{|\bm k|\to \infty}\frac{p_{\bm k}}{|\bm k|^{-s}}<\infty
$$
for some $s>0$.
The long-range percolation model on $\mathds{Z}^d$ is defined by edges $\langle {\bm i},{\bm j}\rangle$ occuring independently with probability $p_{\bm i-\bm j}$. In particular, $p_{1}$ is the probability that two nearest neighbors are connected.
A natural and well-known question concerns the limiting properties of the (aforementioned) graph distance metric $\widehat{d}$.

It is known that the renormalization structure implies five different regimes, depending on whether $s$ is smaller, greater or equal to $d$ or $2d$.
Each regime exhibits distinct behavior as follows.
\begin{itemize}
\item For the case $s<d$, \cite{BKPS04} showed that the graph distance between two points is at most $\lceil d/(d-s) \rceil$.  Recently, \cite{H22} proved the precise estimates on the moments of all orders of the volume of the cluster of the origin inside a box, and then it applies these estimates to prove up-to-constants estimates on the tail of the volume of the cluster of the origin.

\item When $s=d$, it was shown in \cite{CGS02} that, with probability tending to 1, the chemical distance scales as $\log|{\bm x}-{\bm y}|/\log\log|{\bm x}- {\bm y}|$ as $|{\bm x}-{\bm y}|\to\infty$. Recently, \cite{W22} further showed that
    $$\text{diam}([0,N]_\mathds{Z}^d)\frac{\log\log N}{\log N}\to d$$
    as $N\to \infty$ in probability.

\item When $d<s<2d$, \cite{B04,B11} proved that $\widehat{d}({\bm x},{\bm y})$ grows like $(\log|{\bm x}- {\bm y}|)^{1/\log_2(2d/s)+o(1)}$. Lately, this result was improved to $\widehat{d}({\bm x},{\bm y})\asymp_P(\log|{\bm x}- {\bm y}|)^{1/\log_2(2d/s)}$ for the continuous model in \cite{BL19}, and for the discrete model in \cite{BK21}. Here $A_n\asymp_P B_n$ means that for all $\varepsilon>0$ there is a constant $C>0$ such that
$$
\mathds{P}[C^{-1}B_n\leq A_n\leq CB_n]\geq 1-\varepsilon\quad \text{for all }n\geq 1.
$$

\item For $s>2d$, \cite{BB01} showed that for $d=1$, $\widehat{d}({\bm x},{\bm y})$ grows linearly with Euclidean distance, while for $d\geq 2$, \cite{CGS02} established a lower bound  that $\widehat{d}({\bm x},{\bm y})\geq |{\bm x}-{\bm y}|^\eta$ for some $0<\eta<1$ depending on $s$ and $d$. Additionally, \cite{B18} further provided a similar lower bound with $\eta=1$.

\item In the critical case $s=2d$, \cite{CGS02} also proved that the typical diameter of a box grows at a polynomial rate (of the side length of the box) with some power strictly smaller than 1. Recently, \cite{Baumler22} proved that (see also \cite{Ding-Sly13} for the one-dimensional case) the typical distance between two points $\bm 0$ and $n\bm 1$ grows as $n^\theta$ for some $\theta\in(0,1)$, and the same holds for the diameter of $[0, n]^d_\mathds{Z}$.  In other words, they showed the existence of an exponent $\theta=\theta(\beta,d)\in(0,1)$ such that
$$
\widehat{d}(\bm 0, n\bm1)\asymp_P \text{diam}([0,n]_\mathds{Z}^d)\asymp_P n^\theta.
$$
Moreover, it was shown by \cite{B22b} that the exponent $\theta(\beta,d)$ is continuous and strictly decreasing as a function in $\beta$, and $\theta(\beta,1)=1-\beta+o(\beta)$ for small $\beta$ in dimension one.
\end{itemize}

There are also many other works on long-range percolation. For example, in the case of $p_1<1$  (recall that $p_1$ is the probability that two nearest neighbors are connected), regarding the important question of the existence and uniqueness of the infinite component in one dimension, \cite{S83} showed that percolation does not occur if $s>2$, while \cite{NS86} established the existence of oriented percolation if $s<2$.  Moreover, it was found that  non-oriented percolation occurs in the critical case $s=2$ if $\beta$ is sufficiently large and $p_1$ is close to 1. This result for $s=2$ was further improved by \cite{AN86} (see also \cite{DGT20}), which demonstrated the criticality of $\beta$. Specifically, it showed that percolation does not occur if $\beta\leq 1$, but non-oriented percolation does occur if $\beta>1$ and $p_1$ is close to 1. Afterwards, the oriented case for $s=2$ was solved by \cite{MSV10}, also proving that $\beta_c=1$.
In the high-dimensional case, the insertion of long-range connections is not essential for the existence of percolation. Instead, the main problem of interest lies in quantifying the effect of such connections on the critical behavior. For further details, refer to e.g. \cite{MS96,SSV99,Fd06}.
Another interesting direction in the study of long-range percolation is the behavior of the random walk on its infinite cluster. In \cite{GKZ93}, the authors showed that supercritical percolation in $\mathds{Z}^d$ is transient for all $d\geq 3$. For $d=1,2$, \cite{B02} proved the following: if $d<s<2d$, then the random walk is transient; if $s\geq 2d$, then the random walk is recurrent.
There are also numerous related results regarding the heat kernel (see e.g. \cite{CS12,KM08}) and the scaling limits (see e.g. \cite{CS13,CCK22}) of random walks on long-range percolation clusters.

\medskip

In this paper, our main focus lies in constructing the long-range percolation metric at criticality. While it may not be completely obvious, there is a striking similarity between the critical long-range percolation model and the Liouville quantum gravity (LQG) through their scaling invariant properties and their hierarchical structures. In fact, our proof method on the uniqueness is hugely inspired by recent work on the uniqueness of the LQG metrics (and perhaps drawing such a connection can be regarded as an interesting conceptual contribution that may shed light on future study on other aspects for the LRP model). In what follows, we briefly review the progress on the LQG metric.

Liouville quantum gravity is a one-parameter family of random fractal surfaces that was first studied by physicists in the 1980s as a class of canonical models of random two-dimensional Riemannian manifolds \cite{Pol81, Dav88, distler1989conformal}. 	
{\em Formally} speaking, a $\gamma$-Liouville quantum gravity ($\gamma$-LQG) surface is a random ``Riemannian manifold'' with Riemannian metric tensor $\e^{\gamma h} ds^2$ where $h$ is a variant of the {\em Gaussian free field} (GFF) on some domain $U \subset \mathbb C$ and $\gamma \in (0, 2]$ is the underlying parameter.
This is of course not well-defined since the GFF is a random Schwarz distribution (see, e.g.,~\cite{She07, berestycki2021gaussian, werner2020lecture} for a comprehensive introduction to the GFF). Rigorous mathematical investigation into this surface as a random metric measure space was set off with the construction of the associated volume measure in \cite{Kahane85} (in the more general setting of Gaussian multiplicative chaos) and in \cite{DupShe11} (which also established important fractal properties of this random volume measure); see~\cite{RhodesVargas14} for an excellent account on this subject.
	
On the metric side,  the $\sqrt{8/3}$-LQG metric (here $\sqrt{8/3}$ corresponds to the case of pure gravity) was constructed through a series of papers \cite{MSbrownianmapI,MSbrownianmapII,MSbrownianmapIII} which also established a deep connection with the {\em Brownian map} (which is the scaling limit of random planar maps, as shown by \cite{LeGall13,Miermont13}). For a general $\gamma \in (0, 2)$ (the so-called {\em subcritical phase}), the metric was recently constructed  as a culmination of several works \cite{DingDubDunFalc20, DubFalcGwynPfeSun20, GwynneMillerconcon2021, GM21, DD19, DF20, GM19c}. Their construction starts by producing candidate distance functions which are obtained as subsequential limits \cite{DingDubDunFalc20} of a family of random metrics
known as the {\em Liouville first passage percolation} (LFPP). The limiting metric is then shown to be unique
in two steps. Firstly, every possible subsequential limit is shown to be a measurable function of the GFF $h$ satisfying a list of axioms motivated by the natural properties and scaling behavior of the LFPP metrics \cite{DubFalcGwynPfeSun20}. Subsequently
these axioms are shown to uniquely characterize a random metric \cite{GwynneMillerconcon2021, GM21} on the plane. See \cite{ding2021introduction} for a detailed overview of the construction of LQG using LFPP.

Associated with the LFPP metric is a parameter $\xi = \xi(\gamma)$ which gives a reparametrization of the LQG metric although its explicit dependence on $\gamma$ is currently unknown (see~\cite{DingGwynne20, ding2021introduction}).
There is yet another parametrization of LQG which is perhaps more popular in the physics community, namely the {\em matter central charge} ${\bf c}_{{\rm M}}$. The subcritical phase (i.e., $\gamma \in (0, 2)$) corresponds to ${\bf c}_{{\rm M}} \in (-\infty, 1)$ whereas the critical ($\gamma = 2$) and the supercritical phases ($\gamma$ complex, $|\gamma| = 2$) correspond to ${\bf c}_{{\rm M}} = 1$ and ${\bf c}_{{\rm M}} \in (1, 25)$ respectively. See \cite{ding2021introduction} and \cite[Section~1]{DG23} for the interplay between these different parameters. Of these three phases, the supercritical phase is the most mysterious, not least because $\gamma$ is complex. However, one can still assign a LFPP parameter $\xi > 0$ to such $\gamma$ \cite{ding2020tightness} and consider subsequential limits of LFPP metrics as in the subcritical case. This program was carried out in a series of works \cite{ding2020tightness, pfeffer2021weak,DG23} and hence the LQG metric is now defined for all values of $\xi \in (0,\infty)$.

As mentioned earlier, there are similarities between the critical LRP model and LQG. In particular, the parameters associated with metrics of these two models seem to have some kind of correspondence.
Intuitively, the parameter $\beta$ associated with the critical LRP metric corresponds to the parameter $\xi=\xi(\gamma)$ (as defined in the previous paragraph) associated with the LFPP metric, while the LRP exponent $\theta=\theta(\beta)$ corresponds to the LFPP exponent $Q=Q(\xi)$ which is introduced by \cite{ding2020tightness}. It is known that $Q\in (0,\infty)$ for all $\xi>0$, and $Q$ is a continuous, non-increasing function of $\xi$, see e.g. \cite{DG19,GHS19,GP19,ding2020tightness,DGS21}. However, the exact relationship between $Q$ and $\xi$ is still  unknown, except for the special case of pure gravity. Regarding to the LRP model, \cite{B22b} proves that the function $\beta\to \theta(\beta)$ is continuous and strictly decreasing. In particular, as we presented before, it shows that $\theta(\beta)=1-\beta+o(\beta)$ for small $\beta$ in dimension one. It is important to note that the exact relationship between $\theta(\beta)$ and $\beta$ is still an open question. We are under the feeling that, computing the exact value of $\theta(\beta)$ is a question of major challenge, as the question for computing $Q(\xi)$ in the LQG model.

Our approach in proving uniqueness for the critical LRP metrics is hugely inspired by the method developed for the LQG metric. We will formulate a natural list of axioms (see Definition \ref{strongLRPmetric}), which as one can verify are satisfied by any subsequential limit of the LRP metric. The main challenge of this work is then to prove that this list of axioms fully characterizes the LRP metric, i.e., to prove the uniqueness. To this end, we follow in the overview level the framework initiated in \cite{GM21} and further developed in \cite{DG23}. In the implementation, we encounter substantial challenges for the LRP model since for instance a LRP geodesic is not a continuous path with respect to the Euclidean topology and the LRP metric does not enjoy the Weyl scaling as the LQG metric.
That being said, we also have some advantages on the LRP model since for instance we have more user-friendly independence for LRP and also in LRP the long-range edges can help us attracting geodesics to desirable locations. In addition, regardless of $\beta$, the LRP model behaves more or less like the subcritical LQG metric, in the sense that one does not see ``singularities'' as observed in the supercritical LQG metric \cite{ding2020tightness} which is a major technical challenge in \cite{DG23}.
Furthermore, some a priori estimates for the LRP metric are already available. For instance, the power law growth was proved in \cite{Baumler22} while for the LQG metric an up to poly-log estimate needed to be  proved separately \cite{DingGwynne2023} before establishing the uniqueness in the supercritical regime.




\subsection{Continuous LRP}\label{overview}

A uesful tool for studying long-range percolation is a continuous analog of LRP on $\mathds{R}^d$, which is also of interest on its own right.
For $\beta > 0$, the continuous critical long-range percolation model with parameter $\beta$ (continuous $\beta$-LRP) is a percolation model on $\mathds{R}^d$, where the set of edges $\mathcal{E}$ is given by a Poisson point process where edges between ${\bm x}$ and ${\bm y}$ occur with intensity $\beta/|{\bm x}-{\bm y}|^{2d}$ with respect to the Lebesgue measure on $\mathds{R}^{2d}$.
As above, we use the notation $\langle \cdot,\cdot\rangle$ to denote a long edge, and for a long edge $\langle {\bm x}, {\bm y}\rangle $, we say its scope is $|{\bm x}-{\bm y}|$.

We can similarly consider the metric structure for the model.
For $\delta'>\delta>0$, we define $d_{(\delta,\delta')}(\cdot,\cdot)$ as the graph distance which only uses long edges whose scopes are in the interval $(\delta,\delta')$, denoted by $\mathcal{E}_{(\delta,\delta')}$. That is,
\begin{equation}\label{dist_delta}
\begin{split}
&d_{(\delta,\delta')}({\bm x},{\bm y})\\
&=\min_{m\geq 0}\left\{|{\bm x}-{\bm u}_1|+\sum_{i=1}^{m-1}|{\bm v}_i-{\bm u}_{i+1}|+|{\bm v}_m-{\bm y}|:\ \{\langle {\bm u}_i,{\bm v}_i\rangle\}_{1\leq i\leq m}\subseteq \mathcal{E}_{(\delta,\delta')}\right\},
\end{split}
\end{equation}
where the case of $m=0$ evaluates to $|{\bm x}-{\bm y}|$. From the above definition, we see that long edges with scopes in $(\delta, \delta')$ serve as ``express ways'' and have length 0.
It is clear that the resulting graph is connected, and for any ${\bm x},{\bm y}\in\mathds{R}^d$, the graph distance $d_{(\delta,\delta')}({\bm x},{\bm y})$ between them satisfies $d_{(\delta,\delta')}({\bm x},{\bm y})\leq |{\bm x}-{\bm y}|$.

We will also study the scaling limit of the graph distance metric for this model. 


\subsection{Axiomatic characterization of strong $\beta$-LRP metric}\label{strongmetric}

The existence of subsequential limits more or less follows from \cite{Baumler22}, and the key challenge in this paper is the uniqueness of the limit. To this end, we will follow the framework on the random metrics for Liouville quantum gravity \cite{GM21, DG23}. That is, our proof proceeds by formulating a set of axioms for the LRP metric: on the one hand, we will show that all subsequential scaling limits of the discrete metrics satisfy these axioms; on the other hand, we will show that these axioms together uniquely determines the law of the metric.

To state our axioms precisely, we will need some preliminary definitions concerning metric spaces.

\begin{definition}
Let $(X,d)$ be a metric space.
\begin{itemize}

\item Let $P:[a,b]\to (X,d)$ be a curve that is continuous with respect to the topology generated by $d$. The \textit{$d$-length} of $P$ is defined by
$$
{\rm len}(P;d)=\sup_{T}\sum_{i=1}^{\#T}d(P(t_{i-1}),P(t_i)),
$$
where the supremum is over all partitions $T:a=t_0<\cdots<t_{\#T}=b$ of $[a,b]$ (here $\# T$ denotes the cardinality of $T$).

\item We say $(X,d)$ is a \textit{length space} if for each $x,y\in X$ and each $\varepsilon>0$, there exists a curve of $d$-length at most $d(x,y)+\varepsilon$ from $x$ to $y$. A curve from $x$ to $y$ of $d$-length exactly $d(x,y)$ is called a geodesic.

\item For $Y\subset X$, the internal metric of $d$ restricted on $Y$ is defined by
$$
d(x,y;Y)=\inf_{P\subset Y}{\rm len}(P;d)\quad \forall x,y\in Y,
$$
where the infimum is over all paths $P$ in $Y$ from $x$ to $y$. Note that $d(\cdot,\cdot;Y)$ is a metric on $Y$.

\end{itemize}
\end{definition}

The axioms which characterize our metric are given in the following definition.

\begin{definition}\label{strongLRPmetric} (strong $\beta$-LRP metric).
Let $\mathcal{D}'$ be the space of measures
$$\left\{\sum_{n=1}^N\delta_{({\bm x}_n,{\bm y}_n)}:\ \{({\bm x}_n,{\bm y}_n)\}_{n=1}^N\subset \mathds{R}^{2d}\ \text{contains no repeated elements and } 0\le N\le\infty\right\}
$$
with the usual weak topology. It is worth emphasizing that here we allow $N$ to take infinity value.
For $\beta>0$, a (strong) $\beta$-LRP metric is a measurable function $\mathcal{E}\to D$ from $\mathcal{D}'$ to the space of continuous pseudometrics on $\mathds{R}^d$ with local uniform topology satisfying the following properties. Let $\mathcal{E}$ be an edge set given by the set of atoms of a Poisson point process where edges between ${\bm x}$ and ${\bm y}$ occur with intensity $\beta/|{\bm x}-{\bm y}|^{2d}$ with respect to the Lebesgue measure on $\mathds{R}^{2d}$ for the parameter $\beta$.
Then the associated metric $D$ satisfies the following axioms.

    \begin{enumerate}
        \item[I.] {\bf Length space.} Consider the equivalence relationship $\sim$ defined by  ${\bm x}\sim {\bm y}$ if and only if $D({\bm x},{\bm y})=0$. Then the pair $(\mathds{R}^d/\sim,D)$ forms a length space. Additionally, there exists a geodesic connecting any two elements ${\bm x}$ and ${\bm y}$ in $\mathds{R}^d$.

        \item[II.] {\bf Locality.} Let $U$ be an open and deterministic subset of $\mathds{R}^d$. The $D$-internal metric $D(\cdot,\cdot;U)$ is determined by $\mathcal{E}|_{U\times U}$. It is  independent of all edges with at least one end point outside of $U$.

        \item[III.] {\bf Regularity.} For any ${\bm x},{\bm y}\in\mathds{R}^d$, $D({\bm x},{\bm y})=0$ if $\langle {\bm x},{\bm y}\rangle \in\mathcal{E}$.

        \item[IV.] {\bf Translation and scale covariance.} For any $r>0,{\bm z}\in\mathds{R}^d$,
        $$
        D(r\cdot+{\bm z},r\cdot+{\bm z})\stackrel{\rm law}{=}r^\theta D(\cdot,\cdot).
        $$

        \item[V.] {\bf Tightness.} $D({\bm 0},([-1,1]^d)^c)>0$ almost surely and $\mathds{E}\left[\exp\left\{({\rm diam}([0,1]^d;D))^\eta\right\}\right]<\infty$ for any $\eta\in(0,1/(1-\theta))$.

    \end{enumerate}
\end{definition}

Our main results establish the existence of subsequential limiting metrics and demonstrate that any such metric is a (unique) strong $\beta$-LRP metric.

\begin{theorem}\label{dis-maintheorem}
    Let $\widehat{d}$ be the chemical distance on the discrete $\beta$-LRP model. Let $\widehat{a}_n$ be the median of $ \widehat{d}({\bm 0},n{\bm 1}) $ and $\widehat{D}_n(\cdot,\cdot)=\widehat{a}_n^{-1}\widehat{d}(\lfloor n\cdot\rfloor,\lfloor n\cdot\rfloor)$.
    Then $\widehat{D}_n$ converges to a strong $\beta$-LRP metric with respect to the topology of local uniform convergence on $\mathds{R}^{2d}$.
\end{theorem}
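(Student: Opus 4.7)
The plan is to split the statement into three tasks: (a) tightness of $\{\widehat{D}_n\}$ in the local uniform topology, (b) identification of every subsequential limit as a strong $\beta$-LRP metric in the sense of Definition~\ref{strongLRPmetric}, and (c) uniqueness of such a metric (coupled to the underlying Poisson edge set). Together these force $\widehat{D}_n$ to converge along the full sequence. Part (a) will be built on the polynomial exponent bounds of \cite{Baumler22}: the one-point estimate $\widehat{d}(\bm 0, n\bm 1) \asymp_P n^\theta$, together with standard subadditivity/union bound arguments using the natural coupling between discrete LRP on $\mathds Z^d$ rescaled by $1/n$ and the Poisson intensity $\beta |\bm x-\bm y|^{-2d}\,d\bm x\,d\bm y$, give uniform H\"older type bounds on $\widehat{D}_n$ and hence tightness in the space of continuous pseudometrics on $\mathds R^d$. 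Simultaneously, the (rescaled) long-edge set converges to the continuum Poisson point process $\mathcal{E}$, so every subsequential limit is naturally coupled to $\mathcal{E}$.

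For part (b), I would verify the five axioms directly from the discrete picture. Length space (I) follows because each $\widehat{D}_n$ is realized by a geodesic in a locally finite graph, and a standard Arzel\`a--Ascoli argument (using tightness of lengths established in part (a)) produces limit geodesics. Locality (II) is inherited from the fact that $\widehat{d}(\cdot,\cdot;U)$ only depends on edges with both endpoints in $U$, plus independence of the Poisson edge set on disjoint regions. Regularity (III) follows because a continuum edge $\langle \bm x,\bm y\rangle \in \mathcal{E}$ is approximated at every dyadic scale by a long edge of the discrete model connecting the boxes containing $\lfloor n\bm x\rfloor$ and $\lfloor n\bm y\rfloor$, whose presence forces $\widehat{D}_n$-distance between these boxes to 0. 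Translation covariance is automatic; scale covariance (IV) follows from the intensity's $2d$-homogeneity together with the normalization $\widehat{a}_n \asymp n^\theta$. For tightness (V), the positivity $D(\bm 0,([-1,1]^d)^c)>0$ is a consequence of the fact that a single long edge has length one in the discrete model, so the median distance from $\bm 0$ to the complement of a macroscopic box is $\Theta(n^\theta)$; the moment bound on $\mathrm{diam}([0,1]^d;D)$ with $\eta<1/(1-\theta)$ can be obtained from the subadditive chaining estimate of \cite{Baumler22}, which yields a stretched exponential tail with exactly this exponent.

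For part (c), which is the heart of the work, I follow the Gwynne--Miller / Ding--Gwynne axiomatic uniqueness framework. Let $D$ and $\widetilde D$ be two strong $\beta$-LRP metrics coupled to the \emph{same} $\mathcal{E}$. Define the (a priori random) quantities
\[
c_*=\inf\bigl\{c>0:\exists\ \bm x\neq\bm y \text{ with } D(\bm x,\bm y)<c\,\widetilde D(\bm x,\bm y)\bigr\},\qquad
c^*=\sup\bigl\{c>0:\exists\ \bm x\neq\bm y \text{ with } D(\bm x,\bm y)>c\,\widetilde D(\bm x,\bm y)\bigr\},
\]
and show that $c_*=c^*$ a.s., which forces $D=\widetilde D$ up to a deterministic multiplicative constant; coupling with the common normalization (medians along the diagonal) then removes that constant. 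Using axioms I--V one first obtains a priori bi-Lipschitz bounds $0<c_*\le c^*<\infty$ via a $0$-$1$ law (exploiting the Poisson independence built into the locality axiom on annuli) and the tightness axiom. The core step is a bootstrap: assuming $c_*<c^*$, build a ``bypass event'' in a macroscopic annulus on which the $D$-length of any crossing is strictly less than $c^*\,\widetilde D$-length; then by locality and scale covariance one can iteratively insert such bypasses along a $\widetilde D$-geodesic to improve $c^*$, contradicting its definition. In the LQG setting the bypass is produced by local field perturbations; in LRP the analogue is the direct insertion of long edges, which is in fact cleaner: because $\mathcal{E}$ is Poisson, one can condition on all edges outside a small box and independently resample inside, and because long edges serve as ``express ways'' one has essentially free control over where a near-geodesic is forced to pass.

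The main obstacle I expect is precisely the adaptation of this bypass/bootstrap step. Two features of LRP make the LQG script non-routine: geodesics are discontinuous in the Euclidean topology (they jump along long edges), and there is no analogue of the Weyl scaling $D\mapsto e^{\gamma h}D$ that is used in \cite{GM21,DG23} to absorb multiplicative corrections. To circumvent the first, I will work with the continuous Poisson model throughout and track geodesics as sequences of (endpoint, edge) pairs rather than as paths, using axiom I to guarantee their existence and axiom III to handle the jumps. To circumvent the absence of Weyl scaling, the scale covariance axiom IV is used as a replacement: applying IV at dyadic scales converts an improvement of $c^*$ at one scale into an improvement at every scale, provided the improvement event has uniformly positive probability, which is where the independence of edges in disjoint annuli (axiom II) and the tail control (axiom V) combine to yield the needed concentration. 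The successful execution of this scheme is what the bulk of the paper will be devoted to.
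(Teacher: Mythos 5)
Your overall architecture — tightness, identification of subsequential limits via axioms, then axiomatic uniqueness — matches the paper's, and your instinct to replace the LQG bump-function construction by direct insertion of Poisson long edges is exactly what the paper does in Section~\ref{section4}. However, there is a genuine gap in your verification of Axiom~IV (scale covariance) for subsequential limits, and it propagates into your uniqueness step because you also propose to use Axiom~IV as the substitute for Weyl scaling. The $2d$-homogeneity of the Poisson intensity gives $r\mathcal{E}\stackrel{\mathrm{law}}{=}\mathcal{E}$, but rescaling space also rescales the effective cutoff in the approximating metric: up to a lattice identification, $\widehat{a}_n^{-1}\widehat{d}(\lfloor nr\cdot\rfloor,\lfloor nr\cdot\rfloor)$ equals $(\widehat{a}_{nr}/\widehat{a}_n)\,\widehat{D}_{nr}$, and the bound $\widehat{a}_n\asymp_P n^\theta$ only says $\widehat{a}_{nr}/(r^\theta\widehat{a}_n)$ is bounded above and below, not that it converges to $1$. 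Since you control only \emph{subsequential} limits of $\widehat{D}_n$, this does not give exact scale covariance for the limit — the paper explicitly states (Section~\ref{sectionWeakLRP}) that Axiom~IV cannot be checked this way. The paper's fix is the weak/strong dichotomy: it introduces \emph{weak} $\beta$-LRP metrics (Definition~\ref{weakmetric}) satisfying translation invariance~IV$'$ and two-sided across-scales tightness~V1$'$, V2$'$ in place of exact scaling, verifies these for subsequential limits (Theorem~\ref{dis-subsequence-exist}), proves uniqueness under the weaker axioms (Theorem~\ref{uniqueness}), and only \emph{then} deduces Axiom~IV a posteriori (Lemma~\ref{weak=>strong}: the metric associated to $r\mathcal{E}$ is another weak metric, so uniqueness forces $D(r\cdot,r\cdot)=C(r)D(\cdot,\cdot)$, and multiplicativity of $C$ plus~V1$'$, V2$'$ identify $C(r)=r^\theta$). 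Your bootstrap for $c_*=c^*$ must therefore be carried out using only~IV$'$, V1$'$, V2$'$ — the paper's substitute for dyadic rescaling is a renormalization/coupling argument using these together with the locality/independence of the Poisson edges. As a smaller point, the $0$--$1$ law you invoke for the bi-Lipschitz constants is not free: events on far-apart cubes are correlated through long edges joining them, and the paper replaces it with a second-moment computation (Proposition~\ref{cCdtm}).
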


Note that Theorem \ref{dis-maintheorem} implies Theorem \ref{realmaintheorem} and also gives a description of the scaling limit. We emphasize that the scaling limit is a strong $\beta$-LRP metric on $\mathds{R}^{2d}$ obtained by constructing a coupling between the discrete model and the continuous model (see the first paragraph in Section \ref{dis-existence}).
We also have an analog of Theorem \ref{dis-maintheorem} for the continuous model.

\begin{theorem}\label{maintheorem}
    Let $a_n$ be the median of $d_{(1/n,\infty)}({\bm 0},{\bm 1})$ and $D_n=a_n^{-1}d_{(1/n,+\infty)}$. Then $D_n$ converges to a strong $\beta$-LRP metric with respect to the local uniform topology of $C(\mathds{R}^{2d})$.
\end{theorem}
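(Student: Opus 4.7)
The plan is to follow the familiar three-step blueprint for convergence of a family of random metrics: (i) prove tightness of $\{D_n\}$, (ii) show every subsequential limit satisfies the axioms of Definition \ref{strongLRPmetric}, and (iii) invoke the uniqueness of the strong $\beta$-LRP metric (the main technical contribution of the paper, to be established in the subsequent sections) to conclude convergence. The continuous setting is cleaner than the discrete one because the Poisson point process intensity $\beta/|{\bm x}-{\bm y}|^{2d}$ is exactly scale invariant: under ${\bm x}\mapsto r{\bm x}$, the image PPP is again distributed as $\mathcal{E}$, which gives the exact identity $d_{(r\delta,r\delta')}(r{\bm x},r{\bm y})\stackrel{\rm law}{=}r\,d_{(\delta,\delta')}({\bm x},{\bm y})$. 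Applied with $r=1/n$, $\delta=1$, $\delta'=\infty$, this identifies $D_n$ restricted to $[0,1]^d$ with a rescaled copy of $d_{(1,\infty)}$ on $[0,n]^d$, so that the Bäumler-type estimates from \cite{Baumler22} (translated to the continuous model through the comparison between the discrete and continuous metrics that will be developed earlier in the paper) provide both $\widehat{a}_n\asymp n^{\theta-1}$ and polynomial upper bounds on moments of $D_n$-distances and diameters.

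For tightness (Step 1), I would combine the scaling identity with an Efron--Stein-type or chaining argument to obtain Hölder-type equicontinuity of $({\bm x},{\bm y})\mapsto D_n({\bm x},{\bm y})$ with high probability, uniformly in $n$. This gives precompactness in $C(\mathds{R}^{2d})$ by Arzelà--Ascoli. Additional control at small scales is needed so that $D_n$ does not degenerate: the $d$-length of an essentially straight Euclidean path of length $\varepsilon$ at scale $n$ corresponds to a path of length $n\varepsilon$ in $d_{(1,\infty)}$, and applying the lower tail bound $d_{(1,\infty)}({\bm 0},{\bm k})\geq c|{\bm k}|^\theta$ (with good probability) yields the lower equicontinuity. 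The scale invariance lets us upgrade any tightness at a single scale to tightness uniformly on compact subsets of $\mathds{R}^{2d}$.

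For axiom verification (Step 2), fix a subsequential limit $D$.
\emph{Length space (I)}: each prelimit $d_{(1/n,\infty)}$ is a length metric on $\mathds{R}^d$ with an explicit geodesic realization (a union of Euclidean segments and long-edge jumps); equicontinuity of geodesics, together with Arzelà--Ascoli and a standard diagonal argument, produces limiting geodesics realizing $D$-distances.
\emph{Locality (II)}: for a deterministic open $U$, the internal distance $D_n(\cdot,\cdot;U)$ is a measurable function of $\mathcal{E}\cap (U\times U)$, and this measurability passes to the limit by the convergence of prelimit internal metrics.
\emph{Regularity (III)}: for any edge $\langle{\bm x},{\bm y}\rangle\in\mathcal{E}$ of scope $s$, once $n>1/s$ the single-edge path gives $d_{(1/n,\infty)}({\bm x},{\bm y})=0$, so $D({\bm x},{\bm y})=0$ in the limit.
\emph{Translation/scale covariance (IV)}: translation covariance is immediate from translation invariance of the PPP; scale covariance follows from the identity $D_n(r\cdot,r\cdot)\stackrel{\rm law}{=}r(a_{rn}/a_n)D_{rn}(\cdot,\cdot)$ together with the regular variation $a_n\asymp n^{\theta-1}$, which forces $r\,a_{rn}/a_n\to r^\theta$ along subsequences.
\emph{Tightness (V)}: positivity $D({\bm 0},([-1,1]^d)^c)>0$ follows from the lower equicontinuity of Step 1, and the exponential moment bound for the diameter comes from iterating the multiplicative subadditivity of $d_{(1/n,\infty)}$-diameters across a multi-scale decomposition, which gives Gaussian-type concentration of the diameter around its median with tails controlled up to stretched exponential of exponent $1/(1-\theta)$.

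Finally, in Step 3, the uniqueness theorem established elsewhere in the paper (Gwynne--Miller / Ding--Gwynne style) identifies all subsequential limits in law, upgrading tightness to convergence in $C(\mathds{R}^{2d})$. The hardest ingredient by far is the uniqueness itself; among the steps above, the most delicate verification will be the Gaussian-type upper tail in Axiom V, which requires a genuine submultiplicative scheme for the diameter rather than just first-moment bounds, and the construction of limiting geodesics, which must cope with the fact that a $\beta$-LRP geodesic is a discontinuous path in the Euclidean topology, so that ``length space'' is to be interpreted in the quotient space $(\mathds{R}^d/\!\sim,D)$.
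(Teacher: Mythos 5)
Your Step 2 verification of Axiom IV (scale covariance) contains a genuine gap, and it is precisely the one the paper goes to considerable lengths to avoid. You write that ``scale covariance follows from the identity $D_n(r\cdot,r\cdot)\stackrel{\rm law}{=}r(a_{rn}/a_n)D_{rn}(\cdot,\cdot)$ together with the regular variation $a_n\asymp n^{\theta-1}$, which forces $r\,a_{rn}/a_n\to r^\theta$ along subsequences.'' Two things break here. First, Lemma \ref{an-bounded} only gives the two-sided bounds $a_n\asymp n^{\theta-1}$, not regular variation, so there is no reason $a_{rn}/a_n$ converges at all. Second, and more fundamentally, you only control $D_n$ along some subsequence $\{n_k\}$; the scaled sequence $\{rn_k\}$ need not lie in $\{n_k\}$, and even if it did, nothing forces the limit of $D_{rn_k}$ to coincide in law with $D$. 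So this identity transports the limit along $n$ to an \emph{a priori different} limit along $rn$, and you cannot close the loop. The paper flags exactly this obstruction in Section \ref{sectionWeakLRP} and resolves it by a different route: it introduces \emph{weak} $\beta$-LRP metrics, which replace Axiom IV by translation invariance plus the two tightness-across-scales axioms V1' and V2', shows every subsequential limit is a weak metric (Theorem \ref{subsequence-exist}), proves the uniqueness theorem at the weak level (Theorem \ref{uniqueness}), and only then \emph{deduces} scale covariance as a consequence of uniqueness (Lemma \ref{weak=>strong}: the rescaled metric is also a weak $\beta$-LRP metric, hence equals $C(r)D$, and the tightness axioms pin down $C(r)=r^\theta$). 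Without this detour, your verification of Axiom IV does not go through.

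A secondary issue is your treatment of Axiom II (locality). You say the measurability ``passes to the limit by the convergence of prelimit internal metrics,'' but this only yields that the pair $(\mathcal{E}|_{U\times U}, D(\cdot,\cdot;U))$ is \emph{independent} of the outside edges (the weak locality Axiom II''), not that the internal metric is a \emph{measurable function} of $\mathcal{E}|_{U\times U}$. The latter requires showing $D$ is a.s.\ determined by $\mathcal{E}$, which the paper obtains by an Efron--Stein argument (Proposition \ref{reallocal}) that crucially leans on the bi-Lipschitz equivalence of any two local $\beta$-LRP metrics. You mention Efron--Stein in Step 1 for tightness, but the paper does not use it there; it is the measurability step that needs it, and it is not routine. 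Apart from these two points, your overall blueprint (tightness via the Bäumler moment estimates and chaining, geodesics via Arzelà--Ascoli on the quotient space, uniqueness as the engine) matches the paper's; the missing ingredient is the weak/strong axiom split and the recognition that Axiom IV and full Axiom II are \emph{outputs} of the uniqueness theorem rather than inputs to it.
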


Furthermore, we will show the typical growth speed of $\widehat{a}_n$ and $a_n$ as $n\to\infty$.

\begin{lemma}\label{an-bounded}
    $\left\{n^{1-\theta}a_n \right\}_{n\in\mathds{N}}$, $\left\{n^{\theta-1}a_n^{-1}\right\}_{n\in\mathds{N}}$,  $\left\{n^{-\theta}\widehat{a}_n \right\}_{n\in\mathds{N}}$ and $\left\{n^{\theta}\widehat{a}_n^{-1}\right\}_{n\in\mathds{N}}$ are all uniformly bounded.
\end{lemma}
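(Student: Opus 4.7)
The four sequences split into the discrete pair (bounds on $\widehat{a}_n$) and the continuous pair (bounds on $a_n$). The first pair is essentially immediate from B\"aumler's estimate \cite{Baumler22}, which states $\widehat{d}({\bm 0}, n{\bm 1}) \asymp_P n^\theta$: for every $\varepsilon \in (0,1)$ there exists $C = C(\varepsilon)$ with
$$\Pp[\, C^{-1} n^\theta \le \widehat{d}({\bm 0}, n{\bm 1}) \le C n^\theta\, ] \ge 1 - \varepsilon$$
for all $n$. Choosing $\varepsilon = 1/4$, the median $\widehat{a}_n$ must lie in $[C^{-1} n^\theta, C n^\theta]$: if $\widehat{a}_n > Cn^\theta$ then $\Pp[\widehat{d}({\bm 0}, n{\bm 1}) \ge \widehat{a}_n] \le \Pp[\widehat{d}({\bm 0}, n{\bm 1}) > Cn^\theta] \le 1/4 < 1/2$, contradicting the definition of the median, and similarly on the other side. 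This yields the desired uniform bounds on $n^{-\theta}\widehat{a}_n$ and $n^\theta \widehat{a}_n^{-1}$.

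For the continuous part, the first step is to exploit scale invariance of the Poisson intensity $\beta |{\bm x} - {\bm y}|^{-2d}\, d{\bm x}\, d{\bm y}$: the pushforward of this measure under $({\bm x}, {\bm y}) \mapsto (r{\bm x}, r{\bm y})$ has Jacobian $r^{2d}$, which exactly cancels the scaling of $|{\bm x} - {\bm y}|^{-2d}$. Since long edges of scope in $(\delta, \delta')$ map to long edges of scope in $(r\delta, r\delta')$ and all Euclidean lengths are multiplied by $r$, reading off \eqref{dist_delta} yields the distributional identity
$$
d_{(1/n, \infty)}({\bm 0}, {\bm 1})\ \stackrel{\rm law}{=}\ n^{-1}\, d_{(1, \infty)}({\bm 0}, n{\bm 1}).
$$
Hence $a_n = n^{-1} \tilde a_n$, where $\tilde a_n$ is the median of $d_{(1, \infty)}({\bm 0}, n{\bm 1})$, and it suffices to prove the continuous analog $d_{(1, \infty)}({\bm 0}, n{\bm 1}) \asymp_P n^\theta$ of B\"aumler's estimate; the same median argument as above will then give $\tilde a_n \asymp n^\theta$ and thus $a_n \asymp n^{\theta-1}$.

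For the analog I use the standard coupling between the continuous and discrete LRP employed in Theorem \ref{dis-maintheorem}: the discrete long edge $\langle {\bm i}, {\bm j}\rangle$ is declared present iff the Poisson edge set contains at least one atom in $V_1({\bm i}) \times V_1({\bm j})$, which by \eqref{connectprob} produces exactly the correct marginal. The upper bound is straightforward under this coupling: given a $\widehat{d}$-geodesic from ${\bm 0}$ to $n{\bm 1}$, replace each nearest-neighbor edge by a unit Euclidean segment and each discrete long edge by the underlying Poisson edge, paying only $O(1)$ Euclidean length inside each cube to connect the lattice point to the Poisson endpoint; this produces a continuous path of $d_{(1, \infty)}$-length at most $C\, \widehat{d}({\bm 0}, n{\bm 1})$, so $\tilde a_n \le C\, \widehat{a}_n = O(n^\theta)$. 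The matching lower bound is the main obstacle: a naive discretization of a continuous geodesic only yields $\widehat{d}({\bm 0}, n{\bm 1}) \le C(L + m)$, where $L$ is the continuous path length and $m$ is the number of free long edges used, and $m$ is not a priori controlled by $L$ since the long edges are free. I instead appeal to the hierarchical multi-scale renormalization in \cite{Baumler22}, whose lower bound only uses the long-edge structure at each dyadic scale (together with volume estimates on the Poisson edge counts) and therefore transfers almost verbatim to $d_{(1, \infty)}$, yielding $d_{(1, \infty)}({\bm 0}, n{\bm 1}) \ge c n^\theta$ with probability $\ge 3/4$. Combining the two directions produces the required $\tilde a_n \asymp n^\theta$, and hence the boundedness of the remaining two sequences.
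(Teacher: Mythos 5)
Your reduction via scale invariance, the median argument for both the discrete and continuous pairs, and the coupling used for the upper bound all agree with the paper, which proves Lemma~\ref{an-bounded} by combining Theorems~\ref{discrete-dist} and \ref{continuous-dist} with the scaling identity $d_{(1/n,\infty)}(\cdot/n,\cdot/n)\stackrel{\rm law}{=}n^{-1}d_{(1,\infty)}(\cdot,\cdot)$. You also correctly identify the obstruction to the continuous lower bound: discretizing a $d_{(1,\infty)}$-geodesic $P$ only yields a bound of the form $\widehat{d}\leq C(L+m)$, where $m=h(P)$ is the number of hops (long edges used), and $m$ is not controlled by $L$ because hops are free.

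Where you diverge from the paper, and where your argument has a gap, is the repair. The paper (Proposition~\ref{apriori-cont-2}) does not re-run B\"aumler's renormalization for $d_{(1,\infty)}$. Instead it proves a continuous path-counting lemma (Lemma~\ref{DS13lem21}, modelled on \cite{Ding-Sly13}): the expected number of equivalence classes of proper $d_{(1,\infty)}$-paths from the origin of length at most $t$ is at most $C_{cont}^t$, and the probability that some such path makes more than $\alpha t$ hops is at most $(C_{cont}/\alpha)^t$. This converts the hop problem into a union bound: on the event that $d_{(1,\infty)}(\bm 0,([-r,r]^d)^c)$ is small, one takes a geodesic whose hop count is, with high probability, a constant multiple of its length, and the coupling then yields $\widehat{d}(\bm 0,([-r,r]_\mathds{Z}^d)^c)\leq c(d)\,d_{(1,\infty)}(\bm 0,([-r,r]^d)^c)+(2d+1)(h(P)+1)$, contradicting the discrete tightness result from B\"aumler (Lemma~\ref{Baumler4-10}). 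In this way the discrete lower bound is used purely as a black box. Your proposal to re-derive the continuous lower bound by rerunning B\"aumler's multi-scale scheme would have to contend with exactly the same issue --- a continuous crossing can escape a renormalization block through a single hop at zero $d_{(1,\infty)}$-cost --- so the assertion that the argument ``transfers almost verbatim'' is not substantiated. You need a substitute for the hop-counting lemma, or a careful verification that B\"aumler's argument already controls hop counts in a form that carries over to $d_{(1,\infty)}$.
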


Note that part of the lemma has been proved in \cite{Baumler22} (also see \cite{Ding-Sly13}). Lemma \ref{an-bounded} will be completely proved in Section \ref{section2} for general $d$, with crucial input from \cite{Baumler22}.

\subsection{Weak $\beta$-LRP metrics}\label{sectionWeakLRP}
Every possible subsequential limiting metric of $\{\widehat{D}_n\}$(resp. $\{D_n\}$) satisfies Axioms I, II and III. This is intuitively clear from the definition, and not too hard to check rigorously (see Theorems \ref{dis-subsequence-exist}-\ref{subsequence-exist}). It is also easy to see that every possible subsequential limit of $\{\widehat{D}_n\}$(resp. $\{D_n\}$) satisfies Axiom IV for $r = 1$ (i.e., it satisfies the coordinate change formula for translations). However, it is worth emphasizing that even if we have the scaling invariance of the Poisson point process in the continuous model, it is far from obvious that the subsequential limits satisfy Axiom IV when $r \neq 1$. The reason is that in the continuous model re-scaling the space changes the value of $\delta=n^{-1}$ in \eqref{dist_delta} (taking $\delta'=\infty$): for $n\in\mathds{N}, r > 0$, one has
\begin{equation*}
    D_n(r\bm z,r\bm w)\stackrel{\rm law}{=}(nr)^{1-\theta}d_{(1/(nr),\infty)}(\bm z,\bm w)
\end{equation*}
from the scaling invariance of the Poisson 
point process 
for edges. So, since we only have subsequential limits of $\{D_n\}$, we cannot directly deduce that the subsequential limit satisfies an exact spatial scaling property.

Because of the above issue, we do not know how to check Axiom IV for subsequential limits of $\{\widehat{D}_n\}$(resp. $\{D_n\}$) directly. Instead, we will prove a stronger uniqueness statement than the ones in Theorems \ref{dis-maintheorem} and \ref{maintheorem}, under a weaker list of axioms which can be checked for subsequential limits of $\{\widehat{D}_n\}$(resp. $\{D_n\}$) (Theorems \ref{dis-subsequence-exist} and \ref{subsequence-exist}). We will then deduce from this stronger uniqueness statement that the weaker list of axioms implies the axioms in Definition \ref{strongmetric} (Lemma \ref{weak=>strong}).

We now present the definition of a weaker version of $\beta$-LRP metric.
\begin{definition}\label{weakmetric} (weak $\beta$-LRP metric).
Let $\mathcal{D}'$ and $\mathcal{E}$ be as in Definition \ref{strongLRPmetric}. For $\beta>0$, a weak $\beta$-LRP metric with parameter $\beta$ is a measurable function $\mathcal{E}\to D$ from $\mathcal{D}'$ to the space of continuous pseudometrics on $\mathds{R}^d$ with local uniform topology satisfying Axioms I, II, III from Definition \ref{strongLRPmetric} and the following additional axioms for some constant $\theta=\theta(\beta,d)>0$.
    \begin{enumerate}
        \item[IV'] {\bf Translation invariance.}  For any ${\bm z}\in\mathds{R}^d$,
        $$
        D(\cdot+{\bm z},\cdot+{\bm z})\stackrel{\rm law}{=} D(\cdot,\cdot).
        $$
        \item[V1'] {\bf Tightness across different scales (lower bound).} $\{\frac{r^\theta}{D({\bm 0},([-r,r]^d)^c)}\}_{r>0}$ is tight.
        \item[V2'] {\bf Tightness across different scales (upper bound).} $$\sup_{r>0}\mathds{E}\left[\exp\left\{\left(\frac{{\rm diam}([0,r]^d;D)}{r^\theta}\right)^\eta\right\}\right]<\infty$$ for any $\eta\in(0,1/(1-\theta))$.
    \end{enumerate}
\end{definition}

It is worth emphasizing that the biggest difference from the axiomatization of the LQG metric is that here we do not have the Weyl scaling property (which describes exactly how the LQG metric will change if we add a smooth function to the underlying Gaussian free field).
However, in the long-range percolation model, Axioms I and III above serve as a replacement for the Weyl scaling property: from these axioms, it can be inferred that if we add more edges, the distance will be reduced. In addition, our underlying logic for attracting geodesics is also a bit different from that used in \cite{GM21,DG23} for the LQG metric which takes advantage of the Weyl scaling property; see the paragraph before Proposition \ref{simpleprop4.3} for more discussions.

To prove Theorem \ref{dis-maintheorem}, we first demonstrate that $\{\widehat{D}_n\}$ is tight and that any subsequential limiting metric is a weak $\beta$-LRP metric as follows. See Section \ref{dis-existence} for its proof.

\begin{theorem}\label{dis-subsequence-exist}
    $\{\widehat{D}_n\}$ is tight with respect to the local uniform convergence topology. Furthermore, for every sequence of $n$'s tending to infinity, there is a weak $\beta$-LRP metric $D$ and a subsequence $\{n_k\}$ along which $\widehat{D}_{n_k}$ converges in law to $D$.
\end{theorem}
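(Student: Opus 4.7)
The plan is to realize the discrete $\beta$-LRP and a continuous Poisson point process $\mathcal{E}$ on $\mathds{R}^{2d}$ of intensity $\beta|\bm u-\bm v|^{-2d}$ on the same probability space, declaring the long discrete edge $\langle\bm i,\bm j\rangle$ (with $\|\bm i-\bm j\|_1>1$) present iff $\mathcal{E}(V_1(\bm i)\times V_1(\bm j))>0$; this coupling is compatible with \eqref{connectprob}, while nearest-neighbor edges are always present by the $p_1=1$ convention. One uses the same $\mathcal{E}$ for every $n$ (incorporating the scaling invariance of the intensity). Under this coupling, both $\widehat D_n$ and the candidate limiting structure live on one probability space, and the axioms of Definition \ref{weakmetric} can be checked against the same $\mathcal{E}$.

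Tightness of $\{\widehat D_n\}$ will follow from three families of estimates, uniform in $n$ and in a scale $r>0$. \emph{(a) Diameter upper bound:} combining Lemma \ref{an-bounded} with B\"aumler's box-crossing arguments and a subadditive iteration across dyadic scales gives the stretched-exponential moment bound of Axiom V2' for the rescaled discrete diameters. \emph{(b) Non-degeneracy lower bound:} again from Lemma \ref{an-bounded} and \cite{Baumler22}, $r^{-\theta}\widehat a_n^{-1}\widehat d(\bm 0,(\lfloor[-rn,rn]^d\rfloor_{\mathds{Z}})^c)$ stays bounded away from $0$ in probability uniformly in $r,n$, giving Axiom V1'. \emph{(c) Uniform modulus of continuity:} for every compact $K\subset\mathds{R}^d$ and $\varepsilon>0$, $\sup_n\mathds{P}\bigl(\sup\{\widehat D_n(\bm x,\bm y):\bm x,\bm y\in K,\ |\bm x-\bm y|<\delta\}>\varepsilon\bigr)\to 0$ as $\delta\to 0$. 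With (a)--(c) in hand, an Arzel\`a--Ascoli-type argument in the space of continuous pseudometrics on $\mathds{R}^d$ with local uniform topology yields tightness of $\{\widehat D_n\}$; joint tightness with $\mathcal{E}$ is automatic; and after passing to a subsequence and applying Skorokhod (on an enlarged probability space if needed), we realize $(\widehat D_{n_k},\mathcal{E})\to (D,\mathcal{E})$ almost surely, with $D$ a continuous pseudometric on $\mathds{R}^d$ measurable with respect to $\mathcal{E}$ and possibly auxiliary randomness.

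The axioms of Definition \ref{weakmetric} are then verified one at a time. Axiom III is immediate: for each of the countably many $\langle\bm x,\bm y\rangle\in\mathcal{E}$ the coupling produces a discrete long edge between $\lfloor n\bm x\rfloor$ and $\lfloor n\bm y\rfloor$ for every sufficiently large $n$, whence $\widehat D_n(\lfloor n\bm x\rfloor/n,\lfloor n\bm y\rfloor/n)\le \widehat a_n^{-1}\to 0$, and (c) together with $\lfloor n\bm x\rfloor/n\to\bm x$ upgrades this to $D(\bm x,\bm y)=0$. Axiom II is inherited from the fact that $\widehat d(\cdot,\cdot;U)$ depends only on edges with both endpoints in a $1/n$-neighborhood of $U$ and is independent of the remaining edges; boundary effects vanish by (c). Axiom IV' combines translation invariance of $\mathcal{E}$ for every $\bm z\in\mathds{R}^d$ with the integer-translation invariance of the discrete model and the fractional-part error $|\lfloor n\bm z\rfloor/n-\bm z|=O(1/n)$, absorbed by (c). Axiom I holds since each $\widehat D_n$ is a length metric with graph geodesics, and a standard compactness extraction of geodesics in $(\mathds{R}^d/{\sim},D)$ works given (c). Axioms V1' and V2' are precisely (b) and (a).

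\textbf{Main obstacle.} The principal technical hurdle is the modulus-of-continuity estimate (c). Unlike the LQG setting, no Weyl scaling is available to suppress the contribution of rare long edges, so instead one exploits the exact scaling of $\mathcal{E}$, the dyadic-scale version of (a), and a careful union bound over long edges whose endpoints enter a neighborhood of the Euclidean-close pair, showing that no single such edge can shortcut the distance by more than a polynomially small multiple of $r^\theta$ with high probability. Once (c) is in place, the remaining axiom verifications reduce to fairly standard soft arguments.
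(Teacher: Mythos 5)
Your coupling, your tightness plan (a)--(b)--(c), and most of the axiom checks line up with what the paper actually does (including the use of a generalized Arzel\`a--Ascoli statement to handle the jump-by-$\widehat a_n^{-1}$ discontinuity of $\widehat D_n$ and Bäumler's Theorem 6.1 for (a) and (b)). Two points, one minor and one substantive.

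Minor: for (c), the long-edge union bound you describe is aimed at the wrong direction. Long edges can only \emph{shrink} discrete distances, so they pose no threat to the modulus-of-continuity \emph{upper} bound $\widehat D_n(\bm x,\bm y)\lesssim$ small. Estimate (c) follows directly from (a) via the chaining argument of Lemma \ref{HolderLemma}: translation invariance plus the uniform moment bound on diameters of dyadic cubes gives a log-Hölder modulus, and the $+\,3\widehat a_n^{-1}$ error (when $\lfloor n\bm x\rfloor=\lfloor n\bm y\rfloor$) is absorbed by $\delta_n\to 0$ in Lemma \ref{newAAtheorem}. No separate shortcut analysis is needed.

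Substantive gap: your verification of Axiom II is not a proof but a restatement of the desired conclusion. You correctly observe that you may need to enlarge the probability space and that the subsequential limit $D$ might depend on auxiliary randomness beyond $\mathcal{E}$. But Axiom II demands that $D(\cdot,\cdot;U)$ be \emph{determined by} $\mathcal{E}|_{U\times U}$, i.e.\ measurable with respect to it. Measurability with respect to a sub-$\sigma$-algebra is \emph{not} preserved under convergence in law (nor under a.s.\ convergence of a coupled sequence, once you have introduced auxiliary randomness): the fact that $\widehat d_n(\cdot,\cdot;U)$ is a deterministic function of the discrete edges near $U$ does not transfer to the limit. What \emph{does} survive the limit, after a careful argument about convergence of internal metrics on an exhausting family of dyadic sets (Lemmas \ref{dis-limitonset}, \ref{internalmetric}, needing the strong regularity of Proposition \ref{SRSL} to even identify $D_W(\cdot,\cdot;W)$ with $D(\cdot,\cdot;W)$), is the weaker independence statement Axiom II$''$ (Definition \ref{localmetric}): the internal metrics on disjoint open sets, paired with the restricted edge sets, are jointly independent of the exterior edges. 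The paper then proves that any metric satisfying only Axioms I, III, IV$'$, V1$'$, V2$'$ and II$''$ is automatically a measurable function of $\mathcal{E}$ alone and satisfies full Axiom II (Proposition \ref{reallocal}). That step uses the bi-Lipschitz equivalence of two such ``local'' metrics (Proposition \ref{bilip}, which itself requires a renormalization argument) together with an Efron--Stein variance-decomposition on a randomly shifted grid. This is a genuinely nontrivial piece of the argument, and your proposal as written skips it entirely. Without it, you have tightness and all the soft axioms but not locality, and hence not a weak $\beta$-LRP metric.
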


Similarly in the continuous model, to prove Theorem \ref{maintheorem}, we first demonstrate that $\{D_n\}$ is tight and that any subsequential limiting metric is a weak $\beta$-LRP metric as follows. See Section \ref{existence} for its proof.

\begin{theorem}\label{subsequence-exist}
    $\{D_n\}$ is tight with respect to the local uniform topology of $C(\mathds{R}^{2d})$. Furthermore, for every sequence of $n$'s tending to infinity, there is a weak $\beta$-LRP metric $D$ and a subsequence $\{n_k\}$ along which $D_{n_k}$ converges in probability to $D$.
\end{theorem}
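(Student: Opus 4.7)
The plan is to prove Theorem~\ref{subsequence-exist} in three stages: (a) tightness of $\{D_n\}$ as $C(\mathds{R}^{2d})$-valued random variables, (b) extraction of a subsequential limit together with a coupling realising the convergence in probability, and (c) verification of the six axioms of Definition~\ref{weakmetric} for any such limit. For~(a), I would apply Arzel\`a--Ascoli on $\mathds{R}^{2d}$, which reduces tightness to pointwise boundedness plus a uniform modulus of continuity on compact sets. Thanks to the triangle-inequality bound $|D_n(\bm x,\bm y)-D_n(\bm x',\bm y')|\le D_n(\bm x,\bm x')+D_n(\bm y,\bm y')$, both requirements can be recast in terms of uniform control of $\mathrm{diam}([0,r]^d;D_n)$ from above and of $D_n(\bm 0,([-r,r]^d)^c)$ from below. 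Using the PPP scaling identity $d_{(\delta,\infty)}(r\bm x,r\bm y)\stackrel{\rm law}{=}r\,d_{(\delta/r,\infty)}(\bm x,\bm y)$ together with the growth $a_n\asymp n^{\theta-1}$ from Lemma~\ref{an-bounded}, the law of $D_n(r\cdot,r\cdot)$ is comparable (up to factors tight in $n$) to $r^\theta D_{rn}(\cdot,\cdot)$, so all multi-scale estimates reduce to two uniform-in-$n$ one-scale estimates on the unit box: an exponential-moment bound $\sup_n\mathds{E}[\exp(({\rm diam}([0,1]^d;D_n))^\eta)]<\infty$ for $\eta\in(0,1/(1-\theta))$ and the tightness of $\{1/D_n(\bm 0,([-1,1]^d)^c)\}_n$. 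These one-scale bounds are the technical heart of step~(a); I expect them to follow from B\"aumler-type polynomial estimates combined with a dyadic renormalisation across scopes that exploits the independence of edges in disjoint scope ranges.

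For~(b), Prokhorov's theorem extracts a weakly convergent subsequence $D_{n_k}\to D$. Applying Skorokhod's representation together with a joint coupling of the underlying Poisson configurations realises $D_{n_k}$ and $D$ on a common probability space on which $D_{n_k}\to D$ almost surely; on this space, the locality of each pre-limit implies that $D$ is itself a measurable functional of the limiting edge configuration $\mathcal{E}$, and convergence in probability follows from the a.s.\ convergence. With this coupling in place, most axioms pass to the limit directly. Axiom~IV$'$ is immediate from translation invariance of the PPP. Axiom~II holds because $d_{(1/n,\infty)}(\cdot,\cdot;U)$ is by construction a functional of $\mathcal{E}|_{U\times U}$, and this locality survives the coupled limit. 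For Axiom~III, if $\langle\bm x,\bm y\rangle\in\mathcal{E}$ with $|\bm x-\bm y|>0$, then $d_{(1/n_k,\infty)}(\bm x,\bm y)=0$ for all $n_k>1/|\bm x-\bm y|$, so $D(\bm x,\bm y)=0$ in the limit by the local uniform convergence. Axioms~V1$'$ and~V2$'$ follow from the quantitative estimates of step~(a) via Fatou/Portmanteau.

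The hard part will be Axiom~I, the length-space property together with the existence of geodesics. Each pre-limit $D_n$ is a graph length pseudometric admitting geodesics, but those geodesics consist of Euclidean segments interrupted by long-edge teleports whose number may diverge as $n\to\infty$, so they are not Euclidean-equicontinuous and a direct Arzel\`a--Ascoli argument on the paths themselves fails. I plan to circumvent this by parametrising a $D_n$-geodesic $\gamma_n$ from $\bm x$ to $\bm y$ by its own $D_n$-arclength, which yields a $1$-Lipschitz map $[0,D_n(\bm x,\bm y)]\to(\mathds{R}^d,D_n)$. The lower bound from step~(a) combined with the triangle inequality ensures that the Euclidean image of $\gamma_n$ lies, with high probability, in some deterministic Euclidean compact set, so that a diagonal subsequence extracts a limit curve in the $D$-topology which realises the distance $D(\bm x,\bm y)$. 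This simultaneously verifies that $(\mathds{R}^d/\sim,D)$ is a length space and produces a geodesic between any two points. Finally, the continuity of $D$ as a pseudometric on $\mathds{R}^{2d}$ follows from the uniform modulus of continuity used in step~(a) together with lower semicontinuity of the weak limit.
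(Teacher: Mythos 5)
Your outline of the tightness argument and the soft passages (extraction, Axiom~III, translation invariance, pushing the tail bounds through Fatou) matches the paper's Section~5 closely; the one-scale moment bounds are indeed obtained by coupling with the discrete model and importing B\"aumler's estimates (cf.\ Propositions~\ref{apriori-cont-1} and~\ref{apriori-cont-2}).  However, there is a genuine gap at the point you treat as an easy consequence: the claim that ``the locality of each pre-limit implies that $D$ is itself a measurable functional of the limiting edge configuration $\mathcal{E}$, and convergence in probability follows from the a.s.\ convergence''.  This gets the logical order backwards.  Prokhorov only gives convergence in law of $(\mathcal{E},D_{n_k})$; Skorokhod's theorem then produces a coupling $(\mathcal{E}'_k,D'_{n_k})\to(\mathcal{E}',D')$ a.s., but nothing forces the first coordinate to be independent of $k$, so you cannot directly conclude that $D'$ is a function of $\mathcal{E}'$.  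The paper instead (i) shows the limit is a \emph{local} $\beta$-LRP metric satisfying only Axiom~II$''$ (weak locality), which is what actually survives the coupled limit via the dyadic/internal-metric apparatus of Lemma~\ref{limitonset}, and then (ii) upgrades Axiom~II$''$ to Axiom~II, and simultaneously proves that $D$ is a.s.\ determined by $\mathcal{E}$, through the Efron--Stein argument in Proposition~\ref{reallocal} --- which itself hinges on the bi-Lipschitz equivalence of any two local $\beta$-LRP metrics (Proposition~\ref{bilip}), the main content of Section~3.  Only once measurability is known does convergence in probability follow, via the elementary Lemma~\ref{convergePLemma} (if $(X,Y_n)\to(X,Y)$ in law and $Y$ is a.s.\ determined by $X$, then $Y_n\to Y$ in probability).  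The Efron--Stein step is not a formality; it is a substantial part of the proof that your sketch omits entirely.  The same issue affects your verification of Axiom~II itself: the internal metric $D(\cdot,\cdot;U)$ of the \emph{limit} is not obviously the limit of $d_{(1/n,\infty)}(\cdot,\cdot;U)$, nor obviously a measurable function of $\mathcal{E}|_{U\times U}$, without the machinery above.

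On the length-space point you flag as hard, you take a genuinely different (and more hands-on) route than the paper.  You propose arclength-parametrising $D_n$-geodesics as $1$-Lipschitz maps $[0,D_n(\bm x,\bm y)]\to(\mathds{R}^d,D_n)$ and extracting a limit curve.  This can be made to work, but notice that the equicontinuity is in the $D_n$-topology while the target space $(\mathds{R}^d,D_n)$ is itself changing with $n$, and the curves are badly discontinuous in the Euclidean topology, so a naive Arzel\`a--Ascoli in any fixed ambient topology does not apply; one must transfer the modulus of continuity through the local uniform convergence $D_n\to D$ and work on the quotient $\mathds{R}^d/\sim$.  The paper avoids all of this by a soft midpoint argument (Proposition~\ref{Prop-lengthspace}): each $D_n$ is a geodesic pseudometric (Lemma~\ref{DnGeodesic}), so it admits midpoints; the midpoints are confined to a Euclidean-compact set by the escape estimate of Lemma~\ref{equal-bounded}; a subsequential limit of midpoints is a midpoint for $D$; and a complete metric admitting midpoints is a length space.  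Your approach would additionally produce geodesics directly, while the paper's produces them via Proposition~\ref{Dxy=0}; both outcomes are acceptable, but the midpoint argument requires fewer delicate uniform-continuity transfers.
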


Additionally, most of this paper (Sections \ref{bi-lipschitz}, \ref{section3} and \ref{section4}) is devoted to the proof of the uniqueness of the weak $\beta$-LRP metric, which is also the major challenge addressed in this work.

\begin{theorem}\label{uniqueness}
    For any two weak $\beta$-LRP metrics $D$ and $\widetilde{D}$, there exists a deterministic constant $C$ such that a.s.
    $$
    \widetilde{D}=CD.
    $$
\end{theorem}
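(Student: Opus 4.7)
Following the template of \cite{GM21, DG23}, I would split the proof into two parts. The first (Section \ref{bi-lipschitz}) establishes bi-Lipschitz equivalence: there exist deterministic constants $0 < c \leq C < \infty$ with $cD \leq \widetilde{D} \leq CD$ almost surely. The second part (Sections \ref{section3} and \ref{section4}) upgrades this to $\widetilde{D} = C D$ by showing the optimal Lipschitz constants coincide.

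\textbf{Bi-Lipschitz step.} Apply Axioms V1' and V2' to both $D$ and $\widetilde{D}$, which live on the same edge set $\mathcal{E}$. Using point-to-box distances and diameters on a common dyadic grid, one obtains a tight family of ratios $\widetilde{D}(\cdot,\cdot)/D(\cdot,\cdot)$ when evaluated at distances of order $r^\theta$ between well-separated points in a box of side $r$. Combined with the translation invariance of Axiom IV', Poisson independence across disjoint regions (via locality Axiom II) gives a zero-one law: the essential infimum and supremum of the ratio are deterministic, finite, and strictly positive. This yields the desired deterministic comparison constants.

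\textbf{Uniqueness step.} Define
\[c_* = \sup\{c>0 : \widetilde{D}\geq cD \text{ a.s.}\}, \qquad C_* = \inf\{C>0 : \widetilde{D}\leq CD \text{ a.s.}\},\]
both deterministic by the zero-one law above. Assuming $c_* < C_*$, I would derive a contradiction by producing $\varepsilon>0$ with $\widetilde{D} \geq (c_* + \varepsilon) D$; the symmetric argument bounds $C_*$. By optimality of $c_*$ and locality, for every fixed box $Q$ the event ``the internal metric satisfies $\widetilde{D}(\cdot,\cdot;Q) \geq (c_*+\varepsilon_0) D(\cdot,\cdot;Q)$'' has probability bounded away from zero for some $\varepsilon_0>0$, because the opposite event of ``near-optimal'' boxes cannot have probability one on every scale without forcing $C_* = c_*$. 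Call such boxes \emph{good}; by Poisson independence (Axiom II) and translation invariance (Axiom IV'), good boxes of a given scale percolate densely in $\mathds{R}^d$.

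\textbf{Geodesic attraction, the main obstacle.} The crux is to force any $\widetilde{D}$-geodesic from $\bm x$ to $\bm y$ to accumulate a positive fraction of its length inside good boxes, so that
\[\widetilde{D}(\bm x,\bm y)\geq (c_*+\varepsilon_0)\cdot(\text{$D$-length in good boxes}) + c_*\cdot(\text{rest}) \geq (c_*+\varepsilon)D(\bm x,\bm y).\]
Unlike LQG, there is no Weyl scaling available, so the mechanism is different: I would use Axiom III, which makes any long edge have $D$-length zero, to \emph{attract} geodesics. Specifically, for a prescribed sequence of good boxes along an approximate Euclidean path from $\bm x$ to $\bm y$, I would condition on the existence of long edges with one endpoint in each consecutive pair of good boxes; by \eqref{connectprob} and the Poisson intensity, such edges exist with comparable probability to independent unconditioned edges, and any geodesic will use the cheapest such chain. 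Combined with scale invariance (Axiom IV) to iterate across scales and the tail bound from Axiom V2' to control the contribution of short-scale detours, this routes geodesics through a positive density of good boxes and yields the required improvement of $c_*$. The genuine difficulty is handling the discontinuity of LRP geodesics and ensuring that the good boxes traversed contribute a positive fraction of $D$-length rather than being skipped over by a single long jump; this is where the interplay between the axioms and the Poisson structure is delicate and where I expect the bulk of the technical work (and the analogue of the Proposition \ref{simpleprop4.3} machinery) to lie.
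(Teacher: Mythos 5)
Your high-level blueprint (bi-Lipschitz first, then upgrade via optimal constants $c_*\le C_*$, then contradiction) matches the paper, and you correctly identify the genuine difficulty: discontinuity of geodesics and the lack of Weyl scaling. But there are two concrete gaps that, as written, your argument cannot bridge.

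First, your "zero-one law" for the bi-Lipschitz constants is not available. The events "the ratio is near $c_*$ in box $Q_1$" and "near $c_*$ in box $Q_2$" are \emph{not} independent even for distant $Q_1,Q_2$, because long edges between $Q_1$ and $Q_2$ affect both internal metrics; a Kolmogorov zero-one law is not applicable. The paper instead proves determinism of $c_*,C_*$ (Proposition \ref{cCdtm}) by a second-moment argument conditioned on the event that the cubes are \emph{not} directly connected, which forces you to pick well-separated cubes where the correlation is quantitatively small. A subtler but related gap: tightness of ratios $\widetilde{D}/D$ at scale $r^\theta$ does not follow from V1' and V2' holding separately for each metric; the paper's bi-Lipschitz proof (Proposition \ref{bilip}) routes both metrics through the same discrete skeleton $\widehat d$ via a renormalization and "good cube" argument, which is what couples the two metrics at every scale.

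Second, and more seriously, the geodesic-attraction mechanism as you describe it — "condition on the existence of long edges with one endpoint in each consecutive pair of good boxes... any geodesic will use the cheapest such chain" — does not work. Conditioning on existence of such edges changes the joint law of $(\mathcal{E},D,\widetilde D)$ in an uncontrolled way, so you can no longer compare with the unconditioned $c_*,C_*$. Worse, even after adding edges, geodesics can simply ignore them, so there is no a priori reason that "any geodesic will use the cheapest such chain." The paper's actual mechanism is a counting argument: it adds edges from an \emph{independent} Poisson process $\widetilde{\mathcal{E}}$, controls the Radon–Nikodym derivative of the modified edge set (Lemmas \ref{EE+}, \ref{abscont}), defines paired events $\mathsf{F}_{Z,\varepsilon}(\mathcal{E},\mathcal{E}_Z^+)$ and $\mathsf{G}^-_{Z,\varepsilon}(\mathcal{E}_Z^-,\mathcal{E})$ exploiting the identity $\mathsf{F}_{Z,\varepsilon}(\mathcal{E},\mathcal{E}_Z^+)=\mathsf{G}^-_{Z,\varepsilon}(\mathcal{E},\mathcal{E}_Z^+)$, and compares counts of $Z$ on both sides. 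The fact that added edges \emph{are} used by the modified geodesic is not automatic but established by an iterative removal procedure (Lemma \ref{Lemma4.13DG21}) that discards "bad" $k\in Z$ while controlling $\#Z_m$ from below using the regularity of super good cubes. You also omit the crucial multi-scale boosting (nice $\to$ very nice $\to$ very very nice $\to$ super good) that amplifies the initially tiny probability $p$ of a nice cube to near-1 for super good cubes; without this, a single long jump of the geodesic really can skip all the good cubes, which is exactly the obstacle you flag. Finally, the contradiction requires the key uniformity — that the lower bound $p$ in Proposition \ref{simplemrinS3} / \ref{mrinS3} is \emph{independent of} $C'$ — which you do not state; without it, showing $\mathds P[H_r(\alpha,C_*-\delta)]\to 0$ as $\delta\to 0$ contradicts nothing.
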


Let us now explain why Theorems \ref{dis-subsequence-exist}, \ref{subsequence-exist} and \ref{uniqueness} are sufficient to establish our main result, Theorems \ref{dis-maintheorem} and \ref{maintheorem}. We first observe that every strong $\beta$-LRP metric is a weak $\beta$-LRP metric.
\begin{lemma}\label{strong=>weak}
    Every strong $\beta$-LRP metric {\rm(}Definition \ref{strongmetric}{\rm)} is a weak $\beta$-LRP metric (Definition \ref{weakmetric}).
\end{lemma}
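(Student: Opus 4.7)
The plan is to check each axiom of a weak $\beta$-LRP metric for an arbitrary strong $\beta$-LRP metric $D$. Axioms I, II and III appear verbatim in both Definitions \ref{strongLRPmetric} and \ref{weakmetric}, and Axiom IV' (translation invariance) is the $r=1$ specialization of Axiom IV. The real content of the lemma is therefore to deduce the two cross-scale tightness statements V1' and V2' from the single-scale tightness in Axiom V together with the scale covariance in Axiom IV.

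The driving observation is that Axiom IV gives the distributional identity $D(r\cdot, r\cdot) \stackrel{\mathrm{law}}{=} r^\theta D(\cdot,\cdot)$ for random continuous pseudometrics on $\mathds{R}^d$, and this identity transfers to any continuous functional of $D$. Applying it to the functional $D \mapsto \inf_{\bm y \notin [-1,1]^d} D(\bm 0, \bm y)$ yields
\[
D(\bm 0, ([-r,r]^d)^c) \stackrel{\mathrm{law}}{=} r^\theta\, D(\bm 0, ([-1,1]^d)^c),\qquad r>0,
\]
so $r^\theta/D(\bm 0, ([-r,r]^d)^c)$ has a common distribution for all $r$, and this distribution is an honest probability law on $[0,\infty)$ because $D(\bm 0, ([-1,1]^d)^c) > 0$ a.s.\ by Axiom V. A single probability law is automatically tight, giving V1'. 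Similarly, applying the identity to $D \mapsto \mathrm{diam}([0,1]^d;D) = \sup_{\bm x,\bm y \in [0,1]^d} D(\bm x,\bm y)$ yields $\mathrm{diam}([0,r]^d;D) \stackrel{\mathrm{law}}{=} r^\theta\, \mathrm{diam}([0,1]^d;D)$, and hence
\[
\mathds{E}\!\left[\exp\!\left\{\!\left(\frac{\mathrm{diam}([0,r]^d;D)}{r^\theta}\right)^{\eta}\right\}\right] = \mathds{E}\!\left[\exp\!\left\{(\mathrm{diam}([0,1]^d;D))^\eta\right\}\right]
\]
for every $r>0$ and every $\eta \in (0, 1/(1-\theta))$. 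The right-hand side is finite by Axiom V, so taking the supremum over $r>0$ gives V2'.

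There is no genuine obstacle here; the only mildly technical point is to justify the transfer of the distributional identity of Axiom IV from the pseudometric $D$ itself to the nonlinear functionals used above. Continuity of $D$ lets me evaluate each infimum and supremum over a countable dense set, which reduces the transfer to a distributional statement about countably many coordinates of $D$, precisely what Axiom IV supplies. The infimum in V1' over the unbounded set $([-r,r]^d)^c$ can further be written as the monotone decreasing limit of infima over truncated annuli $([-r,r]^d)^c \cap [-R,R]^d$ as $R\to\infty$, so the continuous mapping argument goes through without complication.
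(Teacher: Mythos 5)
Your proof is correct and follows essentially the same argument as the paper: both deduce V1' from the a.s.\ positivity of $D(\bm 0,([-1,1]^d)^c)$ combined with the scaling identity $D(\bm 0,([-r,r]^d)^c)\stackrel{\mathrm{law}}{=}r^\theta D(\bm 0,([-1,1]^d)^c)$, and deduce V2' from Axiom V combined with the analogous scaling identity for the diameter. Your last paragraph simply spells out the routine continuous-mapping justification that the paper leaves implicit.
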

\begin{proof}
    Let $D$ be a strong $\beta$-LRP metric, it is clear that $D$ satisfies Axioms I, II, III and IV'. To demonstrate that $D$ is a weak $\beta$-LRP metric, we need to check Axioms  V1' and V2' of Definition \ref{weakmetric}. Specifically, from Axiom V's assertion that $0<D({\bm0},([-1,1]^d)^c)<\infty$ a.s., together with $\frac{D({\bm 0},([-r,r]^d)^c)}{r^\theta}\stackrel{\rm law}{=}D({\bm 0},([-1,1]^d)^c)$, we readily obtain Axiom V1'. Moreover, Axiom IV (with ${\bm z} = \bm 0$) and Axiom V together directly imply Axiom V2'.
\end{proof}

Theorem \ref{uniqueness} implies that one also has the converse to Lemma \ref{strong=>weak}.
\begin{lemma}\label{weak=>strong}
    Every weak $\beta$-LRP metric $D$ is also a strong $\beta$-LRP metric.
\end{lemma}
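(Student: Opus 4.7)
The plan is to use Theorem~\ref{uniqueness} to bootstrap the weak axioms for $D$ into the strong ones. Most of the strong list is already present in the weak list: Axioms I, II, III coincide verbatim, and Axiom V follows by specializing V1' and V2' to $r = 1$ (tightness of the single random variable $1/D(\bm 0, ([-1,1]^d)^c)$ gives $D(\bm 0, ([-1,1]^d)^c) > 0$ almost surely, and V2' at $r=1$ is exactly the required exponential moment on $\mathrm{diam}([0,1]^d; D)$). The only real task is upgrading the translation invariance IV' to the full scale covariance IV.

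To do this, I would exploit the scale invariance of the Poisson intensity $\beta/|\bm u - \bm v|^{2d}\, d\bm u\, d\bm v$ on $\mathds{R}^{2d}$: for any $r > 0$, the simultaneous dilation $S_r : (\bm u, \bm v) \mapsto (r\bm u, r\bm v)$ preserves the law of $\mathcal{E}$. For each $r > 0$ I define the candidate metric
\[
D^{(r)}[\mathcal{E}](\bm x, \bm y) := r^{-\theta}\, D[S_r \mathcal{E}](r\bm x, r\bm y)
\]
and verify that it is again a weak $\beta$-LRP metric. Axioms I, II, III transfer directly from $D$, because the length-space property, locality, and the regularity axiom (using $\langle \bm x, \bm y\rangle \in \mathcal{E}$ iff $\langle r\bm x, r\bm y\rangle \in S_r\mathcal{E}$) are all preserved under dilating the domain and multiplying the metric by a positive constant. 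Axiom IV' follows from IV' for $D$ combined with $S_r\mathcal{E} \stackrel{\rm law}{=} \mathcal{E}$. For V1' and V2', the scaling identity
\[
\frac{s^\theta}{D^{(r)}(\bm 0, ([-s, s]^d)^c)} = \frac{(rs)^\theta}{D[S_r\mathcal{E}](\bm 0, ([-rs, rs]^d)^c)},
\]
and its diameter analogue, show that the tightness families for $D^{(r)}$ are mere reindexings of those for $D$.

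Having verified that $D^{(r)}$ is a weak $\beta$-LRP metric, I apply Theorem~\ref{uniqueness} to $D$ and $D^{(r)}$ to obtain a deterministic constant $C_r > 0$ with $D^{(r)} = C_r D$ a.s., i.e.\ $r^{-\theta}\, D[S_r\mathcal{E}](r\bm x, r\bm y) = C_r\, D[\mathcal{E}](\bm x, \bm y)$. Iterating this identity at two successive scales (using $S_{r_1} \circ S_{r_2} = S_{r_1 r_2}$) gives the cocycle relation $C_{r_1 r_2} = C_{r_1} C_{r_2}$, and since $r \mapsto C_r$ is measurable (it is a ratio of measurable functionals of $\mathcal{E}$) Cauchy's functional equation forces $C_r = r^\alpha$ for some $\alpha \in \mathds{R}$. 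Passing to laws and specializing to the distance from $\bm 0$ to the boundary of a centered cube yields
\[
D(\bm 0, ([-r, r]^d)^c) \stackrel{\rm law}{=} r^{\theta + \alpha}\, D(\bm 0, ([-1, 1]^d)^c).
\]
Axiom V1' then reduces to tightness of $\{r^{-\alpha}/D(\bm 0, ([-1, 1]^d)^c)\}_{r > 0}$, which forces $\alpha = 0$ since the denominator is a single positive random variable and $r^{-\alpha}$ would otherwise blow up as $r \to 0$ or $r \to \infty$. Hence $C_r = 1$, so $D(r\cdot, r\cdot) \stackrel{\rm law}{=} r^\theta D(\cdot, \cdot)$; combining with IV' gives Axiom IV.

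I expect the main technical obstacle to be the careful bookkeeping in verifying that $D^{(r)}$ is a weak $\beta$-LRP metric, which requires simultaneously tracking the scale invariance of the Poisson process, the measurability of $\mathcal{E} \mapsto D[\mathcal{E}]$, and each individual axiom in Definition~\ref{weakmetric}. Once this is in place, the remainder is a routine uniqueness-plus-cocycle argument, the one subtlety being that V1'—rather than any a priori structural property of $\theta$—is the ingredient that rules out $\alpha \ne 0$.
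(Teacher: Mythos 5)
Your proposal is correct and follows the same strategy as the paper: rescale the edge set (using the scale invariance of the Poisson intensity) to get a new weak $\beta$-LRP metric, apply Theorem~\ref{uniqueness} to obtain a deterministic multiplicative constant, derive the cocycle $C_{r_1 r_2} = C_{r_1}C_{r_2}$, and then pin down that constant via the tightness axioms. The one place you diverge is in the final step: the paper uses \emph{both} V1' and V2' to show that $\{C(r)/r^\theta\}_{r>0}$ is bounded above and below, and then closes the argument purely algebraically (a multiplicative function of $r$ sandwiched between two constant multiples of $r^\theta$ must equal $r^\theta$), whereas you invoke Cauchy's functional equation plus measurability of $r\mapsto C_r$ to first reduce $C_r$ to $r^\alpha$ and then use V1' alone to force $\alpha=0$. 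Both routes work, but the paper's avoids any appeal to the measurability of $r\mapsto C_r$, which you assert via a slightly handwavy parenthetical (``ratio of measurable functionals of $\mathcal{E}$'') that would need a small extra argument, e.g.\ identifying $C_r$ as a ratio of medians of $D^{(r)}(\bm 0,\bm 1)$ and $D(\bm 0,\bm 1)$ and using joint measurability of $(r,\mathcal{E})\mapsto D^{(r)}(\bm 0,\bm 1)$.
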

\begin{proof}[Proof assuming Theorem \ref{uniqueness}]
    This proof is similar to that of \cite[Lemma 1.10]{GM21}. Let $D$ be a weak $\beta$-LRP metric. It is clear that $D$ satisfies Axioms I, II, III and V of Definition \ref{strongmetric}. To show that $D$ is a strong $\beta$-LRP metric, we need to check Axiom IV of Definition \ref{strongmetric} in the case when ${\bm z} = {\bm0}$ (note that we already have translation invariance from Definition \ref{weakmetric}), i.e. for any $r>0$,
    $$
    r^{-\theta}D(r\cdot,r\cdot)\stackrel{\rm law}{=}D(\cdot,\cdot).
    $$
    To this end, we define a metric $D^{(r)}$ for $r>0$ using the same law as $D$ from the edge set $r\mathcal{E}:=\{\langle r\bm x,r\bm y\rangle:\langle\bm x,\bm y\rangle\in\mathcal{E}\}$. It is clear that $D^{(r)}(r\cdot,r\cdot)$ is also a weak $\beta$-LRP metric, since $D^{(r)}$ has the same law as $D$ thanks to the scaling invariance of the Poisson point process for edges as well as the fact that $D^{(r)}(r\bm x,r\bm y)=0$ for any $\langle\bm x,\bm y\rangle\in\mathcal{E}$. Let $d^{(r)}(\cdot,\cdot)=D^{(r)}(r\cdot,r\cdot)$. Hence, by Theorem \ref{uniqueness}, there exists $C(r)>0$ such that
    \begin{equation*}
        d^{(r)}(\cdot,\cdot)=C(r) D(\cdot,\cdot).
    \end{equation*}
    It suffices to show that $C(r)=r^\theta$ for $r>0$. First, note that $d^{(r_1r_2)}=(d^{(r_1)})^{(r_2)}$, and hence $$C(r_1r_2)=C(r_1)C(r_2).$$
    In addition, because $d^{(r)}$ has the same law as $D(r\cdot,r\cdot)$, it follows that $C(r)^{-1}D({\bm 0},([-r,r]^d)^c)$ has the same law as $D({\bm0},([-1,1]^d)^c)$. Thus the families
    $$\{C(r)^{-1}D({\bm0},([-r,r]^d)^c)\}_{r>0}\text{ and }\{C(r)D({\bm0},([-r,r]^d)^c)^{-1}\}_{r>0}$$
     are both tight (noting that $0<D({\bm0},([-1,1]^d)^c)<\infty$ almost surely). By comparing these families to Axioms V1' and V2', we conclude that $\frac{C(r)}{r^\theta}$ are bounded uniformly (both from above and from below). Combining this with the fact that $C(r_1r_2)=C(r_1)C(r_2)$, we obtain $C(r)=r^\theta$ for each value of $r>0$.
\end{proof}

We now present the
\begin{proof}[Proof of Theorems \ref{realmaintheorem} and \ref{dis-maintheorem} assuming Theorems \ref{dis-subsequence-exist} and \ref{uniqueness}]
    Our first step is to show that for any two subsequential limiting metrics $D$ and $\widetilde{D}$, they are equal to each other. By Theorem \ref{dis-subsequence-exist}, these metrics are both weak $\beta$-LRP metrics. Consequently, from Theorem \ref{uniqueness}, there exists a deterministic constant $C>0$ such that $\widetilde{D}=CD$. Moreover, from the definition of $\widehat{a}_n$, the median of $\widehat{D}_n({\bm0},n{\bm 1})=\widehat{a}_n^{-1}\widehat{d}({\bm0},n{\bm 1})$ is always equal to 1 for all $n\in\mathds{N}$. This property holds for the subsequential limit as well, which means that the medians of $D({\bm0},{\bm1})$ and $\widetilde{D}({\bm0},{\bm1})$ are both 1. Thus $C=1$ and $\widetilde{D}=D$.

    Henceforth, combining this result with Theorem \ref{dis-subsequence-exist}, we conclude that $\widehat{D}_n$ converges to a weak $\beta$-LRP metric with respect to the local uniform convergence on $\mathds{R}^{2d}$. Additionally, Lemmas \ref{strong=>weak} and \ref{weak=>strong} show that this limit is a strong $\beta$-LRP metric.
\end{proof}

We can also prove Theorem \ref{maintheorem} assuming Theorems \ref{subsequence-exist} and \ref{uniqueness} under the same proof as Theorem \ref{dis-maintheorem} by replacing Theorem \ref{dis-subsequence-exist} with Theorem \ref{subsequence-exist}.


Notably, we can describe the continuity of the weak $\beta$-LRP metric $D$ using Axioms IV' and V2' along with Lemma \ref{HolderLemma}.
\begin{proposition}\label{LimitContinuousTail}
    For any weak $\beta$-LRP metric $D$, there exists a positive constant $C_D$ depending only on $d$ and the law of $D$ such that for any $R>0$ and  $M>2^{\theta+d+1}$,
    \begin{equation*}
        \mathds{P}\left[\sup_{{\bm x},{\bm x}'\in[-R,R]^d}\frac{D({\bm x},{\bm x}';[-R,R]^d)}{\|{\bm x}-{\bm x}'\|_\infty^\theta(\log\frac{4R}{\|{\bm x}-{\bm x}'\|_\infty})}>M\right]\le 2^{d+1}C_D \exp\{-2^{-\theta-1}M\}.
    \end{equation*}
\end{proposition}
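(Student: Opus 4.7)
The plan is to invoke the abstract chaining lemma Lemma \ref{HolderLemma} with inputs supplied by Axioms IV' and V2'. First, pick $\eta = 1$ in Axiom V2', which is admissible since $\theta\in(0,1)$ forces $1<1/(1-\theta)$. Setting
$$C_D:=\sup_{r>0}\mathds{E}\bigl[\exp\bigl(\operatorname{diam}([0,r]^d;D)/r^\theta\bigr)\bigr]<\infty,$$
Markov's inequality combined with translation invariance (Axiom IV') gives the universal single-cube tail bound
$$\mathds{P}\bigl[\operatorname{diam}(B;D)>\lambda r^\theta\bigr]\le C_D e^{-\lambda}$$
for every axis-aligned cube $B\subset\mathds{R}^d$ of side $r>0$ and every $\lambda>0$; this is the only probabilistic input to the chaining.

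Next, for each integer $k\ge 0$, cover $[-R,R]^d$ by a family $\mathcal{B}_k$ of at most $2^{(k+1)d}$ axis-aligned cubes of side $R\cdot 2^{-k}$, arranged (via a suitable shifted-grid construction) so that any two points of $[-R,R]^d$ at $\ell^\infty$-distance $\le R\cdot 2^{-k}$ lie in a common element of $\mathcal{B}_k$. Define the bad event
$$A_k:=\bigl\{\operatorname{diam}(B;D)>\lambda_k(R\cdot 2^{-k})^\theta\ \text{for some}\ B\in\mathcal{B}_k\bigr\},\qquad \lambda_k:=2^{-\theta-1}(k+2)M\log 2.$$
A union bound and the single-cube tail estimate give $\mathds{P}[A_k]\le 2^{(k+1)d}C_D\cdot 2^{-2^{-\theta-1}(k+2)M}$. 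The hypothesis $M>2^{\theta+d+1}$ ensures $d-2^{-\theta-1}M\le -1$, so the series $\sum_{k\ge 0}\mathds{P}[A_k]$ is geometric with ratio $\le 1/2$ and is therefore at most twice its leading term, producing the bound $2^{d+1}C_D\cdot 2^{-2^{-\theta}M}\le 2^{d+1}C_D\, e^{-2^{-\theta-1}M}$ (using $\log 2\ge 1/2$), which matches the stated tail bound.

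On the good event $\bigcap_{k\ge 0}A_k^c$, any $\bm x,\bm x'\in[-R,R]^d$ with $\|\bm x-\bm x'\|_\infty\in(R\cdot 2^{-k-1},R\cdot 2^{-k}]$ lie in a common $B\in\mathcal{B}_k$ by construction, so $D(\bm x,\bm x';[-R,R]^d)\le\operatorname{diam}(B;D)\le\lambda_k(R\cdot 2^{-k})^\theta$. Combined with the elementary bounds $\|\bm x-\bm x'\|_\infty^\theta\ge 2^{-\theta}(R\cdot 2^{-k})^\theta$ and $\log(4R/\|\bm x-\bm x'\|_\infty)\ge(k+2)\log 2$, the definition of $\lambda_k$ yields $D(\bm x,\bm x';[-R,R]^d)\le M\|\bm x-\bm x'\|_\infty^\theta\log(4R/\|\bm x-\bm x'\|_\infty)$, as required. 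The main obstacle I expect is the design of the covers $\mathcal{B}_k$ with the single-cube containment property: two points at $\ell^\infty$-distance up to $R\cdot 2^{-k}$ need not share a sub-cube in the standard dyadic grid, so an enlarged or shifted-grid construction is needed, and the bookkeeping must be arranged so that the cardinality overhead combines with the tail estimate to give precisely $2^{d+1}$ and $2^{-\theta-1}M$ in the final bound.
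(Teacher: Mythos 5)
The paper's proof of Proposition \ref{LimitContinuousTail} is a one-liner: verify that Axioms IV' and V2' (with $\eta=1$) supply exactly the hypotheses of Lemma \ref{HolderLemma} and cite it. Your opening paragraph does this part correctly — $\eta=1$ is admissible, $C_D$ is finite, translation invariance and the single-cube Markov tail are in place — and had you stopped there and invoked the lemma, the proof would be complete and match the paper exactly.

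What follows, however, is not an invocation of Lemma \ref{HolderLemma} but an attempted re-derivation via a different covering scheme, and this is where a genuine gap sits. You want families $\mathcal{B}_k$ of at most $2^{(k+1)d}$ cubes of side $R\cdot 2^{-k}$ such that every pair $\bm x,\bm x'\in[-R,R]^d$ with $\|\bm x-\bm x'\|_\infty\le R\cdot 2^{-k}$ lies in a common $B\in\mathcal{B}_k$, and you then bound $D(\bm x,\bm x';[-R,R]^d)\le\operatorname{diam}(B;D)$. Two problems. First, no \emph{finite} family of cubes of side $s:=R\cdot 2^{-k}$ can contain every pair at $\ell^\infty$-distance $\le s$: already in $d=1$, the pair $x=-\varepsilon$, $x'=s-\varepsilon$ has distance $s$, and the unique interval of length $s$ containing both is $[-\varepsilon,s-\varepsilon]$, which varies continuously with $\varepsilon$. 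Any shifted-grid construction with granularity $\ge s/2$ fails (e.g.\ $x=0.4s$, $x'=1.3s$ shares no cube with the standard or half-shifted grid), and decreasing the granularity forces the cube count past $2^{(k+1)d}$ — the count $2^{(k+1)d}$ with $2^d$ shifts is consistent only with cubes of side $2s=R\cdot 2^{-k+1}$, which changes the arithmetic you carried out. Second, even with the right cover, the inequality $D(\bm x,\bm x';[-R,R]^d)\le\operatorname{diam}(B;D)$ requires $B\subseteq[-R,R]^d$ (so that $B$-internal paths are $[-R,R]^d$-internal paths); shifted cubes near the boundary protrude outside $[-R,R]^d$, and for those the two internal metrics restrict to incomparable path families. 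The paper's proof of Lemma \ref{HolderLemma} sidesteps both issues: it uses the honest \emph{partition} of $[-R,R]^d$ into $2^{kd}$ cubes $I_j$ of side $2^{-k+1}R$ (all inside the box), observes that any close pair lies in $I_{j_1}\cup I_{j_2}$ for intersecting cubes, and pays a factor of $2$ via the triangle inequality through a point of $I_{j_1}\cap I_{j_2}$; that factor of $2$ is exactly where the extra $2^{-1}$ in the $2^{-\theta-1}$ exponent comes from. Either cite Lemma \ref{HolderLemma} directly, or, if you want a self-contained chaining argument, use the partition-plus-triangle-inequality step with the factor of $2$ rather than the nonexistent overlapping cover.
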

\begin{proof}
    The statement can be immediately inferred from Axioms IV', V2' along with Lemma \ref{HolderLemma}, which is discussed in Section \ref{dis-existence} below.
\end{proof}

Additionally, we can further strengthen Axiom I by proving the following proposition.
\begin{proposition}\label{Dxy=0}
    Let $D$ be a {\rm(}weak or strong{\rm)} $\beta$-LRP metric $D$. Then almost surely, for any ${\bm x},{\bm y}\in\mathds{R}^d$, $D({\bm x},{\bm y})=0$ if and only if $\langle {\bm x},{\bm y}\rangle \in\mathcal{E}$. Moreover, Axiom I can be strengthened by the following property. Let $\sim$ be the equivalence relation that ${\bm x}\sim {\bm y}$ if and only if $\langle{\bm x},{\bm y}\rangle\in\mathcal{E}$. Then $(\mathds{R}^d/\sim,D)$ is a length space. Furthermore, for any ${\bm x},{\bm y}\in\mathds{R}^d$, there exists a geodesic from ${\bm x}$ to ${\bm y}$.
\end{proposition}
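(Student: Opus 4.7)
The reverse direction is Axiom III, so the task reduces to showing that for $\bm x \ne \bm y$ with $\langle\bm x, \bm y\rangle \notin \mathcal{E}$, $D(\bm x, \bm y) > 0$ almost surely. Once this ``iff'' is in hand, the strengthened length-space statement follows directly from Axiom I, since a $D$-length-zero curve must identify all of its points inside a single zero-distance class, which by the ``iff'' coincides with a single class of the edge relation $\sim$. The starting point is a Poisson observation: the intensity $\beta|\bm u-\bm v|^{-2d}\,d\bm u\,d\bm v$ is absolutely continuous on $\mathbb{R}^{2d}$, and the ``coincident-endpoints'' set $\{(\bm u_1,\bm v_1,\bm u_2,\bm v_2):\{\bm u_1,\bm v_1\}\cap\{\bm u_2,\bm v_2\}\ne\emptyset\}$ has Lebesgue measure zero in $\mathbb{R}^{4d}$. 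The Mecke formula then implies that almost surely no two distinct atoms of $\mathcal{E}$ share a common endpoint, so the edge graph is a matching and each endpoint $\bm v \in V$ has a unique partner $\bm v^\ast$.

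For deterministic $\bm x \in \mathbb{R}^d$, Axiom V1' (or Axiom V in the strong case) combined with translation invariance (Axiom IV') gives $D(\bm x, \bm x + ([-r,r]^d)^c) > 0$ a.s.\ for each $r > 0$; a countable union over rational $r \downarrow 0$ shows that almost surely $D(\bm x, \bm y) > 0$ for every $\bm y \ne \bm x$. A Fubini argument upgrades this to: almost surely Lebesgue-a.e.\ $\bm x$ has this property, which disposes of everything outside the almost surely countable set $V$ up to a Lebesgue-null exception. To handle $V$ I would apply the Mecke--Slivnyak theorem: conditioning on $(\bm v, \bm w) \in \mathcal{E}$ is equivalent to adjoining the atom $(\bm v, \bm w)$ to an independent Poisson copy $\mathcal{E}'$ with the same intensity. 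Writing $D'$ for the metric associated with $\mathcal{E}'$, the points $\bm v, \bm w$ are deterministic in $\mathcal{E}'$, so the preceding paragraph applied to $D'$ yields $D'(\bm v, \bm y), D'(\bm w, \bm y) > 0$ for every $\bm y \notin \{\bm v, \bm w\}$. Using Axiom I together with the fact that passing from $D'$ to $D$ introduces exactly one new zero-length shortcut, one derives
\begin{equation*}
D(\bm x, \bm y) = \min\bigl\{D'(\bm x, \bm y),\; D'(\bm x, \bm v) + D'(\bm w, \bm y),\; D'(\bm x, \bm w) + D'(\bm v, \bm y)\bigr\},
\end{equation*}
whence $D(\bm v, \bm y) = \min\{D'(\bm v, \bm y), D'(\bm w, \bm y)\} > 0$ for $\bm y \notin \{\bm v, \bm w\}$.

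The principal obstacle is the passage from ``almost every $\bm x$'' and ``almost every edge'' to ``every $\bm x \in \mathbb{R}^d$'' simultaneously. Axiom V1' only yields positivity of $D(\bm z, \bm z + \partial[-r,r]^d)$ for each \emph{fixed} $\bm z$, and this quantity is merely continuous in $\bm z$ rather than uniformly bounded below, so a density argument does not immediately upgrade the Lebesgue-a.e.\ statement to every point. To rule out a hypothetical random exceptional $\bm x^\ast \notin V$ admitting a nontrivial zero-distance partner, I would combine Axiom II (locality) with the H\"older-type control of Proposition~\ref{LimitContinuousTail} inside a small cube $Q$ around $\bm x^\ast$: locality confines the analysis to the matching $\mathcal{E}|_{Q \times Q}$, and the H\"older bound together with a compactness argument on geodesics issuing from $\bm x^\ast$ should force any alleged zero-length escape to leave $Q$ through one of the (at most one) edges incident to $\bm x^\ast$, contradicting the assumption. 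This final ``a.e.\ to everywhere'' upgrade is, in my view, where most of the technical work will lie.
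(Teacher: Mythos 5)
Your overall outline is reasonable, but there are two genuine gaps, one of which you flag yourself.

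First, the formula
\begin{equation*}
D(\bm x, \bm y) = \min\bigl\{D'(\bm x, \bm y),\; D'(\bm x, \bm v) + D'(\bm w, \bm y),\; D'(\bm x, \bm w) + D'(\bm v, \bm y)\bigr\}
\end{equation*}
is not a consequence of the axioms. The axioms only assert that $D$ is a measurable function of $\mathcal{E}$ satisfying Axioms I--V, and that the graph shortcut $\langle\bm v,\bm w\rangle$ gives $D(\bm v,\bm w)=0$; they do \emph{not} say that deleting a single atom from $\mathcal{E}$ changes $D$ only by removing that shortcut. That fact is true for the discrete metric and for the limit once uniqueness is known, but proving it from the axioms alone amounts to the Theorem~\ref{uniqueness} machinery. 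Since the present proposition is used as an input to Proposition~\ref{bilip}, which feeds into Theorem~\ref{uniqueness}, invoking that formula here would be circular.

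Second, and more importantly, the ``a.e.\ to everywhere'' upgrade that you defer is not a finishing detail: it \emph{is} the proof. The paper bypasses the issue entirely by quantifying over a \emph{countable} family of events rather than over pairs of points. Concretely, Lemma~\ref{Dinterval=0} uses a dyadic renormalization: for every scale $k\in\mathds{Z}$ and every pair of dyadic cubes $V^k(\bm i),V^k(\bm j)$ with $\|\bm i-\bm j\|_1>3\cdot 2^k$, the event $\{D(V^k(\bm i),V^k(\bm j))=0 \text{ and the cubes are not directly $\mathcal{E}$-linked}\}$ is shown to have probability zero via the BK inequality applied to $(3\cdot 2^k,\alpha)$-good cubes (Lemma~\ref{hd-BK}). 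A countable union over $k$ and over dyadic pairs then gives an a.s.\ statement; and any fixed $\bm x\neq\bm y$ with $\langle\bm x,\bm y\rangle\notin\mathcal{E}$ eventually lie in separated, non-$\mathcal{E}$-linked dyadic cubes at some sufficiently small scale $k'$, which bounds $D(\bm x,\bm y)$ from below by a cube-to-cube distance that is already known to be positive. Your sketch (locality plus H\"older control plus a compactness argument on geodesics issuing from a hypothetical exceptional $\bm x^\ast$) does not obviously exclude an uncountable exceptional set, and is not a substitute for that renormalization argument.
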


See Section \ref{proof111} for the proof of Proposition \ref{Dxy=0}. Furthermore, we can show that the parameter $\theta$ in Definitions \ref{strongLRPmetric} of strong $\beta$-LRP metric and \ref{weakmetric} of weak $\beta$-LRP metric is uniquely determined by $d$ and $\beta$.

\begin{theorem}\label{uniquetheta-1}
Fix $\beta>0$. Let $\theta_1>0$ be such that there exists a weak $\beta$-LRP metric with $\theta=\theta_1$. Then we have that $\theta_1=\theta_0$, where $\theta_0$ is chosen in Theorem \ref{discrete-dist}  and depends only on $\beta$ and $d$.
\end{theorem}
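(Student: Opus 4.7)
The plan is to produce a concrete weak $\beta$-LRP metric whose exponent is manifestly $\theta_0$, invoke Theorem \ref{uniqueness} to conclude that every weak $\beta$-LRP metric is a constant multiple of it, and then use the two-sided tightness at every scale encoded in Axioms V1' and V2' to force the two exponents to coincide.

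First I would apply Theorem \ref{dis-subsequence-exist} to extract a weak $\beta$-LRP metric $D^{(0)}$ as a subsequential limit in law of $\widehat{D}_{n_k}$. I would then verify that the exponent attached to $D^{(0)}$ equals $\theta_0$. For each fixed $r>0$,
\[
\widehat{D}_n(\bm 0,([-r,r]^d)^c)=\widehat{a}_n^{-1}\widehat{d}(\bm 0,(\lfloor n[-r,r]^d\rfloor)^c)
\et
{\rm diam}([0,r]^d;\widehat{D}_n)=\widehat{a}_n^{-1}{\rm diam}([0,\lfloor nr\rfloor]^d_{\mathds{Z}};\widehat{d}).
\]
Lemma \ref{an-bounded} gives $\widehat{a}_n\asymp n^{\theta_0}$, while Theorem \ref{discrete-dist} controls the numerators at order $(nr)^{\theta_0}$. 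Combining these inputs, together with the uniform-in-$n$ exponential upper-tail estimates that underlie the proof of Theorem \ref{dis-subsequence-exist}, yields tightness of $\{r^{\theta_0}/\widehat{D}_n(\bm 0,([-r,r]^d)^c)\}_n$ and the V2'-type exponential moment bound for $\{{\rm diam}([0,r]^d;\widehat{D}_n)/r^{\theta_0}\}_n$, uniformly in $r$. Passing to the subsequential limit transfers these bounds to $D^{(0)}$, so $D^{(0)}$ is a weak $\beta$-LRP metric whose exponent parameter is $\theta_0$.

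Next, given any weak $\beta$-LRP metric $D^{(1)}$ with exponent $\theta_1$, Theorem \ref{uniqueness} furnishes a deterministic constant $C>0$ with $D^{(1)}=CD^{(0)}$ almost surely. Now write $X_r:=D^{(0)}(\bm 0,([-r,r]^d)^c)$. Axiom V1' applied to $D^{(0)}$ gives tightness of $\{r^{\theta_0}/X_r\}_{r>0}$. The obvious bound $X_r\le {\rm diam}([-r,r]^d;D^{(0)})$ together with Axiom V2' for $D^{(0)}$ gives tightness of $\{X_r/r^{\theta_0}\}_{r>0}$, so $X_r/r^{\theta_0}$ is bounded away from $0$ and from $\infty$ in probability uniformly in $r\in(0,\infty)$. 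The identical argument applied to $D^{(1)}=CD^{(0)}$, using V1' and V2' with $\theta_1$, shows that $CX_r/r^{\theta_1}$ is bounded away from $0$ and from $\infty$ in probability uniformly in $r$. Dividing the two shows that the deterministic quantity $C\,r^{\theta_0-\theta_1}$ is bounded above and below uniformly in $r\in(0,\infty)$, which forces $\theta_1=\theta_0$.

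The main obstacle is the verification in the first step that the subsequential limit has the expected exponent. This requires that the Bäumler-type estimates and the Lemma \ref{an-bounded} input survive the passage to the limit uniformly in the reference scale $r$, and in particular that V2' holds for the limit with the full range $\eta\in(0,1/(1-\theta_0))$; once this is in hand, the uniqueness theorem and the elementary scaling comparison above finish the argument.
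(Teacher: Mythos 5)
Your final scaling step is clean and correct: if $D^{(1)}=CD^{(0)}$ a.s.\ with deterministic $C>0$, then V1'/V2' for $D^{(0)}$ with exponent $\theta_0$ and for $D^{(1)}$ with exponent $\theta_1$ force the deterministic ratio $Cr^{\theta_0-\theta_1}$ to be bounded above and below uniformly in $r\in(0,\infty)$, whence $\theta_1=\theta_0$. The verification that the subsequential limit $D^{(0)}$ carries exponent $\theta_0$ (via Lemma \ref{an-bounded} and Theorem \ref{dis-upperbound}) is also fine; this is precisely Lemma \ref{dis-Axiom5}.

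The genuine gap is the invocation of Theorem \ref{uniqueness} in your second step. Theorem \ref{uniqueness} is established only \emph{after} Theorem \ref{uniquetheta-1}, and its proof — starting already from Proposition \ref{bilip} and the existence of finite optimal bi-Lipschitz constants $c_*, C_*\in(0,\infty)$ — silently uses a single exponent $\theta$ shared by both metrics. Concretely, the renormalization step producing \eqref{Dupdown} and \eqref{Dtildeupdown} yields $D(\bm x,\bm y)\asymp(\varepsilon R)^{\theta}d^G(\bm k_{\bm x},\bm k_{\bm y})$ and $\widetilde{D}(\bm x,\bm y)\asymp(\varepsilon R)^{\widetilde\theta}d^G(\bm k_{\bm x},\bm k_{\bm y})$ with $\varepsilon,R$-independent implicit constants, which only combine into a bi-Lipschitz bound on $\widetilde{D}/D$ when $\theta=\widetilde\theta$; otherwise the ratio scales like $(\varepsilon R)^{\widetilde\theta-\theta}$ and no finite $c_*,C_*$ exist. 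Hence applying Theorem \ref{uniqueness} to $D^{(0)}$ (with $\theta_0$) and $D^{(1)}$ (with a priori unknown $\theta_1$) presupposes the very equality you are trying to prove, making the argument circular.

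The paper instead runs a direct comparison with no appeal to uniqueness. It redoes the renormalization of Proposition \ref{bilip} for a single metric $D$ with exponent $\theta_1$, obtaining (on a high-probability event) $C^{-1}d^G(\bm k_{\bm 0},\bm k_{\bm 1})\varepsilon^{\theta_1}\le D(\bm 0,\bm 1)\le C\,d^G(\bm k_{\bm 0},\bm k_{\bm 1})\varepsilon^{\theta_1}$. Plugging in $d^G(\bm k_{\bm 0},\bm k_{\bm 1})\asymp_P\varepsilon^{-\theta_0}$ from Theorem \ref{discrete-dist} gives $D(\bm 0,\bm 1)\asymp\varepsilon^{\theta_1-\theta_0}$ with probability close to $1$, and letting $\varepsilon\to 0$ forces $\theta_1=\theta_0$. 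Morally this is the same scaling comparison you are making, but routed through the graph metric $d^G$ rather than through another continuous weak $\beta$-LRP metric, which is what keeps it non-circular. If you want to keep your structure, replace ``Theorem \ref{uniqueness}'' by the two renormalization displays from the proof of Proposition \ref{bilip} applied separately to $D^{(1)}$ with $\theta_1$ and to $d^G$ with $\theta_0$; that collapses your argument to the one in the paper.
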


See Section \ref{uniquetheta} for the proof of Theorem \ref{uniquetheta-1}. In the rest of this subsection, we consider the Hausdorff dimension of $\mathds{R}^d$ endowed with the strong (weak) $\beta$-LRP metric. To this end, we will first introduce some notations. Let $(Y,D)$ be a  metric space. For $\Delta > 0$, the $\Delta$-Hausdorff content of $(Y, D)$ is the number
\begin{equation}\label{Hausdorff}
    C_\Delta(Y, D) := \inf\left\{ \sum_{j=1}^\infty r_j^\Delta:\text{ there is a cover of $Y$ by $D$-balls of radii }\{r_j\}_{j\geq 1} \right\}
\end{equation}
and the \textit{Hausdorff dimension} of $(Y, D)$ is defined by $\inf\{\Delta > 0 : C_\Delta(Y, D) = 0\}$.

In the following, we take $Y=\mathds{R}^d$. For a set $X\subset \mathds{R}^d$, let ${\rm dim}_{\mathcal{H}}^0 X$ and ${\rm dim}_{\mathcal{H}}^\beta X$ denote the
Hausdorff dimensions of the metric spaces $(X,|\cdot|)$ and $(X,D)$ respectively, where $D$ is a strong (weak) $\beta$-LRP metric. We will refer to these two quantities as the \textit{Euclidean dimension} and \textit{$\beta$-LRP dimension} of the set $X$, respectively. Note that the $\beta$-LRP dimension depends on the choice of parameters $\beta$ and $d$.

We provide an upper bound of ${\rm dim}_{\mathcal{H}}^\beta X$ for any $X\subset \mathds{R}^d$ as follows.
\begin{theorem}\label{theorem-Hausdorff}
    Let $X$ be a deterministic Borel subset of $\mathds{R}^d$. Then  we have a.s.
    \begin{equation}\label{uppbhasd}
        {\rm dim}_{\mathcal{H}}^\beta X\leq \theta^{-1}{\rm dim}_{\mathcal{H}}^0 X.
    \end{equation}
    Here $\theta$ is the constant chosen in Theorem \ref{discrete-dist}.
\end{theorem}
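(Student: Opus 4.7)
The plan is to reduce Theorem \ref{theorem-Hausdorff} to (pointwise) Hölder continuity of $D$ with respect to the Euclidean metric, which is essentially provided by Proposition \ref{LimitContinuousTail}. The key observation is that the logarithmic correction $\log(4R/\|\bm x - \bm x'\|_\infty)$ appearing there is harmless, since $\sup_{r \in (0, 1]} r^\eta \log(1/r) = (e\eta)^{-1} < \infty$ for every $\eta > 0$, and therefore the bound of Proposition \ref{LimitContinuousTail} can be traded for a genuine Hölder estimate with any exponent strictly less than $\theta$.

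Concretely, I would first upgrade Proposition \ref{LimitContinuousTail} to the following almost sure estimate: for every $R > 0$ and every $\eta \in (0, \theta)$ there exists a finite random constant $K = K(R, \eta, D)$ such that
\begin{equation*}
D(\bm x, \bm y) \leq K \|\bm x - \bm y\|_\infty^{\theta - \eta} \quad \text{for all } \bm x, \bm y \in [-R, R]^d.
\end{equation*}
For $\|\bm x - \bm y\|_\infty \leq 1$ this is the logarithmic-absorption step, using the tail bound from Proposition \ref{LimitContinuousTail} to produce an a.s.\ finite supremum over the ratio appearing there. For $\|\bm x - \bm y\|_\infty > 1$ the inequality is trivial after bounding $D(\bm x, \bm y)$ by ${\rm diam}([-R, R]^d; D)$, which is a.s.\ finite by Axiom V2'.

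Next, since Hausdorff dimension is countably stable under unions and $X = \bigcup_{R \in \mathds{N}} X_R$ with $X_R := X \cap [-R, R]^d$, it suffices to prove ${\rm dim}_{\mathcal{H}}^\beta X_R \leq \theta^{-1} {\rm dim}_{\mathcal{H}}^0 X_R$ for each fixed $R$. Fix such an $R$ and a small $\eta \in (0, \theta)$, and work on the a.s.\ event on which the Hölder estimate holds with constant $K$. Pick $\Delta_0 > {\rm dim}_{\mathcal{H}}^0 X_R$; since the $\Delta_0$-dimensional Hausdorff measure of $X_R$ vanishes, for every $\varepsilon > 0$ one can cover $X_R$ by Euclidean $\ell^\infty$-balls of radii $\{r_j\}_{j \geq 1}$ with $\sum_j r_j^{\Delta_0} < \varepsilon$. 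By the Hölder estimate each such ball (intersected with $X_R$) has $D$-diameter at most $K(2r_j)^{\theta - \eta}$ and hence lies in a $D$-ball of that radius. Setting $\Delta := \Delta_0/(\theta - \eta)$, the resulting $D$-cover satisfies
\begin{equation*}
\sum_j \bigl(K(2r_j)^{\theta-\eta}\bigr)^{\Delta} = K^{\Delta} \, 2^{\Delta_0} \sum_j r_j^{\Delta_0} \leq K^{\Delta} \, 2^{\Delta_0} \, \varepsilon,
\end{equation*}
so the Hausdorff content satisfies $C_\Delta(X_R, D) = 0$ by \eqref{Hausdorff}, giving ${\rm dim}_{\mathcal{H}}^\beta X_R \leq \Delta_0/(\theta - \eta)$. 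Sending $\Delta_0 \downarrow {\rm dim}_{\mathcal{H}}^0 X_R$, then $\eta \downarrow 0$, and finally taking the supremum over $R \in \mathds{N}$ yields \eqref{uppbhasd}.

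I do not anticipate any substantial obstacle here, since Proposition \ref{LimitContinuousTail} already does essentially all the hard work and the remainder is a standard covering argument combined with a routine logarithmic-exponent trade-off. The only mildly delicate step is producing a single almost sure finite Hölder constant $K$ valid uniformly over all pairs in $[-R, R]^d$ — but this follows immediately from the integrability of the tail in Proposition \ref{LimitContinuousTail}.
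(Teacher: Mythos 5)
Your proof is correct, and it is mathematically sound in all of its steps: the logarithmic absorption $\sup_{r\in(0,1]} r^\eta\log(1/r)=(e\eta)^{-1}$ is right, Proposition \ref{LimitContinuousTail} does yield an a.s.\ finite $(\theta-\eta)$-H\"older constant on $[-R,R]^d$ (since the internal metric dominates the ambient one and the supremum on the left-hand side of Proposition \ref{LimitContinuousTail} is a.s.\ finite), and the covering argument and the two-limit procedure (over rational $\eta$ and $\Delta_0$, then over integer $R$) are standard and close without difficulty.

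That said, it is a genuinely different route from the paper's proof. You prove an a.s.\ uniform H\"older modulus for $D$ first (via Proposition \ref{LimitContinuousTail}, which already aggregates over all scales and all pairs of points), and then your covering argument becomes purely deterministic. The paper, by contrast, never establishes a uniform H\"older bound: it fixes a countable family of deterministic coverings $\{V_{r_i^{(n)}}(\bm x_i^{(n)})\}_i$ with $\sum_i(r_i^{(n)})^\ell<n^{-2}$, applies Markov's inequality (using $C_D=\sup_{r}\mathds{E}[\exp\{{\rm diam}(V_r;D)/r^\theta\}]<\infty$) ball by ball to bound $\mathds{P}[{\rm diam}(V_{r_i^{(n)}};D)>(r_i^{(n)})^{\theta-\zeta}]\le C_\ell(r_i^{(n)})^\ell$, and then runs Borel--Cantelli over the doubly-indexed family $(n,i)$ to conclude that eventually all balls in the $n$-th covering have controlled $D$-diameter. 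Your version is slightly more modular and reusable (the H\"older modulus is a statement of independent interest, and the remaining covering argument is the textbook one for dimension drop under H\"older maps); the paper's version bypasses the uniform continuity and uses only the one-ball moment bound directly, at the cost of a slightly more intricate bookkeeping with the choice of $\zeta$ and the double sequence of coverings. Both deliver the same bound ${\rm dim}_{\mathcal H}^\beta X\le\theta^{-1}{\rm dim}_{\mathcal H}^0 X$.
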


The proof of Theorem \ref{theorem-Hausdorff} will be presented in Section \ref{Hasd}.
\begin{remark}
We expect the equality in \eqref{uppbhasd} to hold, but the difficulty lies in the fact that we do not have a good lower tail for $D(\bm 0,\bm 1)$. This is the reason why we can not adapt in the straightforward manner the techniques in \cite[Section 2.3]{GP22} for the lower bound on the Hausdorff dimension for the LQG metric.
\end{remark}


\subsection{Notational conventions}
We write $\mathds{N}=\{1,2,3,\cdots\}$. For $a<b$, we define $[a,b]_\mathds{Z}=[a,b]\cap\mathds{Z}$. We write ${\bm 1}=(1,1,\cdots,1)\in\mathds{R}^d$. In the rest of the paper, we use the boldface font (e.g. ${\bm x,\bm y,\bm z,\bm u,\bm v,\bm w,\bm i,\bm j,\bm k,\bm l}$) to represent vectors.

If $f:(0,\infty)\to \mathds{R}$ and $g:(0,\infty)\to (0,\infty)$, we say that $f(\varepsilon)=O_\varepsilon(g(\varepsilon))$ (resp. $f(\varepsilon)=o_\varepsilon(g(\varepsilon))$) as $\varepsilon\to0$ if $f(\varepsilon)/g(\varepsilon)$ remains bounded (resp. tends to 0) as $\varepsilon\to 0$. We similarly define $O(\cdot)$ and $o(\cdot)$ errors as a parameter goes to infinity.

Let $\{E_\varepsilon\}_{\varepsilon>0}$ be a non-parameter family of events. We say that $E_\varepsilon$ occurs with
\begin{itemize}
\item \textit{polynomially high probability} as $\varepsilon\to 0$ if there is a $\mu$ (independent from $\varepsilon$) such that $\mathds{P}[E_\varepsilon]\geq 1-O_\varepsilon(\varepsilon^\mu)$.

\item \textit{superpolynomially high probability} as $\varepsilon\to 0$ if  $\mathds{P}[E_\varepsilon]\geq 1-O_\varepsilon(\varepsilon^\mu)$ for all $\mu>0$.
\end{itemize}

For two sequences of random variables $\{A_n\}_{n\geq 1}$ and $\{B_n\}_{n\geq 1}$, we write $A_n\asymp_P B_n$ if for all $\varepsilon>0$, there exist $c,C>0$ such that $\mathds{P}[cB_n\leq A_n\leq CB_n]\geq 1-\varepsilon$ for all $n\geq 1$.

For any ${\bm x}\in\mathds{R}^d$, we write $|{\bm x}|$ as the Euclidean norm of ${\bm x}$, write $\|{\bm x}\|_{1}$ as the $\ell^1$-norm of ${\bm x}$ and write $\|{\bm x}\|_\infty$ as the $\ell^\infty$-norm of ${\bm x}$. For any $r>0$ and ${\bm x}\in\mathds{R}^d$, we define $V_r({\bm x})$ as the closed cube in $\mathds{R}^d$ with center at ${\bm x}$ and side length $r$
(note that in the renormalization arguments in the subsequent paper, to avoid intersection between different cubes, when not specifically stated, the symbol $V_r(\bm x)$ is considered to represent the cube (with center at $\bm x$ and side length $r$) that is closed on the left and bottom sides, and open on the right and top sides in the renormalization arguments).
For any $A\subset \mathds{R}^d$, we write $V_r(A)=\{{\bm z}\in \mathds{R}^d:\ \text{dist}({\bm z},A;\|\cdot\|_\infty)\leq r/2\}$.

For two sets $A,B\subset \mathds{R}^d$, we write ${\rm dist}(A,B)$ as the Euclidean distance between $A$ and $B$, i.e., ${\rm dist}(A,B)=\inf_{{\bm x}\in A,{\bm y}\in B}|{\bm x}-{\bm y}|$.

For $A\subset\mathds{R}^d$, we write ${\rm diam}(A;D)$ as the internal diameter of $A$ with respect to the metric $D$, i.e., ${\rm diam}(A;D)=\sup_{{\bm x},{\bm y}\in A}D({\bm x},{\bm y};A)$. For ${\bm x},{\bm y}\in\mathds{R}^d$, we write $\lfloor {\bm x}\rfloor:=(\lfloor {\bm x}^1\rfloor, \cdots, \lfloor {\bm x}^d\rfloor)$.

 We use ${\rm Bin}(n,p)$ to denote the distribution of a binomial variable with parameters $n\in\mathds{N}$ and $p\in[0,1]$. We use ${\rm Poi}(\lambda)$ to denote a Poisson variable with parameter $\lambda>0$.

Throughout the paper, we use $c_1, c_2, \dots$ to denote positive constants that are numbered by sections. Note that these constants may vary from section to section.

\subsection{Outline}\label{outline}
As detailed in Sections \ref{overview}-\ref{sectionWeakLRP}, our goal is to establish Theorems \ref{dis-subsequence-exist}, \ref{subsequence-exist} and \ref{uniqueness}. In this subsection, we give an overview of the proof of Theorem \ref{uniqueness}, which encapsulates the main challenge. To begin, suppose that $\beta>0$ and that $D$ and $\widetilde{D}$ are two weak $\beta$-LRP metrics on $\mathds{R}^d$. Our objective is to show that there exists a deterministic constant $C>0$ such that a.s.\ $\widetilde{D}=CD$.

\subsubsection{Optimal bi-Lipschitz constants}

Consider two weak $\beta$-LRP metrics on $\mathds{R}^d$, denoted as $D(\cdot,\cdot)$ and $\widetilde{D}(\cdot,\cdot)$. Proposition \ref{bilip} (presented below) indicates that these metrics are bi-Lipschitz equivalent, that is, there exists a deterministic constant $C>0$ such that a.s.
$$
C^{-1}D({\bm x},{\bm y})\leq \widetilde{D}({\bm x},{\bm y})\leq CD({\bm x},{\bm y})\quad \text{for all }{\bm x},{\bm y}\in\mathds{R}^d.
$$
Consequently, we can define the optimal upper and lower bi-Lipschitz constants between $D$ and $\widetilde{D}$ as follows:
\begin{equation}\label{optimalconst}
\begin{split}
c_*&=\sup\left\{c'>0:\ c'D({\bm x},{\bm y})\leq \widetilde{D}({\bm x},{\bm y})\ \text{for all}\ {\bm x},{\bm y}\in\mathds{R}^d\right\},\\
C_*&=\inf\left\{C'>0:\ \widetilde{D}({\bm x},{\bm y})\leq C'D({\bm x},{\bm y})\ \text{for all}\ {\bm x},{\bm y}\in\mathds{R}^d\right\}.
\end{split}
\end{equation}
Proposition \ref{cCdtm} (which we will present below) asserts that each of $c_*$ and $C_*$ is a.s.\ equal to a deterministic constant. As a result,
we can replace $c_*$ and $C_*$ by their a.s.\ values in Proposition \ref{cCdtm}, so that each of $c_*$ and $C_*$ is a deterministic constant depending only on the laws of $D$ and $\widetilde{D}$ and a.s.
\begin{equation}\label{DDtilde}
c_*D({\bm x},{\bm y})\leq \widetilde{D}({\bm x},{\bm y})\leq C_* D({\bm x},{\bm y})\quad \text{for all }{\bm x},{\bm y}\in \mathds{R}^d.
\end{equation}

\subsubsection{Main idea of the proof of Theorem \ref{uniqueness}}

Our objective is to establish Theorem \ref{uniqueness} by demonstrating that $c_*=C_*$. In the rest of this subsection, we provide an overview of how we will prove this fact. To make the main concepts as clear as possible, we will gloss over certain technical details, so some of the statements in this subsection may not be entirely accurate without additional caveats. More precise outlines can be found at the beginning of each individual section and subsection.

At a high level, our strategy for proving the uniqueness of weak $\beta$-LRP metrics shares similarities with the approach used to establish the uniqueness of LQG metrics in \cite{GM21,DG23}.  However, major challenges arise in our setting, owing to the fact that likely the geodesics in our model are discontinuous curves with respect to the topology of $\mathds{R}^d$ and the fact that we do not have the Weyl scaling property for $\beta$-LRP metrics.

In what follows, we provide a general overview on our proof approach. We begin by assuming, for the sake of contradiction, that $c_*<C_*$.
In Section \ref{section3} we will show that for any $c'\in (c_*,C_*)$, there are many ``good'' pairs of points ${\bm x,\bm y}\in\mathds{R}^d$  such that $\widetilde{D}({\bm x,\bm y})< c'D({\bm x,\bm y})$.
In fact,
in Section \ref{section4} we will show that the set of such points is dense enough that every $D$-geodesic $P$ has to get close to each of ${\bm x}$ and ${\bm y}$ for many ``good'' pairs of points ${\bm x,\bm y}$.
We then replace segments of $P$ with the concatenation of a $\widetilde{D}$-geodesic from a point on $P$ to ${\bm x}$, a $\widetilde{D}$-geodesic from ${\bm x}$ to ${\bm y}$, and a $\widetilde{D}$-geodesic from ${\bm y}$ to a point on $P$. This results in a new path with the same end points as $P$.

Our choice of good pair of points ${\bm x,\bm y}$ ensures that the $\widetilde{D}$-length of each of the replacement segments is at most slightly larger than $c'$ times its $D$-length. In addition, due to the definition of $C_*$, we know that the  $\widetilde{D}$-length of any segment of $P$ that 
is 
not replaced is at most $C_*$ times its $D$-length. Morally, we would like to say that this implies that there exists $c''\in(c',C_*)$ such that a.s.
\begin{equation}\label{c''}
\widetilde{D}({\bm x,\bm y})\leq c''D({\bm x,\bm y}),\quad \forall {\bm x,\bm y}\in \mathds{R}^d.
\end{equation}
The bound \eqref{c''} contradicts the fact that $C_*$ is the optimal upper bi-Lipschitz constant defined in \eqref{optimalconst}.

In the remainder of this section, we provide a more comprehensive outline of our proof strategy by describing each section in turn. This will provide a more detailed understanding of the overall structure and the flow of our argument.

\subsubsection{Section \ref{section2}: Some preparations}

Since renormalization is a repeatedly applied tool in this paper which relies heavily on the discrete model, we will prove some estimates on the discrete $\beta$-LRP in Section \ref{section2}. Additionally, we will show the proof of Lemma \ref{an-bounded} by proving a high-dimensional version of \cite[Theorem 1]{Ding-Sly13}.

\subsubsection{Section \ref{bi-lipschitz}: Bi-Lipschitz equivalence of two weak $\beta$-LRP metrics}
Our aim is to establish the bi-Lipschitz equivalence of any two arbitrary weak $\beta$-LRP metrics, using renormalization as our main tool. By renormalization and coupling with the discrete model, we can show that, in essence, every weak $\beta$-LRP metric is comparable with the discrete metric. In particular, there exists a constant $C_0$ such that for each sufficiently small $\varepsilon>0$, with high probability (vaguely speaking) the following estimate holds for any $R>0$. For any ${\bm x,\bm y}\in[-R,R]^d$ with $D({\bm x,\bm y})\ge 2\varepsilon^{\theta/2}R^\theta$,
$$
C_0^{-1}(\varepsilon R)^\theta  \widehat{d}({\bm k}_{\bm x},{\bm k}_{\bm y})\leq D({\bm x,\bm y})\leq C_0(\varepsilon R)^\theta \widehat{d}({\bm k}_{\bm x},{\bm k}_{\bm y}).
$$
Here $ \widehat{d}$ is the metric on the discrete model, and ${\bm k}_{\bm z}:=\lfloor\frac{{\bm z}}{\varepsilon R}\rfloor$ for ${\bm z}\in\mathds{R}^d$. 
Applying the same renormalization and the corresponding discrete metric $\widehat{d}$  
to two different weak $\beta$-LRP metrics $D$ and $\widetilde{D}$,
we can compare the two metrics using $ \widehat{d}$ as an intermediary. Moreover, we prove Proposition \ref{Dxy=0} and Theorem \ref{uniquetheta-1}  using the same renormalization. It is important to stress that in fact we prove the bi-Lipschitz equivalence for two local $\beta$-LRP metrics (defined in Section \ref{bi-lipschitz} and satisfying even weaker conditions than the weak $\beta$-LRP   
metric in Definition \ref{weakmetric}) because we need such bi-Lipschitz equivalence to complete the proof of tightness in Sections \ref{dis-existence} and \ref{existence}.
Finally, we will prove Theorem \ref{theorem-Hausdorff} in Section \ref{Hasd}.

\subsubsection{Section \ref{dis-existence}: Subsequential limiting metrics {\rm(}weak $\beta$-LRP metrics{\rm)} in the discrete model}

To establish the tightness of discrete $\beta$-LRP distances $\widehat{D}_n:=\widehat{a}_n^{-1}\widehat{d}(\lfloor n\cdot\rfloor,\lfloor n\cdot\rfloor)$ (recall $\widehat{a}_n$ in Lemma \ref{an-bounded} above), we use the tail estimate in \cite[Theorem 6.1]{Baumler22} for the discrete model to derive the existence of a subsequential limit of the $\beta$-LRP distance $D$. We will then construct a coupling of $\widehat{D}_n$ and the continuous $\beta$-LRP model $\mathcal{E}$ to demonstrate that every subsequential limiting metric $D$ is a weak $\beta$-LRP metric by verifying each axiom in Definition \ref{weakmetric}, ultimately concluding the proof of Theorem \ref{dis-subsequence-exist}.

\subsubsection{Section \ref{existence}: Subsequential limiting metrics {\rm(}weak $\beta$-LRP metrics{\rm)} in the continuous model}

Similar to what we do in Section \ref{dis-existence}, we establish the tightness of $\beta$-LRP distances $D_n:=a_n^{-1}d_{(1/n,\infty)}$ (recall $a_n$ in Lemma \ref{an-bounded} above) in Section \ref{existence}. Our main tool for the existence of a subsequential limiting metric is \cite[Theorem 6.1]{Baumler22}, which is an analogous version of the tail estimate for the discrete model.
After that, we will then demonstrate that every subsequential limiting metric is a weak $\beta$-LRP metric by verifying each axiom in Definition \ref{weakmetric} using a similar proof as in Section \ref{dis-existence}, ultimately concluding the proof of Theorem \ref{subsequence-exist}.

\subsubsection{Section \ref{section3}: Quantifying the optimality of the optimal bi-Lipschitz constants}
Let $C'\in(c_*,C_*)$. By the definition \eqref{optimalconst} of $c_*$ and $C_*$, it holds with positive probability that there exist points ${\bm x,\bm y}\in \mathds{R}^d$ such that $\widetilde{D}({\bm x,\bm y})>C'D({\bm x,\bm y})$. The purpose of Section \ref{section3} is to prove a quantitative version of this statement  which draws strong inspiration from \cite[Proposition 3.3]{DG23} for the LQG metric.

The following is a simplified version of the main result of Section \ref{section3} (see Proposition \ref{mrinS3}).

\begin{proposition}\label{simplemrinS3}
There exists $p\in(0,1)$, depending only on $d, \beta$ and the laws of $D$ and $\widetilde{D}$, such that for each $C'\in (0,C_*)$, there exists $\varepsilon_0>0$ depending on $d, \beta$ and the laws of $\widetilde{D}$ and $D$ with the following property. For each $\varepsilon\in(0,\varepsilon_0]$,
\begin{equation}\label{prob-good-1}
\mathds{P}[\exists \text{ a ``regular'' pair of points } {\bm u},{\bm v}\in [0,\varepsilon)^d \text{ such that }\widetilde{D}({\bm u},{\bm v})>C'D({\bm u},{\bm v})]\geq p.
\end{equation}
\end{proposition}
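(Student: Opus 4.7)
The strategy for proving Proposition \ref{simplemrinS3} mirrors the approach developed for the LQG metric in \cite[Proposition 3.3]{DG23}, with substantial modifications needed for the LRP setting. First, I would exploit the fact that $C_*$ is a.s.\ a deterministic constant (Proposition \ref{cCdtm}) together with its definition in \eqref{optimalconst}: since $C' < C_*$, for any intermediate $C'' \in (C', C_*)$ there must exist almost surely pairs ${\bm x}, {\bm y} \in \mathds{R}^d$ with $\widetilde{D}({\bm x}, {\bm y}) > C'' D({\bm x}, {\bm y})$. Translation invariance (Axiom IV') together with monotone convergence then yields $R = R(C'') > 0$ such that, with probability at least $1/2$, such a pair exists with ${\bm x}, {\bm y}\in[-R, R]^d$.

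Next, conditional on the existence of such a bad pair $({\bm x}, {\bm y})$, I would take a $D$-geodesic $P$ from ${\bm x}$ to ${\bm y}$ (guaranteed by Axiom I and Proposition \ref{Dxy=0}) and observe that its $\widetilde{D}$-length is at least $\widetilde{D}({\bm x}, {\bm y}) > C'' D({\bm x}, {\bm y})$, while its $D$-length equals $D({\bm x}, {\bm y})$. I would then choose a partition $0 = s_0 < s_1 < \cdots < s_N = D({\bm x}, {\bm y})$ of $P$ parametrized by $D$-length, with step of order $\varepsilon^\theta$, so that by the H\"older estimate in Proposition \ref{LimitContinuousTail} the consecutive points $P(s_{k-1}), P(s_k)$ lie within Euclidean distance at most $\varepsilon$ up to polylogarithmic factors. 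Since $\sum_k D(P(s_{k-1}), P(s_k)) = D({\bm x}, {\bm y})$ (as $P$ is a $D$-geodesic), if every ratio $\widetilde{D}(P(s_{k-1}), P(s_k))/D(P(s_{k-1}), P(s_k))$ were $\le C'$, then the total $\widetilde{D}$-sum would be at most $C' D({\bm x}, {\bm y}) < C'' D({\bm x}, {\bm y})$, contradicting the lower bound on the $\widetilde{D}$-length of $P$ (after refining the partition if necessary so that the $\widetilde{D}$-sum is close to the $\widetilde{D}$-length).

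This pigeonhole step produces some index $k$ with $\widetilde{D}(P(s_{k-1}), P(s_k)) > C' D(P(s_{k-1}), P(s_k))$; setting $({\bm u}, {\bm v}) = (P(s_{k-1}), P(s_k))$ yields a pair at Euclidean distance at most $\varepsilon$ (up to log factors) satisfying the desired ratio bound. A final application of translation invariance (Axiom IV') shifts $({\bm u}, {\bm v})$ into $[0, \varepsilon)^d$, delivering the target event with probability at least some $p > 0$ depending only on $d$, $\beta$, and the laws of $D$ and $\widetilde{D}$. The ``regular'' condition must be built into the selection of the partition, e.g., by requiring $D({\bm u}, {\bm v})$ to be of the typical order $\varepsilon^\theta$ up to constants and by using Axiom II (locality) to enforce independence of the edge configurations near ${\bm u}$ and ${\bm v}$ that will be needed for downstream applications of the pair.

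I expect the main obstacle to be the careful formulation of the ``regular'' condition together with controlling the possibility that $P$ contains discontinuities through long edges (cf.\ Figures \ref{MN2} and \ref{MN3}), which would break the naive Euclidean diameter estimate on sub-segments of $P$ and force one to work with a partition adapted to the long-edge structure of the geodesic. Since a weak $\beta$-LRP metric lacks the exact scale covariance enjoyed by the LQG metric, the propagation of elevated ratios across scales must rely only on the tightness in Axioms V1' and V2' together with the bi-Lipschitz comparison to the discrete chemical distance established in Section \ref{bi-lipschitz}. Making the lower bound on probability genuinely uniform in $\varepsilon \in (0, \varepsilon_0]$, rather than merely along a subsequence, will require combining these ingredients with the quantitative tail estimate in Proposition \ref{LimitContinuousTail}.
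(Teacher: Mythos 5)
There is a genuine gap, and it lies precisely at the step you flag in passing but do not resolve: making the lower bound $p$ uniform in $\varepsilon$. Your argument locates, via pigeonhole along a $D$-geodesic, a pair $({\bm u},{\bm v})$ with elevated ratio $\widetilde{D}/D > C'$ that is Euclidean-close. But the position of this pair is random, and Axiom~IV' gives translation invariance only for deterministic shifts. To land the bad pair inside the \emph{fixed} cube $[0,\varepsilon)^d$ you would need a union bound over the $\asymp(R/\varepsilon)^d$ cells of a grid of mesh $\varepsilon$ in $[-R,R]^d$, which gives a per-cell probability of order $\varepsilon^d/R^d$ and hence $p\to 0$ as $\varepsilon\to 0$. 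The proposition requires $p$ independent of $\varepsilon$; this uniformity is the whole content and cannot be obtained from translation invariance applied to a random location.

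A second, smaller gap: the H\"older estimate of Proposition~\ref{LimitContinuousTail} controls $D$ by Euclidean distance, not the reverse. A segment of a $D$-geodesic with $D$-length $\asymp\varepsilon^\theta$ can have arbitrarily large Euclidean diameter because the geodesic may use a long edge (whose $D$-length is~$0$ by Axiom~III). You acknowledge this obstacle, but the partition-adapted-to-long-edges fix you gesture at is exactly what enters the paper's proof of Lemma~\ref{P[G]}, where the partition is chosen so that consecutive points are either at $\ell^\infty$ distance $<1$ or joined by a long edge; without that, the pigeonholed pair need not be Euclidean-close at all, and cannot be relocated into an $O(\varepsilon)$-cube.

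The paper's proof avoids both problems by working with the contrapositive. It does not try to find and relocate a bad pair; rather, Proposition~\ref{(B)=>G} assumes $\mathds{P}[H_{\varepsilon r}(\alpha,C')]<p$ for small $p$ and shows, via the renormalization of Section~\ref{bi-lipschitz} and the nice-cube machinery (Definitions~\ref{def-E}--\ref{alp-good}, Lemmas~\ref{good-prob}, \ref{Fr4lemma}), that every $D$-geodesic between points at scale $r$ spends a positive fraction of its $D$-length in cubes where $\widetilde{D}\le C'D$ holds. Replacing those segments and using $\widetilde{D}\le C_* D$ on the remainder (see~\eqref{C''D}) yields $\widetilde{D}(\bm x,\bm y)\le C'' D(\bm x,\bm y)$ with $C''<C_*$ \emph{simultaneously} for all pairs at scale $r$, hence $\mathds{P}[G_r(\gamma,q,C'')]<\gamma$ for all $r$, contradicting Lemma~\ref{P[G]}. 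The contradiction makes $p$ automatically independent of $\varepsilon$, because the failure of $\mathds{P}[H_{\varepsilon r}]\ge p$ at any single small scale is already enough to force $C_*\le C''<C_*$. Your forward pigeonhole is morally one ingredient of Lemma~\ref{P[G]}, but on its own it cannot produce the scale-uniform lower bound; the renormalization step, which you do not use, is what converts the single-pair observation into the quantitative statement.
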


The statement that ${\bm u}$ and ${\bm v}$ are ``regular'' in \eqref{prob-good-1} means that these points satisfy several regularity conditions which are presented precisely in Definition \ref{def-H}. These conditions include lower and upper bounds on $D({\bm u},{\bm v})$. We emphasize that the parameter $p$ in Proposition \ref{simplemrinS3} does not depend on $C'$. This will be crucial for our proof later.

We will prove Proposition \ref{simplemrinS3} by contradiction. To this end, we will assume that there are arbitrarily small values $\varepsilon>0$  such that
\begin{equation}\label{Dtilde<C'D}
\mathds{P}[\widetilde{D}({\bm u},{\bm v})\leq C'D({\bm u},{\bm v}), \forall \text{ ``regular'' pairs of points }{\bm u},{\bm v}\in[0,\varepsilon)^d]\geq 1-p,
\end{equation}
and we will derive a contradiction if $p$ is sufficiently small. If $p$ is small enough (depending only on $d, \beta$ and the laws of $D$ and $\widetilde{D}$), then under the assumption of \eqref{Dtilde<C'D}, along with the independence of edges and a union bound via renormalization technique, we can derive the following. For any bounded cube $[-r,r]^d\subset \mathds{R}^d$, it holds with high probability that there are numerous small cubes of side length $\varepsilon$ within $[-r,r]^d$ such that the event in \eqref{Dtilde<C'D} occurs.
This in turn implies that a long path will pass through a significant number of these ``good'' cubes.

We will then work on the high-probability event that we have  many such ``good'' cubes in $[-r,r]^d$. 
Consider the points ${\bm x,\bm y}\in[-r,r]^d$ such that there exists a $D$-geodesic $P$ from ${\bm x}$ to ${\bm y}$ that lies in $[-r,r]^d$. We will replace several segments of $P$ between pairs of ``regular'' points ${\bm u},{\bm v}$ as in \eqref{Dtilde<C'D} with $\widetilde{D}$-geodesics from ${\bm u}$ to ${\bm v}$.
The $\widetilde{D}$-length of each of these geodesics is at most $C'D({\bm u},{\bm v})$. Furthermore, by \eqref{optimalconst}, the $\widetilde{D}$-length of each segment of $P$ which we did not replace is at most $C_*$ times its $D$-length. Hence this produces a path from ${\bm x}$ to ${\bm y}$ with $\widetilde{D}$-length of at most $C''D({\bm x,\bm y})$, where $C''\in (C',C_*)$ is a constant depending only on $C',d, \beta$ and the laws of $D$ and $\widetilde{D}$.

We can show that with high probability, this works for all $D$-geodesics contained in $[-r,r]^d$. Therefore, by taking $[-r,r]^d$ to be arbitrarily large, we reach a contradiction with the definition of $C_*$. As a result, we prove Proposition \ref{simplemrinS3}.

By the symmetry in our hypotheses for $D$ and $\widetilde{D}$, we also get the following analog of Proposition \ref{simplemrinS3} with the roles of $D$ and $\widetilde{D}$ interchanged.

\begin{proposition}\label{simplemrinS3-2}
There exists $p\in(0,1)$, depending only on $d, \beta$ and the laws of $D$ and $\widetilde{D}$, such that for each $c'>c_*$, there exists $\varepsilon_0>0$ depending on $d, \beta$ and the laws of $\widetilde{D}$ and $D$ with the following property. For each $\varepsilon\in(0,\varepsilon_0]$,
\begin{equation}\label{Dtilde<c'D}
\mathds{P}[\exists \text{ a ``regular'' pair of points } {\bm u,\bm v}\in [0,\varepsilon)^d \text{ such that }\widetilde{D}({\bm u,\bm v})<c'D({\bm u,\bm v})]\geq p.
\end{equation}
\end{proposition}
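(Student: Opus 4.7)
The plan is to deduce Proposition \ref{simplemrinS3-2} from Proposition \ref{simplemrinS3} by exploiting the symmetry of the hypotheses in $D$ and $\widetilde{D}$. Both metrics are weak $\beta$-LRP metrics for the same Poisson edge set $\mathcal{E}$, and the bi-Lipschitz comparison \eqref{DDtilde} is symmetric: dividing through by $c_*$ and $C_*$ gives $C_*^{-1}\widetilde{D}(\bm x,\bm y)\leq D(\bm x,\bm y)\leq c_*^{-1}\widetilde{D}(\bm x,\bm y)$. Hence, if we swap the roles of $D$ and $\widetilde{D}$ in the definitions \eqref{optimalconst}, the optimal lower and upper bi-Lipschitz constants for the ordered pair $(\widetilde{D},D)$ are exactly $C_*^{-1}$ and $c_*^{-1}$, respectively. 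In particular $c'>c_*$ is equivalent to $(c')^{-1}<c_*^{-1}$, which is the regime in which Proposition \ref{simplemrinS3} applies to the swapped pair.

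More precisely, I would first observe that every step leading up to Proposition \ref{simplemrinS3} used only: (i) that $D$ and $\widetilde{D}$ are weak $\beta$-LRP metrics coupled to the same $\mathcal{E}$, and (ii) the definition of the upper bi-Lipschitz constant $C_*$. The proof does not favour either metric, so we may reapply the entire argument of Section \ref{section3} verbatim to $(\widetilde{D},D)$ in place of $(D,\widetilde{D})$. Doing so produces a constant $p\in(0,1)$ (depending only on $d$, $\beta$ and the laws of $D,\widetilde{D}$) and an $\varepsilon_0>0$ such that for every $C'\in(0,c_*^{-1})$ and every $\varepsilon\in(0,\varepsilon_0]$,
\begin{equation*}
\mathds{P}\bigl[\exists \text{ a ``regular'' pair }\bm u,\bm v\in[0,\varepsilon)^d\text{ with }D(\bm u,\bm v)>C'\widetilde{D}(\bm u,\bm v)\bigr]\geq p.
\end{equation*}
Setting $C'=(c')^{-1}$ for $c'>c_*$ and rearranging transforms the event to $\widetilde{D}(\bm u,\bm v)<c'D(\bm u,\bm v)$, which is exactly \eqref{Dtilde<c'D}. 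Taking the minimum of the two values of $p$ (from Propositions \ref{simplemrinS3} and \ref{simplemrinS3-2}) and the minimum of the two values of $\varepsilon_0$ yields a common constant that works in both statements.

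The one substantive point to check — and where I expect the only real work — is that the notion of a ``regular'' pair in Definition \ref{def-H} is compatible with the swap. Since the regularity conditions involve upper and lower bounds on the $D$-distance between $\bm u$ and $\bm v$ together with local regularity conditions on $D$ and $\widetilde{D}$, the bi-Lipschitz relation \eqref{DDtilde} immediately allows us to translate $D$-bounds into $\widetilde{D}$-bounds at the cost of the deterministic factors $c_*,C_*$. Thus ``$D$-regular'' and ``$\widetilde{D}$-regular'' pairs differ only by adjusting the thresholds in Definition \ref{def-H} by bounded multiplicative constants, which is absorbed into the constants appearing in the proof. Modulo this bookkeeping, Proposition \ref{simplemrinS3-2} is immediate from Proposition \ref{simplemrinS3} by symmetry, and no new geometric input is required.
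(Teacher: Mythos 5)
Your argument is correct and coincides with the paper's, which deduces Proposition \ref{simplemrinS3-2} (precisely, Proposition \ref{mrinS3-tilde}) from Proposition \ref{simplemrinS3} by observing that the hypotheses on $D$ and $\widetilde{D}$ are symmetric, so the entire Section \ref{section3} argument applies verbatim with the roles of the two metrics interchanged. Your ``one substantive point to check'' is actually moot: the regularity conditions in the swapped proposition are not obtained by translating $D$-bounds into $\widetilde{D}$-bounds via bi-Lipschitz constants, but simply by adopting the swapped definitions (Definitions \ref{def-tildeG} and \ref{def-tildeH}) in which every occurrence of $D$ is replaced by $\widetilde{D}$ and vice versa, so no bookkeeping at all is required.
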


\medskip

\subsubsection{Section \ref{section4}: The core argument}

The idea for the rest of the proof of Theorem \ref{uniqueness} is to show that if $c_*<C_*$, then Proposition \ref{simplemrinS3-2} implies a contradiction to Proposition \ref{simplemrinS3}. We remark that in the overview level Section \ref{section4} follows the framework of \cite[Section 4]{DG23} for the LQG metric, but it is worth emphasizing that substantial additional challenges need to be addressed due to the discontinuity of paths and the lack of Weyl scaling in the LRP model. In this proof, we will employ the multi-scale analysis.

We set
$$
c'=\frac{c_*+C_*}{2},\quad \text{so that }c'\in(c_*,C_*)\  \text{if }c_*<C_*,
$$
and modify the event in \eqref{Dtilde<c'D} by considering its cube version (as defined in Definition \ref{interval-good}). As presented in Proposition \ref{simplemrinS3-2}, the probability of this event has a positive lower bound $p$. However, $p$ may be exceedingly small, which calls for applications of multi-scale analysis to increase it value.

To achieve this, we fix three parameters $\alpha$ (small enough), $K$ ($K\gg 1/\alpha$) and $\varepsilon$ (small enough).
We begin by dividing $\mathds{R}^d$ into small cubes of side length $\varepsilon/K$, and this is our first scale. We refer to such a cube as \textit{nice} if the event in \eqref{Dtilde<c'D}  (with $\varepsilon/K$ instead of $\varepsilon$ and small cubes instead of ${\bm u,\bm v}$) occurs (see Definition \ref{interval-good}). According to the independence of the Poisson point process, the probability of such a cube being nice has a lower bound $p$, which however can be exceedingly small.
To lift this probability up, we next merge every collection of $(\alpha^{2.5}K)^d$ cubes (of side length $\varepsilon/K$) into a cube of side length $\alpha^{2.5}\varepsilon$, forming our second scale.
A cube of this scale is considered \textit{very nice} if it contains at least one nice cube and satisfies certain regularity conditions.
These conditions ensure that if a path encounters a nice cube whose side length is $\varepsilon/K$ within a very nice cube with side length $\alpha^{2.5}\varepsilon$, it will spend a certain amount of time near the cube other than immediately jumping out of the cube (see Definition \ref{very nice} for more details).
After forming the second scale, we obtain larger cubes of side length $\alpha^2\varepsilon$ by merging every collection of $\alpha^{-0.5d}$ cubes (of side length $\alpha^{2.5}\varepsilon$). These cubes constitute our third scale and each such cube is referred to as \textit{very very nice} if it contains at least one very nice cube (see Definition \ref{very very nice}).
With the definitions described above and the independence of Poisson point process, we can observe that the probability of a cube being very very nice is close to 1 since $\alpha$ is sufficiently small.
Lastly, we refer to a cube of side length $\varepsilon$ (the fourth scale) as \textit{super good} if each of its sub-cubes (of side length $\alpha^2\varepsilon$) is very very nice and it satisfies certain regularity conditions (see Definition \ref{super good}). In other words, a cube $V_\varepsilon({\bm z}_k)\subset \mathds{R}^d$ (of side length $\varepsilon$) is considered super good
if it in a sense ``evenly'' contains enough ``regular'' pairs of small cubes (in the smallest scale), denoted by $(J_{k,i}^{(1)},J_{k,i}^{(2)})$, which satisfy
\begin{equation}\label{Dtilde<c'D-2}
\widetilde{D}(J_{k,i}^{(1)}, J_{k,i}^{(2)})<c'D(J_{k,i}^{(1)}, J_{k,i}^{(2)}),
\end{equation}
and several regularity conditions, such as
\begin{itemize}
\item[(1)] ${\rm dist}(J_{k,i}^{(2)}, J_{k,i+1}^{(1)})\geq \alpha^{2.5}\varepsilon/2$;
\item[(2)] there is a $D$-geodesic $P$ from $J_{k,i}^{(1)}$ to  $J_{k,i}^{(2)}$ such that $P$ is contained in a small neighborhood of $J_{k,i}^{(1)}\cup J_{k,i}^{(2)}$;
\item[(3)] every path must spend a certain amount of time before escaping a neighborhood of $J_{k,i}^{(1)}\cup J_{k,i}^{(2)}$.
\end{itemize}
Because of the independence of Poisson point process, the probability that a cube $V_\varepsilon({\bm z}_k)$ being super good is very close to 1. This implies that any path with sufficient length contained in a compact set must encounter a positive fraction of super good cubes and spend a significant amount of time within them.


It is worth emphasizing that we used multi-scale analysis to lift up the probability of a cube being super good mainly because the paths in our model are non-continuous with respect to the Euclidean topology of $\mathds{R}^d$, which is fundamentally different from the continuous paths in the LQG model.
In fact, in the LQG model, \cite{GM21,DG23} mainly used a series of concentric annuli to increase the probability for the existence of pairs of ``good'' points. Then based on the continuity of the paths, once a path hits some ball of small radius, it must pass through a ``good'' annulus which contains many pairs of ``good'' points. 
However, likely the geodesics in our model are discontinuous.
Even if a path hits a small cube, it may not necessarily hit a neighborhood of that cube, so the aforementioned method in \cite{GM21,DG23} is not applicable in our setting. Therefore, here we use multi-scale analysis to lay out super good cubes with a sufficiently high density, making it essentially impossible for paths to avoid all the super good cubes.

After a geodesic hits a super good cube $V_\varepsilon({\bm z}_k)$,  we want to attract it further so that it passes through some ``regular'' pairs of small cubes $(J_{k,i}^{(1)}, J_{k,i}^{(2)})$ in $V_\varepsilon({\bm z}_k)$. This will allow us to use \eqref{Dtilde<c'D-2}. To accomplish this,
 we concatenate these ``regular'' pairs of small cubes by (randomly) adding a number of edges between $J_{k,i}^{(2)}$ and $J_{k,i+1}^{(1)}$ (This is analogous to \cite{GM21,DG23} by subtracting a large bump function and applying the Weyl scaling valid for the LQG metric;  see more detailed discussions in the next paragraph). This results in a new metric denoted as $D_k^+$.
Note that the regularity condition (1) implies that $D(J_{k,i}^{(2)},J_{k,{i+1}}^{(1)})$ is not small (with high probability), while in the new metric $D_k^+$, we reduce this distance to 0 with high probability.
The regularity condition (3) suggests that when a $D_k^+$-geodesic exits $J_{k,i}^{(2)}$, it is more likely to go through the long edges we added, so it will hit $J_{k,{i+1}}^{(1)}$ and then $J_{k,{i+1}}^{(2)}$.
Additionally, the regularity conditions (2) and (3) ensure that $D$-geodesics between $J^{(1)}_{k,i}$ and $J_{k,i}^{(2)}$ are the same as $D_k^+$-geodesics. This is because at least one end point of the long edges we add is far from the small neighborhood of $J^{(1)}_{k,i}\cup J^{(2)}_{k,i}$, while $D$-geodesics from $J^{(1)}_{k,i}$ to $J^{(2)}_{k,i}$ are only within their small neighborhood and thus are not affected by the added edges (with high probability). Hence, the relation \eqref{Dtilde<c'D-2} still holds even after adding edges.
Consequently, the above regularity conditions guarantee that when a $D_k^+$-geodesic hits $V_\varepsilon({\bm z}_k)$, it is likely to pass through some ``regular'' pairs of small cubes which satisfy \eqref{Dtilde<c'D-2}. This, combined with the triangle inequality, will help us obtain some estimates, such as  Proposition \ref{simpleprop4.3} below.

 Let us further illustrate the comparison between the methods used to attract geodesics to pass through ``regular'' pairs of small cubes in our model and those used in \cite{GM21,DG23} for the LQG model. In the LQG model, when a geodesic hits some good annulus,
the method of subtracting a large and essentially constant function $f$ from the Gaussian free field within the annulus is used to attract geodesics to the pairs of good points inside and increase the probability of hitting them.
This results in a new metric which still satisfies \eqref{Dtilde<c'D-2} for those pairs of good points by the Weyl scaling property (i.e. $D_{h-f}=\e^{-f}D_{h}$). However, in our model, the Weyl scaling property is no longer applicable, and \eqref{Dtilde<c'D-2} may no longer hold after we add edges inside or around the good cubes. To address this challenge, we add long edges determined by a new independent Poisson point process, which does not change $D$-geodesics between  $J_{k,i}^{(1)}$ and  $J_{k,i}^{(2)}$, but links these good cubes together to attract geodesics to going along the desired path. Note that the edges we add are random and thus we need a more careful treatment (such as estimates on tail probabilities) to obtain the desired estimates.

Most of Section \ref{section4} is devoted to proving the estimate which roughly speaking says the following.

\begin{proposition}\label{simpleprop4.3}
Assume that $c_*<C_*$. As $\delta\to0$, it holds uniformly over all ${\bm x,\bm y}\in\mathds{R}^d$ that
$$
\mathds{P}\left[\widetilde{D}({\bm x,\bm y})\geq (C_*-\delta)D({\bm x,\bm y}),\ \text{regularity conditions}\right]=O_\delta(\delta^\mu),\quad \forall \mu>0.
$$
\end{proposition}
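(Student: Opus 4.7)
The plan is to prove the proposition by contradiction: under the standing hypothesis $c_*<C_*$, I would show that the event in the proposition forces a strict improvement $\widetilde D({\bm x},{\bm y})\le(C_*-\delta_0)D({\bm x},{\bm y})$ for some fixed $\delta_0>0$ depending only on $d,\beta$ and the laws of $D,\widetilde D$, on an event whose complement has superpolynomial probability in $\delta$. Setting $c'=(c_*+C_*)/2$, Proposition \ref{simplemrinS3-2} gives a uniform positive lower bound on the probability that a small cube contains a regular pair $(J^{(1)},J^{(2)})$ with $\widetilde D(J^{(1)},J^{(2)})<c'D(J^{(1)},J^{(2)})$. The multi-scale \emph{nice $\to$ very nice $\to$ very very nice $\to$ super good} construction described in the section outline lifts this bound: fixing $\alpha$ small and $K\gg 1/\alpha$, the probability that a cube $V_\varepsilon({\bm z})$ is super good can be made $\ge 1-\alpha^{c_0}$ for some $c_0>0$, uniformly in $\varepsilon$, using Axiom IV' together with the independence coming from Axiom II.

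I would then work under the proposition's regularity conditions, which localize every $D$-geodesic $P$ from ${\bm x}$ to ${\bm y}$ inside a compact region $B$ of Euclidean diameter comparable to $|{\bm x}-{\bm y}|$. Partitioning $B$ into $N$ cubes of side length $\varepsilon$, locality gives that super good events of well-separated cubes are independent. For each super good cube $V_\varepsilon({\bm z}_k)$, I would introduce an \emph{independent} Poisson process of long edges connecting the consecutive regular pairs $(J_{k,i}^{(2)},J_{k,i+1}^{(1)})$ inside, forming an auxiliary metric $D_k^+$. Conditions (2) and (3) on a super good cube guarantee that these added edges have at least one endpoint far from every $J_{k,i}^{(1)}\cup J_{k,i}^{(2)}$, so $D$-geodesics from $J_{k,i}^{(1)}$ to $J_{k,i}^{(2)}$ are unaffected, and the relation \eqref{Dtilde<c'D-2} persists in $D_k^+$; meanwhile, condition (3) forces any $D_k^+$-geodesic exiting $J_{k,i}^{(2)}$ to take the added edge rather than the ambient LRP geometry, so with positive probability uniform in $\delta$ the $D_k^+$-geodesic traverses every regular pair in the cube. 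Applying the triangle inequality for $\widetilde D$ along the traversed pairs and controlling the complementary segments by $C_*$ via \eqref{DDtilde} then yields a saving of a factor strictly less than $C_*$ on the portion of $P$ inside the cube.

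Summing over the cubes, if $M$ counts the visited super good cubes on which the attraction succeeds, one obtains $\widetilde D({\bm x},{\bm y})\le (C_*-\delta_1 M/N)D({\bm x},{\bm y})$ for a deterministic $\delta_1>0$. Choosing $\varepsilon=\delta^a$ for a small $a>0$ makes $N\ge \delta^{-\kappa}$ for some fixed $\kappa>0$; a Chernoff bound based on the locality-induced independence of super good events and the independence of the auxiliary Poisson processes gives
\begin{equation*}
    \mathds{P}[M\le N/2]\le \exp(-c\delta^{-\kappa}),
\end{equation*}
which is $O_\delta(\delta^\mu)$ for every $\mu>0$. On the complementary event, the saving $\delta_1/2$ exceeds $\delta$ whenever $\delta<\delta_1/2$, contradicting $\widetilde D\ge(C_*-\delta)D$ and yielding the desired bound.

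The main obstacle is the attraction step inside a super good cube: I must show that a $D_k^+$-geodesic actually \emph{uses} the added edges with sufficiently high probability and does not shortcut around the regular pairs, and that this $D_k^+$-based information translates back into a bound on $\widetilde D$ itself (which is not affected by the added edges). The first point relies crucially on condition (3), ensuring that any alternative path out of $J_{k,i}^{(2)}$ is much longer than the added edge; since the added edges are themselves random, this requires tail estimates on their scopes and endpoints. The second point uses only the triangle inequality for $\widetilde D$ applied to the sequence of regular pairs that the $D_k^+$-geodesic visits, since \eqref{Dtilde<c'D-2} is a statement about the original $\widetilde D$. Additional care is needed to make the bound uniform in ${\bm x},{\bm y}$; this is handled by reducing to finitely many dyadic regimes via Axiom IV'.
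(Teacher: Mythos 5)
Your proposal correctly reproduces the \emph{ingredients} of the paper's Section~\ref{section4} (the multi-scale nice/very nice/very very nice/super good hierarchy, the added Poisson edges $\mathcal{E}_Z^+$, the role of the regularity conditions in pinning geodesic segments inside a super good cube, and the eventual reduction of $\varepsilon$ to a small power of $\delta$), but it is missing the mechanism that the paper actually uses to turn these ingredients into a probability bound, and the direct translation step you propose in its place has a genuine gap.

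The problematic step is the inference ``$D_k^+$-geodesic traverses the regular pairs $\Rightarrow$ $\widetilde D(\bm x,\bm y) \le (C_*-\delta_1 M/N)D(\bm x,\bm y)$''. The added edges live only in $\mathcal{E}_Z^+$, not in $\mathcal{E}$, so $\widetilde D$ across an added edge $\langle \bm u,\bm v\rangle$ is \emph{not} zero; consequently, walking along the $D_Z^+$-geodesic and applying the triangle inequality for $\widetilde D$ produces a sum whose ``gap'' terms include the full $\widetilde D$-distance across every added edge. Likewise, the $D$-lengths along the decomposed $D_Z^+$-geodesic are not controlled by $D(\bm x,\bm y)$: the $D_Z^+$-length of that geodesic is $D_Z^+(\bm x,\bm y)\le D(\bm x,\bm y)$, but its \emph{$D$-length} can be much larger, precisely because the added edges have $D_Z^+$-length zero but positive $D$-length. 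So neither side of your proposed triangle-inequality decomposition lands on a quantity comparable to $D(\bm x,\bm y)$. The paper explicitly flags this issue — the attraction produced by adding edges only constrains the $D_Z^+$-geodesic, not the original $D$-geodesic $P$, and there is no direct way to read off a $\widetilde D$ bound from the $D_Z^+$-geodesic's trajectory. This is exactly the obstacle that the counting argument is designed to circumvent.

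What the paper does instead — and what your proposal omits entirely — is the change-of-measure/counting argument encapsulated in Propositions~\ref{Prop4.5DG21}, \ref{Prop4.6DG21} and Lemma~\ref{Lemma4.4DG21}. One introduces two families of events indexed by $Z\subset[1,\varepsilon^{-d}]_\mathds{Z}$: $\mathsf{F}_{Z,\varepsilon}(\mathcal{E},\mathcal{E}_Z^+)$ (the $D_Z^+$-geodesic passes through regular pairs after \emph{adding} edges), and $\mathsf{G}^-_{Z,\varepsilon}(\mathcal{E}_Z^-,\mathcal{E})$ (the original $D$-geodesic $P$ passes through regular pairs relative to the metric obtained by \emph{deleting} edges). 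The identity $\mathsf{F}_{Z,\varepsilon}(\mathcal{E},\mathcal{E}_Z^+)=\mathsf{G}^-_{Z,\varepsilon}(\mathcal{E},\mathcal{E}_Z^+)$ combined with the Radon-Nikodym bound $M_1^{-\#Z}\le \mathds{P}[\mathsf{G}^-_{Z,\varepsilon}(\mathcal{E}_Z^-,\mathcal{E})]/\mathds{P}[\mathsf{G}^-_{Z,\varepsilon}(\mathcal{E},\mathcal{E}_Z^+)]\le M_1^{\#Z}$ is what transfers information from the artificial metric $D_Z^+$ back to the original one. Proposition~\ref{Prop4.5DG21} shows that on the bad event $\mathscr{G}_r^\varepsilon$, the number of $Z$ with $\#Z\le m$ and $\mathsf{F}_{Z,\varepsilon}$ occurring grows like $\varepsilon^{-c_3 m}$; Proposition~\ref{Prop4.6DG21} shows a.s.\ the number of $Z$ with $\mathsf{G}^-_{Z,\varepsilon}(\mathcal{E}_Z^-,\mathcal{E})$ occurring is at most $C_2^m$. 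Comparing expectations gives $\mathds{P}[\mathscr{G}_r^\varepsilon]\preceq (C_2 M_1)^m \varepsilon^{c_3 m}$ for every $m$, and sending $m\to\infty$ yields the superpolynomial decay. Without this counting step, the multi-scale hierarchy and edge-addition construction in your proposal do not, on their own, produce a bound on $\widetilde D(\bm x,\bm y)$ — the Chernoff estimate you invoke only controls how many super good cubes exist, not whether the geodesic and the regular pairs interact in the way required.
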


To prove Proposition \ref{simpleprop4.3}, as we mentioned before, we need to show that with high probability, there are numerous super good cubes $V_\varepsilon({\bm z}_k)$ such that a $D$-geodesic $P$ from ${\bm x}$ to ${\bm y}$ passes through some ``regular'' pairs of small cubes $(J_{k,i}^{(1)}, J_{k,i}^{(2)})$ in it. This with \eqref{Dtilde<c'D-2} in turn will show that $\widetilde{D}({\bm x,\bm y})$ is bounded away from $C_*D({\bm x,\bm y})$ (see \eqref{Dtilde<(C*-)D}).

Inspired by the idea in \cite[Section 4]{DG23}, we will show $P$ hits many super good cubes using an argument based on counting the number of events of a certain type which occur. More precisely, for sufficiently small $\varepsilon>0$, we divide $[-R,R]^d$ uniformly into $1/(2\varepsilon)^d$ small cubes of side length $\varepsilon R$, denoted by $V_{\varepsilon R}({\bm z}_k)$.
For a set $Z\subset (\varepsilon R)[-1/\varepsilon,1/\varepsilon]_\mathds{Z}^d$, we will let $\mathsf{F}_{Z,\varepsilon}(\mathcal{E},\mathcal{E}_Z^+)$ be (roughly speaking) the event that the following holds.
\begin{itemize}
\item[(1)] For ${\bm z}_k\in Z$, $V_{3\varepsilon R}({\bm z}_k)$ is super good with respect to $\mathcal{E}$.

\item[(2)] A $D$-geodesic from ${\bm x}$ to ${\bm y}$ hits $V_{\varepsilon R}({\bm z}_k)$ for each ${\bm z}_k\in Z$.

\item [(3)] For all ${\bm z}_k\in Z$, a $D_Z^+$-geodesic from ${\bm x}$ to ${\bm y}$ passes through some ``regular'' pair of cubes $(J_{k,i}^{(1)}, J_{k,i}^{(2)})$ in $V_{\varepsilon R}({\bm z}_k)$. Here $D_Z^+$ is the metric after adding edges to concatenate ``regular'' pairs of small cubes in all $V_{\varepsilon R}({\bm z}_k) $ for $ {\bm z}_k\in Z$.
\end{itemize}

In addition, we define a new edge set $\mathcal{E}_Z^-$ and a corresponding metric $D_Z^-$ after eliminating  some long edges between pairs of nice cubes which is in some sense an inverse procedure of adding edges mentioned above. We define $\mathsf{G}_{Z,\varepsilon}^-(\mathcal{E}_Z^-,\mathcal{E})$ similar to  $\mathsf{F}_{Z,\varepsilon}(\mathcal{E},\mathcal{E}_Z^+)$,  but using  $D_Z^-$ instead of $D$ and using $D$ instead of $D_Z^+$, i.e., $\mathsf{G}_{Z,\varepsilon}^-(\mathcal{E}_Z^-,\mathcal{E})$ is the event that the following hold.
 \begin{itemize}
\item[(1)] For ${\bm z}_k\in Z$, $V_{3\varepsilon R}({\bm z}_k)$ is super good with respect to $\mathcal{E}_Z^- $.

\item[(2)] A $D_Z^-$-geodesic from ${\bm x}$ to ${\bm y}$ hits $V_{\varepsilon R}({\bm z}_k)$ for each ${\bm z}_k\in Z$.

\item [(3)] For all ${\bm z}_k\in Z$, a $D$-geodesic from ${\bm x}$ to ${\bm y}$ passes through some ``regular'' pair of cubes $(J_{k,i}^{(1)}, J_{k,i}^{(2)})$ in $V_{\varepsilon R}({\bm z}_k)$.
\end{itemize}

Using basic properties of Poisson distribution, one could see that there is a constant $M_1>0$ depending only on the laws of $D$ and $\widetilde{D}$ such that
      \begin{equation}\label{RN}
        M_1^{-\# Z}\le\frac{\mathds{P}[\mathsf{G}_{Z,\varepsilon}^-(\mathcal{E}_Z^-,\mathcal{E})]}
        {\mathds{P}[\mathsf{G}_{Z,\varepsilon}^-(\mathcal{E},\mathcal{E}_Z^+)]}\le M_1^{\# Z},
    \end{equation}
(see Lemma \ref{Lemma4.4DG21}). We will eventually take a sufficiently large number $m$, which does not depend on ${\bm x,\bm y}$ or $\varepsilon$.
From \eqref{RN}, we can infer that the number of sets $Z$ with $\# Z\leq m$ for which $\mathsf{F}_{Z,\varepsilon}(\mathcal{E},\mathcal{E}_Z^+)$ occurs should be comparable to the number of such sets for which $\mathsf{G}_{Z,\varepsilon}^-(\mathcal{E}_Z^-,\mathcal{E})$ occurs.

Additionally, if $\varepsilon $ is small enough, we can show that the number of sets $Z$ with $\# Z\leq m$ such that $\mathsf{F}_{Z,\varepsilon}(\mathcal{E},\mathcal{E}_Z^+)$ occurs grows like a positive power of $\varepsilon^{-m}$ (refer to Proposition \ref{Prop4.5DG21}). As previously mentioned, there are numerous sets $Z_0$ with $V_{\varepsilon R}({\bm z}_k)$ being supper good and hit by $P$ for every ${\bm z}_k\in Z_0$. The objective is to produce many sets $Z\subset Z_0$ for which these criteria hold, along with the additional condition that a $D_Z^+$-geodesic, denoted by $P_Z^+$, passes through at least one ``regular'' pair of small cubes in $V_{\varepsilon R}({\bm z}_k)$.
For this, we start with a set $Z_0$ satisfying the aforementioned properties and iteratively remove the ``bad'' points ${\bm z}_k\in Z_0$ where such $D_Z^+$-geodesic $P_Z^+$ from ${\bm x}$ to ${\bm y}$ does not pass through any ``regular'' pair of small cubes in it. This procedure results in a set $Z\subset Z_0$ where $\mathsf{F}_{Z,\varepsilon}(\mathcal{E},\mathcal{E}_Z^+)$ occurs, and $\#Z$ is not too much smaller than $\#Z_0$. Refer to the proof of Lemma \ref{Lemma4.13DG21}.

By combining the previous two paragraphs with an elementary calculation, we can conclude that with high probability, there are lots of sets $Z$ with $\#Z\leq m$ such that $\mathsf{G}_{Z,\varepsilon}^-(\mathcal{E}_Z^-,\mathcal{E})$ occurs. In particular, there must be lots of cubes $V_{\varepsilon R}({\bm z}_k)$ for which $P$ passes through at least one pair of nice cubes in it. As mentioned earlier, this gives Proposition \ref{simpleprop4.3}.

Once Proposition \ref{simpleprop4.3} is proven, a union bound over many pairs of points ${\bm x,\bm y}\in[-1,1]^d$ leads to the following approximation (refer to Lemma \ref{delta to 0}).

\begin{proposition}\label{{delta to 0}-2}
Assume that $c_*<C_*$. We have
$$
\lim_{\delta\to0}\mathds{P}\left[\exists\ \text{a ``regular'' pair of points ${\bm x,\bm y}\in [-1,1]^d$ s.t. }\widetilde{D}({\bm x,\bm y})\geq (C_*-\delta)D({\bm x,\bm y})\right]=0.
$$
\end{proposition}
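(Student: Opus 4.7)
The plan is to upgrade the pointwise-in-$(\bm x,\bm y)$ bound of Proposition \ref{simpleprop4.3} to one uniform over $(\bm x,\bm y)\in[-1,1]^d$ by a union bound over a polynomially fine deterministic net of $[-1,1]^d$, combined with the Hölder-type tail estimate of Proposition \ref{LimitContinuousTail} to fill in the gaps between net points. The reason this works is precisely that Proposition \ref{simpleprop4.3} provides decay faster than any polynomial in $\delta$, so the entropy cost of a polynomial net may be absorbed.

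First I fix a large exponent $A$, to be chosen in terms of $d$, $\theta$, and the regularity constants of Definition \ref{def-H}, and set $\mathcal{N}_\delta:=[-1,1]^d\cap(\delta^A\mathds{Z})^d$, of cardinality $O(\delta^{-Ad})$. For each ordered pair $(\bm x_0,\bm y_0)\in\mathcal{N}_\delta^2$, Proposition \ref{simpleprop4.3} applied with $2\delta$ in place of $\delta$ (and a slightly relaxed form of the regularity conditions, to be discussed below) gives a bound $O(\delta^\mu)$ for any $\mu>0$; choosing $\mu>2Ad+1$ and union bounding over the $O(\delta^{-2Ad})$ net pairs contributes only $O(\delta)$ to the probability of the bad event restricted to net pairs.

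Next I apply Proposition \ref{LimitContinuousTail} to both $D$ and $\widetilde{D}$ with $R=1$ and threshold $M=\delta^{-\kappa}$ for a small $\kappa>0$. The exponential tail there yields a \emph{good continuity} event $\mathcal{G}_\delta$ with $\mathds{P}[\mathcal{G}_\delta]\to 1$, on which
\begin{equation*}
D(\bm u,\bm v),\ \widetilde{D}(\bm u,\bm v)\ \le\ \delta^{-\kappa}\,\|\bm u-\bm v\|_\infty^{\theta}\log\frac{4}{\|\bm u-\bm v\|_\infty}\quad\forall \bm u,\bm v\in[-1,1]^d.
\end{equation*}
Hence on $\mathcal{G}_\delta$, the distances from any $\bm x\in[-1,1]^d$ to its nearest $\bm x_0\in\mathcal{N}_\delta$ are $O(\delta^{A\theta-\kappa-o(1)})$ in both metrics, and the triangle inequality yields control of $|D(\bm x,\bm y)-D(\bm x_0,\bm y_0)|$ and the analogous $\widetilde{D}$-difference of the same order. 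Since the regularity conditions in Definition \ref{def-H} include a $\delta$-independent lower bound on $D(\bm x,\bm y)$, choosing $A$ so large that $A\theta-\kappa$ exceeds all the exponents arising from the regularity renders this additive error negligible against the multiplicative factor $\delta D(\bm x,\bm y)$. Consequently, on $\mathcal{G}_\delta$, the existence of a regular pair $(\bm x,\bm y)$ with $\widetilde{D}(\bm x,\bm y)\ge(C_*-\delta)D(\bm x,\bm y)$ forces the inequality $\widetilde{D}(\bm x_0,\bm y_0)\ge(C_*-2\delta)D(\bm x_0,\bm y_0)$ at the corresponding net pair, and the conclusion follows by combining the two previous steps.

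The main obstacle will be ensuring that the full list of regularity conditions in Definition \ref{def-H}, and not merely the two-point bounds on $D(\bm x,\bm y)$, transfer cleanly from $(\bm x,\bm y)$ to the nearby net pair $(\bm x_0,\bm y_0)$: conditions that mention intrinsic diameters of cubes, exit times of balls, or the location of $D$-geodesics are not obviously continuous in the endpoints. I expect to handle this by building a quantitative slack (polynomial in $\delta$, with exponent strictly smaller than $A\theta-\kappa$) into the definition of \emph{regular}, so that any perturbation of the endpoints of size $\delta^A$ automatically preserves the event; Proposition \ref{simpleprop4.3} will still apply to this slackened variant since its proof depends only on the qualitative structure of the regular event, not on its sharp thresholds.
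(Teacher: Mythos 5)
Your outline follows the same route as the paper: the paper's Lemma \ref{delta to 0} is proven precisely by a union bound over a polynomially fine net $\varepsilon^{\nu}r\mathds{Z}^d$ (with $\nu=4$, playing the role of your exponent $A$), which is feasible because of the superpolynomial decay in Proposition \ref{Prop4.3DG21}, followed by the continuity estimate of Proposition \ref{LimitContinuousTail} (paper's events $A$ and $B$, your event $\mathcal{G}_\delta$) and a triangle-inequality transfer of the bound from net points to general points, with the additive slack $(\varepsilon r)^\theta$ converted to a multiplicative one using the $\delta$-independent lower bound $D(\bm x,\bm y)\geq (b\alpha r)^\theta$. Your chain $\delta=\varepsilon^{3\theta/2}/2$-type bookkeeping and the choice $\mu>2Ad+1$ in the union bound match the paper's arithmetic.

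The one place where your proposal is genuinely vague is the ``main obstacle'' you flag at the end, and the proposed fix does not quite suffice on its own. The bad event bounded in Proposition \ref{Prop4.3DG21} (the formal form of Proposition \ref{simpleprop4.3}) is a \emph{conjunction} of the near-saturation inequality with auxiliary conditions that are \emph{global configuration events}, not two-point conditions: namely that the $D$-geodesic crosses at least $\varepsilon^{-\theta/4}$ cubes and that it passes through a prescribed number of super super good cubes (conditions (3) and (4) of Definition \ref{G}). These cannot be made robust under a $\delta^A$-perturbation of the endpoints merely by loosening thresholds inside the definition of \emph{regular}, because they depend on the edge configuration throughout $V_r(\bm 0)$. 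The paper addresses this with a separate lemma (Lemma \ref{PG(2)(3)}), which shows that those two auxiliary conditions hold with probability tending to one uniformly over all regular pairs, so that (after a union bound) their failure can be discarded and the remaining possibility is exactly that the near-saturation inequality fails. The intermediate statement combining the union bound with this verification is Lemma \ref{DGLem4.19}, and it is that lemma, not Proposition \ref{simpleprop4.3} directly, that the paper pushes through the continuity step. You would need something equivalent to Lemma \ref{PG(2)(3)} to close your argument; the ``quantitative slack in the definition of regular'' idea does handle the $D(\bm x,\bm y)$ lower bound (as in the paper's passage from $(b\alpha r)^\theta$ to $(b\alpha r)^\theta/2$) and the geodesic-containment condition, but not the cube-counting conditions.
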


Proposition \ref{{delta to 0}-2} is incompatible with Proposition  \ref{simplemrinS3} since the parameter $p$ in Proposition  \ref{simplemrinS3} does not depend on $C'$. We thus obtain a contradiction to the assumption that $c_*<C_*$, so we conclude that $c_*=C_*$ and hence Theorem \ref{uniqueness} holds.

\section{Some preparations}\label{section2}

Given that renormalization will be applied frequently in proving the main results, it would be advantageous to have a foundational understanding of some estimates for the discrete long-range percolation model. Therefore, we study that in this section. The other goal of this section is to spell out an explicit proof of Lemma 1.6, where the main work was carried out in \cite{Baumler22}.

To begin with, let us recall the discrete $\beta$-LRP model and recall that $\widehat{d}$ is the chemical distance for the discrete $\beta$-LRP model.
As stated in \cite[Theorem 1.1]{Baumler22}, the following theorem shows that the distance between the two sites ${\bm 0}$ and $\bm u$ grows like $|\bm u|^\theta$ for some $\theta\in (0,1)$.

\begin{theorem}\label{discrete-dist}
For $d\geq 1$ and all $\beta>0$, there exists $\theta=\theta(d, \beta)\in(0,1)$ such that for any $\bm u\in\mathds{Z}^d$,
$$
\widehat{d}({\bm 0},\bm u)\asymp_P |\bm u|^\theta.
$$
\end{theorem}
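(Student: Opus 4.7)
My plan is to follow the approach of \cite{Baumler22} (which already handles general $d$) and to spell out the extension from the estimate along the ``diagonal'' $\widehat{d}(\bm 0, n\bm 1) \asymp_P n^\theta$ to arbitrary directions $\bm u\in\mathds{Z}^d$. Writing $\widehat{a}_n$ for the median of $\widehat{d}(\bm 0, n\bm 1)$, I would first prove $\widehat{a}_n \asymp n^\theta$ for some $\theta \in (0,1)$, and then combine this with the exponential upper tail from \cite[Theorem~6.1]{Baumler22}, together with a matching lower tail, to upgrade to the pointwise scaling $\widehat{d}(\bm 0, n\bm 1) \asymp_P n^\theta$.

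The existence of the exponent will follow from an approximate submultiplicativity $\widehat{a}_{nm} \lesssim \widehat{a}_n \, \widehat{a}_m \cdot (\log n)^{O(1)}$, obtained by renormalization: tile $[0,nm]^d_\mathds{Z}$ by boxes of side $m$, so that the connectivity pattern between boxes induced by the long edges in \eqref{connectprob} is again (approximately) LRP at scale $m$ thanks to the scaling invariance of the intensity $\beta/|\bm x-\bm y|^{2d}$; any path in the coarse lattice can then be lifted to the original lattice at cost at most $\widehat{a}_m$ per coarse step with high probability. Fekete's lemma applied to $\log \widehat{a}_{2^k}$ gives the limit $\theta = \lim_{k\to\infty} \log\widehat{a}_{2^k}/(k\log 2) \in [0,1]$. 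The strict inequality $\theta < 1$ is obtained from the existence, at every scale, of a long edge that shortcuts a macroscopic Euclidean distance with positive probability bounded below uniformly in the scale. The strict inequality $\theta > 0$ is a quantitative lower bound on the graph distance: because the expected number of long edges of scope $\asymp n$ incident to a given box of side $n$ is $O(1)$, a path from $\bm 0$ to $n\bm 1$ that uses only short hops plus finitely many long jumps cannot reach Euclidean scale $\gtrsim n$ too quickly, which yields a multiplicative lower bound of the form $\widehat{a}_{2n}\ge(1+\delta)\widehat{a}_n$ for some $\delta>0$.

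With the diagonal scaling $\widehat{a}_n \asymp n^\theta$ and the sub-Gaussian-type tail estimate of \cite[Theorem~6.1]{Baumler22} in hand, the concentration $\widehat{d}(\bm 0, n\bm 1) \asymp_P n^\theta$ follows via Chernoff-type bounds applied to both tails. To extend from $n\bm 1$ to arbitrary $\bm u$ with $|\bm u|\to\infty$, I would sandwich $\widehat{d}(\bm 0, \bm u)$ between diagonal distances through the triangle inequality
\begin{equation*}
    \widehat{d}(\bm 0, \lfloor c|\bm u|\rfloor \bm 1) \le \widehat{d}(\bm 0, \bm u) + \widehat{d}(\bm u, \lfloor c|\bm u|\rfloor \bm 1)
\end{equation*}
(and a reversed version for the upper bound), using translation invariance together with the coordinate-axis symmetries of $\mathds{Z}^d$ to compare $\widehat{d}(\bm u, \lfloor c|\bm u|\rfloor \bm 1)$ with a diagonal distance at scale $|\bm u|$.

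The step I expect to be the main obstacle is establishing the approximate submultiplicativity $\widehat{a}_{nm} \lesssim \widehat{a}_n \, \widehat{a}_m$ up to sub-polynomial corrections. Although the continuous model is exactly scale-invariant, the discrete cutoff at scale $1$ must be controlled so that the short-range edges contribute only a negligible correction to the renormalized metric, and one must also ensure that the coarse-grained connectivity is close enough to the original LRP law at scale $m$ to reuse the scaling exponent. This is precisely the technical core of \cite{Baumler22}, and I would import the estimates from there rather than re-deriving them.
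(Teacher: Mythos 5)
The paper contains no proof of Theorem~\ref{discrete-dist}: it is cited verbatim from \cite[Theorem~1.1]{Baumler22}, as the sentence immediately preceding it makes explicit. So there is no ``paper's own proof'' to compare against. Your plan is, in effect, a reconstruction of the argument \emph{inside} \cite{Baumler22} (renormalization across dyadic scales, approximate submultiplicativity of $\widehat a_n$, Fekete, $\theta<1$ via macroscopic shortcuts, $\theta>0$ via a scarcity-of-long-edges lower bound, concentration via the exponential tail of \cite[Theorem~6.1]{Baumler22}), and you correctly identify that the hard multiscale estimates would be imported rather than rederived. That is precisely what the paper does, except that it imports the conclusion rather than the lemmas. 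For the extension from $n\bm 1$ to arbitrary $\bm u$, your triangle-inequality sketch gives only upper bounds directly; the lower bound should instead come from a lower tail on the exit distance $\widehat d(\bm 0,([-r,r]_{\mathds Z}^d)^c)$, which is exactly what the paper records separately as Lemma~\ref{Baumler4-10} (again cited from \cite{Baumler22}); pairing this with the upper bound on $\mathrm{diam}([0,n]_{\mathds Z}^d;\widehat d)$ gives the two-sided bound for every $\bm u$ without having to align $\bm u$ with a diagonal via lattice symmetries. In short, your approach is sound as a description of \cite{Baumler22}, but relative to the paper it replaces a one-line citation with a re-proof whose substance would itself be cited; if you intend to actually carry it out, make the lower bound come from a crossing/exit estimate rather than from the triangle inequality.
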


In addition, we can also show an analogous version for the continuous model (see \cite{Ding-Sly13} for the one-dimensional case).
\begin{theorem}\label{continuous-dist}
For $d\geq 1$ and all $\beta>0$, there exists $\theta=\theta(d, \beta)\in(0,1)$ {\rm(}same as that in Theorem \ref{discrete-dist}{\rm)} such that for any $\bm u\in\mathds{R}^d$,
    $$
    d_{(1,\infty)}({\bm 0},\bm u)\asymp_P |\bm u|^\theta,
    $$
    where the metric $d_{(1,\infty)}(\cdot,\cdot)$ is defined in \eqref{dist_delta}.
\end{theorem}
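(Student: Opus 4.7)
My plan is to derive Theorem \ref{continuous-dist} from Theorem \ref{discrete-dist} via a natural coupling of the two models and, for the lower bound, an adaptation of the renormalization argument behind the discrete result. For $\bm i\neq\bm j\in\mathds{Z}^d$ with $\|\bm i-\bm j\|_1>1$, the number of atoms of the continuous edge process $\mathcal{E}$ landing in $V_1(\bm i)\times V_1(\bm j)$ is Poisson with parameter $\lambda_{\bm i,\bm j}:=\beta\int_{V_1(\bm i)}\int_{V_1(\bm j)}|\bm u-\bm v|^{-2d}d\bm u\,d\bm v$, which matches the connection probability in \eqref{connectprob}. Declaring the discrete edge $\langle\bm i,\bm j\rangle$ open precisely when this count is positive, and all nearest-neighbor pairs open, realizes the discrete $\beta$-LRP as a measurable function of $\mathcal{E}$; hence Theorem \ref{discrete-dist} applied in this coupling gives $\widehat d(\bm 0,\lfloor\bm u\rfloor)\asymp_P|\bm u|^\theta$ with the same exponent $\theta=\theta(\beta,d)$ we will use for the continuous statement.

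For the upper bound $d_{(1,\infty)}(\bm 0,\bm u)\lesssim_P|\bm u|^\theta$, I would take a discrete geodesic $\bm 0=\bm i_0,\ldots,\bm i_K=\lfloor\bm u\rfloor$ of length $K\lesssim_P|\bm u|^\theta$ and simulate each step in the continuous model. For a constant $c_d$ depending only on $d$, whenever $\|\bm i_k-\bm i_{k+1}\|_\infty\ge c_d$ every pair of points in $V_1(\bm i_k)\times V_1(\bm i_{k+1})$ has Euclidean separation strictly greater than one, so the coupling supplies a scope-$>1$ atom of $\mathcal{E}$ between the two cubes which we traverse at zero $d_{(1,\infty)}$-cost, paying only $O(1)$ for short Euclidean detours within each cube. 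The remaining steps are either nearest neighbors or ``short'' long edges with $\|\bm i_k-\bm i_{k+1}\|=O(1)$, which I cover directly in Euclidean distance at cost $O(1)$ each. Summing over the $K$ steps yields $d_{(1,\infty)}(\bm 0,\bm u)\lesssim_P K\asymp|\bm u|^\theta$.

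The lower bound $d_{(1,\infty)}(\bm 0,\bm u)\gtrsim_P|\bm u|^\theta$ is the principal obstacle. The naive inverse construction --- discretize each Euclidean segment of a near-optimal continuous path $\gamma$ into $O(L)$ nearest-neighbor steps, and replace each long edge of $\gamma$ by either the coupled discrete long edge (if its endpoint cubes have $\ell^1$-distance $>1$) or by a single nearest-neighbor step --- produces a discrete path from $\bm 0$ to $\lfloor\bm u\rfloor$ of length at most $O(L)+m$, where $L:=d_{(1,\infty)}(\bm 0,\bm u)$ is the Euclidean length of $\gamma$ and $m$ is its number of long edges. Combined with Theorem \ref{discrete-dist} this only gives $O(L)+m\gtrsim_P|\bm u|^\theta$, and the remaining difficulty is that $m$ is not a priori bounded by a multiple of $L$, because long continuous edges carry zero $d_{(1,\infty)}$-weight and in principle can be chained cheaply. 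To bypass this I would abandon the direct conversion in favor of adapting the Aizenman-Newman-style multi-scale renormalization argument used for the discrete case in \cite{Baumler22} (and in \cite{Ding-Sly13} for $d=1$) directly to the continuous setting: partition $\mathds{R}^d$ into level-$r$ cubes, inductively show that a typical level-$r$ cube has internal $d_{(1,\infty)}$-diameter at least $cr^\theta$, and propagate the bound through dyadic scales using the scale invariance of the Poisson intensity $\beta|\bm x-\bm y|^{-2d}$. The coupling above provides the unit-scale base case, and the inductive step is formally identical to the discrete recursion, forcing the same exponent $\theta$. The main technical subtlety I expect is rederiving the continuous analogues of the intermediate Poisson-density estimates that drive the induction.
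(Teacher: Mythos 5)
Your upper bound follows the paper's route essentially verbatim: the coupling where the discrete edge $\langle\bm i,\bm j\rangle$ is open iff $V_1(\bm i)\times V_1(\bm j)$ contains an atom of $\mathcal{E}$, followed by simulating a discrete geodesic with Euclidean detours and free traversals of the coupled long edges, giving $d_{(1,\infty)}({\bm x},{\bm y})\le 3d\,\widehat{d}(\bm i_{\bm x},\bm i_{\bm y})+O(1)$; this is precisely the paper's Proposition \ref{apriori-cont-1} (upgraded there to a uniform moment-generating-function bound, but the geometric core is the same).

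For the lower bound you correctly identify the obstruction --- converting a continuous $d_{(1,\infty)}$-geodesic $P$ to a discrete path gives $\widehat d \lesssim L + m$ where $L$ is the geodesic length and $m$ is its number of long edges, and $m$ is not obviously controlled by $L$ --- but then you propose to bypass it by re-running the entire multi-scale renormalization of \cite{Baumler22} in the continuous setting. The paper resolves it much more cheaply. Since a.s.\ no two atoms of $\mathcal{E}$ share an end point, a $d_{(1,\infty)}$-geodesic is a \emph{proper} path alternating hops and gaps, and the paper's Lemma \ref{DS13lem21} shows by a first-moment (branching-process-style) count that
\[
\mathds{E}\bigl[|\mathscr{P}_t|\bigr]\le C_{cont}^t,
\qquad
\mathds{P}\bigl[\exists P\in\mathscr{P}_t:\ h(P)\ge\alpha t\bigr]\le (C_{cont}/\alpha)^t
\quad(\alpha\ge C_{cont}),
\]
where $\mathscr{P}_t$ is the set of equivalence classes of proper paths from $\bm 0$ of $d_{(1,\infty)}$-length at most $t$ and $h(P)$ is the hop count. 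The recursion behind the first bound is an integral over the location of the first hop, exactly the ``Poisson-density estimate'' you anticipate needing, but packaged so that it feeds directly into the conversion: with high probability any geodesic of length $t$ makes $O(t)$ hops, hence the induced discrete path has length $O(t)$, and the discrete lower bound (Lemma \ref{Baumler4-10}, an input from \cite{Baumler22}) immediately forces $t\gtrsim r^\theta$. This keeps the continuous lower bound a corollary of the discrete one, rather than a re-proof of it. Your proposed renormalization route is not wrong in principle --- the scale invariance of the intensity $\beta|\bm x-\bm y|^{-2d}$ does make it natural --- but it trades a short entropy estimate for a substantial reconstruction of the discrete machinery, and you have left the hard inductive step entirely to an analogy with the discrete case. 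I would also flag that the paper states the lower bound as a tightness statement for the exit distance $d_{(1,\infty)}(\bm 0,([-r,r]^d)^c)$ (Proposition \ref{apriori-cont-2}) rather than the two-point distance directly; the theorem then follows by sandwiching $d_{(1,\infty)}(\bm 0,\bm u)$ between the exit distance of a small box and the diameter of a large one.
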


Theorem \ref{continuous-dist} will be proved in the next subsection. Provided with Theorems \ref{discrete-dist} and \ref{continuous-dist}, we can present the
\begin{proof}[Proof of Lemma \ref{an-bounded}]
    By the scaling invariance property for the Poisson point process, we have that
    $$d_{(1/n,\infty)}(\cdot/n,\cdot/n)\stackrel{\rm law}{=}n^{-1}d_{(1,\infty)}(\cdot,\cdot).$$
    Combined with Theorems \ref{discrete-dist} and \ref{continuous-dist}, it completes the proof of the lemma.
\end{proof}

In order to obtain union bounds in our renormalization approach, we require some estimates on the number of proper paths starting from ${\bm 0}$ in the discrete model.  Let $\mathcal{P}_{\leq m}$ denote the collection of  self-avoiding paths starting from ${\bm 0}$ with lengths at most $m$ under the metric $\widehat{d}$, and let $|\mathcal{P}_{\leq m}|$ be the number of paths in  $\mathcal{P}_{\leq m}$ for $m\ge1$.

The following estimate for $\mathds{E}[|\mathcal{P}_{\leq m}|]$ is a similar version of \cite[Lemma 3.2]{Baumler22}.

\begin{lemma}\label{number-path-k}
For all $d\geq 1$ and all $\beta>0$, there exists a constant $C_{dis}=C_{dis}(\beta,d)>0$, depending only on $\beta$ and $d$, such that for all $m\geq 1$,
$$
\mathds{E}[|\mathcal{P}_{\leq m}|]\leq C_{dis}^m.
$$
\end{lemma}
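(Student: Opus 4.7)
The plan is to bound the first moment of $|\mathcal{P}_{\le m}|$ by dropping the self-avoiding constraint and then factorizing using independence of edges. Decompose $\mathcal{P}_{\le m}=\bigsqcup_{k=0}^{m}\mathcal{P}_{=k}$, where $\mathcal{P}_{=k}$ is the set of self-avoiding paths starting from $\bm 0$ with exactly $k$ edges. It then suffices to prove $\mathds{E}[|\mathcal{P}_{=k}|]\le S^k$ for some constant $S=S(\beta,d)\in[1,\infty)$; summing the geometric series gives $\mathds{E}[|\mathcal{P}_{\le m}|]\le (m+1)S^m$, and the polynomial factor is absorbed by choosing any $C_{dis}>S$ (and adjusting for small $m$).

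For the factorization step, write $p_{\bm i,\bm j}$ for the probability that the edge $\langle\bm i,\bm j\rangle$ is open. By independence of the edges, the probability that a fixed path $(\bm 0,\bm v_1,\dots,\bm v_k)$ is present equals $\prod_{i=1}^{k} p_{\bm v_{i-1},\bm v_i}$. Bounding the sum over self-avoiding sequences by the sum over arbitrary ones, and using translation invariance $\sum_{\bm v_i}p_{\bm v_{i-1},\bm v_i}=\sum_{\bm w\ne \bm 0}p_{\bm 0,\bm w}=:S$, the inner sums factorize iteratively to yield $\mathds{E}[|\mathcal{P}_{=k}|]\le S^k$.

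The substantive content is then to show $S<\infty$. I would split $S$ into a short-range part over $\{\bm w:\|\bm w\|_\infty\le 1\}$ and a long-range part over $\{\bm w:\|\bm w\|_\infty\ge 2\}$. The short-range part contains at most $3^d-1$ terms, each bounded trivially by $1$, and so contributes at most $3^d-1$. For the long-range part, once $\|\bm w\|_\infty\ge 2$ one has a uniform lower bound $|\bm u-\bm v|\ge c|\bm w|$ for $(\bm u,\bm v)\in V_1(\bm 0)\times V_1(\bm w)$ with some $c=c(d)>0$. Combined with $1-e^{-x}\le x$, this gives $p_{\bm 0,\bm w}\le \beta\int_{V_1(\bm 0)}\int_{V_1(\bm w)}|\bm u-\bm v|^{-2d}\,d\bm u\,d\bm v\le C\beta/|\bm w|^{2d}$. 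Summing over shells $\{\bm w:|\bm w|\asymp r\}$ (each containing $O(r^{d-1})$ integer points) then produces a convergent series $C\beta\sum_{r\ge 2}r^{-d-1}<\infty$.

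The main obstacle I anticipate is a small technical point in the short-range bookkeeping. For $\bm w$ with $\|\bm w\|_\infty=1$ but $\|\bm w\|_1>1$ (such as $\bm w=(1,1,0,\dots,0)$ when $d\ge 2$), the cubes $V_1(\bm 0)$ and $V_1(\bm w)$ touch along a lower-dimensional face, and the integral in~\eqref{connectprob} is actually infinite, so $p_{\bm 0,\bm w}=1$. This is why one cannot directly bound $S$ by the integral near the origin and instead must peel off the short-range contribution; however, there are at most $3^d-1-2d$ such $\bm w$, each contributing at most $1$ to $S$, so this issue is purely cosmetic. With this accounted for, the remaining estimates are routine.
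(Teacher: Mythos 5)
Your proposal is correct and follows essentially the same strategy as the paper: drop the self-avoiding constraint, factorize via independence, and reduce to finiteness of $S=\sum_{\bm w\ne\bm 0}p_{\bm 0,\bm w}$, which is a convergent $|\bm w|^{-2d}$ series plus a finite short-range piece. Your observation that the integral in \eqref{connectprob} diverges for diagonal neighbours (so $p_{\bm 0,\bm w}=1$ there) is accurate and astute; the paper handles this silently by applying the $|\bm j|^{-2d}$ bound only for $|\bm j|>2$ and noting the remaining finitely many terms contribute at most $1$ each.
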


\begin{proof}
    The underlying intuition behind the proof is that these paths are dominated by paths in a branching process where the offspring distribution has mean given by \eqref{sum} below.

    For any $k\in\mathds{N}$, let $\mathcal{P}_k$ be the collection of self-avoiding paths starting from ${\bm 0}$ with length $k$ and in particular we have $\mathcal{P}_0=\{{\bm 0}\}$. Note that for any $k+1$ distinct integers $\bm i_1,\bm i_2,\cdots,\bm i_{k+1}\in\mathds{Z}^d\setminus\{{\bm 0}\}$, a path $P:{\bm i_0}={\bm 0}\to {\bm i_1}\to\cdots\to {\bm i_k}\to{\bm i_{k+1}}$ with length $k+1$ is contained in $\mathcal{P}_{k+1}$ if and only if its subpath $P':{\bm i_0}={\bm 0}\to {\bm i_1}\to \cdots\to {\bm i_k}$ is contained in $\mathcal{P}_k$ and $\langle {\bm i_k},{\bm i_{k+1}}\rangle\in\mathcal{E}$. Thus, combining this with the independence between different edges,
    \begin{equation}\label{lem21step1}
        \mathds{P}[P\in\mathcal{P}_{k+1}]= \mathds{P}[P'\in\mathcal{P}_k]p_{{\bm i_k},{\bm i_{k+1}}},
    \end{equation}
    where
    \begin{equation*}
    p_{{\bm i},{\bm j}}:=
    \begin{cases}
    1, &\quad \|\bm i-\bm j\|_1=1,\\
    1-\exp\left\{-\beta \int_{V_1({\bm i})}\int_{V_1({\bm j})}\frac{1}{|{\bm u}-{\bm v}|^{2d}}\d {\bm u}\d {\bm v}\right\},&\quad\|\bm i-\bm j\|_1>1.
    \end{cases}
    \end{equation*}
     Now we first sum \eqref{lem21step1} over all possible $\bm i_{k+1}$ with a fixed $P'$ and then sum over all possible $P'$, and from this procedure we get that
    \begin{equation}\label{lem21step2}
        \begin{split}
            \mathds{E}[|\mathcal{P}_{k+1}|]&=\sum_{P'=({\bm 0}, {\bm i_1},\cdots, {\bm i_k})}\sum_{{\bm i_{k+1}}\notin P'}\mathds{P}[P'\in\mathcal{P}_k]p_{{\bm i_k},{\bm i_{k+1}}}\\
            &\leq \sup_{{\bm i}\in\mathds{Z}^d}\left[\sum_{ {{\bm j}\in\mathds{Z}^d: \bm j\neq \bm i}}p_{{\bm i},{\bm j}}\right]\sum_{P'=({\bm 0}, {\bm i_1},\cdots, {\bm i_k})}\mathds{P}[P'\in\mathcal{P}_k]
            = \sup_{{\bm i}\in\mathds{Z}^d}\left[\sum_{ {\bm j}\in\mathds{Z}^d:{\bm j}\neq {\bm i}}p_{{\bm i},{\bm j}}\right]\mathds{E}[|\mathcal{P}_k|].
        \end{split}
    \end{equation}
    Here $({\bm i_1},\cdots, {\bm i_k})$ takes value over all ordered subsets of $\mathds{Z}^d\setminus \{{\bm 0}\}$ with $k$ elements. Note that for ${\bm i}\in\mathds{Z}^d$, from the translation invariance of $p_{{\bm i},{\bm j}}$, we obtain that
    \begin{equation}\label{sum}
        \sum_{{\bm j}\in \mathds{Z}^d:{\bm j}\neq {\bm i}}p_{{\bm i},{\bm j}}=2d+\sum_{{\bm j}\in \mathds{Z}^d:  \|\bm j\|_1>1}\left[1-\exp\left\{-\beta \int_{V_1({\bm 0})}\int_{V_1({\bm j})}\frac{1}{|{\bm u}-{\bm v}|^{2d}}\d {\bm u}\d {\bm v}\right\}\right].
    \end{equation}
    Note that the series on the right hand side of \eqref{sum} converges since
    $$
    1-\exp\left\{-\beta \int_{V_1({\bm 0})}\int_{V_1({\bm j})}\frac{1}{|{\bm u}-{\bm v}|^{2d}}\d {\bm u}\d {\bm v}\right\}\leq  \frac{2^{2d}\beta}{|{\bm j}|^{2d}}\quad \text{for all }|{\bm j}|>2.
    $$
    Thus combining this with \eqref{lem21step2}, we get that there exists a constant $c_{dis}>0$ depending only on $\beta$ and $d$ such that
    \begin{equation*}
        \mathds{E}[|\mathcal{P}_{k+1}|]\leq c_{dis}\mathds{E}[|\mathcal{P}_k|]\quad\text{for all }k\in\mathds{N},
    \end{equation*}
 which implies that
    $$
    \mathds{E}[|\mathcal{P}_k|]\leq c_{dis}^k
    $$
    for all $k\in\mathds{N}$ (note that $\mathcal{P}_0$ only contains one element ${\bm 0}$). As a result, it comes from $\mathcal{P}_{\leq m}=\cup_{k=0}^m \mathcal{P}_k$ that the lemma is true with an appropriate choice of $C_{dis}> 0$  depending only on $d$ and $\beta$.
\end{proof}

In the aforementioned discrete critical long-range bond percolation model, we may introduce another layer of randomness such that each site ${\bm i}\in\mathds{Z}^d$ is ``active'' (resp. ``non-active'') with probability $p_{\rm site}\ ({\rm resp. }\ 1-p_{\rm site})$, and all sites and bonds are mutually independent. This results in a new percolation model, known as long-range site-bond percolation. Notably, unlike classical site-bond percolation, ``non-active'' sites in our model can be passed through just like active sites.

\subsection{Proof of Theorem \ref{continuous-dist}}
In this subsection, we will use Theorem \ref{discrete-dist} and a coupling between the discrete model and the continuous model to prove Theorem \ref{continuous-dist}.

First, we will show a stronger upper bound about the diameter under $d_{(1,\infty)}$, which is an analog of \cite[Theorem 6.1]{Baumler22}.
\begin{proposition}\label{apriori-cont-1}
    For any $\eta\in(0,1/(1-\theta))$, we have the following uniform upper bound about the moment generating function:
    \begin{equation}\label{SimpleMGF-0}
        \sup_{R>1}\mathds{E}\left[\exp\left\{\left(\frac{{\rm diam}([0,R]^d);d_{(1,\infty)}}{R^\theta}\right)^\eta\right\}\right]<\infty.
    \end{equation}
\end{proposition}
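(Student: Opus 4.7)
My plan is to reduce to the discrete $\beta$-LRP via the natural coupling and then transfer \cite[Theorem 6.1]{Baumler22}, the discrete analogue of \eqref{SimpleMGF-0}. Explicitly, given the continuous Poisson edge set $\mathcal{E}$, I declare a discrete long edge $\langle{\bm i},{\bm j}\rangle$ to be present precisely when $\mathcal{E}$ contains at least one point of $V_1({\bm i})\times V_1({\bm j})$. The per-pair probability of this event equals \eqref{connectprob} exactly, so together with the deterministically present nearest-neighbor edges this realises a discrete $\beta$-LRP on the same probability space as $\mathcal{E}$, and I write $\widehat{d}$ for its chemical distance in this coupled space.

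The first technical step is the deterministic comparison
$$
d_{(1,\infty)}({\bm i},{\bm j})\le (\sqrt{d}+1)\,\widehat{d}({\bm i},{\bm j})\quad\text{for all }{\bm i},{\bm j}\in\mathds{Z}^d.
$$
Starting from a $\widehat{d}$-geodesic ${\bm i}={\bm i}_0\to\cdots\to{\bm i}_k={\bm j}$, each nearest-neighbor step has continuous cost at most $1$. For a long step $\langle{\bm i}_l,{\bm i}_{l+1}\rangle$, the coupling supplies ${\bm u}_l\in V_1({\bm i}_l)$ and ${\bm v}_l\in V_1({\bm i}_{l+1})$ with $\langle{\bm u}_l,{\bm v}_l\rangle\in\mathcal{E}$, so the continuous path ${\bm i}_l\to{\bm u}_l\to{\bm v}_l\to{\bm i}_{l+1}$ has $d_{(1,\infty)}$-cost at most $\sqrt{d}/2+s+\sqrt{d}/2\le \sqrt{d}+1$, where $s=0$ if $|{\bm u}_l-{\bm v}_l|>1$ and $s\le|{\bm u}_l-{\bm v}_l|\le 1$ otherwise. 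Summing, applying the triangle inequality, and then rounding arbitrary ${\bm x},{\bm y}\in[0,R]^d$ to their nearest integer points at Euclidean cost $\sqrt{d}/2$ each upgrades this to
$$
{\rm diam}([0,R]^d;d_{(1,\infty)})\le \sqrt{d}+(\sqrt{d}+1)\,{\rm diam}([0,\lceil R\rceil]_\mathds{Z}^d;\widehat{d}).
$$

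To convert this into the target exponential moment bound, given $\eta\in(0,1/(1-\theta))$ I would pick an auxiliary $\eta'\in(\eta,1/(1-\theta))$. The elementary inequalities $(a+b)^\eta\le 2^\eta(a^\eta+b^\eta)$ and $CY^\eta\le Y^{\eta'}+C^{\eta'/(\eta'-\eta)}$ (the latter holding since $Y^{\eta'-\eta}$ eventually beats any constant) together yield
$$
\exp\left\{\left(\frac{{\rm diam}([0,R]^d;d_{(1,\infty)})}{R^\theta}\right)^{\eta}\right\}\le C_1\exp\left\{\left(\frac{{\rm diam}([0,\lceil R\rceil]_\mathds{Z}^d;\widehat{d})}{R^\theta}\right)^{\eta'}\right\}
$$
uniformly in $R>1$, for some constant $C_1=C_1(d,\eta,\eta',\theta)$. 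Taking expectations, using $\lceil R\rceil\le 2R$ for $R>1$, and invoking the discrete bound at exponent $\eta'$ completes the proof. The argument is largely mechanical; the only mildly delicate point is the introduction of $\eta'$ to absorb the multiplicative constant $\sqrt{d}+1$ (since $\eta$ itself is not permitted to equal $1/(1-\theta)$), and I do not anticipate any substantial obstacle beyond this bookkeeping.
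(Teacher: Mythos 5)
Your proposal is correct and takes essentially the same route as the paper: couple the continuous Poisson edge set with a discrete $\beta$-LRP on $\mathds{Z}^d$, establish a deterministic linear comparison of chemical distances (the paper obtains ${\rm diam}([0,R]^d;d_{(1,\infty)})\le 4d\,{\rm diam}([0,R]_\mathds{Z}^d;\widehat d)$, you get a tighter $\sqrt d+1$), and then invoke the discrete moment bound \cite[Theorem 6.1]{Baumler22} at an auxiliary exponent $\eta'\in(\eta,1/(1-\theta))$ to absorb the multiplicative constant and the rounding $\lceil R\rceil\le 2R$. The only cosmetic difference is that the paper rounds $R$ up to a dyadic scale $2^{k+1}$ rather than to $\lceil R\rceil$; this is immaterial.
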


\begin{proof}
    We first consider the case when $R\in\mathds{Z}$. We can construct a coupling of the continuous model and a corresponding discrete model defined on $\mathds{Z}^d$. For any ${\bm k},{\bm l}\in\mathds{Z}^d$, in the corresponding discrete model we have that ${\bm k}$ is connected with ${\bm l}$ whenever $\|{\bm k}-{\bm l}\|_{1}=1$ or when $V_1({\bm k})$ and $V_1({\bm l})$ are connected directly by a long edge in the continuous model. 
    It is straightforward to show that this discrete model is also a critical long-range bond percolation model (see the paragraph before \eqref{connectprob}). 
    Thus it is natural for us to inherite the notation and denote by $\widehat{d}(\bm k, \bm l)$ the chemical distance between $\bm k$ and $\bm l$ in this discrete model.

    For fixed ${\bm x,\bm y}\in[0,R]^d$, let $\bm i_{\bm x}=\lfloor {\bm x}\rfloor$ and $\bm i_{\bm y}=\lfloor {\bm y}\rfloor$.
    Here $\lfloor {\bm x}\rfloor=(\lfloor {\bm x}^1\rfloor,\cdots,\lfloor {\bm x}^d\rfloor)\in\mathds{Z}^d$.
    Let $P_{{\bm x,\bm y}}^{(dis)}=(\bm k_0=\bm i_{\bm x}, \bm k_1,\cdots,\bm k_N=\bm i_{\bm y})$ be a geodesic from $\bm i_{\bm x}$ to $\bm i_{\bm y}$ in the discrete model. To construct a path connecting ${\bm x}$ and ${\bm y}$ in the continuous model when $\bm i_{\bm x}\neq \bm i_{\bm y}$, we will define ${\bm x}_j,{\bm y}_j\in V_1(\bm k_j)$ for $ j\in [0,N]_\mathds{Z}$ such that either $|{\bm y}_j-{\bm x}_{j+1}|\leq 2d$ or ${\bm y}_j$ and ${\bm x}_{j+1}$ are connected by a long edge in the continuous model, in accordance to the geodesic $P_{{\bm x,\bm y}}^{(dis)}$.
    The iterative definition of ${\bm x}_j,{\bm y}_j$ is as follows.

    Assuming that ${\bm x}_0={\bm x},{\bm y}_0,\cdots, {\bm y}_{j-1},{\bm x}_j$ for $0\leq j< N$ have been defined such that ${\bm x}_i,{\bm y}_i\in V_1(\bm k_i)$ for all $i\in [0,j-1]_\mathds{Z}$ and ${\bm x}_j\in V_1(\bm k_j)$, we consider the following two cases.
        \begin{itemize}
            \item[(1)] If $\|\bm k_j-\bm k_{j+1}\|_{1}=1$, we let ${\bm y}_j={\bm x}_j$ and ${\bm x}_{j+1}=\bm k_{j+1}$. Clearly, $|{\bm y}_j-{\bm x}_{j+1}|\le |{\bm x}_j-\bm k_j|+|\bm k_j-\bm k_{j+1}|\le 2d$.
            \item[(2)] If $\|\bm k_j-\bm k_{j+1}\|_{1}> 1$,  since $\bm k_j$ and $\bm k_{j+1}$ are connected by a long edge in the discrete model, we can choose ${\bm y}_j\in V_1(\bm k_j)$ and ${\bm x}_{j+1}\in V_1(\bm k_{j+1})$ such that $\langle {\bm y}_j,{\bm x}_{j+1}\rangle\in \mathcal{E}$.
        \end{itemize}
    Finally, we let ${\bm y}_N={\bm y}$. We define $P_{{\bm x},{\bm y}}$ to be the path ${\bm x}_0={\bm x}\to {\bm y}_0\to {\bm x}_1\to {\bm y}_1\to\cdots\to {\bm x}_N\to {\bm y}_N={\bm y}$, where ${\bm x}_j\to {\bm y}_j$ walks on the underlying Euclidean line, and ${\bm y}_j\to {\bm x}_{j+1}$ walks on the long edge $\langle {\bm y}_j, {\bm x}_{j+1}\rangle$ or a Euclidean line if such a long edge does not exist (see the left picture in Figure \ref{sect2construction} for an illustration).
\begin{figure}[htbp]
\centering
\subfigure{\includegraphics[scale=0.5]{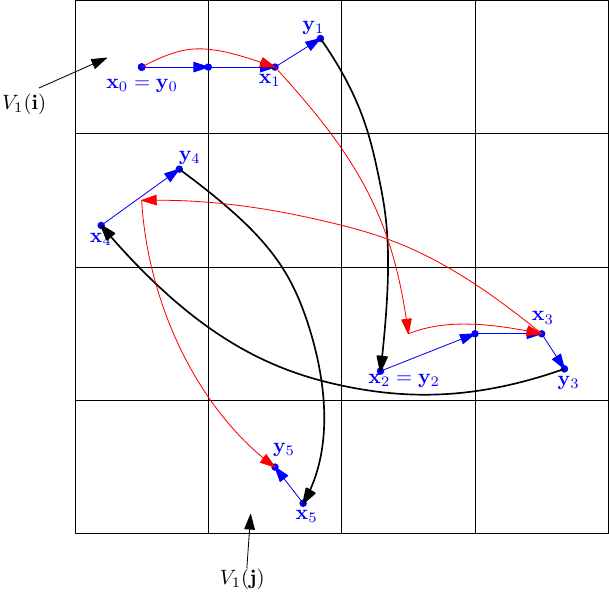}}
\quad
\subfigure{\includegraphics[scale=0.5]{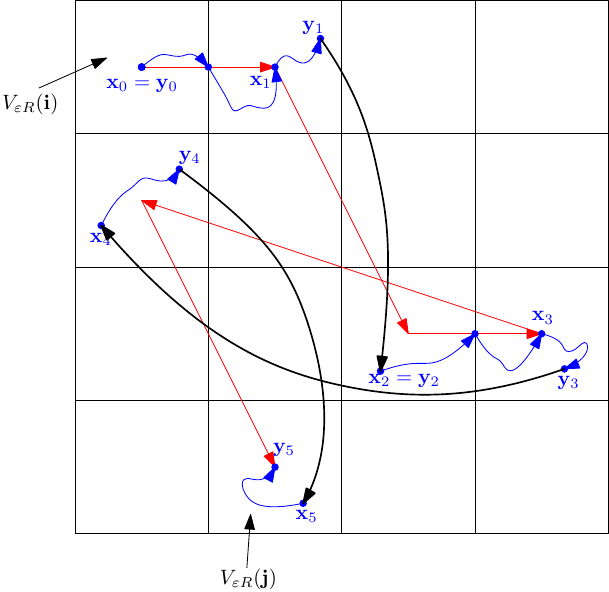}}
\caption{The left picture is the illustration for the construction of $P_{{\bm x},{\bm y}}$ in the two-dimensional case. We use the red curves to represent the discrete path $P_{{\bm x},{\bm y}}^{(dis)}$ and use the blue and black curves to represent the path $P_{{\bm x},{\bm y}}$. Here black curves represent edges $P_{{\bm x},{\bm y}}$ uses and blue lines represent paths restricted in a unit cube. 
The right picture is the illustration for the definition of a simple path in the continuous model in the two-dimensional case (see Definition \ref{pathd-c}). The difference from the left picture is that we replace blue lines by blue curves to present $D$-geodesics restricted in a cube.
Note that the blue curve in the right picture is only a visual representation of a geodesic, and it is not an actual geodesic within each cube since likely an actual geodesic will have jumps arising from long edges of smaller scopes which we skipped in this illustration.}
\label{sect2construction}
\end{figure}

From the construction above, it is easy to see that when $\bm i_{\bm x}\neq \bm i_{\bm y}$,
    \begin{equation}\label{cont-disc}
        d_{(1,\infty)}({\bm x},{\bm y})\le\sum_{j=0}^{{ \widehat{d}}(\bm i_{\bm x},\bm i_{\bm y})}|{\bm x}_j-{\bm y}_j|+\sum_{j=1}^{{ \widehat{d}}(\bm i_{\bm x},\bm i_{\bm y})}|{\bm y}_{j-1}-{\bm x}_j|\I_{\{\langle {\bm y}_{j-1},{\bm x}_j\rangle\notin\mathcal{E}\}}\le 3d{ \widehat{d}}(\bm i_{\bm x},\bm i_{\bm y})+1.
    \end{equation}
Note that \eqref{cont-disc} holds trivially when $\bm i_{\bm x}=\bm i_{\bm y}$. Thus, taking supreme over all $({\bm x},{\bm y})\in[0,R]^{2d}$ yields
    \begin{equation}\label{cont-disc-2}
        {\rm diam}([0,R]^d;d_{(1,\infty)})\le 3d{\rm diam}([0,R]_\mathds{Z}^d;\widehat{d})+1\le 4d{\rm diam}([0,R]_\mathds{Z}^d;\widehat{d}).
    \end{equation}

For general $R>1$, there exists $k\in\mathds{N}$ such that $2^{k}<R\le 2^{k+1}$. Using \eqref{cont-disc-2} we get
    \begin{equation}\label{CompareDtildeC}
        R^{-\theta}{\rm diam}([0,R]^d;d_{(1,\infty)})\le 2^{\theta} \frac{{\rm diam}([0,2^{k+1}]^d;d_{(1,\infty)})}{(2^{k+1})^\theta}\le  2^{\theta+2}d\frac{{\rm diam}([0,2^{k+1}]_\mathds{Z}^d;\widehat{d})}{(2^{k+1})^\theta}.
    \end{equation}
Furthermore, according to \cite[Theorem 6.1]{Baumler22},
    \begin{equation}\label{DiscreteMGF}
        \sup_{k\in\mathds{N}}\mathds{E}\left[\exp\left\{\left(\frac{{\rm diam}([0,2^{k+1}]_{\mathds{Z}}^d;\widehat{d})}{(2^{k+1})^\theta}\right)^\eta\right\}\right]<\infty
    \end{equation}
holds for any $\eta\in(0,1/(1-\theta))$. Thus, for any $\eta\in(0,1/(1-\theta))$, we choose $\eta_1\in(0,1/(1-\theta))$ such that $\eta<\eta_1$. Then applying \eqref{CompareDtildeC}  and \eqref{DiscreteMGF} with $\eta=\eta_1$, we get \eqref{SimpleMGF-0}, which implies the proposition.
\end{proof}

Now we prove the lower bound with the same coupling.
\begin{proposition}\label{apriori-cont-2}
    $\{\frac{r^\theta}{d_{(1,\infty)}({\bm 0},([-r,r]^d)^c)}\}_{r>1}$ is tight.
\end{proposition}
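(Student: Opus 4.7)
The plan is to mirror the strategy of Proposition \ref{apriori-cont-1} via the same coupling between the continuous $\beta$-LRP model on $\mathds{R}^d$ and a discrete LRP on $\mathds{Z}^d$, but now transferring a lower bound rather than an upper bound. Concretely, it suffices to show that for every $\varepsilon>0$ there exists $c>0$ such that $\mathds{P}[d_{(1,\infty)}({\bm 0},([-r,r]^d)^c)<c\,r^\theta]\le\varepsilon$ for all $r>1$.

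Given any $d_{(1,\infty)}$-path $P$ from ${\bm 0}$ to a point ${\bm x}^*\in\partial[-r,r]^d$ of length $L$, I would trace through the unit cubes visited by $P$ to extract a discrete path $P^{(dis)}$ in the coupled model from ${\bm 0}$ to $\lfloor{\bm x}^*\rfloor$. Each Euclidean segment of length $\ell$ contributes at most $d\ell+O(d)$ nearest-neighbor discrete edges, and each continuous long edge of scope $>1$ contributes at most one discrete edge (either a nearest-neighbor edge when its cube endpoints are adjacent, or a discrete long edge otherwise). Hence $|P^{(dis)}|\le dL+k+O(d)$, where $k$ denotes the number of long edges used by $P$. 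Since $|{\bm x}^*|_\infty\ge r$, the pointwise lower bound of Theorem \ref{discrete-dist} applied to $\lfloor{\bm x}^*\rfloor$, combined with a union bound over the $O(r^{d-1})$ possible discrete boundary cubes (upgraded via a polynomial lower-tail estimate on the discrete distance that one can establish by iterating the tree-counting argument behind Lemma \ref{number-path-k}), gives $\widehat{d}({\bm 0},\lfloor{\bm x}^*\rfloor)\ge c'\,r^\theta$ with high probability. Combining, $dL+k\gtrsim r^\theta$.

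The main obstacle is to control the number of long edges $k$ in terms of $L$, since a priori $k$ is unconstrained as each long edge contributes $0$ to the continuous length. To handle this I would perform a direct first-moment computation on the Poisson process of long edges, decomposing the count of admissible paths by $k$ and by the dyadic scope class of each long edge. Writing a candidate path as ${\bm 0}\to{\bm u}_1\to{\bm v}_1\to\cdots\to{\bm u}_k\to{\bm v}_k$ with Euclidean segments of total length at most $L$ and long-edge scopes $s_i=|{\bm u}_i-{\bm v}_i|>1$, the constraint that ${\bm u}_{i+1}$ lies within Euclidean distance $L$ of ${\bm v}_i$ together with the Poisson intensity $\beta/|{\bm u}-{\bm v}|^{2d}$ bounds the expected number of such paths by an integral that, after summing over scope profiles obeying the necessary displacement constraint $\sum_i s_i\ge r-L$ (since $|{\bm v}_k|_\infty\ge r$), is dominated by a geometric sum of the form $\sum_k C^k(L/r)^{d(1-\theta)}$. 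For $L=c\,r^\theta$ and $c$ sufficiently small this bound is $o_r(1)$ uniformly in $r>1$, which yields the desired tightness and completes the proof.
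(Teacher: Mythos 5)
Your overall coupling strategy — extracting a discrete path from the continuous $d_{(1,\infty)}$-geodesic, using the discrete lower bound, and controlling the number of long edges — is the same framework the paper uses. Two pieces of your execution merit comment.

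First, a minor detour: you propose to obtain the discrete exit-distance lower bound by applying Theorem~\ref{discrete-dist} pointwise and then union-bounding over $O(r^{d-1})$ boundary cubes, which would require establishing a polynomial lower tail for $\widehat{d}$. The paper instead imports the tightness of $\{r^\theta/\widehat{d}({\bm 0},([-r,r]_{\mathds{Z}}^d)^c)\}$ directly as Lemma~\ref{Baumler4-10}, which is a box-exit statement (not a pointwise one) and so already absorbs the union bound. Your route is not wrong in principle, but it re-derives something that is already available as a black box.

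The substantive gap is in the last step. You propose a direct first-moment computation over all admissible paths from ${\bm 0}$ to $([-r,r]^d)^c$ of continuous length at most $L$, using the displacement constraint $\sum_i s_i\ge r-L$ on the scopes, and claim the resulting bound is a convergent geometric sum of the form $\sum_k C^k(L/r)^{d(1-\theta)}$, small for $L=c\,r^\theta$. This bound is not correct. By the Dirichlet-integral computation underlying Lemma~\ref{DS13lem21}, the expected number of paths from ${\bm 0}$ with $k$ hops and total gap length at most $L$ is of order $(\widehat{c}L)^{kd}/(kd)!$, and summing over $k$ gives $\exp(\Theta(L))$. The scope constraint $\sum_i s_i\ge r-L$ supplies only a \emph{polynomial} penalty: since the marginal density of a scope is $\propto s^{-d-1}$, forcing one scope to exceed $(r-L)/k$ costs only a factor of order $((r-L)/k)^{-d}$. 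The net first-moment bound is therefore of order $r^{-d}\exp(\Theta(L))$, which with $L=c\,r^\theta$ is $r^{-d}\exp(\Theta(r^\theta))$ and diverges as $r\to\infty$ for every fixed $c>0$. The $\theta$-dependent exponent in your claimed bound does not arise from any such integral — $\theta$ is not explicitly computable from $\beta$ and $d$, so it cannot appear in a pure Poisson first-moment estimate. The first moment over exit paths is simply the wrong quantity to bound.

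The paper sidesteps this by never doing a first-moment argument on the exit event. Instead, Lemma~\ref{DS13lem21} bounds the probability that \emph{any} path of length at most $t$ from ${\bm 0}$ has at least $\alpha t$ hops by $(C_{cont}/\alpha)^t$ (this works because the factorial $1/(kd)!$ dominates once $k\gtrsim t$). Conditioning on the geodesic $P$ having length less than $c_2 r^\theta$ then pins down $h(P)\le 2C_{cont}c_2 r^\theta$ with probability at least $1-2^{-c_2 r^\theta}$, and combining this with the \emph{separately known} discrete exit bound $\widehat{d}({\bm 0},([-r,r]_{\mathds{Z}}^d)^c)\ge c_1 r^\theta$ (Lemma~\ref{Baumler4-10}) via your coupling inequality $\widehat{d}\le c(d)\,d_{(1,\infty)}+(2d+1)(h(P)+1)$ gives a contradiction for $c_2$ small. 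You should replace your exit-path first moment with this two-ingredient argument; your coupling inequality and your identification of the hop count as the key obstruction are otherwise on target.
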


We need some preparations before the proof. We begin by giving an encoding of paths in the continuous long-range percolation model under $d_{(1,\infty)}$. We will denote a path $P$ as a sequence of points $P = {\bm z}_0\to {\bm z}_1\to\cdots\to {\bm z}_m$; this means that the path $P$ traverses those points in that order and in addition for any $j\in[1,m]_\mathds{Z}$, the path $P$ goes from ${\bm z}_j$ to ${\bm z}_{j+1}$ either using the long edge $\langle {\bm z}_j, {\bm z}_{j+1} \rangle$ when  $\langle {\bm z}_j, {\bm z}_{j+1} \rangle \in \mathcal{E}$ or (otherwise) going along the underlying Euclidean line.

For each $\langle {\bm z}_{i-1},{\bm z}_i\rangle\in \mathcal{E}$, we define its scope to be $|{\bm z}_{i-1} - {\bm z}_i|$ and call $\langle{\bm z}_{i-1}, {\bm z}_i\rangle$ as a hop.
Since all $d_{(1,\infty)}$-paths only use long edges with scopes at least 1, we let
$$
I = I_P := \{i \in [1,m]_\mathds{Z} : \langle {\bm z}_{i-1}, {\bm z}_i\rangle \in \mathcal{E}, |{\bm z}_{i-1} - {\bm z}_i| \in (1,\infty)\}
$$
be the set of jump times of the path $P$. Then the length of $P$ is defined to be $\|P\|_1:=\sum_{i\in[1,m]_\mathds{Z}\setminus I} |{\bm z}_{i-1} - {\bm z}_i|$ (note that the scopes in $I$ are not counted in measuring $\|P\|_1$).
For $i \notin I$, we say that $({\bm z}_{i-1}, {\bm z}_i)$ is a gap (with length $|{\bm z}_{i-1}-{\bm z}_i|$). We say a path is proper if it does not contain two consecutive gaps and does not reuse a jump. Since almost surely no two edges share an end point, this means that a proper path alternates between hops and gaps. We say that two proper paths are equivalent if they have the same starting point and make the same set of jumps in the same order (namely, they are equal except for the final end point).
A shortest path (i.e., a path joining two fixed points whose length is the minimal among all such paths) will be called a geodesic.

For $t \geq 0$, we consider proper paths starting from the origin ${\bm 0}$ with lengths at most $t$ in the metric $d_{(1,\infty)}$, and we denote by $\mathscr{P}_t$ the collection of equivalence classes of these proper paths. It will be useful to also count restricted classes of such paths. For $P \in \mathscr{P}_t$, denote by $h(P) = |I|$ the number of hops in the path $P$.
\begin{lemma}\label{DS13lem21}
    For all $d\geq 1$ and all $\beta > 0$, there exists $C_{cont} > 0$ such that for all $t > 0$ and $\alpha \geq C_{cont}$,
    $$
    \mathds{E}[|\mathscr{P}_t|] \leq C_{cont}^t\quad \text{and}\quad \mathds{P}[\exists P \in \mathscr{P}_t : h(P) \geq \alpha t] \leq (C_{cont}/\alpha)^t.
    $$
\end{lemma}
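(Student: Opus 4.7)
The plan is to reduce both bounds to a single estimate, namely an upper bound on the expected number of proper equivalence classes with exactly $k$ hops:
\begin{equation}\label{eq-plan-key}
\mathds{E}[|\mathscr{P}_t\cap\{h=k\}|]\le B^k\,\frac{t^{dk}}{\Gamma(dk+1)}
\end{equation}
for some constant $B=B(\beta,d)>0$. Once \eqref{eq-plan-key} is in hand, both statements of the lemma follow by elementary manipulations, so the substantive work is in establishing \eqref{eq-plan-key} via the Mecke/Palm formula for Poisson processes together with a change of variables that decouples the hop and gap contributions.

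\textbf{Deriving \eqref{eq-plan-key}.} An equivalence class in $\mathscr{P}_t\cap\{h=k\}$, $k\ge 1$, is specified by an ordered tuple of distinct hop edges $\langle{\bm x}_1,{\bm y}_1\rangle,\dots,\langle{\bm x}_k,{\bm y}_k\rangle$ from $\mathcal E$, each of scope $>1$, such that the initial gap $|{\bm x}_1|$ and the inter-hop gaps $|{\bm x}_{i+1}-{\bm y}_i|$ ($i=1,\dots,k-1$) sum to at most $t$ (the trailing gap is free within the class). By the Campbell--Mecke formula for a Poisson process with intensity $\beta/|{\bm x}-{\bm y}|^{2d}$,
\begin{equation*}
\mathds{E}[|\mathscr{P}_t\cap\{h=k\}|]\le \int \prod_{i=1}^k \frac{\beta\,\mathbf{1}\{|{\bm y}_i-{\bm x}_i|>1\}}{|{\bm y}_i-{\bm x}_i|^{2d}}\,\mathbf{1}\Big\{|{\bm x}_1|+\sum_{i=1}^{k-1}|{\bm x}_{i+1}-{\bm y}_i|\le t\Big\}\,\prod_i d{\bm x}_i\,d{\bm y}_i.
\end{equation*}
Introducing hop displacements ${\bm v}_i:={\bm y}_i-{\bm x}_i$ and gap displacements ${\bm w}_0:={\bm x}_1$, ${\bm w}_i:={\bm x}_{i+1}-{\bm y}_i$ (Jacobian $=1$) separates the integrand; the $k$ hop integrals each equal a fixed constant $A:=\int_{|{\bm v}|>1}\beta|{\bm v}|^{-2d}d{\bm v}<\infty$ (the tail is integrable since the integrand decays like $r^{-d-1}$), and the gap integral is the Dirichlet volume
\begin{equation*}
\int_{r_i\ge 0,\;\sum_{i=0}^{k-1}r_i\le t} c_d^k\prod r_i^{d-1}\,dr_i=c_d^k\,\Gamma(d)^k\,\frac{t^{dk}}{\Gamma(dk+1)},
\end{equation*}
where $c_d$ is the surface area of $S^{d-1}$. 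Combining these yields \eqref{eq-plan-key} with $B=Ac_d\Gamma(d)$. The case $k=0$ contributes exactly $1$ to $\mathds{E}[|\mathscr{P}_t|]$.

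\textbf{From \eqref{eq-plan-key} to both bounds.} For the first bound, sum over $k$ and recognize a subseries of $\exp$:
\begin{equation*}
\mathds{E}[|\mathscr{P}_t|]\le \sum_{k\ge 0}\frac{(B^{1/d}t)^{dk}}{(dk)!}\le e^{B^{1/d}t},
\end{equation*}
which gives the first bound with $C_{cont}\ge e^{B^{1/d}}$. For the second bound, by Markov, Stirling ($\Gamma(dk+1)\ge (dk/e)^{dk}$) and \eqref{eq-plan-key},
\begin{equation*}
\mathds{P}[\exists P\in\mathscr{P}_t:h(P)\ge\alpha t]\le\sum_{k\ge\alpha t}B^k\frac{t^{dk}}{\Gamma(dk+1)}\le\sum_{k\ge\alpha t}\Big(\frac{B^{1/d}e\,t}{dk}\Big)^{dk}\le\sum_{k\ge\alpha t}\Big(\frac{C'}{\alpha}\Big)^{dk},
\end{equation*}
with $C':=B^{1/d}e/d$ (the last step uses $k\ge\alpha t$). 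For $\alpha\ge 2C'$ the geometric tail is bounded by $2(C'/\alpha)^{d\alpha t}$, and since $d\alpha t\ge t$ and $(C'/\alpha)^{d\alpha}$ decays super-polynomially in $\alpha$, one readily checks that for $\alpha$ large enough this is $\le (C_{cont}/\alpha)^t$ after possibly enlarging $C_{cont}$.

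\textbf{Main obstacle.} The only real subtlety is bookkeeping the equivalence-class definition correctly: because the trailing gap does not enter $\|P\|_1$ only through inclusion in a class, one must be careful that the integrand in the Mecke formula imposes the length constraint on the \emph{initial plus inter-hop} gaps, not on the trailing one, so that each class is counted once (not integrated over a free trailing segment). Once this is pinned down, the rest is the standard Dirichlet-volume computation and a Stirling tail estimate, exactly mirroring \cite[Lemma~2.1]{Ding-Sly13} but carried out in general dimension $d$.
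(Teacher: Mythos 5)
Your key estimate \eqref{eq-plan-key} is correct and is essentially what the paper establishes: the paper arrives at $f_k(t)\le (c_d\beta(d-1)!t^d)^k/(kd)!$ by setting up a one-step recursion $f_k(t)\le c_d\beta\int_0^t r^{d-1}f_{k-1}(t-r)\,dr$ and solving it by induction, which is exactly the unrolled version of your one-shot multi-point Mecke integral with the Dirichlet change of variables. Your treatment of the equivalence-class bookkeeping (constraint on initial plus inter-hop gaps only) is also the right reading, and the derivation of the first bound $\mathds{E}[|\mathscr{P}_t|]\le e^{B^{1/d}t}$ matches the paper's.

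The second bound, however, contains a genuine gap. You claim that for $\alpha$ large one can absorb the geometric-series constant into $C_{cont}$ and get $2(C'/\alpha)^{d\alpha t}\le(C_{cont}/\alpha)^t$. This fails when $\alpha t$ is small: take $\alpha=C_{cont}$ and $t\to 0^+$; the left side tends to $2$ while the right side equals $1$. The factor $2$ is an absolute constant and cannot be eaten by $(C_{cont}/\alpha)^t$ as $t\to 0$, so ``one readily checks'' is not justified as written. (The conclusion is still true, but one has to handle the regime $\alpha t<1$ separately, e.g., by noting that the sum starts at $k=\lceil\alpha t\rceil\ge 1$ so the bound is actually $2(C'/\alpha)^d$ and then comparing to $(C_{cont}/\alpha)^{1/\alpha}$ — which requires a non-trivial additional check.) The paper sidesteps this entirely by recognizing the subseries $\sum_{k\ge\alpha t}(\widehat ct)^{kd}/(kd)!$ as dominated by $e^{\widehat ct}\,\mathds{P}[\mathrm{Poi}(\widehat ct)\ge\alpha t]$, and then applying the exponential Markov bound to get $e^{-\alpha t+\widehat c e t}\le(e^{\widehat ce}/\alpha)^t$, which holds for every $t>0$ and $\alpha>0$ because $\alpha e^{-\alpha}\le 1$ always. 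The Poisson MGF route thus handles the small-$t$ regime cleanly, and I would recommend replacing your Stirling-plus-geometric-tail step with it.
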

\begin{proof}
    For convenience, for any $k\in\mathds{N}, t>0$ and ${\bm x}\in\mathds{R}^d$, let $\mathscr{P}_t^{\bm x}(k)$ be the collection of equivalence classes of proper paths starting from ${\bm x}$ with lengths at most $t$ in the metric $d_{(1,\infty)}$ and with $k$ hops. If ${\bm x}={\bf 0}$, we will omit ${\bm x}$ then. From translation invariance of the Poisson point process, we get that $\mathds{E}[|\mathscr{P}_t^{{\bm x}}(k)|]$ does not depend on ${\bm x}$ and thus we denote the expectation by $f_k(t)$.

    Now we will calculate $f_k(t)$ inductively. For any $k>0$ and a path $P\in\mathscr{P}_t(k)$, we consider the first hop $\langle{\bm x}, {\bm y}\rangle$ which divides $P$ into two parts before and after this hop. The former part contains a gap with length $|{\bm x}|$ and a hop $\langle{\bm x} , {\bm y}\rangle$, and the latter part is contained in $\mathscr{P}_{t-|{\bm x}|}^{{\bm y}}(k-1)$. As a result, we can get the following recursion by counting the possible hop and the subsequent path respectively:
    \begin{equation}\label{recursion}
        f_k(t)\leq\iint_{|{\bm x}|\leq t, |{\bm x}-{\bm y}|\geq 1}\frac{\beta f_{k-1}(t-|{\bm x}|)}{|{\bm x}-{\bm y}|^{2d}}\d {\bm x}\d {\bm y}=c_d\beta\int_0^t r^{d-1}f_{k-1}(t-r)\d r,
    \end{equation}
    where $c_d>0$ is a constant depending only on $d$.

    From now on, we will use \eqref{recursion} and the fact that $f_0(t)=1$ to give upper bounds on all $f_k$ inductively. To be more precise, we will prove by induction on $k$ that
    \begin{equation}\label{fk}
        f_k(t)\leq\frac{( c_d\beta (d-1)!t^d)^k}{(kd)!}.
    \end{equation}
    It is obvious that \eqref{fk} holds when $k=0$. Assume that \eqref{fk} holds for $k\geq 0$. Then from \eqref{recursion}, we get that
    \begin{equation*}
        \begin{split}
            f_{k+1}(t)&\leq((kd)!)^{-1}(c_d\beta)^{k+1}((d-1)!)^k\int_0^t r^{d-1}(t-r)^{kd}\d r\\
            &=\frac{(c_d \beta (d-1)!t^d)^{k+1}}{((k+1)d)!}\frac{((k+1)d)!}{(kd)!(d-1)!}\int_0^1 r^{d-1}(1-r)^{kd}\d r\\
            &=\frac{(c_d \beta (d-1)!t^d)^{k+1}}{((k+1)d)!},
        \end{split}
    \end{equation*}
    where the last equality is due to $\int_0^1 r^{q_1}(1-r)^{q_2}\d r=\Gamma(q_1+1)\Gamma(q_2+1)/\Gamma(q_1+q_2+2)$ for all $q_1,q_2>-1$.
    Consequently, \eqref{fk} holds for all $k\in\mathds{N}$.

    Now let $\widehat{c}=(\beta c_d(d-1)!)^{1/d}$. Then $f_k(t)\leq\frac{(\widehat{c}t)^{kd}}{(kd)!}$. We can obtain that
    \begin{equation}\label{hd-lem21step1}
        \mathds{E}[|\mathscr{P}_t|]=\sum_{k\geq 0}f_k(t)\leq\sum_{k\geq 0}\frac{(\widehat{c}t)^{kd}}{(kd)!}\leq \e^{\widehat{c}t}.
    \end{equation}
    In addition, by Markov's inequality we get that
    \begin{equation}\label{hd-lem21step2}
        \mathds{P}[\exists P \in \mathscr{P}_t : h(P) \geq \alpha t]\leq \sum_{k\geq \alpha t}f_k(t)\leq\sum_{k\geq \alpha t}\frac{(\widehat{c}t)^{kd}}{(kd)!}\leq \e^{\widehat{c}t}\mathds{P}[{\rm Poi}(\widehat{c}t)\geq \alpha t].
    \end{equation}
    From Markov's inequality again, the right hand side of \eqref{hd-lem21step2} is bounded by $\e^{-\alpha t+\widehat{c}\e t}\leq (\e^{\widehat{c}\e}/\alpha)^t$. As a result, combining \eqref{hd-lem21step1} and \eqref{hd-lem21step2}, we get the lemma with $C_{cont}=\e^{\widehat{c}\e}$.
\end{proof}

We also record the following tightness result for the discrete model, which is an immediate consequence of \cite[Lemmas 2.3 and 4.10]{Baumler22}.
\begin{lemma}\label{Baumler4-10}
    $\left\{\frac{r^\theta}{\widehat{d}({\bm 0},([-r,r]_\mathds{Z}^d)^c)}\right\}_{r>1}$ is tight.
\end{lemma}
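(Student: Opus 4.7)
The aim is: for each $\varepsilon>0$, produce a constant $M=M(\varepsilon)>0$ such that
$$\mathds{P}\left[\widehat{d}(\bm 0,([-r,r]_\mathds{Z}^d)^c) < r^\theta/M\right] \le \varepsilon \qquad \text{for all } r>1.$$
My plan is to rewrite the distance to the complement as a minimum of single-vertex distances,
$$\widehat{d}(\bm 0,([-r,r]_\mathds{Z}^d)^c) = \min_{\bm v\in\mathds{Z}^d\setminus[-r,r]_\mathds{Z}^d} \widehat{d}(\bm 0,\bm v),$$
decompose the index set into dyadic annuli $A_k=\{\bm v\in\mathds{Z}^d:2^{k-1}r<\|\bm v\|_\infty\le 2^k r\}$ (so $|A_k|\le C_d(2^k r)^d$), and union bound over $k\ge 0$. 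For $\bm v\in A_k$, the event $\widehat{d}(\bm 0,\bm v)<r^\theta/M$ forces $\widehat{d}(\bm 0,\bm v)<|\bm v|^\theta/(M\, 2^{(k-1)\theta})$, an increasingly rare lower-tail event as $k$ grows.

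The quantitative engine is Lemma 4.10 of \cite{Baumler22}, which upgrades the $\asymp_P$ statement of Theorem \ref{discrete-dist} to a genuine superpolynomial lower tail of the form $\mathds{P}[\widehat{d}(\bm 0,\bm v)<|\bm v|^\theta/M]\le \eta(M)$ with $\eta(M)=o(M^{-\mu})$ for every $\mu>0$, uniformly in $\bm v$. Lemma 2.3 of \cite{Baumler22} is then used to pass from the set-distance to single-vertex distances in a rigorous way. Feeding this into the dyadic decomposition gives
$$\mathds{P}\left[\widehat{d}(\bm 0,([-r,r]_\mathds{Z}^d)^c) < r^\theta/M\right] \le \sum_{k\ge 0} C_d\, (2^k r)^d \, \eta\!\left(M\, 2^{(k-1)\theta}\right),$$
and choosing $\mu>d/\theta$ in the decay rate of $\eta$ makes the sum convergent in $k$ and arbitrarily small as $M\to\infty$, uniformly in $r>1$.

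The main obstacle is the first annulus $A_0$: it contains $\asymp r^{d-1}$ vertices just outside the box, and for each of them only the full $\eta(M)$ probability is available. The polynomial factor $r^{d-1}$ must therefore be absorbed by the superpolynomial decay of $\eta$, which is exactly the content of \cite[Lemma 4.10]{Baumler22} (the qualitative tightness in Theorem \ref{discrete-dist} alone is not enough). Once $A_0$ is handled, the outer annuli $A_k$ with $k\ge 1$ are automatic: the required threshold shrinks by the factor $2^{k\theta}$ while the vertex count only inflates by $2^{kd}$, so any fast-enough decay of $\eta$ makes the tail summable and the overall bound uniform in $r$.
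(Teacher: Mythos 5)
Your plan is to write $\widehat{d}(\bm 0,([-r,r]_\mathds{Z}^d)^c)$ as a minimum over single vertices, then union bound over dyadic annuli using a single--vertex lower tail of the form $\mathds{P}[\widehat{d}(\bm 0,\bm v)<|\bm v|^\theta/M]\le\eta(M)$ \emph{uniformly in} $\bm v$. That union bound cannot give a bound that is uniform in $r$, and no amount of superpolynomial decay of $\eta$ in $M$ fixes this. The trouble is in the first annulus: it contains $\asymp r^d$ vertices $\bm v$ with $|\bm v|\asymp r$, and for each of them the best you get from your hypothesis is the full $\eta(M)$. Summing gives $C_d\, r^d\, \eta(M)$, which blows up as $r\to\infty$ for any fixed $M$. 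The superpolynomial decay of $\eta$ is an $M$-decay, not an $r$-decay, so it absorbs the $2^{kd}$ growth across annuli (the $k$-sum you correctly handle with $\mu>d/\theta$), but it does nothing against the $r^d$ prefactor.

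The underlying reason this cannot be repaired is probabilistic, not merely a loose constant. For fixed $M$ the quantity $\mathds{P}[\widehat{d}(\bm 0,\bm v)<|\bm v|^\theta/M]$ is \emph{not} expected to decay as $|\bm v|\to\infty$: by Theorem~\ref{discrete-dist}, $\widehat{d}(\bm 0,\bm v)/|\bm v|^\theta$ is a tight family with a nondegenerate limit (this is precisely what the paper is constructing), so for a typical $M$ the lower-tail probability converges to a positive constant. The events $\{\widehat{d}(\bm 0,\bm v)<t\}$ for nearby $\bm v$ are therefore very strongly positively correlated, and the union bound is off by a polynomial factor in $r$. Any correct argument must address the set-distance $\widehat{d}(\bm 0,([-r,r]_\mathds{Z}^d)^c)$ directly rather than via a union bound over target vertices. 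This is exactly what the cited input from B\"aumler does: \cite[Lemma~4.10]{Baumler22} (together with \cite[Lemma~2.3]{Baumler22}) provides a lower tail for the exit distance from a box, obtained by a multiscale renormalization that controls the correlated events jointly. The paper's proof of Lemma~\ref{Baumler4-10} is nothing more than citing those two lemmas; your reconstruction, by contrast, attempts to assemble the set-distance tail from a single-vertex tail plus a union bound, and that decomposition loses a fatal factor of $r^d$.

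A secondary and smaller issue: you first set $|A_k|\le C_d(2^k r)^d$ and then assert the first annulus contains $\asymp r^{d-1}$ vertices, which is inconsistent; in any reasonable choice of dyadic shells outside the box the innermost one has $\asymp r^d$ vertices, not $r^{d-1}$ (you cannot restrict attention to a single-lattice-width shell, since a geodesic may exit the box by a long edge that jumps directly to a distant vertex). This does not change the conclusion above—either power of $r$ is unbounded—but it suggests the accounting of the argument was not carried through carefully.
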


Now we present the
\begin{proof}[Proof of Proposition \ref{apriori-cont-2}]
    Let $P$ be a geodesic under $d_{(1,\infty)}$ from ${\bm 0}$ to $([-r,r]^d)^c$ (in the case of multiple geodesics, we choose an arbitrary one in a prefixed manner). Assume that $P:{\bm 0}={\bm x}_1\to {\bm y}_1\to {\bm x}_2\to\cdots\to {\bm x}_m\to {\bm y}_m$ uses $m-1$ edges in $\mathcal{E}$ (i.e. $\langle {\bm y}_i,{\bm x}_{i+1} \rangle$ for $i\in[1,m-1]_{\mathds{Z}}$) and goes along the Euclidean line from ${\bm x}_j$ to ${\bm y}_j$ for $j\in[1,m]_{\mathds{Z}}$.
    For $ j\in[1,m]_{\mathds{Z}}$, we choose $\bm k_j,\bm l_j\in\mathds{Z}^d$  such that ${\bm x}_j\in V_1(\bm k_j)$ and ${\bm y}_j\in V_1(\bm l_j)$. Thus there exists a constant $c(d)>0$ depending only on $d$ such that
    $$c(d)|{\bm x}_j-{\bm y}_j|+2d\geq  \|\bm k_j-\bm l_j\|_{1}\geq \widehat{d}(\bm k_j,\bm l_j).$$
    Additionally, from the definition of the coupling, we have $\widehat{d}(\bm l_j,\bm k_{j+1})=1$ since there is a long edge connecting $\bm l_j$ and $\bm k_{j+1}$ directly. Thus summing over $j$ yields that
    \begin{equation*}
        \widehat{d}(\bm 0,([-r,r]_\mathds{Z}^d)^c)\leq c(d)d_{(1,\infty)}(\bm 0,([-r,r]^d)^c)+(2d+1)(h(P)+1).
    \end{equation*}

    For any fixed $\varepsilon>0$, by Lemma \ref{Baumler4-10}, we can first take $c_1=c_1(\varepsilon)>0$ such that for any $r>1$,
    \begin{equation}\label{E(1)}
        \mathds{P}[\widehat{d}(\bm 0,([-r,r]_\mathds{Z}^d)^c)\leq c_1r^\theta]<\varepsilon/2.
    \end{equation}
    Moreover, set $c_2=\frac{c_1}{4(2d+1)C_{cont}+2c(d)}>0$. From Lemma \ref{DS13lem21} with $t=c_2 r^\theta$ and $\alpha=2C_{cont}$, we get
    \begin{equation}\label{E(2)}
        \mathds{P}[d_{(1,\infty)}(\bm 0,([-r,r]^d)^c)<c_2 r^\theta, h(P)>2C_{cont}c_2 r^\theta]\leq 2^{-c_2 r^\theta}.
    \end{equation}

In the following, we define some events. Let $E_{r,1}$ be the event that $\widehat{d}(\bm 0,([-r,r]_\mathds{Z}^d)^c)\geq c_1r^\theta$ and $E_{r,2}$ be the event that $h(P)\leq 2C_{cont}c_2 r^\theta$. Set $E_{r}=E_{r,1}\cap E_{r,2}$.
Additionally, let $F_r$ be the event that $d_{(1,\infty)}(\bm 0,([-r,r]^d)^c)<c_2 r^\theta$. Then we obtain
    \begin{equation*}
    \begin{split}
        \mathds{P}[F_r]&\leq \mathds{P}[E_{r,1}^c]+\mathds{P}[F_r\cap E_{r,2}^c]+\mathds{P}[F_r\cap E_r]\\
        &\leq \varepsilon/2+2^{-c_2 r^\theta}+\I_{\{c_1 r^\theta<4d+2\}}
        \end{split}
    \end{equation*}
    from \eqref{E(1)} and \eqref{E(2)} and the fact that on the event $E_r\cap F_r$,
    $$
    c_1 r^\theta\leq\widehat{d}(\bm 0,(([-r,r]_\mathds{Z})^d)^c)\leq c(d)d_{(1,\infty)}(\bm 0,([-r,r]^d)^c)+(2d+1)(h(P)+1)\leq c_1 r^\theta/2+(2d+1).
    $$
    Hence, we can choose an $r_\varepsilon>1$ depending only on $\beta,d$ and $\varepsilon$ such that $\mathds{P}[F_r]<\varepsilon$ when $r>r_\varepsilon$. Furthermore, when $1<r\leq r_\varepsilon$, we can choose $c_3=c_3(\varepsilon)>0$ such that $$
    \mathds{P}[d_{(1,\infty)}(\bm 0,([-r,r]^d)^c)<c_3r^\theta]\leq \mathds{P}[d_{(1,\infty)}(\bm 0,([-1,1]^d)^c)<c_3r_\varepsilon^\theta]<\varepsilon
    $$
     from the fact that $d_{(1,\infty)}(\bm 0,([-1,1]^d)^c)>0$ almost surely. As a result, taking $c=c_2\wedge c_3$, we get$$\mathds{P}[d_{(1,\infty)}(\bm 0,([-r,r]^d)^c)<cr^\theta]<\varepsilon$$ for any $r>1$, which implies the proposition.
\end{proof}

\begin{proof}[Proof of Theorem \ref{continuous-dist}]
    With the fact that $$d_{(1,\infty)}(\bm 0,\bm u)\in[d_{(1,\infty)}(\bm 0,([|{\bm u}|/d,|{\bm u}|/d]^d)^c),{\rm diam}([-|{\bm u}|,|{\bm u}|]^d,d_{(1,\infty)})],$$
    it is immediately implied by Propositions \ref{apriori-cont-1} and \ref{apriori-cont-2}.
\end{proof}

\section{Bi-Lipschitz equivalence}\label{bi-lipschitz}
In this section, we will prove the bi-Lipschitz equivalence of two arbitrary ``local'' metrics, which is an even slightly weaker metric than the weak $\beta$-LRP metric and is defined as  follows.

\begin{definition}\label{localmetric}
We say a pseudometric $D$ on $\mathds{R}^d$ is a \textit{local $\beta$-LRP metric} if it satisfies Axioms I, III, IV', V1', V2' and the following axiom.
\begin{itemize}
\item[II''] {\bf weak locality.} For any finite sequence of disjoint open sets $V_1,V_2,\cdots,V_N\subset \mathds{R}^d$, the pairs $(\mathcal{E}|_{V_i\times V_i};D(\cdot,\cdot;V_i))$, $i=1,2,\cdots, N$ and
$\mathcal{E}|_{(\cup_{i=1}^N(V_i\times V_i))^c}$ are all independent.
\end{itemize}

\end{definition}

Due to the independence of the Poisson point process, it is clear that Axiom II (locality) implies Axiom II'' (weak locality). That is, every weak $\beta$-LRP metric is also a local $\beta$-LRP metric. As a result, all the findings in this section hold true for weak $\beta$-LRP metrics.

In this section, when not specifically stated, we assume that $D$ and $\widetilde{D}$ are two local $\beta$-LRP metrics. The following proposition draws strong inspiration from \cite[Theorem 1.6]{GM19c}.

\begin{proposition}\label{bilip}
    Let $D$ and $\widetilde{D}$ be two local $\beta$-LRP metrics. Then there is a finite deterministic constant $C>0$ which depends only on $\beta,d$ and the laws of $D$ and $\widetilde{D}$ such that a.s.
$$
C^{-1}D({\bm x},{\bm y})\leq \widetilde{D}({\bm x},{\bm y})\leq CD({\bm x},{\bm y}),\quad \forall {\bm x},{\bm y}\in\mathds{R}^d.
$$
\end{proposition}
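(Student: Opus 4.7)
The plan is to compare both $D$ and $\widetilde D$ to a single common intermediary—the chemical distance on a renormalized discrete $\beta$-LRP model at a common scale—and then to combine the two comparisons. Fix $R>0$ and a small $\varepsilon>0$, and partition $\mathds{R}^d$ into cubes of side length $\varepsilon R$ indexed by $\bm k\in\varepsilon R\,\mathds{Z}^d$. Declare two cubes connected in the coarse model whenever they are nearest neighbours in $\varepsilon R\,\mathds{Z}^d$, or there exists a long edge $\langle\bm u,\bm v\rangle\in\mathcal{E}$ joining them. A direct computation using the Poisson intensity $\beta|\bm u-\bm v|^{-2d}$ and the scaling $\bm u\mapsto \bm u/(\varepsilon R)$ shows that the coarse model is itself an instance of the discrete critical $\beta$-LRP on $\mathds{Z}^d$; denote its chemical distance by $\widehat d$ and set $\bm k_{\bm z}:=\lfloor\bm z/(\varepsilon R)\rfloor$. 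The target is to show that both $D$ and $\widetilde D$ satisfy, on the same high-probability event, a two-sided inequality of the form $C_0^{-1}(\varepsilon R)^\theta\widehat d(\bm k_{\bm x},\bm k_{\bm y})\le D(\bm x,\bm y)\le C_0(\varepsilon R)^\theta\widehat d(\bm k_{\bm x},\bm k_{\bm y})$ for all $\bm x,\bm y\in[-R,R]^d$ with $D(\bm x,\bm y)\ge 2\varepsilon^{\theta/2}R^\theta$.

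For the upper bound, take a $\widehat d$-geodesic from $\bm k_{\bm x}$ to $\bm k_{\bm y}$ and lift it to a continuous path: each nearest-neighbour step contributes at most the internal $D$-diameter of a cube of side length $2\varepsilon R$, while each long-edge step contributes $0$ by Axiom III. A union bound over the $O(\varepsilon^{-d})$ relevant cubes, using the stretched-exponential tail of Axiom V2', shows with polynomially high probability that all these diameters are bounded by $C_0(\varepsilon R)^\theta$, giving the upper inequality. For the lower bound, encode an arbitrary continuous $D$-path $P$ from $\bm x$ to $\bm y$ by the ordered list of long edges $\langle\bm u_i,\bm v_i\rangle\in\mathcal{E}$ that it uses and the cubes visited between successive hops; this encoding produces a path in the coarse model from $\bm k_{\bm x}$ to $\bm k_{\bm y}$, so $\widehat d(\bm k_{\bm x},\bm k_{\bm y})$ is bounded by its length. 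To convert this into a lower bound on $D$-length, one invokes V1' in each cube: with probability tending to $1$ uniformly in the cube, the $D$-distance from any interior point of a cube to the boundary of the doubled cube is at least $c(\varepsilon R)^\theta$. Summing contributions yields that $P$ has $D$-length at least $c(\varepsilon R)^\theta\widehat d(\bm k_{\bm x},\bm k_{\bm y})$, the threshold $D(\bm x,\bm y)\ge 2\varepsilon^{\theta/2}R^\theta$ being exactly what is needed to absorb the polynomial failure probability of the union bound over $O(\varepsilon^{-d})$ cubes.

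Applying the two-sided comparison simultaneously to $D$ and $\widetilde D$ with the \emph{same} $\widehat d$, on the intersection of the two high-probability events we obtain
\[
 C_0^{-2}\,D(\bm x,\bm y)\;\le\;\widetilde D(\bm x,\bm y)\;\le\; C_0^{2}\,D(\bm x,\bm y)
\]
for all $\bm x,\bm y\in[-R,R]^d$ whose $D$- and $\widetilde D$-distances exceed the threshold. The restriction to macroscopic pairs is removed by using Axiom I (length-space property) to realise short distances as concatenations of macroscopic pieces at a slightly smaller scale, then letting $\varepsilon\downarrow 0$ so the threshold vanishes. Finally, letting $R\to\infty$ and using translation invariance (Axiom IV') together with a zero-one law coming from the weak-locality Axiom II'' (the event that the optimal bi-Lipschitz constant between $D$ and $\widetilde D$ exceeds a given value is translation-invariant, hence has probability $0$ or $1$), one concludes that this constant is a.s. deterministic.

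The main obstacle is the lower bound in the middle step. Unlike in the LQG setting, $D$ enjoys no Weyl scaling and its geodesics are typically discontinuous in the Euclidean topology, so the standard "crossing" argument cannot be applied directly: a path can leave and re-enter a cube via arbitrarily many long edges, and one must carefully count these hops and bound the $D$-length of each in-cube excursion. The required uniform control over the $O(\varepsilon^{-d})$ cubes, obtained by marrying the polynomial tail of V1' with an exponential-growth bound on the number of coarse paths (playing the role of Lemmas~\ref{number-path-k} and~\ref{DS13lem21}), is the technical core of the proof and is what dictates the somewhat curious threshold $2\varepsilon^{\theta/2}R^\theta$.
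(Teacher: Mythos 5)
Your overall strategy—renormalise to a discrete $\beta$-LRP at scale $\varepsilon R$, compare both $D$ and $\widetilde D$ to the resulting chemical distance $\widehat d$, and combine—is exactly the one the paper uses, and your identification of the lower bound as the hard step is also right. However, there are several concrete gaps that, as written, would make the argument fail.

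\textbf{The upper-bound union bound does not close.} You claim that a union bound over $O(\varepsilon^{-d})$ cubes using the stretched-exponential tail of Axiom V2' shows ``with polynomially high probability that all these diameters are bounded by $C_0(\varepsilon R)^\theta$''. But V2' gives $\mathds{P}[\mathrm{diam}(V_{\varepsilon R}(\bm k);D)>C_0(\varepsilon R)^\theta]\le M\e^{-C_0^\eta}$, a constant in $\varepsilon$, so the union bound over $\varepsilon^{-d}$ cubes diverges. Allowing $C_0$ to grow with $\varepsilon^{-1}$ makes the union bound close but ruins the two-sided comparison, since the final bi-Lipschitz constant must not depend on $\varepsilon$ (you later send $\varepsilon\downarrow 0$). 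The paper circumvents this by allowing a weaker per-cube bound $\varepsilon^{3\theta/4}R^\theta$ (event $F_{\varepsilon,4}$ in Lemma~\ref{diamIk-2}) only to establish $d^G+1\ge\varepsilon^{-\theta/4}$, and then obtaining the sharp cumulative bound via a Chernoff-type estimate on the \emph{sum} of cube diameters along a path (event $F_{\varepsilon,5}$ in Lemma~\ref{Fr3lemma-2}), which in turn relies on the exponential moment from V2' together with a path-counting bound. You need some such sum estimate; a per-cube union bound cannot give the right order.

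\textbf{The lower-bound mechanism is missing.} You correctly observe that a path may hop in and out of a cube via long edges, but the property you invoke (``$D$-distance from any interior point of a cube to the boundary of the doubled cube is at least $c(\varepsilon R)^\theta$'') does not control these excursions: the point $\bm u_2$ from which the path exits $V_{3\varepsilon R}(\bm z)$ is \emph{inside} the doubled cube, not on its boundary, and could be arbitrarily close to the entry point $\bm v_1$. What is actually needed is the paper's notion of a $(3s,\alpha)$-good cube (Definition~\ref{h-good}): the end points of the entering edge and the exiting edge are Euclidean-separated by $\alpha s$ and $D$-separated by $(b\alpha s)^\theta$. Moreover, the good-cube property fails with constant probability (Lemma~\ref{h-probgood}), so again one cannot demand that \emph{all} cubes be good. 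The paper instead proves (Lemmas~\ref{hd-BK}, \ref{Fr4lemma-2}, via the BK inequality and the path-count from Lemma~\ref{number-path-k}) that with superpolynomially high probability every sufficiently long coarse path meets a constant fraction of good cubes. You gesture at the path count but do not supply this structure.

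\textbf{The $D(\bm x,\bm y)=0$ case is untreated.} Sending $\varepsilon\downarrow 0$ removes the threshold and gives the bi-Lipschitz inequality for all pairs with $D(\bm x,\bm y)>0$; it does not cover pairs with $D(\bm x,\bm y)=0$, where one must argue that simultaneously $\widetilde D(\bm x,\bm y)=0$. Your length-space concatenation argument does not help here: if $D(\bm x,\bm y)=0$ there is no decomposition into macroscopic pieces. The paper needs, and separately proves, the strong regularity (Proposition~\ref{Dxy=0-local}: a.s.\ $D(\bm x,\bm y)=0$ iff $\langle\bm x,\bm y\rangle\in\mathcal{E}$, hence iff $\widetilde D(\bm x,\bm y)=0$); this is a genuine additional input.

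Finally, the closing ``zero-one law via weak locality'' is both unnecessary and delicate. For Proposition~\ref{bilip}, the constant $C$ is deterministic by construction (built from the parameters $\alpha,b_0,c_4$, etc.), so no ergodicity is needed—one simply sends $\gamma\to 0$ with $C$ fixed. The statement that the \emph{optimal} bi-Lipschitz constant is deterministic is Proposition~\ref{cCdtm}, which the paper proves via a second-moment argument precisely because translated events are never exactly independent in LRP (long edges connect any two boxes), so Kolmogorov's $0$--$1$ law does not apply directly.
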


Thus we can define the optimal upper and lower bi-Lipschitz constants between $D$ and $\widetilde{D}$ as
$$
c_*=\sup\left\{c'>0:\ c'D({\bm x},{\bm y})\leq \widetilde{D}({\bm x},{\bm y})\ \text{for all}\ {\bm x},{\bm y}\in\mathds{R}^d\right\}
$$
and
$$
 C_*=\inf\left\{C'>0:\ \widetilde{D}({\bm x},{\bm y})\leq C'D({\bm x},{\bm y})\ \text{for all}\ {\bm x},{\bm y}\in\mathds{R}^d\right\}.
$$
Furthermore, if $D$ and $\widetilde{D}$ are also weak $\beta$-LRP metrics, we can show that $c_*$ and $C_*$ are both deterministic constants as follows.
\begin{proposition}\label{cCdtm}
Let $D$ and $\widetilde{D}$ be two weak $\beta$-LRP metrics. Each of $c_*$ and $C_*$, as defined above is a.s.\ equal to a deterministic constant.
\end{proposition}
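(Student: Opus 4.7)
My approach is to prove Proposition \ref{cCdtm} by combining the $\sigma(\mathcal{E})$-measurability of the optimal constants with the translation-ergodicity of the underlying Poisson point process $\mathcal{E}$ on $\mathds{R}^{2d}$. This is the natural analog of how one proves that optimal bi-Lipschitz constants between two LQG metrics are deterministic (via a zero-one law for the GFF), but here the randomness is already the Poisson process, which has even stronger independence structure.

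First I would verify measurability: since $D$ and $\widetilde{D}$ are measurable functions of $\mathcal{E}$ with values in the space of continuous pseudometrics, and by Proposition \ref{bilip} the ratio $\widetilde{D}({\bm x},{\bm y})/D({\bm x},{\bm y})$ lies in a deterministic bounded interval of $(0,\infty)$, continuity lets us rewrite
\[
c_* = \inf \frac{\widetilde{D}({\bm x},{\bm y})}{D({\bm x},{\bm y})}, \qquad C_* = \sup \frac{\widetilde{D}({\bm x},{\bm y})}{D({\bm x},{\bm y})},
\]
where the extrema are now over the countable dense set of pairs $({\bm x}, {\bm y}) \in \mathds{Q}^d \times \mathds{Q}^d$ with $D({\bm x}, {\bm y}) > 0$. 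Thus $c_*$ and $C_*$ are $\sigma(\mathcal{E})$-measurable random variables.

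Next I would exploit translation invariance. For ${\bm z} \in \mathds{R}^d$, let $T_{\bm z}$ denote the diagonal shift on $\mathds{R}^{2d}$ that sends $\langle {\bm x}, {\bm y}\rangle$ to $\langle {\bm x}+{\bm z}, {\bm y}+{\bm z}\rangle$. The intensity $\beta/|{\bm x}-{\bm y}|^{2d}$ is $T_{\bm z}$-invariant so $T_{\bm z}\mathcal{E}\stackrel{\rm law}{=}\mathcal{E}$; moreover the action is mixing, since any two bounded observables can be separated by a sufficiently large shift, after which the corresponding restrictions of $\mathcal{E}$ are independent. As a purely algebraic identity, the functional $c_*$ satisfies $c_*\bigl(D(\cdot+{\bm z},\cdot+{\bm z}), \widetilde{D}(\cdot+{\bm z},\cdot+{\bm z})\bigr) = c_*(D, \widetilde{D})$ by change of variables. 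Combining this with Axioms II'' and IV' (weak locality together with translation invariance in law) lets me select a measurable version of the map $\mathcal{E} \mapsto D$ so that the pointwise covariance $D(T_{\bm z}\mathcal{E})({\bm x}, {\bm y}) = D(\mathcal{E})({\bm x} - {\bm z}, {\bm y} - {\bm z})$ holds almost surely, and likewise for $\widetilde{D}$; with this choice one obtains $c_*(T_{\bm z}\mathcal{E}) = c_*(\mathcal{E})$ a.s.\ for every ${\bm z}$, and similarly for $C_*$. Ergodicity of $(T_{\bm z})_{{\bm z} \in \mathds{R}^d}$ then forces $c_*$ and $C_*$ each to equal a deterministic constant almost surely.

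The main obstacle is the step in the previous paragraph asserting pointwise (rather than merely in-law) translation covariance of the measurable map $\mathcal{E}\mapsto D$, since Axiom IV' by itself is a distributional statement. To discharge this I would either (i) invoke the concrete construction of weak $\beta$-LRP metrics as subsequential limits of discrete or continuous rescaled metrics, which is manifestly compatible with diagonal translations, or (ii) use Axiom II'' to show that for each ${\bm z}$ the two measurable functions $D(T_{\bm z}\mathcal{E})(\cdot,\cdot)$ and $D(\mathcal{E})(\cdot-{\bm z},\cdot-{\bm z})$ agree on internal metrics of every rational cube, hence a.s.\ coincide as continuous pseudometrics on $\mathds{R}^d$. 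Once this compatibility is in place for both $D$ and $\widetilde{D}$, the ergodicity argument delivers the proposition immediately.
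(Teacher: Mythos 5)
Your proposal takes a genuinely different route from the paper: the paper does not use ergodicity at all, but rather a direct second-moment argument. Concretely, the paper fixes $C>0$ with $\mathds{P}[C_*>C]>0$, constructs a localized version $E_{\bm k,r}$ of this event (asking that the witnessing points have all $D$- and $\widetilde{D}$-geodesics contained inside a cube $V_r(\bm k)$, thanks to Lemma~\ref{geodesic-go-not-far}), observes by Axiom~IV$'$ that $\mathds{P}[E_{\bm k,r}]$ is the same for all $\bm k$, and then runs a Paley--Zygmund / second-moment estimate on the number $X_N$ of indices $\bm k_1,\dots,\bm k_N$ at which $E_{\bm k_i,r}$ occurs. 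The near-independence of $E_{\bm k_i,r}$ for well-separated $\bm k_i$ is obtained by conditioning on the absence of a long edge between the cubes and bounding the resulting correlation. Letting $N\to\infty$ and the correlation parameter $\delta\to 0$ forces $\mathds{P}[X_N>0]\to 1$, hence $\mathds{P}[C_*>C]=1$. This route only uses Axiom~IV$'$ as a statement about laws, which is all the axiom gives.

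That last point is exactly where your argument has a genuine gap, and you in fact flag it yourself: the ergodicity argument requires $c_*(T_{\bm z}\mathcal{E})=c_*(\mathcal{E})$ \emph{pointwise a.s.}, which in turn requires the measurable map $\nu\mapsto D(\nu)$ to satisfy the pointwise covariance $D(T_{\bm z}\nu)(\cdot,\cdot)=D(\nu)(\cdot-\bm z,\cdot-\bm z)$ a.s., not merely equality in law. Axiom~IV$'$ only gives equality in law. Neither of your two proposed remedies closes this gap within the logical structure of the paper. Remedy (i) is circular: at this stage of the argument it is not known that every weak $\beta$-LRP metric arises as a subsequential scaling limit --- that is precisely the content of the uniqueness theorem being built towards, and Proposition~\ref{cCdtm} is an input to it, not a consequence. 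Remedy (ii) also does not go through: Axiom~II$''$ is an independence statement and Axiom~II (which one does have by Proposition~\ref{reallocal}) says only that $D(\cdot,\cdot;U)$ is \emph{some} measurable function $F_U$ of $\mathcal{E}|_{U\times U}$; nothing in the axioms forces the family $\{F_U\}$ to be translation-compatible, i.e.\ forces $F_{U+\bm z}$ to equal $F_U$ post-composed with the shift. Without such compatibility, $D(T_{\bm z}\mathcal{E})(\cdot,\cdot)$ and $D(\mathcal{E})(\cdot-\bm z,\cdot-\bm z)$ are two a priori different $\sigma(\mathcal{E})$-measurable pseudometrics with the same law, and the ergodic theorem cannot be invoked. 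The paper's second-moment argument is precisely designed to work with distributional translation invariance alone, and so avoids this issue; if you want to keep the ergodicity framing, you would first need to prove (as a separate lemma) that any weak $\beta$-LRP metric admits a version with pointwise translation covariance, which is itself a non-trivial step.
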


The primary method for proving Propositions \ref{bilip} and \ref{cCdtm} is through renormalization. To streamline its use in the proof, we will perform preliminary work in Section \ref{goodint}.
Then, in Section \ref{proof111},  we will present Proposition \ref{Dxy=0} in a version that pertains to local $\beta$-LRP metrics (which implies Proposition \ref{Dxy=0}) since it forms part of the input required for the proof of Proposition \ref{bilip}. From there, we can use similar techniques to complete the proof of Proposition \ref{bilip}. Additionally, Subsections \ref{uniquetheta} and \ref{Hasd}
are devoted to the proofs of Theorems \ref{uniquetheta-1} and \ref{theorem-Hausdorff}, respectively.

\subsection{Good cubes and associated estimates}\label{goodint}

We first introduce the definition of ``good'' cubes as follows.

\begin{definition}\label{h-good}
For $s>0$, ${\bm z}\in\mathds{R}^d$ and $\alpha\in(0,1)$, we say that a cube $V_{3s}({\bm z})$ is $(3s,\alpha)$-good if the following condition holds. For any two different edges $\langle {\bm u}_1,{\bm v}_1\rangle, \langle {\bm u}_2,{\bm v}_2\rangle\in\mathcal{E}$, with ${\bm u}_1\in V_s({\bm z})^c$, ${\bm v}_1\in V_s({\bm z})$, ${\bm u}_2\in V_{3s}({\bm z})$ and ${\bm v}_2\in V_{3s}({\bm z})^c$, we have $|{\bm v}_1-{\bm u}_2|\geq \alpha s$. Additionally, there is a constant $b>0$ (which does not depend on $\bm z$ or $s$ and will be chosen in Lemma \ref{h-regular-low} below) such that
$$
D({\bm v}_1,{\bm u}_2;V_{3s}({\bm z}))\geq (b\alpha s)^\theta.
$$
\end{definition}

\begin{figure}[htbp]
    \centering
    \includegraphics[scale=0.6]{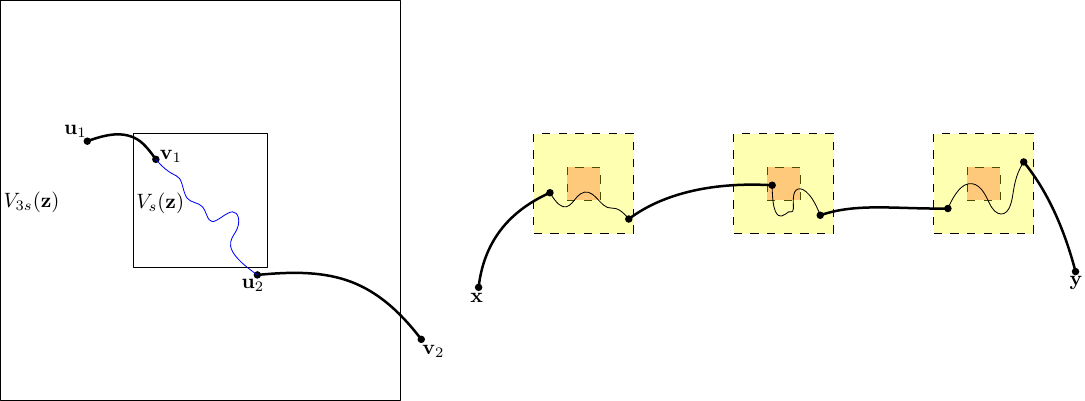}
    \caption{The left figure provides an explanation for Definition \ref{h-good}. The right figure illustrates how we apply the definition of good cubes: for any sufficiently long path, we can with overwhelming probability find a set of $(3s,\alpha)$-good cubes of side length $3s$ (yellow cubes) along the path. These cubes satisfy that the path hits the smaller cubes of side length $s$ (orange cubes) located at the centers of them, and also spends enough time within the yellow cubes. Note that the thick black curves represent the long edges, and the thin black curves represent the paths in cubes.}
    \label{Def34}
\end{figure}

We can show that, for suitable choices of $\alpha$ and $b$, it holds with arbitrarily high probability that $V_{3s}({\bm z})$ is $(3s,\alpha)$-good as follows.

\begin{lemma}\label{h-probgood}
For fixed ${\bm z}\in\mathds{R}^d$, $s>0$ and sufficiently small $\alpha\in(0,1)$, there exist constants $b=b(\alpha)>0$ {\rm (}depending only on $d,\beta,\alpha$ and the law of $D${\rm)} and $c_1>0$ {\rm(}depending only on $d,\beta$ and the law of $D${\rm)} such that $V_{3s}({\bm z})$ is $(3s,\alpha)$-good with probability at least $1-c_1\alpha \log(1/\alpha)$.
\end{lemma}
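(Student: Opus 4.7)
The plan is to control separately the probability that the geometric inequality $|\bm v_1-\bm u_2|\ge\alpha s$ fails and the probability that the metric inequality $D(\bm v_1,\bm u_2;V_{3s}(\bm z))\ge(b\alpha s)^\theta$ fails. By Axiom IV' (translation invariance) and the scale invariance of the Poisson point process on $\mathds{R}^{2d}$ with intensity $\beta|\bm x-\bm y|^{-2d}$, the geometric failure probability depends neither on $\bm z$ nor on $s$; for the metric part the tightness across scales in Axioms V1' and V2' will let me run the argument uniformly in $s$.

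For the geometric step I would apply the two-point Mecke/Slivnyak formula. After integrating out the outer endpoints $\bm u_1\in V_s(\bm z)^c$ and $\bm v_2\in V_{3s}(\bm z)^c$, the expected number of ordered distinct pairs of edges satisfying the endpoint conditions with $|\bm v_1-\bm u_2|<\alpha s$ is comparable to
\begin{equation*}
\int_{V_s(\bm z)}\!\int_{V_{3s}(\bm z)}\frac{\mathds{1}(|\bm v_1-\bm u_2|<\alpha s)}{d(\bm v_1,\partial V_s(\bm z))^{d}\, d(\bm u_2,\partial V_{3s}(\bm z))^{d}}\,d\bm u_2\,d\bm v_1 .
\end{equation*}
This integral is formally infinite due to the boundary singularity $d(\bm v_1,\partial V_s)^{-d}$, so a case split is needed. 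First, if both edges have scope $\le\alpha s/2$ then $\bm v_1$ and $\bm u_2$ are confined to boundary layers of $V_s$ and $V_{3s}$ respectively, which lie at mutual distance $\ge s-\alpha s\gg\alpha s$, and such pairs are harmless. Second, the event that some inner endpoint $\bm u_2$ lies within $2\alpha s$ of $\partial V_s$ has probability $\lesssim\alpha$, since its intensity there is $\asymp s^{-d}$ (using $d(\bm u_2,\partial V_{3s})\asymp s$) and the region has volume $\lesssim\alpha s^d$. Third, on the complementary event every $\bm u_2$ has $d(\bm u_2,\partial V_s)>2\alpha s$, which forces any nearby $\bm v_1$ to satisfy $d(\bm v_1,\partial V_s)>\alpha s$; the residual integral is then bounded by $\lesssim\alpha^d\int_{2\alpha s}^{s/2}D^{-d}\cdot D^{d-1}\,dD\lesssim\alpha\log(1/\alpha)$ (the logarithm appears only when $d=1$; for $d\ge 2$ the sub-integral is $O(\alpha)$). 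Summing the three contributions gives $\mathds{P}[\text{geometric fails}]\le(c_1/2)\alpha\log(1/\alpha)$.

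For the metric step, note that $|\bm v_1-\bm u_2|\ge\alpha s$ forces $\bm u_2\notin V_{\alpha s/(2\sqrt d)}(\bm v_1)$ in the $\ell^\infty$ norm, so any path in $V_{3s}(\bm z)$ from $\bm v_1$ to $\bm u_2$ must exit this small cube. It therefore suffices to establish the uniform escape-time bound $\inf_{\bm x\in V_{3s}(\bm z)}D(\bm x,V_{\alpha s/(2\sqrt d)}(\bm x)^c;V_{3s}(\bm z))\ge(b\alpha s)^\theta$ with failure probability $\le(c_1/2)\alpha\log(1/\alpha)$. I would place a net $\mathcal{N}\subset V_{3s}(\bm z)$ of spacing $\delta$ (to be tuned), so $|\mathcal{N}|\lesssim(\alpha s/\delta)^d$. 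Axiom IV' plus the tightness in V1' yields, for every $\varepsilon>0$, a constant $b_0(\varepsilon)>0$ with $\mathds{P}[D(\bm x_i,V_{\alpha s/(5\sqrt d)}(\bm x_i)^c)<b_0(\alpha s)^\theta]\le\varepsilon$ at each net point; choosing $\varepsilon=\alpha^{d+2}$ and union-bounding gives the lower bound uniformly over $\mathcal{N}$ with negligible failure. A H\"older-type modulus of continuity for $D$ at scale $\delta$, obtainable from Axioms IV' and V2' exactly as in the proof of Proposition \ref{LimitContinuousTail}, controls $D(\bm y,\bm y')\le C\delta^\theta\log^2(1/\delta)$ whenever $\bm y,\bm y'\in V_{3s}(\bm z)$ with $\|\bm y-\bm y'\|_\infty\le\delta$. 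Combining via the inclusion $V_{\alpha s/(5\sqrt d)}(\bm x_i)\subset V_{\alpha s/(2\sqrt d)}(\bm x)$ (valid once $\delta\le\alpha s/(10\sqrt d)$) and the triangle inequality, and using the freedom to let $b(\alpha)$ be as small as needed (refining $\delta=\alpha s/K(\alpha)$ with $K(\alpha)$ polylogarithmic in $1/\alpha$ so that the H\"older correction is dominated by $b_0(\alpha s)^\theta$, at the cost of a polylogarithmic inflation of $|\mathcal{N}|$ absorbed by choosing a correspondingly smaller $\varepsilon$), one obtains the metric bound.

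The principal technical obstacle is the geometric step: the Poisson intensity of inner endpoints $\bm v_1$ diverges near $\partial V_s$, so the expected number of candidate bad pairs is infinite and a naive Markov-type bound fails. The case split above is what isolates the truly dangerous configurations---those with $\bm u_2$ in the bulk of $V_s$---and produces the sharp $\alpha\log(1/\alpha)$ rate. The metric step is more standard once $b(\alpha)$ and the net spacing $\delta$ are tuned jointly with the V1' tightness and the V2'-based H\"older bound.
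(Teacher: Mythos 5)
Your proposal correctly splits the lemma into a geometric part (the condition $|\bm v_1-\bm u_2|\geq\alpha s$) and a metric part (the condition $D(\bm v_1,\bm u_2;V_{3s}(\bm z))\geq(b\alpha s)^\theta$), which is also how the paper structures the argument (Lemmas \ref{h-longedge} and \ref{h-regular-low}). The geometric step is in the right spirit: a first-moment/layer argument controlling the boundary blow-up of the inner-endpoint intensity, analogous to the paper's decomposition of $V_s$ into $\alpha s$-side cubes $J_{i,j}$ indexed by layer. There is a small computational slip: the layer of $V_s$ at distance $D$ from $\partial V_s$ has volume $\asymp s^{d-1}\,dD$, not $D^{d-1}\,dD$, so the residual integral is $\alpha^d\int D^{-d}s^{d-1}\,dD\asymp\alpha$ for $d\ge2$ and $\alpha\log(1/\alpha)$ for $d=1$, rather than the $\alpha^d\log(1/\alpha)$ that your displayed formula gives; also $|\mathcal{N}|$ must cover $V_{3s}$, so $|\mathcal{N}|\asymp(s/\delta)^d$ rather than $(\alpha s/\delta)^d$. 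These are repairable, and the final rate agrees with the paper's $\alpha\log(1/\alpha)$.

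The metric step, however, has a genuine and fatal gap. You reduce to proving the uniform escape-time bound
$\inf_{\bm x\in V_{3s}(\bm z)}D\bigl(\bm x,V_{\alpha s/(2\sqrt d)}(\bm x)^c;V_{3s}(\bm z)\bigr)\geq(b\alpha s)^\theta$,
but this statement is \emph{almost surely false}, and no union bound over a deterministic net can certify it. By Axiom III (regularity), $D(\bm x,\bm y)=0$ whenever $\langle\bm x,\bm y\rangle\in\mathcal{E}$; with probability one the Poisson edge set contains $\asymp\alpha^{-d}$ edges with both endpoints in $V_{3s}(\bm z)$ and scope exceeding $\alpha s/(2\sqrt d)$, and at the endpoint of any such edge the internal escape time vanishes. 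These bad points are dense at scale $\alpha s$ in $V_{3s}$, so however you tune $\delta$, $\varepsilon$, and $b_0$, the conjunction ``net bound holds at every $\bm x_i$'' $\wedge$ ``H\"older modulus holds'' would force the escape bound to hold everywhere, contradicting its a.s.\ failure; in practice what happens is that after shrinking $\delta$ enough to beat the H\"older correction, the net budget $|\mathcal{N}|\varepsilon$ is no longer small, because $b_0(\varepsilon)$ must itself decay in $\alpha$ (Axiom V1' gives only tightness, with no quantitative tail rate). Notice also that the reduction to escape time is lossy in the other direction: $\bm v_1$ may itself be an endpoint of an edge landing inside $V_{3s}$, making the escape time from $\bm v_1$ zero while $D(\bm v_1,\bm u_2;V_{3s})$ can still be large --- so the escape-time bound is both stronger than needed and false. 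The paper's Lemma \ref{h-regular-low} reasons quite differently: it introduces an auxiliary scale $K$ (tuned via $K^{-(d\wedge3)}\log K=\alpha$), decomposes any candidate path from $\bm v_1$ to $\bm u_2$ into its ``$s/K$-backbone'' (the long edges of scope $\geq s/K$ and the intervening gaps), and bounds three bad events: $B_1$ (two backbone edges too close, below scale $s/K^3$), $B_2$ (a backbone path all of whose gaps are smaller than $s/K^3$), and $B_3$ (a gap of Euclidean length $\geq s/K^3$ with anomalously small $D$-length, controlled via Axiom V1'). This backbone decomposition sidesteps the escape-time issue entirely by arguing about the geometry of geodesics; some argument of this type appears unavoidable, and the $\delta$-net approach cannot be repaired within its own framework.
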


We can obtain the desired statement in Lemma \ref{h-probgood} by using Axiom IV' (translation invariance) and Lemmas \ref{h-longedge} and \ref{h-regular-low} below.

\begin{lemma}\label{h-longedge}
For fixed $s>0$ and sufficiently small $\alpha\in(0,1)$, there exists a constant $c_2>0$ {\rm(}depending only on $\beta$ and $d${\rm)} such that with probability at least $1-c_2\alpha \log(1/\alpha)$, for any two edges $\langle {\bm u}_1,{\bm v}_1\rangle, \langle {\bm u}_2,{\bm v}_2\rangle\in\mathcal{E}$, with ${\bm u}_1\in V_s({\bm 0})^c$, ${\bm v}_1\in V_s({\bm 0})$, ${\bm u}_2\in V_{3s}({\bm 0})$ and ${\bm v}_2\in V_{3s}({\bm 0})^c$, we have $|{\bm v}_1-{\bm u}_2|\geq \alpha s$.
\end{lemma}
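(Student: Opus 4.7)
The plan is to upper-bound the probability of the bad event by splitting it according to the distance $r_1({\bm v}_1) := \mathrm{dist}({\bm v}_1, V_s({\bm 0})^c)$ of the ``inside'' endpoint ${\bm v}_1$ from $\partial V_s({\bm 0})$. Let Case~A be the sub-event on which the bad configuration exists with $r_1({\bm v}_1)\ge \alpha s/4$ and Case~B the sub-event on which it exists with $r_1({\bm v}_1)<\alpha s/4$; then $\mathds{P}[\text{bad}]\le \mathds{P}[\text{Case A}]+\mathds{P}[\text{Case B}]$, so it suffices to control each piece separately.

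For Case~A I would apply Markov's inequality to the count $N_A$ of ordered pairs of distinct edges $(\langle{\bm u}_1,{\bm v}_1\rangle,\langle{\bm u}_2,{\bm v}_2\rangle)$ in $\mathcal{E}$ satisfying the prescribed configuration together with $r_1({\bm v}_1)\ge \alpha s/4$. By the Slivnyak--Mecke (Campbell) formula for the Poisson point process defining $\mathcal{E}$, $\mathds{E}[N_A]$ equals the integral of $\beta^2|{\bm u}_1-{\bm v}_1|^{-2d}|{\bm u}_2-{\bm v}_2|^{-2d}$ over the constraint region. The key step is a dyadic decomposition in $r_1$: for integers $j\ge 0$, consider the shell $r_1({\bm v}_1)\in[2^j\alpha s/4,2^{j+1}\alpha s/4]$, which is non-empty only for $j\le O(\log(1/\alpha))$. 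Elementary estimates give $\int_{V_s({\bm 0})^c}|{\bm u}_1-{\bm v}_1|^{-2d}\,d{\bm u}_1\lesssim r_1^{-d}\lesssim (2^j\alpha s)^{-d}$, $|\{{\bm u}_2\in V_{3s}({\bm 0}):|{\bm u}_2-{\bm v}_1|<\alpha s\}|\lesssim (\alpha s)^d$, and, because such ${\bm u}_2$ lies at distance $\ge s/2$ from $V_{3s}({\bm 0})^c$ when $\alpha<1/2$, $\int_{V_{3s}({\bm 0})^c}|{\bm u}_2-{\bm v}_2|^{-2d}\,d{\bm v}_2\lesssim s^{-d}$; the shell volume in $V_s({\bm 0})$ is $\lesssim s^{d-1}\cdot 2^j\alpha s$. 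Multiplying yields an $O(\beta^2\cdot 2^{j(1-d)}\alpha)$ contribution per shell, and summing over $j$ gives $\mathds{E}[N_A]\lesssim \beta^2\alpha$ for $d\ge 2$ and $\mathds{E}[N_A]\lesssim \beta^2\alpha\log(1/\alpha)$ for $d=1$.

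For Case~B, the inequalities $r_1({\bm v}_1)<\alpha s/4$ and $|{\bm v}_1-{\bm u}_2|<\alpha s$ together force $\mathrm{dist}({\bm u}_2,\partial V_s({\bm 0}))<5\alpha s/4$. Hence Case~B implies the existence of at least one edge $\langle{\bm u}_2,{\bm v}_2\rangle\in\mathcal{E}$ with ${\bm u}_2\in V_{3s}({\bm 0})$ lying in the $5\alpha s/4$-neighborhood of $\partial V_s({\bm 0})$ and ${\bm v}_2\in V_{3s}({\bm 0})^c$. For such ${\bm u}_2$ one has $\mathrm{dist}({\bm u}_2,V_{3s}({\bm 0})^c)\ge s/2$, so $\int_{V_{3s}({\bm 0})^c}|{\bm u}_2-{\bm v}_2|^{-2d}\,d{\bm v}_2\lesssim s^{-d}$. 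Combined with the volume bound $O(\alpha s^d)$ for the thickened boundary, Markov's inequality applied to the expected number of such edges yields $\mathds{P}[\text{Case B}]\lesssim \beta\alpha$. Combining Cases~A and~B gives the claimed bound with $c_2$ depending only on $\beta$ and $d$.

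The main obstacle is that a naive Campbell-type bound on the count of all bad edge pairs is infinite: the intensity $|{\bm u}-{\bm v}|^{-2d}$ makes the expected number of short edges crossing $\partial V_s({\bm 0})$ diverge near the boundary. The crucial observation resolving this is that in the divergent regime (short edges, Case~B) the endpoint ${\bm u}_2$ is forced into a thin $O(\alpha s)$-neighborhood of $\partial V_s({\bm 0})$ while the associated second edge must have scope at least $s/2$; the rarity of such ``long'' edges with inner endpoint near $\partial V_s({\bm 0})$ replaces the divergent direct bound. The asymmetry between dimensions — a logarithmic factor appearing only in $d=1$ — arises from the dyadic sum in Case~A and exactly matches the $\alpha\log(1/\alpha)$ form of the stated bound.
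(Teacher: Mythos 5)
Your proposal is correct and reaches the same bound. The paper's route is parallel in spirit but discretely organized: it tiles $V_s({\bm 0})$ with $\alpha s$-cubes $J_{i,j}$ indexed by layer depth $i$, sets $A_{i,j}$ to be the event that some edge runs from $V_s({\bm 0})^c$ into $J_{i,j}$ while another runs from its neighborhood $\widetilde{J}_{i,j}$ out to $V_{3s}({\bm 0})^c$, bounds $\mathds{P}[A_{i,j}]$ as a product of two edge-existence probabilities $\lesssim \beta^2\alpha^d i^{-d}$, and sums over cubes; the near-boundary layers $i\in\{-1,0\}$, where the short-edge integral would diverge, are absorbed simply by bounding that factor by $1$. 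Your dyadic shells in $r_1$ play the role of the layer index $i$ (with $i\sim 2^j$), your Mecke/Markov pair count replaces the cube-wise union bound, and your Case~B makes explicit what the paper performs implicitly for $i\le 0$: there the pair count diverges, but the event is already dominated by the existence of a single edge of scope $\gtrsim s$ whose inner endpoint lies in an $O(\alpha s)$-neighborhood of $\partial V_s({\bm 0})$. Your route buys two things: (i) a cleaner isolation of the delicate near-boundary regime, and (ii) extra rigor in the pair-counting step, since the paper's factorization of $\mathds{P}[A_{i,j}]$ tacitly treats the two edge-existence events as independent even though their constraint regions overlap (an edge running from $J_{i,j}$ directly to $V_{3s}({\bm 0})^c$ satisfies both), whereas the Mecke formula applied to ordered pairs of distinct edges gives the correct upper bound without invoking any independence. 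Both routes produce $O(\beta^2\alpha)$ for $d\ge2$ and $O(\beta^2\alpha\log(1/\alpha))$ for $d=1$, as you note.
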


\begin{proof}
For convenience, we denote by $A$ the event in Lemma \ref{h-longedge}, and assume that $1/(2\alpha)\in\mathds{Z}$. Otherwise, we can replace $1/(2\alpha)$ with $\lfloor 1/(2\alpha)\rfloor$ in the following proof.

Note that if ${\bm u}_2\in V_{3s}(\bm 0)\cap V_{s+\alpha s}(\bm 0)^c$, then  $|{\bm v}_1-{\bm u}_2|\geq \alpha s$ obviously. Thus we only need to upper-bound the probability for the case that ${\bm u}_2\in V_{s+\alpha s}(\bm 0)$.
Now we divide the cube $V_{s}({\bm 0})$ into $(1/\alpha)^d$ small cubes of equal side length, denoted by $J_{i,j}$ for $i\in [0,1/(2\alpha)]_{\mathds{Z}}$ and $j\in [1,2d (1/\alpha-2i)^{d-1}]_{\mathds{Z}}$.
Here the subscript $i$ in $J_{i,j}$ represents that there are $i$ layers of $J_{i',\cdot}$ between $J_{i,j}$ and the boundary of $V_s({\bm 0})$, and the subscript $j$ represents the $j$-th cube among those so that $J_{i, j}$ and $J_{i, j+1}$ are neighboring each other for all $j$. We also divide $V_{s+\alpha s}(\bm 0)\setminus V_s(\bm 0)$ into small cubes of side length $\alpha s$, denoted by $J_{-1,j}$ for $j\in [1,2d (1+1/\alpha)]_\mathds{Z}$. Here the subscript $j$ also represents the $j$-th cube among those so that $J_{-1, j}$ and $J_{-1, j+1}$ are neighboring each other for all $j$ (see the left picture in Figure \ref{Jij} below). We also denote by $\widetilde{J}_{i,j}$ the union of $J_{i,j}$ and all cubes adjacent to $J_{i,j}$.

\begin{figure}[htbp]
\centering
\subfigure{\includegraphics[scale=0.5]{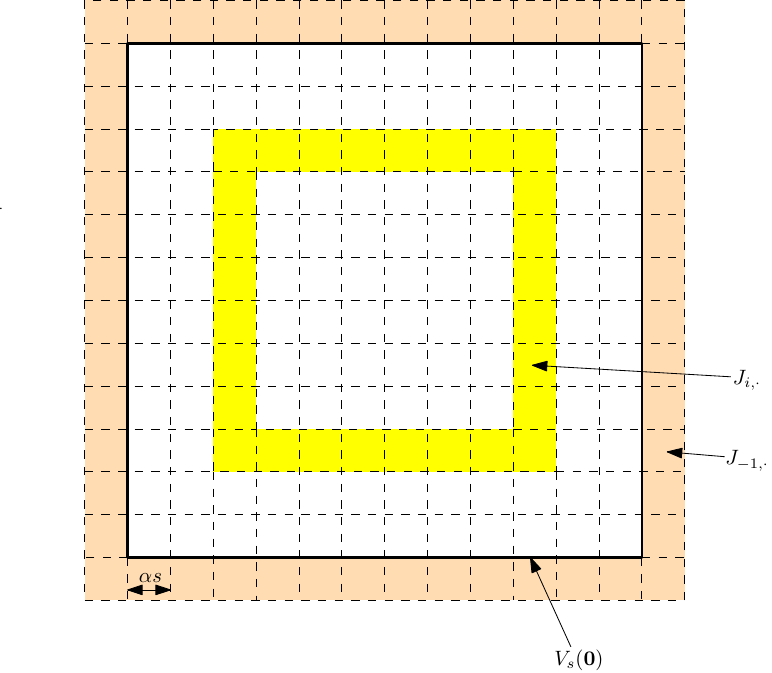}}
\quad
\subfigure{\includegraphics[scale=0.5]{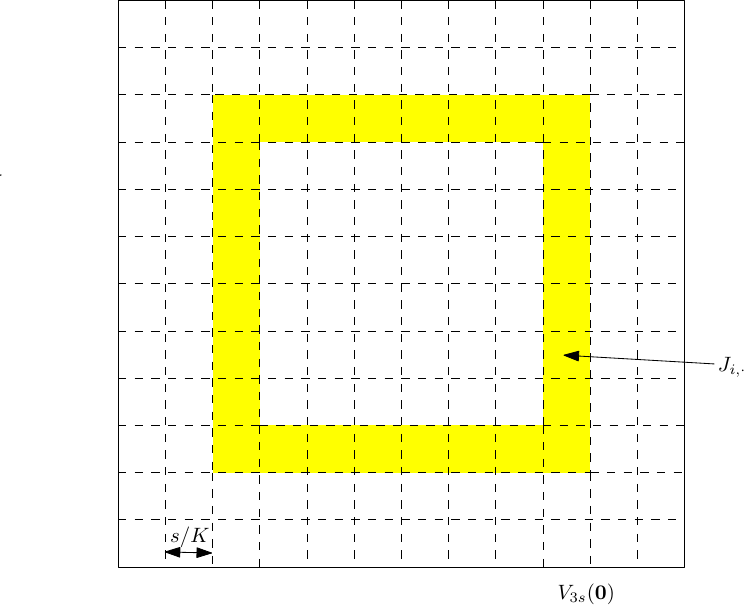}}
\caption{The left picture is an illustration for the proof of Lemma \ref{h-longedge}. We divide the cube $V_{s}(\bm 0)$ into smaller cubes of side length $\alpha s$, denoted as $J_{i,j}$. The subscript $i$ indicates the number of layers between the boundary and $J_{i, \cdot}$. For example, the small cubes in the yellow layer  are denoted as $J_{2,\cdot}$. In addition, the small cubes in the orange layer are denoted as $J_{-1,\cdot}$.
The right picture is an illustration for the proof of Lemma \ref{h-regular-low}. We divide the cube $V_{3s}(\bm 0)$ into smaller cubes of side length $s/K$, denoted as $J_{i,j}$. The subscript $i$ indicates the number of layers between the boundary and $J_{i, \cdot}$.  For example, the small cubes in the yellow layer are denoted as $J_{2,\cdot}$.}
\label{Jij}
\end{figure}

For $i\in [-1,1/(2\alpha)]_{\mathds{Z}}$ and $j\in [1,2d (1/\alpha-2i)^{d-1}]_{\mathds{Z}}$, we let $A_{i,j}$ be the event that there exist two edges $\langle {\bm u}^{i,j}_{1},{\bm v}^{i,j}_{1}\rangle, \langle {\bm u}^{i,j}_{2},{\bm v}^{i,j}_{2}\rangle\in\mathcal{E}$ such that ${\bm u}^{i,j}_{1}\in V_s({\bm 0})^c$, ${\bm v}^{i,j}_{1}\in J_{i,j}$, ${\bm u}^{i,j}_{2}\in \widetilde{J}_{i,j}$ and ${\bm v}^{i,j}_{2}\in V_{3s}({\bm 0})^c$. Then it is clear that
\begin{equation}\label{Acupper}
A^c\subset \bigcup_{i=0}^{1/(2\alpha)}\bigcup_{j=1}^{2d (1/\alpha-2i)^{d-1}}A_{i,j}.
\end{equation}
Thus it suffices to upper-bound $\mathds{P}[A_{i,j}]$ for each $i$ and $j$.

Indeed, by the definition of $J_{i,j}$, one has that  $\text{dist}(J_{i,j},V_s({\bm 0})^c)\geq i\alpha s$ and $\text{dist}(\widetilde{J}_{i,j},V_{3s}({\bm 0})^c)\geq s-\alpha s$ for all $i\in[0,1/(2\alpha)]_\mathds{Z}$.
Combining this with the independence of edges, we get that for $i\in [-1,1/(2\alpha)]_\mathds{Z}$ and $j\in [1,2d (1/\alpha-2i)^{d-1}]_{\mathds{Z}}$,
\begin{equation*}
\begin{split}
\mathds{P}[A_{i,j}]&=\left[1-\exp\left\{-\int_{V_s(\bm 0)^c}\int_{J_{i,j}}\frac{\beta}{|{\bm u}-{\bm v}|^{2d}}\d {\bm u}\d {\bm v}\right\}\right]
\cdot\left[1-\exp\left\{-\int_{V_{3s}(\bm 0)^c}\int_{\widetilde{J}_{i,j}}\frac{\beta}{|{\bm u}-{\bm v}|^{2d}}\d {\bm u}\d {\bm v}\right\}\right]\\
&\leq \left[1-\exp\left\{-c_{d,1} \beta(\alpha s)^d\int_{i\alpha s}^\infty\frac{1}{r^{d+1}}\d r\right\}\right]
\cdot\left[1-\exp\left\{-c_{d,2}\beta (\alpha s)^d\int_{s-\alpha s}^\infty\frac{1}{r^{d+1}}\d r\right\}\right]\\
&= \left[1-\exp\left\{-c_{d,3} \beta i^{-d}\right\}\right]
\cdot\left[1-\exp\left\{-c_{d,4}\beta \alpha^d\right\}\right]\\
&\leq c_{d,5}\beta^2 \alpha^d i^{-d},
\end{split}
\end{equation*}
where $c_{d,l},l=1,\cdots,5$ are positive finite constants depending only on $d$. Combining this with \eqref{Acupper}, we obtain
\begin{equation*}
\begin{split}
\mathds{P}[A^c]&\leq \sum_{i=0}^{1/(2\alpha)}\sum_{j=1}^{2d (1/\alpha-2i)^{d-1}}\mathds{P}[A_{i,j}]
\leq c_{d,6} \beta^2\alpha^d\sum_{i=0}^{1/(2\alpha)}(1/\alpha-2i)^{d-1} i^{-d}\\
&\leq c_{d,7}\beta^2\alpha^d\int_1^{1/(2\alpha)}(1/(\alpha u)-2)^{d-1}u^{-1}\d u
\leq c_{d,\beta}\alpha\log(1/\alpha)
\end{split}
\end{equation*}
where $c_{d,6}$ and $c_{d,7}$ are positive finite constants depending only on $d$, and $c_{d,\beta}$ is a positive finite constant depending only on $d$ and $\beta$.
\end{proof}

In the following, we say $({\bm u}_1,{\bm v}_1,{\bm u}_2,{\bm v}_2)$ is a \textit{$V_{3s}({\bm z})$-special pair of edges} if $\langle {\bm u}_1,{\bm v}_1\rangle, \langle {\bm u}_2,{\bm v}_2\rangle\in\mathcal{E}$ with ${\bm u}_1\in V_s({\bm z})^c$, ${\bm v}_1\in V_s({\bm z})$, ${\bm u}_2\in V_{3s}({\bm z})$, ${\bm v}_2\in V_{3s}({\bm z})^c$.

\begin{lemma}\label{h-regular-low}
For fixed $s>0$ and sufficiently small $\alpha\in(0,1)$, there exist constants $b=b(\alpha)>0$ {\rm(}depending only on $\beta,d,\alpha$ and the law of $D${\rm)} and $c_3>0$ (depending only on $\beta$ and $d$) such that with probability at least $1- c_3\alpha $ the following is true.
For any  $V_{3s}(\bm 0)$-special pair of edges $({\bm u}_1,{\bm v}_1,{\bm u}_2,{\bm v}_2)$ with $|{\bm v}_1-{\bm u}_2|\geq \alpha s$, we have
$$
D({\bm v}_1,{\bm u}_2;V_{3s}({\bm 0}))\geq (b\alpha s)^\theta.
$$
\end{lemma}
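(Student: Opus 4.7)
The plan is to reduce the internal-distance bound to a uniform distance-to-boundary bound, and then to establish this uniform bound via a discretization combined with Axioms V1' and V2'.

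\textbf{Step 1: Reduction.} Since $|\mathbf{v}_1-\mathbf{u}_2|\geq\alpha s$ forces $|\mathbf{v}_1-\mathbf{u}_2|_\infty\geq\alpha s/\sqrt{d}$, the point $\mathbf{u}_2$ lies outside the cube $V_{2\alpha s/\sqrt{d}}(\mathbf{v}_1)$. Combined with the fact that the internal metric on $V_{3s}(\mathbf{0})$ dominates the global metric, this gives
\begin{equation*}
D(\mathbf{v}_1,\mathbf{u}_2;V_{3s}(\mathbf{0}))\geq D(\mathbf{v}_1,\mathbf{u}_2)\geq D\bigl(\mathbf{v}_1, V_{2\alpha s/\sqrt{d}}(\mathbf{v}_1)^c\bigr),
\end{equation*}
so it suffices to lower-bound this last quantity uniformly over every admissible $\mathbf{v}_1\in V_s(\mathbf{0})$.

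\textbf{Step 2: Discretization and triangle inequality.} Cover $V_s(\mathbf{0})$ by cubes $V_h(\mathbf{z}_k)$ of side $h\leq \alpha s/\sqrt{d}$ with centres on a grid, giving $N:=(s/h)^d$ cubes. For $\mathbf{v}_1\in V_h(\mathbf{z}_k)$, the inclusion $V_{\alpha s/\sqrt{d}}(\mathbf{z}_k)\subset V_{2\alpha s/\sqrt{d}}(\mathbf{v}_1)$ together with the triangle inequality applied pointwise to $\mathbf{y}\in V_{2\alpha s/\sqrt{d}}(\mathbf{v}_1)^c\subset V_{\alpha s/\sqrt{d}}(\mathbf{z}_k)^c$ yields
\begin{equation*}
D\bigl(\mathbf{v}_1, V_{2\alpha s/\sqrt{d}}(\mathbf{v}_1)^c\bigr)\geq D\bigl(\mathbf{z}_k, V_{\alpha s/\sqrt{d}}(\mathbf{z}_k)^c\bigr)-D(\mathbf{z}_k,\mathbf{v}_1).
\end{equation*}
Introduce the per-cube events
\begin{equation*}
E_1(\mathbf{z}_k):=\bigl\{D(\mathbf{z}_k, V_{\alpha s/\sqrt{d}}(\mathbf{z}_k)^c)\geq 2(b\alpha s)^\theta\bigr\},\quad E_2(\mathbf{z}_k):=\bigl\{\mathrm{diam}(V_h(\mathbf{z}_k);D)\leq (b\alpha s)^\theta\bigr\}.
\end{equation*}
On $E_1(\mathbf{z}_k)\cap E_2(\mathbf{z}_k)$, since $D(\mathbf{z}_k,\mathbf{v}_1)\leq \mathrm{diam}(V_h(\mathbf{z}_k);D)$, the display above gives the desired bound $D(\mathbf{v}_1, V_{2\alpha s/\sqrt{d}}(\mathbf{v}_1)^c)\geq (b\alpha s)^\theta$ for every $\mathbf{v}_1\in V_h(\mathbf{z}_k)$.

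\textbf{Step 3: Probability estimates and union bound.} By Axiom V1' together with translation invariance IV', $\mathbb{P}[E_1(\mathbf{z}_k)^c]$ is bounded by a quantity $\varepsilon_1(b)\to 0$ as $b\to 0$, uniformly in the scale. By Axiom V2' with translation invariance and Markov's inequality applied to the exponential moment, $\mathbb{P}[E_2(\mathbf{z}_k)^c]\leq M\exp\bigl(-(b\alpha s/h)^{\theta\eta}\bigr)$ for a fixed $\eta\in(0,1/(1-\theta))$ and a constant $M$. A union bound over the $N$ cubes then shows that the total failure probability is at most $N\bigl(\varepsilon_1(b)+M\exp(-(b\alpha s/h)^{\theta\eta})\bigr)$. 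Choose $h$ small (with $h\ll b\alpha s$) so that the V2' contribution, after multiplication by $N$, is at most $c_3\alpha/2$; then select $b=b(\alpha)>0$ via the tightness in V1' so that the V1' contribution $N\varepsilon_1(b)$ is also at most $c_3\alpha/2$. This produces the required $b(\alpha)$ and constant $c_3=c_3(\beta,d)$.

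\textbf{Main obstacle.} The delicate balance is between the three parameters $b$, $h$, and $N$: V1' pushes $b$ to be small while V2' then forces $h\ll b\alpha s$, so that the stretched-exponential in V2' can absorb $N\approx(s/h)^d$. Since V1' only provides tightness rather than an a priori rate, one must choose $b=b(\alpha)$ based on the tightness profile first and then adjust $h$ accordingly; this is permissible because the lemma allows $b$ to depend on $\alpha$ and the resulting $c_3$ remains a constant depending only on $\beta$ and $d$.
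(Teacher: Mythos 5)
Your Step~1 reduction is where the argument breaks. You reduce the internal-metric bound to the global quantity
\begin{equation*}
D\bigl(\bm v_1, V_{2\alpha s/\sqrt{d}}(\bm v_1)^c\bigr),
\end{equation*}
but this quantity is \emph{zero} for precisely the $\bm v_1$ that the lemma cares about. By definition $\bm v_1$ is the endpoint of a long edge $\langle \bm u_1, \bm v_1\rangle$ with $\bm u_1 \in V_s(\bm 0)^c$. Whenever $\bm v_1$ lies more than $\alpha s/\sqrt{d}$ inside $V_s(\bm 0)$ (which is the generic case when $\alpha$ is small), one has $\|\bm u_1 - \bm v_1\|_\infty > \alpha s/\sqrt{d}$, hence $\bm u_1 \in V_{2\alpha s/\sqrt{d}}(\bm v_1)^c$, and then $D(\bm v_1, V_{2\alpha s/\sqrt{d}}(\bm v_1)^c) \leq D(\bm v_1, \bm u_1) = 0$ by Axiom III. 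So the chain of inequalities you wrote is valid but terminates at something that is a.s.\ zero in the main case, and the reduced claim you try to prove in Steps~2--3 is false. The internal metric $D(\cdot,\cdot;V_{3s}(\bm 0))$ is protected against this: if $\bm u_1 \notin V_{3s}(\bm 0)$, the edge $\langle\bm u_1,\bm v_1\rangle$ is simply unavailable to the internal path, which is why the statement in the lemma can hold. The reduction discards exactly this feature.

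This same obstruction surfaces concretely in the union bound. On $E_1(\bm z_k)\cap E_2(\bm z_k)$ one deduces $D(\bm v_1,\bm y) > 0$ for every $\bm v_1 \in V_h(\bm z_k)$ and $\bm y \in V_{\alpha s/\sqrt{d}}(\bm z_k)^c$; by Axiom III this forbids any long edge from $V_h(\bm z_k)$ to $V_{\alpha s/\sqrt{d}}(\bm z_k)^c$. The probability of such an edge existing is of order $(h/(\alpha s))^d$, so $\mathds{P}[(E_1(\bm z_k)\cap E_2(\bm z_k))^c] \gtrsim (h/(\alpha s))^d$, and the union bound over $N = (s/h)^d$ cubes gives $N\cdot(h/(\alpha s))^d = \alpha^{-d}$, which is useless. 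Worse, the intersection $\bigcap_k(E_1(\bm z_k)\cap E_2(\bm z_k))$ would force $V_s(\bm 0)$ to contain no long edge of scope $\gtrsim \alpha s$, an event whose probability decays like $\exp(-c\alpha^{-d})$. So the event you condition on has vanishing probability; your parameter-balancing difficulties in Step~3 are a symptom of this, not something a cleverer choice of $h$ and $b$ can fix. (A secondary issue: even ignoring the above, V1$'$ provides only qualitative tightness, so there is no guarantee that $\varepsilon_1(b)$ decays fast enough in $b$ to beat the polynomial factor $N$.)

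The paper's proof avoids the problem by never passing to the global metric and never doing a union bound over a spatial grid. It works directly with paths inside $V_{3s}(\bm 0)$: it introduces the $s/K$-backbone of a path (keeping only long edges of scope $\geq s/K$), observes that the number of such long edges in $V_{3s}(\bm 0)$ is a Poisson variable of expectation $O(K^d)$, and then performs the union bound over pairs of endpoints of these edges rather than over grid points. This replaces an $\alpha^{-d}$-sized discretization by a set of polynomially-many-in-$K$ random target points, which is small enough for V1$'$ to control. That structure — tracking the long-edge skeleton of a path — is the missing ingredient in your approach.
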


\begin{proof}
For notational convenience, denote the event in the lemma by $B$. Let $K$ be a sufficiently large number chosen in \eqref{takeK} below.
We also denote by $\mathcal{C}$ the set of paths in $V_{3s}({\bm 0})$,
and by $\mathcal{C}_{\geq s/K}$ the collection of paths in $\mathcal{C}$ only using long edges with scopes at least $s/K$ and walking on the underlying Euclidean line between long edges.
Note that for every path $P\in\mathcal{C}$, we can construct a path $P_K\in \mathcal{C}_{\geq s/K}$ as follows. We first keep its long edges with scopes at least $s/K$ and then use Euclidean lines to connect those long edges in order. For ease, we say $P_K$ is the $s/K$-backbone  path of $P$.

We now define three events as follows.
\begin{itemize}
\item Let $B_1$ be the event that there exist two edges $\langle {\bm w}_1,{\bm z}_1\rangle, \langle {\bm w}_2,{\bm z}_2\rangle\in\mathcal{E}$ in $V_{3s}({\bm 0})$ with scopes at least $s/K$ such that their Euclidean distance is smaller than $s/K^3$.

\item Let $B_2$ be the event that there exists a $V_{3s}(\bm 0)$-special pair of edges $({\bm u}_1,{\bm v}_1,{\bm u}_2,{\bm v}_2)$ with $|{\bm v}_1-{\bm u}_2|\geq \alpha s$
 such that there exists an $s/K$-backbone path $P_K$ from ${\bm v}_1$ to ${\bm u}_2$ where the Euclidean length of each gap in $P_K$ is smaller than $s/K^3$.

\item Let $B_3$ be the event that for some $V_{3s}(\bm 0)$-special pair of edges $({\bm u}_1,{\bm v}_1,{\bm u}_2,{\bm v}_2)$ with $|{\bm v}_1-{\bm u}_2|\geq \alpha s$, and for  all $s/K$-backbone  paths from ${\bm v}_1$ to ${\bm u}_2$ with at least one gap whose Euclidean length is larger than $s/K^3$, their each gap $({\bm x},{\bm y})$ (i.e. the line between ${\bm x}$ and ${\bm y}$) satisfies
    $$D({\bm x},{\bm y};V_{3s}({\bm 0}))\leq c_K s^\theta. $$
     Here $c_K$ is a positive constant chosen below \eqref{cp}, which depends only on $\beta,d,K$ and the law of $D$.
\end{itemize}
Then it is clear that $B^c\subset B_1\cup B_2\cup B_3$ if we choose $K$ and $b$ such that $(b\alpha)^\theta=c_K$ (see \eqref{takeK}).

In the following, we estimate the probability of events  $B_1, B_2$ and  $B_3$. To do that, we divide the cube $V_{3s}({\bm 0})$ into $(\frac{ 3s}{K^{-3}s})^d=3^d K^{3d}$ small cubes of equal side length, denoted by $J_{i,j}$ for $i\in [0,3K^3/2]_\mathds{Z}$ and $j\in[1,2d(3K^3-2i)^{d-1}]_\mathds{Z}$. Here the subscript $i$ in $J_{i,j}$ represents that there are $i$ layers of $J_{i', \cdot}$ between $J_{i, \cdot}$ and the boundary of $V_{3s}({\bm 0})$, while the subscript $j$ represents the $j$-th cube among those so that $J_{i, j}$ is adjacent to $J_{i, j+1}$ (see the right picture in Figure \ref{Jij}).

For each $i\in [0,3K^3/2]_\mathds{Z}$ and $j\in[1,2d(3K^3-2i)^{d-1}]_\mathds{Z}$, we let $A_{i,j}$ be the event that there exist two edges with scopes at least $s/K$ connecting $J_{i,j}$ and $J_{i,j}^c$. Then by the independence of edges,  using the similar calculation for $\mathds{P}[A^c]$ in the proof of Lemma \ref{h-longedge} we have
\begin{equation}\label{B1}
\begin{split}
\mathds{P}[B_1]&\leq \sum_{i=0}^{3K^3/2}\sum_{j=1}^{2d(3K^3-2i)^{d-1}}\mathds{P}[A_{i,j}]\\
&\leq \sum_{i=0}^{3K^3/2}\sum_{j=1}^{2d(3K^3-2i)^{d-1}}\left[1-\exp\left\{-\int_{J_{i,j}}\int_{|{\bm v}-{\bm u}|\geq s/K}\frac{\beta}{|{\bm u}-{\bm v}|^{2d}}\d {\bm u}\d {\bm v}\right\}\right]^2\\
&\leq \sum_{i=0}^{3K^3/2}\sum_{j=1}^{2d(3K^3-2i)^{d-1}}\left[1-\exp\left\{-c_{d,1}\beta K^{-3d}s^d\int_{s/K}^\infty\frac{1}{r^{d+1}} \d r\right\}\right]^2\leq c_{d,\beta,1} K^{-d}.
\end{split}
\end{equation}
Moreover, recall that $V_{r}(A)=\{{\bm z}\in\mathds{R}^d:\ \text{dist}({\bm z},A;\|\cdot\|_\infty)\leq r/2\}$ for $r>0$ and $A\subset \mathds{R}^d$. On the event $B_1^c\cap B_2$, we can see that there is at most one long edge (with scope larger than $s/K$) in $P_K$, since (on $B_2$) $P_K$  satisfies that its each gap's Euclidean length is smaller than $s/K^3$. Thus $P_K$ uses only one edge $\langle {\bm x},{\bm y}\rangle$ with scope large than $s/K$, where ${\bm x}\in V_{sK^{-3}}({\bm v}_1)$ and ${\bm y}\in V_{sK^{-3}}({\bm u}_2)$.  This implies

\begin{equation}\label{B1cB2}
\begin{split}
&\mathds{P}[B_1^c\cap B_2]\\
&\leq \sum_{i,j:J_{i,j}\in V_s({\bm 0})}\sum_{i',j':J_{i',j'}\in V_{(K^{-1}\vee \alpha)s}(J_{i,j})^c}\left[1-\exp\left\{-\int_{\widetilde{J}_{i,j}}\int_{\widetilde{J}_{i',j'}}\frac{\beta}{|{\bm u}-{\bm v}|^{2d}}\d {\bm u}\d {\bm v}\right\}\right]\\
&\quad\quad\quad \quad\quad\quad \times\left[1-\exp\left\{-\int_{\widetilde{J}_{i',j'}}\int_{V_{3s}({\bm 0})^c}\frac{\beta}{|{\bm u}-{\bm v}|^{2d}}\d {\bm u}\d {\bm v}\right\}\right]\\
&\leq \sum_{i,j:J_{i,j}\in V_s({\bm 0})}\sum_{i',j':J_{i',j'}\in V_{2s}({\bm 0})\setminus V_{s/K}(J_{i,j})}\left[1-\exp\left\{-c_{d,3}\beta K^{-4d}\right\}\right]\cdot\left[1-\exp\left\{-c_{d,4}\beta K^{-3d}\right\}\right]\\
& \quad +\sum_{i,j:J_{i,j}\in V_s({\bm 0})}\sum_{i',j':J_{i',j'}\in V_{2s}({\bm 0})^c}\left[1-\exp\left\{-\frac{c_{d,5}\beta }{|i-i'-2|^{2d}}\right\}\right]\cdot \left[1-\exp\left\{-c\frac{_{d,6}\beta}{(i')^d}\right\}\right]\\
&=: (I)+(II).
\end{split}
\end{equation}
In the following, we estimate terms $(I)$ and $(II)$ separately. Firstly, note that $\#\{(i,j):J_{i,j}\in V_s({\bm 0})\}=K^{3d}$ and $\#\{(i',j'):J_{i',j'}\subset V_{2s}({\bm 0})\ \text{and}\ \text{dist}(J_{i',j'}, J_{i,j})\geq s/K \}\leq 2^d K^{3d}$.
Combining this with the inequality $1-\e^{-x}\leq x$ for $x>0$, we obtain
\begin{equation}\label{I}
(I)\leq  c_{d,7}\beta^2K^{6d}K^{-7d}= c_{d,7}\beta^2K^{-d}.
\end{equation}
For the term $(II)$, from $1-\e^{-x}\leq x$ for all $x>0$ we have that

\begin{equation}\label{II}
\begin{split}
(II)&\leq c_{d,8}\beta^2\sum_{i,j:J_{i,j}\in V_s({\bm 0})}\sum_{i'\geq 1,j':J_{i',j'}\in V_{2s}({\bm 0})^c}\frac{1}{|i-i'-2|^{2d}(i')^d}\\
&\quad +c_{d,8}\beta^2(3K^3)^{d-1}\sum_{i,j:J_{i,j}\in V_s({\bm 0})}\frac{1}{|i-2|^{2d}}\\
&\leq c_{d,9}\beta^2\int_{K^3}^{3K^3/2}\d y\int_{1}^{K^3/2}\frac{(3K^3-2x)^{d-1}(3K^3-2y)^{d-1}}{(y-x-2)^{2d}x^d}\d x+c_{d,10}\beta^2 K^{3(d-1)}K^{3d} K^{-6d}\\
&\leq c_{d,11}\beta^2K^{3(d-1)-6d}\int_{K^3}^{3K^3/2}(3K^3-2y)^{d-1}\d y\int_{1}^{K^3/2}\frac{1}{x^d}\d x+c_{d,10}K^{-3}\\
&\leq c_{d,12}\beta^2 K^{-3}\log K.
\end{split}
\end{equation}
Applying \eqref{I} and \eqref{II} to \eqref{B1cB2}, we have
\begin{equation}\label{B1cB2-2}
\mathds{P}[B_1^c\cap B_2]\leq c_{d,\beta,2}K^{-(d\wedge 3)}\log K
\end{equation}
for some constant $c_{d,\beta,2}>0$, which depends only on $d$ and $\beta$.

Additionally, denote by $\xi$ the number of long edges with scopes at least $s/K$ in $V_{3s}({\bm 0})$. By the property of the Poisson point process,
$$
\mathds{E}[\xi]=\int_{V_{3s}({\bm 0})}\d {\bm u}\int_{|{\bm v}-{\bm u}|\geq s/K}\frac{1}{|{\bm u}-{\bm v}|^{2d}}\d {\bm v}\leq c_{d,6} K^{d}.
$$
Therefore, from Markov's inequality we get
$$
\mathds{P}[\xi\geq K^{2d}]\leq K^{-2d}\mathds{E}[\xi]\leq c_{d,6}K^{-d}.
$$
Now let $\mathtt{p}>0$ be a sufficiently small number, which will be chosen below. From Axioms IV' (translation invariance) and V1'(tightness across different scales (lower bound)), we can choose a constant $c_{\mathtt{p}}>0$ such that for any ${\bm x},{\bm y}\in\mathds{R}^d$ with $|{\bm x}-{\bm y}|\geq s/K^3$,
\begin{equation}\label{cp}
\mathds{P}\left[D({\bm x},{\bm y};V_{3s}({\bm 0}))\leq c_{\mathtt{p}}s^\theta\right]\leq \mathds{P}[D(\bm 0,{\bm x}-{\bm y})\leq c_{\mathtt{p}}K^{3\theta}|{\bm x}-{\bm y}|^\theta]\leq \mathtt{p}.
\end{equation}
In the following, we take $\mathtt{p}=K^{-6d}$ and $c_K=c_{\mathtt{p}}$. Combining \eqref{cp}  with the fact that the end points of any gap must belong to one of the $\xi$ long edges, we obtain
\begin{equation*}
\begin{split}
\mathds{P}[B_1^c\cap B_2^c\cap B_3]&\leq \mathds{P}[\xi\geq K^{2d}]+\mathds{P}[B_1^c\cap B_2^c\cap B_3\cap \{\xi\leq K^{2d}\}]\\
&\leq c_{d,13} K^{-d}+ \mathtt{p}\sum_{r=1}^{K^{2d}}4r^2\leq c_{d,13} K^{-d}+c_{d,14}\mathtt{p} K^{5d}\leq c_{d,15}K^{-d}.
\end{split}
\end{equation*}
Then it follows from this, \eqref{B1}, \eqref{B1cB2-2} and the fact $B^c\subset B_1\cup B_2\cup B_3$ that
$$
\mathds{P}[B^c]\leq c_{d,\beta,3}K^{-(d\wedge 3)}\log K.
$$

We finally take sufficiently large $K$ such that
\begin{equation}\label{takeK}
K^{-(d\wedge 3)}\log K=\alpha,
\end{equation}
and $b=b(\alpha)>0$ such that $(b\alpha)^\theta=c_K$. Then from the above analysis we obtain that $\mathds{P}[B^c]\leq c_{d,\beta} \alpha$ for some constant $c_{d,\beta}>0$ which depends only on $d$ and $\beta$.
\end{proof}

Now we present the
\begin{proof}[Proof of Lemma \ref{h-probgood}]
If we denote the events in Lemmas \ref{h-longedge} and \ref{h-regular-low} by $A$ and $B$, respectively, then it is clear that
$$
\mathds{P}[V_{3s}({\bm z})\ \text{is $(3s,\alpha)$-good}]\geq 1-(\mathds{P}[A^c]+\mathds{P}[A\cap B^c])\geq 1-c\alpha\log(1/\alpha),
$$
where $c$ is a constant depending only on $d,\beta$ and the law of $D$.
\end{proof}

\subsection{Proof of Propositions \ref{Dxy=0}, \ref{bilip} and \ref{cCdtm}}\label{proof111}

In this subsection, we will use a renormalization argument to prove these propositions. For this purpose, fix $\gamma>0$ and $R>0$ (we will eventually send $R$ to infinity and $\gamma$ to 0).

We now introduce the renormalization. For fixed $n\in\mathds{N}$ with $\varepsilon:=1/n \ll \gamma$ (we will eventually send $n$ to infinity), we divide $\mathds{R}^d$ into small cubes of side length $\varepsilon R$, denoted by $V_{\varepsilon R}(\bm k)$ for ${\bm k}=({\bm k}^1,\cdots,{\bm k}^d)\in(\varepsilon R)\mathds{Z}^d$.
Then we identify the cubes $V_{\varepsilon R}(\bm k)$ as vertices ${\bm k}$ and call the resulting graph $G$. By the self-similarity of the model, it is obvious that $G$ is the critical long-range bond percolation model on $(\varepsilon R)\mathds{Z}^d$.  We say that ${\bm k}$ is good if $V_{3\varepsilon R}(\bm k)$ is $(3\varepsilon R, \alpha)$-good.

As a corollary of Lemma \ref{h-probgood}, we have the following result: a vertex in $G$ is good with arbitrarily high probability if we choose the parameters $\alpha$ and $b$ suitably.

\begin{corollary}\label{h-good-prob-2}
For $\alpha\in(0,1)$, let $b_0=b_0(\alpha)$ be chosen in Lemma \ref{h-regular-low}. Then for each $\delta_0\in(0,1)$, there exists sufficiently small $\alpha=\alpha(\delta_0)>0$ {\rm(}depending only on $d,\beta,\delta_0$ and the law of $D${\rm)} such that
$$
\mathds{P}[{\bm k}\text{ is good}]=\mathds{P}[V_{3\varepsilon R}(\bm k)\text{ is $(3\varepsilon R, \alpha)$-good}]>1-\delta_0\quad \text{for all }{\bm k}\in(\varepsilon R)\mathds{Z}^d.
$$
\end{corollary}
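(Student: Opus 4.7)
The plan is to deduce this directly from Lemma~\ref{h-probgood} together with the translation invariance axiom IV'. First I would note that Lemma~\ref{h-probgood} provides, for each sufficiently small $\alpha \in (0,1)$, constants $b = b(\alpha) > 0$ and $c_1 > 0$ (depending only on $d, \beta$ and the law of $D$) such that for the specific point $\bm z = \bm 0$ and the specific scale $s = \varepsilon R$,
\[
\mathds{P}\bigl[V_{3\varepsilon R}(\bm 0) \text{ is } (3\varepsilon R, \alpha)\text{-good}\bigr] \ge 1 - c_1\alpha\log(1/\alpha).
\]

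Next I would observe that the event ``$V_{3\varepsilon R}(\bm k)$ is $(3\varepsilon R, \alpha)$-good'' is expressed entirely in terms of the edge configuration $\mathcal{E}$ restricted to a neighbourhood of $\bm k$ together with the internal metric $D(\cdot,\cdot;V_{3\varepsilon R}(\bm k))$. By Axiom~IV' (translation invariance) the law of $D(\cdot + \bm k, \cdot + \bm k)$ equals the law of $D(\cdot,\cdot)$, and the Poisson point process $\mathcal{E}$ is itself translation invariant; hence the probability in Lemma~\ref{h-probgood} does not depend on the reference point $\bm z$. Therefore the same lower bound applies uniformly to every $\bm k \in (\varepsilon R)\mathds{Z}^d$.

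Finally, since $\alpha \log(1/\alpha) \to 0$ as $\alpha \to 0^+$, for any prescribed $\delta_0 \in (0,1)$ we may choose $\alpha = \alpha(\delta_0) > 0$ sufficiently small (depending only on $d, \beta, \delta_0$ and the law of $D$, through the constant $c_1$) so that $c_1 \alpha \log(1/\alpha) < \delta_0$, and then set $b_0 = b_0(\alpha)$ as in Lemma~\ref{h-regular-low}. Combining these estimates yields
\[
\mathds{P}[\bm k \text{ is good}] \ge 1 - c_1 \alpha \log(1/\alpha) > 1 - \delta_0
\]
for every $\bm k$, which is the claimed conclusion. I do not foresee any real obstacle here: the statement is essentially a rephrasing of Lemma~\ref{h-probgood} in renormalized coordinates, and the only point to verify is the uniformity in $\bm k$, which is immediate from translation invariance.
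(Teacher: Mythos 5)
Your proposal is correct and follows exactly the route the paper intends: Lemma~\ref{h-probgood} (itself built on Lemmas~\ref{h-longedge} and~\ref{h-regular-low} at the origin plus Axiom~IV' for general $\bm z$) gives the bound $1-c_1\alpha\log(1/\alpha)$ uniformly in the center, and choosing $\alpha$ small enough so that $c_1\alpha\log(1/\alpha)<\delta_0$ finishes the argument. The paper states the corollary without proof precisely because it is this immediate deduction; your write-up simply makes the translation-invariance step and the choice of $\alpha$ explicit.
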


For the rest of this section,  we select a small $\delta_0>0$ such that $2C_{dis}\delta_0<1$, where $C_{dis}$ is the constant defined in Lemma \ref{number-path-k} (which depends only on $\beta$ and $d$). We will also refer to the constants $\alpha$ and  $b_0$ as those chosen in Corollary \ref{h-good-prob-2}.

To distinguish the paths in different models, we write $P$ and $P^G$ for the paths in the continuous model and in $G$, respectively. We also use $d^G$ to denote the chemical distance of $G$.
We will see in the following definition that for each path $P$ in the continuous model, we can construct a self-avoiding path in $G$, which is the ``skeleton'' of $P$.

\begin{definition}\label{pathc-d}
Let $P$ be a path in the continuous model. We construct \textit{the skeleton path $P^G$} in $G$ (which is self-avoiding) of $P$ as follows. We start by defining a sequence $\widetilde{P}^{G}=(\widetilde{\bm k}_1,\widetilde{\bm k}_2,\cdots,\widetilde{\bm k}_N)$ that represents the centers of the $V_{\varepsilon R}(\bm k)$ that the path $P$ hits in order.
Since $P$ can hit the same cube multiple times, it is possible that this sequence contains loops. Therefore, $\widetilde{P}^{G}$ can be viewed as a path with loops in $G$. We next apply the following loop erasing procedure to $\widetilde{P}^{G}$:
\begin{itemize}
\item if $\widetilde{P}^{G}$ is self-avoiding (i.e., visiting each point at most once), set $P^G=\widetilde{P}^{G}$;

\item otherwise, recursively define $s_1=\max\{j\leq N:\ \widetilde{\bm k}_j=\widetilde{\bm k}_1\}$ and let the start point of $P^G$ be ${\bm k}_1=\widetilde{\bm k}_{s_1}$;

\item for $i\geq 1$, if $s_i<N$, define $s_{i+1}=\max\{j\leq N: \widetilde{\bm k}_j=\widetilde{\bm k}_{s_i+1}\}$ and set the $(i+1)$th point of $P^G$ to be ${\bm k}_{i+1}=\widetilde{\bm k}_{s_{i+1}}$.
\end{itemize}
From the above construction, it is clear that $P^G$ is a self-avoiding path in $G$ as desired.
\end{definition}

Conversely, for any path $P^G$ between two different points ${\bm i}$ and ${\bm j}$, as we define next, there exists a ``simple'' path $P$ from $V_{\varepsilon R}(\bm i)$ to $V_{\varepsilon R}(\bm j)$ in the continuous model (here simple means without any loop in the sequence of the centers of the cubes $V_{\varepsilon R}(\bm k)$ that $P$ hits in order) whose skeleton path is $P^G$. The construction is similar to that in the proof of Proposition \ref{apriori-cont-1}.
\begin{definition}\label{pathd-c}
Let ${\bm i},{\bm j}\in (\varepsilon R)\mathds{Z}^d$ and let $P^{G}=({\bm k}_0=\bm i,{\bm k}_1,\cdots,{\bm k}_N={\bm j})$ be a path from ${\bm i}$ to ${\bm j}$ in $G$.
We will first define ${\bm x}_l,{\bm y}_l\in V_{\varepsilon R}({\bm k}_l)$ for $l\in [0,N]_\mathds{Z}$ such that either $\|{\bm k}_l-{\bm k}_{l+1}\|_{1}=\varepsilon R$ or ${\bm y}_l$ and ${\bm x}_{l+1}$ are connected by a long edge in the continuous model, in accordance to the path $P^{G}$.
    The iterative definition of ${\bm x}_l,{\bm y}_l$ is as follows:

    Let ${\bm x}_0$ be the center of $V_{\varepsilon R}({\bm i})$. Assuming that ${\bm x}_0,{\bm y}_0,\cdots, {\bm y}_{l-1},{\bm x}_l$ $(l\geq 0)$ have been defined such that ${\bm x}_i,{\bm y}_i\in V_{\varepsilon R}({\bm k}_i)$ for all $i\in [0,l-1]_\mathds{Z}$ and ${\bm x}_l\in V_{\varepsilon R}({\bm k}_l)$, we consider the following two cases.
        \begin{itemize}
            \item[(1)] If $\|{\bm k}_l-{\bm k}_{l+1}\|_{1}=\varepsilon R$, we let ${\bm y}_l={\bm x}_l$ and let ${\bm x}_{l+1}$ be the center of $V_{\varepsilon R}({\bm k}_{l+1})$.
            \item[(2)] If $\|{\bm k}_l-{\bm k}_{l+1}\|_{1}>\varepsilon R$,  then this implies that ${\bm k}_l$ and ${\bm k}_{l+1}$ are connected by a long edge in the discrete model $G$ from the renormalization. Thus, we can choose ${\bm y}_l\in V_{\varepsilon R}({\bm k}_l)$ and ${\bm x}_{l+1}\in V_{\varepsilon R}({\bm k}_{l+1})$ such that $\langle {\bm y}_l,{\bm x}_{l+1}\rangle\in \mathcal{E}$.
        \end{itemize}
 Finally, let ${\bm y}_N$ be the center of $V_{\varepsilon R}(\bm j)$. Then we define  $P$  to be a path ${\bm x}_0\to {\bm y}_0\to {\bm x}_1\to {\bm y}_1\to\cdots\to {\bm x}_N\to {\bm y}_N$ in the continuous model, where ${\bm x}_l\to {\bm y}_l$ walks on a $D(\cdot,\cdot;V_{\varepsilon R}({\bm k}_l))$-geodesic and ${\bm y}_l\to {\bm x}_{l+1}$ walks on the long edge $\langle {\bm y}_l, {\bm x}_{l+1}\rangle$ (if $\|{\bm k}_l-{\bm k}_{l+1}\|_{1}>\varepsilon R$)
or on a $D(\cdot,\cdot;V_{\varepsilon R}({\bm k}_l))$-geodesic from ${\bm y}_l$ to a fixed point ${\bm z}_l\in \overline{V_{\varepsilon R}({\bm k}_l)}\cap \overline{V_{\varepsilon R}({\bm k}_{l+1})}$ (chosen arbitrarily in a prefixed manner) and then a $D(\cdot,\cdot;V_{\varepsilon R}({\bm k}_{l+1}))$-geodesic from  ${\bm z}_l$ to ${\bm x}_{l+1}$ if $\|{\bm k}_l-{\bm k}_{l+1}\|_{1}=\varepsilon R$.
It is obvious that the sequence of the centers of the cubes $V_{\varepsilon R}(\bm k)$ that $P$ hits in order is the sequence of points in $P^G$, which does not have any loop. Thus we call $P$ is a \textit{simple path} in the continuous model associated with $P^G$ (see the right picture in Figure \ref{sect2construction} above).
\end{definition}

In the following, we denote by $\mathcal{C}(P^G)$ the set of simple paths in the continuous model associated with $P^G$, and unless otherwise specified, we always assume that the path $P^G$ in $G$ is self-avoiding.

For any ${\bm i},{\bm j}\in(\varepsilon R)\mathds{Z}^d$ and $m\in\mathds{N}$, we denote by $\mathcal{P}_m({\bm i},{\bm j})$ the collection of self-avoiding paths $P^G$ from ${\bm i}$ to ${\bm j}$ in $G$ with length $m$. We let $\mathcal{P}_{\geq m}({\bm i},{\bm j})=\bigcup_{l\geq m}\mathcal{P}_l({\bm i},{\bm j})$.

Let $F_{\varepsilon,1}$ be the event that every path $P^G$ in $\bigcup_{{\bm i},{\bm j}\in (\varepsilon R)[-1/\varepsilon,1/\varepsilon)^d_\mathds{Z}}\mathcal{P}_{\geq \varepsilon^{-\theta/4}}({\bm i},{\bm j})$ passes through at least $|P^G|/(4\cdot 3^d)$ of the good sites. The following lemma shows that $F_{\varepsilon,1}$ occurs with very high probability.


\begin{lemma}\label{Fr4lemma-2}
Let $\delta_0\in(0,1)$ with $2C_{dis}\delta_0<1$. Let $\alpha$ and $b_0$ be chosen as in Corollary \ref{h-good-prob-2}.
Then $F_{\varepsilon,1}$ occurs with probability at least $1-O_\varepsilon(\varepsilon^{\mu})$ for all $\mu>0$, where the implicit constant in $O_\varepsilon(\cdot)$ depends only on $\beta, d, \mu, \delta_0$ and the law of $D$.
\end{lemma}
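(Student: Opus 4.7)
The plan is to apply a union bound over all self-avoiding paths $P^G\in\mathcal{P}_{\ge\varepsilon^{-\theta/4}}$ with endpoints in the box $(\varepsilon R)[-1/\varepsilon,1/\varepsilon)_\mathds{Z}^d$, and for each fixed candidate path to prove a large-deviation bound on the number of good sites via the (approximate) independence provided by Axiom II''. By self-similarity of the Poisson edge intensity $\beta/|\bm x-\bm y|^{2d}$, the renormalized graph $G$ is itself a critical $\beta$-LRP at lattice spacing $\varepsilon R$, so Lemma~\ref{number-path-k} applied to $G$ yields $\mathds{E}[|\mathcal{P}_{\le m}^{G}(\bm i)|]\le C_{dis}^m$ for every starting vertex $\bm i$. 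Combining with the $(2/\varepsilon)^d$ possible starting vertices, a first-moment bound gives
\begin{equation*}
\mathds{P}[F_{\varepsilon,1}^c]\le\sum_{m\ge\varepsilon^{-\theta/4}}(2/\varepsilon)^d C_{dis}^m\cdot q_m,
\end{equation*}
where $q_m$ is a uniform bound on the (conditional) probability that a fixed self-avoiding path of length $m$ in $G$ has fewer than $m/(4\cdot 3^d)$ good sites. The goal is thus to show that $q_m\le q^m$ for some $q<1/C_{dis}$, which by the lower bound $m\ge\varepsilon^{-\theta/4}$ will deliver the desired superpolynomial decay.

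To bound $q_m$, I would partition the $m+1$ vertices of $P^G$ into $3^d$ color classes according to $\bm k/(\varepsilon R)\bmod 3$, so that sites within the same class have pairwise $\|\cdot\|_\infty$-distance at least $3\varepsilon R$ and their cubes $V_{3\varepsilon R}(\bm k)$ are pairwise disjoint. By pigeonhole, if $P^G$ has fewer than $m/(4\cdot 3^d)$ good sites in total, then the largest color class $L$ (with $|L|\ge (m+1)/3^d$) has more than $3/4$ of its sites bad. Axiom II'' applied to the disjoint cubes $\{V_{3\varepsilon R}(\bm k_i)\}_{i\in L}$ makes the tuples $(\mathcal{E}|_{V_{3\varepsilon R}(\bm k_i)\times V_{3\varepsilon R}(\bm k_i)},\,D(\cdot,\cdot;V_{3\varepsilon R}(\bm k_i)))$ jointly independent and independent of edges lying between distinct cubes. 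The goodness at $\bm k_i$ also depends on edges with one endpoint in $V_{3\varepsilon R}(\bm k_i)$ and the other outside; any residual cross-class dependence coming from edges between two cubes on our list, as well as from the Palm-theoretic conditioning on path validity (which forces $\mathcal{E}$ to contain certain specific long edges between consecutive $V_{\varepsilon R}(\bm k_j)$'s), can be absorbed by dominating each bad event with an enlarged one in which the outgoing-edge endpoints inside $V_{3\varepsilon R}(\bm k_i)$ are drawn from an independent Poisson point process of the correct intensity. Repeating the arguments in the proofs of Lemmas~\ref{h-longedge} and~\ref{h-regular-low} shows this enlarged bad probability is still at most $2\delta_0$ for sufficiently small $\alpha$. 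The enlarged bad events are genuinely independent across $L$, so a Chernoff bound yields
\begin{equation*}
\mathds{P}\!\left[\#\{i\in L:\text{enlarged bad}\}\ge 3|L|/4\right]\le q_0^{|L|}
\end{equation*}
for some $q_0=q_0(\delta_0)<1$. Summing over the $3^d$ color classes, $q_m\le 3^d q_0^{m/3^d}$, and the hypothesis $2C_{dis}\delta_0<1$ (combined with further shrinking of $\alpha$ and hence of $\delta_0$ in Corollary~\ref{h-good-prob-2} if necessary) ensures $C_{dis}\cdot q_0^{1/3^d}<1$, so the geometric series converges and gives $\mathds{P}[F_{\varepsilon,1}^c]=O_\varepsilon(\varepsilon^\mu)$ for every $\mu>0$.

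The main obstacle is the decoupling step: goodness at $\bm k_i$ involves long edges from $V_{3\varepsilon R}(\bm k_i)$ to \emph{anywhere} in $\mathds{R}^d$, and two cubes on our list can share a common cross-edge, so Axiom II'' alone does not give true independence across $L$. The enlargement-to-independent-Poisson construction must be set up with enough slack that the enlarged bad probability remains close to $\delta_0$; this parallels the bookkeeping in Lemmas~\ref{h-longedge} and~\ref{h-regular-low}, where the offending contributions are of order $\alpha\log(1/\alpha)$ and vanish as $\alpha\to 0$. The Palm-theoretic adjustment for path-validity conditioning adds at most $m$ specific edges, which can be absorbed in the same way without affecting the leading-order estimate. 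Once this decoupling is in place, the color-class Chernoff bound and the path-count estimate combine cleanly to yield the claimed superpolynomial decay.
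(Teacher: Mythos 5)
Your overall strategy---union bound over candidate skeleton paths, color-class decomposition into $3^d$ classes of cubes with pairwise disjoint $V_{3\varepsilon R}(\cdot)$, and a Chernoff bound per class---matches the paper's outline (the paper packages the per-path step as Lemma~\ref{hd-BK} and then unions over paths as you do). The genuine gap is in the decoupling step. You correctly identify that goodness of $V_{3\varepsilon R}(\bm k_i)$ involves edges with one endpoint outside the cube, so Axiom II$''$ alone does not give independence across a color class. You then propose ``dominating each bad event with an enlarged one in which the outgoing-edge endpoints inside $V_{3\varepsilon R}(\bm k_i)$ are drawn from an independent Poisson point process,'' but as written this is not a valid construction. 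If the enlarged events are defined on fresh independent copies of the Poisson process, they are independent but do not contain the original bad events (a cross-cube edge responsible for badness at $\bm k_i$ need not be reproduced); if instead you union the original bad event in, you lose independence and the Chernoff bound collapses. There is no obvious monotonicity that would let a stochastic-domination coupling resolve this, because the badness criterion in Definition~\ref{h-good} also involves a lower bound on an internal metric distance, which is not monotone in the edge configuration. The paper handles exactly this obstruction in the proof of Lemma~\ref{hd-BK}: it observes that on the bad event for the path, the witnessing data for distinct cubes $\bm k_i$ in the chosen color class are \emph{disjoint}---the special pairs of edges through which the path enters/exits the cubes are distinct edges (a simple path uses each edge once), and the short internal geodesics live in the disjoint cubes $V_{3\varepsilon R}(\bm k_i)$---and then invokes the BK inequality to multiply the probabilities. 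Without that disjoint-witness observation and BK (or an equivalent rigorous decoupling), the heart of the lemma is not proved.

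A secondary issue: you also fold the Palm-theoretic path-validity conditioning into the same vague ``absorption,'' whereas the paper makes this concrete by conditioning on the renormalized graph $G$ and running the per-path estimate as a conditional probability $\mathds{P}[\,\cdot \mid G]$. Your first-moment bound $\mathds{P}[F_{\varepsilon,1}^c] \le \sum_m (2/\varepsilon)^d C_{dis}^m q_m$ also needs justification that the path count and the bad-path probability can be multiplied; the paper supplies this by the event $N_{\bm i \bm j}^c$ (controlling $|\mathcal{P}_m(\bm i,\bm j)|$) together with the conditional bound given $G$. These are fixable bookkeeping points, but the decoupling gap is substantive and needs the disjoint-witness/BK argument to close.
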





To prove Lemma \ref{Fr4lemma-2}, we first consider some estimates for a fixed path. To achieve this, we start with a notion for a ``good'' path, which we define in general since such notions will be repeatedly used later.

    \begin{definition}\label{hd-PGgood}
    Let $\alpha\in(0,1)$ and  $b_0=b_0(\alpha)$ be the constants chosen in Corollary \ref{h-good-prob-2}. Additionally, let $\{E_{\bm k}\}_{{\bm k}\in(\varepsilon R)\mathds{Z}^d}$ be a collection of events such that $E_{\bm k}$ is determined by edges in $V_{3\varepsilon R}(\bm k)$ and $D(\cdot,\cdot; V_{3\varepsilon R}(\bm k))$.
    We say a self-avoiding path $P^G=({\bm k}_1,{\bm k}_2,\cdots,{\bm k}_L)$ on $G$ is \textit{$(\{E_{\bm k}\}_{{\bm k}\in(\varepsilon R)\mathds{Z}^d},\alpha)$-good} if at least $\frac{1}{4\cdot 3^d}L$ of its indices $j\in [1,L]_\mathds{Z}$ satisfy the following conditions.
        \begin{enumerate}
            \item $E_{{\bm k}_j}$ occurs.
            \item For each $P\in \mathcal{C}(P^G)$, let $({\bm u}_{{\bm k}_j,1},{\bm v}_{{\bm k}_j,1},{\bm u}_{{\bm k}_j,2},{\bm v}_{{\bm k}_j,2})$ be a $V_{3\varepsilon R}(\bm k_j)$-special pair of edges such that $P$ first uses $\langle {\bm u}_{{\bm k}_j,1},{\bm v}_{{\bm k}_j,1}\rangle$ to enter $V_{\varepsilon R}(\bm k_j)$ and then does not leave $V_{3\varepsilon R}(\bm k_j)$ until using $\langle {\bm u}_{{\bm k}_j,2},{\bm v}_{{\bm k}_j,2}\rangle$. 
            Then
            $$
            |{\bm v}_{{\bm k}_j,1}-{\bm u}_{{\bm k}_j,2}|\geq \alpha \varepsilon R\quad\text{and}\quad D({\bm v}_{{\bm k}_j,1},{\bm u}_{{\bm k}_j,2};V_{3\varepsilon R}(\bm k_j))\geq (b_0\alpha \varepsilon R)^\theta.
            $$
            \item For any two different indices $j_1,j_2$, we have $\bm k_{j_1}-\bm k_{j_2}\in(3\varepsilon R)\mathds{Z}^d$.
        \end{enumerate}

    \end{definition}

    If $P^G$ is not $(\{E_{\bm k}\}_{{\bm k}\in(\varepsilon R)\mathds{Z}^d},\alpha)$-good, we say $P^G$ is $(\{E_{\bm k}\}_{{\bm k}\in(\varepsilon R)\mathds{Z}^d},\alpha)$-bad.

    \begin{lemma}\label{hd-BK}
        Let $\delta\in(0,1)$. Assume that $\{E_{\bm k}\}_{{\bm k}\in(\varepsilon R)\mathds{Z}^d}$ is a collection of events such that $E_{\bm k}$ is determined by edges in $V_{3\varepsilon R}(\bm k)$ and $D(\cdot,\cdot;V_{3\varepsilon R}(\bm k))$.
        Then there exist constants $p_c>0,\alpha_0>0$ {\rm(}depending only on $d,\beta,\delta$ and the law of $D${\rm)} such that if $\mathds{P}[E_{\bm k}]\geq p_c$ for all ${\bm k}\in(\varepsilon R)\mathds{Z}^d$ and $\alpha\in(0,\alpha_0)$, then for any fixed self-avoiding path $P^G$ with $d^G$-length $L$,
        We have
        $$
        \mathds{P}\left[\text{$P^G$ is $(\{E_{\bm k}\}_{{\bm k}\in(\varepsilon R)\mathds{Z}^d},\alpha)$-good}\right]\geq 1-\delta^L.
        $$
    \end{lemma}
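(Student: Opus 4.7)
The strategy is to reduce the lemma to a Chernoff bound by using weak locality to obtain many independent events along $P^G$. Define
\[
G_{\bm k} := E_{\bm k} \cap \{V_{3\varepsilon R}(\bm k) \text{ is } (3\varepsilon R,\alpha)\text{-good}\}.
\]
On $G_{\bm k_j}$, condition (2) of Definition \ref{hd-PGgood} holds automatically at index $j$: the bounds $|\bm v_{\bm k_j,1} - \bm u_{\bm k_j,2}| \geq \alpha\varepsilon R$ and $D(\bm v_{\bm k_j,1},\bm u_{\bm k_j,2}; V_{3\varepsilon R}(\bm k_j)) \geq (b_0\alpha\varepsilon R)^\theta$ are required in Definition \ref{h-good} for \emph{every} $V_{3\varepsilon R}(\bm k_j)$-special pair of edges, hence in particular for the (path-dependent) pair associated with any $P \in \mathcal{C}(P^G)$. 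Using Corollary \ref{h-good-prob-2}, for any prescribed $\delta' > 0$ one can choose $\alpha = \alpha(\delta')$ small enough that the cube is good with probability $\geq 1 - \delta'/2$; combined with $\mathds{P}[E_{\bm k}] \geq p_c := 1 - \delta'/2$ and a union bound, this yields $\mathds{P}[G_{\bm k}] \geq 1 - \delta'$ uniformly in $\bm k$.

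Partition the indices $\{1,\dots,L\}$ of $P^G = (\bm k_1,\dots,\bm k_L)$ by the residue of $\bm k_j$ modulo $(3\varepsilon R)\mathds{Z}^d$, producing $3^d$ classes; by pigeonhole some class $\rho^*$ has $L_{\rho^*} := \#\rho^* \geq L/3^d$. Since $P^G$ is self-avoiding, any two distinct $j_1,j_2 \in \rho^*$ satisfy $\bm k_{j_1} - \bm k_{j_2} \in (3\varepsilon R)\mathds{Z}^d \setminus \{\bm 0\}$, so condition (3) of Definition \ref{hd-PGgood} holds automatically for any subset of $\rho^*$, and moreover the (half-open) cubes $\{V_{3\varepsilon R}(\bm k_j)\}_{j \in \rho^*}$ are pairwise disjoint. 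Because each $G_{\bm k_j}$ is a measurable function of $\mathcal{E}|_{V_{3\varepsilon R}(\bm k_j) \times V_{3\varepsilon R}(\bm k_j)}$ together with $D(\cdot,\cdot; V_{3\varepsilon R}(\bm k_j))$, Axiom II'' (weak locality) gives that $\{G_{\bm k_j}\}_{j \in \rho^*}$ are mutually independent, each with probability $\geq 1 - \delta'$.

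To conclude, let $N := \#\{j \in \rho^* : G_{\bm k_j} \text{ occurs}\}$; if $N \geq L_{\rho^*}/4 \geq L/(4\cdot 3^d)$, then the indices in $\rho^*$ on which $G_{\bm k_j}$ occurs form a subset witnessing that $P^G$ is $(\{E_{\bm k}\},\alpha)$-good. A standard Chernoff estimate for a sum of independent Bernoullis with success probability $\geq 1-\delta'$ gives
\[
\mathds{P}[N < L_{\rho^*}/4] \leq \exp(-c(\delta')L_{\rho^*}) \leq \exp(-c(\delta')L/3^d),
\]
with $c(\delta') \to \infty$ as $\delta' \to 0$. Choosing $\delta'$ small enough that $c(\delta') \geq 3^d\log(1/\delta)$ (an extra constraint on $\alpha_0$ and $p_c$ depending only on $d,\beta,\delta$ and the law of $D$) produces the desired bound $\delta^L$. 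The only conceptually delicate step is the initial reduction of the path-dependent condition (2) to the cube event $G_{\bm k}$; afterwards the genuine independence provided by Axiom II'' means that a plain Chernoff bound suffices and, despite the lemma's name, no BK-type inequality is actually needed.
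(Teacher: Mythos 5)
There is a genuine gap, and it is exactly at the point you flag as ``the only conceptually delicate step.'' You claim that each $G_{\bm k} := E_{\bm k} \cap \{V_{3\varepsilon R}(\bm k) \text{ is } (3\varepsilon R,\alpha)\text{-good}\}$ ``is a measurable function of $\mathcal{E}|_{V_{3\varepsilon R}(\bm k)\times V_{3\varepsilon R}(\bm k)}$ together with $D(\cdot,\cdot;V_{3\varepsilon R}(\bm k))$,'' and then invoke Axiom~II\textquotesingle\textquotesingle\ to get mutual independence of $\{G_{\bm k_j}\}_{j\in\rho^*}$. This measurability claim is false, and without it the independence collapses. Look at Definition~\ref{h-good}: the event ``$V_{3\varepsilon R}(\bm k)$ is $(3\varepsilon R,\alpha)$-good'' quantifies over all $V_{3\varepsilon R}(\bm k)$-special pairs $({\bm u}_1,{\bm v}_1,{\bm u}_2,{\bm v}_2)$, where ${\bm u}_1 \in V_{\varepsilon R}(\bm k)^c$ is unconstrained (it can lie far outside $V_{3\varepsilon R}(\bm k)$) and ${\bm v}_2 \in V_{3\varepsilon R}(\bm k)^c$ lies outside the cube by definition. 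So the good event is determined by edges with one endpoint inside and one endpoint \emph{outside} $V_{3\varepsilon R}(\bm k)$ --- precisely the class of edges that Axiom~II\textquotesingle\textquotesingle\ does \emph{not} localize. In particular, a single long edge between $V_{\varepsilon R}(\bm k_{j_1})$ and $V_{3\varepsilon R}(\bm k_{j_2})$ is a legitimate ingredient of a special pair for both cubes, so the good events at $\bm k_{j_1}$ and $\bm k_{j_2}$ are coupled through shared edges and are not independent. Your concluding remark that ``no BK-type inequality is actually needed'' is therefore mistaken; the BK inequality is doing real work precisely because the cube-good events are nonlocal.

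The paper's proof avoids this by splitting the two ingredients of $G_{\bm k}$ and treating them with different tools. The local events $E_{\bm k}$ (which by the lemma's hypothesis really are measurable with respect to $\mathcal{E}|_{V_{3\varepsilon R}(\bm k)\times V_{3\varepsilon R}(\bm k)}$ and $D(\cdot,\cdot;V_{3\varepsilon R}(\bm k))$) are handled exactly as you do: pigeonhole onto a residue class $P^{(\bm i_*)}$, genuine independence, and a binomial tail bound. For the nonlocal good-cube failures, the paper observes that badness of $P^G$ forces at least $L/(4\cdot 3^d)$ cubes in $\Lambda \subset P^{(\bm i_*)}$ to be not-good \emph{with disjoint witnesses} (the long edges and internal-metric certificates live in disjoint regions, even though the events themselves are correlated), and then applies the van den Berg--Kesten inequality \cite{BK85} together with the single-cube estimate of Lemma~\ref{h-probgood} to get a product bound $(c\alpha\log(1/\alpha))^{L/(4\cdot 3^d)}$. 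Your residue-class pigeonhole and the Chernoff computation for the $E_{\bm k}$ part are fine; what is missing is this disjoint-occurrence argument (or some substitute for it) for the good-cube part.
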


    \begin{proof}
        Fix any possible self-avoiding path $P^G$ with $d^G$-length $L$. For any $\bm{i}\in(\varepsilon R)\{0,1,2\}^d$, let $P^{({\bm i})}=\{{\bm k}\in P^G: \bm{k}-\bm{i}\in(3\varepsilon R)\mathds{Z}^d\}$. Then $\{P^{({\bm i})}:{\bm{i}}\in(\varepsilon R)\{0,1,2\}^d\}$ is a partition of $P^G$.
        From Pigeonhole's Principle, there exists an ${\bm i}_*\in(\varepsilon R)\{0,1,2\}^d$ such that $|P^{({\bm i}_*)}|\geq 3^{-d}L$. Note that $V_{3\varepsilon R}(\bm k)$ for all ${\bm k}\in P^{({\bm i}_*)}$ are disjoint.

        Let $A$ be the event that there exist at most $L/(4\cdot3^{d} )$ many ${\bm k}$'s in $P^{({\bm i}_*)}$ such that $E_{\bm k}$ occurs.  Then from the independence between different $E_{\bm k}$,
        \begin{equation}\label{hd-BKLemma-1}
            \mathds{P}[A]\leq 2^L(1-p_c)^{3L/(4\cdot 3^d)}.
        \end{equation}

        On the event $A^c$, if the path $P^G$ is $(\{E_{\bm k}\}_{{\bm k}\in(\varepsilon R)\mathds{Z}^d},\alpha)$-bad, then for any $\Lambda\subset P^{({\bm i}_*)}$ with $|\Lambda|\geq L/(2\cdot 3^d)$ such that $E_{\bm k}$ occurs for any ${\bm k}\in\Lambda$, there exists $P\in\mathcal{C}(P^G)$ and at least $L/(4\cdot 3^d)$ ${\bm k}$'s in $\Lambda$ such that for these corresponding $V_{3\varepsilon R}(\bm k)$-special pairs of edges $({\bm u}_{{\bm k},1},{\bm v}_{{\bm k},1},{\bm u}_{{\bm k},2},{\bm v}_{{\bm k},2})$, 
        either
        $$
        |{\bm v}_{{\bm k},1}-{\bm u}_{{\bm k},2}|<\alpha\varepsilon R\quad \text{or} \quad D({\bm v}_{{\bm k},1},{\bm u}_{{\bm k},2};V_{3\varepsilon R}({\bm k}))\leq (b_0\alpha\varepsilon R)^\theta.
        $$
         As a result, there are at least $L/(4\cdot 3^d)$ ${\bm k}$'s in $\Lambda$ such that $V_{3\varepsilon R}(\bm k)$ is not $(3\varepsilon R,\alpha)$-good with disjoint witness (recall Definition \ref{h-good}).
         Then from Lemma \ref{h-probgood},
           Axiom II'' (weak locality) and BK inequality \cite{BK85}, we get
        \begin{equation}\label{hd-BKLemma-2}
            \mathds{P}\left[A^c\cap\{P^G\text{ is }(\{E_{\bm k}\}_{{\bm k}\in(\varepsilon R)\mathds{Z}^d},\alpha)\text{-bad}\}\right]\leq 2^L( c\alpha\log(1/\alpha))^{L/(4\cdot 3^{d})}.
        \end{equation}

        Combining \eqref{hd-BKLemma-1} and \eqref{hd-BKLemma-2}, we get the lemma by taking sufficiently small $p_c$ and $\alpha$ such that  $2(1-p_c)^{3/(4\cdot 3^d)}<\delta/2$ and $2( c\alpha\log(1/\alpha))^{1/(4\cdot 3^d)}<\delta/2$. 
    \end{proof}

We now can present the

\begin{proof}[Proof of Lemma \ref{Fr4lemma-2}]
Let ${\bm i},{\bm j}\in (\varepsilon R)[-1/\varepsilon,1/\varepsilon)_{\mathds{Z}}^d$ and let $P_{{\bm i\bm j}}^G$ be a path from ${\bm i}$ to ${\bm j}$ in $G$ with $|P_{\bm i\bm j}^G|\geq \varepsilon^{-\theta/4}$.
Applying Lemma \ref{hd-BK} with $\delta=\delta_0$ and $E_{\bm k}$ being the whole probability space for all ${\bm k}\in (\varepsilon R)\mathds{Z}^d$, we see that for each $m\geq \varepsilon^{-\theta/4}$ and each path $P_{\bm i\bm j}^G\in \mathcal{P}_m(\bm i,\bm j)$,
\begin{equation}\label{ijpath-active-2}
\begin{split}
&\mathds{P}\left[\text{the number of good sites in $P_{\bm i\bm j}^G$ is at most }|P_{\bm i\bm j}^G|/(4\cdot 3^d) \Big| G\right]\\
&=\mathds{P}\left[P^G_{\bm i\bm j} \text{ is }(\{E_{\bm k}\}_{{\bm k}\in (\varepsilon R)\mathds{Z}^d},\alpha)\text{-bad}\Big|G\right]
\leq \delta_0^m.
\end{split}
\end{equation}

In addition, by Lemma \ref{number-path-k} and Markov's inequality, we get that
\begin{equation}\label{number-path-m-2}
\mathds{P}\left[|\mathcal{P}_m(\bm i,\bm j)|\geq (2C_{dis})^m\right]\leq (2C_{dis})^{-m}\mathds{E}[|\mathcal{P}_m(\bm i,\bm j)|]\leq 2^{-m},
\end{equation}
where $C_{dis}$ is the constant in Lemma \ref{number-path-k}, depending only on $\beta$ and $d$. This implies
\begin{equation}\label{number-path>=-2}
\begin{split}
&\mathds{P}\left[ \exists m\geq \varepsilon^{-\theta/4}\ \text{such that }|\mathcal{P}_m(\bm i ,\bm j)|\geq (2C_{dis})^m\right]\\
&\leq \sum_{ m\geq  \varepsilon^{-\theta/4}} 2^{-m}=O_\varepsilon(\varepsilon^{\mu})\quad \forall\mu>0.
\end{split}
\end{equation}
Here the implicit constant in $O_\varepsilon(\cdot)$ depends only on $\beta, d, \mu$ and the law of $D$.

For brevity, we denote the events in \eqref{ijpath-active-2} and \eqref{number-path>=-2} by $M(P^G_{\bm i\bm j})$ and $N_{\bm i\bm j}$, respectively.
Combining \eqref{ijpath-active-2} and \eqref{number-path>=-2} yields that
\begin{align*}
\mathds{P}[F_{\varepsilon,1}^c]&\leq \sum_{\bm i,\bm j\in(\varepsilon R)[-1/\varepsilon,1/\varepsilon)^d_{\mathds{Z}}}\mathds{P}[N_{\bm i\bm j}]+
\sum_{\bm i,\bm j\in(\varepsilon R)[-1/\varepsilon,1/\varepsilon)^d_{\mathds{Z}}}\sum_{m\geq \varepsilon^{-\theta/4}}\mathds{E}\left[\I_{(N_{\bm i\bm j})^c}\sum_{P_{\bm i\bm j}^G\in\mathcal{P}_m(\bm i,\bm j)}\mathds{P}\left[ M(P_{\bm i\bm j}^G)\big| G\right]\right]\\
&\leq O_\varepsilon(\varepsilon^{\mu})
+ \sum_{\bm i,\bm j\in(\varepsilon R)[-1/\varepsilon,1/\varepsilon)^d_{\mathds{Z}}}
\sum_{m\geq \varepsilon^{-\theta/4}}(2C_{dis})^m\cdot \delta_0^{m}\\
&\leq O_\varepsilon(\varepsilon^{\mu})\quad \forall \mu>0,
\end{align*}
where the last inequality holds
since $2C_{dis} \delta_0 < 1$ (in light of choices of  $\alpha, b_0$ in  Corollary \ref{h-good-prob-2}).
Hence the proof is complete.
\end{proof}

Since $\varepsilon\ll \gamma$, from Lemma \ref{Fr4lemma-2} we get that
\begin{equation*}\label{P[F_n-2]}
\mathds{P}[F_{\varepsilon,1}]\geq 1-\gamma.
\end{equation*}

In what follows, to obtain a lower bound on the number of cubes $V_{3\varepsilon R}(\bm k)$ traversed by a $D$-geodesic from ${\bm x}$ to ${\bm y}$ for all ${\bm x},{\bm y}\in[-R,R]^d$, we require a uniform upper bound on the $D$-diameter of such cubes $V_{3\varepsilon R}(\bm k)$ (see Lemma \ref{diamIk-2} below). This is because we hope that these geodesics lie entirely within a compact set.
To achieve this, we need the following lemma, which  shows that a geodesic cannot stray far from the two end points.

\begin{lemma}\label{geodesic-go-not-far}
    For any fixed $\gamma>0$, there exists $u=u(\gamma)>1$, depending only on $\beta,\gamma$ and the law of $D$, such that for any $R>0$,
    \begin{equation*}
        \mathds{P}[D([-R,R]^d,([-uR,uR]^d)^c)< \mathrm{diam}([-R,R]^d;D)]<\gamma,
    \end{equation*}
    which implies with probability at least $1-\gamma$, for any ${\bm x},{\bm y}\in[-R,R]^d$, each $D$-geodesic from ${\bm x}$ to ${\bm y}$ is contained in $[-uR,uR]^d$.
\end{lemma}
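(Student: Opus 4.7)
The plan is to decouple the two sides of the inequality and estimate each using separate axioms: the upper tail $\mathrm{diam}([-R,R]^d;D) \le C_1 R^\theta$ will come from Axiom V2', and the lower bound $D(\bm 0, ([-uR,uR]^d)^c) \ge c_2(uR)^\theta$ from Axiom V1'. These will then be glued together via the triangle inequality to pass from a distance at $\bm 0$ to a distance from the whole cube $[-R,R]^d$.

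First, combining Axiom IV' (translation invariance, applied with $\bm z = (R,\ldots,R)$) with Axiom V2' gives
$$
\sup_{R>0}\mathds{E}\!\left[\exp\!\left\{\left(\tfrac{\mathrm{diam}([-R,R]^d;D)}{(2R)^\theta}\right)^\eta\right\}\right]<\infty
$$
for a fixed $\eta \in (0,1/(1-\theta))$, so Markov's inequality produces a constant $C_1 = C_1(\gamma)$ depending only on $\gamma$ and the law of $D$ such that $\mathds{P}[\mathrm{diam}([-R,R]^d;D) > C_1 R^\theta] < \gamma/2$ uniformly in $R$. Second, Axiom V1' applied with $r = uR$ gives a constant $c_2 = c_2(\gamma)>0$ such that $\mathds{P}[D(\bm 0,([-uR,uR]^d)^c) < c_2 (uR)^\theta] < \gamma/2$ uniformly in $u$ and $R$.

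Third, I would observe a deterministic triangle-inequality bound: for any $\bm x \in [-R,R]^d$ and $\bm y \in ([-uR,uR]^d)^c$, since $\bm 0 \in [-R,R]^d$,
$$
D(\bm x,\bm y) \ge D(\bm 0,\bm y) - D(\bm 0,\bm x) \ge D(\bm 0,([-uR,uR]^d)^c) - \mathrm{diam}([-R,R]^d;D).
$$
Taking infimum over such $\bm x$ and $\bm y$ yields
$$
D([-R,R]^d,([-uR,uR]^d)^c) \ge D(\bm 0,([-uR,uR]^d)^c) - \mathrm{diam}([-R,R]^d;D).
$$
Choosing $u = u(\gamma)$ large enough that $c_2 u^\theta \ge 2C_1$, on the intersection of the two good events above (of probability $\ge 1-\gamma$) one gets $D([-R,R]^d,([-uR,uR]^d)^c) \ge c_2(uR)^\theta - C_1R^\theta \ge C_1R^\theta \ge \mathrm{diam}([-R,R]^d;D)$. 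The second assertion follows immediately, since any $D$-geodesic from $\bm x,\bm y\in[-R,R]^d$ leaving $[-uR,uR]^d$ would have length at least $D([-R,R]^d,([-uR,uR]^d)^c) > \mathrm{diam}([-R,R]^d;D) \ge D(\bm x,\bm y)$, contradicting that it is a geodesic.

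There is no serious obstacle in this proof — all the work has already been done by the axioms V1', V2', and IV'; the only conceptual step is the triangle-inequality reduction that converts ``distance from $\bm 0$'' to ``distance from the cube $[-R,R]^d$'' by absorbing the diameter.
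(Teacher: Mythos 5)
Your proof is correct and is essentially identical to the paper's: both use Axiom V2' to bound $\mathrm{diam}([-R,R]^d;D) \le CR^\theta$ with probability $\ge 1-\gamma/2$, Axiom V1' to get $D(\bm 0,([-uR,uR]^d)^c) \ge c(uR)^\theta$ with probability $\ge 1-\gamma/2$, the same triangle inequality $D([-R,R]^d,([-uR,uR]^d)^c) \ge D(\bm 0,([-uR,uR]^d)^c) - \mathrm{diam}([-R,R]^d;D)$, and the same choice of $u$ satisfying $cu^\theta \ge 2C$.
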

\begin{proof}
    From Axiom V2' (tightness across different scales (upper bound)), we know that there exists $C=C(\gamma)>0$, which depends only on $\beta,d,\gamma$ and the law of $D$, such that for any $R>0$,
    \begin{equation*}
        \mathds{P}[ \mathrm{diam}([-R,R]^d;D)\le C R^\theta]>1-\gamma/2.
    \end{equation*}
    Using the triangle inequality
    $$
    D([-R,R]^d,([-uR,uR]^d)^c)\ge D({\bm0},([-uR,uR]^d)^c)-\mathrm{diam}([-R,R]^d;D),
    $$
    it suffices to show that there exists $u>0$ (which does not depend on $R$) such that for any $R>0$,
    \begin{equation}\label{target-2-10}
        \mathds{P}[D(\bm 0,([-uR,uR]^d)^c)\geq  2CR^\theta]>1-\gamma/2.
    \end{equation}
Indeed, assuming \eqref{target-2-10} we have that with probability at least $1-\gamma$,
$$
D([-R,R]^d,([-uR,uR]^d)^c)\geq CR^\theta\geq \mathrm{diam}([-R,R]^d;D),
$$
which implies the desired statement.

We next prove \eqref{target-2-10}.  By Axiom V1' (tightness across different scales (lower bound)), there exists a constant $c_\gamma>0$ such that for every $r>0$,
    \begin{equation}\label{exitlowerbound-2}
        \mathds{P}\left[D(\bm 0,([-r,r]^d)^c)\ge c_\gamma r^\theta\right]>1-\frac{\gamma}{2}.
    \end{equation}
    Thus, if we take $u>0$ with $u^\theta c_\gamma >2C$ (which does not depend on $r$), then we can obtain \eqref{target-2-10} by taking $r=uR$ in \eqref{exitlowerbound-2}, which implies the lemma.
\end{proof}

Hence, we see from Lemma \ref{geodesic-go-not-far} that for any fixed $\gamma>0$, there is a sufficiently large $u=u(\gamma)\in\mathds{N}$ (which depends only on $\beta,d,\gamma$ and the law of $D$)
such that with probability at least $1-\gamma$, each $D$-geodesic between any two points in $[-R,R]^d$ is contained in $[-uR, uR]^d$.
We will refer to this event as $F_{\gamma,2}$. Thus, we have
\begin{equation*}\label{P[F_gamma-2]}
\mathds{P}[F_{\gamma,2}]\geq 1-\gamma.
\end{equation*}

For any  ${\bm x},{\bm y}\in[-R,R]^d$, let ${\bm k}_{\bm x},{\bm k}_{\bm y}\in (\varepsilon R)[-u/\varepsilon,u/\varepsilon)^d_\mathds{Z}$ be such that  ${\bm x}\in V_{\varepsilon R}({\bm k}_{\bm x}), {\bm y}\in V_{\varepsilon R}({\bm k}_{\bm y})$.
We next let $F_{\varepsilon,3}$ be the event that for any ${\bm x},{\bm y}\in[-R,R]^d$ with $D({\bm x},{\bm y})\ge 2\varepsilon^{\theta/2}R^\theta$,
    \begin{equation*}
        d^G({\bm k}_{\bm x},{\bm k}_{\bm y}) +1\ge \max\left\{ c_4(\varepsilon R)^{-\theta} D({\bm x},{\bm y}),\varepsilon^{-\theta/4}\right\}
    \end{equation*}
for some constant $c_4>0$. Here, 2 and 4 are arbitrarily chosen from $(1,\infty)$. We just select them for the convenience of later use.

The following lemma is the main input of Proposition \ref{bilip}, which shows that the probability of $F_{\varepsilon,3}$ is very high for a suitable $c_4>0$.
\begin{lemma}\label{d-longer-than-D}
    Assuming that $F_{\gamma,2}$ occurs, there exists a constant $c_4>0$ {\rm(}depending only on $\beta,d$ and the law of $D${\rm)} such that $F_{\varepsilon,3}$ occurs with probability at least $1-O_\varepsilon(\varepsilon^{\mu})$ for any $\mu>0$, where the implicit constant depends only on $\beta, d, \gamma, \mu$  and the law of $D$.
\end{lemma}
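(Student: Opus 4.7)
The plan is to control $D({\bm x},{\bm y})$ from above along a $d^G$-shortest path via the simple-path construction of Definition \ref{pathd-c}, and then to exploit independence and exponential tails of cube diameters coming from Axioms II'', IV' and V2'. Given ${\bm x},{\bm y}\in[-R,R]^d$, let $P^G=({\bm k}_0,\dots,{\bm k}_m)$ be a $d^G$-shortest path from ${\bm k}_{\bm x}$ to ${\bm k}_{\bm y}$ (with $m=d^G({\bm k}_{\bm x},{\bm k}_{\bm y})$) and pick an associated simple path $P\in\mathcal{C}(P^G)$. Since by Axiom III the long edges used by $P$ carry zero $D$-length,
$$D({\bm x},{\bm y})\le\mathrm{len}(P;D)\le\sum_{i=0}^{m}\mathrm{diam}\bigl(V_{\varepsilon R}({\bm k}_i);D\bigr)=(\varepsilon R)^\theta\sum_{i=0}^{m}X_{{\bm k}_i},$$
where $X_{\bm k}:=\mathrm{diam}(V_{\varepsilon R}({\bm k});D)/(\varepsilon R)^\theta$. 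Because the tiles $\{V_{\varepsilon R}({\bm k})\}_{\bm k}$ are disjoint, Axioms II'' and IV' make $\{X_{\bm k}\}$ i.i.d.\ and independent of the inter-cube edges, which in turn determine $G$ and the family of $d^G$-paths. Applying Axiom V2' with any $\eta\in(1,1/(1-\theta))$ (such $\eta$ exists because $\theta>0$ by Theorem \ref{discrete-dist}) gives $\mathds{E}[\exp(X_{\bm 0}^\eta)]<\infty$, so $X_{\bm 0}$ has finite moment generating function of every order, and its Cram\'er rate $I(t):=\sup_{\lambda\ge 0}(\lambda t-\log\mathds{E}[e^{\lambda X_{\bm 0}}])$ satisfies $I(t)\to\infty$ as $t\to\infty$.

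Next, I would fix $c_4>0$ small enough that $I(c_4^{-1})>\log C_{dis}+1$, with $C_{dis}$ from Lemma \ref{number-path-k}, and split according to whether $d^G+1\ge\varepsilon^{-\theta/4}$ (long-path case) or not. In the long-path case, Chernoff yields $\mathds{P}\bigl[\textstyle\sum_{{\bm k}\in P^G}X_{\bm k}>c_4^{-1}(m+1)\bigm|G\bigr]\le e^{-(m+1)I(c_4^{-1})}$ for each fixed $P^G$ of length $m$; taking expectations via $\mathds{E}[|\mathcal{P}_m({\bm i},\cdot)|]\le C_{dis}^m$ (Lemma \ref{number-path-k}) and union-bounding over ${\bm i}\in(\varepsilon R)[-1/\varepsilon,1/\varepsilon)^d_\mathds{Z}$ and $m\ge\varepsilon^{-\theta/4}$ produces a probability bounded by a constant times $\varepsilon^{-d}\sum_{m\ge\varepsilon^{-\theta/4}}(C_{dis}e^{-I(c_4^{-1})})^m$, which is $O_\varepsilon(\varepsilon^\mu)$ for every $\mu>0$. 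In the short-path case, if $({\bm x},{\bm y})$ were to satisfy $D\ge 2\varepsilon^{\theta/2}R^\theta$ and $m+1<\varepsilon^{-\theta/4}$, then $\sum X_{{\bm k}_i}\ge D/(\varepsilon R)^\theta\ge 2\varepsilon^{-\theta/2}$ would force some cube along $P^G$ to satisfy $X_{\bm k}>2\varepsilon^{-\theta/4}$; a union bound over starting points, lengths $m<\varepsilon^{-\theta/4}$, paths (whose expected count is at most $C_{dis}^m$), and cubes inside each path, combined with $\mathds{P}[X_{\bm 0}>2\varepsilon^{-\theta/4}]\le\exp(-2^\eta\varepsilon^{-\theta\eta/4})$ from Axiom V2', makes this event super-polynomially small in $\varepsilon$ because $\eta>1$ lets $\varepsilon^{-\theta\eta/4}$ dominate $\varepsilon^{-\theta/4}\log C_{dis}$.

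Intersecting the two high-probability events with $F_{\gamma,2}$, any pair $({\bm x},{\bm y})\in[-R,R]^d$ with $D({\bm x},{\bm y})\ge 2\varepsilon^{\theta/2}R^\theta$ must lie in the long-path case, whence $D\le c_4^{-1}(\varepsilon R)^\theta(d^G+1)$, which is the first inequality defining $F_{\varepsilon,3}$; this in turn forces $d^G+1\ge 2c_4\varepsilon^{-\theta/2}\ge\varepsilon^{-\theta/4}$ for $\varepsilon$ small enough (depending on $c_4$), delivering the second inequality. The main technical obstacle is choosing $c_4$ so that the Cram\'er rate beats the combinatorial growth of paths, i.e.\ $I(c_4^{-1})>\log C_{dis}$; this requires $X_{\bm 0}$ to admit exponential moments of every order, which is precisely what Axiom V2' supplies once $\eta>1$ is admissible, a range that is nonempty only because the a priori input $\theta>0$ from Theorem \ref{discrete-dist} is in hand.
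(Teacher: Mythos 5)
Your proof is correct and follows essentially the same strategy as the paper's: bound $D(\bm x,\bm y)$ from above by the sum of internal cube diameters along a $d^G$-shortest path (via the simple-path construction), exploit the conditional independence from Axiom II\texttt{''} to run a Chernoff-type large-deviation bound, and combine with the path-counting estimate of Lemma~\ref{number-path-k}; your long-path/short-path dichotomy is a logically equivalent repackaging of the paper's auxiliary events $F_{\varepsilon,5}$ and $F_{\varepsilon,4}$, respectively. One small streamlining worth noting: the paper gets by with $\eta=1$ in Axiom~V2' by fixing $\lambda=1$ and tuning the threshold $\widetilde{c}$ rather than the Chernoff parameter, and its short-path argument ($F_{\varepsilon,4}$) union-bounds directly over cubes rather than over paths, so the observation that a single oversized cube diameter must exist makes paths irrelevant there and $\eta=1$ suffices --- your detour through path counts is valid but is what forces you to need $\eta>1$.
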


We first show that the lower bound $\varepsilon^{-\theta/4}$ in Lemma \ref{d-longer-than-D} is sensible by checking that the diameter of $V_{\varepsilon R}(\bm k)$ with respect to $D$ has a uniform upper bound with high probability in the following lemma. For that, we let $F_{\varepsilon,4}$ be the event that
    $$
    \sup_{\bm k\in (\varepsilon R)[-u/\varepsilon,u/\varepsilon)^d_{\mathds{Z}}}{\rm diam}(V_{\varepsilon R}(\bm k);D)\leq \varepsilon^{3\theta/4} R^\theta.
    $$

\begin{lemma}\label{diamIk-2}
    $F_{\varepsilon,4}$ occurs with probability at least $1-O_\varepsilon(\varepsilon^{\mu})$ for all $\mu>0$, where the implicit constant in the $O_\varepsilon(\cdot)$ depends only on $\beta, d, \gamma$  and the law of $D$ {\rm(}since $u$ in the event $F_{\gamma,2}$ depends only on $\beta,d,\gamma$ and the law of $D${\rm)}.
\end{lemma}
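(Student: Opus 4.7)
The plan is to combine Axiom IV' (translation invariance) with Axiom V2' (uniform exponential moment bound across scales), then conclude by a union bound over the $O(\varepsilon^{-d})$ cubes indexed by $\bm k \in (\varepsilon R)[-u/\varepsilon, u/\varepsilon)^d_{\mathds{Z}}$. Recall that $u = u(\gamma)$ is a constant depending only on $\beta, d, \gamma$ and the law of $D$, so the number of such cubes is of order $\varepsilon^{-d}$ up to a constant depending on $\gamma$.

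First, for each fixed $\bm k$, Axiom IV' yields
\[
\mathrm{diam}(V_{\varepsilon R}(\bm k); D) \stackrel{\mathrm{law}}{=} \mathrm{diam}([0, \varepsilon R]^d; D).
\]
Fix any $\eta \in (0, 1/(1-\theta))$. By Axiom V2' there exists a constant $C_\eta < \infty$, depending only on $\eta$ and the law of $D$, such that
\[
\sup_{r > 0} \mathds{E}\!\left[\exp\!\left\{\!\left(\frac{\mathrm{diam}([0,r]^d; D)}{r^\theta}\right)^{\!\eta}\right\}\right] \le C_\eta.
\]
Applied with $r = \varepsilon R$ together with Markov's inequality, this gives for any $t > 0$
\[
\mathds{P}\!\left[\mathrm{diam}(V_{\varepsilon R}(\bm k); D) > t (\varepsilon R)^\theta\right] \le C_\eta \exp\{-t^\eta\}.
\]

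Second, take $t = \varepsilon^{-\theta/4}$ so that $t(\varepsilon R)^\theta = \varepsilon^{3\theta/4} R^\theta$. Then a union bound over all $\bm k \in (\varepsilon R)[-u/\varepsilon, u/\varepsilon)^d_{\mathds{Z}}$ gives
\[
\mathds{P}[F_{\varepsilon, 4}^c] \le (2u)^d \varepsilon^{-d} \, C_\eta \exp\!\left\{-\varepsilon^{-\eta \theta / 4}\right\}.
\]
Since $\exp\{-\varepsilon^{-\eta\theta/4}\}$ decays faster than any polynomial in $\varepsilon$ as $\varepsilon \to 0$, the right-hand side is $O_\varepsilon(\varepsilon^\mu)$ for every $\mu > 0$, with implicit constant depending only on $\beta, d, \gamma, \mu$ and the law of $D$.

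There is no serious obstacle here; the argument is essentially a direct application of the exponential moment axiom V2' combined with translation invariance. The only point to be a little careful about is confirming that the power $3\theta/4$ in the target bound (stronger than the natural scale $(\varepsilon R)^\theta$) is precisely what absorbs the $\varepsilon^{-d}$ factor coming from the union bound, which it does comfortably because the tail decay is stretched-exponential rather than polynomial.
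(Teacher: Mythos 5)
Your proposal is correct and follows essentially the same route as the paper's proof: translation invariance (Axiom IV'), the uniform exponential moment bound (Axiom V2') applied via Markov's inequality, and a union bound over the $O_\gamma(\varepsilon^{-d})$ cubes. The only cosmetic difference is that the paper fixes $\eta=1$ in Axiom V2' while you allow any $\eta\in(0,1/(1-\theta))$; either choice makes the stretched-exponential tail dominate the polynomial entropy factor from the union bound.
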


\begin{proof}
    Applying Markov's inequality and  Axiom V2' (tightness across different scales (upper bound)) with $\eta=1$, we get:
    \begin{equation*}
        \begin{split}
        &\mathds{P}\left[\sup_{\bm k\in(\varepsilon R)[-u/\varepsilon,u/\varepsilon)^d_{\mathds{Z}}}{\rm diam}(V_{\varepsilon R}(\bm k);D)\geq \varepsilon^{3\theta/4} R^\theta \right]
        \leq \sum_{\bm k\in(\varepsilon R)[-u/\varepsilon,u/\varepsilon)^d_{\mathds{Z}}}\mathds{P}\left[\frac{{\rm diam}(V_{\varepsilon R}(\bm k);D)}{(\varepsilon R)^\theta}\geq \varepsilon^{-\theta/4} \right]\\
        &\leq (2u\varepsilon^{-1})^{d}\e^{-\varepsilon^{-\theta/4}}\mathds{E}\left[\exp\left\{\frac{{\rm diam}([0,\varepsilon R)^d;D)}{(\varepsilon R)^\theta}\right\}\right]=1-O_\varepsilon (\varepsilon ^{\mu}) \quad \forall \mu>0,
        \end{split}
        \end{equation*}
    which implies the desired statement.
\end{proof}

In particular, when the event $F_{\gamma,2}\cap F_{\varepsilon,4}$ occurs, we have the following estimate.
\begin{lemma}\label{Dleqd}
Assuming that $F_{\gamma,2}\cap F_{\varepsilon,4}$ occurs, we then have that
\begin{equation*}\label{numbL-2}
   D(V_{\varepsilon R}(\bm k_1),V_{\varepsilon R}(\bm k_2))\leq \left(d^G(\bm k_1,\bm k_2)+1\right)\varepsilon^{3\theta/4} R^\theta\quad \text{for all }\bm k_1,\bm k_2\in(\varepsilon R)[-1/\varepsilon,1/\varepsilon)^d_{\mathds{Z}}.
\end{equation*}
\end{lemma}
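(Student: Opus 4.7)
The plan is to lift a $d^G$-geodesic in the renormalized graph $G$ to a continuous $D$-path in the $\beta$-LRP model, using Axiom III (long edges carry zero $D$-distance) and the uniform cube-diameter bound from $F_{\varepsilon,4}$ to control each step.

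Concretely, I would fix $\bm k_1,\bm k_2\in(\varepsilon R)[-1/\varepsilon,1/\varepsilon)_{\mathds{Z}}^d$, set $L:=d^G(\bm k_1,\bm k_2)$, and pick a $d^G$-geodesic $\bm k_1=\bm l_0,\bm l_1,\ldots,\bm l_L=\bm k_2$ in $G$. I then inductively choose points $\bm x_j\in V_{\varepsilon R}(\bm l_j)$ and concatenate short $D$-paths between consecutive $\bm x_j$'s. Each edge $\{\bm l_{j-1},\bm l_j\}$ of $G$ is of one of two types. In the \emph{nearest-neighbor case} $\|\bm l_{j-1}-\bm l_j\|_1=\varepsilon R$, the closed cubes $\overline{V_{\varepsilon R}(\bm l_{j-1})}$ and $\overline{V_{\varepsilon R}(\bm l_j)}$ share a $(d-1)$-face on which I place $\bm x_j$, giving
\[
D(\bm x_{j-1},\bm x_j)\le D(\bm x_{j-1},\bm x_j;V_{\varepsilon R}(\bm l_{j-1}))\le \mathrm{diam}(V_{\varepsilon R}(\bm l_{j-1});D)\le \varepsilon^{3\theta/4}R^\theta
\]
by $F_{\varepsilon,4}$. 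In the \emph{long-edge case} the $G$-edge comes from some $\langle\bm u,\bm v\rangle\in\mathcal{E}$ with $\bm u\in V_{\varepsilon R}(\bm l_{j-1})$ and $\bm v\in V_{\varepsilon R}(\bm l_j)$; setting $\bm x_j:=\bm v$, I bound $D(\bm x_{j-1},\bm x_j)\le D(\bm x_{j-1},\bm u;V_{\varepsilon R}(\bm l_{j-1}))+D(\bm u,\bm v)\le \varepsilon^{3\theta/4}R^\theta+0$ by $F_{\varepsilon,4}$ and Axiom III. Summing the $L$ step bounds and using $D(V_{\varepsilon R}(\bm k_1),V_{\varepsilon R}(\bm k_2))\le D(\bm x_0,\bm x_L)$ yields $L\cdot\varepsilon^{3\theta/4}R^\theta\le (d^G(\bm k_1,\bm k_2)+1)\varepsilon^{3\theta/4}R^\theta$, with the $+1$ absorbing slack (and covering the trivial $L=0$ case, in which $\bm k_1=\bm k_2$ and the single cube diameter already bounds the left-hand side).

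The main obstacle I anticipate is the \emph{range issue}: $F_{\varepsilon,4}$ only controls cube diameters for $\bm k\in(\varepsilon R)[-u/\varepsilon,u/\varepsilon)_{\mathds{Z}}^d$, so I need the chosen $d^G$-geodesic to stay inside this range. This is the role of $F_{\gamma,2}$ in the hypothesis: since $\bm k_1,\bm k_2\in[-R,R]^d$ and $u$ is the constant from Lemma \ref{geodesic-go-not-far} controlling how far $D$-geodesics can wander, I expect one can select a $d^G$-geodesic avoiding any excursion outside $[-uR,uR]^d$, e.g.\ by replacing any out-of-range segment between two in-range vertices by a direct long $G$-edge connecting them (which exists as a consequence of the very excursion being short-circuited through a more distant vertex, and cannot lengthen the edge count). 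Once this restriction is in place, the rest of the proof is pure bookkeeping, along the same lines as in Section \ref{goodint}.
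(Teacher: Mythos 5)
Your proof follows exactly the same route as the paper's: take a $d^G$-geodesic, lift it via the cube-by-cube construction of Definition \ref{pathd-c} into a continuous path (walking on internal $D$-geodesics inside each cube, and crossing between consecutive cubes either through a shared face or through a zero-$D$-length long edge by Axiom~III), and bound the resulting $D$-length by the number of cubes visited times the uniform diameter bound supplied by $F_{\varepsilon,4}$. The paper's proof simply invokes Definition \ref{pathd-c} and states the resulting inequality, whereas you re-derive the construction inline; the two arguments coincide.

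The range concern you flag is legitimate: $F_{\varepsilon,4}$ controls cube diameters only for indices $\bm k\in(\varepsilon R)[-u/\varepsilon,u/\varepsilon)^d_{\mathds{Z}}$, and nothing a priori prevents a $d^G$-geodesic between two vertices of $(\varepsilon R)[-1/\varepsilon,1/\varepsilon)^d_{\mathds{Z}}$ from visiting cubes outside this window. However, your proposed patch is not sound. You claim one can short-circuit an out-of-range excursion by a direct $G$-edge between the in-range entrance and exit vertices, but the existence of a long edge from $\bm a$ to an out-of-range $\bm b$ and another from $\bm b$ to $\bm c$ gives no edge $\langle\bm a,\bm c\rangle$ in $G$ (the Poisson edge set is not transitive); and were such an edge to exist, replacing two or more edges by one would strictly shorten the path, contradicting that it was a $d^G$-geodesic to begin with, so a $d^G$-geodesic cannot be ``short-circuited'' by construction. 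For what it is worth, the paper's own proof also does not address this point explicitly: it merely cites $F_{\gamma,2}\cap F_{\varepsilon,4}$, and $F_{\gamma,2}$ concerns $D$-geodesics rather than $d^G$-geodesics, so the subtlety you identify applies to the published proof as well.
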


\begin{proof}
For any fixed $\bm k_1,\bm k_2\in (\varepsilon R)[-1/\varepsilon,1/\varepsilon)^d_{\mathds{Z}}$, let $P^G_{\bm k_1,\bm k_2}$ be a $d^G$-geodesic in $G$. Since $G$ is the renormalization from the continuous model, from Definition \ref{pathd-c} we see that  there exists a simple path $P$ from $V_{\varepsilon R}(\bm k_1)$ to $V_{\varepsilon R}(\bm k_2)$
(as mentioned earlier here simple means that there is no loop in the sequence of the centers of the cubes $V_{\varepsilon R}(\bm k)$ that it hits in order)
whose skeleton path is $P^G_{\bm k_1,\bm k_2}$. Thus, on the event $F_{\gamma,2}\cap F_{\varepsilon,4}$,
$$
 D(V_{\varepsilon R}(\bm k_1),V_{\varepsilon R}(\bm k_2))\leq {\rm len}(P;D)\le \left(d^G(\bm k_1,\bm k_2)+1\right)\varepsilon^{3\theta/4} R^\theta.
$$
The proof is complete.
\end{proof}

We also let $F_{\varepsilon,5}$ be the event that for any $\bm i,\bm j\in (\varepsilon R)[-u/\varepsilon,u/\varepsilon)^d_{\mathds{Z}}$ and every path $P^G_{\bm i\bm j}$ from $\bm i$ to $\bm j$ with $|P^G_{\bm i\bm j}|\geq \varepsilon^{-\theta/4}$, we have
 $$|P^G_{\bm i\bm j}|\geq c_5(\varepsilon R)^{-\theta}\sum_{\bm k\in P^G_{\bm i\bm j}}{\rm diam}(V_{\varepsilon R}(\bm k);D)$$
for some constant $ c_5>0$.

 It is clear that ${\rm diam}(V_{\varepsilon R}(\bm k);D)$ is determined by the internal metric $D(\cdot,\cdot;V_{\varepsilon R}(\bm k))$ for each $\bm k\in (\varepsilon R)\mathds{Z}^d$. Hence, ${\rm diam}(V_{\varepsilon R}(\bm k);D)$ are i.i.d.\ random variables by Axiom II'' (weak locality) and the translation invariance.
Furthermore, they are also independent of all edges in $G$ by Axiom II'' (weak locality) again.
Thus, we can get the following estimate.

\begin{lemma}\label{Fr3lemma-2}
    There exists a constant $c_5>0$ {\rm(}depending only on $\beta,d$ and the law of $D${\rm)} such that $F_{\varepsilon,5}$ occurs with probability at least  $1-O_\varepsilon(\varepsilon^{\mu})$ for all $\mu>0$.
\end{lemma}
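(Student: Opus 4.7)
The plan is to reduce the estimate to a Chernoff bound on a sum of i.i.d.\ random variables, and then union-bound over all self-avoiding paths. The crux is an independence observation: by Axiom II'' (weak locality) applied to the disjoint open cubes $\{V_{\varepsilon R}(\bm k)\}_{\bm k \in (\varepsilon R)\mathds{Z}^d}$, the pairs $(\mathcal{E}|_{V_{\varepsilon R}(\bm k)\times V_{\varepsilon R}(\bm k)}, D(\cdot,\cdot;V_{\varepsilon R}(\bm k)))$ are mutually independent, and jointly independent of the remaining atoms of $\mathcal{E}$. Since the edges of the renormalized graph $G$ are determined solely by long edges whose two endpoints lie in different cubes (together with the deterministic nearest-neighbour edges), the normalized diameters
$$X_{\bm k}:=(\varepsilon R)^{-\theta}\,\mathrm{diam}(V_{\varepsilon R}(\bm k);D)$$
are i.i.d.\ (using Axiom IV' for a common law) and jointly independent of $G$. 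Moreover, Axiom V2' with $\eta=1\in(0,1/(1-\theta))$ yields $M_0:=\sup_{r>0}\mathds{E}[\exp(\mathrm{diam}([0,r]^d;D)/r^\theta)]<\infty$, so $\mathds{E}[e^{X_{\bm k}}]\le M_0$ uniformly in $\varepsilon R$.

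Next, for any fixed, deterministic self-avoiding sequence of vertices $P=({\bm k}_1,\dots,{\bm k}_L)$, I apply the Chernoff bound to obtain
$$\mathds{P}\Bigl[\sum_{\bm k\in P}X_{\bm k}>L/c_5\Bigr]\le e^{-L/c_5}\,M_0^{L}=\exp\{-L(1/c_5-\log M_0)\}.$$
Independence of $\{X_{\bm k}\}_{\bm k\in P}$ from the edges of $G$ lets me factor
$\mathds{P}[\{P\subset G\}\cap\{\sum X_{\bm k}>L/c_5\}]=\mathds{P}[P\subset G]\cdot\mathds{P}[\sum X_{\bm k}>L/c_5]$.
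The expected number of self-avoiding paths of length $L$ starting from a given vertex in $G$ equals the corresponding quantity for the discrete critical LRP model on $(\varepsilon R)\mathds{Z}^d$, which by self-similarity and Lemma \ref{number-path-k} is bounded by $C_{dis}^L$.

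Summing over the $(2u/\varepsilon)^d$ possible starting vertices $\bm i$ and over all $L\ge\lceil\varepsilon^{-\theta/4}\rceil$, and observing that summing over endpoints $\bm j$ is absorbed into the count of paths out of $\bm i$, I get
$$\mathds{P}[F_{\varepsilon,5}^c]\le (2u/\varepsilon)^d\sum_{L\ge\lceil\varepsilon^{-\theta/4}\rceil}C_{dis}^L\exp\{-L(1/c_5-\log M_0)\}.$$
Choosing $c_5>0$ small enough (depending only on $\beta,d$ and the law of $D$ via $C_{dis}$ and $M_0$) so that $1/c_5-\log M_0\ge \log(2C_{dis})+1$, the geometric series is dominated by $\sum_{L\ge\varepsilon^{-\theta/4}}2^{-L}e^{-L}\lesssim \exp\{-c\,\varepsilon^{-\theta/4}\}$, and the total is $O_\varepsilon(\varepsilon^\mu)$ for every $\mu>0$.

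The main obstacle I expect is verifying rigorously that the edges of $G$ and the diameters $\{X_{\bm k}\}$ split cleanly as indicated: this requires a careful bookkeeping of which Poisson atoms contribute to which object, so that weak locality can be invoked with a well-chosen family of disjoint open sets. Once that splitting is established, the Chernoff-plus-union-bound architecture is routine, and the factor $\varepsilon^{-\theta/4}$ built into the definition of $F_{\varepsilon,5}$ is exactly what is needed to turn the exponential decay in $L$ into a superpolynomial decay in $\varepsilon$.
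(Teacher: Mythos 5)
Your proof is correct and follows essentially the same approach as the paper: i.i.d.\ normalized diameters $X_{\bm k}$, independence from the renormalized graph via Axiom II$''$, an exponential Markov/Chernoff bound, and a union bound over paths using the $C_{dis}^L$ moment estimate from Lemma~\ref{number-path-k}. The only cosmetic difference is that you bound $\sum_P\mathds{P}[P\subset G]=\mathds{E}[|\mathcal{P}_L|]\le C_{dis}^L$ directly, whereas the paper first introduces a high-probability event on which $|\mathcal{P}_m(\bm i,\bm j)|\le(2C_{dis})^m$ and then applies a union bound on its complement; both yield the same $O_\varepsilon(\varepsilon^\mu)$ conclusion.
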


\begin{proof}
Denote
$$M_D:=\sup_{\varepsilon R>0}\mathds{E}\left[\exp\left(\frac{{\rm diam}([0,\varepsilon R)^d;D)}{(\varepsilon R)^{\theta}}\right)\right].$$
By Axiom V2' (tightness across different scales (upper bound)), it is obvious that $ M_D<\infty$ and it does not depend on $\varepsilon$ or $R$. Furthermore, ${\rm diam}(V_{\varepsilon R}(\bm k);D)$ for $\bm k\in (\varepsilon R)[-u/\varepsilon,u/\varepsilon)^d_\mathds{Z}$ are i.i.d. and independent of all edges in $G$. Thus, applying the exponential Markov's inequality (i.e., $\mathds{P}[X\geq x]\leq \e^{-x}\mathds{E}[\e^X]$),
 we obtain that for any $\bm i,\bm j\in (\varepsilon R)[-u/\varepsilon,u/\varepsilon)^d_{\mathds{Z}}$, $m\geq \varepsilon^{-\theta/4}$, $\widetilde{c}>0$, and any $P_{\bm i\bm j}^G\in\mathcal{P}_m(\bm i,\bm j)$,
\begin{equation}\label{diamsum-2}
\mathds{P}\left[ (\varepsilon R)^{-\theta}\sum_{\bm k\in P^G_{\bm i\bm j}}{\rm diam}(V_{\varepsilon R}(\bm k);D)\geq  \frac{m}{\widetilde{c}} \Big| G\right]
\leq \e^{-\frac{m}{\widetilde{c}}}M_D^m=\e^{-(\frac{1}{\widetilde{c}}-\log  M_D)m}.
\end{equation}

Moreover, using arguments similar to those in \eqref{number-path-m-2} and \eqref{number-path>=-2}, we can show that
\begin{equation}\label{number-path>=1-2}
\begin{split}
&\mathds{P}\left[ \exists m\geq \varepsilon^{-\theta/4}\ \text{such that }|\mathcal{P}_m(\bm i,\bm j)|\geq (2C_{dis})^m\right]\\
&\leq \sum_{ m\geq  \varepsilon^{-\theta/4}} 2^{-m}=O_\varepsilon(\varepsilon^{\mu})\quad \forall\mu>0,
\end{split}
\end{equation}
where the implicit constant in the $O_\varepsilon(\cdot)$ depends only on $\beta,d,\mu$  and the law of $D$.
We denote the events in \eqref{diamsum-2} and \eqref{number-path>=1-2} by $M'(P_{\bm i\bm j}^G)$ and $N'_{\bm i\bm j}$, respectively. Combining \eqref{diamsum-2} and \eqref{number-path>=1-2}, we obtain that
\begin{equation*}
\begin{split}
&\mathds{P}[F_{\varepsilon,5}^c]\leq \sum_{\bm i,\bm j\in (\varepsilon R)[-u/\varepsilon,u/\varepsilon)^d_{\mathds{Z}}}\mathds{P}[N'_{\bm i\bm j}]
+\sum_{\bm i,\bm j\in (\varepsilon R)[-u/\varepsilon, u/\varepsilon)^d_{\mathds{Z}}}\sum_{m\geq \varepsilon^{-\theta/4}}\mathds{E}\left[\I_{(N'_{\bm i\bm j})^c}\sum_{P_{\bm i\bm j}^G\in\mathcal{P}_m(\bm i,\bm j)}\mathds{P}\left[ M'(P_{\bm i\bm j}^G)\big| G\right]\right]\\
&\leq O_\varepsilon(\varepsilon^{\mu})
+(2u\varepsilon^{-1})^{2d}\sum_{m\geq \varepsilon^{-\theta/4}}(2C_{dis})^m\e^{-(\frac{1}{\widetilde{c}}-\log M_D)m}\\
&=O_\varepsilon(\varepsilon^{\mu})+(2u\varepsilon^{-1})^{2d}\sum_{m\geq  \varepsilon^{-\theta/4}}\e^{-(\frac{1}{\widetilde{c}}-\log M_D-\log(2C_{dis}))m}\quad \quad \forall \mu>0.
\end{split}
\end{equation*}
Thus,  if we choose the constant $\widetilde{c}>0$ small enough that $1/\widetilde{c}>2(\log  M_D+\log(2C_{dis}))$, then we obtain $\mathds{P}[F_{\varepsilon,5}^c]=O_\varepsilon(\varepsilon^{\mu})$ (with $c_5=\widetilde{c}$) for all $\mu>0$. This implies the lemma.
\end{proof}

Now we finish the
\begin{proof}[Proof of Lemma \ref{d-longer-than-D}]
    To prove Lemma \ref{d-longer-than-D}, it suffices to show that for $c_5$ in Lemma \ref{Fr3lemma-2}, the following holds on the event $F_{\gamma,2}\cap F_{\varepsilon,4}\cap F_{\varepsilon,5}$. For any ${\bm x},{\bm y}\in[-R,R]^d$ with $D({\bm x},{\bm y})\ge 2\varepsilon^{\theta/2}R^\theta$,
    take $\bm k_{\bm x},\bm k_{\bm y}\in (\varepsilon R)[-1/\varepsilon,1/\varepsilon)_\mathds{Z}^d$ satisfying $\bm x\in V_{\varepsilon R}(\bm k_{\bm x}),\bm y\in V_{\varepsilon R}(\bm k_{\bm y})$. Then
    \begin{equation*}
        d^G(\bm k_{\bm x},\bm k_{\bm y})+1\ge \max\left\{ c_4(\varepsilon R)^{-\theta} D({\bm x},{\bm y}),\varepsilon^{-\theta/4}\right\}.
    \end{equation*}

    From Lemma \ref{Dleqd}, we obtain that  on $F_{\gamma,2}\cap F_{\varepsilon,4}\cap F_{\varepsilon,5}$,
    $$
    D(V_{\varepsilon R}(\bm k_{\bm x}),V_{\varepsilon R}(\bm k_{\bm y}))\leq  \left(d^G(\bm k_{\bm x},\bm k_{\bm y})+1\right)\varepsilon^{3\theta/4}R^\theta.
    $$
    Combining this with the fact that
    \begin{equation*}
    \begin{split}
    D(V_{\varepsilon R}(\bm k_{\bm x}),V_{\varepsilon R}(\bm k_{\bm y}))&\geq D({\bm x},{\bm y})-({\rm diam}(V_{\varepsilon R}(\bm k_{\bm x});D)+{\rm diam}(V_{\varepsilon R}(\bm k_{\bm y});D))\\
    &\geq 2\varepsilon^{\theta/2}R^\theta-2\varepsilon^{3\theta/4}R^\theta\geq \varepsilon^{\theta/2}R^\theta
    \end{split}
    \end{equation*}
    for sufficiently small $\varepsilon$ (i.e. sufficiently large $n$), we get that  on $F_{\gamma,2}\cap F_{\varepsilon,4}\cap F_{\varepsilon,5}$,
    \begin{equation}\label{Lkxky}
        d^G(\bm k_{\bm x},\bm k_{\bm y})+1\geq \varepsilon^{-3\theta/4}R^{-\theta}D(V_{\varepsilon R}(\bm k_{\bm x}),V_{\varepsilon R}(\bm k_{\bm y}))\geq \varepsilon^{-\theta/4}.
    \end{equation}

Additionally, let $P^G_{\bm k_{\bm x},\bm k_{\bm y}}$ be a $d^G$-geodesic from $\bm k_{\bm x}$ to $\bm k_{\bm y}$ in $G$. From Definition \ref{pathd-c}, there exists a simple path $P$ from $V_{\varepsilon R}(\bm k_{\bm x})$ to $V_{\varepsilon R}(\bm k_{\bm y})$
whose skeleton path is $P^G_{\bm k_{\bm x},\bm k_{\bm y}}$. We now construct a path $P'$ from ${\bm x}$ to ${\bm y}$ by  connecting $\bm x$ to the start point of $P$ with an internal geodesic in $V_{\varepsilon R}(\bm k_{\bm x})$ and $\bm y$ to the end point of $P$ with an internal geodesic in $V_{\varepsilon R}(\bm k_{\bm y})$. As a result, on $F_{\gamma,2}\cap F_{\varepsilon,5} $  we have
    \begin{equation}\label{Dupperbound-2}
        D({\bm x},{\bm y})\le {\rm len}(P';D)\le \sum_{\bm k\in P^G_{\bm k_{\bm x},\bm k_{\bm y}}}{\rm diam}(V_{\varepsilon R}(\bm k);D)\le \frac{1}{c_5}\left(d^G(\bm k_{\bm x},\bm k_{\bm y})+1\right)(\varepsilon R)^{\theta}.
    \end{equation}
    Combining \eqref{Lkxky} and \eqref{Dupperbound-2}, we obtain the proof of Lemma \ref{d-longer-than-D}.
\end{proof}

We also present the following proposition, which forms part of the input required for the proof of Proposition \ref{bilip}.

\begin{proposition}\label{Dxy=0-local}
    Let $D$ be a local $\beta$-LRP metric $D$. Then almost surely, for any ${\bm x},{\bm y}\in\mathds{R}^d$, $D({\bm x},{\bm y})=0$ if and only if $\langle {\bm x},{\bm y}\rangle \in\mathcal{E}$. Moreover, Axiom I can be reinforced by the following property. Let $\sim$ be the following equivalence relation that $\bm x\sim\bm y$ if and only if $\langle{\bm x},{\bm y}\rangle\in\mathcal{E}$. Then $(\mathds{R}^d/\sim,D)$ is a length space. Furthermore, for any ${\bm x},{\bm y}\in\mathds{R}^d$, there exists a geodesic from ${\bm x}$ to ${\bm y}$.
\end{proposition}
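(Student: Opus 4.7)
The ``if'' direction is Axiom III. For the ``only if'' direction, I argue by contradiction. Two Poisson-structural consequences are crucial: almost surely no two distinct edges of $\mathcal{E}$ share an endpoint, since the set of $4$-tuples in $\mathds{R}^{4d}$ with a coincident coordinate is Lebesgue null and $\mathcal{E}$ has absolutely continuous intensity; consequently each point of $\mathds{R}^d$ is an endpoint of at most one edge of $\mathcal{E}$.

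Suppose that with positive probability there exist $\bm x\ne\bm y$ with $D(\bm x,\bm y)=0$ and $\langle\bm x,\bm y\rangle\notin\mathcal{E}$, and let $P$ be a $D$-geodesic from $\bm x$ to $\bm y$ with ${\rm len}(P;D)=0$ (Axiom I). I apply the renormalization of Section \ref{goodint} at scale $\varepsilon R$ with $R$ large enough to contain $P$ and $\varepsilon$ small enough that $\bm x,\bm y$ lie in distinct, non-adjacent cubes, and let $P^G$ be the skeleton of $P$ in the discrete graph $G$ as in Definition \ref{pathc-d}. On the high-probability event $F_{\varepsilon,1}$ of Lemma \ref{Fr4lemma-2}, any skeleton of $d^G$-length at least $\varepsilon^{-\theta/4}$ is $(\{E_{\bm k}\},\alpha)$-good in the sense of Definition \ref{hd-PGgood}; by Definition \ref{h-good} and Lemma \ref{h-regular-low}, $P$ would accumulate at least $(b_0\alpha\varepsilon R)^\theta>0$ of $D$-length in each good cube traversed between a long-edge entrance and exit, contradicting ${\rm len}(P;D)=0$. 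I would further strengthen Lemma \ref{h-regular-low} to also cover boundary entrances and exits -- by discretizing the cube boundary into small sub-cubes, applying Axiom V1' inside each, and union bounding at an additional $\alpha$-power cost in the failure probability -- so that the same $(b_0\alpha\varepsilon R)^\theta$ lower bound holds for \emph{any} entrance/exit pair, whether via a long edge of $\mathcal{E}$ or via the boundary of the cube.

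Under this strengthened bound, on the good event the skeleton $P^G$ must consist entirely of long-edge transitions in $\mathcal{E}$, as any boundary transition would force positive $D$-cost in a surrounding good cube along $P^G$. By the no-shared-endpoint property, such a chain of long edges has length at most one, so $|P^G|\le 1$. The case $|P^G|=0$ would force $\bm k_{\bm x}=\bm k_{\bm y}$ and hence $\bm x=\bm y$ as $\varepsilon\to 0$, contradicting $\bm x\ne\bm y$; the case $|P^G|=1$ gives a unique edge $\langle\bm u,\bm v\rangle\in\mathcal{E}$ with $\bm u\in V_{\varepsilon R}(\bm k_{\bm x})$ and $\bm v\in V_{\varepsilon R}(\bm k_{\bm y})$, and since each point of $\mathds{R}^d$ is an endpoint of at most one edge of $\mathcal{E}$, sending $\varepsilon\to 0$ forces $\bm u=\bm x$ and $\bm v=\bm y$, so $\langle\bm x,\bm y\rangle\in\mathcal{E}$, contradicting the assumption. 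Parts (b) and (c) of the proposition then follow immediately from Axiom I, since the two candidate equivalence relations now coincide, so the length space structure and geodesic existence inherited from Axiom I transfer verbatim to the refined quotient. The principal obstacle is the middle step above: strengthening Lemma \ref{h-regular-low} to cover arbitrary boundary entrances/exits uniformly along the skeleton, and ensuring that every transition of $P^G$ lies within a cube where this strengthened bound applies even when $|P^G|<\varepsilon^{-\theta/4}$; this requires a careful combination of the good-cube framework of Section \ref{goodint} with Axiom V1' at multiple scales, together with a union bound over the $O(\varepsilon^{-\theta/4})$ candidate transitions.
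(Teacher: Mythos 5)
Your approach differs substantially from the paper's, and has gaps that the paper's route is designed to avoid. The paper proves Lemma~\ref{Dinterval=0}: for each fixed scale $k$ and dyadic cubes $V^k(\bm i),V^k(\bm j)$ with $\|\bm i-\bm j\|_1>3\cdot 2^k$, the event that $D(V^k(\bm i),V^k(\bm j))=0$ while the cubes are not directly joined by $\mathcal{E}$ has probability zero. The key step applies Lemma~\ref{hd-BK} (with each $E_{\bm k}$ taken to be the full probability space) and bounds the failure probability by $\sum_{m\geq 2}C_{dis}^m\delta^{m-1}=O(\delta)$, which vanishes as $\delta\to 0$. The ``a.s. for all $\bm x,\bm y$'' statement follows because the bad event $E_{\bm x,\bm y}$ is contained in the countable union $\cup_k E^k_{\bm i_k(\bm x),\bm j_k(\bm y)}$ over the dyadic family, using that only finitely many long edges join the small boxes around any two non-adjacent points. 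This entirely sidesteps any analysis of a zero-length geodesic.

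Your proposal has two gaps the paper's argument is built to avoid. First, a $D$-geodesic $P$ with $\mathrm{len}(P;D)=0$ is only $D$-continuous, not Euclidean-continuous, and its image lies inside a single $D$-equivalence class; there is no reason the sequence of cubes it ``hits in order'' (Definition~\ref{pathc-d}) is finite and well-ordered, so the skeleton $P^G$ you want to analyze may not even be defined. Second, and as you acknowledge, the event $F_{\varepsilon,1}$ of Lemma~\ref{Fr4lemma-2} only controls skeleton paths of length at least $\varepsilon^{-\theta/4}$, whereas your target conclusion is $|P^G|\le 1$ — precisely the short-path regime that $F_{\varepsilon,1}$ is silent about. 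The proposed strengthening of Lemma~\ref{h-regular-low} addresses boundary entries/exits but does nothing for this short-skeleton issue; in contrast, the paper's use of Lemma~\ref{hd-BK} gives the bound $1-\delta^L$ for \emph{every} path length $L$, so that summing $\sum_{m\geq 2}C_{dis}^m\delta^{m-1}$ over all $m\geq 2$ (rather than $m\geq\varepsilon^{-\theta/4}$) gives a quantity that vanishes in the free parameter $\delta$, with no lower cutoff on path length. Additionally, the random choices of $\bm x,\bm y,R,\varepsilon$ in your argument need to be union-bounded over a countable family, which you do not address; the paper's dyadic-cube parametrization is exactly how it handles this.
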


To prove Proposition \ref{Dxy=0-local}, we need the following lemma.

\begin{lemma}\label{Dinterval=0}
    Let $D$ be a local $\beta$-LRP metric. For any $ k\in\mathds{Z}$ and $ \bm l\in2^k\mathds{Z}^d$, let $V^k(\bm l)=V_{2^k}(\bm l)$. Let $E_{\bm i,\bm j}^k$ {\rm(}for $\|\bm i-\bm j\|_{1}>3\cdot 2^k${\rm)} be the event that $D(V^k(\bm i),V^k(\bm j))=0$ and $V^k(\bm i)$ and $V^k(\bm j)$ are not directly connected by $\mathcal{E}$. Then $\mathds{P}[E_{\bm i,\bm j}^k]=0$.
    As a consequence,
    $$\mathds{P}\left[\bigcup_{k\in \mathds{Z}}\bigcup_{\bm i,\bm j\in 2^k\mathds{Z}^d: \|\bm i-\bm j\|_{1}>3\cdot 2^k}E_{\bm i,\bm j}^k\right]=0.$$
\end{lemma}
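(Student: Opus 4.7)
\textbf{Plan for Lemma \ref{Dinterval=0}.} By $\sigma$-subadditivity over the countable index set $\{(k,\bm i,\bm j)\}$, it suffices to prove $\mathds{P}[E_{\bm i,\bm j}^k]=0$ for each fixed triple with $\|\bm i-\bm j\|_1>3\cdot 2^k$. My plan is to establish the stronger quantitative bound $\mathds{P}[E_{\bm i,\bm j}^k]\le C\alpha\log(1/\alpha)$ for every sufficiently small $\alpha\in(0,1)$ with $C$ independent of $\alpha$, then send $\alpha\to 0$. Using Axiom IV' (translation invariance) I recenter at $\bm i=\bm 0$; since $\bm i,\bm j\in 2^k\mathds{Z}^d$, the hypothesis forces $\|\bm i-\bm j\|_1\ge 4\cdot 2^k$ and thus a uniform positive Euclidean separation between $V^k(\bm i)$ and $V^k(\bm j)$.

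The mechanism is a barrier argument driven by Lemma \ref{h-probgood}. Fix a fine scale $s\ll 2^k$ and a bounded region $\Omega$ containing $V^k(\bm i)\cup V^k(\bm j)$ together with a buffer. Cover $\Omega\setminus(V^k(\bm i)\cup V^k(\bm j))$ by finitely many cubes $\{V_{3s}(\bm z_m)\}_{m=1}^N$ whose inner cores $\{V_s(\bm z_m)\}$ already tile the separating region, where $N\asymp|\Omega|/s^d$ depends only on $s$ and the geometry, not on $\alpha$. By Lemma \ref{h-probgood} combined with Axiom II'' (weak locality across disjoint cubes), the event $G$ that every $V_{3s}(\bm z_m)$ is $(3s,\alpha)$-good has probability at least $1-CN\alpha\log(1/\alpha)$. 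On $G\cap E_{\bm i,\bm j}^k$, Axiom I produces, for each $\delta>0$, a path $P_\delta$ from $V^k(\bm i)$ to $V^k(\bm j)$ with $\mathrm{len}(P_\delta;D)<\delta$ that cannot use any direct edge (none exists). Decompose $P_\delta$ into hops (Poisson edges, $D$-length zero by Axiom III) and gaps (Euclidean segments). Axiom V1' applied gap by gap shows that with high probability every gap has Euclidean length at most $O(\delta^{1/\theta})$, so for $\delta$ small the hops dominate the Euclidean trajectory and form a dense polygonal skeleton from $V^k(\bm i)$ across the separating region to $V^k(\bm j)$. A pigeonhole argument then locates two consecutive hops $\langle \bm u_l,\bm v_l\rangle$ and $\langle \bm u_{l+1},\bm v_{l+1}\rangle$ realizing a $V_{3s}(\bm z_m)$-special pair in the sense of Definition \ref{h-good}: $\bm v_l\in V_s(\bm z_m)$ (since the hop endpoint falls in the tiled region) and $\bm u_{l+1}\in V_{3s}(\bm z_m)^c$ with $|\bm v_l-\bm u_{l+1}|\ge\alpha s$. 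The $(3s,\alpha)$-good property then gives $D(\bm v_l,\bm u_{l+1};V_{3s}(\bm z_m))\ge(b_0\alpha s)^\theta$, so $\mathrm{len}(P_\delta;D)\ge(b_0\alpha s)^\theta$, a contradiction for $\delta<(b_0\alpha s)^\theta$.

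\textbf{Main obstacle.} The delicate point is that LRP paths are Euclidean-discontinuous: a single long Poisson edge could in principle leap out of $\Omega$ with both endpoints lying outside $\bigcup_m V_s(\bm z_m)$, and the path could then leave and re-enter the region without ever entering an inner core. The resolution has two ingredients. First, the cores must fully tile a bounded separating region of positive Lebesgue measure (not just form a thin wall), guaranteeing that any hop endpoint lying in $\Omega\setminus(V^k(\bm i)\cup V^k(\bm j))$ falls in some $V_s(\bm z_m)$. Second, one must separately control the rare ``far-leap'' event in which hops exit $\Omega$ entirely; here the integrable tail of the Poisson intensity $\beta/|\bm x-\bm y|^{2d}$ makes the density of such configurations arbitrarily small by enlarging $\Omega$, and combined with Axiom V1' applied to the short return gap this produces the same lower bound. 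Coordinating the geometric placement of the barrier with the probabilistic good-cube estimate while keeping $N$ independent of $\alpha$ is the main technical work.
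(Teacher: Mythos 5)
Your approach is genuinely different from the paper's. The paper proves Lemma \ref{Dinterval=0} by renormalizing at scale $2^k$ (identifying $V^k(\bm l)$ with a vertex $\bm l$ of a discrete graph $G$), defining $\widetilde{E}_{\bm i,\bm j,\alpha}$ as the event that some path in $G$ from $\bm i$ to $\bm j$ avoids all $\alpha$-good vertices, observing that $E_{\bm i,\bm j}^k\subset\widetilde{E}_{\bm i,\bm j,\alpha}$, and then bounding $\mathds{P}[\widetilde{E}_{\bm i,\bm j,\alpha}]$ by summing the path-level bad probability from Lemma \ref{hd-BK} against the expected path count from Lemma \ref{number-path-k}, yielding $\mathds{P}[E_{\bm i,\bm j}^k]\le C_{dis}^2\delta/(1-C_{dis}\delta)\to 0$. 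You instead tile a fixed separating region at a finer scale $s\ll 2^k$ and take a union bound over the $N\asymp|\Omega|/s^d$ tiling cubes, sending $\alpha\to 0$ with $N$ fixed. Both routes would be acceptable if the barrier step were sound, but the way you cross from "all barrier cubes are $(3s,\alpha)$-good" to a positive lower bound on $D(V^k(\bm i),V^k(\bm j))$ has genuine gaps.

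The main gap is circular. You decompose the $D$-near-geodesic $P_\delta$ into "hops (Poisson edges, $D$-length zero by Axiom III) and gaps (Euclidean segments)." For the limit metric $D$, a $D$-continuous curve may be Euclidean-discontinuous, and whether its Euclidean discontinuities occur \emph{only} across Poisson edges is precisely the content of Proposition \ref{Dxy=0-local} — which is proved \emph{from} Lemma \ref{Dinterval=0}. Axiom III only gives the one-sided implication ($\langle\bm x,\bm y\rangle\in\mathcal{E}\Rightarrow D(\bm x,\bm y)=0$), so a priori $P_\delta$ could jump between points $\bm p,\bm q$ that are not joined by an edge but satisfy $D(\bm p,\bm q)=0$, and such a jump is not a hop in your decomposition. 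The paper's renormalization sidesteps this by never decomposing continuous paths into hops and gaps; everything is phrased in terms of self-avoiding paths in the discrete graph $G$ (where the hop structure is built in by fiat) plus the a.s. inequality provided by the $(3s,\alpha)$-good definition, which is stated in terms of internal $D$-distances between endpoints of a special pair of $\mathcal{E}$-edges and thus needs no classification of where $D$ vanishes.

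A second gap is the appeal to "Axiom V1' applied gap by gap." Axiom V1' is a tail bound on the law of $D(\bm 0,([-r,r]^d)^c)$ for one fixed configuration of center and radius; it does not yield an a.s. uniform lower bound over the random family of gap endpoints of a random path $P_\delta$. Turning it into such a statement would itself require a renormalization/union-bound argument — essentially the argument you are trying to avoid. Relatedly, your pigeonhole step needs the path to enter $V_s(\bm z_m)$ \emph{via a hop} and exit $V_{3s}(\bm z_m)$ \emph{via a hop} in order for Definition \ref{h-good} to supply the lower bound $D(\bm v_1,\bm u_2;V_{3s}(\bm z_m))\ge(b_0\alpha s)^\theta$; if either crossing happens by Euclidean continuity, the special-pair structure does not apply and you have no quantitative bound available. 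Once you make the hop-gap decomposition unavailable, there is no clean way to guarantee both crossings are hops, whereas in the paper the $(3s,\alpha)$-good property and the BK-inequality estimate of Lemma \ref{hd-BK} are applied directly to $G$-paths, where every step is an edge.
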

\begin{proof}
    For fixed $k\in \mathds{Z}$, recalling the renormalization argument, we can define a discrete model as follows. We identify $V^k(\bm l)$ as the vertex $\bm l$ for all $\bm l\in 2^k\mathds{Z}^d$ and call the resulting graph $G$. By the self-similarity of the model, $G$ is the discrete long-range percolation defined before \eqref{connectprob}.

    For any sufficiently small $\alpha>0$, we say $\bm l$ is $\alpha$-good if $V_{3\cdot 2^k}(\bm l)$ is $(3\cdot2^k,\alpha)$-good as in Definition \ref{h-good}.
    We also define $\widetilde{E}_{\bm i,\bm j,\alpha}$ as the event that there exists a path in $G$ from $\bm i$ to $\bm j$ with length $m>1$ such that any vertex on the path (except $\bm i,\bm j$) is not $\alpha$-good. Then by the definition of  $(3\cdot2^k,\alpha)$-good,  for $\bm i,\bm j\in 2^k\mathds{Z}^d$ with  $\|\bm i-\bm j\|_{1}> 3\cdot 2^k$, we see that if $\langle \bm i,\bm j\rangle\notin\mathcal{E}$ and $\widetilde{E}_{\bm i,\bm j,\alpha}^c$ occurs,
    then $D(V^k(\bm i),V^k(\bm j))\ge (b_0\alpha 2^k)^\theta>0$ (here $b_0$ is chosen as in Corollary \ref{h-good-prob-2}).
    As a result,
    \begin{equation}\label{EEtilde}
    \mathds{P}[E_{\bm i,\bm j}^k]\leq  \mathds{P}[\widetilde{E}_{\bm i,\bm j,\alpha}].
    \end{equation}

     Recall that $\mathcal{P}_m(\bm i,\bm j)$ is the set consisting of all the paths from $\bm i$ to $\bm j$ with length $m$.
    Let $\widetilde{E}_{\bm i,\bm j,\alpha}^m(m>1)$ be the event that there exists a path in $\mathcal{P}_m(\bm i,\bm j)$ such that every vertex on the path (here we do not consider $\bm i,\bm j$ either) is not $\alpha$-good. Then applying Lemma \ref{hd-BK} with $E_{\bm k}$ being the whole probability space $\widetilde{\Omega}$ for all $\bm k \in 2^k\mathds{Z}^d$ and $\delta>0$ such that $C_{dis}\delta<1$, we get that for sufficiently small $\alpha$,
    \begin{equation*}
        \begin{split}
            \mathds{P}[E_{\bm i,\bm j}^k]&\leq  \mathds{P}[\widetilde{E}_{\bm i,\bm j,\alpha}]\leq \sum_{m\geq 2}\mathds{P}[\widetilde{E}_{\bm i,\bm j,\alpha}^m]\\
            &\leq \sum_{m\geq 2}\mathds{E}\left[\sum_{P^G\in\mathcal{P}_m(\bm i,\bm j)}\mathds{P}\left[
            P^G\text{ is }(\widetilde{\Omega},\alpha)\text{-bad}\big|G\right]\right]\\
            &\leq \sum_{m\geq 2}\mathds{E}[|\mathcal{P}_m(\bm i,\bm j)|]\delta^{m-1}
            \leq \sum_{m\geq 2}C_{dis}^m \delta^{m-1} = \frac{C^2_{dis}\delta}{1-C_{dis}\delta}.
        \end{split}
    \end{equation*}
    Thus by sending $\delta$ to 0, we obtain $\mathds{P}[E_{\bm i,\bm j}^k]=0$, which implies the desired result.
\end{proof}

We now complete the

\begin{proof}[Proof of Proposition \ref{Dxy=0-local}]
    It suffices to show the conclusion for a local LRP metric $D$.
Suppose ${\bm x},{\bm y}\in \mathds{R}^d$ are fixed with $\langle {\bm x},{\bm y}\rangle \notin\mathcal{E}$. For each $k\in\mathds{Z}$, recall that $V^k(\cdot)$ is defined in Lemma \ref{Dinterval=0}.
Let $\bm i_k({\bm x}),\bm j_k({\bm y})\in 2^k\mathds{Z}^d$ be such that ${\bm x}\in V^k(\bm i_k({\bm x}))$ and ${\bm y}\in V^k(\bm i_k({\bm y}))$.
By the property of the Poisson point process, it is evident that the number of long edges connecting the cubes $ V^k(\bm i_k({\bm x}))$ and $ V^k(\bm i_k({\bm y}))$ is finite for each small $k<0$.
Moreover, since $\langle {\bm x},{\bm y}\rangle\notin\mathcal{E}$, we can select $k'<0$ sufficiently small such that $ V^{k'}(\bm i_{k'}({\bm x}))$ and $ V^{k'}(\bm i_{k'}({\bm y}))$ are not connected directly by any long edge and that $\|\bm i_{k'}({\bm x})-\bm j_{k'}({\bm y})\|_{1}> 3\cdot 2^{k'}$.
In addition, it is clear that
    $$
    D({\bm x},{\bm y})\ge D\left(V^{k'}(\bm i_{k'}({\bm x})),V^{k'}(\bm i_{k'}({\bm y}))\right).
    $$
Thus, if we denote by $E_{{\bm x},{\bm y}}$ the event that $D({\bm x},{\bm y})=0$ and $\langle {\bm x},{\bm y}\rangle\notin \mathcal{E}$ for ${\bm x},{\bm y}\in \mathds{R}^d$, then
$E_{{\bm x},{\bm y}}\subseteq \cup_{k\in\mathds{Z}:\|\bm i_{k}({\bm x})-\bm j_{k}({\bm y})\|_{1}> 3\cdot 2^{k}}E_{\bm i_{k}({\bm x}),\bm j_{k}({\bm y})}^{k}$.
Combining this with Lemma \ref{Dinterval=0}, we get that
$$
\mathds{P}[\cup_{{\bm x},{\bm y}\in\mathds{R}^d}E_{{\bm x},{\bm y}}]\leq \mathds{P}\left[\cup_{k\in \mathds{Z}}\cup_{\|\bm i-\bm j\|_{1}>3\cdot 2^k}E_{\bm i,\bm j}^k\right]=0,
$$
which implies that almost surely for any $\bm x,\bm y\in\mathds{R}^d$, $D(\bm x, \bm y)=0$ if and only if $\langle \bm x,\bm y \rangle\in\mathcal{E}$. Combining this with Axiom I yields the remaining part of the proposition.   
\end{proof}

From Proposition \ref{Dxy=0-local}, we obtain immediately the

\begin{proof}[Proof of Proposition \ref{Dxy=0}]
The statement is easily derived from the fact that every weak or strong $\beta$-LRP metric is also a local $\beta$-LRP metric.
\end{proof}

We turn to the

\begin{proof}[Proof of Proposition \ref{bilip}]

By Lemmas \ref{Fr4lemma-2}, \ref{geodesic-go-not-far} and \ref{d-longer-than-D} and our assumption that $\varepsilon \ll \gamma$, we observe that
$$
\mathds{P}[F_{\varepsilon,1}\cap F_{\gamma,2}\cap F_{\varepsilon,3}]\geq 1-3\gamma.
$$
Then throughout the proof, we assume that $F_{\varepsilon,1}\cap F_{\gamma,2}\cap F_{\varepsilon,3}$ occurs. For any ${\bm x},{\bm y}\in [-R,R]^d$ with $D({\bm x},{\bm y})\geq 2\varepsilon^{\theta/2}R^\theta$, let $\bm k_{\bm x},\bm k_{\bm y}\in (\varepsilon R)[-1/\varepsilon,1/\varepsilon)^d_\mathds{Z}$ be such that ${\bm x}\in V_{\varepsilon R}(\bm k_{\bm x})$ and ${\bm y}\in V_{\varepsilon R}(\bm k_{\bm y})$. Then by the definition of $F_{\varepsilon,3}$ in Lemma \ref{d-longer-than-D}, we see that
\begin{equation}\label{Duper-2}
d^G(\bm k_{\bm x},\bm k_{\bm y})+1\geq \max\left\{ c_4(\varepsilon R)^{-\theta}D({\bm x},{\bm y}),\varepsilon^{-\theta/4}\right\}.
\end{equation}

Denote by $P_{\bm k_{\bm x},\bm k_{\bm y}}^G$ the skeleton path in $G$ derived from a $D$-geodesic between $V_{\varepsilon R}(\bm k_{\bm x})$ and $V_{\varepsilon R}(\bm k_{\bm y})$. Then
 by the definitions of $F_{\varepsilon,1}$ and good cubes and the fact that $|P_{\bm k_{\bm x},\bm k_{\bm y}}^G| \geq d^G(\bm k_{\bm x},\bm k_{\bm y})$, we get
\begin{equation}\label{Dlower-2}
D({\bm x},{\bm y})\geq \frac{1}{ 4\cdot 3^d} |P_{\bm k_{\bm x},\bm k_{\bm y}}^G|(b_0\alpha\varepsilon R)^\theta\ge \frac{1}{ 4\cdot 3^d}d^G(\bm k_{\bm x},\bm k_{\bm y})(b_0\alpha\varepsilon R)^\theta,
\end{equation}
where $\alpha$ and $b_0$ are small constants chosen in Corollary \ref{h-good-prob-2}.

Combining \eqref{Duper-2} and \eqref{Dlower-2}, we obtain that with probability at least $1-3\gamma$, the following holds.  For any ${\bm x},{\bm y}\in [-R,R]^d$ with $D({\bm x},{\bm y})\geq 2\varepsilon^{\theta/2}R^\theta$,
\begin{equation}\label{Dupdown}
    \frac{1}{ 4\cdot 3^d}(b_0\alpha\varepsilon R)^\theta d^G(\bm k_{\bm x},\bm k_{\bm y})\leq D({\bm x},{\bm y})\leq{ 2}  c_4^{-1}(\varepsilon R)^\theta d^G(\bm k_{\bm x},\bm k_{\bm y}).
\end{equation}

A similar result is true for the metric $\widetilde{D}$ with another set of constants $\widetilde{\alpha},\widetilde{b}_0,\widetilde{c}_{4}$ since $\widetilde{D}$ is also a local $\beta$-LRP metric. To be presice, we have that with probability at least $1-3\gamma$, the following holds.  For any ${\bm x},{\bm y}\in [-R,R]^d$ with $D({\bm x},{\bm y})\geq 2\varepsilon^{\theta/2}R^\theta$,
\begin{equation}\label{Dtildeupdown}
    \frac{1}{ 4\cdot 3^d}(\widetilde{b}_0\widetilde{\alpha}\varepsilon R)^\theta d^G(\bm k_{\bm x},\bm k_{\bm y})\leq \widetilde{D}({\bm x},{\bm y})\leq{ 2} \widetilde{c}_{4}^{-1}(\varepsilon R)^\theta d^G(\bm k_{\bm x},\bm k_{\bm y}).
\end{equation}

Combining \eqref{Dupdown} with \eqref{Dtildeupdown}, we get that it holds with probability at least $1-6\gamma$ that
\begin{equation*}
        C^{-1}D({\bm x},{\bm y})\leq \widetilde{D}({\bm x},{\bm y})\leq C D({\bm x},{\bm y}),\quad \forall {\bm x},{\bm y}\in[-R,R]^d\text{ with }D({\bm x},{\bm y})\wedge\widetilde{D}({\bm x},{\bm y})\geq 2\varepsilon^{\theta/2}R^\theta,
\end{equation*}
where $C:= \max\{{ 8\cdot 3^d}\widetilde{c}_{4}^{-1}(b_0\alpha)^{-\theta},{ 8\cdot 3^d}c_{4}^{-1}(\widetilde{b}_0\widetilde{\alpha})^{-\theta}\}$.

Sending $\varepsilon$ to 0 (i.e. $n$ to infinity) and then sending $R$ to infinity, we get
\begin{equation}\label{Dtilde>0}
    \mathds{P}[C^{-1}D({\bm x},{\bm y})\leq \widetilde{D}({\bm x},{\bm y})\leq C D({\bm x},{\bm y}),\ \forall {\bm x},{\bm y}\in\mathds{R}^d\text{ with }D({\bm x},{\bm y})\wedge \widetilde{D}({\bm x},{\bm y})>0]\geq 1-6\gamma.
\end{equation}
In addition, it follows from Proposition \ref{Dxy=0} that a.s.\ $D({\bm x},{\bm y})=0$ if and only if $\widetilde{D}({\bm x},{\bm y})=0$ for all ${\bm x},{\bm y}\in\mathds{R}^d$.  Combining this fact with \eqref{Dtilde>0} we see that
$$
\mathds{P}\left[C^{-1}D({\bm x},{\bm y})\leq \widetilde{D}({\bm x},{\bm y})\leq C D({\bm x},{\bm y}),\ \forall {\bm x},{\bm y}\in\mathds{R}^d\right]\geq 1-6\gamma.
$$
Finally, we complete the proof by sending $\gamma$ to 0 (recall that $C$ does not depend on $\gamma$).
\end{proof}

Finally we present the

\begin{proof}[Proof of Proposition \ref{cCdtm}]
    Let $D$ and $\widetilde{D}$ be two weak $\beta$-LRP metrics. We will only consider the statement for $C_*$ here. Suppose  $C>0$ such that $\mathds{P}[C_*>C]>0$. We will show that $\mathds{P}[C_*>C]=1$.

    From $\mathds{P}[C_*>C]>0$, we can find a large deterministic $R>0$ such that with positive probability, denoted by $q_1$, there are points ${\bm x},{\bm y}\in [-R,R]^d$ such that $\widetilde{D}({\bm x},{\bm y})/D({\bm x},{\bm y})>C$.
    In order to prove the lemma, we wish to build many almost independent trials where each trial certifies $C_*>C$ with positive probability, and it is natural to extract the independence from the locality of the LRP metric. To this end, for any $r>0$ and $\bm k\in r\mathds{Z}^d$, let $E_{\bm k,r}$ be the event that there exist ${\bm x},{\bm y}\in V_r(\bm k)$ such that
    $$\widetilde{D}({\bm x},{\bm y})/D({\bm x},{\bm y})>C,$$
    and each $D$-geodesic and $\widetilde{D}$-geodesic from ${\bm x}$ to ${\bm y}$ are contained within $ V_r(\bm k)$.

    Additionally, from Lemma \ref{geodesic-go-not-far}, we see that there is a sufficiently large $u=u(q_1)>0$ (depending only on $\beta,d$, $q_1$ and the law of $D$) such that with probability at least $1-q_1/2$, each $D$-geodesic and $\widetilde{D}$-geodesic between any two points of $[-R,R]^d$ is contained in $[-uR,uR]^d$.
    Therefore, taking $r=2uR>2R$, we see that $E_{\bm 0,2uR}$ occurs with probability at least $q_1/2$. For convenience, let $q_2=\mathds{P}[E_{\bm 0,2uR}]\geq q_1/2>0$. From Axiom IV', we get that $q_2=\mathds{P}[E_{\bm k,2uR}]$ for any $\bm k\in r\mathds{Z}^d$.

    Now it suffices to show that almost surely there exists a (random) $\bm k\in r\mathds{Z}^d$ such that $E_{\bm k,2uR}$ occurs. Note that for any $\bm k,\bm l\in r\mathds{Z}^d$, we do not have exact independence between $E_{\bm k,2uR}$ and $E_{\bm l,2uR}$ even if $\bm k,\bm l$ are far away from each other, since both $E_{\bm k,2uR}$ and $E_{\bm l,2uR}$ are dependent on edges connecting $V_{2uR}(\bm k)$ and $V_{2uR}(\bm l)$. As a result, we can not use Kolmogorov 0-1 law directly. However, since the correlation is small when $\bm k$ and $\bm l$ are sufficiently far away, we can use the second moment method to find a $\bm k\in r\mathds{Z}^d$ such that $E_{\bm k,2uR}$ occurs.

    To this end, we need to first define an event that will facilitate the control of the correlation. For any $\bm k,\bm l\in r\mathds{Z}^d $ such that $|\bm k-\bm l|>2uR$, let $C_{\bm k,\bm l}$ be the event that $ V_{2uR}(\bm k)$ and $V_{2uR}(\bm l)$ are not connected by $\mathcal{E}$. For any $\delta>0$, we can choose a sufficiently large (deterministic) $R_\delta>2uR$ such that $\mathds{P}[C_{\bm k,\bm l}]>1-\delta$ whenever $|\bm k-\bm l|>R_\delta$.  Now for any sufficiently large $N\in\mathds{N}$, we choose $\bm k_1,\cdots,\bm k_N\in r\mathds{Z}^d$ such that for any $i\neq j\in[1,N]_\mathds{Z}$, $|\bm k_i-\bm k_j|>R_\delta$. Let $X_N=\sum_{i=1}^N\I\{E_{\bm k_i,2uR}\}$. Then we have the first moment
    \begin{equation}\label{firstmoment}
        \mathds{E}[X_N]=\sum_{i=1}^N\mathds{P}[E_{\bm k_i,2uR}]=Nq_2.
    \end{equation}

    We now turn to estimate $\mathds{E}[X_N^2]$. Note that for any $i\neq j\in[1,N]_\mathds{Z}$, conditioned on $C_{\bm k_i,\bm k_j}$, we have that $E_{\bm k_i,2uR}$ (resp. $E_{\bm k_i,2uR}$) is a.s. determined by $\mathcal{E}\cap (V_{2uR}(\bm k_i)\times (\mathds{R}^d\setminus V_{2uR}(\bm k_j)))$ (resp. $\mathcal{E}\cap (V_{2uR}(\bm k_i)\times (\mathds{R}^d\setminus V_{2uR}(\bm k_j)))$). As a result, the independence of the Poisson point process yields the conditional independence between $E_{\bm k_i,2uR}$ and $E_{\bm k_j,2uR}$ conditioned on $C_{\bm k_i,\bm k_j}$. Then we get that
    $$ \mathds{P}[E_{\bm k_i,2uR}\cap E_{\bm k_j,2uR}|C_{\bm k_i,\bm k_j}]=\mathds{P}[E_{\bm k_i,2uR}|C_{\bm k_i,\bm k_j}]\mathds{P}[E_{\bm k_i,2uR}|C_{\bm k_i,\bm k_j}]. $$
    Therefore we have that
    \begin{equation}\label{correlation}
        \begin{aligned}
        \mathds{P}[E_{\bm k_i,2uR}\cap E_{\bm k_j,2uR}]&\leq \mathds{P}[E_{\bm k_i,2uR}\cap E_{\bm k_j,2uR}|C_{\bm k_i,\bm k_j}]\mathds{P}[C_{\bm k_i,\bm k_j}]+\mathds{P}[C_{\bm k_i,\bm k_j}^c] \\
        &= \mathds{P}[E_{\bm k_i,2uR}|C_{\bm k_i,\bm k_j}]\mathds{P}[E_{\bm k_i,2uR}|C_{\bm k_i,\bm k_j}]+(1-\mathds{P}[C_{\bm k_i,\bm k_j}]) \\
        &\leq \mathds{P}[E_{\bm k_i,2uR}]\mathds{P}[E_{\bm k_i,2uR}](\mathds{P}[C_{\bm k_i,\bm k_j}])^{-2}+(1-\mathds{P}[C_{\bm k_i,\bm k_j}]) \\
        &\leq (1-\delta)^{-2}q_2^2+\delta.
        \end{aligned}
    \end{equation}
    Here the last inequality used the fact that $|\bm k_i-\bm k_j|>R_\delta$, which implies that $\mathds{P}[C_{\bm k_i,\bm k_j}]>1-\delta$. Then summing \eqref{correlation} over all possible $i\neq j\in[1,N]_\mathds{Z}$, we get that
    \begin{equation}\label{secondmoment}
        \mathds{E}[X_N^2]=\sum_{i=1}^N\mathds{P}[E_{\bm k_i,2uR}]+\sum_{i\neq j\in[1,N]_\mathds{Z}}\mathds{P}[E_{\bm k_i,2uR}\cap E_{\bm k_j,2uR}]\leq Nq_2+N(N-1)((1-\delta)^{-2}q_2^2+\delta).
    \end{equation}

    Combining Cauchy-Schwarz inequality with \eqref{firstmoment} and \eqref{secondmoment}, we get that
    $$ \mathds{P}[X_N>0]\geq \frac{\mathds{E}[X_N]^2}{\mathds{E}[X_N^2]} \geq \frac{(Nq_2)^2}{Nq_2+N(N-1)((1-\delta)^{-2}q_2^2+\delta)}. $$
    Note that $X_N>0$ means that there exists at least a $j\in[1,N]_\mathds{Z}$ such that $E_{\bm k_j,2uR}$ occurs, which implies $C_*>C$. Letting $N\to\infty$ and then $\delta\to 0$, we get that the right hand side of the inequality above tends to 1, which implies that $\mathds{P}[C_*>C]=1$.
\end{proof}

\subsection{Proof of Theorem \ref{uniquetheta-1}}\label{uniquetheta}

The proof is similar to that of Proposition \ref{bilip}.
    Assume that $D$ is a weak $\beta$-LRP metric with $\theta=\theta_1>0$.
    Note that the proof of Proposition \ref{bilip} is still valid if we replace all $\theta$ by $\theta_1$.
    In particular, from \eqref{Dupdown} (replacing all $\theta$ by $\theta_1$ and taking $R=1$), we get that for any fixed $\gamma>0$, there exist $C>0$ and $\varepsilon_0>0$ (depending on $\beta,d,\gamma$ and the law of $D$) such that for any $\varepsilon<\varepsilon_0$, the following event (denoted by $F_{\gamma,\varepsilon}$) occurs with probability at least $1-3\gamma$. For any $\bm x,\bm y\in[-1,1]^d$ with $D(\bm x,\bm y)\geq 2\varepsilon^{\theta_1/2}$,
    \begin{equation}\label{Fgammaeps}
        C^{-1}d^G(\bm k_{\bm x},\bm k_{\bm y})\varepsilon^{\theta_1}\leq D(\bm x,\bm y)\leq Cd^G(\bm k_{\bm x},\bm k_{\bm y})\varepsilon^{\theta_1}.
    \end{equation}
    Additionally, let $G_{\gamma,\varepsilon}$ be the event that $D(\bm 0,\bm 1)\geq 2\varepsilon^{\theta_1/2}.$ Then from Axiom V1', we can let $\varepsilon>0$ be an arbitrarily small constant depending only on $\beta,d,\gamma$ and the law of $D$ such that $$\mathds{P}[F_{\gamma,\varepsilon}\cap G_{\gamma,\varepsilon}]\geq 1-4\gamma.$$
     Now on the event $F_{\gamma,\varepsilon}\cap G_{\gamma,\varepsilon}$, taking $(\bm x,\bm y)=(\bm 0,\bm 1)$ in \eqref{Fgammaeps} we get that
    \begin{equation}\label{FGgammaeps}
        C^{-1}d^G(\bm k_{\bm 0},\bm k_{\bm 1})\varepsilon^{\theta_1}\leq D(\bm 0,\bm 1)\leq C^{-1}d^G(\bm k_{\bm 0},\bm k_{\bm 1})\varepsilon^{\theta_1}.
    \end{equation}
    Furthermore, from Theorem \ref{discrete-dist} and the translation invariance of the discrete $\beta$-LRP, we get that there exists $C_1>0$ depending on $\beta,d$ and $\gamma$ such that with probability at least $1-\gamma$,
    \begin{equation}\label{dGbound}
        C_1^{-1}|\bm k_{\bm 1}-\bm k_{\bm 0}|^{\theta_0}\leq d^G(\bm k_{\bm 0},\bm k_{\bm 1})\leq C_1|\bm k_{\bm 1}-\bm k_{\bm 0}|^{\theta_0}.
    \end{equation}
    Recall that the renormalization is obtained from dividing $\mathds{R}^d$ into small cubes of side length $\varepsilon$.
    Then it is clear that there exists a constant $C_d>0$ depending only on $d$ such that $|\bm k_{\bm 1}-\bm k_{\bm 0}|\in[C_d^{-1}\varepsilon^{-1},C_d\varepsilon^{-1}].$ Combining this with \eqref{FGgammaeps} and \eqref{dGbound}, we get that the following occurs with probability at least $1-5\gamma$. For $C_0=CC_1C_d^{\theta_0}$,
    $$
    C_0^{-1}\varepsilon^{\theta_1-\theta_0}\leq D(\bm 0,\bm 1)\leq C_0\varepsilon^{\theta_1-\theta_0}.
    $$
    Since $\varepsilon>0$ is arbitrarily small, if $\theta_1\neq \theta_0$, then we get a contradiction when we take $\gamma<1/5$ close to 0 and let $\varepsilon\to 0$. Hence, we must have $\theta_1=\theta_0$. \hfill $\square$

\subsection{Proof of Theorem \ref{theorem-Hausdorff}}\label{Hasd}
Recall that for $\Delta > 0$, the $\Delta$-Hausdorff content and the \textit{Hausdorff dimension} of $(Y, D)$ is defined around \eqref{Hausdorff}.
For a set $X\subset \mathds{R}^d$, recall that  ${\rm dim}_{\mathcal{H}}^0 X$ and ${\rm dim}_{\mathcal{H}}^\beta X$ are the \textit{Euclidean dimension} and \textit{$\beta$-LRP dimension} of the set $X$, respectively.

    Let $D$ be a weak $\beta$-LRP metric and let $X$ be a deterministic Borel subset of $\mathds{R}^d$.  By the countable stability of the  Hausdorff dimension, we can assume that $X$ is contained in a compact subset $K$ of $\mathds{R}^d$ without loss of generality. By \eqref{Hausdorff} and the definition of the Hausdorff dimension, for any $\ell>{\rm dim}_{\mathcal{H}}^0 X$, the following is true: for any $n\in\mathds{N}$, we can choose a (deterministic) covering of $X$ by $\ell^\infty$ balls $\{V_{r^{(n)}_i}(\bm x^{(n)}_i)\}_{i\in\mathds{N}}$ such that
    \begin{equation}\label{XEucD}
        \sum_{i=1}^\infty (r^{(n)}_i)^\ell<n^{-2}.
    \end{equation}

    From Axioms IV' and V2', we get that
    $$
    C_D:=\sup_{\bm z\in\mathds{R}^d,r>0}\mathds{E}\left[\exp\left\{\frac{{\rm diam}(V_r(\bm z);D)}{r^\theta}\right\}\right]=\sup_{r>0}\mathds{E}\left[\exp\left\{\frac{{\rm diam}([0,r]^d;D)}{r^\theta}\right\}\right]<\infty.
    $$
    Let $\widetilde{r}^{(n)}_i={\rm diam}(V_{r^{(n)}_i}(\bm x_i);D)$. Let $A_{n,i}$ be the event that $\{\widetilde{r}^{(n)}_i>(r^{(n)}_i)^{\theta-\zeta}\}$. Here $\zeta$ is a positive constant such that $\ell/(\theta-\zeta)<(2\ell -{\rm dim}_{\mathcal{H}}^0 X)/\theta$. Thus we can use Markov's inequality to get that for $i\in\mathds{N}$ and $\zeta>0$, there exists a constant $C_\ell$ depending only on $C_D,\ell$ and $\zeta$  such that for any $n,i\in\mathds{N}$.
    $$
    \mathds{P}[A_{n,i}]\leq C_D \exp\{-(r^{(n)}_i)^{-\zeta}\}\leq C_\ell (r^{(n)}_i)^\ell.
    $$
    Since from \eqref{XEucD}, we have $\sum_{n,i}(r^{(n)}_i)^\ell<\sum_{n}n^{-2}<\infty$. Combining this with Borel-Cantelli Lemma, there exists a random $N\in\mathds{N}$ such that a.s. for any $n>N$ and $i\in\mathds{N}$, $A_{n,i}$ does not occur, which means $\widetilde{r}^{(n)}_i\leq (r^{(n)}_i)^{\theta-\zeta}$ for any $n>N$ and $i\in\mathds{N}$. Thus we have that a.s. for any $n>N$,
    $$ \sum_{i=1}^\infty(\widetilde{r}^{(n)}_i)^{\ell/(\theta-\zeta)}<\sum_{i=1}^\infty (r^{(n)}_i)^\ell<n^{-2}. $$
     Applying this to \eqref{Hausdorff}, we get that a.s. $C_{\ell/(\theta-\zeta)}(X,D)=0$ and $ {\rm dim}_{\mathcal{H}}^\beta X\leq \ell/(\theta-\zeta)<(2\ell -{\rm dim}_{\mathcal{H}}^0 X)/\theta $ (recall the choice of $\zeta$). Letting $\ell\to {\rm dim}_{\mathcal{H}}^0 X$, we get that a.s. $ {\rm dim}_{\mathcal{H}}^\beta X\leq {\rm dim}_{\mathcal{H}}^0 X/\theta $, which is the desired result.  \hfill $\square$

\section{Proof of Theorem \ref{dis-subsequence-exist}}\label{dis-existence}
In this section, we aim to prove Theorem \ref{dis-subsequence-exist}. We will construct a coupling between the discrete model and the continuous model and show the convergence in probability under the continuous setting. To be precise, for any $n\in\mathds{N}$, let $G_n$ be a random graph on $\mathds{Z}^d$ such that for any ${\bm i},{\bm j}\in\mathds{Z}^d$, ${\bm i}$ and ${\bm j}$ are connected in $G_n$ if and only if $V_{1/n}({\bm i}/n)$ and $V_{1/n}({\bm j}/n)$ are connected by $\mathcal{E}$ in the continuous model or $\bm i$ is a nearest neighbor of $\bm j$.
Then it is easy to see that $G_n$ is a critical long-range bond percolation model. Let $d^{G_n}$ be the chemical distance of the graph $G_n$ and let $\widetilde{d}_n({\bm x}, {\bm y})=d^{G_n}(\lfloor n {\bm x}\rfloor,\lfloor n {\bm y}\rfloor)$ for $ {\bm x}, {\bm y}\in\mathds{R}^d$.
Here $\lfloor n {\bm x}\rfloor=(\lfloor n{\bm x}^1\rfloor,\cdots,\lfloor n{\bm x}^d\rfloor)\in\mathds{Z}^d$ for $ {\bm x}=({\bm x}^1,\cdots,{\bm x}^d)\in\mathds{R}^d$. Note that $d^{G_n}$ has the same law as $\widehat{d}$. We will give the proof of the following proposition in the rest of the section, which directly implies Theorem \ref{dis-subsequence-exist}.
\begin{proposition}\label{dis-prop-subsequence}
    $\{\widehat{a}_n^{-1}\widetilde{d}_n\}$ is tight with respect to the topology of local uniform convergence on $\mathds{R}^{2d}$. Furthermore, for every sequence of $n$'s tending to infinity, there is a weak $\beta$-LRP metric $D$ and a subsequence $\{n_k\}$ along which $\widehat{a}_{n_k}^{-1}\widetilde{d}_{n_k}$ converges in probability to $D$.
\end{proposition}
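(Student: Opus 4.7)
The plan is to first establish tightness via a uniform H\"older-type estimate, then extract subsequential limits, verify each of the axioms of a weak $\beta$-LRP metric, and finally upgrade the convergence in law to convergence in probability by showing the limit is a measurable function of $\mathcal{E}$.

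For tightness, I would combine the upper tail bound \cite[Theorem 6.1]{Baumler22} on $\mathrm{diam}([0,n]_\mathds{Z}^d;\widehat{d})$ with Lemma \ref{an-bounded} (which gives $\widehat{a}_n\asymp n^\theta$) to obtain, uniformly in $n$ and in dyadic boxes $V_r({\bm z})\subset\mathds{R}^d$,
\begin{equation*}
\sup_n \mathds{E}\!\left[\exp\!\left\{\left(\frac{\mathrm{diam}(V_r({\bm z});\widehat{a}_n^{-1}\widetilde{d}_n)}{r^\theta}\right)^{\!\eta}\right\}\right]<\infty \qquad \text{for }\eta\in(0,1/(1-\theta)).
\end{equation*}
Feeding this diameter bound into the deterministic Garsia--Rodemich--Rumsey/chaining argument packaged in Lemma \ref{HolderLemma} yields the uniform modulus of continuity of Proposition \ref{LimitContinuousTail} for $\widehat{a}_n^{-1}\widetilde{d}_n$; since moreover $\widehat{a}_n^{-1}\widetilde{d}_n({\bm 0},{\bm 0})=0$, the family is tight for local uniform convergence on $\mathds{R}^{2d}$ by an Arzel\`a--Ascoli argument.

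Once tightness is in hand, given any sequence $n\to\infty$ I would invoke Prokhorov to pass to a subsequence $n_k$ along which $(\widehat{a}_{n_k}^{-1}\widetilde{d}_{n_k},\mathcal{E})$ converges jointly in law to some $(D,\mathcal{E})$, where $D$ is automatically a continuous pseudometric by the uniform modulus above. The axioms are then verified one by one. Axiom IV$'$ (translation invariance) follows from the translation invariance of the coupling $(G_{n_k},\mathcal{E})$, passing to the limit. Axioms V1$'$ and V2$'$ are inherited directly from the uniform tail bounds behind tightness, together with Lemma \ref{an-bounded}. Axiom III uses the key feature of our coupling: if $\langle {\bm x},{\bm y}\rangle\in\mathcal{E}$ then for all $n_k$ large enough $\lfloor n_k{\bm x}\rfloor$ and $\lfloor n_k{\bm y}\rfloor$ are connected directly by an edge of $G_{n_k}$, giving $\widetilde{d}_{n_k}({\bm x},{\bm y})\le 1$ and hence $D({\bm x},{\bm y})=0$ in the limit. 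Axiom II$''$ (weak locality) follows from the fact that, inside each open set $V_i$, the restriction $\widetilde{d}_{n_k}(\cdot,\cdot;V_i)$ depends only on $\mathcal{E}|_{V_i\times V_i}$ (plus the deterministic nearest-neighbour edges), together with the independence of the Poisson process on disjoint regions; passing to the limit, the pairs $(\mathcal{E}|_{V_i\times V_i},D(\cdot,\cdot;V_i))$ remain independent of each other and of $\mathcal{E}|_{(\cup_i V_i\times V_i)^c}$.

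The most delicate axiom is I, the length space property, and this is the step I expect to be the main obstacle, because discrete geodesics in $G_{n_k}$ need not converge to continuous curves in $\mathds{R}^d$ (they typically jump along long edges). My plan is to verify Axiom I in the form strengthened by Proposition \ref{Dxy=0}: using the coupling, any discrete geodesic from $\lfloor n_k {\bm x}\rfloor$ to $\lfloor n_k {\bm y}\rfloor$ can be lifted, exactly as in the construction of Definition \ref{pathd-c}, to a path in $\mathds{R}^d$ that alternates between long edges of $\mathcal{E}$ and internal segments of length at most $\widehat{a}_{n_k}^{-1}(n_k)^{-\theta}\cdot c\to 0$; a compactness/diagonal argument, combined with the uniform modulus of continuity and Axiom III verified above, extracts a subsequential limit curve whose $D$-length realises $D({\bm x},{\bm y})$, and this curve is a genuine geodesic in the quotient $(\mathds{R}^d/\sim,D)$. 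Finally, to upgrade from convergence in law to convergence in probability I would apply Theorem \ref{uniqueness} (which we are entitled to use at this stage modulo its independent proof in Sections \ref{bi-lipschitz}--\ref{section4}): any two subsequential limits are weak $\beta$-LRP metrics coupled to the same $\mathcal{E}$, hence equal almost surely to $C\,D'$ for a deterministic constant $C$, and the normalisation by the median $\widehat{a}_{n_k}$ forces $C=1$. Consequently $D$ is a measurable function of $\mathcal{E}$, and joint convergence in law of $(\widehat{a}_{n_k}^{-1}\widetilde{d}_{n_k},\mathcal{E})$ to $(D,\mathcal{E})$ with $D=F(\mathcal{E})$ upgrades to convergence in probability in the coupling, completing the proof.
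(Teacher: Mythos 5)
Your plan correctly identifies the axioms to check and uses many of the right ingredients (the tail bound from B\"aumler, Lemma \ref{an-bounded}, translation invariance of the coupling, and the observation that $\langle{\bm x},{\bm y}\rangle\in\mathcal{E}$ forces $D({\bm x},{\bm y})=0$ in the limit), but there are two genuine gaps.

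The decisive gap is the last step. You verify only Axiom II$''$ (weak locality: joint independence of internal metrics on disjoint open sets), which makes the subsequential limit a \emph{local} $\beta$-LRP metric in the sense of Definition \ref{localmetric}, not a \emph{weak} one. A weak $\beta$-LRP metric requires the full Axiom II (locality), in particular that $D(\cdot,\cdot;U)$ is a.s.\ determined by $\mathcal{E}|_{U\times U}$ — i.e.\ measurability of $D$ as a function of $\mathcal{E}$. You never establish this, and you cannot then appeal to Theorem \ref{uniqueness} to rescue it, since that theorem is stated (and in its proof in Sections \ref{section3}--\ref{section4}, crucially used — through expressions like $D_Z^+=D(\mathcal{E}_Z^+)$) only for weak $\beta$-LRP metrics, i.e.\ metrics that are already known to be measurable functions of $\mathcal{E}$. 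The paper bridges this gap with Proposition \ref{reallocal}, an Efron--Stein variance argument on a randomly shifted grid, which shows that a local $\beta$-LRP metric is a.s.\ determined by $\mathcal{E}$ and hence satisfies Axiom II. Once one has measurability, the upgrade from convergence in law of $(\mathcal{E},\widehat{a}_{n_k}^{-1}\widetilde{d}_{n_k})$ to convergence in probability is the elementary Lemma \ref{convergePLemma}; no uniqueness theorem is needed. Without the Efron--Stein step, neither the conclusion that the limit is a weak $\beta$-LRP metric nor the convergence in probability follows.

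A secondary but real technical issue is the tightness step. The pre-limit metrics $\widehat{a}_n^{-1}\widetilde{d}_n$ are not continuous on $\mathds{R}^{2d}$ (they take values in $\widehat{a}_n^{-1}\mathds{Z}_{\ge 0}$), so they do not satisfy, uniformly in $n$, the H\"older modulus of Proposition \ref{LimitContinuousTail}: for $\|{\bm x}-{\bm y}\|_\infty\lesssim 1/n$ one has $\widetilde{d}_n({\bm x},{\bm y})$ of order $1$, which makes the modulus ratio blow up as ${\bm x}\to{\bm y}$. The paper therefore cannot and does not feed the diameter bound into Lemma \ref{HolderLemma} for the discrete approximants; it instead bounds the metric against $\max\{\|{\bm x}-{\bm y}\|_\infty^{\theta/2},n^{-\theta}\}$ (allowing an extra $n$-dependent error that vanishes as $n\to\infty$) and invokes a generalized Arzel\`a--Ascoli theorem for possibly discontinuous functions (Lemma \ref{newAAtheorem}). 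Your invocation of classical Arzel\`a--Ascoli together with Lemma \ref{HolderLemma} is thus not directly applicable at the level of the $\widehat{a}_n^{-1}\widetilde{d}_n$; the modulus-of-continuity estimate is valid only for the \emph{limiting} metric.

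Finally, on Axiom I: your geodesic-lifting strategy via Definition \ref{pathd-c} differs from what the paper actually does. The paper proves the length-space property by a Skorokhod-plus-midpoint argument (Proposition \ref{dis-lengthspace}): one extracts discrete $\widehat{d}_{n_k}$-midpoints between ${\bm z}$ and ${\bm w}$, shows they remain in a random compact box using Lemma \ref{dis-bounded}, and passes to a subsequential limit which is a $D$-midpoint; completeness plus existence of midpoints then gives a length space via the BBI criterion. Your plan, by contrast, attempts to construct geodesics directly by taking subsequential limits of lifted discrete geodesics; this is plausible in outline, but the lifted paths jump along long edges of $\mathcal{E}$ and have no continuity in the Euclidean topology, so the ``compactness/diagonal argument'' you mention is nontrivial and would need to be made precise — whereas the midpoint argument sidesteps this entirely. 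This last point is a difference of route rather than an error, but the first two are genuine gaps that would have to be filled.
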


\subsection{Tightness}
In this subsection we will prove the tightness of $\{\widehat{a}_n^{-1}\widetilde{d}_n\}$ and show that any subsequential limit satisfies Axioms V1' and V2'. In Lemma \ref{an-bounded}, we have shown that $\{\widehat{a}_n^{-1}n^\theta\}$ and $\{\widehat{a}_nn^{-\theta}\}$ are both uniformly bounded. Thus we only need to prove the tightness of $\{ n^{-\theta}\widehat{d}(\lfloor n\cdot\rfloor ,\lfloor n\cdot\rfloor)\}$ combining with the fact that $\widehat{d}$ and $d^{G_n}$ have the same law.

Our primary tool in this proof is the following uniform tail bound about the diameter under $\widehat{d}$, which was shown by \cite[Theorem 6.1]{Baumler22}.
\begin{theorem}\label{dis-upperbound}
    For any $\eta\in(0,1/(1-\theta))$, we have the following uniform upper bound about the moment generating function:
    $$
    \sup_{n\in\mathds{N}}\mathds{E}\left[\exp\left\{\left(\frac{{\rm diam}([0,n]_{\mathds{Z}}^d;\widehat{d})}{n^\theta}\right)^\eta\right\}\right]<\infty.
    $$
\end{theorem}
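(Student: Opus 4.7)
The plan is a multiscale renormalization argument. Writing $M_n := n^{-\theta}\,\mathrm{diam}([0,n]_\mathds{Z}^d; \widehat{d})$, the target becomes $\sup_n \mathds{E}[\exp(M_n^\eta)] < \infty$ for each $\eta < 1/(1-\theta)$. My starting input is Theorem~\ref{discrete-dist}, which gives $\widehat{d}(\bm 0,\bm n) = O_P(|\bm n|^\theta)$; combined with a union bound over the $O(n^{2d})$ pairs in $[0,n]_\mathds{Z}^d$ and the subadditivity of $\widehat{d}$, this yields the crude first-moment control $\mathds{E}[M_n] \le C_1$ uniformly in $n$, enough to initialize a recursion.

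The core step is a renormalization recursion. Fix a moderate integer $K$ and partition $[0,Kn]_\mathds{Z}^d$ into $K^d$ sub-cubes of side length $n$. I form a coarsened graph on $[0,K]_\mathds{Z}^d$ in which two sub-cubes are adjacent whenever the original LRP contains a long edge of scope of order $n$ joining them; by the scale invariance of the intensity $\beta/|\bm x-\bm y|^{2d}$, this coarsened graph is itself distributionally a critical $\beta$-LRP, so Theorem~\ref{discrete-dist} applied at this coarsened scale yields chemical diameter of order $K^\theta$. Together with a short-edge clean-up inside each sub-cube, this bounds $\mathrm{diam}([0,Kn]_\mathds{Z}^d;\widehat{d})$, up to a controlled additive error, by $\sup_{\gamma} \sum_{i \in \gamma} D_n^{(i)}$, where the supremum is over backbone paths $\gamma$ in the coarsened graph and the $D_n^{(i)}$ are i.i.d.\ copies of $\mathrm{diam}([0,n]_\mathds{Z}^d;\widehat{d})$ (independence from disjoint sub-cubes coming from the Poisson structure of the long edges). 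A union bound over backbones, whose count is $\exp(O(K^\theta))$ by Lemma~\ref{number-path-k} applied at the coarsened scale, combined with a Chernoff-type large-deviations estimate for $\sum_i D_n^{(i)}$, propagates a stretched-exponential tail $\mathds{P}[M_n \ge t] \le C_2 \exp(-c t^\eta)$ from scale $n$ to scale $Kn$ with constants uniform in $K$.

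Iterating this recursion from a bounded base scale gives the claim. The restriction $\eta < 1/(1-\theta)$ arises from balancing the combinatorial count $\exp(O(K^\theta))$ of backbones against the per-path probability cost: a geodesic realising $M_n = M \gg 1$ traverses $\sim M n^\theta$ edges inside a cube of side $n$, and the constraint on total Euclidean scope $\lesssim n$ forces many of those edges to be short, so extremising the trade-off between the number of candidate backbones and the cost of finding enough short edges at each length scale produces precisely the exponent $1/(1-\theta)$.

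The main obstacle I expect is the regime $\eta > 1$: since $x \mapsto x^\eta$ is no longer subadditive there, one cannot simply sum stretched-exponential moments across sub-cubes and must instead set up a Chernoff-type inequality that preserves the form $\exp(-c t^\eta)$ through the recursion, without the constants $c$ and $C_2$ degenerating as $K$ varies. A secondary subtlety is the short-edge clean-up at the interfaces between neighbouring sub-cubes, so that the additive error introduced by closing the small gaps between a long edge's endpoint and the interior of a sub-cube is absorbed into the recursion rather than contaminating the target exponent.
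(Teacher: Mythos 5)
The paper does not actually prove this theorem: immediately before it is stated, the text says it ``was shown by \cite[Theorem 6.1]{Baumler22}'', and the result is used purely as an external input. So there is no in-paper argument to compare your sketch against, and any assessment has to be of the sketch on its own terms; that said, the multiscale renormalization strategy you outline (partition into $K^d$ sub-cubes, coarsened graph is again a $\beta$-LRP by scale invariance, tail estimate propagated across scales) is genuinely the right flavour for this statement.

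There is, however, a concrete gap at the initialization. You claim that Theorem~\ref{discrete-dist} together with a union bound over the $O(n^{2d})$ pairs gives the uniform first-moment bound $\mathds{E}[M_n]\leq C_1$. It does not: Theorem~\ref{discrete-dist} is a tightness statement ($\asymp_P$), which gives, for each fixed $\varepsilon$, a constant $C_\varepsilon$ with $\mathds{P}[\widehat{d}(\bm 0,\bm u)>C_\varepsilon|\bm u|^\theta]<\varepsilon$, and this says nothing about moments or about how the exceptional probability decays as $C_\varepsilon\to\infty$. A union bound over $n^{2d}$ pairs would require probability decay faster than $n^{-2d}$, which tightness alone does not supply, and the only trivial moment bound is the deterministic $M_n\leq d\,n^{1-\theta}$, which blows up. In fact a uniform-in-$n$ moment bound is essentially as hard as the theorem itself. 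The recursion can still be salvaged: one initializes at a bounded base scale $n_0$ where $M_{n_0}$ is deterministically bounded, and then runs the renormalization upward, interpolating to intermediate $n$; but the route you describe (tightness plus union bound) would not get you off the ground.

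A secondary imprecision is the union bound over backbones. Your step $\mathrm{diam}([0,Kn]_\mathds{Z}^d;\widehat{d})\lesssim\sup_\gamma\sum_{i\in\gamma}D_n^{(i)}$ is only correct if $\gamma$ is restricted to coarsened-graph geodesics, whose lengths are themselves random and a priori unbounded, so one cannot directly quote the count $\exp(O(K^\theta))$ from Lemma~\ref{number-path-k} (which in any case controls only the expected number of paths of length $\leq m$). To make this rigorous one must stratify the union bound by the coarsened path length $L$ and interleave the path-count estimate $\exp(O(L))$ against the Chernoff bound for $\sum_{i=1}^L D_n^{(i)}$, and it is exactly this balance that forces the threshold $\eta<1/(1-\theta)$. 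Your heuristic for the exponent (``Euclidean scope $\lesssim n$ forces many edges to be short'') gestures at the right phenomenon but is not the mechanism that actually produces it in the recursion. These points are all handled carefully in \cite{Baumler22}; I'd encourage you to consult that proof rather than reconstruct it.
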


With the estimate above in hand, we can move on to proving the tightness of $\{ n^{-\theta}\widehat{d}(\lfloor n\cdot\rfloor ,\lfloor n\cdot\rfloor)\}$.
\begin{proposition}\label{dis-tightness}
    $\{ n^{-\theta}\widehat{d}(\lfloor n\cdot\rfloor,\lfloor n\cdot\rfloor)\}$ is tight with respect to the topology of local uniform convergence. Moreover, for any $R>0$, the family of random metrics
    $$\left\{n^{-\theta}\widehat{d}(\lfloor n\cdot\rfloor,\lfloor n\cdot\rfloor;[-nR,nR]_\mathds{Z}^d)\right\}$$
     is tight with respect to the topology of local uniform convergence on $[-R,R]^d$. Furthermore, any subsequential limit can be viewed as a continuous function on $\mathds{R}^{2d}$.
\end{proposition}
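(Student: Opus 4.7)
The plan is to deduce tightness from an Arzelà--Ascoli-type argument after establishing a uniform (in $n$) H\"older-type modulus of continuity, which in turn will be extracted from the uniform moment generating function bound in Theorem \ref{dis-upperbound} via a dyadic chaining. Throughout, fix $R > 0$ and work on $[-R,R]^d$; the full $\mathds{R}^{2d}$ statement follows by passing to an increasing sequence of $R$'s via a standard diagonal extraction.

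First I would set up the chaining. For each $k \geq 0$, partition $[-R,R]^d$ into essentially $2^{kd}$ closed dyadic cubes $\{Q_{k,i}\}$ of side length $2^{-k+1}R$. By translation invariance and scaling, and with $\widehat{d}_n := n^{-\theta}\widehat{d}(\lfloor n\cdot\rfloor, \lfloor n\cdot\rfloor)$, Theorem \ref{dis-upperbound} gives for any $\eta \in (0, 1/(1-\theta))$ a constant $M < \infty$ with
\begin{equation*}
\mathds{E}\left[\exp\left\{\left(\frac{\operatorname{diam}(Q_{k,i}; \widehat{d}_n)}{(2^{-k+1}R)^\theta}\right)^\eta\right\}\right] \leq M \quad\text{uniformly in } n, k, i.
\end{equation*}
By Markov's inequality and a union bound over $i$ and $k$, choosing $\lambda_k = A(k+1)^{1/\eta}$ for a large enough $A = A(\delta)$, the event
\begin{equation*}
\mathsf{E}_A := \bigcap_{k\geq 0}\bigcap_{i}\left\{\operatorname{diam}(Q_{k,i};\widehat{d}_n) \leq A\,(k+1)^{1/\eta}(2^{-k+1}R)^\theta\right\}
\end{equation*}
satisfies $\mathds{P}[\mathsf{E}_A] \geq 1 - \delta$, again uniformly in $n$, since $\sum_k 2^{kd} \exp(-A^\eta(k+1)) < \infty$ once $A$ is large.

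Next I would convert the cube diameter bounds into a modulus of continuity. For any ${\bm x},{\bm y}\in[-R,R]^d$ with $\|{\bm x}-{\bm y}\|_\infty \in [2^{-k-1}R, 2^{-k}R)$, a bounded number (depending only on $d$) of consecutive dyadic cubes at scale $k$ covers a segment from ${\bm x}$ to ${\bm y}$, so on $\mathsf{E}_A$,
\begin{equation*}
\widehat{d}_n({\bm x},{\bm y}) \;\leq\; C_d\,A\,(k+1)^{1/\eta}(2^{-k+1}R)^\theta \;\leq\; C'_{d,R,\eta}\,A\,\|{\bm x}-{\bm y}\|_\infty^\theta\,\bigl(\log(R/\|{\bm x}-{\bm y}\|_\infty)+1\bigr)^{1/\eta},
\end{equation*}
with the convention that the bound is trivial when ${\bm x}={\bm y}$. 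This is an equicontinuity estimate on the (random) function $\widehat{d}_n : [-R,R]^{2d}\to[0,\infty)$ that holds with probability $\geq 1-\delta$ uniformly in $n$. Combined with the uniform tail bound $\widehat{d}_n({\bm 0},{\bm 0}) = 0$, Arzel\`a--Ascoli (applied to the closed, equicontinuous, pointwise-bounded family arising from $\mathsf{E}_A$) shows that the laws of $\widehat{d}_n$ form a tight family of continuous pseudometrics on $[-R,R]^d$. A diagonal argument over $R \to \infty$ upgrades this to tightness on $\mathds{R}^{2d}$ with respect to local uniform convergence, and any subsequential limit inherits the same modulus of continuity, hence is itself a continuous function on $\mathds{R}^{2d}$. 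The internal-metric version $\widehat{d}(\lfloor n\cdot\rfloor,\lfloor n\cdot\rfloor;[-nR,nR]_\mathds{Z}^d)$ is handled by the same chaining, noting that for ${\bm x},{\bm y}$ in a common dyadic sub-cube of $[-R,R]^d$ the constructed connecting path stays inside $[-R,R]^d$.

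The main obstacle I anticipate is not any single estimate but the bookkeeping: one must choose $\eta>0$ small enough that the stretched exponential moment bound of Theorem \ref{dis-upperbound} is available and yet use a chaining factor $(k+1)^{1/\eta}$ strong enough to beat the $2^{kd}$-factor from the union bound over cubes at scale $k$. Once $A = A(\delta,d,\eta,R)$ is chosen so that $\sum_k 2^{kd} M \exp(-A^\eta(k+1)) \leq \delta$, every other step reduces to routine real analysis and Arzel\`a--Ascoli, and all bounds are automatically uniform in $n$ because Theorem \ref{dis-upperbound} is.
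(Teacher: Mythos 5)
Your chaining strategy is the right instinct, but there is a genuine gap that the paper takes pains to handle: the function $\widehat{d}_n(\bm x,\bm y) := n^{-\theta}\widehat{d}(\lfloor n\bm x\rfloor,\lfloor n\bm y\rfloor)$ is a step function of $(\bm x,\bm y)$, not continuous, and your H\"older modulus cannot hold for it. Concretely, if $\|\bm x-\bm y\|_\infty$ is arbitrarily small but $\lfloor n\bm x\rfloor\neq\lfloor n\bm y\rfloor$ (which always happens near lattice hyperplanes), then $\widehat{d}_n(\bm x,\bm y)\geq n^{-\theta}$, whereas your claimed bound $C\,\|{\bm x}-{\bm y}\|_\infty^\theta(\log\cdots)^{1/\eta}\to 0$. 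The same problem sinks the uniform MGF bound you invoke: when $2^{-k+1}Rn < 1$, a dyadic cube $Q_{k,i}$ that straddles a lattice boundary has $\operatorname{diam}(Q_{k,i};\widehat{d}_n)\asymp n^{-\theta}$ but $(2^{-k+1}R)^\theta<n^{-\theta}$, so the ratio is unbounded and Theorem \ref{dis-upperbound} (stated for integer side lengths $\geq 1$) no longer applies; your event $\mathsf{E}_A$ then has probability tending to $0$ as $n\to\infty$ for fixed $A$. Finally, even granting an equicontinuity estimate, the standard Arzel\`a--Ascoli theorem does not apply to a family of discontinuous functions, so a further idea is needed to conclude compactness.

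The paper addresses all three points at once. It uses the generalized Arzel\`a--Ascoli theorem (Lemma \ref{newAAtheorem}), which permits a modulus of the form $d_Y(v_n(t),v_n(t'))\leq\omega(t,t')+\delta_n$ with an additive error $\delta_n\to 0$; here $\delta_n=O(n^{-\theta})$ absorbs the discreteness. Concretely, it proves tightness of
$\sup_{\bm x,\bm y}\widehat{d}_n(\bm x,\bm y)/\max\{\|\bm x-\bm y\|_\infty^{\theta/2},n^{-\theta}\}$,
splitting into the case $\|\bm x-\bm y\|_\infty\leq 3/n$ (where $\widehat{d}_n\leq 3n^{-\theta}$ trivially) and the case $\|\bm x-\bm y\|_\infty>3/n$, where the discrete points $\lfloor n\bm x\rfloor,\lfloor n\bm y\rfloor$ are genuinely separated and the chaining estimate (Lemma \ref{HolderLemma} applied to $\widehat{d}$ on $[0,n]_\mathds{Z}^d$) is valid. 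To fix your argument you would need to (i) stop the chaining at scale $2^{-k+1}R\asymp 1/n$, (ii) absorb the remaining jump into an additive $O(n^{-\theta})$ term, and (iii) replace Arzel\`a--Ascoli by Lemma \ref{newAAtheorem}. Once those adjustments are made, your approach and the paper's coincide.
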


Note that $\widehat{d}(\lfloor n\cdot\rfloor,\lfloor n\cdot\rfloor)$ is not a continuous function on $\mathds{R}^{2d}$. As a result, we will review a more general version of Arzela-Ascoli theorem as follows, (see e.g. \cite[Theorem 6.2]{DJERGC14}), to deal with discontinuous functions.
\begin{lemma}\label{newAAtheorem}
{\rm(\cite[Theorem 6.2]{DJERGC14})}
    Let $X$ be a compact metric space and $Y$ be a complete metric space. Denote by $S(X,Y)$ the space of functions from $X$ to $Y$ endowed with the topology of uniform convergence on $X$. Also denote by $C(X,Y)\subset S(X,Y)$ the space of continuous functions from $X$ to $Y$. Let $\{ v_n \}_{n\in \mathds {N} }$ be a sequence in $S(X,Y)$ such that there exists a function $\omega :X\times X\to [0,\infty ]$ and a sequence $ \{\delta _{n}\}_{n\in \mathds {N} }\subset [0,\infty )$ satisfying
    \begin{align*}
        &\lim _{d_{X}(t,t')\to 0}\omega (t,t')=0,\quad \lim _{n\to \infty }\delta _{n}=0, \\
        &\forall (t,t')\in X\times X,\quad \forall n\in \mathds {N} ,\quad d_{Y}(v_{n}(t),v_{n}(t'))\leq \omega (t,t')+\delta _{n}.
    \end{align*}
    Assume also that, for all $t\in X$, $\{v_{n}(t) \}_{n\in \mathds {N}}$ is relatively compact in $Y$. Then $\{v_{n}\}_{n\in \mathds {N} }$ is relatively compact in $S(X,Y)$, and any subsequential limit of $\{v_{n}\}_{n\in \mathds {N} }$ in this space is in $C(X,Y)$.
\end{lemma}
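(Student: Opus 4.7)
The plan is to adapt the classical Arzelà--Ascoli argument, with the twist that the functions $v_n$ are only approximately equicontinuous (modulus $\omega+\delta_n$), but $\delta_n\to 0$ forces any limit to actually be continuous. Throughout, let $d_X$ and $d_Y$ denote the metrics on $X$ and $Y$, and fix a countable dense subset $E=\{t_k\}_{k\in\mathds{N}}\subset X$, which exists because $X$ is compact (hence separable).

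First, I would extract a pointwise-convergent subsequence on $E$ by diagonalization. Since $\{v_n(t_1)\}$ is relatively compact in $Y$, there is a subsequence $\{v_{n_j^{(1)}}\}$ along which $v_n(t_1)$ converges in $Y$. Repeating for each $t_k$ and diagonalizing, I obtain a subsequence (still denoted $\{v_n\}$ after relabeling) such that $v(t_k):=\lim_{n\to\infty}v_n(t_k)$ exists in $Y$ for every $k$. The next step is to extend $v$ to all of $X$ and prove uniform convergence simultaneously. To this end, fix $\varepsilon>0$ and use the hypothesis $\omega(t,t')\to 0$ as $d_X(t,t')\to 0$ to pick $\eta>0$ so that $\omega(t,t')<\varepsilon$ whenever $d_X(t,t')<\eta$. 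By compactness of $X$, cover $X$ by finitely many balls $B(t_{k_1},\eta),\dots,B(t_{k_N},\eta)$ with centers in $E$. Choose $N_0$ large enough that for $n,m\ge N_0$ and $i=1,\dots,N$, $d_Y(v_n(t_{k_i}),v_m(t_{k_i}))<\varepsilon$ and $\delta_n<\varepsilon$. Then for any $t\in X$, picking $t_{k_i}$ with $d_X(t,t_{k_i})<\eta$, the triangle inequality gives
\begin{equation*}
d_Y(v_n(t),v_m(t))\le d_Y(v_n(t),v_n(t_{k_i}))+d_Y(v_n(t_{k_i}),v_m(t_{k_i}))+d_Y(v_m(t_{k_i}),v_m(t))\le 2(\omega(t,t_{k_i})+\delta_n\vee\delta_m)+\varepsilon\le 5\varepsilon.
\end{equation*}
Thus $\{v_n(t)\}$ is Cauchy uniformly in $t$, and since $Y$ is complete the limit $v(t):=\lim_n v_n(t)$ exists for every $t\in X$, and moreover $v_n\to v$ uniformly on $X$.

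Next I would verify the continuity of the limit. Passing to the limit $n\to\infty$ in the inequality $d_Y(v_n(t),v_n(t'))\le \omega(t,t')+\delta_n$, using $\delta_n\to 0$ and continuity of $d_Y$, yields
\begin{equation*}
d_Y(v(t),v(t'))\le \omega(t,t')\quad\text{for all }t,t'\in X.
\end{equation*}
Combined with $\omega(t,t')\to 0$ as $d_X(t,t')\to 0$, this immediately gives $v\in C(X,Y)$. Finally, to deduce that $\{v_n\}$ is relatively compact in $S(X,Y)$, note that the argument above extracts from an \emph{arbitrary} subsequence of the original $\{v_n\}$ a further subsequence converging uniformly (to some continuous limit), since the hypotheses are preserved under passing to subsequences; this is exactly the relative-compactness criterion in the complete metric space $S(X,Y)$ (uniform convergence topology, metrized e.g. by $\sup_X d_Y(\cdot,\cdot)\wedge 1$).

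The one point that requires care is the diagonal extraction combined with the approximate (rather than exact) equicontinuity: one must be careful that the $\delta_n$ term does not accumulate when chaining estimates across the finite $\eta$-cover. The bound $5\varepsilon$ above shows that it does not, precisely because $\delta_n\to 0$ allows absorbing those errors into $\varepsilon$ for $n$ large. Everything else is a direct transcription of the classical Arzelà--Ascoli proof, so I do not anticipate any real obstacle beyond keeping the bookkeeping of $\omega$ versus $\delta_n$ contributions straight.
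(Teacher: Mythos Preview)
Your proof is correct and follows the standard Arzel\`a--Ascoli strategy, with the one extra observation that the vanishing defect $\delta_n$ guarantees continuity of the limit. The paper does not actually supply its own proof of this lemma: it is quoted verbatim from \cite[Theorem~6.2]{DJERGC14} and used as a black box, so there is nothing to compare against beyond noting that your argument is precisely the natural one.
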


Additionally, we will introduce a useful rule for proving the continuity of a random metric, which will be used multiple times throughout the paper.
\begin{lemma}\label{HolderLemma}
    Suppose $f(\cdot,\cdot)$ is a random pseudometric on $\mathds{R}^d$ satisfying Axioms IV' and V2'. We denote its internal metric in $[-R,R]^d$ by $f(\cdot,\cdot;[-R,R]^d)$ for $R>0$. If
    \begin{equation}\label{Cf}
        C_f:=\sup_{r>0}\mathds{E}\left[\exp\left\{\frac{{\rm diam}([0,r]^d;f)}{r^\theta}\right\}\right]<\infty,
    \end{equation}
    then for any $R>0$ and $ M>2^{\theta+1+d}>0$,
    \begin{equation*}
        \mathds{P}\left[\sup_{{\bm x},{\bm x}'\in[-R,R]^d}\frac{f({\bm x},{\bm x}';[-R,R]^d)}{\|{\bm x}-{\bm x}'\|_\infty^\theta(\log\frac{4R}{\|{\bm x}-{\bm x}'\|_\infty})}>M\right]\le 2^{d+1}C_f\exp\{-2^{-\theta-1}M\}.
    \end{equation*}
\end{lemma}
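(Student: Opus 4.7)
The bound is a Kolmogorov-type continuity estimate, and I will prove it by a dyadic chaining argument combined with the exponential-moment hypothesis $C_f<\infty$. For each integer $k\ge 1$, partition $[-R,R]^d$ into $2^{kd}$ closed dyadic cubes $\mathcal{C}_k$ of common side length $\ell_k:=R\cdot 2^{1-k}$. By Axiom IV' (translation invariance) together with the definition of $C_f$, every $Q\in\mathcal{C}_k$ satisfies $\mathds{E}[\exp(\mathrm{diam}(Q;f)/\ell_k^\theta)]\le C_f$, so the exponential Markov inequality yields the basic tail bound
\[
\mathds{P}[\mathrm{diam}(Q;f)>t\,\ell_k^\theta]\le C_f\,e^{-t}\qquad\text{for every }t>0.
\]

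I will take thresholds of the form $t_k:=2^{-\theta-1}M\bigl(1+(k-1)\alpha\bigr)$ with $\alpha>0$ to be tuned (of order $\log 2$). A union bound over $Q\in\mathcal{C}_k$ and then over $k\ge 1$ gives
\[
\mathds{P}[\mathsf{B}]\le C_f\,e^{-2^{-\theta-1}M}\sum_{k\ge 1}2^{kd}\exp\!\bigl(-2^{-\theta-1}M(k-1)\alpha\bigr),
\]
where $\mathsf{B}:=\bigcup_{k\ge 1}\{\exists\,Q\in\mathcal{C}_k:\mathrm{diam}(Q;f)>t_k\ell_k^\theta\}$. Under $M>2^{\theta+1+d}$ one has $2^{-\theta-1}M>2^d$, which forces the common ratio of this geometric-type series to be at most $1/2$; the sum is then bounded by $2\cdot 2^d=2^{d+1}$, producing $\mathds{P}[\mathsf{B}]\le 2^{d+1}C_f\,e^{-2^{-\theta-1}M}$, matching the stated prefactor and exponent.

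The remaining task is to show that on the complementary event $\mathsf{B}^c$ the claimed uniform bound holds. Given $\bm x,\bm x'\in[-R,R]^d$, let $k_0\ge 1$ be the unique integer with $\|\bm x-\bm x'\|_\infty\in(\ell_{k_0+1},\ell_{k_0}]$; then $\bm x,\bm x'$ lie in a $2^d$-cluster of adjacent cubes in $\mathcal{C}_{k_0}$, and can be joined via a chain of face-adjacent cubes whose total $f$-length is a dimensional constant times $t_{k_0}\,\ell_{k_0}^\theta$. Using $\|\bm x-\bm x'\|_\infty^\theta\ge\ell_{k_0}^\theta/2^\theta$ and $\log(4R/\|\bm x-\bm x'\|_\infty)\ge(k_0+1)\log 2$, the chained quantity is bounded above by $M\,\|\bm x-\bm x'\|_\infty^\theta\log(4R/\|\bm x-\bm x'\|_\infty)$ once $\alpha$ is chosen appropriately.

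The main technical nuisance is the careful bookkeeping of constants so that the chaining factor from cluster traversal is compatible with the stated prefactor $2^{d+1}$ and exponent $2^{-\theta-1}$; when the naive ``$d+1$-cubes'' chaining is too loose, one passes instead to a shifted dyadic sub-partition ensuring $\bm x,\bm x'$ lie in a single dyadic cube of comparable size, collapsing the chain factor to one. The hypothesis $M>2^{\theta+1+d}$ supplies precisely the slack needed to absorb all constants cleanly into the final bound; no conceptual difficulty is expected beyond this bookkeeping, and any continuity issue (needed to pass from a countable dense set of dyadic centres to arbitrary $\bm x,\bm x'$) is handled by noting that on $\mathsf{B}^c$ the chaining itself forces the dyadic approximations to be Cauchy.
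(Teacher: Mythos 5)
Your proposal takes the same route as the paper: dyadic cubes at scale $k$, an exponential Markov bound on each cube's internal diameter via $C_f$, a union bound over cubes and scales, and a geometric-series sum controlled by $M>2^{\theta+1+d}$. The decomposition, the threshold structure (the paper uses $t_k = 2^{-\theta-1}M(k+1)$, which is the special case $\alpha=1$ of yours, up to harmless constant factors), and the final summation all match the paper's proof.

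Where you over-complicate matters is the chaining step. You speak of joining $\bm x,\bm x'$ by a ``chain of face-adjacent cubes'' with a ``dimensional constant'' factor, and you then float a shifted-sub-partition fallback and a density/continuity concern about dyadic centres. None of this is needed. The paper's observation is simpler and already exact: if $\|\bm x-\bm x'\|_\infty\le 2^{-k+1}R$ (the side length of the scale-$k$ grid on $[-R,R]^d$), then $\bm x$ and $\bm x'$ necessarily lie in two grid cubes $I_{j_1},I_{j_2}$ with $I_{j_1}\cap I_{j_2}\ne\emptyset$ (they are at most corner-adjacent, coordinate by coordinate). Picking $\bm z\in I_{j_1}\cap I_{j_2}$ and using the triangle inequality for the internal metric gives
\[
f(\bm x,\bm x';[-R,R]^d)\le f(\bm x,\bm z;I_{j_1})+f(\bm z,\bm x';I_{j_2})\le 2\sup_j \mathrm{diam}(I_j;f),
\]
so the chaining factor is exactly $2$, not a dimensional constant, and the bound is for \emph{all} $\bm x,\bm x'$ directly, so there is no continuity/density issue to handle. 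This factor of $2$ is precisely what produces the $2^{-\theta-1}$ in the exponent. With this simplification your remaining bookkeeping closes cleanly and matches the paper's constants.
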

\begin{proof}
    For any fixed $R>0$ and $ M>0$,
    \begin{equation}\label{tightstep1}
        \begin{split}
            &~\mathds{P}\left[\sup_{{\bm x},{\bm x}'\in[-R,R]^d}\frac{f({\bm x},{\bm x}';[-R,R]^d)}{\|{\bm x}-{\bm x}'\|_\infty^\theta\log\frac{4R}{\|{\bm x}-{\bm x}'\|_\infty}}>M\right]\\
            \le &~\sum_{k\ge0}\mathds{P}\left[\exists {\bm x},{\bm x}'\in[-R,R]^d\ \text{with}\ \|{\bm x}-{\bm x}'\|_\infty\in[2^{-k}R,2^{-k+1}R], \frac{f({\bm x},{\bm x}';[-R,R]^d)}{\|{\bm x}-{\bm x}'\|_\infty^\theta\log\frac{4R}{\|{\bm x}-{\bm x}'\|_\infty}}>M\right]\\
            \le &~\sum_{k\ge0}\mathds{P}\left[\exists {\bm x},{\bm x}'\in[-R,R]^d\ \text{with}\ \|{\bm x}-{\bm x}'\|_\infty\in[2^{-k}R,2^{-k+1}R],\frac{f({\bm x},{\bm x}';[-R,R]^d)}{(2^{-k+1}R)^\theta}>2^{-\theta}M (k+1)\right].\\
        \end{split}
    \end{equation}
    Next, we will estimate the terms on the right hand side of the last inequality in \eqref{tightstep1}. For any fixed $k\geq 0$, we divide $[-R,R]^d$ into $2^{kd}$ small cubes of side length $2^{-k+1}R$ and denote them as $I_j$ for $j\in [0,2^{kd}-1]_\mathds{Z}$. It is easy to see that for any ${\bm x},{\bm x}'$ satisfying $\|{\bm x}-{\bm x}'\|_\infty\le2^{-k+1}R$, there exist $j_1,j_2\in[0,2^{kd}-1]_\mathds{Z}$ such that ${\bm x}\in I_{j_1}$ and ${\bm x}'\in I_{j_1}\cup I_{j_2}$ with $I_{j_1}\cap I_{j_2}\neq\emptyset$.  As a result $f({\bm x},{\bm x}';[-R,R]^d)\le2\sup_{j}{\rm diam}(I_j;f)$, which implies
    \begin{equation}\label{tightstep2}
        \begin{split}
            &~\mathds{P}\left[\exists {\bm x},{\bm x}'\in[-R,R]^d\ \text{with}\ \|{\bm x}-{\bm x}'\|_\infty\in[2^{-k}R,2^{-k+1}R],\frac{f({\bm x},{\bm x}';[-R,R]^d)}{(2^{-k+1}R)^\theta}>2^{-\theta}M (k+1)\right]\\
            \le&~\mathds{P}\left[\exists j\in[0,2^{kd}-1]_{\mathds{Z}},\frac{{\rm diam}(I_j;f)}{(2^{-k+1}R)^\theta }>2^{-\theta-1}M(k+1)\right]\\
            \le&~2^{kd}\sup_{j\in[0,2^{kd}-1]_{\mathds{Z}}}\mathds{P}\left[\frac{{\rm diam}(I_j;f)}{(2^{-k+1}R)^\theta }>2^{-\theta-1}M (k+1)\right]\\
            =&~2^{kd}\mathds{P}\left[\frac{{\rm diam}([0,2^{-k+1}R]^d;f)}{(2^{-k+1}R)^\theta }>2^{-\theta-1}M (k+1)\right].\\
        \end{split}
    \end{equation}
    Here the last equality is due to translation invariance of $f$ (Axiom IV'). By \eqref{Cf} we can use Markov's inequality to obtain
    \begin{equation}\label{tightstep3}
        \mathds{P}\left[\frac{{\rm diam}([0,2^{-k+1}R]^d;f)}{(2^{-k+1}R)^\theta}>2^{-\theta-1}M (k+1)\right]\le C_f \exp\left\{-2^{-\theta-1}M(k+1)\right\}.
    \end{equation}

     Plugging \eqref{tightstep3} to \eqref{tightstep2} and then combining with \eqref{tightstep1}, we get
    for any $M>2^{\theta+1+d}>0$,
    \begin{equation*}
        \begin{split}
        \quad\mathds{P}\left[\sup_{{\bm x},{\bm x}'\in[-R,R]}\frac{f({\bm x},{\bm x}';[-R,R]^d)}{|{\bm x}-{\bm x}'|^\theta(\log\frac{4R}{|{\bm x}-{\bm x}'|})}>M\right]&\le C_f \sum_{k\ge0} \left(2^d\exp\{-2^{-\theta-1}M\}\right)^{k+1}\\
        &\le\frac{2^dC_f \exp\{-2^{-\theta-1}M\}}{1-2e^{-2}}\\
        &\le 2^{d+1}C_f \exp\{-2^{-\theta-1}M\},
        \end{split}
    \end{equation*}
    which implies the lemma.
\end{proof}

Now we present the
\begin{proof}[Proof of Proposition \ref{dis-tightness}]
    Without loss of generality, we can assume that $R$ is a positive integer.

    Note that due to the fact that $\widehat{d}$ only takes discrete values, if ${\bm x},{\bm y}$ are too close to each other, $\|{\bm x}-{\bm y}\|_\infty$ is very small but $n^{-\theta}\widehat{d}(\lfloor n {\bm x}\rfloor,\lfloor n {\bm y}\rfloor)$ may be larger than $n^{-\theta}$. Thus we can not use Arzela-Ascoli Theorem directly. Instead, we use a general version of Arzela-Ascoli Theorem (see Lemma \ref{newAAtheorem}). To be precise, in Lemma \ref{newAAtheorem}, we take $X=[-R,R]^{2d}$ and $Y=\mathds{R}$ (both endowed with Euclidean metric). Let $v_n( {\bm x}, {\bm y})=n^{-\theta}\widehat{d}(\lfloor n {\bm x}\rfloor,\lfloor n {\bm y}\rfloor)$ for ${\bm x},{\bm y}\in\mathds{R}^d$. Note that for any ${\bm x}_1, {\bm x}_2, {\bm y}_1, {\bm y}_2\in[-R,R]^d$, we have $$|v_n( {\bm x}_1, {\bm y}_1)-v_n({\bm x}_2, {\bm y}_2)|\leq v_n( {\bm x}_1, {\bm x}_2)+v_n( {\bm y}_1, {\bm y}_2)$$ from the triangle inequality. Then it suffices to show that the following two collections of random variables are tight:
    $$
    \left\{ n^{-\theta}{\rm diam}([-nR,nR]_\mathds{Z}^d;\widehat{d}) \right\}_{n\in\mathds{N}},\quad \left\{\sup_{{\bm x},{\bm y}\in[-R,R]^d}\frac{n^{-\theta}\widehat{d}(\lfloor n {\bm x}\rfloor,\lfloor n {\bm y}\rfloor;[-nR,nR]_\mathds{Z}^d)}{\max\{\|{\bm x}- {\bm y}\|_\infty^{\theta/2},n^{-\theta}\}}\right\}_{n\in\mathds{N}}.
    $$
    Note that the first tightness is directly implied by Theorem \ref{dis-upperbound}. Thus it suffices to show that the second collection is tight.

    For any $n\in\mathds{N}$ and ${\bm x},{\bm y}\in[-R,R]^d$, if $\|{\bm x}-{\bm y}\|_\infty\leq 3/n$, then $n^{-\theta}\widehat{d}(\lfloor n{\bm x} \rfloor,\lfloor n {\bm y}\rfloor ;[-nR,nR]^d)\leq 3n^{-\theta}$; otherwise, if $\|{\bm x}-{\bm y}\|_\infty >3/n$, we have $\|\lfloor n{\bm x} \rfloor-\lfloor n{\bm y} \rfloor\|_\infty\geq n\|{\bm x}-{\bm y}\|_\infty/3\geq 1$. Combining these two cases, we have
    $$
    \sup_{{\bm x},{\bm y}\in[-R,R]^d}\frac{n^{-\theta}\widehat{d}(\lfloor n{\bm x}\rfloor,\lfloor n{\bm y}\rfloor;[-nR,nR]_\mathds{Z}^d)}{\max\{\|{\bm x}-{\bm y}\|_\infty^{\theta/2},n^{-\theta}\}}\leq 3\max\left\{1, \sup_{{\bm i},{\bm j}\in[-nR,nR]_\mathds{Z}^d}\frac{\widehat{d}({\bm i},{\bm j};[-nR,nR]_\mathds{Z}^d)}{n^{\theta/2}\|{\bm i}-{\bm j}\|_\infty^{\theta/2}} \right\}
    $$
    and it suffices to show the tightness of
    $$
    \left\{\sup_{{\bm i},{\bm j}\in[0,n]_\mathds{Z}^d}\frac{\widehat{d}({\bm i},{\bm j};[0,n]_\mathds{Z}^d)}{n^{\theta/2}\|{\bm i}-{\bm j}\|_\infty^{\theta/2}}\right\}_{n\in\mathds{N}}
    $$
    by the translation invariance of $\widehat{d}$. Without loss of generality, we only consider the case when $n=2^m$ for some $m\in\mathds{N}$. Then we apply Theorem \ref{dis-upperbound}, \eqref{tightstep1} and \eqref{tightstep2}, but replacing $f(\cdot,\cdot;[-R,R]^d)$ with $\widehat{d}(\cdot,\cdot;[0,n]_{\mathds{Z}}^d)$ and replacing $[2^{-k}R,2^{-k+1}R]$ with $[2^k,2^{k+1}]$, to get the following result for any $M>1$:
    \begin{equation}\label{prob-tight}
    \begin{split}
    \mathds{P}\left[\sup_{\bm i,\bm j\in [0,n]^d_\mathds{Z}}\frac{\widehat{d}(\bm i,\bm j;[0,n]^d_\mathds{Z})}{n^{\theta/2}\|\bm i-\bm j\|_\infty^{\theta/2}}>M\right]
    &\leq \widehat{C}\sum_{0\leq k\leq \log_2n}2^{-kd}n^d\exp\left\{-2^{-\theta-1}M(2^{-k}n)^{\theta/2}\right\}\\
    &\leq \widehat{C}\sum_{0\leq l\leq \log_2 n} 2^{-d(\log_2 n-l)}n^d\exp\left\{-2^{-\theta-1}M(2^{-(\log_2 n-l)}n)^{\theta/2}\right\}
    \\
    &\leq \widehat{C}\sum_{l\geq 0}2^{ld}\exp\left\{-2^{-\theta-1}M(2^l)^{\theta/2}\right\},
    \end{split}
    \end{equation}
where
$$
\widehat{C}:=\sup_{n\in \mathds{N}}\mathds{E}\left[\exp\left\{\frac{\text{diam}([0,n]_\mathds{Z}^d)}{n^\theta}\right\}\right]<\infty
$$
from Theorem \ref{dis-upperbound}. Hence, we obtain the desired statement since the last line of \eqref{prob-tight} vanishes as $M\to \infty$.
\end{proof}

By combining Proposition \ref{dis-tightness} with Lemma \ref{an-bounded}, we get the tightness of $\{\widehat{a}_n^{-1}\widetilde{d}_n\}$. Furthermore, we can show that any subsequential limit $D$ satisfies Axioms V1' and V2'.
\begin{lemma}\label{dis-Axiom5}
    Any subsequential limit $D$ of $\{\widehat{a}_n^{-1}\widetilde{d}_n\}$ satisfies Axioms V1' and V2', i.e.
    $$\left\{\frac{r^\theta}{D({\bm 0},([-r,r]^d)^c)}\right\}_{r>0}$$ is tight and
    $$ \sup_{r>0}\mathds{E}\left[\exp\left\{\left(\frac{{\rm diam}([0,r]^d;D)}{r^\theta}\right)^\eta\right\}\right]<\infty $$
    for all $\eta\in (0, 1/(1-\theta))$.
\end{lemma}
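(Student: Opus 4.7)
The plan is to transfer the discrete tail estimates, specifically Theorem \ref{dis-upperbound} (uniform MGF bound on the diameter of $\widehat{d}$) and Lemma \ref{Baumler4-10} (tightness of the normalized exit distance of $\widehat{d}$), to the prelimits $\widehat{a}_n^{-1}\widetilde{d}_n$ using the identity-in-law $d^{G_n}\stackrel{\rm law}{=}\widehat{d}$, and then pass to the subsequential limit $D$ via a Skorokhod representation. Throughout, the two-sided bound $\widehat{a}_n\asymp n^\theta$ from Lemma \ref{an-bounded} is used to convert between the discrete and continuous scales.

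For Axiom V2', the containment $\{\lfloor n\bm x\rfloor:\bm x\in[0,r]^d\}\subseteq[0,\lceil nr\rceil]_\mathds{Z}^d$ combined with $\widehat{a}_n^{-1}\leq Cn^{-\theta}$ yields
\[
\frac{{\rm diam}([0,r]^d;\widehat{a}_n^{-1}\widetilde{d}_n)}{r^\theta}\leq C'\cdot\frac{{\rm diam}([0,\lceil nr\rceil]_\mathds{Z}^d;\widehat{d})}{\lceil nr\rceil^\theta}
\]
for all $n$ and $r$ with $nr\geq 1$. Given $\eta\in(0,1/(1-\theta))$, I would choose $\eta_1\in(\eta,1/(1-\theta))$ and use the elementary inequality $(C'x)^\eta\leq x^{\eta_1}+K$ (valid for $x\geq 0$ with some constant $K=K(C',\eta,\eta_1)$) to dominate the MGF of the left-hand side by a constant multiple of the MGF of $\bigl({\rm diam}([0,\lceil nr\rceil]_\mathds{Z}^d;\widehat{d})/\lceil nr\rceil^\theta\bigr)^{\eta_1}$, which is uniformly bounded in $n$ and $r$ by Theorem \ref{dis-upperbound}. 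Along a Skorokhod coupling $\widehat{a}_{n_k}^{-1}\widetilde{d}_{n_k}\to D$ a.s.\ uniformly on compact subsets of $\mathds{R}^{2d}$, the diameter of $[0,r]^d$ is continuous under uniform convergence on $[0,r]^{2d}$, so ${\rm diam}([0,r]^d;\widehat{a}_{n_k}^{-1}\widetilde{d}_{n_k})\to{\rm diam}([0,r]^d;D)$ almost surely, and Fatou's lemma transfers the MGF bound to $D$ uniformly in $r$.

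For Axiom V1', fix $r>0$ and $\varepsilon>0$, and set $m=\lfloor nr\rfloor-1$. For any $\bm y\in([-r,r]^d)^c$, some coordinate of $\bm y$ has absolute value exceeding $r$, which forces the corresponding coordinate of $\lfloor n\bm y\rfloor$ to exceed $m$ in absolute value; hence $\lfloor n\bm y\rfloor\in([-m,m]_\mathds{Z}^d)^c$, and taking the infimum over $\bm y$ gives
\[
\widetilde{d}_n(\bm 0,([-r,r]^d)^c)\geq d^{G_n}\bigl(\bm 0,([-m,m]_\mathds{Z}^d)^c\bigr).
\]
Combining $d^{G_n}\stackrel{\rm law}{=}\widehat{d}$ with Lemma \ref{Baumler4-10} and $\widehat{a}_n^{-1}\geq cn^{-\theta}$ from Lemma \ref{an-bounded}, there is $c_\varepsilon>0$ such that the event $A_n:=\{\widehat{a}_n^{-1}\widetilde{d}_n(\bm 0,([-r,r]^d)^c)\geq c_\varepsilon r^\theta\}$ satisfies $\mathds{P}[A_n]\geq 1-\varepsilon$ for all sufficiently large $n$. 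In the Skorokhod coupling, fixing any single $\bm y\in([-r,r]^d)^c$ and using $\widehat{a}_{n_k}^{-1}\widetilde{d}_{n_k}(\bm 0,([-r,r]^d)^c)\leq\widehat{a}_{n_k}^{-1}\widetilde{d}_{n_k}(\bm 0,\bm y)\to D(\bm 0,\bm y)$, then taking the infimum over $\bm y$, yields
\[
\limsup_k\widehat{a}_{n_k}^{-1}\widetilde{d}_{n_k}(\bm 0,([-r,r]^d)^c)\leq D(\bm 0,([-r,r]^d)^c)
\]
almost surely. On the event $\{A_{n_k}\text{ infinitely often}\}$, which has probability at least $1-\varepsilon$ by reverse Fatou, this forces $D(\bm 0,([-r,r]^d)^c)\geq c_\varepsilon r^\theta$, establishing Axiom V1'.

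The main obstacle is that $\widehat{a}_n^{-1}\widetilde{d}_n$ is only piecewise constant on $n^{-1}\mathds{Z}^d$-cells and hence discontinuous on $\mathds{R}^{2d}$, so functionals such as the exit distance do not pass to the limit by a direct continuous-mapping argument. The plan sidesteps this by exploiting two robust facts: on a compact set, the diameter is a continuous functional under uniform convergence (used for V2'); and the exit distance is upper semi-continuous under local uniform convergence of metrics, as can be seen by testing against any single fixed point in the complement and then taking infima (used for V1').
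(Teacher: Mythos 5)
Your proof is correct and takes essentially the same route as the paper's: Theorem \ref{dis-upperbound} with a slightly larger exponent $\eta_1$ followed by Fatou under a Skorokhod representation for Axiom V2', and Lemma \ref{Baumler4-10} combined with Lemma \ref{an-bounded} for Axiom V1'. The paper dispatches V1' in a single sentence (``immediately implied by the tightness\dots''), whereas you correctly spell out the discrete comparison $\widetilde{d}_n(\bm 0,([-r,r]^d)^c)\geq d^{G_n}(\bm 0,([-m,m]_\mathds{Z}^d)^c)$, the upper semi-continuity of the exit distance under local uniform convergence, and the reverse-Fatou step that the paper leaves implicit.
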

\begin{proof}
    Axiom V1' is immediately implied by the tightness of $$\left\{\frac{r^\theta}{n^{-\theta}\widehat{d}({\bm 0},([-nr,nr]_\mathds{Z}^d)^c)}\right\}_{r>0,nr>2}$$
    in Lemma \ref{Baumler4-10}.

    Now we prove Axiom V2'. From Theorem \ref{dis-upperbound}, we get that for all $\eta\in (0, 1/(1-\theta))$,
    \begin{equation}\label{tighteta}
    \sup_{r>0,n\in\mathds{N},nr>2}\mathds{E}\left[\exp\left\{\left(\frac{n^{-\theta}{\rm diam}([0,nr]_\mathds{Z}^d;\widehat{d})}{r^\theta}\right)^\eta\right\}\right]<\infty.
    \end{equation}
Thus, for any $\eta\in (0,1/(1-\theta))$, we choose $\eta_1\in (0,1/(1-\theta))$ such that $\eta<\eta_1$. Then applying \eqref{tighteta} with $\eta=\eta_1$ and Lemma \ref{an-bounded}, we can obtain that
    $$
    \sup_{r>0,n\in\mathds{N},nr>2}\mathds{E}\left[\exp\left\{\left(\frac{{\rm diam}([0,r]_\mathds{Z}^d;\widehat{a}_n^{-1}\widetilde{d}_n)}{r^\theta}\right)^\eta\right\}\right]<\infty.
    $$
    Since $D$ is a subsequential limit of $\{\widehat{a}_n^{-1}\widetilde{d}_n\}$, there is a sequence $\{n_k\}$ such that $\widehat{a}_{n_k}^{-1}\widetilde{d}_{n_k}$ converges to $D$ in law as $n_k\to\infty$. Hence,
    applying Fatou's Lemma to the above inequality, we arrive at
    $$
    \sup_{r>0}\mathds{E}\left[\exp\left\{\left(\frac{{\rm diam}([0,r]^d;D)}{r^\theta}\right)^\eta\right\}\right]
    \leq\sup_{r>0}\liminf_{n_k\to\infty}\mathds{E}\left[\exp\left\{\left(\frac{{\rm diam}([0,r]_\mathds{Z}^d;\widehat{a}_{n_k}^{-1}\widetilde{d}_{n_k})}{r^\theta}\right)^\eta\right\}\right]<\infty,
    $$
    which implies  Axiom V2'.
\end{proof}

\subsection{Translation invariance}
In this subsection, we will show that any subsequential limit $D$ satisfies Axiom IV' (translation invariance).

\begin{proposition}\label{dis-Axiom4}
    Assume that $\widehat{a}_{n_k}^{-1}\widetilde{d}_{n_k}$ converges to $D$ in law as $k\to\infty$ with respect to the topology of local uniform convergence on $\mathds{R}^{2d}$. Then for any ${\bm z}\in\mathds{R}^d$, $D(\cdot+{\bm z},\cdot+{\bm z})$ has the same law as $D(\cdot,\cdot)$.
\end{proposition}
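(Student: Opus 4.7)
The plan is to exploit the exact $\mathds{Z}^d$-translation invariance available at the discrete level and then show that the rounding error introduced by shifting by a real vector becomes negligible after rescaling by $\widehat{a}_n$. Since the Poisson point process is translation invariant, the probability that $V_{1/n}(\bm{i}/n)$ and $V_{1/n}(\bm{j}/n)$ are joined in $G_n$ depends only on $\bm{i}-\bm{j}$, and the usual nearest-neighbor edges are translation invariant as well. Hence the graph $G_n$, and in particular its chemical distance $d^{G_n}$ as a process on $\mathds{Z}^d\times\mathds{Z}^d$, is invariant in law under any shift by $\bm{m}\in\mathds{Z}^d$.

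For a fixed $\bm{z}\in\mathds{R}^d$, set $\bm{m}_n:=\lfloor n\bm{z}\rfloor\in\mathds{Z}^d$. A direct computation gives, for every $\bm{x}\in\mathds{R}^d$,
$$
\lfloor n(\bm{x}+\bm{z})\rfloor = \lfloor n\bm{x}\rfloor + \bm{m}_n + \bm{\epsilon}_n(\bm{x}),
$$
where $\bm{\epsilon}_n(\bm{x})\in\{0,1\}^d$ is a \emph{deterministic} function of $(\bm{x},\bm{z},n)$. Applying the $\bm{m}_n$-translation invariance of $G_n$ to the entire joint process in $(\bm{x},\bm{y})$ yields
$$
\bigl\{\widetilde{d}_n(\bm{x}+\bm{z},\bm{y}+\bm{z})\bigr\}_{(\bm{x},\bm{y})}
\stackrel{\rm law}{=}
\bigl\{d^{G_n}(\lfloor n\bm{x}\rfloor + \bm{\epsilon}_n(\bm{x}),\,\lfloor n\bm{y}\rfloor + \bm{\epsilon}_n(\bm{y}))\bigr\}_{(\bm{x},\bm{y})}.
$$
On the same probability space, the right-hand side differs from $\widetilde{d}_n(\bm{x},\bm{y}) = d^{G_n}(\lfloor n\bm{x}\rfloor,\lfloor n\bm{y}\rfloor)$ by at most $2d$ uniformly in $(\bm{x},\bm{y})$: the triangle inequality gives a bound by $d^{G_n}(\lfloor n\bm{x}\rfloor+\bm{\epsilon}_n(\bm{x}),\lfloor n\bm{x}\rfloor) + d^{G_n}(\lfloor n\bm{y}\rfloor+\bm{\epsilon}_n(\bm{y}),\lfloor n\bm{y}\rfloor)$, and each summand is at most $d$ because nearest-neighbor edges have unit weight in $G_n$ and $\|\bm{\epsilon}_n\|_1\leq d$.

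After normalization, this discrepancy is bounded by $2d\,\widehat{a}_n^{-1}$, which vanishes as $n\to\infty$ thanks to the estimate $\widehat{a}_n\asymp n^\theta$ from Lemma \ref{an-bounded}. Passing to the subsequence $n_k$ along which $\widehat{a}_{n_k}^{-1}\widetilde{d}_{n_k}\to D$ in law, both processes $(\widehat{a}_{n_k}^{-1}\widetilde{d}_{n_k}(\bm{x}+\bm{z},\bm{y}+\bm{z}))_{(\bm{x},\bm{y})}$ and $(\widehat{a}_{n_k}^{-1}\widetilde{d}_{n_k}(\bm{x},\bm{y}))_{(\bm{x},\bm{y})}$ converge in law (in the topology of local uniform convergence on $\mathds{R}^{2d}$) to the same limit $D$, which forces $D(\cdot+\bm{z},\cdot+\bm{z})\stackrel{\rm law}{=}D(\cdot,\cdot)$. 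The only mild subtlety here is that $\widehat{D}_n$ is not continuous on $\mathds{R}^{2d}$, so the joint convergence must be formulated in the space $S(\mathds{R}^{2d},\mathds{R})$ as in the application of Lemma \ref{newAAtheorem} used for tightness; once that is set up, the uniform bound $2d\,\widehat{a}_n^{-1}\to0$ renders the identification of limits immediate.
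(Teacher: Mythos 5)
Your proof is correct and follows essentially the same strategy as the paper's: exploit the exact $\mathds{Z}^d$-translation invariance of $G_n$ by shifting through a nearby integer vector (the paper picks some $\bm m_{k}(\bm z)\in\mathds{Z}^d$ with $|(1/n_k)\bm m_k(\bm z)-\bm z|<d/n_k$, you take $\bm m_n=\lfloor n\bm z\rfloor$), then bound the residual rounding discrepancy uniformly by a constant multiple of $\widehat{a}_n^{-1}$, which vanishes by Lemma \ref{an-bounded}. The only cosmetic difference is that you absorb the rounding error into an explicit $\bm\epsilon_n(\bm x)\in\{0,1\}^d$ and compare on the same probability space, whereas the paper compares $\widetilde{d}_{n_k}(\cdot+(1/n_k)\bm m_k(\bm z),\cdot)$ to $\widetilde{d}_{n_k}(\cdot+\bm z,\cdot)$; both yield the same $O(d\,\widehat{a}_{n_k}^{-1})$ uniform bound and the same identification of limits.
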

\begin{proof}
    Fix ${\bm z}\in\mathds{R}^d$. For any $n_k$, we can choose $\bm m_k({\bm z})\in\mathds{Z}^d$ such that $|(1/n_k)\bm m_k({\bm z})-{\bm z}|<d/n_k$. Since $$\widetilde{d}_n(\cdot+(1/n)\bm m,\cdot+(1/n)\bm m)\stackrel{\rm law}{=} \widehat{d}(\lfloor n\cdot\rfloor +\bm m,\lfloor n\cdot\rfloor +\bm m)\stackrel{\rm law}=\widehat{d}(\lfloor n\cdot\rfloor ,\lfloor n\cdot\rfloor)\stackrel{\rm law}{=}\widetilde{d}_n(\cdot,\cdot)$$
    for any $n\in\mathds{N}$ and $\bm m\in\mathds{Z}^d$, we get that $\widehat{a}_n^{-1}\widetilde{d}_n(\cdot+(1/n_k)\bm m_k({\bm z}),\cdot+(1/n_k)\bm m_k({\bm z}))$ converges to $D$ in law with respect to the topology of local uniform convergence on $\mathds{R}^{2d}$.

    In addition, since for any ${\bm x},{\bm y}\in\mathds{R}^d$,
    $$\left|\widehat{a}_{n_k}^{-1}\widetilde{d}_{n_k}({\bm x}+(1/n_k)\bm m_k({\bm z}),{\bm y}+(1/n_k)\bm m_k({\bm z}))-\widehat{a}_{n_k}^{-1}\widetilde{d}_{n_k}({\bm x}+{\bm z},{\bm y}+{\bm z})\right|\leq 4d\widehat{a}_{n_k}^{-1}\to 0 $$
    uniformly over ${\bm x},{\bm y}$ as $n_k\to\infty$ from the fact that $|(1/n_k) \bm m_k({\bm z})-{\bm z}|<d/n_k$ and Lemma \ref{an-bounded}, we obtain that $\widehat{a}_{n_k}^{-1}\widetilde{d}_{n_k}(\cdot+{\bm z},\cdot+{\bm z})$ also converges to $D$ in law. However, from the condition in the proposition, we already have $\widehat{a}_{n_k}^{-1}\widetilde{d}_{n_k}(\cdot+{\bm z},\cdot+{\bm z})$ converges to $D(\cdot+{\bm z},\cdot+{\bm z})$ in law. As a result, $D(\cdot+{\bm z},\cdot+{\bm z})$ has the same law as $D(\cdot,\cdot)$.
\end{proof}

\subsection{Length space}
In this subsection, we will present a proposition that implies any subsequential limiting metric is, in an appropriate sense, a length space.
\begin{proposition}\label{dis-lengthspace}
    Assume that $\widehat{a}_{n_k}^{-1}\widetilde{d}_{n_k}$ converges to $D$ in law as $k \to \infty$ with respect to the topology of local uniform convergence on $\mathds{R}^{2d}$. Then $D$ is a length metric {\rm(}viewed as a metric on the quotient metric $\mathds{R}^d/\sim${\rm)}. Here $\sim$ is the equivalence relation that ${\bm x}\sim {\bm y}$ if and only if $D({\bm x}, {\bm y}) = 0$.
\end{proposition}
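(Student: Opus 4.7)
The plan is to inherit the length space structure of $D$ from the discrete graph metric by producing exact midpoints via a compactness argument.

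By Skorohod's representation theorem we may pass to a probability space on which $\widehat{a}_{n_k}^{-1}\widetilde{d}_{n_k}\to D$ locally uniformly on $\mathds{R}^{2d}$ almost surely. Combining Axioms IV' and V2' (already verified for $D$ in Proposition \ref{dis-Axiom4} and Lemma \ref{dis-Axiom5}) with Lemma \ref{HolderLemma}, $D$ is almost surely locally H\"older continuous with respect to the Euclidean topology, so it is a continuous pseudometric on $\mathds{R}^d$ and descends to a genuine continuous metric on the quotient $\mathds{R}^d/{\sim}$.

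The main step is to establish the exact midpoint property for every fixed pair $\bm{x},\bm{y}\in\mathds{R}^d$: there exists $\bm{z}\in\mathds{R}^d$ with $D(\bm{x},\bm{z})=D(\bm{z},\bm{y})=\tfrac{1}{2}D(\bm{x},\bm{y})$. For each $k$ let $P_k$ be a $\widehat{d}$-geodesic in $G_{n_k}$ from $\lfloor n_k\bm{x}\rfloor$ to $\lfloor n_k\bm{y}\rfloor$, of integer length $L_k=\widetilde{d}_{n_k}(\bm{x},\bm{y})$, and let $\bm{w}_k\in\mathds{Z}^d$ be the vertex on $P_k$ at graph distance $\lfloor L_k/2\rfloor$ from $\lfloor n_k\bm{x}\rfloor$, so that $\widehat{d}(\lfloor n_k\bm{x}\rfloor,\bm{w}_k)\le \lfloor L_k/2\rfloor$ and $\widehat{d}(\bm{w}_k,\lfloor n_k\bm{y}\rfloor)\le L_k-\lfloor L_k/2\rfloor$. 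The key assertion is that $\{\bm{w}_k/n_k\}$ remains Euclidean-bounded with arbitrarily high probability; I would derive this by contradiction, using Lemma \ref{Baumler4-10} and Lemma \ref{an-bounded}: if $\|\bm{w}_k\|/n_k$ were to escape some ball of radius $R$, then $\widehat{d}(\lfloor n_k\bm{x}\rfloor,\bm{w}_k)$ would be at least the exit-distance $\widehat{d}(\lfloor n_k\bm{x}\rfloor,([-n_kR,n_kR]_\mathds{Z}^d)^c)$, which by Lemma \ref{Baumler4-10} and translation invariance is at least $c_\varepsilon(n_kR)^\theta\asymp c'_\varepsilon R^\theta\,\widehat{a}_{n_k}$, contradicting $\widehat{a}_{n_k}^{-1}\lfloor L_k/2\rfloor\to D(\bm{x},\bm{y})/2<\infty$ once $R$ is taken large enough. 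Passing to a further subsequence, $\bm{w}_k/n_k\to\bm{z}$ for some $\bm{z}\in\mathds{R}^d$. The a.s.\ local uniform convergence $\widehat{a}_{n_k}^{-1}\widetilde{d}_{n_k}\to D$ together with continuity of $D$ then give $\widehat{a}_{n_k}^{-1}\widetilde{d}_{n_k}(\bm{x},\bm{w}_k/n_k)\to D(\bm{x},\bm{z})$ and analogously with $\bm{y}$ in place of $\bm{x}$; passing to the limit in $\widehat{a}_{n_k}^{-1}\widetilde{d}_{n_k}(\bm{x},\bm{w}_k/n_k)\le\widehat{a}_{n_k}^{-1}\lfloor L_k/2\rfloor$ and invoking the triangle inequality yields $D(\bm{x},\bm{z})=D(\bm{z},\bm{y})=\tfrac{1}{2}D(\bm{x},\bm{y})$.

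To finish, the midpoint property together with continuity and completeness of $(\mathds{R}^d/{\sim},D)$ (the latter following from Lemma \ref{HolderLemma} and Axiom V1', which prevent $D$-Cauchy sequences from escaping to Euclidean infinity) yields via the standard iterated-bisection construction a $1$-Lipschitz map from the dyadic rationals in $[0,1]$ into $(\mathds{R}^d/{\sim},D)$ of total $D$-length $D(\bm{x},\bm{y})$, which extends by completeness to a geodesic, thereby exhibiting $(\mathds{R}^d/{\sim},D)$ as a length space. The main obstacle will be the spatial control of the discrete midpoints $\bm{w}_k$: unlike in continuum settings, a $\widehat{d}$-geodesic may freely use long edges, and so its graph midpoint can a priori be spatially arbitrary; ruling this out is exactly where the a priori lower bound of Lemma \ref{Baumler4-10} and the two-sided control $\widehat{a}_n\asymp n^\theta$ of Lemma \ref{an-bounded} enter. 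A secondary technical point is upgrading the midpoint property from a pointwise a.s.\ statement to one that holds simultaneously for all pairs $(\bm{x},\bm{y})$ on a single event of full probability; this is handled by verifying it on a countable dense subset of $\mathds{R}^{2d}$ and then extending by continuity of $D$.
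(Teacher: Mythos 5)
Your plan mirrors the paper's own argument quite closely: pass to a Skorohod coupling, exhibit discrete approximate midpoints, show they stay spatially bounded, extract a convergent subsequence, pass to the limit to obtain an exact $D$-midpoint, and conclude via the standard midpoint-plus-completeness criterion for length spaces (the paper invokes \cite[Theorem 2.4.16]{BBI01} rather than spelling out the bisection construction, but the content is the same). The real divergence, and the place where I see a genuine gap in your write-up, is the boundedness step.

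You propose to bound $\bm{w}_k/n_k$ directly via Lemma~\ref{Baumler4-10} and Lemma~\ref{an-bounded}, writing that if $\|\bm{w}_k\|/n_k > R$ then the exit distance $\widehat{d}(\lfloor n_k\bm{x}\rfloor,([-n_kR,n_kR]_\mathds{Z}^d)^c)$ ``is at least $c_\varepsilon (n_k R)^\theta$.'' But Lemma~\ref{Baumler4-10} is a tightness statement: for each fixed $k$ the lower bound $c_\varepsilon (n_k R)^\theta$ holds only with probability $\geq 1-\varepsilon$, and there is no single probability-$(1-\varepsilon)$ event on which it holds for \emph{all} $k$ simultaneously. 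A union bound over $k$ diverges, so as written the contradiction ``once $R$ is taken large enough'' is not reached on a fixed event. One can fill this by observing that your estimates give $\sup_k \mathds{P}[\|\bm{w}_k\|/n_k > R] < \varepsilon$ and then invoking $\mathds{P}[\liminf_k A_k] \le \liminf_k \mathds{P}[A_k]$, so that with probability at least $1-\varepsilon$ infinitely many $\bm{w}_k/n_k$ lie in $[-R,R]^d$, which suffices to extract a convergent subsequence and, sending $\varepsilon \to 0$, to deduce the midpoint property a.s. — but the proposal does not take this detour, and the phrase ``remains Euclidean-bounded'' suggests the stronger (and not directly available) claim that $\sup_k \|\bm{w}_k\|/n_k$ is controlled.

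The paper avoids this entirely by transferring the quantitative content to the \emph{limit} metric: it proves Lemma~\ref{dis-bounded} once and for all (using the monotonicity in $R$ of $D'(\bm 0, ([-R,R]^d)^c)$ together with Axiom~V1', so that the tightness in $R$ upgrades to an a.s.\ divergence statement), and then argues on the a.s.\ event of Skorohod convergence that any escaping approximate midpoint would force $\widehat{a}_{n_k}^{-1}\widetilde{d}'_{n_k}(\bm z, \cdot)$ to exceed a threshold controlled by $D'$, contradicting the midpoint inequality. This is cleaner because the $k$-uniform control comes for free from the a.s.\ pointwise statement about $D'$. I would recommend you either adopt the paper's route through Lemma~\ref{dis-bounded}, or at least make the Fatou-type step explicit; as written, the boundedness claim reads as if Lemma~\ref{Baumler4-10} were an almost-sure lower bound, which it is not.
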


Before starting our proof, we will first show the fact that any $D$-bounded set is also Euclidean bounded.
\begin{lemma}\label{dis-bounded}
 Let $D$ be the limit in Proposition \ref{dis-lengthspace}. Then a.s.\ for every compact set $K \subset \mathds{R}^d$, we have
    $$
    \lim_{R\to\infty} D(K,([-R,R]^d)^c)=\infty.
    $$
    In particular, every D-bounded subset of R is also Euclidean bounded.
\end{lemma}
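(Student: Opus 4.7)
The plan is to reduce the statement to showing that $D(\bm 0,([-R,R]^d)^c)\to\infty$ almost surely as $R\to\infty$, and then to upgrade via the triangle inequality to the stated uniform-in-$K$ statement. The two main ingredients are Axiom V1' for $D$ (which $D$ satisfies by Lemma \ref{dis-Axiom5}) together with the elementary observation that a monotone family of random variables converging in probability to $+\infty$ must actually converge almost surely to $+\infty$.

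For the first step, note that $R\mapsto D(\bm 0,([-R,R]^d)^c)$ is monotone non-decreasing in $R$, since the set $([-R,R]^d)^c$ shrinks as $R$ grows. Axiom V1' gives the tightness of $\{r^\theta/D(\bm 0,([-r,r]^d)^c)\}_{r>0}$; combined with $r^\theta\to\infty$ this immediately yields $D(\bm 0,([-R,R]^d)^c)\to\infty$ in probability. Any monotone family of random variables converges almost surely to a (possibly infinite) limit, and when that family additionally tends to $+\infty$ in probability the limit must be $+\infty$ almost surely. Hence there is a single event $\Omega_0$ of full probability on which $D(\bm 0,([-R,R]^d)^c)\to\infty$, and $\Omega_0$ does not depend on $K$.

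For the second step, fix any compact $K\subset\mathds{R}^d$, say $K\subset[-K_0,K_0]^d$. Since $D$ is a continuous pseudometric on $\mathds{R}^d$ (any subsequential limit is continuous by Proposition \ref{dis-tightness}), the function $x\mapsto D(\bm 0,x)$ is continuous, and so $M_K:=\sup_{x\in K}D(\bm 0,x)<\infty$ almost surely. For $R>K_0$, the triangle inequality gives
\[
D(K,([-R,R]^d)^c)\ \geq\ D(\bm 0,([-R,R]^d)^c)-M_K,
\]
and on $\Omega_0$ the right-hand side tends to $+\infty$. Since $\Omega_0$ is independent of $K$, this establishes the almost-sure statement uniformly over all compact sets.

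For the ``in particular'' assertion, let $S\subset\mathds{R}^d$ be $D$-bounded with $M:=\sup_{x,y\in S}D(x,y)<\infty$, and fix any $x_0\in S$. Applying the previous step with $K=\{x_0\}$, on $\Omega_0$ we may choose $R_0$ such that $D(x_0,([-R_0,R_0]^d)^c)>M$. Then every $x\in S$ satisfies $D(x_0,x)\leq M<D(x_0,([-R_0,R_0]^d)^c)$, which forces $x\in[-R_0,R_0]^d$, and hence $S$ is Euclidean bounded. No substantive obstacle is anticipated; the only mild subtlety is ensuring that the almost-sure exceptional set is independent of $K$, which is automatically arranged by producing the single event $\Omega_0$ in the monotone-in-$R$ argument.
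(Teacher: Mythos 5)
Your proof is correct and takes essentially the same approach as the paper's: reduce to the cube $[-r,r]^d$, observe that the cube has finite $D$-diameter (you appeal to continuity and compactness; the paper cites finiteness of the internal diameter), and use that $D(\bm 0,([-R,R]^d)^c)\to\infty$ a.s. The paper states this last fact without proof; your monotonicity-plus-tightness argument is the natural way to supply it and constitutes the detail the paper leaves implicit.
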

\begin{proof}
    Since for any compact $K$, there exists a positive integer $r > 0$ such that $K \subset [-r, r]^d$, it suffices to show that almost surely for any $r \in \mathds{N}$,
    $$
    \lim_{R\to\infty} D([-r,r],([-R,R]^d)^c)=\infty.
    $$
    Hence we only need to consider the case when $r > 0$ is fixed. This is true since we have a.s. ${\rm diam}([-r, r]^d; D) < \infty$ and $\lim_{R\to\infty} D(\bm 0, ([-R, R]^d)^c) = \infty$.
\end{proof}

Now we turn to the
\begin{proof}[Proof of Proposition \ref{dis-lengthspace}]
First, according to Skorohod representation theorem, we see that there exist a probability space and random variables  $\widehat{a}_{n_k}^{-1}\widetilde{d}'_{n_k}$ and $D'$ (equal in distribution to  $\widehat{a}_{n_k}^{-1}\widetilde{d}_{n_k}$ and $D$, respectively) on it such that $\widehat{a}_{n_k}^{-1}\widetilde{d}'_{n_k}$ converges a.s.\ to  $D'$.

Since $D'$ can be viewed as a continuous function on $\mathds{R}^{2d}$ (see Proposition \ref{dis-tightness}), it is a complete metric due to the completeness of $\mathds{R}^d$. Therefore, by e.g. \cite[Theorem 2.4.16]{BBI01}, it suffices to show that for any points ${\bm z}, {\bm w} \in \mathds{R}^d$, there exists a midpoint between ${\bm z}$ and ${\bm w}$, i.e., a point ${\bm x} \in \mathds{R}^d$ such that $D'({\bm z}, {\bm x}) = D'({\bm w}, {\bm x}) = \frac{1}{2} D'({\bm z}, {\bm w})$.

    To this end, for any fixed $r\in \mathds{N}$, let $E_r$ be the event that $\lim_{R\to \infty} D'([-r,r]^d,([-R,R]^d)^c)=\infty$. By Lemma \ref{dis-bounded}, we have $\mathds{P}[E_r]=1$. We also let $F$ be the event that $\widehat{a}_{n_k}^{-1}\widetilde{d}'_{n_k}$ converges to  $D'$. Then from the above analysis, we get $\mathds{P}[F]=1$. This implies that $\mathds{P}[\cap_{r\in\mathds{N}}E_r\cap F]=1$. Now on the event $E_r\cap F$, we can see that there is a random $R>0$ such that
    \begin{equation}\label{rR}
    D'(\bm z,\bm w)+1<D'([-r,r]^d,([-R,R]^d)^c)\quad \text{for all }\bm z,\bm w\in [-r,r]^d.
    \end{equation}
    Additionally, for fixed  ${\bm z},{\bm w}\in[-r,r]^d$, from the definition of chemical distance $\widetilde{d}'_n$, it is easy to show that there exists ${\bm x}_{n_k}$ such that $|\widetilde{d}'_{n_k}({\bm z},{\bm x}_{n_k})-\frac{1}{2}\widetilde{d}'_{n_k}({\bm z},{\bm w})|<1$ and $|\widetilde{d}'_{n_k}({\bm w},{\bm x}_{n_k})-\frac{1}{2}\widetilde{d}'_{n_k}({\bm z},{\bm w})|<1$. We claim that on the event $E_r\cap F$, we have that there are at most finitely many $\bm x\in\{\bm x_{n_k}\}$ such that $\bm x\in ([-R,R]^d)^c$ (here $R$ is chosen in \eqref{rR}). Indeed, we will prove this claim by contradiction.
    Assume (otherwise) that there is a random subsequence $\{n_{k_l}\}$ of $\{n_k\}$ such that
    $\bm x_{n_{k_l}}\in ([-R,R]^d)^c$. Since $\widehat{a}_{n_k}^{-1}\widetilde{d}'_{n_k}$ converges to  $D'$ on the event $F$, we have that
    there is a random $N_1>0$ such that
    \begin{equation}\label{ankl}
    |\widehat{a}_{n_k}^{-1}\widetilde{d}'_{n_k} (\cdot,\cdot)- D'(\cdot,\cdot)|<1/2.
     \end{equation}
     Then we get that on the event $E_r\cap F$, for all $n_{k_l}\geq N_1$,
    \begin{equation*}
    \begin{split}
    \widehat{a}_{n_{k_l}}^{-1}\widetilde{d}'_{n_{k_l}}(\bm z,\bm x_{n_{k_l}})>D'(\bm z,\bm x_{n_{k_l}})-1/2>D'(\bm z,\bm w)+1/2> \widehat{a}_{n_{k_l}}^{-1}\widetilde{d}'_{n_{k_l}}(\bm z,\bm w),
    \end{split}
    \end{equation*}
     where the first and the last inequalities used \eqref{ankl}, and the second inequality used \eqref{rR}. This arrives at a contradiction to the definition of $\bm x_{n_{k_l}}$. Having proved that on the event $E_r\cap F$, there are at most finitely many $\bm x\in\{\bm x_{n_k}\}$ such that $\bm x\in ([-R,R]^d)^c$, we see that there is a random subsequence $\{n_{k_l}\}$ of $\{n_k\}$ and a random point $\bm x$ such that $\bm x_{n_{k_l}}\to \bm x$.
 Thus combining with Lemma \ref{an-bounded} which shows $\widehat{a}_n\to\infty$, we get that on the event $E_r\cap F$,
    $$
    D'({\bm z},{\bm x})=\lim_{l\to\infty}\widehat{a}_{n_{k_l}}^{-1}\widetilde{d}'_{n_{k_l}}({\bm z}, {\bm x}_{n_{k_l}}) = \frac{1}{2}\lim_{l\to\infty}\widehat{a}_{n_{k_l}}^{-1}\widetilde{d}'_{n_{k_l}}({\bm z}, {\bm w}) = \frac{1}{2}D'({\bm z},{\bm w})\quad \text{for all }\bm z,\bm w\in [-r,r]^d.
    $$
    Similarly, we get $D'({\bm z},{\bm x})=D'({\bm w},{\bm x})=\frac{1}{2}D'({\bm z},{\bm w})$. Finally, we obtain the desired statement in the proposition by the arbitrariness of $r\in \mathds{N}$.
\end{proof}

\subsection{Strong regularity of subsequential limiting metrics}

In this subsection, we will establish the strong regularity (introduced in Proposition \ref{Dxy=0}) of the subsequential limiting metrics of  $\{\widehat{a}_n^{-1}\widetilde{d}_n\}$ as follows.

\begin{proposition}\label{SRSL}
Any subsequential limit $D$ of $\{\widehat{a}_n^{-1}\widetilde{d}_n\}$ satisfies the strong regularity, i.e., the following holds almost surely. For any $\bm x,\bm y\in \mathds{R}^d$, $D(\bm x,\bm y)=0$ if and only if $\langle \bm x, \bm y\rangle\in \mathcal{E}$.
\end{proposition}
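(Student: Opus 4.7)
The plan is to establish the two implications separately, working on the Skorohod coupling along which $\widehat{a}_{n_k}^{-1}\widetilde{d}_{n_k}\to D$ holds almost surely in the local uniform topology, against the same underlying Poisson edge set $\mathcal{E}$ used to build every $G_{n_k}$.

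For the easy direction ($\langle\bm{x},\bm{y}\rangle\in\mathcal{E}\Rightarrow D(\bm{x},\bm{y})=0$), I observe that by the construction of $G_n$, for any $\bm{x}\neq\bm{y}$ with $\langle\bm{x},\bm{y}\rangle\in\mathcal{E}$, once $n$ is large enough that $\lfloor n\bm{x}\rfloor$ and $\lfloor n\bm{y}\rfloor$ lie in distinct cubes of the tiling by $V_{1/n}(\cdot/n)$, those two cubes are joined in $G_n$ by the edge $\langle\bm{x},\bm{y}\rangle$ itself. Hence $\widetilde{d}_n(\bm{x},\bm{y})$ is bounded by an $O(1)$ constant (a single long-edge hop, plus at most a fixed number of nearest-neighbor steps to absorb any offset between the floor convention and the cube convention). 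Dividing by $\widehat{a}_n\to\infty$ (Lemma \ref{an-bounded}) and invoking local uniform convergence on a bounded cube containing $(\bm{x},\bm{y})$ then gives $D(\bm{x},\bm{y})=0$. Because this argument is \emph{realization-wise}, on the full-probability Skorohod event it applies simultaneously to every edge of $\mathcal{E}$; no Slivnyak--Mecke augmentation is needed.

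For the converse I mimic the architecture of Proposition \ref{Dxy=0-local}. The key intermediate statement is an analog of Lemma \ref{Dinterval=0} for the subsequential limit $D$: writing $V^k(\bm{z})=V_{2^k}(\bm{z})$, one wants
\begin{equation*}
\mathds{P}\left[D(V^k(\bm{i}),V^k(\bm{j}))=0\text{ and }V^k(\bm{i}),V^k(\bm{j})\text{ are not directly connected by }\mathcal{E}\right]=0
\end{equation*}
for every $k\in\mathds{Z}$ and every $\bm{i},\bm{j}\in 2^k\mathds{Z}^d$ with $\|\bm{i}-\bm{j}\|_1>3\cdot 2^k$. Granting this, the proof of Proposition \ref{Dxy=0-local} transfers essentially verbatim: for any $\bm{x}\neq\bm{y}$ with $\langle\bm{x},\bm{y}\rangle\notin\mathcal{E}$, the local finiteness of long edges of $\mathcal{E}$ (restricted to scopes bounded away from $0$) lets us choose $k$ sufficiently negative that the two containing cubes share no $\mathcal{E}$-edge, whence $D(\bm{x},\bm{y})\geq D(V^k(\bm{i}_k(\bm{x})),V^k(\bm{j}_k(\bm{y})))>0$ almost surely.

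The main obstacle is the displayed $D$-analog of Lemma \ref{Dinterval=0}: one cannot cite that lemma directly, since it was proved under the standing hypothesis that $D$ is a local $\beta$-LRP metric, whereas full locality of a subsequential limit is being established only in parallel subsections. My plan is to prove the analog at the discrete level and then pass it through the limit along $\{n_k\}$. Fix $k$ and restrict to $n$ with $2^k n\in\mathds{N}$; renormalize $G_n$ at an intermediate scale $2^k n/M$ for a large integer $M$. The good-cube machinery of Section \ref{goodint} (Corollary \ref{h-good-prob-2}), combined with the BK-type union bound of Lemma \ref{hd-BK} applied at the discrete level to $\widetilde{d}_n$ (whose locality and independence on disjoint cubes come directly from $\mathcal{E}$), yields the following: any $G_n$-path from $\lfloor nV^k(\bm{i})\rfloor$ to $\lfloor nV^k(\bm{j})\rfloor$ that does not use a direct $\mathcal{E}$-edge between $V^k(\bm{i})$ and $V^k(\bm{j})$ must traverse a positive density of good intermediate-scale cubes, contributing $G_n$-length at least $c(k,M)\,n^\theta$ with probability $1-o_n(1)$. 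Dividing by $\widehat{a}_n\asymp n^\theta$ (Lemma \ref{an-bounded}) and passing to the subsequential limit gives a strictly positive uniform lower bound on $D(V^k(\bm{i}),V^k(\bm{j}))$ on the disconnection event; a union bound over the countable family of triples $(k,\bm{i},\bm{j})$ then completes the proof of Proposition \ref{SRSL}.
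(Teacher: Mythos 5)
Your proposal takes essentially the same approach as the paper: establish a discrete-level, $n$-uniform analog of Lemma \ref{Dinterval=0} for $\mathcal{D}_n=\widehat{a}_n^{-1}\widetilde{d}_n$ via the good-cube and BK machinery of Section \ref{goodint} (the paper's Lemmas \ref{h-probgood-2}, \ref{hd-BK-2}, \ref{Dinterval=0-2}), pass to the subsequential limit (Lemma \ref{Dinterval=0-3}), and then conclude exactly as in Proposition \ref{Dxy=0-local}, while the easy direction is just Axiom III inherited from local uniform convergence. One phrase to tighten: the failure probability at the discrete level is not $o_n(1)$ but rather an $n$-uniform constant depending on the per-cube failure parameter (call it $\delta$) — this uniformity in $n$ is the crucial point, flagged in the paper via the self-similarity and tightness of $\{\mathcal{D}_n\}$ — so the correct order of limits is $n\to\infty$ at fixed $\delta$ to obtain a $\delta$-dependent bound for $D$, and only then $\delta\to 0$.
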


To prove Proposition \ref{SRSL}, we first do some preparations. For convenience, let $\mathcal{D}_n=\widehat{a}_n^{-1}\widetilde{d}_n$ for all $n\geq 1$, and let
$$
\widehat{\mathcal{E}}_n=\left\{\langle\bm x,\bm y \rangle: \bm x,\bm y\in\mathds{R}^d \text{ such that }V_{1/n}(\lfloor n\bm x\rfloor/n) \text{ and }V_{1/n}(\lfloor n\bm y\rfloor/n)\text{ are connected by }\mathcal{E}\right\}.
$$
We also need the following definition of ``good'' cubes with respect to $\mathcal{D}_n$, which is similar with Definition \ref{h-good}.

\begin{definition} Fix $n\geq 1$.
For $s\gg 1/n$, ${\bm z}\in\mathds{R}^d$ and $\alpha\in(0,1)$, we say that a cube $V_{3s}({\bm z})$ is $(3s,\alpha)$-good with respect to $\mathcal{D}_n$ if the following condition holds. For any two different edges $\langle {\bm u}_1,{\bm v}_1\rangle,\langle {\bm u}_2, {\bm v}_2\rangle\in \widehat{\mathcal{E}}_n$, with ${\bm u}_1\in V_s({\bm z})^c$, ${\bm v}_1\in V_s({\bm z})$, ${\bm u}_2\in V_{3s}({\bm z})$ and ${\bm v}_2\in V_{3s}({\bm z})^c$, we have $|{\bm v}_1-{\bm u}_2|\geq \alpha s$. Additionally, there is a constant $b>0$ (which does not depend on $\bm z$ or $s$ and will be chosen in Lemma \ref{h-regular-low} below) such that
$$
\mathcal{D}_n({\bm v}_1,{\bm u}_2;V_{3s}({\bm z}))\geq (b\alpha s)^\theta.
$$
\end{definition}

Similar to Lemma \ref{h-probgood}, we have the following result.

\begin{lemma}\label{h-probgood-2}
Fix $n\geq 1$. For any ${\bm z}\in\mathds{R}^d$, $s>0$ and sufficiently small $\alpha\in(0,1)$, there exist constants $b=b(\alpha)>0$ {\rm(}depending only on $d,\beta$ and $\alpha${\rm)} and $c_1>0$ {\rm(}depending only on $d$ and $\beta${\rm)} such that with probability at least $1-c_1\alpha \log(1/\alpha)$, $V_{3s}({\bm z})$ is $(3s,\alpha)$-good with respect to $\mathcal{D}_n$.
\end{lemma}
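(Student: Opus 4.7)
The plan is to follow the proof of Lemma~\ref{h-probgood} line by line, replacing the continuous edge set $\mathcal{E}$ and metric $D$ with $\widehat{\mathcal{E}}_n$ and $\mathcal{D}_n$. The point is that both of the two ingredients behind Lemma~\ref{h-probgood}, namely Lemma~\ref{h-longedge} (a purely combinatorial estimate on the Poisson edge structure) and Lemma~\ref{h-regular-low} (which used Axioms~IV' and~V1' for the metric), are available in the discrete/rescaled setting in a form with constants that depend only on $d,\beta,\alpha$, and not on $n$, provided we restrict to scales $s\gg 1/n$ (for $s\lesssim 1/n$ the statement is either vacuous or follows trivially from the observation that $V_s(\bm z)$ intersects $O(1)$ many sites of the discrete model). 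Combining the two analogues by a union bound, exactly as in the proof of Lemma~\ref{h-probgood}, will give the result with $c_1$ depending only on $d,\beta$ and $b$ depending only on $d,\beta,\alpha$.

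For the analogue of Lemma~\ref{h-longedge}: a pair of special edges in $\widehat{\mathcal{E}}_n$ with endpoints in prescribed regions $A$ and $B$ exists iff some pair of $(1/n)$-cubes, one in $A$ and one in $B$, is connected in $\mathcal{E}$. Summing $\beta/|\bm u-\bm v|^{2d}$ over the $(1/n)$-cubes that make up $A$ and $B$ gives the same integral $\int_A\int_B \beta/|\bm u-\bm v|^{2d}\,d\bm u\,d\bm v$ up to a multiplicative constant depending only on $d$ (this constant is uniform as long as the distance between $A$ and $B$ is much larger than $1/n$, which is guaranteed in every appearance of such a sum in the proof of Lemma~\ref{h-longedge} since the smallest relevant separation is $\alpha s\gg 1/n$). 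Hence the same partition of $V_s(\bm 0)$ into cubes $J_{i,j}$ of side length $\alpha s$ and the same termwise estimate yield the bound $1-c_2\alpha\log(1/\alpha)$, with $c_2$ depending only on $d,\beta$.

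For the analogue of Lemma~\ref{h-regular-low}: we partition $V_{3s}(\bm 0)$ into cubes of side length $s/K$ as before, define the same three events $B_1,B_2,B_3$ (with $\mathcal{E}$ replaced by $\widehat{\mathcal{E}}_n$ and $D$ replaced by $\mathcal{D}_n$), and estimate them in the same way. The Poisson estimates on $B_1$ and $B_1^c\cap B_2$ are handled as in the previous paragraph. The only genuinely metric input used in bounding $B_1^c\cap B_2^c\cap B_3$ is the bound~\eqref{cp}, which rests on Axioms~IV' and~V1'. For $\mathcal{D}_n$, translation invariance holds exactly under $(1/n)\mathds{Z}^d$ shifts (because $\widehat{d}$ is translation invariant on $\mathds{Z}^d$) and up to an $O(1/n)$ error for arbitrary shifts; since $s/K^3\gg 1/n$, this suffices. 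The required uniform lower tail
$$
\mathds{P}\bigl[\mathcal{D}_n(\bm x,\bm y;V_{3s}(\bm 0))\le c_{\mathtt{p}} s^\theta\bigr]\le \mathtt{p},\qquad |\bm x-\bm y|\ge s/K^3,
$$
is obtained by combining Theorem~\ref{discrete-dist} (which gives $\widehat{d}(\bm 0,\bm u)\asymp_P |\bm u|^\theta$) with Lemma~\ref{an-bounded} (which gives $\widehat{a}_n\asymp n^\theta$) to conclude $\mathcal{D}_n(\bm 0,\bm u)\asymp_P |\bm u|^\theta$ uniformly in $n$ and in $\bm u$ with $|\bm u|$ bounded away from $0$. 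With this replacement, the choice $\mathtt{p}=K^{-6d}$, $c_K=c_{\mathtt{p}}$, $K^{-(d\wedge 3)}\log K=\alpha$, $(b\alpha)^\theta=c_K$ goes through unchanged, yielding the bound $1-c_3\alpha$ with $c_3$ depending only on $d,\beta$.

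The main obstacle is purely bookkeeping: verifying that every constant arising in the proof of Lemmas~\ref{h-longedge} and~\ref{h-regular-low} remains uniform in $n$ once we switch from $(\mathcal{E},D)$ to $(\widehat{\mathcal{E}}_n,\mathcal{D}_n)$. The only nontrivial point is the uniform lower tail for $\mathcal{D}_n$; as noted above, this is essentially automatic from Theorem~\ref{discrete-dist} together with the two-sided control of $\widehat{a}_n$ in Lemma~\ref{an-bounded}. No new probabilistic input is needed beyond what has already been established.
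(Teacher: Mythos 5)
Your proposal is correct and takes essentially the same route the paper takes: follow the proof of Lemma~\ref{h-probgood}, discretizing the two Poisson-geometry lemmas (\ref{h-longedge}, \ref{h-regular-low}) by replacing points with $(1/n)$-cubes, and replacing the Axiom~V1' lower-tail input in Lemma~\ref{h-regular-low} by the uniform-in-$n$ lower-tail control on $\mathcal{D}_n$ coming from Theorem~\ref{discrete-dist} together with Lemma~\ref{an-bounded}. The paper only sketches this (saying to replace $\bm u$ with $V_{1/n}(\lfloor n\bm u\rfloor/n)$, cite ``tightness of $\{\mathcal{D}_n\}$,'' and use self-similarity); your write-up is a more explicit unpacking of the same argument — in particular your observation that $\widehat d$ is exactly translation invariant under $(1/n)\mathds{Z}^d$ shifts (so the Axiom~IV' step in \eqref{cp} carries over directly) and your use of Theorem~\ref{discrete-dist}+Lemma~\ref{an-bounded} for the lower tail (arguably cleaner than the paper's citation, which points to the tightness-from-above results of Section~\ref{dis-existence}). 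No gap.
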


As the proof of Lemma \ref{h-probgood-2} closely resembles that of Lemma \ref{h-probgood}, we will focus on highlighting the key differences in the proof below, while omitting the explicit details of the proof itself.
The proof of Lemma \ref{h-probgood-2} relies on obtaining versions of Lemmas \ref{h-longedge} and \ref{h-regular-low} by replacing $\mathcal{E}$ with edges in $G_n$ and replacing $D$ with $\mathcal{D}_n$. To achieve this, we only need to replace all instances of points $\bm u$ used in the proofs of these two lemmas with the cubes $V_{1/n}(\lfloor n\bm u\rfloor/n)$. Additionally, we substitute Axiom V1' in the proof of Lemma \ref{h-regular-low} with the tightness of $\{\mathcal{D}_n\}$ (see Propositions \ref{discrete-dist} and \ref{dis-prop-subsequence}). Then from the self-similarity of the model, we can deduce the desired versions of Lemmas  \ref{h-longedge} and \ref{h-regular-low}  with respect to $\mathcal{D}_n$. Finally, following a similar approach as the proof of Lemma \ref{h-probgood}, we can complete the proof of Lemma  \ref{h-probgood-2}. It is worth emphasizing that the self-similarity and the tightness of $\{\mathcal{D}_n\}$ ensure that the parameters $\alpha, b(\alpha)$ and $c_1$ in Lemma \ref{h-probgood-2} do not depend on $n$.

We next introduce the renormalization for $(\mathds{R}^d, \widehat{\mathcal{E}}_n)$. For fixed $s\gg 1/n$, we divide $\mathds{R}^d$
into small cubes of side length $s$, denoted by $V_{s}(\bm k)$ for $\bm k\in s\mathds{Z}^d$. Then we identify the cubes $V_{s}(\bm k)$ as vertices $\bm k$ and call the resulting graph $\mathcal{G}_n$. We say $\bm k$ is good in $\mathcal{G}_n$ if $V_{3s}(\bm k)$ is  $(3s,\alpha)$-good with respect to $\mathcal{D}_n$. Similar with Corollary \ref{h-good-prob-2}, from Lemma \ref{h-probgood-2}, we see that for sufficiently small $\delta_0>0$ with $2C_{dis}\delta_0<1$ (where $C_{dis}$ is defined in Lemma \ref{number-path-k}), there exist sufficiently small $\alpha=\alpha(\delta_0)>0$ and $b=b(\alpha)>0$ (depending only on $d,\beta$ and $\delta_0$) such that
\begin{equation}\label{cor3.8-2}
\mathds{P}[\bm k \text{ is good}]>1-\delta_0\quad \text{for all }\bm k\in s\mathds{Z}^d.
\end{equation}

We refer to a path $P^{\mathcal{G}_n}$ as $\alpha$-good in $\mathcal{G}_n$ if, upon replacing $\mathcal{G}_n$ for $G$ in Definition \ref{hd-PGgood} and replacing $\{E_{\bm k}\}$ for the whole probability space, the conditions in the definition hold.
\begin{lemma}\label{hd-BK-2}
 Let $\delta\in(0,1)$.
        Then there exists  a constant $\alpha_0>0$ {\rm(}depending only on $d,\beta$ and $\delta${\rm )} such that for all $\alpha\in(0,\alpha_0)$ and for any fixed self-avoiding path $P^{\mathcal{G}_n}$ with graph length $L$,
        We have
        $$
        \mathds{P}\left[\text{$P^{\mathcal{G}_n}$ is $\alpha$-good in $\mathcal{G}_n$}\right]\geq 1-\delta^L.
        $$
\end{lemma}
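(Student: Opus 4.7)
The plan is to mirror the proof of Lemma \ref{hd-BK} almost verbatim, with the simplification that the collection $\{E_{\bm k}\}$ is the whole probability space (so the auxiliary event $A$ can be omitted), and with the ingredient of Lemma \ref{h-probgood-2} replacing that of Lemma \ref{h-probgood}. Fix a self-avoiding path $P^{\mathcal{G}_n} = (\bm k_1, \dots, \bm k_L)$ in $\mathcal{G}_n$. For each $\bm i \in s\{0,1,2\}^d$, set $P^{(\bm i)} := \{\bm k \in P^{\mathcal{G}_n} : \bm k - \bm i \in 3s\mathds{Z}^d\}$; these $3^d$ classes partition $P^{\mathcal{G}_n}$, and within a single class the cubes $\{V_{3s}(\bm k)\}_{\bm k \in P^{(\bm i)}}$ are pairwise disjoint. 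By pigeonhole there exists $\bm i_*$ with $|P^{(\bm i_*)}| \geq 3^{-d}L$.

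Because we take $\{E_{\bm k}\}$ to be the whole probability space, condition~(1) in Definition \ref{hd-PGgood} is automatic. Hence if $P^{\mathcal{G}_n}$ is $\alpha$-bad, fewer than $L/(4\cdot 3^d)$ indices $j$ with $\bm k_j \in P^{(\bm i_*)}$ can satisfy condition~(2), so at least $|P^{(\bm i_*)}| - L/(4\cdot 3^d) \geq 3L/(4\cdot 3^d)$ of the cubes $V_{3s}(\bm k_j)$ with $\bm k_j \in P^{(\bm i_*)}$ fail to be $(3s,\alpha)$-good with respect to $\mathcal{D}_n$ (failure of (2) for a specific path $P\in\mathcal{C}(P^{\mathcal{G}_n})$ exhibits an explicit pair of edges violating the goodness definition).

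The key probabilistic step is then to combine Lemma \ref{h-probgood-2} with the BK inequality applied to the Poisson process of edges underlying $\widehat{\mathcal{E}}_n$. The not-good event on $V_{3s}(\bm k_j)$ is increasing and witnessed either by a pair of long edges incident to $V_{3s}(\bm k_j)$ or by a short internal path in the Poisson process restricted to $V_{3s}(\bm k_j)$; since the cubes $V_{3s}(\bm k_j)$ for $\bm k_j \in P^{(\bm i_*)}$ are disjoint, disjoint witnesses are available across distinct $j$'s. BK therefore gives
\[
\mathds{P}\bigl[P^{\mathcal{G}_n}\text{ is }\alpha\text{-bad}\bigr] \leq \sum_{\bm i}\binom{|P^{(\bm i_*)}|}{3L/(4\cdot 3^d)}\bigl(c_1\alpha\log(1/\alpha)\bigr)^{3L/(4\cdot 3^d)} \leq 2^{d+L}\bigl(c_1\alpha\log(1/\alpha)\bigr)^{3L/(4\cdot 3^d)}.
\]
Choosing $\alpha_0 = \alpha_0(d,\beta,\delta)$ small enough that $2^{d/L+1}(c_1\alpha\log(1/\alpha))^{3/(4\cdot 3^d)} \leq \delta$ for all $\alpha \in (0,\alpha_0)$ (taking $L\geq 1$) completes the proof.

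The only subtlety—and the step I would treat most carefully—is verifying that the independence / BK structure is genuinely available for $\mathcal{D}_n$: unlike the axiomatic metric $D$, here we must check directly from the construction that $\mathcal{D}_n(\cdot,\cdot;V_{3s}(\bm k))$ is a measurable function of the edges of $G_n$ with both endpoints in $V_{3s}(\bm k)$ (equivalently, of the Poisson process restricted to $V_{3s}(\bm k)\times V_{3s}(\bm k)$), so that the internal-metric portion of the not-good event is independent across the disjoint cubes of $P^{(\bm i_*)}$. This follows because chemical-distance paths confined to $V_{3s}(\bm k)$ only traverse edges with both endpoints there; the remaining portion of the not-good event, concerning the crossing edges $\langle\bm u_1,\bm v_1\rangle, \langle\bm u_2,\bm v_2\rangle$, is handled by BK exactly as in Lemma \ref{hd-BK}. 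Crucially the constants $c_1$, $b_0(\alpha)$ produced by Lemma \ref{h-probgood-2} are uniform in $n$ thanks to the self-similarity of the model and the tightness of $\{\mathcal{D}_n\}$, so $\alpha_0$ depends only on $d,\beta,\delta$ as claimed.
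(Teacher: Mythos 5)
Your proposal is correct and follows the same route the paper itself indicates: the authors explicitly omit the proof of Lemma \ref{hd-BK-2}, remarking that it is obtained from the proof of Lemma \ref{hd-BK} by substituting Lemma \ref{h-probgood-2} for Lemma \ref{h-probgood} and \eqref{cor3.8-2} for Corollary \ref{h-good-prob-2}, which is exactly what you do. The only cosmetic blemish is the stray $\sum_{\bm i}$ (and hence the extra factor $2^d$) after you have already fixed $\bm i_*$; it is harmless since it is absorbed into the choice of $\alpha_0$, and your observation that the BK/locality structure for $\mathcal{D}_n$ must be checked directly from the construction rather than invoked from Axiom II$''$ is precisely the point worth flagging.
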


The proof of Lemma \ref{hd-BK-2} is similar to that of Lemma \ref{hd-BK} (just replacing Lemma \ref{h-probgood} with Lemma \ref{h-probgood-2}, and replacing Corollary \ref{h-good-prob-2} with \eqref{cor3.8-2}). We thus omit the proof.

With the above lemma at hand, we have the following result.
\begin{lemma}\label{Dinterval=0-2}
    Fix $n\geq 1,k \in\mathds{Z}$ with $1/n\ll 2^k$ and $\delta>0$ such that $C_{dis}\delta<1$. For any $ \bm l\in2^k\mathds{Z}^d$, let $V^k(\bm l)=V_{2^k}(\bm l)$. Let $E_{\bm i,\bm j}^{n,k}$ {\rm(}for $\|\bm i-\bm j\|_{1}>3\cdot 2^k${\rm)} be the event that $\mathcal{D}_n(V^k(\bm i),V^k(\bm j))<(2^kb\alpha)^\theta$ and $V^k(\bm i)$ and $V^k(\bm j)$ are not directly connected by $\widehat{\mathcal{E}}_n$. Then
    $$\mathds{P}[E_{\bm i,\bm j}^{n,k}]\leq \frac{C_{dis}^2\delta}{1-C_{dis}\delta}.$$
\end{lemma}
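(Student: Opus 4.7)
My plan is to adapt the argument from Lemma \ref{Dinterval=0} to the present discrete-scale setting, using the renormalization at scale $2^k$ already set up for $\mathcal{D}_n$ and keeping $\delta$ fixed instead of sending it to $0$. Since $1/n \ll 2^k$, I can identify each cube $V^k(\bm l) = V_{2^k}(\bm l)$ with a vertex $\bm l \in 2^k \mathds{Z}^d$ and use the graph $\mathcal{G}_n$ (at this scale) whose long-edge structure is again a critical long-range bond percolation by self-similarity, so Lemma \ref{number-path-k} applies verbatim to its set $\mathcal{P}_m(\bm i, \bm j)$ of self-avoiding paths of length $m$. I will call a vertex $\bm l$ good if $V_{3\cdot 2^k}(\bm l)$ is $(3\cdot 2^k, \alpha)$-good with respect to $\mathcal{D}_n$, with $\alpha$, $b$ chosen as in \eqref{cor3.8-2}. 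Let $\widetilde{E}_{\bm i,\bm j,\alpha}$ be the event that there is a self-avoiding path in $\mathcal{G}_n$ from $\bm i$ to $\bm j$ of graph length $m > 1$ all of whose internal vertices are not good.

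The key containment I would establish is $E^{n,k}_{\bm i,\bm j} \subset \widetilde{E}_{\bm i,\bm j,\alpha}$. Assuming $\widetilde{E}_{\bm i,\bm j,\alpha}$ does not occur, then on every self-avoiding path in $\mathcal{G}_n$ from $\bm i$ to $\bm j$ of length $\geq 2$ some internal vertex $\bm l$ is good. Since $\|\bm i-\bm j\|_1 > 3\cdot 2^k$ and $\langle \bm i,\bm j\rangle \notin \widehat{\mathcal{E}}_n$, a near-optimal $\mathcal{D}_n$-path from $V^k(\bm i)$ to $V^k(\bm j)$ must have a skeleton (in the sense of Definition \ref{pathc-d}) of $\mathcal{G}_n$-length at least $2$, so it enters and exits the cube $V_{3\cdot 2^k}(\bm l)$ through some $V_{3\cdot 2^k}(\bm l)$-special pair of edges, and the good-cube condition forces this crossing to contribute $\mathcal{D}_n$-length at least $(b\alpha 2^k)^\theta$. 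Hence $\mathcal{D}_n(V^k(\bm i), V^k(\bm j)) \geq (b\alpha 2^k)^\theta$, contradicting $E^{n,k}_{\bm i,\bm j}$.

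To finish, I would combine Lemma \ref{hd-BK-2} (with $E_{\bm k}$ taken to be the full probability space and the same $\delta$) with Lemma \ref{number-path-k}, summing over the path length:
\[
\mathds{P}[E^{n,k}_{\bm i,\bm j}] \leq \mathds{P}[\widetilde{E}_{\bm i,\bm j,\alpha}] \leq \sum_{m \geq 2} \mathds{E}\!\left[\,\sum_{P \in \mathcal{P}_m(\bm i,\bm j)} \mathds{P}[P \text{ is $\alpha$-bad} \mid \mathcal{G}_n]\right] \leq \sum_{m \geq 2} C_{dis}^m\,\delta^{m-1} = \frac{C_{dis}^2\delta}{1 - C_{dis}\delta},
\]
where the geometric sum converges thanks to the assumption $C_{dis}\delta < 1$. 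The only nontrivial step is the containment argument above: unlike in the continuous Lemma \ref{Dinterval=0}, the underlying metric $\mathcal{D}_n$ is discrete at scale $1/n$, so I must verify that the skeleton-path construction at the much larger scale $2^k$ interacts correctly with the $(3\cdot 2^k,\alpha)$-good definition for $\mathcal{D}_n$ and $\widehat{\mathcal{E}}_n$. This is the same compatibility that underlies Lemma \ref{hd-BK-2}, so the hypothesis $1/n \ll 2^k$ is exactly what is needed to make the crossing argument go through.
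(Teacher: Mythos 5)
Your proposal follows exactly the paper's intended argument: the paper itself states that Lemma \ref{Dinterval=0-2} is proved by replacing Lemma \ref{hd-BK} with Lemma \ref{hd-BK-2} in the proof of Lemma \ref{Dinterval=0} (and not sending $\delta\to 0$), and your writeup fills in those details correctly. One tiny nit: Lemma \ref{hd-BK-2} is already stated with $E_{\bm k}$ the whole probability space built in, so your parenthetical ``(with $E_{\bm k}$ taken to be the full probability space\ldots)'' is redundant rather than a choice you make, but this does not affect the argument.
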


Although Lemma \ref{Dinterval=0-2} appears to be stronger than Lemma \ref{Dinterval=0} (since we prove an upper bound for the probability of the quantitative event event $\mathcal{D}_n(V^k(\bm i),V^k(\bm j))<(2^kb\alpha)^\theta$ in Lemma \ref{Dinterval=0-2} rather than the qualitative event $D(V^k(\bm i),V^k(\bm j))=0$ in Lemma \ref{Dinterval=0}), the analysis above \eqref{EEtilde} actually leads to a stronger version of Lemma \ref{Dinterval=0} which is similar to Lemma \ref{Dinterval=0-2}.
Thus, replacing Lemma \ref{hd-BK} with Lemma \ref{hd-BK-2} in the proof of  Lemma \ref{Dinterval=0}, we can complete the proof of Lemma \ref{Dinterval=0-2}. We omit the details here.

It is worth emphasizing that we obtain a uniform tail  for all $\mathcal{D}_n$ in Lemma \ref{Dinterval=0-2}. Thus from it we can derive the following result.

\begin{lemma}\label{Dinterval=0-3}
    Let $D$ be a subsequential limit of $\{\mathcal{D}_n\}$. For any $ k\in\mathds{Z}$ and $ \bm l\in2^k\mathds{Z}^d$, let $V^k(\bm l)=V_{2^k}(\bm l)$. Let $E_{\bm i,\bm j}^k$ {\rm(}for $\|\bm i-\bm j\|_{1}>3\cdot 2^k${\rm)} be the event that $D(V^k(\bm i),V^k(\bm j))=0$ and $V^k(\bm i)$ and $V^k(\bm j)$ are not directly connected by $\mathcal{E}$. Then $\mathds{P}[E_{\bm i,\bm j}^k]=0$.
    As a consequence,
    $$\mathds{P}\left[\bigcup_{k\in \mathds{Z}}\bigcup_{\bm i,\bm j\in 2^k\mathds{Z}^d: \|\bm i-\bm j\|_{1}>3\cdot 2^k}E_{\bm i,\bm j}^k\right]=0.$$
\end{lemma}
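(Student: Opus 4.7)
The plan is to deduce the claim from Lemma \ref{Dinterval=0-2} by passing to the limit along a subsequence $\{n_m\}$ realizing $D$ as a distributional limit of $\mathcal{D}_{n_m}$. Fix $k\in\mathds{Z}$ and $\bm i,\bm j\in 2^k\mathds{Z}^d$ with $\|\bm i-\bm j\|_1>3\cdot 2^k$. Because $\mathcal{D}_n$ is a measurable function of the Poisson process $\mathcal{E}$ and the law of $\mathcal{E}$ does not depend on $n$, the family $\{(\mathcal{D}_n,\mathcal{E})\}_n$ is tight on the product of the local-uniform pseudometric space with a suitable space of counting measures; after passing to a further subsequence and invoking Skorohod's representation theorem, I may work on a common probability space on which $\mathcal{D}_{n_m}\to D'$ locally uniformly almost surely, $D'\stackrel{\rm law}{=}D$, and the $\mathcal{E}$-coordinate is common to all $m$ and has the same law as the original $\mathcal{E}$. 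On this space the event $E_{\bm i,\bm j}^k$ is defined in terms of $D'$ together with the coupled $\mathcal{E}$.

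Two facts are needed on this coupled space, working on the event $E_{\bm i,\bm j}^k$. First, local uniform convergence of $\mathcal{D}_{n_m}$ forces $\mathcal{D}_{n_m}(V^k(\bm i),V^k(\bm j))\to D'(V^k(\bm i),V^k(\bm j))=0$, so eventually $\mathcal{D}_{n_m}(V^k(\bm i),V^k(\bm j))<(2^k b\alpha)^\theta$ with $b,\alpha$ the constants from Lemma \ref{Dinterval=0-2}. Second, I must translate non-connectedness under $\mathcal{E}$ into non-connectedness under $\widehat{\mathcal{E}}_{n_m}$. An $\widehat{\mathcal{E}}_{n_m}$-edge between $V^k(\bm i)$ and $V^k(\bm j)$ is induced by an $\mathcal{E}$-edge whose endpoints lie in the $(1/n_m)$-thickenings $V_{2^k+2/n_m}(\bm i)$ and $V_{2^k+2/n_m}(\bm j)$, so the only discrepancy with $\mathcal{E}$-non-connectedness comes from $\mathcal{E}$-edges with at least one endpoint in a boundary slab of width $2/n_m$. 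Since $\|\bm i-\bm j\|_1>3\cdot 2^k$ the intensity $\beta/|\bm u-\bm v|^{2d}$ is uniformly bounded on the relevant product region, and a direct volume computation bounds the expected number of such extra edges by $C_{k,\bm i,\bm j}/n_m$; by monotonicity of the slab in $m$, this count is almost surely $0$ for all sufficiently large $m$.

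Combining these ingredients, on $E_{\bm i,\bm j}^k$ one has almost surely, for all large $m$, both $\mathcal{D}_{n_m}(V^k(\bm i),V^k(\bm j))<(2^k b\alpha)^\theta$ and $V^k(\bm i),V^k(\bm j)$ not directly connected by $\widehat{\mathcal{E}}_{n_m}$, i.e.\ the event $E_{\bm i,\bm j}^{n_m,k}$ from Lemma \ref{Dinterval=0-2} occurs. Thus $E_{\bm i,\bm j}^k\subseteq\liminf_m E_{\bm i,\bm j}^{n_m,k}$ up to a null set, and Fatou's lemma combined with Lemma \ref{Dinterval=0-2} yields
\[
\mathds{P}[E_{\bm i,\bm j}^k]\;\le\;\liminf_{m\to\infty}\mathds{P}[E_{\bm i,\bm j}^{n_m,k}]\;\le\;\frac{C_{dis}^2\delta}{1-C_{dis}\delta}.
\]
Sending $\delta\to 0$ (subject to $C_{dis}\delta<1$) forces $\mathds{P}[E_{\bm i,\bm j}^k]=0$, and a countable union over $k\in\mathds{Z}$ and admissible $(\bm i,\bm j)$ concludes the argument. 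The main obstacle I expect is setting up the joint Skorohod coupling in the first paragraph carefully enough that the coupled $\mathcal{E}$ simultaneously witnesses the limit $D'$ and the discrete connection relations $\widehat{\mathcal{E}}_{n_m}$; once this topological framework is in place, both the distance-convergence step and the boundary-slab edge-count are routine.
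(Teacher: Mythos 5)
Your proposal takes the same high-level route as the paper: invoke Lemma~\ref{Dinterval=0-2} for a bound that is uniform in $n$, then pass $n\to\infty$ and $\delta\to 0$. The paper's written proof compresses the whole limit passage into the single phrase ``Letting $n_l\to\infty$ and then letting $\delta\to 0$,'' so what you have done is to supply the machinery behind that sentence: a Skorohod-type coupling giving a.s.\ local-uniform convergence of the rescaled metrics, a verification that on $E_{\bm i,\bm j}^k$ the distance condition of $E_{\bm i,\bm j}^{n_m,k}$ eventually holds (immediate from $D(V^k(\bm i),V^k(\bm j))=0$ and uniform convergence), and a boundary-slab argument converting ``not directly $\mathcal{E}$-connected'' into ``eventually not directly $\widehat{\mathcal{E}}_{n_m}$-connected.'' Since for fixed $k,\bm i,\bm j$ there are a.s.\ only finitely many $\mathcal{E}$-atoms in a bounded neighborhood of $V^k(\bm i)\times V^k(\bm j)$ and a.s.\ none on the cube boundaries, the extra edges in the $O(1/n_m)$ boundary slabs do vanish for $m$ large; combined with Fatou this yields the bound and then $\delta\to0$ finishes, exactly as the paper intends.

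There is one imprecision worth flagging: you assert that after applying Skorohod to the tight family $(\mathcal{D}_{n_m},\mathcal{E})$ ``the $\mathcal{E}$-coordinate is common to all $m$.'' Skorohod's theorem does not give that; it produces $(\mathcal{D}'_{n_m},\mathcal{E}'_m)\to(D',\mathcal{E}')$ a.s.\ with the right marginals, but the point-process coordinates $\mathcal{E}'_m$ need not coincide with $\mathcal{E}'$. This is easy to repair, because you never actually need them equal: a.s.\ vague convergence $\mathcal{E}'_m\to\mathcal{E}'$, together with a.s.\ simplicity of $\mathcal{E}'$ and the a.s.\ absence of $\mathcal{E}'$-atoms on $\partial V^k(\bm i)\times\partial V^k(\bm j)$ (and on $\partial$ of the enlarged cubes), already implies that for $m$ large the atoms of $\mathcal{E}'_m$ in the relevant bounded product region are in bijection with, and within $\epsilon/2$ of, the atoms of $\mathcal{E}'$. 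Thus if no $\mathcal{E}'$-edge joins $V^k(\bm i)$ to $V^k(\bm j)$ and there is a uniform gap $\epsilon>0$ to the product boundary, then once $3/n_m<\epsilon/4$ no $\mathcal{E}'_m$-edge lies in the thickened product either, so $V^k(\bm i),V^k(\bm j)$ are not $\widehat{\mathcal{E}'_m}_{n_m}$-connected. With this correction your argument is complete and matches the paper's (implicit) strategy.
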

\begin{proof}
Assume that $D$ is the limit of $\{\mathcal{D}_{n_l}\}$. By Lemma \ref{Dinterval=0-2}, for all $k\in\mathds{Z}$ with $1/n_l\ll 2^k$ we have
\begin{equation}\label{Dnk}
\mathds{P}[E_{\bm i,\bm j}^{n_l,k}]\leq \frac{C_{dis}^2\delta}{1-C_{dis}\delta}.
\end{equation}
Letting $n_l\to \infty$ and then letting $\delta\to 0$, we can obtain $\mathds{P}[E_{\bm i,\bm j}^k]=0$.
\end{proof}

With the preceding preparations, the proof of Proposition \ref{SRSL} is similar to that of Proposition \ref{Dxy=0-local} just replacing  Lemma \ref{Dinterval=0} with Lemma \ref{Dinterval=0-3}. We also omit the details here.

\subsection{Subsequential limits}
In this subsection, we consider the convergence of internal metrics of $\widehat{a}_n^{-1}\widetilde{d}_n$, construct a subsequential limit and show that the internal metric of the limit is the limit of the internal metric, which slightly strengthens the result in Proposition \ref{dis-tightness}.

We start with the definition of dyadic cubes, which will be used to approximate general open sets later.

\begin{definition}\label{dyadic}
    A closed cube $I\subset \mathds{R}^d$ is \textit{dyadic} if $I$ has side length $2^k$ and vertices in $2^k\mathds{Z}^d$ for some $k\in\mathds{Z}$. We say that $W\subset \mathds{R}^d$ is a \textit{dyadic set} if there exists a finite collection of dyadic cubes $\mathcal{I}$ such that $W$ is the interior of $\cup_{I\in \mathcal{I}}I$. Note that a dyadic set is a bounded open set.
\end{definition}

\begin{lemma}\label{dis-limitonset}
    Let $\mathcal{W}$ be the set of all dyadic sets. For any sequence $\{n_k\}_{k\geq 1}\subset \mathds{N}$ tending to infinity, there is a subsequence $\{n'_k\}_{k\geq 1}$ and a random length metric $D$ such that the following is true. We have the convergence of joint laws
    \begin{equation}\label{dis-jointlaw}
        \left(\widehat{a}_{n'_k}^{-1}\widetilde{d}_{n'_k},\ \{\widehat{a}_{n'_k}^{-1}\widetilde{d}_{n'_k}(\cdot,\cdot;\overline{W})\}_{W\in\mathcal{W}}\right)\to \left(D,\{D_W\}_{W\in\mathcal{W}}\right),
    \end{equation}
    where the first coordinate is endowed with the topology of local uniform convergence on $\mathds{R}^d\times \mathds{R}^d$ and each element of the collection in the second coordinate is endowed with the topology of local uniform convergence on $\overline{W}\times \overline{W}$. Furthermore, for each $W\in\mathcal{W}$ we have a.s.\ $D_W(\cdot,\cdot;W)=D(\cdot,\cdot;W)$.
\end{lemma}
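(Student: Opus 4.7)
The plan is to use countability of $\mathcal{W}$ and a diagonal extraction, and then to prove the internal-metric identity $D_W(\cdot,\cdot;W)=D(\cdot,\cdot;W)$ via a local comparison of $D$ and $D_W$. For each fixed dyadic $W$, I would first establish tightness of $\{\widehat{a}_n^{-1}\widetilde{d}_n(\cdot,\cdot;\overline{W})\}_n$ under local uniform convergence on $\overline{W}\times\overline{W}$ by the same Arzel\`a--Ascoli-type argument as in Proposition~\ref{dis-tightness}: the internal diameter of $\overline{W}$ is bounded by summing internal diameters of the finitely many dyadic cubes composing $W$, each of which is tight by Theorem~\ref{dis-upperbound} and Lemma~\ref{an-bounded}, and the equicontinuity estimate carries over verbatim because an internal geodesic between two nearby lattice points of $\overline{W}$ remains a path in $\overline{W}$. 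Countability of $\mathcal{W}$ combined with Prokhorov's theorem, a diagonal extraction, and Skorohod's representation then yields a subsequence $\{n'_k\}$ and a coupling on which all the marginal convergences in \eqref{dis-jointlaw} hold almost surely.

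The easy inequality $D_W(\cdot,\cdot;W)\ge D(\cdot,\cdot;W)$ is immediate: passing $\widehat{a}_n^{-1}\widetilde{d}_n(u,v;\overline{W})\ge \widehat{a}_n^{-1}\widetilde{d}_n(u,v)$ to the limit gives $D_W\ge D$ on $\overline{W}$, so $\mathrm{len}(P;D_W)\ge\mathrm{len}(P;D)$ for every path $P\subset W$, and taking infima gives the claim.

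For the reverse inequality, I would prove the following local equality: for every compact $K\subset W$, there exists a (random) $\rho_K>0$ such that $D_W(u,v)=D(u,v)$ whenever $u,v\in K$ with $\|u-v\|_\infty<\rho_K$. Granting this, given $x,y\in W$ and $\varepsilon>0$, Proposition~\ref{dis-lengthspace} produces a path $P\subset W$ with $\mathrm{len}(P;D)\le D(x,y;W)+\varepsilon$ whose image lies in some compact $K\subset W$ (by Lemma~\ref{dis-bounded}); refining any partition of $P$ to have mesh below $\rho_K$ makes every summand in the definition of length satisfy $D_W=D$, so $\mathrm{len}(P;D_W)=\mathrm{len}(P;D)$, and therefore $D_W(x,y;W)\le D(x,y;W)+\varepsilon$. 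Sending $\varepsilon\to 0$ gives the matching inequality.

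The main obstacle is proving the local equality. Since $D$-geodesics in our model are not Euclidean-continuous, one cannot simply argue that near-geodesics between close points stay in $\overline{W}$: a single long edge could teleport outside. I would overcome this by a renormalization argument analogous to that in Section~\ref{bi-lipschitz}, together with the H\"older-type estimate of Proposition~\ref{LimitContinuousTail} (applied to $D$ via Axioms IV' and V2', which hold for the subsequential limit by Proposition~\ref{dis-Axiom4} and Lemma~\ref{dis-Axiom5}) and Axiom V1'. At a scale $\delta\ll\mathrm{dist}(K,\partial W)$, any $D$-path starting in $K$ that exits $\overline{W}$ and returns must incur a $D$-cost of order $\mathrm{dist}(K,\partial W)^\theta$ by Axiom V1' applied to suitably shifted cubes, whereas the H\"older estimate bounds $D(u,v)$ by a vanishing function of $\|u-v\|_\infty$. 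Hence for $\rho_K$ chosen sufficiently small, near-$D$-geodesics between $u,v\in K$ automatically stay in $\overline{W}$, so $D(u,v)$ equals the infimum of $D$-lengths of paths in $\overline{W}$; the discrete analog of this dichotomy (transported through the coupling of Section~\ref{dis-existence} and Lemma~\ref{number-path-k} for the discrete path counts) shows that discrete near-geodesics for the internal metric on $n'_k\overline{W}$ coincide with those for the unrestricted metric on the same pairs, and after passing to the limit one obtains $D_W(u,v)=D(u,v)$ as required.
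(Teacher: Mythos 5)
Your scaffolding — Arzelà–Ascoli for the internal metrics (the paper's Lemma~\ref{dis-tightDW}), Prokhorov plus a diagonal extraction over the countable family $\mathcal{W}$, the easy inequality $D_W(\cdot,\cdot;W)\ge D(\cdot,\cdot;W)$, and a local equality globalized by a covering argument — matches the paper's structure. The paper packages the hard direction into a deterministic compatibility statement (Lemma~\ref{internalmetric}) applied together with strong regularity (Proposition~\ref{SRSL}); your local-equality step is essentially a reproof of Lemma~\ref{internalmetric}.

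The gap is in the quantitative claim behind your local equality. You assert that any $D$-path starting in $K$ that exits $\overline{W}$ and returns must cost on the order of $\mathrm{dist}(K,\partial W)^\theta$, citing Axiom~V1' applied to shifted cubes. This cannot hold: Axiom~V1' is only a tightness statement and gives no a.s.\ lower bound, and more fundamentally the one-way escape cost is a.s.\ zero for some $u\in K$ — since $K\times\overline{W}^c$ has positive intensity measure, $\mathcal{E}$ a.s.\ contains a long edge $\langle u,\bm y\rangle$ with $u\in K$ and $\bm y\in\overline{W}^c$, and then $D(u,\overline{W}^c)=0$ by Axiom~III. A single long edge teleports a $D$-geodesic outside $\overline{W}$ for free, so the quantity that must be bounded below is the \emph{roundtrip} cost from a small ball $V_{\varepsilon/2}(\bm z)$ to $V^c$ and back, and this is precisely where the paper's argument does something you omit: it introduces the event $A_\varepsilon$ (at most one long edge connecting $V_\varepsilon(\bm x)$ to $V^c$, for all $\bm x$ in an $(\varepsilon/2)$-lattice covering $V_\delta$), shows $\mathds{P}[A_\varepsilon^c]\to 0$, and then combines Axiom~I with strong regularity of Proposition~\ref{SRSL} ($D(\bm x,\bm y)=0\iff\langle\bm x,\bm y\rangle\in\mathcal{E}$) to argue that a geodesic roundtrip can use the unique escape edge at most once, so the return leg has positive $D$-length, since otherwise a second direct edge would exist, contradicting $A_\varepsilon$. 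Only then does $\mathrm{diam}(V_r(\bm z);D)\to 0$ supply the radius $r(\bm z)$ below which short excursions are unaffordable. Your proposal never invokes an $A_\varepsilon$-type control on long edges nor strong regularity, and without them the roundtrip cost could vanish, so the local equality does not follow. A secondary issue: you state the local equality in terms of Euclidean proximity $\|u-v\|_\infty<\rho_K$ and refine partitions of a near-geodesic to have small Euclidean mesh, but $D$-geodesics jump arbitrarily far in Euclidean distance in zero $D$-length, so such partitions need not exist along the path and the corresponding Riemann sums need not approximate $\mathrm{len}(P;D)$; the paper instead formulates the local statement via the $D$-metric condition $D_U(\bm x,\bm y)<D_U(\bm x,V^c)$ and covers by Euclidean balls of small $D$-diameter, which avoids this conflation.
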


We now proceed with the proof of Lemma \ref{dis-limitonset}.
First, the following lemma is from Propositions \ref{dis-tightness} and \ref{dis-lengthspace} immediately, which gives the tightness of the internal metrics associated to $\widehat{a}_n^{-1}\widetilde{d}_n$.

\begin{lemma}\label{dis-internaltight}
    Let $R>0$. The laws of the internal metrics $\{\widehat{a}_n^{-1}\widetilde{d}_n(\cdot,\cdot;[-R,R]^d)\}_{n\geq 1}$ are tight with respect to  the topology of local uniform convergence on $\mathds{R}^{2d}$ and any subsequential limit of these laws is supported on length metrics which can also be viewed as a continuous function.
\end{lemma}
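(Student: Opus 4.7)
\medskip

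\noindent\textbf{Proof plan for Lemma \ref{dis-internaltight}.}

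The plan is to reduce the claimed tightness and continuity to Proposition \ref{dis-tightness}, and then obtain the length-metric property by a midpoint argument in the spirit of Proposition \ref{dis-lengthspace}, simplified by the fact that we are working inside the compact box $[-R,R]^d$. First I would invoke Lemma \ref{an-bounded} to note that $\widehat{a}_n n^{-\theta}$ is uniformly bounded away from $0$ and $\infty$, so all tightness/continuity statements for the rescaled family $\widehat{a}_n^{-1}\widetilde{d}_n(\cdot,\cdot;[-R,R]^d)$ are equivalent to the corresponding statements for $n^{-\theta}\widetilde{d}_n(\cdot,\cdot;[-R,R]^d)$. Since $d^{G_n}$ has the same law as $\widehat{d}$ and since $\widetilde{d}_n(\bm x,\bm y;[-R,R]^d)=d^{G_n}(\lfloor n\bm x\rfloor,\lfloor n\bm y\rfloor;[-nR,nR]^d_{\mathds{Z}})$, the tightness of $\{n^{-\theta}\widetilde{d}_n(\cdot,\cdot;[-R,R]^d)\}$ with respect to local uniform convergence, and the fact that every subsequential limit can be viewed as a continuous function on $[-R,R]^{2d}$, are exactly the content of the second half of Proposition \ref{dis-tightness} (applied via the generalized Arzel\`a--Ascoli statement in Lemma \ref{newAAtheorem}, with the error term $\delta_n=O(n^{-\theta})$ coming from the jump in $\lfloor n\cdot\rfloor$).

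Next I would establish the length property of any subsequential limit $D_R$ on $[-R,R]^d$. By Skorohod's representation theorem I would pass to a coupling in which $\widehat{a}_{n_k}^{-1}\widetilde{d}_{n_k}(\cdot,\cdot;[-R,R]^d)$ converges a.s.\ locally uniformly on $[-R,R]^{2d}$ to a continuous pseudometric $D_R$; the quotient $([-R,R]^d/\!\sim_R,D_R)$, with $\bm x\sim_R\bm y$ iff $D_R(\bm x,\bm y)=0$, is then a complete metric space (the underlying set is compact and $D_R$ is continuous). By \cite[Theorem 2.4.16]{BBI01} it then suffices to exhibit, for any $\bm z,\bm w\in[-R,R]^d$, a midpoint $\bm x\in[-R,R]^d$ with
\begin{equation*}
D_R(\bm z,\bm x)=D_R(\bm w,\bm x)=\tfrac{1}{2}D_R(\bm z,\bm w).
\end{equation*}
For each $k$ choose a discrete midpoint $\bm x_{n_k}\in[-nR,nR]^d_{\mathds{Z}}$ on a chemical-distance geodesic from $\lfloor n_k\bm z\rfloor$ to $\lfloor n_k\bm w\rfloor$ \emph{inside} $[-nR,nR]^d_{\mathds{Z}}$, so that both $|\widetilde{d}_{n_k}(\bm z,\bm x_{n_k}/n_k;[-R,R]^d)-\tfrac{1}{2}\widetilde{d}_{n_k}(\bm z,\bm w;[-R,R]^d)|\leq 1$ and the corresponding bound with $\bm w$ in place of $\bm z$ hold.

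Crucially, because we are working with the \emph{internal} metric, the midpoint lies automatically in $[-R,R]^d$ after rescaling; this is the key simplification over Proposition \ref{dis-lengthspace}, where one had to invoke Lemma \ref{dis-bounded} to prevent the midpoint from escaping to infinity. By compactness of $[-R,R]^d$, extract a further subsequence along which $\bm x_{n_k}/n_k\to\bm x\in[-R,R]^d$. Using locally uniform convergence of the rescaled internal metrics to $D_R$ together with $\widehat{a}_{n_k}\to\infty$ (Lemma \ref{an-bounded}) to absorb the additive $1$ error, I would pass to the limit to obtain $D_R(\bm z,\bm x)=\tfrac{1}{2}D_R(\bm z,\bm w)$ and symmetrically for $\bm w$, finishing the midpoint argument. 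I do not foresee a serious obstacle here: the only minor point to check is that the midpoint does not exit the Euclidean box, but as indicated this is automatic for the internal-metric construction.
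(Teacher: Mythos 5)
Your proof is correct and follows essentially the same strategy the paper intends. The paper disposes of this lemma in a single line, asserting that it follows ``immediately'' from Propositions \ref{dis-tightness} and \ref{dis-lengthspace}; since Proposition \ref{dis-lengthspace} is stated for the full metric $\widehat{a}_{n_k}^{-1}\widetilde{d}_{n_k}$ rather than the internal metric on $[-R,R]^d$, that citation implicitly asks the reader to adapt the midpoint argument, and you carry out exactly that adaptation. Your observation that the internal-metric setting actually \emph{simplifies} the midpoint argument (the discrete midpoint is confined to $[-nR,nR]^d_{\mathds{Z}}$ by construction, so compactness replaces the appeal to Lemma \ref{dis-bounded}) is correct and is the one substantive point the paper leaves unsaid.
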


Recall that in this paper, we use the term ``length metric'' to refer to a metric $D$ that becomes a length metric when considered on the quotient space $\mathds{R}^d/\sim$. Here, the equivalence relation $\sim $ is defined such that ${\bm x}\sim {\bm y}$ if and only if $D({\bm x},{\bm y})=0$.

We now generalize the internal metrics on cubes to internal metrics on closures of dyadic sets.

\begin{lemma}\label{dis-tightDW}
    Let $W\subset \mathds{R}^d$ be a dyadic set. The laws of the internal metrics $\{\widehat{a}_n^{-1}\widetilde{d}_n(\cdot,\cdot;\overline{W})\}_{n\geq 1}$ are tight with respect to the topology of local uniform convergence on $\overline{W}\times\overline{W}$ and any subsequential limit of these laws is supported on length metrics.
\end{lemma}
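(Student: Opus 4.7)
My plan is to mirror the proofs of Proposition~\ref{dis-tightness} and Proposition~\ref{dis-lengthspace}, adapted to the internal metric on $\overline{W}$, exploiting that a dyadic set is a finite union of dyadic cubes so its boundary is piecewise affine and well-behaved.

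First, I would establish a uniform upper bound for the diameter of the internal metric. Writing $W$ as the interior of $\bigcup_{I\in\mathcal{I}}I$ for finitely many dyadic cubes, I would fix a spanning tree on $\mathcal{I}$ where two cubes are adjacent if they share a $(d{-}1)$-dimensional face. For any $\bm x,\bm y\in\overline{W}$, one can build a path inside $\overline{W}$ by concatenating paths that stay within a sequence of cubes along the tree, yielding
\begin{equation*}
{\rm diam}\bigl(\overline{W};\widehat{a}_n^{-1}\widetilde{d}_n(\cdot,\cdot;\overline{W})\bigr)\le \sum_{I\in\mathcal{I}}{\rm diam}\bigl(\overline{I};\widehat{a}_n^{-1}\widetilde{d}_n(\cdot,\cdot;\overline{I})\bigr).
\end{equation*}
Each summand has the same law as $\widehat{a}_n^{-1}{\rm diam}([0,2^{k_I}n]_{\mathds{Z}}^d;\widehat{d})$ (up to rounding effects), so by Theorem~\ref{dis-upperbound} and Lemma~\ref{an-bounded} these are jointly tight.

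For the modulus estimate required by Lemma~\ref{newAAtheorem}, I would run the argument from Proposition~\ref{dis-tightness} but with paths restricted to $\overline{W}$. Specifically, partition $\overline{W}$ into small sub-cubes of side length $2^{-m}$ (each either fully inside some $I\in\mathcal{I}$ or straddling a face), and for any $\bm x,\bm x'\in\overline{W}$ with $\|\bm x-\bm x'\|_\infty\in[2^{-k}\mathrm{diam}(W),2^{-k+1}\mathrm{diam}(W)]$ bound the internal distance by at most $2{\sup}_{I\in\mathcal I}\sup_j {\rm diam}(I_j^{(k)};\widehat{a}_n^{-1}\widetilde{d}_n(\cdot,\cdot;\overline I))$ for appropriate sub-cubes $I_j^{(k)}\subset I$; this uses that two nearby points in $\overline{W}$ lie in (or near) a common $I\in\mathcal{I}$, so a short internal path is available. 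Then the same union-bound plus exponential Markov argument carried out in \eqref{tightstep1}--\eqref{tightstep3} and \eqref{prob-tight} yields tightness of the random continuity moduli, and Lemma~\ref{newAAtheorem} gives tightness with subsequential limits in the space of continuous pseudometrics on $\overline W\times\overline W$.

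For the length metric property, I would follow Proposition~\ref{dis-lengthspace}: pass to a common probability space via Skorokhod and find midpoints. Given $\bm z,\bm w\in\overline{W}$, one constructs approximate midpoints $\bm x_{n_k}$ on discrete internal geodesics inside $\overline{W}$ that satisfy $|\widetilde{d}_{n_k}'(\bm z,\bm x_{n_k};\overline{W})-\tfrac12\widetilde{d}_{n_k}'(\bm z,\bm w;\overline{W})|\le 1$, and similarly for $\bm w$. Compactness of $\overline{W}$ (this is where the boundedness of dyadic sets really helps, and replaces Lemma~\ref{dis-bounded}) automatically gives a convergent subsequence $\bm x_{n_k}\to \bm x\in\overline W$ without any auxiliary event, and we pass to the limit to conclude $D_W(\bm z,\bm x)=D_W(\bm w,\bm x)=\tfrac12 D_W(\bm z,\bm w)$. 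By the Menger-type criterion for length metrics on complete spaces, $D_W$ is a length metric on $\overline W/\!\sim$.

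The main obstacle I anticipate is the modulus estimate near $\partial W$: two Euclidean-close points of $\overline W$ may lie in different cubes $I,I'\in\mathcal I$ meeting only along a lower-dimensional stratum of the boundary, so that no short path stays in $\overline W$. The dyadic structure saves us here because $\overline{I}\cap\overline{I'}$ is itself a dyadic face, and neighboring sub-cubes of size $2^{-m}$ near such a face can be chained along the face without leaving $\overline{W}$; making this chaining quantitative (rather than losing a multiplicative factor in the cardinality of $\mathcal I$ that would blow up as one refines) is the technical point that needs care, but it is handled by the same dyadic-layer bookkeeping used in the proof of Proposition~\ref{dis-tightness}.
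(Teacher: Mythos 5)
Your proposal is correct and follows the same route the paper intends: the paper omits the proof of this lemma entirely, saying only that it mirrors the argument for cubes (Propositions \ref{dis-tightness} and \ref{dis-lengthspace}) with $\overline{W}$ in place of $[-R,R]^d$, and you have fleshed out exactly the adaptations required — the chaining estimate near $\partial W$ and the replacement of Lemma \ref{dis-bounded} by compactness of $\overline{W}$ in the midpoint argument. The only small imprecision is that a spanning tree on $(d{-}1)$-face adjacency need not exist for a connected dyadic set whose cubes of $\mathcal{I}$ meet only along lower-dimensional faces; however, the displayed diameter bound still holds (any internal path visits boundedly many cubes, and even a vertex-only junction admits discrete paths through the shared lattice point), so this is cosmetic rather than a gap.
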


 The proof of the lemma above is similar to that of Lemma  \ref{dis-internaltight} (just replacing $[-R,R]^d$ with $W$). So we omit the proof.

The last lemma we need for the proof of Lemma \ref{dis-limitonset} is the following deterministic compatibility statement for limits of internal metrics.

\begin{lemma}\label{internalmetric}
    Let $V\subset U\subset \mathds{R}^d$ be open. Let $\{D^n\}$ be a sequence of 
    length metrics on $U$ which converges to a 
    length metric $D_U$ {\rm(}with respect to the topology of local uniform convergence on $U\times U${\rm)} satisfying Axiom I and the strong regularity in Proposition \ref{SRSL}. Suppose also that $D^n(\cdot,\cdot;\overline{V})$ converges to a 
    length metric $D_V$ {\rm(}with respect to the topology of local uniform convergence on $V\times V${\rm)}. Then $D_U(\cdot,\cdot;V)=D_V(\cdot,\cdot;V)$.
\end{lemma}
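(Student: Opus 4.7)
The proof splits into two inequalities on $V\times V$, and I will treat the easy one first before turning to the main technical content. For the direction $D_U(x,y;V)\le D_V(x,y;V)$: for every $n$, any path in $\overline V$ is a path in $U$, so restricting the infimum gives $D^n(x,y)\le D^n(x,y;\overline V)$. Passing to the local uniform limit on $V\times V$ yields the pointwise bound $D_U\le D_V$ there. Consequently, for any path $P\subset V$ and any partition $T$ of its parameter interval, $\sum_T D_U(P(t_{i-1}),P(t_i))\le\sum_T D_V(P(t_{i-1}),P(t_i))$, so $\mathrm{len}(P;D_U)\le\mathrm{len}(P;D_V)$, and taking infima over paths in $V$ delivers the desired inequality.

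The hard direction is $D_V(x,y;V)\le D_U(x,y;V)$. Fix $x,y\in V$ and $\varepsilon>0$. By Axiom I for $D_U$, choose a $D_U$-continuous path $\eta:[0,1]\to V$ from $x$ to $y$ with $\mathrm{len}(\eta;D_U)<D_U(x,y;V)+\varepsilon$. Using the strong regularity of $D_U$ (Proposition~\ref{SRSL}), which forces the $D_U$-topology and the Euclidean topology to be compatible on bounded sets, together with a Lemma~\ref{dis-bounded}-type bridging argument, I can cover $\eta([0,1])$ by a Euclidean-compact set $K\subset V$, and then enlarge slightly to an open set $V^\circ$ with $\eta([0,1])\subset V^\circ\subset \overline{V^\circ}\subset V$. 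Since $\overline{V^\circ}$ is Euclidean-compact and contained in $U$, $D^n\to D_U$ uniformly on $\overline{V^\circ}\times\overline{V^\circ}$, and $D^n(\cdot,\cdot;\overline V)\to D_V$ uniformly on $\overline{V^\circ}\times\overline{V^\circ}\subset V\times V$.

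Refine a partition $0=t_0<\dots<t_N=1$ so that (i) $\sum_i D_U(\eta(t_{i-1}),\eta(t_i))>\mathrm{len}(\eta;D_U)-\varepsilon$, and (ii) each sub-arc $\eta([t_{i-1},t_i])$ lies in a small dyadic cube $J_i\subset V^\circ$ whose $D_U$- and Euclidean-diameters are controlled. For each $i$, the key claim is that $D_V(\eta(t_{i-1}),\eta(t_i))\le D_U(\eta(t_{i-1}),\eta(t_i))+o_N(1)$: the sub-arc witnesses that $\eta(t_{i-1})$ and $\eta(t_i)$ are connected inside $\overline{J_i}\subset\overline{V^\circ}\subset\overline V$, which together with the length-space structure of $D_U$ allows us to iterate on a further sub-partition $t_{i-1}=s_{i,0}<\dots<s_{i,m_i}=t_i$ until each $D_U$-step is so small that $D^n(\cdot,\cdot;\overline V)$ and $D^n(\cdot,\cdot)$ differ only by $o(1)$ there (controlled via the uniform convergence $D^n\to D_U$ on $\overline{V^\circ}$ and the length-metric property of $D^n$, which lets a would-be short $D^n$-path escape $\overline V$ only by an amount vanishing with cube size). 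Passing to the limit $n\to\infty$ and summing over $i$ via the triangle inequality for $D_V$ gives $D_V(x,y)\le\sum_i D_V(\eta(t_{i-1}),\eta(t_i))\le\sum_i D_U(\eta(t_{i-1}),\eta(t_i))+O(\varepsilon)\le\mathrm{len}(\eta;D_U)+O(\varepsilon)<D_U(x,y;V)+O(\varepsilon)$. The same partition argument, applied to the concatenation of $D_V$-near-geodesics between successive $\eta(t_i)$ (which can be chosen inside $V$ since $D_V$ is a length metric and $\eta(t_i)\in V^\circ\subset V$), upgrades this to $D_V(x,y;V)\le D_U(x,y;V)+O(\varepsilon)$; sending $\varepsilon\to 0$ completes the argument.

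The principal obstacle is that internal metrics do not automatically commute with local uniform convergence: the assumption only gives $D^n(\cdot,\cdot;\overline V)\to D_V$, not convergence of $D^n(\cdot,\cdot;\overline{J_i})$ to any natural limit. The plan circumvents this by working only with $D^n(\cdot,\cdot;\overline V)$ itself, choosing sub-partitions of $\eta$ so fine that short $\overline V$-paths between consecutive partition points (which a priori could venture toward $\partial V$) must in fact stay close to $\eta$, since $\eta$ already lies deep inside the Euclidean interior $V^\circ$ and $D^n$-distances on $\overline{V^\circ}$ are uniformly close to $D_U$-distances. Strong regularity of $D_U$ enters essentially here: it is what ensures that $D_U$-small neighborhoods of $\eta$ actually translate into Euclidean-small neighborhoods, so that the Euclidean buffer between $\overline{V^\circ}$ and $\partial V$ corresponds to a genuine metric buffer in the limit.
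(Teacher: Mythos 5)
Your easy direction ($D_U(\cdot,\cdot;V)\le D_V(\cdot,\cdot;V)$) is correct and a bit cleaner than the paper's single two-sided argument. The hard direction, however, hinges on a claim that is false for this model. You write that strong regularity (Proposition~\ref{SRSL}) ``forces the $D_U$-topology and the Euclidean topology to be compatible on bounded sets'' and later that it ``ensures that $D_U$-small neighborhoods of $\eta$ actually translate into Euclidean-small neighborhoods.'' Strong regularity asserts the opposite: $D_U(\bm x,\bm y)=0$ with $\bm x\ne\bm y$ happens precisely when $\langle\bm x,\bm y\rangle\in\mathcal{E}$, so the $D_U$-ball of any positive radius around a point contains every long-edge neighbor of that point, which may be Euclidean-far and may lie in $V^c$. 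The $D_U$-topology is strictly coarser than the Euclidean topology, not equivalent to it. Consequently, your central estimate --- that a $D^n$-short path between consecutive partition points ``must in fact stay close to $\eta$'' since $\eta$ lies deep in $V^\circ$, so that $D^n(\cdot,\cdot;\overline V)\approx D^n(\cdot,\cdot)$ at small scales --- is unsupported: a single long edge from a tiny Euclidean neighborhood of a partition point to $V^c$ allows a $D^n$-geodesic to leave $\overline V$ at zero cost, and two nearby such edges could produce a shortcut through $V^c$ making $D^n(\bm z,\bm w;\overline V)$ strictly larger than $D^n(\bm z,\bm w)$ even as $|\bm z-\bm w|\to 0$.

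What is missing is a probabilistic argument controlling the long-edge structure near $\eta$, and this is where the paper's proof actually uses Axiom~I, strong regularity, and the Poisson structure. The paper introduces the event $A_\varepsilon$ that each tiny cube $V_\varepsilon(\bm x)$ with $\bm x\in(\varepsilon/2)\mathds{Z}^d\cap V_\delta$ has \emph{at most one} long edge to $V^c$; it bounds $\mathds{P}[A_\varepsilon^c]\lesssim\varepsilon^d/\delta^{2d}$ using the intensity computation. On $A_\varepsilon$, a $D_U$-geodesic that exits $V$ near $\bm z$ and returns cannot reuse the one available edge, so by strong regularity the round trip has strictly positive $D_U$-length; shrinking the cube until $\mathrm{diam}(V_{r(\bm z)}(\bm z);D_U)$ is below that cost forces geodesics between nearby points to stay in $V$, where the first paragraph of the paper's proof (pass $D^n(\cdot,\cdot)=D^n(\cdot,\cdot;\overline V)$ to the limit for pairs with $D_U(\bm x,\bm y)<D_U(\bm x,V^c)$) applies. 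Your blueprint of partitioning a near-geodesic and summing local estimates is structurally compatible with this, but the local estimate you rely on needs exactly this event-$A_\varepsilon$ argument in place of the topology-compatibility claim. (A minor further gap: you also need to argue that $\overline{\eta([0,1])}^{\mathrm{Euc}}\subset V$ so that $V^\circ$ exists; this holds a.s.\ because $\partial V$ has Lebesgue measure zero so a.s.\ no edge endpoint lies on $\partial V$, but it is not automatic from $D_U$-continuity of $\eta$.)
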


\begin{proof}

Suppose ${\bm x},{\bm y}\in V$ satisfy that $D_U({\bm x},{\bm y})<D_U({\bm x},V^c)$. Since $D_U$ is a length metric, we have that $D_U({\bm x},{\bm y})=D_U({\bm x},{\bm y};V)=D({\bm x},{\bm y};\overline{V})$. Furthermore, for large enough $n\in\mathds{N}$ we have $D^n({\bm x},{\bm y})<D^n({\bm x},V^c)$, which implies that $D^n({\bm x},{\bm y})=D^n({\bm x},{\bm y};V)=D^n({\bm x},{\bm y};\overline{V})$. Therefore, $D^n({\bm x},{\bm y})$ converges to both $D_U({\bm x},{\bm y})=D_U({\bm x},{\bm y};V)$ and $D_V({\bm x},{\bm y})$. Consequently, $D_U({\bm x},{\bm y};V)=D_V({\bm x},{\bm y})$ for each ${\bm x},{\bm y}\in V$ with $D_U({\bm x},{\bm y})<D_U({\bm x},V^c)$.

To prove the general case, we first define some notations.
For a fixed $\delta>0$ and $\varepsilon \ll \delta$ , let $V_\delta=\{\bm x\in V: \text{dist}(\bm x,V^c)\geq \delta\}$ and let $A_\varepsilon$ be the event that there is at most one long edge connecting $V_{\varepsilon}(\bm x)$ to $V^c$ for all $\bm x\in (\varepsilon/2)\mathds{Z}^d\cap V_{\delta}$. By the property of the Poisson point process, one has that
\begin{equation}\label{Avareps}
\begin{split}
\mathds{P}[A_{\varepsilon}^c]&\leq\sum_{\bm x\in (\varepsilon/2)\mathds{Z}^d\cap V_{\delta}} \left[1-\exp\left\{  -\beta\int_{V_{\varepsilon}(x)}\int_{V^c}\frac{1}{|\bm z-\bm y|^{2d}}\d \bm z\d \bm y\right\}\right]^2\\
&\leq \sum_{\bm x\in (\varepsilon/2)\mathds{Z}^d\cap V_{\delta}} \left[1-\exp\left\{  -\beta\int_{V_{\varepsilon}(x)}\left(\int_{V_{\delta-\varepsilon}(\bm y)^c}\frac{1}{|\bm z-\bm y|^{2d}}\d \bm z\right)\d \bm y\right\}\right]^2\\
&\leq \sum_{\bm x\in (\varepsilon/2)\mathds{Z}^d\cap V_{\delta}} (1-\exp\{-c_1\beta\varepsilon^d\delta^{-d}\})^2 \leq c_2 \varepsilon^d/\delta^{2d},
\end{split}
\end{equation}
where $c_1,c_2$ are two constants depending only on $d,\beta$ and the Euclidean diameter of $V_\delta$.

In the following, we assume that the event $A_\varepsilon$ occurs. Then for any $\bm z\in V_\delta$ with $V_{\varepsilon/2}(\bm z)\cap V_\delta\neq \emptyset$, there is at most one long edge connecting $V_{\varepsilon/2}(\bm z)$ and $V^c$. In addition, for a  $D_U$-geodesic  $P_{\bm z,V^c,\bm z}$ from $V_{\varepsilon/2}(\bm z)$ to $V^c$ and then back to $V_{\varepsilon/2}(\bm z)$, if there exists a long edge connecting  $V_{\varepsilon/2}(\bm z)$ and $V^c$, $P_{\bm z,V^c,\bm z}$ can use that edge at most once. Combined with Axiom I and Proposition \ref{SRSL}, this implies that $\text{len}(P_{\bm z,V^c,\bm z};D_U)>0$. Combining this with the fact that $\lim_{r\to0}\text{diam}(V_r(\bm z);D_U)=0$, we can see that there is a (random) $0<r(\bm z)<\varepsilon/2$ such that
\begin{equation*}
\text{diam}(V_{r(\bm z)}(\bm z);D_U)<\text{len}(P_{\bm z,V^c,\bm z};D_U).
\end{equation*}
Therefore on the event $A_\varepsilon$, for any two points $\bm x,\bm y\in V_{r(\bm z)}(\bm z)$ there is a $D_U$-geodesic from $\bm x$ and $\bm y$ that is contained in $V$, that is,
\begin{equation}\label{xyVc}D_U(\bm x,\bm y)< \max\{D_U(\bm x, V^c), D_U(\bm y, V^c)\}.\end{equation}
Combining this with the first paragraph in the proof, we arrive at $D_U(\bm x,\bm y;V)=D_V(\bm x,\bm y)$.

Now for any path $P$ in $V$, $\{V_{r(\bm z)}(\bm z)\}_{z\in P}$ is a family of open covers for $P$. Since $P$ is a compact set in $\mathds{R}^d$, according to Heine-Borel-Lebesgue property, there is a finite subcover, denoted by $\{V_{r(\bm z_k)}(\bm z_k)\}_{k\in [1,N]_\mathds{Z}}$, of the open cover $\{V_{r(\bm z)}(\bm z)\}_{z\in P}$. Now for any partition $P(t_0),\cdots, P(t_{N'+1})$ of $P$ such that $(P(t_i),P(t_{i+1}))$ is contained in some $V_{r(\bm z_k)}(\bm z_k)$ for all $i\in [0,N']_\mathds{Z}$, from \eqref{xyVc} and the first paragraph above, we get that $D_U(P(t_i),P(t_{i+1});V)=D_V(P(t_i),P(t_{i+1}))$ for all $i\in [0,N']_\mathds{Z}$. Combining this with the definition of $D_U$-length of $P$, i.e.,
     $$
     \text{len}(P;D_U)=\sup_{T}\sum_{i=1}^{\#T} D_U(P(t_i),P(t_{i+1})),
     $$
     we get that $\text{len}(P;D_U)=\text{len}(P;D_V)$. Furthermore, from this, the definition of $D_U(\cdot,\cdot;V)$ and the fact that $D_U$ and $D_V$ are length metrics, we conclude that $D_U({\bm x},{\bm y};V)=D_V({\bm x},{\bm y})$ for all ${\bm x},{\bm y}\in V$.
\end{proof}



We now present the

\begin{proof}[Proof of Lemma \ref{dis-limitonset}]
    From Propositions \ref{dis-tightness} and \ref{dis-lengthspace} we first see that metrics $\widehat{a}_n^{-1}\widetilde{d}_n$ are tight with respect to the local uniform topology on $\mathds{R}^{2d}$ and any subsequential limit in law is a.s.\ 
    a length metric on $\mathds{R}^d$. Now applying Lemma \ref{dis-tightDW} and the Prokhorov theorem, we get that the joint law of the metrics on the left hand side of \eqref{dis-jointlaw} is tight. Moreover, any subsequential limit of these joint laws is a coupling of a 
    length metric $D$ on $\mathds{R}^d$ and a length metric $D_W$ on $\overline{W}$ for each $W\in\mathcal{W}$. 
    We then apply Proposition \ref{SRSL} and Lemma \ref{internalmetric} to say that $D_W(\cdot,\cdot;W)=D(\cdot,\cdot;W)$ for each $W\in\mathcal{W}$  (note that $D$ satisfies Axiom III because of local uniform convergence).
\end{proof}

\subsection{Weak locality}
    In this subsection, we will prove Axiom II'' (weak locality) for any subsequential limit of $\widehat{a}_n^{-1}\widetilde{d}_n$ with the help of Lemma \ref{dis-limitonset}.

\begin{lemma}\label{dis-weaklocality2}
    Let $(\mathcal{E},D)$ be any subsequential limit of the laws of $(\mathcal{E},\widehat{a}_n^{-1}\widetilde{d}_n)$. Then $D$ satisfies Axiom II'' {\rm(}weak locality{\rm)}.
\end{lemma}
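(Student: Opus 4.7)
The plan is to first establish Axiom II'' for finite collections of disjoint dyadic sets (in the sense of Definition \ref{dyadic}) by pulling the independence back to the coupled discrete graphs $G_n$, and then to extend to arbitrary disjoint open sets by inner dyadic exhaustion. The starting point is the coupling itself: for a dyadic set $W$, the internal metric $\widetilde{d}_n(\cdot,\cdot;\overline{W})$ is by construction the chemical distance in the subgraph of $G_n$ spanned by lattice points ${\bm k}\in\mathds{Z}^d$ with $V_{1/n}({\bm k}/n)\cap\overline{W}\neq\emptyset$; in particular, for any fixed open neighborhood $U\supset\overline{W}$ and all $n$ sufficiently large, $\widetilde{d}_n(\cdot,\cdot;\overline{W})$ is a measurable function of $\mathcal{E}|_{U\times U}$ (together with the deterministic nearest-neighbor edges).

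Second, I will verify the axiom for disjoint dyadic $W_1,\ldots,W_N$. Fix disjoint open neighborhoods $U_i\supset\overline{W_i}$ with $\overline{U_i}$ pairwise disjoint. By the independence of the Poisson point process on disjoint regions, the restrictions $\mathcal{E}|_{U_i\times U_i}$, $i=1,\ldots,N$, together with $\mathcal{E}|_{(\cup_i U_i\times U_i)^c}$, are mutually independent; combined with the previous paragraph this shows that, for $n$ large, the tuples $(\mathcal{E}|_{W_i\times W_i},\widehat{a}_n^{-1}\widetilde{d}_n(\cdot,\cdot;\overline{W_i}))$, $i=1,\ldots,N$, are mutually independent and also independent of $\mathcal{E}|_{(\cup_i W_i\times W_i)^c}$ (after routing the edges between $U_i\setminus\overline{W_i}$ and the rest of space into the last coordinate, which does not affect any of the $\widetilde{d}_n(\cdot,\cdot;\overline{W_i})$). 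Passing to the subsequential joint limit from Lemma \ref{dis-limitonset} and using the elementary fact that independence is preserved under convergence in law of the joint distribution, the tuples $(\mathcal{E}|_{W_i\times W_i},D_{W_i})$ together with $\mathcal{E}|_{(\cup_i W_i\times W_i)^c}$ remain mutually independent. Since Lemma \ref{dis-limitonset} also gives $D_{W_i}(\cdot,\cdot;W_i)=D(\cdot,\cdot;W_i)$ a.s., this yields Axiom II'' for the dyadic collection.

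Third, for arbitrary disjoint open $V_1,\ldots,V_N$, I will exhaust each $V_i$ from inside by dyadic sets $W_i^{(k)}$ with $\overline{W_i^{(k)}}\subset V_i$ and $W_i^{(k)}\uparrow V_i$. The sets $W_1^{(k)},\ldots,W_N^{(k)}$ remain pairwise disjoint, so by the dyadic case the families $(\mathcal{E}|_{W_i^{(k)}\times W_i^{(k)}},D(\cdot,\cdot;W_i^{(k)}))_{i=1}^N$ are independent of each other and of $\mathcal{E}|_{(\cup_i W_i^{(k)}\times W_i^{(k)})^c}$. Because $D$ is a length metric, $D(\cdot,\cdot;W_i^{(k)})\downarrow D(\cdot,\cdot;V_i)$ as $k\to\infty$, and $\mathcal{E}|_{V_i\times V_i}=\bigcup_k \mathcal{E}|_{W_i^{(k)}\times W_i^{(k)}}$, so the joint object for $V_i$ is measurable with respect to the $\sigma$-algebra generated by the joint objects for $\{W_i^{(k)}\}_{k\geq1}$. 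Taking $k\to\infty$ preserves the independence and gives Axiom II'' in full generality.

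The main technical obstacle I anticipate is the pre-limit measurability step, namely pinning down that $\widetilde{d}_n(\cdot,\cdot;\overline{W})$ truly is a function of $\mathcal{E}$ restricted to a shrinking open neighborhood of $\overline{W}$ (including a careful treatment of the ``boundary layer'' of cubes $V_{1/n}({\bm k}/n)$ that straddle $\partial W$), together with verifying that under the joint convergence of Lemma \ref{dis-limitonset} the independence of the finite-dimensional marginals indeed passes to the limit. Once these two points are nailed down, the rest of the argument is a soft combination of properties of Poisson point processes, Lemma \ref{dis-limitonset}, and the standard monotonicity of internal metrics of length spaces under open exhaustion.
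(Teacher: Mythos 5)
Your overall plan coincides with the paper's: exploit the coupling to the discrete graphs $G_n$, use the Poisson independence for the pre-limit internal metrics on dyadic subsets, pass to the joint limit via Lemma~\ref{dis-limitonset}, and then exhaust the open sets by dyadic sets. However, there is a concrete gap in your second step that needs to be fixed before the argument closes.

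You assert that for $n$ large the tuples $\left(\mathcal{E}|_{W_i\times W_i},\widehat{a}_n^{-1}\widetilde{d}_n(\cdot,\cdot;\overline{W}_i)\right)$, $i=1,\dots,N$, are mutually independent \emph{and also independent of} $\mathcal{E}|_{(\cup_i W_i\times W_i)^c}$. The second part is too strong and does not follow from the Poisson independence on the disjoint $U_i$'s. In the coupling, $\widetilde{d}_n(\cdot,\cdot;\overline{W}_i)$ is determined by $\mathcal{E}$ restricted to the $1/(2n)$-fattening of $\overline{W}_i$, not by $\mathcal{E}|_{W_i\times W_i}$. In particular, edges $\langle{\bm u},{\bm v}\rangle$ with ${\bm u}\in W_i$ and ${\bm v}$ in the thin shell between $\overline{W}_i$ and its fattening (or between $W_i$ and $U_i\setminus\overline{W}_i$) do affect $\widetilde{d}_n(\cdot,\cdot;\overline{W}_i)$ --- they change the adjacency of boundary cubes in $G_n$ --- and yet they lie in $\mathcal{E}|_{(\cup_i W_i\times W_i)^c}$. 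So independence of the pair from $\mathcal{E}|_{(\cup_i W_i\times W_i)^c}$ is simply false at the pre-limit level. Your ``routing'' parenthetical only disposes of edges connecting $U_i\setminus\overline{W}_i$ to the \emph{outside of} $U_i$, which is not where the obstruction sits.

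The correct statement, and the one the paper records as \eqref{dis-independent}, is independence with $\mathcal{E}|_{(\cup_i U_i\times U_i)^c}$ (equivalently, with $U_i=V_i$, independence with $\mathcal{E}|_{(\cup_i V_i\times V_i)^c}$), valid once ${\rm dist}(W_i,\overline{V}_i^c)>dn^{-1}$. This already uses $U_i=V_i$, and then the dyadic exhaustion $W_i^{(k)}\uparrow V_i$ passes the independence to $(\mathcal{E}|_{V_i\times V_i},D(\cdot,\cdot;V_i))$ and $\mathcal{E}|_{(\cup_i V_i\times V_i)^c}$ exactly as you intend in your third step. With that replacement --- claim independence against the complement of the $U_i\times U_i$'s (taking $U_i=V_i$), not against the complement of the $W_i\times W_i$'s --- your proof goes through and recovers the paper's argument.
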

\begin{proof}
Let $\{n_k\}$ be any subsequence such that $(\mathcal{E},\widehat{a}_n^{-1}\widetilde{d}_{n_k})$ converges to $(\mathcal{E},D)$ in law, and
    let $V_1,V_2,\cdots,V_N \subset \mathds{R}^d$ be disjoint open sets for some $N\in\mathds{N}$.
    We now fix dyadic sets $W_1,W_2,\cdots, W_N$ with $W_i\subset \overline{V}_i$ (we will eventually let all $W_i$ increase to all of $V_i$, respectively). By the independence of the Poisson point process, we obtain that when the Euclidean distance between $W_i$ and $\overline{V}_i^c$ is larger than $dn^{-1}$ for all $i\in[1,N]_\mathds{Z}$, we have that
    \begin{equation}\label{dis-independent}
        \left\{\left(\mathcal{E}|_{W_i\times W_i}, \widehat{a}_n^{-1}\widetilde{d}_n(\cdot,\cdot;\overline{W}_i)\right)\right\}_{ i\in[1,N]_\mathds{Z}}\quad\text{and}\quad\mathcal{E}|_{(\cup_{i=1}^N(V_i\times V_i))^c}\quad   \text{are independent}.
    \end{equation}

    In addition, applying Lemma \ref{dis-limitonset} by possibly replacing $\{n'_k\}$ with a deterministic subsequence, there exists
    a coupling $(\mathcal{E},D,D_{W_1},\cdots, D_{W_N})$ of $(\mathcal{E},D)$ with length metrics $D_{W_1}(\cdot,\cdot;\overline{W}_1)$, $\cdots, D_{W_N}(\cdot,\cdot;\overline{W}_N)$
    on $\overline{W}_1,\cdots,\overline{W}_N$, respectively, such that
    \begin{equation}\label{dis-DW=D}
    D_{W_i}(\cdot,\cdot;W_i)=D(\cdot,\cdot;W_i)\quad\text{for all }i\in[1,N]_\mathds{Z}
    \end{equation}
    and that the following holds. We have the convergence of joint laws
    \begin{equation}\label{dis-convergence}
    \left(\mathcal{E},\widehat{a}_n^{-1}\widetilde{d}_n,\widehat{a}_n^{-1}\widetilde{d}_n(\cdot,\cdot;\overline{W}_1),\cdots,\widehat{a}_n^{-1}\widetilde{d}_n(\cdot,\cdot;\overline{W}_N)\right)\to (\mathcal{E},D,D_{W_1},\cdots,D_{W_N})
    \end{equation}
    along $n\in\{n_k\}$ where the last $N$ coordinates are endowed with the topology of uniform convergence on $\overline{W}_1\times \overline{W}_1,\cdots, \overline{W}_N\times \overline{W}_N$, respectively. Since the independence is preserved under convergence in law, from \eqref{dis-independent} and \eqref{dis-convergence} we obtain that $(\mathcal{E}|_{W_i\times W_i},D_{W_i})$, $i\in [1,N]_\mathds{Z}$ are independent.  Combining this with \eqref{dis-DW=D}, we further have
    \begin{equation*}
    \left\{(\mathcal{E}|_{W_i\times W_i},D(\cdot,\cdot;W_i))\right\}_{i\in[1,N]_\mathds{Z}}\quad \text{and}\quad \mathcal{E}|_{(\cup_{i=1}^N(V_i\times V_i))^c}\quad \text{are independent}.
    \end{equation*}
    Letting $W_i$ increases to $V_i$ for all $i\in[1,N]_\mathds{Z}$, one can get that $\{(\mathcal{E}|_{W_i\times W_i},D(\cdot,\cdot;W_i))\}_{i\in[1,N]_\mathds{Z}}$ a.s. converges to $\{(\mathcal{E}|_{V_i\times V_i},D(\cdot,\cdot;V_i))\}_{i\in[1,N]_\mathds{Z}}$ since $D$ is a length metric. Hence,
     we conclude the proof.
\end{proof}

\subsection{Measurability}
In this subsection we prove Proposition \ref{reallocal} below. The proof of Proposition \ref{reallocal} is essentially the same as that of \cite[Corollary 1.8]{GM19c}, and we reproduce its proof here merely for completeness.

\begin{proposition}\label{reallocal}
    If $D$ is a local $\beta$-LRP metric, then $D$ satisfies Axiom II {\rm(}locality{\rm)}. That is, $D$ is a weak $\beta$-LRP metric.
\end{proposition}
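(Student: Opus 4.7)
The plan is to derive the required measurability from the independence structure encoded in Axiom II''. Fix a deterministic open set $U \subset \mathds{R}^d$. Apply Axiom II'' with the single open set $V_1 = U$ (a sequence of length one is still a finite sequence) to obtain that the pair $(\mathcal{E}|_{U\times U}, D(\cdot,\cdot;U))$ is independent of $\mathcal{E}|_{(U\times U)^c}$. Set $\mathcal{G} := \sigma(\mathcal{E}|_{U\times U})$ and $\mathcal{H} := \sigma(\mathcal{E}|_{(U\times U)^c})$. Because $D$ is by assumption a measurable function of $\mathcal{E}$, the internal metric $D(\cdot,\cdot;U)$ is $(\mathcal{G} \vee \mathcal{H})$-measurable.

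The key step is to upgrade this to $\mathcal{G}$-measurability. For any Borel set $A$ in the space of continuous pseudometrics on $\mathds{R}^d$ with the local uniform topology (which is Polish), set $X := \mathds{1}_{\{D(\cdot,\cdot;U) \in A\}}$. Since $X$ is $(\mathcal{G} \vee \mathcal{H})$-measurable we have $X = \mathds{E}[X \mid \mathcal{G} \vee \mathcal{H}]$ almost surely. On the other hand, $X$ is a measurable function of $(\mathcal{E}|_{U\times U}, D(\cdot,\cdot;U))$, which is independent of $\mathcal{H}$ by the instance of Axiom II'' stated above. Using additionally that $\mathcal{G}$ and $\mathcal{H}$ themselves are independent (an immediate consequence of the independence properties of the Poisson point process), a direct calculation with product indicators $\mathds{1}_B\mathds{1}_C$ for $B \in \mathcal{G}$ and $C \in \mathcal{H}$ yields
\[
\mathds{E}[X \mathds{1}_B \mathds{1}_C] \;=\; \mathds{E}\bigl[\mathds{E}[X \mid \mathcal{G}]\, \mathds{1}_B \mathds{1}_C\bigr],
\]
and a monotone-class argument extends this identity to all sets in $\mathcal{G} \vee \mathcal{H}$. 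Hence $\mathds{E}[X \mid \mathcal{G} \vee \mathcal{H}] = \mathds{E}[X \mid \mathcal{G}]$ almost surely, which forces $X$ to be $\mathcal{G}$-measurable almost surely. Since the Borel $\sigma$-algebra on the space of continuous pseudometrics is countably generated, this upgrades to $\mathcal{G}$-measurability of $D(\cdot,\cdot;U)$ itself, meaning that $D(\cdot,\cdot;U)$ is determined by $\mathcal{E}|_{U \times U}$.

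For the remaining half of Axiom II, note that the edges in $\mathcal{E}$ with at least one endpoint outside $U$ are exactly the atoms of $\mathcal{E}|_{(U\times U)^c}$, and the basic independence properties of the Poisson point process with intensity $\beta/|\bm x - \bm y|^{2d}$ ensure that $\mathcal{E}|_{U \times U}$ and $\mathcal{E}|_{(U \times U)^c}$ are independent. Combined with the $\mathcal{G}$-measurability proved above, this gives that $D(\cdot,\cdot;U)$ is independent of all edges with at least one endpoint outside $U$, which is the second assertion of Axiom II.

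There is no substantial obstacle here: the content of the proposition is the measure-theoretic fact that if a pair $(Y, Z)$ is independent of a $\sigma$-algebra $\mathcal{H}$ while $Z$ is measurable with respect to $\sigma(Y) \vee \mathcal{H}$, then $Z$ must already be $\sigma(Y)$-measurable. The only items requiring care are the Polish structure of the space of continuous pseudometrics (standard) and the permissibility of $N=1$ in Definition \ref{localmetric}; if one preferred to avoid the latter point, taking $V_1 = U$ together with any small open ball $V_2$ disjoint from $U$ yields the same independence statement by marginalization, and the argument proceeds without change.
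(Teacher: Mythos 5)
The proposal collapses at its foundational step. You write ``Because $D$ is by assumption a measurable function of $\mathcal{E}$, the internal metric $D(\cdot,\cdot;U)$ is $(\mathcal{G}\vee\mathcal{H})$-measurable,'' but this is not part of the hypotheses. Compare Definitions~\ref{strongLRPmetric} and~\ref{weakmetric}, which explicitly require $D$ to be a measurable function $\mathcal{E}\mapsto D$, with Definition~\ref{localmetric}, which only asks that $D$ be a (random) pseudometric satisfying Axioms~I, III, IV', V1', V2', II''. Nothing in those axioms states that $D$ is $\sigma(\mathcal{E})$-measurable: Axiom~II'' gives a joint independence structure for the pairs $(\mathcal{E}|_{V_i\times V_i}, D(\cdot,\cdot;V_i))$, which is consistent with $D$ carrying additional randomness beyond $\mathcal{E}$. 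The whole content of Proposition~\ref{reallocal} is to \emph{prove} that this extra randomness does not exist. So the claim $\mathcal{G}\vee\mathcal{H} = \sigma(\mathcal{E}) \supset \sigma(D(\cdot,\cdot;U))$ is exactly the conclusion, and your argument is circular.

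Your abstract measure-theoretic observation is fine on its own terms: if $(Y,Z)\perp\mathcal{H}$, $\sigma(Y)\perp\mathcal{H}$, and $Z$ is $\sigma(Y)\vee\mathcal{H}$-measurable, then $Z$ is $\sigma(Y)$-measurable a.s. But it is not applicable here, because the last hypothesis is missing. To see that Axiom~II'' alone cannot do the job, note that one could in principle build an object satisfying all the local-metric axioms where $D(\cdot,\cdot;V) = f(\mathcal{E}|_{V\times V},\,\xi_V)$ for an extra, $\mathcal{E}$-independent noise $\xi_V$ attached to each region $V$ and set up so that the required mutual independences hold; such a $D$ would not be determined by $\mathcal{E}$. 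Ruling this out requires a genuinely global argument exploiting the consistency of the internal metrics across scales, which is exactly what the paper does: it introduces a randomly shifted grid $\varepsilon\mathcal{G}_{\bm\phi}$ (Lemma~\ref{divide}), shows that $D$ is determined by $(\mathcal{E},\bm\phi)$ and the internal metrics on the grid cubes (Lemma~\ref{weakmeasurable}), checks conditional independence of those internal metrics given $(\mathcal{E},\bm\phi)$ (Lemma~\ref{weakindependence}), and then applies the Efron--Stein inequality together with the bi-Lipschitz comparison from Proposition~\ref{bilip} to show that $\mathrm{Var}[D(\bm x,\bm y;V)\mid\mathcal{E},\bm\phi]\to 0$ as the mesh goes to $0$. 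None of that machinery appears in your proposal, and it is precisely the substance of the proof.

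Your closing remark that ``there is no substantial obstacle here'' is thus the most telling symptom: the obstacle is that $D$ is \emph{not} known to be determined by $\mathcal{E}$, and the proof must earn that conclusion rather than invoke it.
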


Throughout the proof, assume that $D$ is a local $\beta$-LRP metric. Let $D, \widetilde{D}$ be conditionally i.i.d. samples from the conditional law of $D$ given $\mathcal{E}$. Then from Proposition \ref{bilip}, there exists a deterministic constant $C>0$  such that a.s. for each ${\bm x},{\bm y} \in \mathds{R}^d$,
\begin{equation*}
    \widetilde{D}({\bm x},{\bm y})\leq  CD({\bm x},{\bm y}).
\end{equation*}

We use the Efron-Stein inequality \cite{ES81} to prove Proposition \ref{reallocal}. This inequality states that for any measurable function $F = F(X_1,\cdots,X_n)$ of $n$ independent random variables, we have
\begin{equation}\label{ESinequality}
    {\rm Var}[F]\leq\sum_{i=1}^n {\rm Var}[F|\{ X_j\}_{i\neq j}].
\end{equation}
To implement \eqref{ESinequality} in our scenario, we will partition $\mathds{R}^d$ into a grid of $d$ dimensions (shifted randomly for technical reasons; see Lemma \ref{divide}). Using the weak locality of $D$, we will establish that the internal metrics of $D$ on these grid cubes are conditionally independent, given $\mathcal{E}$. Additionally, we will demonstrate that $D$ is almost surely determined by these internal metrics (Lemma \ref{weakmeasurable}).

Next, we will select fixed $\bm x,\bm y\in\mathds{R}^d$ and apply \eqref{ESinequality} to the conditional distribution of the random variable $F=D(x,y)$ given $\mathcal{E}$. To achieve this, we must control the conditional variance when re-sampling the internal metric on one cube $S$. To accomplish this, we will utilize a $D$-geodesic $P$ from $\bm x$ to $\bm y$ and leverage the bi-Lipschitz equivalence from Proposition \ref{bilip}. This equivalence will allow us to bound the difference between the original value of $D(\bm x,\bm y)$ and the new value after re-sampling the internal metric on $S$, ensuring that this difference is no greater than a constant times the $D$-length of $P\cap S$.

As we send the mesh size to zero, the sum of the squared error ${\rm len}(P\cap S)^2$ over all $S$ will almost surely converge to zero. This convergence will demonstrate that ${\rm Var}[D(x,y)|\mathcal{E}]=0$ and, therefore, that $D$ is almost surely determined by $\mathcal{E}$; this implies the desired result finally.

We will now define the grid used in our proof. First, we sample ${\bm \phi}$ uniformly from the Lebesgue measure on $[0,1]^d$, independently from all other random variables.
We then construct the randomly shifted $d$-dimensional grid, denoted as $\mathcal{G}_{\bm \phi}$. Specifically,  $\mathcal{G}_{\bm \phi}=\cup_{{\bm x}\in\mathds{Z}^d}({\bm x}+{\bm \phi}+\partial [0,1]^d)$, where $\partial A$ means the boundary of $A$ for a set $A\subset \mathds{R}^d$.

The reason for introducing the random shift ${\bm \phi}$ is to control the $D$-length of a path on the boundaries of our cubes in the partition.

\begin{lemma}\label{divide}
    Let $P : [0,{\rm len}(P;D)]\to \mathds{R}^d$ {\rm(}parameterized by $D$-length{\rm)} be a random path with finite $D$-length chosen in a manner depending only on $(\mathcal{E},D)$ {\rm(}not on ${\bm \phi}${\rm)}. For each $\varepsilon > 0$, a.s.\ ${\rm len}(P;D) = {\rm len}(P \setminus (\varepsilon \mathcal{G}_{\bm \phi});D)$.
\end{lemma}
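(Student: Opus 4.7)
The plan is to reduce the lemma to a one-line Fubini argument that exploits the independence of $\bm\phi$ from $(\mathcal{E},D,P)$ together with the uniform distribution of $\bm\phi$ on $[0,1]^d$.

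Set $T := {\rm len}(P;D)$, so $P:[0,T]\to \mathds{R}^d$ is parameterized by $D$-length. I would interpret $P\setminus(\varepsilon\mathcal{G}_{\bm\phi})$ as the restriction of $P$ to the parameter set $E := \{t\in[0,T] : P(t)\notin \varepsilon\mathcal{G}_{\bm\phi}\}$. Since $\varepsilon\mathcal{G}_{\bm\phi}$ is Euclidean-closed and $P$ consists of Euclidean-continuous arcs interspersed with countably many jumps along long edges of $\mathcal{E}$ (which contribute $0$ to the $D$-length), $E$ differs from an open subset of $[0,T]$ by a countable set, so $E$ is a countable disjoint union of (half-)open intervals $(a_j,b_j)$ up to a countable set. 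Using that $P$ is parameterized by $D$-length, ${\rm len}(P|_{[a_j,b_j]};D) = b_j-a_j$, hence
\[
{\rm len}(P\setminus \varepsilon\mathcal{G}_{\bm\phi}; D)=\sum_j (b_j-a_j)=|E|,
\]
where $|\cdot|$ denotes Lebesgue measure on $[0,T]$. Consequently the lemma reduces to the claim that $|P^{-1}(\varepsilon\mathcal{G}_{\bm\phi})|=0$ almost surely.

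To prove this, condition on $(\mathcal{E},D,P)$, under which $T$ and $t\mapsto P(t)$ become deterministic while $\bm\phi$ remains uniform on $[0,1]^d$ by the independence assumption (the hypothesis that $P$ is chosen depending only on $(\mathcal{E},D)$ is exactly what guarantees this). Fubini then yields
\[
\mathds{E}_{\bm\phi}\!\left[|P^{-1}(\varepsilon\mathcal{G}_{\bm\phi})|\,\big|\,\mathcal{E},D,P\right]=\int_0^T \mathds{P}_{\bm\phi}\!\left[P(t)\in \varepsilon\mathcal{G}_{\bm\phi}\right]dt.
\]
For each fixed $t$, writing $\bm x:=P(t)\in\mathds{R}^d$, one has $\bm x\in\varepsilon\mathcal{G}_{\bm\phi}$ iff for some coordinate $i$, $\phi_i\equiv x_i/\varepsilon\pmod 1$, which is a single-point condition on $\phi_i$ and therefore has probability $0$ under the uniform law on $[0,1]^d$. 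Thus the integrand is identically $0$, the conditional expectation vanishes, and $|P^{-1}(\varepsilon\mathcal{G}_{\bm\phi})|=0$ almost surely, as required.

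There is essentially no serious obstacle in this argument; the only points worth checking are (i) the Borel measurability of $t\mapsto\mathbf{1}_{\{P(t)\in\varepsilon\mathcal{G}_{\bm\phi}\}}$ needed to apply Fubini, which follows from the piecewise-Euclidean structure of $P$ (Euclidean-continuous arcs plus countably many jumps), and (ii) the identification of ${\rm len}(P\setminus\varepsilon\mathcal{G}_{\bm\phi};D)$ with $|E|$, which is immediate once the arc decomposition of $P$ is written out. The essential content is thus entirely contained in the fact that a single randomly shifted hyperplane almost surely misses any fixed deterministic point.
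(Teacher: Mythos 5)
Your proof is correct and uses essentially the same argument as the paper: condition on $(\mathcal{E},D,P)$, apply Fubini to see that for each fixed $t$ the event $\{P(t)\in\varepsilon\mathcal{G}_{\bm\phi}\}$ has probability zero under the uniform $\bm\phi$, and conclude that $|P^{-1}(\varepsilon\mathcal{G}_{\bm\phi})|=0$ a.s. The additional bookkeeping you include (the excursion decomposition and the identification of the length with the Lebesgue measure of the parameter set) is just the elaboration of what the paper states implicitly in the remark preceding the lemma, so there is no substantive difference.
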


We note that $P\setminus (\varepsilon \mathcal{G}_{\bm \phi})$ is a countable union of excursions of $P$ in $\mathds{R}^d\setminus (\varepsilon \mathcal{G}_{\bm \phi})$, so its $D$-length is well-defined.

\begin{proof}[Proof of Lemma \ref{divide}]
    For each fixed $t \in [0, {\rm len}(P;D)]$ (selected based solely on $P$), we have $\mathds{P}[P(t) \in  \varepsilon \mathcal{G}_{\bm \phi} | P] = 0$ since $P$ is independent of ${\bm \phi}$. Therefore the Lebesgue measure of $P^{-1}(\varepsilon \mathcal{G}_{\bm \phi})$ is a.s. equal to zero.
\end{proof}

Let $\varepsilon > 0$. We define $\mathcal{S}_{\bm \phi}^\varepsilon$ as the collection of open cubes of side length $\varepsilon$ which are the connected components of the complement of the scaled grid $(\varepsilon\mathcal{G}_{\bm \phi})$. Based on Lemma \ref{divide}, if $P$ is a path as described in that lemma, then a.s. we have
\begin{equation}\label{dividepath}
    {\rm len}(P;D)=\sum_{S\in \mathcal{S}_{\bm \phi}^\varepsilon}{\rm len}(P\cap S;D).
\end{equation}

In fact, we can a.s. recover $D$ from its internal metrics on the cubes $S \in \mathcal{S}_{\bm \phi}^\varepsilon$ , as demonstrated by the following lemma.
\begin{lemma}\label{weakmeasurable}
    The metric $D$ is a.s.\ determined by $\mathcal{E},{\bm \phi}$, and the set of internal metrics $\{D(\cdot, \cdot; S) : S \in \mathcal{S}_{\bm \phi}^\varepsilon\}.$
\end{lemma}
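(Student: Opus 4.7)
The plan is to construct a candidate pseudometric $\widehat D$ that is manifestly determined by the data $(\mathcal{E}, {\bm \phi}, \{D(\cdot, \cdot; S) : S \in \mathcal{S}_{\bm \phi}^\varepsilon\})$ and then verify $\widehat D = D$ almost surely. Concretely, I set
$$\widehat D(\bm x, \bm y) := \inf \sum_{i=1}^n w_i,$$
where the infimum runs over all finite chains $\bm x = \bm z_0, \bm z_1, \ldots, \bm z_n = \bm y$ in $\mathds{R}^d$, with $w_i = D(\bm z_{i-1}, \bm z_i; \overline{S})$ whenever $\bm z_{i-1}, \bm z_i$ both lie in $\overline{S}$ for some $S \in \mathcal{S}_{\bm \phi}^\varepsilon$ (the internal metric on the closed cube being recoverable from $D(\cdot, \cdot; S)$ by continuity), and $w_i = 0$ whenever $\langle \bm z_{i-1}, \bm z_i\rangle \in \mathcal{E}$. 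Since any two points in $\mathds{R}^d$ can be joined through a finite sequence of closed cubes, every pair admits an admissible chain, so $\widehat D$ is finite-valued. The inequality $D \leq \widehat D$ is then immediate from the triangle inequality for $D$ together with $D(\bm u, \bm v) \leq D(\bm u, \bm v; \overline{S})$ and the strong regularity $D(\bm u, \bm v) = 0$ for $\langle \bm u, \bm v\rangle \in \mathcal{E}$ from Proposition \ref{Dxy=0-local}.

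For the reverse bound $\widehat D \leq D$, fix $\bm x, \bm y \in \mathds{Q}^d$. Since ${\bm \phi}$ is an independent uniform sample, almost surely $\bm x, \bm y \notin \varepsilon \mathcal{G}_{\bm \phi}$, and Proposition \ref{Dxy=0-local} provides a $D$-geodesic $P : [0, T] \to \mathds{R}^d$ from $\bm x$ to $\bm y$ with $T = D(\bm x, \bm y)$, parameterized by $D$-length. Lemma \ref{divide} forces $P^{-1}(\varepsilon \mathcal{G}_{\bm \phi})$ to have Lebesgue measure zero; after also removing the at most countably many jump times of $P$, the remaining set decomposes into countably many maximal open \emph{visit} intervals $(a_k, b_k)$, on each of which $P$ takes values in a single open cube $S_k \in \mathcal{S}_{\bm \phi}^\varepsilon$. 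Setting $\bm u_k := P(a_k^+)$ and $\bm v_k := P(b_k^-)$, one has $\bm u_k, \bm v_k \in \overline{S_k}$, $D(\bm u_k, \bm v_k; \overline{S_k}) \leq b_k - a_k$, and $\sum_k (b_k - a_k) = T$. Ordering the visits by left endpoint, the null measure of their complement forces $b_k = a_{k+1}$ for consecutive visits, and the transition at the shared endpoint is either a continuous crossing of the grid (so $\bm v_k = \bm u_{k+1}$) or a jump via a long edge $\langle \bm v_k, \bm u_{k+1}\rangle \in \mathcal{E}$ -- both contributing weight $0$ to any chain.

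Given $\delta > 0$, I would then choose finitely many visits whose total $D$-length exceeds $T - \delta$ and assemble the corresponding chain $\bm x, \bm u_{k_1}, \bm v_{k_1}, \ldots, \bm u_{k_N}, \bm v_{k_N}, \bm y$ in time order; each omitted residual time interval carries $P$ along a $D$-sub-geodesic of short length, which can be handled recursively by applying the same construction to that sub-geodesic. Because the total residual $D$-length is at most $\delta$, the recursion produces bridge chains whose cumulative weight is $O(\delta)$, and one obtains $\widehat D(\bm x, \bm y) \leq T + O(\delta)$. Sending $\delta \to 0$ yields $\widehat D(\bm x, \bm y) \leq D(\bm x, \bm y)$ almost surely. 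A countable union bound over $\bm x, \bm y \in \mathds{Q}^d$, combined with the continuity of $D$ and the fact that $\widehat D(\bm x, \bm y) \leq D(\bm x, \bm y; \overline{S})$ when $\bm x, \bm y$ lie in the same closed cube, extends $\widehat D = D$ to all of $\mathds{R}^{2d}$. The principal technical obstacle is that $P^{-1}(\varepsilon \mathcal{G}_{\bm \phi})$ need only be null rather than discrete, so the linear-order type of visits may be considerably more complicated than $\omega$; circumventing this either by the recursion above or by ordering visits by length and using the bi-Lipschitz equivalence of Proposition \ref{bilip} to absorb finitely many residual bridges is where the actual work lies.
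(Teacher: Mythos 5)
Your proposal takes a genuinely different route from the paper, and it contains a gap at the crucial step $\widehat D \le D$ that you yourself flag as ``where the actual work lies.'' The paper does not attempt to build a candidate metric $\widehat D$ from finite chains at all. Instead, it conditions on $(\mathcal E,\bm\phi,\{D(\cdot,\cdot;S)\})$, draws two conditionally i.i.d.\ samples $D,D'$ from the conditional law, and shows $D=D'$ a.s.\ Both samples are local $\beta$-LRP metrics (for the same $\mathcal E$), so Proposition~\ref{bilip} gives a deterministic bi-Lipschitz constant between them; this, together with Lemma~\ref{divide}, yields the countable decomposition $\mathrm{len}(P;D)=\sum_{S}\mathrm{len}(P\cap S;D)$ for a $D$-geodesic $P$ and likewise for $D'$. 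Since the internal metrics on each $S$ coincide, $\mathrm{len}(P;D)=\mathrm{len}(P;D')$, whence $D'(\bm x,\bm y)\le D(\bm x,\bm y)$, and by symmetry equality. No finite-chain approximation is ever required.

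Your direction $D\le\widehat D$ is fine, but $\widehat D\le D$ is not established, and the two fixes you sketch do not obviously close the gap. First, the recursion: you must eventually stop at finite depth and replace each remaining residual bridge from $\bm v_{k_j}$ to $\bm u_{k_{j+1}}$ by a finite admissible chain whose weight is controlled by the residual $D$-length $\eta_j$; but bounding $\widehat D(\bm v_{k_j},\bm u_{k_{j+1}})$ in terms of $\eta_j$ is exactly the inequality $\widehat D\le C\,D$ you are trying to prove, so the recursion is circular unless you supply a separate a priori bound. A crude bound such as $\widehat D(\bm u,\bm v)\le D(\bm u,\bm v;\overline S)$ when $\bm u,\bm v\in\overline S$ does not shrink with $\eta_j$: the endpoints of a residual bridge are boundary points of cubes, and the $D$-geodesic joining them may exit $S$, so the internal metric $D(\bm u,\bm v;\overline S)$ can remain of order $\varepsilon^\theta$ even as $\eta_j\to 0$. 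Second, invoking Proposition~\ref{bilip} to ``absorb'' the bridges presupposes that $\widehat D$ is a local $\beta$-LRP metric in the sense of Definition~\ref{localmetric}; but $\widehat D$ depends on $\bm\phi$ as well as $\mathcal E$, and verifying Axioms I, II$''$, III, IV$'$, V1$'$, V2$'$ for $\widehat D$ without first knowing $\widehat D=D$ is not carried out and is not clearly easier than the original problem. The paper's conditionally-i.i.d.\ device is precisely what removes both difficulties: the bi-Lipschitz input is applied to $D$ and $D'$, which satisfy the axioms by fiat (same marginal law as $D$), and the length comparison over the countable family of cubes needs no truncation to finite chains.
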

\begin{proof}
    Conditioning on $\mathcal{E}, {\bm \phi}$ and $\{D(\cdot, \cdot; S) : S \in \mathcal{S}_{\bm \phi}^\varepsilon\}$, we consider two conditionally independent samples $D$ and $D'$ from the conditional law of $D$. This means that a.s. $D(\cdot, \cdot; S) = D'(\cdot, \cdot; S)$ for each $S \in \mathcal{S}_{\bm \phi}^\varepsilon$. To prove the lemma, it suffices to show that a.s.\ $D = D'$.

    We first observe that since $D,D'$ are both local $\beta$-LRP metrics, Proposition \ref{bilip} implies that there exists a constant $C_0$ depending on the laws of $D$ and $D'$ such that a.s. for each ${\bm x},{\bm y} \in \mathds{R}^d$,
    \begin{equation}\label{compare379lem53}
        C_0^{-1}D({\bm x},{\bm y})\leq D'({\bm x},{\bm y})\leq C_0 D({\bm x},{\bm y}).
    \end{equation}
    Next, we fix ${\bm x},{\bm y} \in \mathds{R}^d$ and let $P$ be a geodesic from ${\bm x}$ to ${\bm y}$ chosen based only on $D$.
    Recall that we defined  $V_r(A):=\{{\bm z}\in \mathds{R}^d:\ \text{dist}({\bm z},A;\|\cdot\|_\infty)\leq r/2\}$ for some set $A\subset \mathds{R}^d$ and $r>0$.
    From Lemma \ref{divide}, we get that a.s
    \begin{equation}\label{neargrid}
       \lim_{r\to 0} {\rm len}(P\cap V_r(\varepsilon(\mathcal{G}_{\bm \phi}));D)=0\quad(\varepsilon\text{ is fixed}).
    \end{equation}
    Using \eqref{compare379lem53}, we deduce that \eqref{neargrid} also holds with $D'$-length instead of $D$-length. Consequently, a.s. we have
    \begin{equation}\label{bothdivide}
        {\rm len}(P;D)=\sum_{S\in\mathcal{S}_{\bm \phi}^\varepsilon}{\rm len}(P\cap S;D)\quad \text{and}\quad {\rm len}(P;D')=\sum_{S\in\mathcal{S}_{\bm \phi}^\varepsilon}{\rm len}(P\cap S;D').
    \end{equation}

    Since the internal metrics of $D$ and $D'$ on $S$ for each $S \in \mathcal{S}_{\bm \phi}^\varepsilon$ are equivalent, we have that a.s.\ the $D'$-length of every path contained in some $S \in \mathcal{S}_{\bm \phi}^\varepsilon$ is the same as its $D$-length. Therefore, \eqref{bothdivide} implies that a.s.\ ${\rm len}(P;D) = {\rm len}(P;D')$. Since $D'$ is a length metric and by our choice of $P$, we have $D'({\bm x},{\bm y}) \leq D({\bm x},{\bm y})$.
    By symmetry, we also have a.s.\ $D({\bm x},{\bm y}) \leq D'({\bm x},{\bm y})$. Applying this for all ${\bm x},{\bm y}\in \mathds{Q}^d$, we conclude that a.s.\ $D = D'$, which completes the proof of the lemma.
\end{proof}

Using Lemma \ref{weakmeasurable} and the following lemma, we can represent $D$ as a function of a set of random variables which are conditionally independent given $(\mathcal{E}, {\bm \phi})$. This representation will enable us to apply the Efron-Stein inequality.

\begin{lemma}\label{weakindependence}
    Fix $\varepsilon > 0$. Under the conditional law given $(\mathcal{E}, {\bm \phi})$, a.s.\ the internal metrics $\{D(\cdot,\cdot; S) : S \in \mathcal{S}_{\bm \phi}^\varepsilon\}$ are conditionally independent.
\end{lemma}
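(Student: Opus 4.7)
The plan is to reduce the statement to the finite-collection version of Axiom II'' (weak locality) and then apply a standard fact about conditional independence. Since ${\bm \phi}$ is independent of $(\mathcal{E}, D)$, conditioning on ${\bm \phi}$ makes the partition $\mathcal{S}_{\bm \phi}^\varepsilon$ deterministic without altering the joint law of $(\mathcal{E}, D)$. Throughout, I therefore fix ${\bm \phi}$ and work with the deterministic countable collection of open cubes $\mathcal{S}_{\bm \phi}^\varepsilon$.

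The first step is to fix any finite subcollection $S_1, \ldots, S_N \in \mathcal{S}_{\bm \phi}^\varepsilon$. Since these are disjoint open sets, Axiom II'' applied to them yields that the pairs
$$\bigl(\mathcal{E}|_{S_i \times S_i},\; D(\cdot,\cdot;S_i)\bigr), \qquad i = 1, \ldots, N,$$
together with $\mathcal{E}|_{(\bigcup_{i=1}^N S_i \times S_i)^c}$, are mutually independent. Now I invoke the elementary probabilistic fact: if random pairs $(X_i, Y_i)_{i=1}^N$ are mutually independent and $Z$ is independent of them all, then $Y_1, \ldots, Y_N$ are conditionally independent given $\sigma(X_1, \ldots, X_N, Z)$ (proof: under this conditioning the joint law factors, since the full joint is a product measure). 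Applying this with $X_i = \mathcal{E}|_{S_i \times S_i}$, $Y_i = D(\cdot,\cdot;S_i)$, and $Z = \mathcal{E}|_{(\bigcup_i S_i \times S_i)^c}$ gives the conditional independence of $\{D(\cdot,\cdot;S_i)\}_{i=1}^N$ given $\sigma(X_1,\ldots,X_N, Z)$. Because ${\bm \phi}$ has an absolutely continuous distribution on $[0,1]^d$ and is independent of $\mathcal{E}$, almost surely no atom of the Poisson process $\mathcal{E}$ has an endpoint on the measure-zero set $\varepsilon \mathcal{G}_{\bm \phi}$; consequently $\mathcal{E}$ is a.s.\ recovered from its restrictions $\mathcal{E}|_{S\times S}$ for $S \in \mathcal{S}_{\bm \phi}^\varepsilon$, and the $\sigma$-algebra $\sigma(X_1,\ldots,X_N, Z)$ is a sub-$\sigma$-algebra of $\sigma(\mathcal{E},{\bm \phi})$. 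A routine check shows the desired finite conditional independence is preserved under further conditioning, so $\{D(\cdot,\cdot; S_i)\}_{i=1}^N$ are conditionally independent given $(\mathcal{E}, {\bm \phi})$.

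The final step is to pass from arbitrary finite subcollections to the full countable collection $\mathcal{S}_{\bm \phi}^\varepsilon$, which is standard: the joint law of a countable family of random variables is determined by its finite-dimensional marginals (monotone class / $\pi$-$\lambda$), and conditional independence of every finite subfamily implies conditional independence of the whole family. The only subtle point — and the main thing to verify carefully — is the bookkeeping that $\mathcal{E}$ (together with ${\bm \phi}$) is a.s.\ measurable with respect to the product $\sigma$-algebra generated by $\{\mathcal{E}|_{S\times S}\}_{S \in \mathcal{S}_{\bm \phi}^\varepsilon}$ and $\mathcal{E}|_{(\bigcup_S S \times S)^c}$, so that conditioning on $(\mathcal{E}, {\bm \phi})$ is the same as conditioning on this richer $\sigma$-algebra. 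This is immediate once one notes that the exceptional event of an edge having an endpoint on $\varepsilon \mathcal{G}_{\bm \phi}$ has probability zero (by the continuity of the law of ${\bm \phi}$ and the independence of ${\bm \phi}$ from $\mathcal{E}$). There is no real obstacle beyond this measure-theoretic verification.
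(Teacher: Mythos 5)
Your proposal is correct and follows essentially the same route as the paper's (very terse) proof: condition on ${\bm \phi}$ to freeze the grid, invoke Axiom II'', and deduce conditional independence. The paper's two-sentence argument just asserts this; your write-up fills in the standard measure-theoretic steps (product structure $\Rightarrow$ conditional independence given the generating $\sigma$-algebra; finite $\Rightarrow$ countable).

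Two small imprecisions worth flagging, neither of which damages the argument. First, you claim $\mathcal{E}$ is a.s.\ recovered from its restrictions $\mathcal{E}|_{S\times S}$, $S\in\mathcal{S}_{\bm\phi}^\varepsilon$ alone; this is false, since those restrictions miss every edge whose two endpoints lie in distinct cubes. What you actually need --- and what you in fact use --- is that $\mathcal{E}$ is determined by $(X_1,\dots,X_N,Z)$, which is trivially true since $\{S_i\times S_i\}_{i=1}^N$ and $(\bigcup_i S_i\times S_i)^c$ partition $\mathds{R}^{2d}$; no measure-zero grid observation is required here. Second, the phrase that conditional independence ``is preserved under further conditioning'' because $\sigma(X_1,\dots,X_N,Z)\subset\sigma(\mathcal{E},\bm\phi)$ is a dangerous thing to assert in general (enlarging the conditioning $\sigma$-algebra does not preserve conditional independence). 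What rescues it is that the two $\sigma$-algebras are in fact equal modulo the independent $\bm\phi$: given $\bm\phi$, $\sigma(X_1,\dots,X_N,Z)=\sigma(\mathcal{E})$, so there is no further conditioning to worry about. With that correction, the argument is airtight.
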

\begin{proof}
    First, we condition on ${\bm \phi}$, which determines $\mathcal{S}_{\bm \phi}^\varepsilon$. Then we apply Axiom II'' (weak locality) to the collection of disjoint open sets $\{ S  : S \in \mathcal{S}_{\bm \phi}^\varepsilon\}$ and get the desired conditional independence.
\end{proof}

Now we present the

\begin{proof}[Proof of Proposition \ref{reallocal}]
    To simplify the analysis, we fix a large bounded, connected open set $V$, and define $\mathcal{S}_{\bm \phi}^\varepsilon(V)$ as the set of cubes in $\mathcal{S}_{\bm \phi}^\varepsilon$ that intersect with $V$. To be precise, let
    \begin{equation*}
        \mathcal{S}_{\bm \phi}^\varepsilon(V):=\{S\in\mathcal{S}_{\bm \phi}^\varepsilon:S\cap V\neq\emptyset\}.
    \end{equation*}
    Furthermore, we consider points ${\bm x},{\bm y} \in V$. We aim to show that the internal distance $D({\bm x},{\bm y}; V )$ is a.s.\ determined by $\mathcal{E}$. By varying ${\bm x},{\bm y}$ over $V \cap \mathds{Q}^d$ and then increasing $V$ to $\mathds{R}^d$, we will conclude the proof.

    Step 1: application of the Efron-Stein inequality. Lemma \ref{weakmeasurable} implies that $D$ can be a.s.\ represented as a measurable function of $\mathcal{E}, {\bm \phi}$, and the set of internal metrics $\{D(\cdot, \cdot; S) : S \in \mathcal{S}_{\bm \phi}^\varepsilon\}$. Furthermore, by Lemma \ref{weakindependence}, these internal metrics are conditionally independent given $(\mathcal{E}, {\bm \phi})$. Therefore, for each $S \in \mathcal{S}_{\bm \phi}^\varepsilon(V)$, we can generate a new random metric $D^S$ by re-sampling $D(\cdot, \cdot; S )$ from its conditional law given $(\mathcal{E}, {\bm \phi})$, while keeping $D(\cdot, \cdot; S')$ unchanged for each $S'\in \mathcal{S}_{\bm \phi}^\varepsilon\setminus\{ S\}$. It follows that $(\mathcal{E}, {\bm \phi}, D) \stackrel{\rm law}{=} (\mathcal{E}, {\bm \phi},D^S)$.

    Applying the Efron-Stein inequality \eqref{ESinequality} under the conditional law given $(\mathcal{E},{\bm \phi})$, we have a.s.
    \begin{equation}\label{variation}
        {\rm Var}[D({\bm x},{\bm y};V)|\mathcal{E},{\bm \phi}]\leq \frac{1}{2}\sum_{S\in\mathcal{S}_{\bm \phi}^\varepsilon(V)}\mathds{E}\left[(D^S({\bm x},{\bm y};V)-D({\bm x},{\bm y};V))^2|\mathcal{E},{\bm \phi} \right].
    \end{equation}
    Since the conditional laws of $(D,D^S)$ and $(D^S,D)$ given $(\mathcal{E}, {\bm \phi})$ are the same, the conditional law of $D({\bm x},{\bm y}; V ) - D^S({\bm x},{\bm y}; V )$ is symmetric around the origin. Thus, each term in \eqref{variation} satisfies
    \begin{equation}\label{positivepart}
        \mathds{E}\left[(D^S({\bm x},{\bm y};V)-D({\bm x},{\bm y};V))^2|\mathcal{E},{\bm \phi} \right]=2\mathds{E}\left[(D^S({\bm x},{\bm y};V)-D({\bm x},{\bm y};V))_+^2|\mathcal{E},{\bm \phi} \right]
    \end{equation}
    where $(h)_+ = h$ if $h \geq 0$ or 0 if $h < 0$. Most of the remainder of the proof is devoted to showing that the right hand side of \eqref{variation} converges to 0 a.s.\ as $\varepsilon\to 0$.

    Step 2: comparison of $D$ and $D^S$. Since $(\mathcal{E},D^S) \stackrel{\rm law}{=} (\mathcal{E},D)$, both $D$ and $D^S$ are local $\beta$-LRP metrics. According to Proposition \ref{bilip}, there exists a constant $C_1>0$ depending only on the law of $D$ such that  a.s.  for all ${\bm x}, {\bm y}\in \mathds{R}^d$ that
    \begin{equation}\label{compareDDS}
        C_1^{-1}D({\bm x},{\bm y};V)\leq D^S({\bm x},{\bm y};V)\leq C_1 D({\bm x},{\bm y};V).
    \end{equation}
    Since $D(\cdot,\cdot; V )$ is a length metric, we can choose a $D(\cdot,\cdot; V )$-geodesic $P$ from ${\bm x}$ to ${\bm y}$ in $V$. Let us fix such a path and recall \eqref{dividepath}. By the definition of $D^S$, we have
    \begin{equation}\label{DDSonSprime}
        {\rm len}(P\cap S';D)={\rm len}(P\cap S';D^S),\quad \forall S'\in\mathcal{S}_{\bm \phi}^\varepsilon (V)\setminus \{S\}.
    \end{equation}
    Using \eqref{compareDDS}, we obtain
    \begin{equation}\label{DDSonS}
        C_1^{-1}{\rm len}(P\cap S;D)\leq {\rm len}(P\cap S;D^S)\leq C_1{\rm len}(P\cap S;D).
    \end{equation}
    According to Lemma \ref{divide}, a.s.\ the contribution to the $D$-length of $P$ from the intersections of $P$ with the boundaries of the cubes in $\mathcal{S}_{\bm \phi}^\varepsilon$ is 0. Since $D$ and $D^S$ are a.s.\ bi-Lipschitz equivalent (by \eqref{compareDDS}), we can apply the same argument as in the proof of Lemma \ref{weakmeasurable} to show that the same is true with the $D^S$-length in place of the $D$-length. Combining this with \eqref{DDSonSprime} and \eqref{DDSonS}, we get that a.s.
    \begin{equation*}
        D^S({\bm x},{\bm y};V)\leq {\rm len}(P;D^S)=\sum_{S'\in \mathcal{S}_{\bm \phi}^\varepsilon(V)}{\rm len}(P\cap S';D^S) \leq D({\bm x},{\bm y};V)+C_1 {\rm len}(P\cap S;D).
    \end{equation*}
    Therefore, a.s.,
    \begin{equation}\label{upperpositive}
        (D^S({\bm x},{\bm y};V)-D({\bm x},{\bm y};V))_+\leq C_1 {\rm len}(P\cap S;D).
    \end{equation}
    By plugging \eqref{upperpositive} into \eqref{positivepart} and then into \eqref{variation} and then applying the Cauchy-Schwarz inequality, we have that
    \begin{equation}\label{boundvariation}
        \begin{aligned}
            {\rm Var}[D({\bm x},{\bm y};V)|\mathcal{E},{\bm \phi}]&\leq\mathds{E}\left[\sum_{S\in\mathcal{S}_{\bm \phi}^\varepsilon(V)}C_1^2({\rm len}(P\cap S;D))^2|\mathcal{E},{\bm \phi}\right]\\
            &\leq C_1^2\mathds{E}\left[\sum_{S\in\mathcal{S}_{\bm \phi}^\varepsilon(V)}{\rm len}(P\cap S;D)(\max_{S\in\mathcal{S}_{\bm \phi}^\varepsilon(V)}\{{\rm len}(P\cap S;D)\})|\mathcal{E},{\bm \phi}\right]\\
            &\leq C_1^2\mathds{E}[D({\bm x},{\bm y};V)^2|\mathcal{E},{\bm \phi}]^{1/2}\mathds{E}\left[\max_{S\in\mathcal{S}_{\bm \phi}^\varepsilon(V)}\{{\rm len}(P\cap S;D)\}^2|\mathcal{E},{\bm \phi}\right]^{1/2}.
        \end{aligned}
    \end{equation}

    Step 3: conclusion. 
    First, we will show that a.s.\ the first expectation in the last line of \eqref{boundvariation} is finite since $D$ satisfies Axiom V1'.

    Next, we will show that the second expectation a.s. tends to 0 as $\varepsilon\to 0$. Note that the path $P$ is a.s.\ contained in $V$, so the range of $P$ is a compact subset of $\mathds{R}^d$.

    Since $P$ is a $D$-geodesic from $\bm x$ to $\bm y$ we have that for any $S \in \mathcal{S}_{\bm \phi}^\varepsilon(V)$, the $D$-length of $P \cap S$ is at most $\sup_{{\bm u},{\bm v}\in S} D({\bm u},{\bm v};V)$ (otherwise, we could find a path from ${\bm x}$ to ${\bm y}$ of $D$-length smaller than $D({\bm x},{\bm y};V)$ by replacing the segment of $P$ between the first and last points of $S$ hit by $P$). Since $D$ can be viewed as a continuous function on $\mathds{R}^d$ and the Euclidean side length of each $S \in\mathcal{S}_{\bm \phi}^\varepsilon(V)$ is $\varepsilon$, it holds a.s.\ that
    \begin{equation}\label{Sconverge0}
        \lim_{\varepsilon\to 0}\max_{S\in\mathcal{S}_{\bm \phi}^\varepsilon(V)}\{{\rm len}(P\cap S;D)\}\leq \lim_{\varepsilon\to 0}\max_{S\in\mathcal{S}_{\bm \phi}^\varepsilon(V),P\cap V\neq\emptyset}\{\sup_{{\bm u},{\bm v}\in S}D({\bm u},{\bm v};V)\}=0\,.
    \end{equation}
    Each of the random variables ${\rm len}(P \cap S;D)$ is bounded above by ${\rm len}(P;D)=D(\bm x,\bm y;V)$.  By \eqref{Sconverge0} and the dominated convergence theorem, the second expectation in the last line of \eqref{boundvariation} a.s.\ converges to 0 as $\varepsilon \to 0$.

    Consequently, a.s.\ ${\rm Var}[D({\bm x},{\bm y}; V ) | \mathcal{E}, {\bm \phi}] \to 0 $ as $\varepsilon \to 0$, which implies that a.s.\ $D({\bm x},{\bm y}; V)$ is determined by $(\mathcal{E}, {\bm \phi})$. Since $D(\cdot, \cdot; V )$ can be viewed as a continuous function on $V\times V$ and this holds for any fixed choice of ${\bm x},{\bm y} \in V$, we conclude that a.s.\ $D(\cdot, \cdot; V)$ is determined by $(\mathcal{E},{\bm \phi})$.

    Furthermore, since $(\mathcal{E},D)$ is independent from ${\bm \phi}$, we further obtain that a.s.\ $D(\cdot,\cdot; V)$ is determined by $\mathcal{E}$. Since $D$ is a length metric, we can let $V$ increase to all of $\mathds{R}^d$ to show that a.s.\ $D$ is determined by $\mathcal{E}$.

    Additionally, for any open set $U\subset \mathds{R}^d$, since $D(\cdot,\cdot;U)$ is independent of $\mathcal{E}|_{(U\times U)^c}$ and determined by $D$ (and thus by $\mathcal{E}$ a.s.), we get that $D(\cdot,\cdot;U)$ is a.s.\ determined by $\mathcal{E}|_{U\times U}$. Thus $D$ satisfies Axiom II (locality). That is, $D$ is a weak $\beta$-LRP metric.
\end{proof}

\subsection{Proof of Proposition \ref{dis-prop-subsequence}}
Before we show the proof of Proposition \ref{dis-prop-subsequence}, we will first present the following elementary probabilistic lemma, which is the reason why we have convergence in probability, instead of convergence in law, in Proposition \ref{dis-prop-subsequence}  (see e.g. \cite[Lemma 4.5]{SS13}).
\begin{lemma}\label{convergePLemma}
    Let $(\Omega_1, d_1)$ and $(\Omega_2, d_2)$ be complete separable metric spaces. Let $X$ be a random variable taking values in $\Omega_1$ and let $\{Y^n\}$ and $Y$ be random variables taking values in $\Omega_2$, all defined on the same probability space, such that $(X, Y^n) \to (X, Y )$ in law. If $Y$ is a.s.\ determined by $X$, then $Y_n \to Y$ in probability.
\end{lemma}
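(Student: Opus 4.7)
The plan is to reduce convergence in probability to a subsequential weak-limit argument via tightness on the product space $\Omega_1 \times \Omega_2 \times \Omega_2$. The key observation is that although $(X,Y)$ is a.s. determined by $X$, the same is not a priori true for $(X,Y^n)$, but any weak limit of the triple $(X,Y^n,Y)$ must force the two copies in $\Omega_2$ to agree.

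First, since $Y$ is a.s. determined by $X$, there exists a measurable map $f:\Omega_1\to\Omega_2$ with $Y=f(X)$ almost surely. Next, consider the sequence of laws of the triples $(X,Y^n,Y)$ on the complete separable metric space $\Omega_1\times\Omega_2\times\Omega_2$. The laws of $X$ and of $Y$ are fixed (hence tight), and the laws of $Y^n$ are tight because $Y^n\to Y$ in law (as a consequence of the convergence $(X,Y^n)\to(X,Y)$). Therefore the joint laws of $(X,Y^n,Y)$ form a tight family.

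Now fix any subsequence $\{n_k\}$. By Prokhorov's theorem, there is a further subsequence (still denoted $\{n_k\}$) along which $(X,Y^{n_k},Y)$ converges in law to some triple $(X^*,W^*,Y^*)$ in $\Omega_1\times\Omega_2\times\Omega_2$. Reading off the marginals, $(X^*,W^*)\stackrel{\rm law}{=}(X,Y)$ by the hypothesis $(X,Y^n)\to(X,Y)$ in law, while $(X^*,Y^*)\stackrel{\rm law}{=}(X,Y)$ since the joint law of $(X,Y)$ is fixed. Applying $f$ (which is measurable) and using $Y=f(X)$ a.s., we deduce that $W^*=f(X^*)$ a.s.\ and $Y^*=f(X^*)$ a.s., so $W^*=Y^*$ a.s.

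Finally, the map $(u,v)\mapsto d_2(u,v)\wedge 1$ from $\Omega_2\times\Omega_2$ to $\mathds{R}$ is bounded and continuous, so by the portmanteau theorem,
\begin{equation*}
\mathds{E}\bigl[d_2(Y^{n_k},Y)\wedge 1\bigr]\longrightarrow \mathds{E}\bigl[d_2(W^*,Y^*)\wedge 1\bigr]=0.
\end{equation*}
Since this argument can be carried out starting from any subsequence of $\{n\}$, the full sequence satisfies $\mathds{E}[d_2(Y^n,Y)\wedge 1]\to 0$, which is equivalent to $Y^n\to Y$ in probability. The only step requiring any care is the identification $W^*=Y^*$ in the subsequential limit, and this is handled cleanly by invoking the measurable function $f$ together with the equality-in-law of the relevant pairs of marginals.
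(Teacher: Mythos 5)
Your proof is correct. Note that the paper itself does not supply a proof of this lemma; it simply cites it as a known result, ``see e.g.\ \cite[Lemma 4.5]{SS13}''. Your argument is the standard one used to establish this fact: pass to a tight family of laws on $\Omega_1\times\Omega_2\times\Omega_2$, extract a weak subsequential limit $(X^*,W^*,Y^*)$, identify both sub-marginals $(X^*,W^*)$ and $(X^*,Y^*)$ with the law of $(X,Y)$, and use a Doob--Dynkin factorization $Y=f(X)$ to force $W^*=Y^*$ a.s.\ (using that the graph of a Borel map into a Polish space is Borel, so the event $\{y=f(x)\}$ is measurable and carries probability $1$ under the law of $(X,Y)$). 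The conclusion via $\mathds{E}[d_2(Y^{n_k},Y)\wedge 1]\to 0$ along every subsequence-of-a-subsequence is clean and complete. This is essentially the same route as the proof in the cited reference; there is no substantive divergence to flag.
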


Now we turn to finish
\begin{proof}[Proof of Proposition \ref{dis-prop-subsequence}]
    From Proposition \ref{dis-tightness}, we know that $\{\widehat{a}_n^{-1}\widetilde{d}_n\}$ is tight with respect to the topology of local uniform convergence on $\mathds{R}^{2d}$. Furthermore, any subsequential limiting metric $D$ satisfies the following axioms:
    \begin{itemize}
        \item Axiom I: It is implied by Proposition \ref{dis-lengthspace}.
        \item Axiom II'': It is implied by Lemma \ref{dis-weaklocality2}.
        \item Axiom III: It is immediately implied by local uniform convergence.
        \item Axiom IV': It is implied by Proposition \ref{dis-Axiom4}.
        \item Axioms V1' and V2': It is implied by Proposition \ref{dis-Axiom5}.
    \end{itemize}
    Thus $D$ is a local $\beta$-LRP metric. From Proposition \ref{reallocal}, we get that $D$ is a weak $\beta$-LRP metric.

    Furthermore, convergence in probability comes from Lemma \ref{convergePLemma} with $X=\mathcal{E}$, $Y=D$ and $\{Y_n\}_{n\geq 1}=\{\widehat{a}^{-1}_{n_k} \widehat{d}_{n_k}\}_{k\geq 1}$. Hence, the proof is complete.
\end{proof}

\section{Proof of Theorem \ref{subsequence-exist}}\label{existence}
In this section, we aim to prove Theorem \ref{subsequence-exist}. Specifically, we will prove the tightness of $\{D_n\}_{n\in\mathds{N}}$ in Section \ref{tightness}. Then in Section \ref{lengthspace}, we will provide the proof that the subsequential limiting metric is a length space in an appropriate sense. Section \ref{conv-internal} is devoted to the convergence of the internal metrics of $D_n=a_n^{-1}d_{(1/n,\infty)}$, and Section \ref{weak loc} is devoted to proving Axiom II'' (weak locality) for any subsequential limit of $D_n$. Finally, we will complete the proof of Theorem \ref{subsequence-exist} in Section \ref{proofthm1.9}.

\subsection{Tightness}\label{tightness}
Recall that we defined the distance $d_{(1/n,\infty)}$ in \eqref{dist_delta} and  $D_n=a_n^{-1}d_{(1/n,\infty)}$, where $a_n$ is the median of $d_{(1/n,\infty)}(\bm 0,\bm 1)$. By Lemma \ref{an-bounded} (note that this part of Lemma \ref{an-bounded} follows from \cite[Theorem 1.1]{Baumler22}), $\left\{n^{1-\theta}a_n \right\}$ and $\left\{n^{\theta-1}a_n^{-1}\right\}$ are both uniformly bounded over all $n\in\mathds{N}$. Thus, we only need to prove the tightness of $\{n^{1-\theta}d_{(1/n,\infty)}\}$, which then implies the tightness of $\{D_n\}$.
For convenience of notation, let $\widetilde{D}_n=n^{1-\theta}d_{(1/n,\infty)}$ in the rest of this subsection.

Our primary tool in this proof is the following uniform tail bound on the diameter under $\widetilde{D}_n$, which is an analog of \cite[Theorem 6.1]{Baumler22} for the continuous model.
\begin{proposition}\label{MGFUpperBound}
    For any $\eta\in(0,1/(1-\theta))$, we have the following uniform upper bound on the moment generating function :
    \begin{equation*}
        \sup_{n\in\mathds{N},r>0} \mathds{E}\left[\exp\left\{\left(\frac{{\rm diam}([0,r]^d;\widetilde{D}_n) }{r^\theta}\right)^\eta\right\}\right]<\infty.
    \end{equation*}
\end{proposition}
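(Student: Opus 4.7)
The plan is to leverage the scaling invariance of the continuous model to reduce Proposition \ref{MGFUpperBound} to the already-established Proposition \ref{apriori-cont-1}. The Poisson point process with intensity $\beta/|\bm x-\bm y|^{2d}$ on $\mathds{R}^{2d}$ is invariant under the scaling $\bm z\mapsto n\bm z$, since the Jacobian $n^{2d}$ exactly cancels the factor $n^{-2d}$ in the intensity. Moreover, the scaling sends the edge set $\mathcal{E}_{(1/n,\infty)}$ onto a set with the same law as $\mathcal{E}_{(1,\infty)}$. Reading off the formula \eqref{dist_delta}, the Euclidean lengths of gaps scale by $n$, so
\begin{equation*}
d_{(1/n,\infty)}(\bm x,\bm y)\stackrel{\rm law}{=}n^{-1}d_{(1,\infty)}(n\bm x,n\bm y),\qquad\bm x,\bm y\in\mathds{R}^d.
\end{equation*}

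Multiplying by $n^{1-\theta}$ and taking a diameter, the identity above yields
\begin{equation*}
\frac{{\rm diam}([0,r]^d;\widetilde{D}_n)}{r^\theta}\stackrel{\rm law}{=}\frac{{\rm diam}([0,nr]^d;d_{(1,\infty)})}{(nr)^\theta}.
\end{equation*}
Hence, for fixed $\eta\in(0,1/(1-\theta))$, the moment generating function in the statement depends on $(n,r)$ only through the product $R:=nr$, and it suffices to bound
\[
\mathds{E}\!\left[\exp\!\left\{\left(\frac{{\rm diam}([0,R]^d;d_{(1,\infty)})}{R^\theta}\right)^{\!\eta}\right\}\right]
\]
uniformly in $R>0$.

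For $R>1$ this uniform bound is precisely the content of Proposition \ref{apriori-cont-1}. For the remaining range $R\in(0,1]$, one uses the trivial bound $d_{(1,\infty)}(\bm u,\bm v)\le|\bm u-\bm v|$ (no long edges need ever be used), which gives ${\rm diam}([0,R]^d;d_{(1,\infty)})\le\sqrt{d}\,R$, so
\[
\frac{{\rm diam}([0,R]^d;d_{(1,\infty)})}{R^\theta}\le\sqrt{d}\,R^{1-\theta}\le\sqrt{d},\qquad R\in(0,1],
\]
and the MGF is deterministically bounded by $\exp\{d^{\eta/2}\}$. Combining the two regimes gives the proposition. I do not anticipate any real obstacle here: all of the analytic difficulty was already absorbed into Proposition \ref{apriori-cont-1}, and the present statement is a clean consequence of scaling together with the trivial small-scale estimate.
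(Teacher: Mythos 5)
Your proposal is correct and follows essentially the same path as the paper: it invokes the scaling invariance of the Poisson point process to reduce the claim to the uniform-in-$R$ bound of Proposition \ref{apriori-cont-1} for $R>1$, and dispatches the range $R\in(0,1]$ with the trivial bound $d_{(1,\infty)}\le|\cdot-\cdot|$. The only cosmetic difference is that you use $\sqrt{d}$ for the Euclidean diameter of $[0,1]^d$ where the paper writes the looser constant $d$; both work.
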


\begin{proof}
The main idea of the proof is to couple and compare the distances in continuous and discrete models.
    Due to the scaling invariance of the Poisson point process for edges, $r^{-\theta}\widetilde{D}_n(r\cdot,r\cdot)$ has the same law as $(nr)^{-\theta}d_{(1,\infty)}(nr\cdot,nr\cdot)$.
    For $R\in (0,1]$, we have $R^{-\theta}{\rm diam}([0,R]^d;d_{(1,\infty)})\le dR^{1-\theta}\le d$.
    Therefore, it suffices to show that for any $\eta\in(0,1/(1-\theta))$,
    \begin{equation*}
        \sup_{R>1}\mathds{E}\left[\exp\left\{\left(\frac{{\rm diam}([0,R]^d;d_{(1,\infty)})}{R^\theta}\right)^\eta\right\}\right]<\infty.
    \end{equation*}
    This is immediately implied by Proposition \ref{apriori-cont-1}.
\end{proof}

We now move to the proof of the tightness for $\{\widetilde{D}_n\}$, as incorporated in the next proposition.
\begin{theorem}\label{tightmetric}
   The family of random metrics $\{\widetilde{D}_n\}_{n\geq 1}$ {\rm(}viewed as a random continuous function on $C(\mathds{R}^{2d})${\rm)} is tight with respect to the local uniform topology of $C(\mathds{R}^{2d})$. Moreover,
    for any $R>0$, the family of random  metrics $\{\widetilde{D}_n(\cdot,\cdot;[-R,R]^d)\}_{n\geq 1}$ {\rm(}viewed as a random continuous function on $C([-R,R]^{2d})${\rm)} is tight with respect to the uniform topology of $C([-R,R]^{2d})$. 
\end{theorem}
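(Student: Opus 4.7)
The plan is to mirror the template used in Proposition \ref{dis-tightness}, but with the simplification that the continuous-model pseudometrics $\widetilde{D}_n$ are automatically continuous (indeed Lipschitz for each fixed $n$, since $d_{(1/n,\infty)}(\bm x,\bm y)\le |\bm x-\bm y|$). Thus we may invoke the classical Arzel\`a--Ascoli theorem and avoid the generalized version (Lemma \ref{newAAtheorem}) that was needed in the discrete case. The two structural facts we will exploit are (i) translation invariance: $\widetilde{D}_n(\cdot+\bm z,\cdot+\bm z)\stackrel{\rm law}{=}\widetilde{D}_n(\cdot,\cdot)$ for every $\bm z\in\mathds{R}^d$, which is immediate from the translation invariance of the Poisson point process defining $\mathcal{E}$; and (ii) the uniform-in-$n$ moment bound provided by Proposition \ref{MGFUpperBound}.

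First I would establish the Hölder-type modulus of continuity uniformly in $n$. Lemma \ref{HolderLemma} was stated for a generic pseudometric satisfying translation invariance and the finite-MGF bound with $\eta=1$. Its proof only uses these two inputs, together with a dyadic decomposition of $[-R,R]^d$. Applying that proof verbatim to $\widetilde{D}_n$ (using Proposition \ref{MGFUpperBound} with $\eta=1$ to obtain a constant
\[
C_\star:=\sup_{n\in\mathds{N},r>0}\mathds{E}\Bigl[\exp\Bigl\{\tfrac{{\rm diam}([0,r]^d;\widetilde{D}_n)}{r^\theta}\Bigr\}\Bigr]<\infty
\]
in place of $C_f$) yields: for every $R>0$ and every $M>2^{\theta+d+1}$,
\[
\sup_{n\in\mathds{N}}\mathds{P}\Bigl[\sup_{\bm x,\bm x'\in[-R,R]^d}\tfrac{\widetilde{D}_n(\bm x,\bm x';[-R,R]^d)}{\|\bm x-\bm x'\|_\infty^\theta\log(4R/\|\bm x-\bm x'\|_\infty)}>M\Bigr]\le 2^{d+1}C_\star\exp\{-2^{-\theta-1}M\}.
\]
Since $\widetilde{D}_n(\bm x,\bm x')\le\widetilde{D}_n(\bm x,\bm x';[-R,R]^d)$ for $\bm x,\bm x'\in[-R,R]^d$, the same tail bound holds for $\widetilde{D}_n$ on $[-R,R]^d$. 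This delivers an equicontinuity modulus that is uniform in $n$, together with tightness of the family of moduli.

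Next I would verify the pointwise tightness that Arzel\`a--Ascoli requires. By the triangle inequality and the Hölder modulus just established, it suffices to check tightness of $\{\widetilde{D}_n(\bm 0,\bm 0)\}$, which is trivially $0$. (Alternatively, tightness at any finite collection of points follows from Proposition \ref{MGFUpperBound}, since $\widetilde{D}_n(\bm x,\bm y)\le{\rm diam}([-R,R]^d;\widetilde{D}_n)$ for any $R>\|\bm x\|_\infty\vee\|\bm y\|_\infty$.) Combined with the uniform Hölder modulus, the classical Arzel\`a--Ascoli theorem implies that for each fixed $R>0$ the family $\{\widetilde{D}_n|_{[-R,R]^{2d}}\}_{n\ge 1}$ is tight in $C([-R,R]^{2d})$ with the uniform topology, and moreover every subsequential limit is almost surely Hölder continuous with an exponent arbitrarily close to $\theta$ (up to a logarithmic correction).

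Finally, I would extend from compact cubes to all of $\mathds{R}^{2d}$ by the standard diagonal extraction: apply the above tightness for $R=1,2,3,\dots$, pick a diagonal subsequence along which the restrictions converge in law in each $C([-R,R]^{2d})$, and patch the limits into a single random element of $C(\mathds{R}^{2d})$ equipped with the topology of local uniform convergence. The corresponding statement for the internal metrics $\{\widetilde{D}_n(\cdot,\cdot;[-R,R]^d)\}$ is obtained by the same argument, now applied to internal metrics directly (the bound used in step one is already the internal-metric version). The step that requires genuine work is the uniform-in-$n$ Hölder modulus: the key obstacle---that the bound $\widetilde{D}_n(\bm x,\bm y)\le n^{1-\theta}|\bm x-\bm y|$ degenerates as $n\to\infty$---is resolved precisely by the $n$-uniform MGF estimate of Proposition \ref{MGFUpperBound}, which is the main input inherited from \cite{Baumler22} via the coupling used in Proposition \ref{apriori-cont-1}. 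After this, everything is standard.
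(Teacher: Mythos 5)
Your proof is correct and follows essentially the same route as the paper: combine the uniform-in-$n$ moment bound from Proposition \ref{MGFUpperBound} (with $\eta=1$) with Lemma \ref{HolderLemma} to get an $n$-uniform H\"older-type modulus, then apply the classical Arzel\`a--Ascoli theorem on each $[-R,R]^{2d}$ and take a diagonal subsequence. Your observations that $\widetilde{D}_n$ is already continuous for each fixed $n$ (so the generalized Arzel\`a--Ascoli from the discrete case is unnecessary) and that the degenerating Lipschitz bound $n^{1-\theta}|\bm x-\bm y|$ is precisely what Proposition \ref{MGFUpperBound} repairs are both accurate and match the structure of the paper's argument.
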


\begin{proof} 
    Note that by Proposition \ref{MGFUpperBound}  with $\eta=1$,
    \begin{equation*}
        C_\beta:=\sup_{n\in\mathds{N},r>0}\mathds{E}\left[\exp\left\{\frac{{\rm diam}([0,r]^d;\widetilde{D}_n)}{r^\theta}\right\}\right]<\infty
    \end{equation*}
    is a constant depending only on $\beta$ and $d$. Therefore, by applying Lemma \ref{HolderLemma} to every $\widetilde{D}_n$, we obtain the tightness of \begin{equation*}
        \left\{ \sup_{{\bm x},{\bm x}'\in[-R,R]^d}\frac{\widetilde{D}_n({\bm x},{\bm x}')}{\|{\bm x}-{\bm x}'\|_\infty^\theta\log\frac{4R}{\|{\bm x}-{\bm x}'\|_\infty}} \right\}_{n\in\mathds{N},R>0}.
    \end{equation*}
    Combined with  Arzela-Ascoli Theorem, this completes the proof of theorem. \end{proof}

Combining Theorem \ref{tightmetric} and Lemma \ref{an-bounded}, we obtain the tightness of $\{D_n\}$. Moreover, combining Proposition \ref{MGFUpperBound} and Lemma \ref{an-bounded} and using a similar argument in the proof of Lemma \ref{dis-Axiom5}, we can show that for any $\eta\in(0,1/(1-\theta))$,
$$
\sup_{n\in\mathds{N},r>0}\mathds{E}\left[\exp\left\{\left(\frac{{\rm diam}([0,r]^d;D_n)}{r^\theta}\right)^\eta\right\}\right]<\infty.
$$ Applying Fatou's Lemma to this implies that any subsequential limiting metric of $\{D_n\}$ satisfies Axiom V2' (tightness across scales for upper bound). To be precise, we summarize these consequences in the following corollary.

\begin{corollary}\label{TightAcrossScale}
 The family of random metrics $\{D_n\}_{n\geq 1}$ {\rm(}viewed as a random continuous function on $C(\mathds{R}^{2d})${\rm)} is tight with respect to the local uniform topology of $C(\mathds{R}^{2d})$.
  Moreover, for any $R>0$, the family of random metric $\{D_n(\cdot,\cdot;[-R,R]^d)\}_{n\geq 1}$ {\rm(}viewed as a random continuous function on $C([-R,R]^{2d})${\rm)} is tight with respect to the uniform topology of $C([-R,R]^{2d})$.
 Thus $\{D_n\}_{n\geq 1}$ has subsequential limiting metrics. Furthermore, for any subsequential limiting metric  $D$, it satisfies Axiom V2', i.e. for any $\eta\in(0,1/(1-\theta))$, 
    \begin{equation*}
        \sup_{r>0}\mathds{E}\left[\exp\left\{\left(\frac{{\rm diam}([0,r]^d;D)}{r^\theta}\right)^\eta\right\}\right]<\infty.
    \end{equation*}
\end{corollary}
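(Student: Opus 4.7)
The corollary is essentially a translation of the tightness and moment estimates from $\widetilde{D}_n = n^{1-\theta}d_{(1/n,\infty)}$ to $D_n = a_n^{-1} d_{(1/n,\infty)}$, using that the two metrics differ only by the deterministic scalar $c_n := n^{\theta-1}a_n^{-1}$, which by Lemma~\ref{an-bounded} is uniformly bounded above and below. My plan is to carry out this translation in three short steps, followed by a standard Fatou argument for the limiting axiom.

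First, for the tightness statements, I would observe that $D_n = c_n \widetilde{D}_n$ pointwise, with $c_n$ a deterministic positive constant satisfying $0 < c_- \le c_n \le c_+ < \infty$ uniformly in $n$, by Lemma~\ref{an-bounded}. Since multiplication by a bounded positive scalar is a continuous operation on $C(\mathds{R}^{2d})$ (respectively $C([-R,R]^{2d})$) with respect to the local uniform (respectively uniform) topology, Theorem~\ref{tightmetric} immediately gives the tightness of $\{D_n\}_{n\ge 1}$ and of $\{D_n(\cdot,\cdot;[-R,R]^d)\}_{n\ge 1}$; one can make this rigorous by noting that any compact subset of continuous functions for $\widetilde{D}_n$ maps to a compact subset for $D_n$ under the scalar factor (and apply Prokhorov). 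This yields the existence of subsequential limits.

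Second, for the uniform moment generating function estimate for $\{D_n\}$: fix $\eta \in (0, 1/(1-\theta))$ and choose $\eta' \in (\eta, 1/(1-\theta))$. From $D_n \le c_+ \widetilde{D}_n$ we get $\mathrm{diam}([0,r]^d;D_n) \le c_+ \mathrm{diam}([0,r]^d;\widetilde{D}_n)$, hence
\begin{equation*}
\left(\tfrac{\mathrm{diam}([0,r]^d;D_n)}{r^\theta}\right)^{\eta} \le c_+^\eta \left(\tfrac{\mathrm{diam}([0,r]^d;\widetilde{D}_n)}{r^\theta}\right)^{\eta}.
\end{equation*}
Using the elementary inequality $c_+^\eta x^\eta \le C_{c_+,\eta,\eta'} + x^{\eta'}$ valid for all $x \ge 0$ (since $x^{\eta'}$ dominates $x^\eta$ at infinity), we obtain
\begin{equation*}
\exp\!\left\{\left(\tfrac{\mathrm{diam}([0,r]^d;D_n)}{r^\theta}\right)^{\eta}\right\} \le e^{C_{c_+,\eta,\eta'}} \exp\!\left\{\left(\tfrac{\mathrm{diam}([0,r]^d;\widetilde{D}_n)}{r^\theta}\right)^{\eta'}\right\}.
\end{equation*}
Taking expectations and using Proposition~\ref{MGFUpperBound} with parameter $\eta'$ gives the desired uniform bound $\sup_{n,r}\mathds{E}[\exp\{(\mathrm{diam}([0,r]^d;D_n)/r^\theta)^\eta\}] < \infty$.

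Third, for the subsequential limiting metric $D$, I would fix $\eta$ and $r$ and note that ${\rm diam}([0,r]^d; \cdot)$ is a lower semicontinuous functional on $C(\mathds{R}^{2d})$ with respect to local uniform convergence (as a supremum of continuous evaluations), so that along the subsequence $D_{n_k}\to D$, the quantity $\exp\{(\mathrm{diam}([0,r]^d;D_{n_k})/r^\theta)^\eta\}$ converges (in law) with a suitable liminf bound by the value for $D$. Passing to the limit via Skorokhod representation and Fatou's lemma yields
\begin{equation*}
\mathds{E}\!\left[\exp\!\left\{\left(\tfrac{\mathrm{diam}([0,r]^d;D)}{r^\theta}\right)^{\eta}\right\}\right] \le \liminf_{k\to\infty} \mathds{E}\!\left[\exp\!\left\{\left(\tfrac{\mathrm{diam}([0,r]^d;D_{n_k})}{r^\theta}\right)^{\eta}\right\}\right] < \infty,
\end{equation*}
uniformly in $r$, which is Axiom V2'. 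No step is a real obstacle; the only mild subtlety is the inequality $c_+^\eta x^\eta \le C + x^{\eta'}$, which is why we must take $\eta' > \eta$ strictly inside $(0, 1/(1-\theta))$ — this is possible precisely because $\eta$ lies in the open interval.
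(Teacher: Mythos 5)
Your proposal is correct and takes essentially the same approach as the paper: the paper likewise reduces tightness of $\{D_n\}$ to Theorem~\ref{tightmetric} via the uniform boundedness of $n^{\theta-1}a_n^{-1}$ from Lemma~\ref{an-bounded}, proves the uniform MGF bound by invoking Proposition~\ref{MGFUpperBound} with an auxiliary exponent $\eta_1 > \eta$ (exactly the role of your elementary inequality $c_+^\eta x^\eta \le C + x^{\eta'}$, mirroring the proof of Lemma~\ref{dis-Axiom5}), and passes to the subsequential limit by Fatou's lemma. One small wrinkle worth flagging: you justify the Fatou step by claiming $\mathrm{diam}([0,r]^d;\cdot)$ is a supremum of continuous evaluations, but this quantity is defined via the \emph{internal} metric $D(\cdot,\cdot;[0,r]^d)$, which is not a simple pointwise evaluation of $D$; the paper glosses over the same point, with the needed convergence of internal metrics handled separately in Lemma~\ref{limitonset}.
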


In addition, according to \cite[Theorem 1.1]{Baumler22}, we can also show that
\begin{lemma}\label{tightlowerbound}
    For any subsequential limiting metric  $D$, it satisfies Axiom V1', i.e. $\left\{\frac{r^\theta}{D(\bm 0,([-r,r]^d)^c)}\right\}_{r>0}$ is tight.
\end{lemma}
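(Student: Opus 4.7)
The plan is to deduce Axiom V1' for any subsequential limit by combining the scaling invariance of the Poisson point process, the apriori tightness estimate Proposition \ref{apriori-cont-2} for $d_{(1,\infty)}$, the uniform control on $a_n$ from Lemma \ref{an-bounded}, and an upper semi-continuity / Portmanteau argument to pass tightness through the weak limit.

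First I would rewrite the relevant quantity using scaling. The map $\bm x\mapsto n\bm x$ pushes the Poisson point process of edges with intensity $\beta|\bm u-\bm v|^{-2d}\,d\bm u\,d\bm v$ to itself in law, and sends long edges with scopes in $(1/n,\infty)$ to long edges with scopes in $(1,\infty)$. This yields
\begin{equation*}
d_{(1/n,\infty)}(\bm 0,([-r,r]^d)^c)\stackrel{\rm law}{=} n^{-1}\,d_{(1,\infty)}(\bm 0,([-nr,nr]^d)^c),
\end{equation*}
and therefore
\begin{equation*}
\frac{r^\theta}{D_n(\bm 0,([-r,r]^d)^c)}\stackrel{\rm law}{=}\bigl(a_n n^{1-\theta}\bigr)\cdot\frac{(nr)^\theta}{d_{(1,\infty)}(\bm 0,([-nr,nr]^d)^c)}.
\end{equation*}
By Lemma \ref{an-bounded}, $\{a_n n^{1-\theta}\}_{n\in\mathds{N}}$ is uniformly bounded, and by Proposition \ref{apriori-cont-2} the family $\{(nr)^\theta/d_{(1,\infty)}(\bm 0,([-nr,nr]^d)^c)\}_{nr>1}$ is tight. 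Combining these gives that $\{r^\theta/D_n(\bm 0,([-r,r]^d)^c)\}_{n\in\mathds{N},\,nr>1}$ is tight, with tail bound uniform in both $n$ and $r$.

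Next I would pass tightness to the subsequential limit. Assume $D_{n_k}\to D$ in law with respect to local uniform convergence on $\mathds{R}^{2d}$. For any continuous pseudometric $D'$ on $\mathds{R}^d$, the functional $D'\mapsto D'(\bm 0,([-r,r]^d)^c)=\inf_{\bm x\notin [-r,r]^d}D'(\bm 0,\bm x)$ is upper semi-continuous in the local uniform topology: if $D_{n_k}'\to D'$ locally uniformly, then for every fixed $\bm x\notin[-r,r]^d$, $\limsup_k D_{n_k}'(\bm 0,([-r,r]^d)^c)\le\lim_k D_{n_k}'(\bm 0,\bm x)=D'(\bm 0,\bm x)$, and taking the infimum over such $\bm x$ yields $\limsup_k D_{n_k}'(\bm 0,([-r,r]^d)^c)\le D'(\bm 0,([-r,r]^d)^c)$. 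Consequently the sublevel set $\{D':\,D'(\bm 0,([-r,r]^d)^c)<M^{-1}r^\theta\}$ is open, and Portmanteau gives, for each $r>0$ (choosing $k$ large so that $1/n_k<r$),
\begin{equation*}
\mathds{P}[D(\bm 0,([-r,r]^d)^c)<M^{-1}r^\theta]\le\liminf_{k\to\infty}\mathds{P}[D_{n_k}(\bm 0,([-r,r]^d)^c)<M^{-1}r^\theta].
\end{equation*}
Since the bound obtained in the first step is uniform in $r$ and $n_k$, this yields Axiom V1' for $D$.

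I expect no serious obstacle; the only mildly subtle point is the semi-continuity step, because $D(\bm 0,\cdot)$ is an infimum over a non-compact set and so a priori the infimising sequence could escape to infinity, but the one-sided bound $\limsup_k D_{n_k}(\bm 0,([-r,r]^d)^c)\le D(\bm 0,([-r,r]^d)^c)$ needed here follows from pointwise convergence alone and does not require controlling such sequences, which is why usc (and not continuity) is enough. Everything else is essentially a scaling and Portmanteau bookkeeping argument.
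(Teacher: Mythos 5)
Your proposal is correct and takes essentially the same route as the paper: use the scaling invariance of the Poisson point process to rewrite $r^\theta/D_n(\bm 0,([-r,r]^d)^c)$ in terms of $(nr)^\theta/d_{(1,\infty)}(\bm 0,([-nr,nr]^d)^c)$, invoke Proposition \ref{apriori-cont-2} and Lemma \ref{an-bounded} for uniform tightness over $n$ and $r$, and pass this to the subsequential limit. The paper leaves the final limit-passing step implicit; your upper semi-continuity plus Portmanteau argument is the standard way to make it rigorous and is correct.
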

\begin{proof}
    It is immediately implied by the tightness of
    $$\left\{\frac{r^\theta}{n^{1-\theta}d_{(1/n,+\infty)}(\bm 0,([-r,r]^d)^c)}\right\}_{nr>1}$$
    from Proposition \ref{apriori-cont-2} since $\frac{r^{\theta}}{n^{1-\theta}d_{(1/n,+\infty)}(\bm 0,([-r,r]^d)^c)}$ has the same law as $\frac{1}{(nr)^{-\theta}d_{(1,+\infty)}(\bm 0,([-nr,nr]^d)^c)}$ by the scaling invariance of the Poisson point process for edges.
\end{proof}

\subsection{Length space}\label{lengthspace}
In this subsection, we will present a proposition that implies any subsequential limiting metric is, in an appropriate sense, a length space.
\begin{proposition}\label{Prop-lengthspace}
    Assume that $D_{n_k}$ converges to $D$ as $k\to\infty$ with respect to the local uniform topology of $C(\mathds{R}^{2d})$. Then $D$ is a length metric {\rm(}viewed as a metric on the quotient metric $\mathds{R}^d/\sim${\rm)}. Here $\sim$ is the equivalence relation that ${\bm x}\sim {\bm y}$ if and only if $D({\bm x},{\bm y})=0$.
\end{proposition}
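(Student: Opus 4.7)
The plan is to follow closely the strategy used in Proposition \ref{dis-lengthspace} for the discrete case, adapted to the fact that each $D_n$ is already a continuous pseudometric on $\mathds{R}^d$. First I invoke Skorohod's representation theorem to pass to a probability space on which $D_{n_k} \to D$ almost surely in the local uniform topology of $C(\mathds{R}^{2d})$; in particular $D$ is a continuous (hence complete) pseudometric. By \cite[Theorem 2.4.16]{BBI01}, to show $D$ is a length metric on $\mathds{R}^d/\sim$ it then suffices to show that almost surely, for every $\bm z, \bm w \in \mathds{R}^d$, there exists $\bm x \in \mathds{R}^d$ with $D(\bm z, \bm x) = D(\bm w, \bm x) = \tfrac{1}{2} D(\bm z, \bm w)$.

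Second, I establish the continuous analogue of Lemma \ref{dis-bounded}: almost surely every $D$-bounded subset of $\mathds{R}^d$ is also Euclidean bounded. By Corollary \ref{TightAcrossScale} and Lemma \ref{tightlowerbound}, $D$ satisfies Axioms V1' and V2'; in particular $\mathrm{diam}([-r,r]^d; D) < \infty$ a.s.\ for every $r > 0$, while tightness of $\{r^\theta/D(\bm 0, ([-r,r]^d)^c)\}_{r > 0}$ together with the monotonicity of $R \mapsto D(\bm 0, ([-R,R]^d)^c)$ upgrades to the almost sure divergence $D(\bm 0, ([-R,R]^d)^c) \to \infty$ as $R \to \infty$. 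Writing $D(K, ([-R,R]^d)^c) \ge D(\bm 0, ([-R,R]^d)^c) - \mathrm{diam}([-r,r]^d;D)$ for any compact $K \subset [-r,r]^d$ then gives the claimed boundedness statement.

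Third, I build midpoints by approximation. Because $d_{(1/n_k,\infty)}$ is an infimum over lengths of paths made of Euclidean segments and zero-length long-edge jumps, for each $k$ there is a path from $\bm z$ to $\bm w$ of length at most $d_{(1/n_k,\infty)}(\bm z, \bm w) + 1/n_k$. Parametrising it by arc length and selecting the midpoint produces a point $\bm x_{n_k} \in \mathds{R}^d$ satisfying
\begin{equation*}
\bigl|D_{n_k}(\bm z, \bm x_{n_k}) - \tfrac{1}{2} D_{n_k}(\bm z, \bm w)\bigr| \le \frac{1}{2 n_k a_{n_k}} \et \bigl|D_{n_k}(\bm w, \bm x_{n_k}) - \tfrac{1}{2} D_{n_k}(\bm z, \bm w)\bigr| \le \frac{1}{2 n_k a_{n_k}},
\end{equation*}
and both error terms are of order $n_k^{-\theta}$ and hence vanish by Lemma \ref{an-bounded}. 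The boundedness lemma above, combined with the local uniform convergence $D_{n_k} \to D$, confines $\{\bm x_{n_k}\}$ to a Euclidean-bounded set (since its $D_{n_k}$-distance from $\bm z$ converges to $\tfrac{1}{2}D(\bm z,\bm w) < \infty$). Extracting a convergent subsequence $\bm x_{n_{k_j}} \to \bm x$ and passing to the limit using joint continuity of $D$ yields $D(\bm z, \bm x) = D(\bm w, \bm x) = \tfrac{1}{2} D(\bm z, \bm w)$, as required.

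The main obstacle is this last step, namely controlling the locations of the approximate midpoints $\bm x_{n_k}$. The Euclidean-boundedness lemma handles it, but some care is needed to compare $D_{n_k}$-balls to $D$-balls uniformly in $k$, which is precisely what local uniform convergence provides; this parallels the corresponding argument in the discrete case and should go through with essentially the same implementation.
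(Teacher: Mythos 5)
Your proposal is correct and follows essentially the same route as the paper: Skorohod representation, the continuous analogue of the boundedness lemma (the paper's Lemma \ref{equal-bounded}), and the midpoint criterion from \cite[Theorem 2.4.16]{BBI01}. The only variation is that you build approximate midpoints on near-optimal paths of length at most $d_{(1/n_k,\infty)}(\bm z,\bm w)+1/n_k$, whereas the paper invokes Lemma \ref{DnGeodesic} to obtain exact midpoints for each $D_n$; both work equally well, and your error terms $O(1/(n_k a_{n_k}))$ vanish as you say. One place to tighten the write-up: the claim that local uniform convergence ``confines $\{\bm x_{n_k}\}$ to a Euclidean-bounded set'' should be run by contradiction, since local uniform convergence controls $D_{n_k}-D$ only on compacts and $\bm x_{n_k}$ is not a priori in one. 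The fix is exactly the one the paper uses for Proposition \ref{dis-lengthspace}: if a subsequence $\bm x_{n_{k_l}}$ escaped every $[-R,R]^d$, then $D_{n_{k_l}}(\bm z,\bm x_{n_{k_l}})\ge D_{n_{k_l}}(\bm z,([-R,R]^d)^c)$, and the right side is an infimum over the compact boundary $\partial[-R,R]^d$, so local uniform convergence lets it pass to $D(\bm z,([-R,R]^d)^c)$, which by the boundedness lemma exceeds $\tfrac12 D(\bm z,\bm w)+1$ for $R$ large, contradicting your vanishing error estimate. You acknowledge this needs care and defer to the discrete case, which is the right instinct; with that sentence inserted the argument is complete.
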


Before we start our proof, we will first record the fact that any $D$-bounded set is also Euclidean bounded.
\begin{lemma}\label{equal-bounded}
    Let $D$ be the limit in Proposition \ref{Prop-lengthspace}. Then a.s.,  for every compact set $K\subset\mathds{R}^d$, we have $$\lim_{R\to\infty} D(K, ([-R,R]^d)^c) = \infty.$$ In particular, every $D$-bounded subset of $\mathds{R}^d$ is also Euclidean bounded.
\end{lemma}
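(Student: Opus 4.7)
The plan is to follow the same route as in the discrete case (Lemma \ref{dis-bounded}), since the relevant axioms for a subsequential limit $D$ of $\{D_{n_k}\}$ in the continuous setting are already in place: Axiom V2' for $D$ is supplied by Corollary \ref{TightAcrossScale}, while Axiom V1' for $D$ is supplied by Lemma \ref{tightlowerbound}. Since the Hausdorff dimension $\mathds{R}^d$ is countably covered by cubes $[-r,r]^d$ with $r\in\mathds{N}$, it suffices to prove the almost sure statement with $K$ replaced by $[-r,r]^d$ for each fixed $r\in\mathds{N}$.

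The first step is to invoke the triangle inequality in the form
\begin{equation*}
D([-r,r]^d,\,([-R,R]^d)^c)\;\ge\;D(\bm 0,\,([-R,R]^d)^c)-\mathrm{diam}([-r,r]^d;D),
\end{equation*}
valid whenever $R>r$. Thus the lemma reduces to showing (i) $\mathrm{diam}([-r,r]^d;D)<\infty$ almost surely for each $r\in\mathds{N}$, and (ii) $\lim_{R\to\infty}D(\bm 0,([-R,R]^d)^c)=\infty$ almost surely. Assertion (i) is immediate from Axiom V2' (Corollary \ref{TightAcrossScale}) applied with, say, $\eta=1$, since the finiteness of the exponential moment of $\mathrm{diam}([-r,r]^d;D)/r^\theta$ forces $\mathrm{diam}([-r,r]^d;D)$ to be finite almost surely.

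For assertion (ii), the key observation is that the random variable $X_R:=D(\bm 0,([-R,R]^d)^c)$ is monotone non-decreasing in $R$, so $X_\infty:=\lim_{R\to\infty}X_R$ exists almost surely in $[0,\infty]$. Suppose for contradiction that $\mathds{P}[X_\infty\le M]>\varepsilon$ for some finite $M>0$ and $\varepsilon>0$. Then for every $R>0$, one would have $\mathds{P}[X_R\le M]\ge\varepsilon$, equivalently $\mathds{P}[R^\theta/X_R\ge R^\theta/M]\ge\varepsilon$. Letting $R\to\infty$, this contradicts the tightness of $\{r^\theta/D(\bm 0,([-r,r]^d)^c)\}_{r>0}$ guaranteed by Axiom V1' (Lemma \ref{tightlowerbound}). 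Hence $X_\infty=\infty$ almost surely, which establishes (ii).

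Combining (i) and (ii) via the triangle inequality yields $\lim_{R\to\infty}D([-r,r]^d,([-R,R]^d)^c)=\infty$ almost surely for each fixed $r\in\mathds{N}$, and taking a countable intersection over $r$ gives the statement for all compact $K$. The ``in particular'' assertion then follows from the contrapositive: if $A\subset\mathds{R}^d$ were $D$-bounded but Euclidean unbounded, then for every compact $K$ containing some fixed point of $A$ one could find points of $A$ outside $[-R,R]^d$ for arbitrarily large $R$, contradicting the finite $D$-diameter of $A$ together with the divergence just established. Since no step requires more than the axioms already verified for subsequential limits, there is no real obstacle; the only mild subtlety is the monotonicity argument upgrading Axiom V1' from tightness to the almost sure divergence of $X_R$.
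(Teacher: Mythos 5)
Your proposal is correct and follows the same route as the paper, which simply states that the proof is identical to that of Lemma \ref{dis-bounded}: reduce to $K=[-r,r]^d$ via the triangle inequality, then combine a.s.\ finiteness of $\mathrm{diam}([-r,r]^d;D)$ (from Axiom V2') with a.s.\ divergence of $D(\bm 0,([-R,R]^d)^c)$ as $R\to\infty$. Your only addition is the short monotonicity-plus-tightness argument upgrading Axiom V1' to the almost sure divergence, which the paper leaves implicit; that step is correct.
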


The proof of Lemma \ref{equal-bounded} is essentially identical to that of Lemma \ref{dis-bounded} and thus we omit it here.

Next we will show that for every $n$, $D_n$ is a geodesic metric in an appropriate sense. Here a geodesic metric is defined as a metric under which there exists a geodesic between any two points.
\begin{lemma}\label{DnGeodesic}
    For fixed $n\in\mathds{N}$, let $\sim_n$ be the equivalence relationship that ${\bm x}\sim_n {\bm y}$ if and only if $D_n({\bm x},{\bm y})=0$. Then $(\mathds{R}^d/\sim_n, D_n)$ is a geodesic metric.
\end{lemma}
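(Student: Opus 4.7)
The plan is to construct, for each pair $\bm x, \bm y \in \mathds{R}^d$, an explicit geodesic from $[\bm x]$ to $[\bm y]$ in $(\mathds{R}^d/\sim_n, D_n)$ by showing that the infimum defining $d_{(1/n,\infty)}(\bm x,\bm y)$ in \eqref{dist_delta} is in fact attained by some sequence of hops and gaps. That $D_n$ descends to a well-defined pseudometric on $\mathds{R}^d/\sim_n$ is immediate from the triangle inequality, so only the existence of a minimizer requires genuine work.

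Writing $L := d_{(1/n,\infty)}(\bm x,\bm y) \le |\bm x-\bm y|$, I would first observe that it suffices to search among \emph{proper} paths, in the sense of the definition just before Lemma \ref{DS13lem21}, i.e., paths that do not reuse any hop: any candidate in \eqref{dist_delta} of total length at most $L+1$ can be made proper without increasing its length by iteratively excising the cycles created by repeated hops, and the endpoints of the remaining path are unchanged. The key step is then a rescaled version of Lemma \ref{DS13lem21}. By the scaling invariance of the Poisson edge process (the law of $n\mathcal{E} := \{\langle n\bm u, n\bm v\rangle : \langle \bm u,\bm v\rangle\in\mathcal{E}\}$ coincides with that of $\mathcal{E}$, and this rescaling converts $d_{(1/n,\infty)}$-length $t$ at $(\bm x,\bm y)$ into $d_{(1,\infty)}$-length $nt$ at $(n\bm x, n\bm y)$), the expected number of proper paths from $\bm x$ with $d_{(1/n,\infty)}$-length at most $L+1$ is bounded by $C_{cont}^{n(L+1)}$, and is therefore a.s.\ finite. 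Since this finite set of candidates is non-empty (the straight Euclidean segment from $\bm x$ to $\bm y$ corresponds to $m=0$ in \eqref{dist_delta}), the shortest of them ending at $\bm y$ attains the infimum $L$.

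Finally, this minimizer descends to a curve in $(\mathds{R}^d/\sim_n, D_n)$ which is continuous in the $D_n$-metric topology: on each gap it is continuous in the Euclidean metric, and hence in $D_n$ (since $D_n(\bm u,\bm v) \le a_n^{-1}|\bm u-\bm v|$ by the $m=0$ case of \eqref{dist_delta}), while each hop $\langle \bm u,\bm v\rangle$ collapses to a single point in the quotient because $D_n(\bm u,\bm v) = 0$. Its $D_n$-length equals $a_n^{-1}L = D_n([\bm x],[\bm y])$, so it is the required geodesic. The main obstacle is the counting step, because a priori a path of bounded $d_{(1/n,\infty)}$-length might use arbitrarily many hops (hops being free of length); it is precisely the recursive bound $f_k(t) \le (\widehat c\, t)^{kd}/(kd)!$ established in the proof of Lemma \ref{DS13lem21}, which controls proper paths with exactly $k$ hops, that reduces the problem to a finite optimization and delivers the geodesic.
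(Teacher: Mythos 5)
Your proof is correct, but it takes a genuinely different route from the paper's own argument. The paper's proof of Lemma \ref{DnGeodesic} first shows that candidate geodesics must stay inside a random compact cube $[-M,M]^d$ (using that $d_{(1/n,\infty)}(\bm 0,([-R,R]^d)^c)\to\infty$ together with boundedness of the diameter of $[-r,r]^d$), and then notes that the Poisson point process places a.s.\ only finitely many edges with both endpoints in $[-M,M]^d$ and scope larger than $1/n$; hence the relevant combinatorial choices (which hops to traverse, in what order) form a finite set and the infimum in \eqref{dist_delta} is attained. You instead bypass the compactification step entirely and appeal to the rescaled path-counting estimate $\mathds{E}[|\mathscr{P}_t|]\le C_{cont}^{t}$ from Lemma \ref{DS13lem21}: applied at scale $n$ with $t=n(|\bm x-\bm y|+1)$, this shows that a.s.\ only finitely many equivalence classes of proper paths have $d_{(1/n,\infty)}$-length $\le |\bm x-\bm y|+1\ge L+1$, and since each equivalence class contributes exactly one candidate terminating at $\bm y$, a finite optimization yields the minimizer.

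Both approaches are valid. Yours is more quantitative and self-contained in the sense that it needs no separate ``geodesics stay in a compact set'' step, but it invokes a heavier tool (the recursive bound $f_k(t)\le(\widehat c\,t)^{kd}/(kd)!$) than the paper does for this purely qualitative statement. The paper's argument is more elementary — local finiteness of the Poisson process on compacta — at the cost of first proving the escape-to-infinity estimate. Two small points of rigor worth tightening in your write-up: (i) $L$ is a random variable, so the expectation should be taken with the deterministic bound $t=|\bm x-\bm y|+1$ rather than $L+1$; (ii) when you reduce to proper paths, you should excise not only repeated hops but also merge consecutive gaps, both of which are length-nonincreasing, to match the definition of ``proper'' used just before Lemma \ref{DS13lem21}. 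Neither affects the conclusion.
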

\begin{proof}
    Since $D_n=a_n^{-1}d_{(1/n,+\infty)}$, it suffices to show the lemma for $d_{(1/n,+\infty)}$.
    First, it is clear that ${\rm diam}([-r,r]^d;d_{(1/n,+\infty)})\le 2dr$ and $\lim_{R\to\infty}d_{(1/n,+\infty)}(\bm 0,([-R,R]^d)^c)=\infty.$
    As a result, for any compact $K\subset\mathds{R}^d$,
    $$
    \lim_{R\to\infty}d_{(1/n,+\infty)}(K,([-R,R]^d)^c)=\infty.
    $$
Therefore, for any fixed ${\bm x},{\bm y}\in\mathds{R}^d$, there exists a random $M>0$ such that for any continuous path $P$ from ${\bm x}$ to ${\bm y}$, if ${\rm len}(P;d_{(1/n,+\infty)})\le |\bm x-\bm y|$, then $P\subset [-M,M]^d$. This implies
\begin{equation}\label{d(x,y)}
d_{(1/n,\infty)}({\bm x},{\bm y})=\inf\left\{{\rm len}(P, d_{(1/n,+\infty)}):\ P \text{ is a path from ${\bm x}$ to ${\bm y}$ and } P\subset [-M,M]^d\right\}.
\end{equation}
Additionally, from the property of the Poisson point process for edges, a.s.\ there are only finite edges with both end points in $[-R,R]^d$ and scopes larger than $1/n$ for any $R>0$.
Hence, we observe that the number of paths $P$ in \eqref{d(x,y)} is a.s.\ finite. This implies that there exists a path $P$ from ${\bm x}$ to ${\bm y}$ that achieves the infimum in \eqref{d(x,y)}, that is, there exists a $d_{(1/n,\infty)}$-geodesic from ${\bm x}$ to ${\bm y}$.
\end{proof}


\begin{proof}[Proof of Proposition \ref{Prop-lengthspace}]
Similar with the proof of Proposition \ref{dis-lengthspace}, according to Skorohod representation theorem, we see that there exist a probability space and random variables $D_n',D'$ (equal in distribution to $D_n$ and $D$, respectively) on it such that $D_{n_k}'$ converges a.s.\ to $D'$.
Since $D'$ can be viewed as a H\"older continuous function on $\mathds{R}^{2d}$, it is a complete metric due to the completeness of $\mathds{R}^d$.
Therefore, using the arguments in the proof of Proposition \ref{dis-lengthspace} with replacing $\{{\bm x}_{n_k}\}$ with the midpoints between ${\bm z}$ and ${\bm w}$ under metrics $D_{n_k}$ (note that the existence of midpoints is ensured by Lemma \ref{DnGeodesic}),
and replacing  Lemma \ref{dis-bounded} with Lemma \ref{equal-bounded}, we can obtain the desired result.
\end{proof}

\subsection{Subsequential limits}\label{conv-internal}
In this subsection, we consider the convergence of internal metrics of $D_n=a_n^{-1}d_{(1/n,\infty)}$ on a certain class of sets on $\mathds{R}^d$.

\begin{lemma}\label{limitonset}
Let $\mathcal{W}$ be the set of all dyadic sets {\rm(}recalling Definition \ref{dyadic}{\rm)}. For any sequence $\{n_k\}_{k\geq 1}\subset \mathds{N}$ tending to infinity, there is
a subsequence $\{n'_k\}_{k\geq 1}$ and a 
random length metric $D$ such that the following is true. We have the convergence of joint laws
\begin{equation}\label{jointlaw}
\left(D_{n'_k},\ \{D_{n'_k}(\cdot,\cdot;\overline{W})\}_{W\in\mathcal{W}}\right)\to \left(D,\{D_W\}_{W\in\mathcal{W}}\right),
\end{equation}
where the first coordinate is endowed with the local uniform topology on $\mathds{R}^d\times \mathds{R}^d$ and each element of the collection in the second coordinate is endowed with the uniform topology on $\overline{W}\times \overline{W}$. Furthermore, for each $W\in\mathcal{W}$ we have a.s.\ $D_W(\cdot,\cdot;W)=D(\cdot,\cdot;W)$.
\end{lemma}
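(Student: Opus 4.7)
The plan is to mirror the strategy used to prove Lemma \ref{dis-limitonset} in the discrete setting, substituting the continuous tightness inputs established earlier in this section. First, I would observe that the collection $\mathcal{W}$ of dyadic sets is countable, so a diagonal extraction will suffice once I prove tightness for each coordinate of the joint distribution in \eqref{jointlaw}. Tightness of the first coordinate $\{D_{n'_k}\}$ with respect to the local uniform topology on $\mathds{R}^{2d}$ is already provided by Corollary \ref{TightAcrossScale}. Tightness of $\{D_n(\cdot,\cdot;\overline{W})\}_{n\geq 1}$ for each fixed $W\in\mathcal{W}$ is an analog of Lemma \ref{dis-tightDW}; I would establish it in exactly the same way, namely by repeating the proof of Theorem \ref{tightmetric} with $[-R,R]^d$ replaced by a bounded open set containing $\overline{W}$, using the uniform moment generating function estimate of Proposition \ref{MGFUpperBound} together with Lemma \ref{HolderLemma} to produce a uniform H\"older-type modulus of continuity.

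With these tightness inputs in hand, I would apply Prokhorov's theorem combined with a diagonal argument over the countable index set $\mathcal{W}$ to extract a further subsequence $\{n'_k\}$ along which the joint law in \eqref{jointlaw} converges to some limit $(D,\{D_W\}_{W\in\mathcal{W}})$. By Proposition \ref{Prop-lengthspace}, the marginal $D$ is almost surely a length metric on the quotient $\mathds{R}^d/\sim$; the analogous length space property for each $D_W$ on $\overline{W}$ follows from the same proof applied to internal metrics (note that Lemma \ref{DnGeodesic} already ensures $D_n(\cdot,\cdot;\overline{W})$ is itself a geodesic metric for every $n$, so the midpoint argument of Proposition \ref{Prop-lengthspace} carries over verbatim).

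Having produced the coupling, the final compatibility claim $D_W(\cdot,\cdot;W)=D(\cdot,\cdot;W)$ a.s.\ is precisely the conclusion of the deterministic Lemma \ref{internalmetric}, applied to the sequence $D^n:=D_{n'_k}$ with $U$ any bounded open set containing $\overline{W}$ and $V=W$. To invoke that lemma, I need $D$ to satisfy Axiom I (which is exactly Proposition \ref{Prop-lengthspace}) as well as the strong regularity property that $D(\bm x,\bm y)=0$ iff $\langle \bm x,\bm y\rangle\in\mathcal{E}$. The strong regularity is where the main technical point lies: it is the analog of Proposition \ref{SRSL} in the discrete case and requires proving a quantitative, uniform-in-$n$ lower bound of the form $\mathds{P}[D_n(V^k(\bm i),V^k(\bm j))<(2^k b\alpha)^\theta,\ \langle V^k(\bm i),V^k(\bm j)\rangle\text{ not directly connected}]\to 0$ as $\alpha\to 0$, via the same multi-scale renormalization and BK-type bound argument used in Lemmas \ref{Dinterval=0-2}-\ref{Dinterval=0-3}.

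The main obstacle, as just indicated, is establishing this strong regularity uniformly along the approximating sequence $\{D_n\}$ in the continuous setup, because the good-cube estimate (Lemma \ref{h-probgood-2}) was formulated with reference to the discrete metrics $\mathcal{D}_n$ and uses the self-similarity of the renormalized discrete model. I expect this step to transfer without difficulty given the scaling invariance of the Poisson point process and the tightness of $\{D_n\}$ furnished by Corollary \ref{TightAcrossScale} and Lemma \ref{tightlowerbound}, which together play the role that tightness of $\{\mathcal{D}_n\}$ played in the discrete argument. Once the uniform regularity estimate is in place, passing it to the subsequential limit and then running the proof of Lemma \ref{internalmetric} completes the argument.
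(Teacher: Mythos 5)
Your proposal follows essentially the same route as the paper: tightness of the joint laws via Corollary \ref{TightAcrossScale} and the continuous analog of Lemma \ref{dis-tightDW} (which the paper records as Lemma \ref{tightDW}), Prokhorov and a diagonal argument over the countable family $\mathcal{W}$, the length-space property from Proposition \ref{Prop-lengthspace}, strong regularity from Lemma \ref{SRSL-c} (the continuous analog of Proposition \ref{SRSL}), and the deterministic compatibility Lemma \ref{internalmetric}. Your extra caution about transferring the strong-regularity estimate to the continuous approximating sequence is reasonable—the paper states Lemma \ref{SRSL-c} without a separate proof—but the overall strategy is the same.
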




We now proceed with the proof of Lemma \ref{limitonset}.  
The following lemma follows from Corollary \ref{TightAcrossScale} and Proposition \ref{Prop-lengthspace} immediately, which gives the tightness of the internal metrics associated to $D_n$. It is also an analog to Lemma \ref{dis-tightDW}.

\begin{lemma}\label{internaltight}
Let $R>0$. The laws of the internal metrics $\{D_n(\cdot,\cdot;[-R,R]^d)\}_{n\geq 1}$ are tight with respect to  the local uniform topology of $C(\mathds{R}^{2d})$ and any subsequential limit of these laws is supported on length metrics which can also be viewed as a continuous function.
\end{lemma}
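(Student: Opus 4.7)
The plan is to mirror the proof strategy already established for external metrics: derive a uniform modulus of continuity for the internal metrics from the uniform moment generating function bound, deduce tightness via Arzelà–Ascoli, and then upgrade any subsequential limit to a length metric using a midpoint argument as in Proposition \ref{Prop-lengthspace}.

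First, combining Proposition \ref{MGFUpperBound} with Lemma \ref{an-bounded} yields, for any $\eta\in(0,1/(1-\theta))$, the uniform bound
$$
\sup_{n\in\mathds{N},\,r>0}\mathds{E}\Bigl[\exp\Bigl\{\Bigl(\tfrac{\mathrm{diam}([0,r]^d;D_n)}{r^\theta}\Bigr)^{\eta}\Bigr\}\Bigr]<\infty.
$$
Fixing $\eta=1$, I would then run the chaining argument of Lemma \ref{HolderLemma} with $f=D_n(\cdot,\cdot;[-R,R]^d)$: for $\bm x,\bm x'\in [-R,R]^d$ with $\|\bm x-\bm x'\|_\infty\in[2^{-k}R,2^{-k+1}R]$ and $k$ large enough that dyadic cubes of side $2^{-k+1}R$ fit inside $[-R,R]^d$, one has
$$
D_n(\bm x,\bm x';[-R,R]^d)\le 2\sup_{j}\mathrm{diam}(I_j;D_n),
$$
where $\{I_j\}$ is the partition of $[-R,R]^d$ into cubes of side $2^{-k+1}R$. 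Indeed, any path inside an $I_j$ is a fortiori a path inside $[-R,R]^d$. The union bound and the uniform MGF estimate above (applied to each $D_n$, exploiting the fact that the diameter bound is already translation-free in its statement) produce the same conclusion as Lemma \ref{HolderLemma}, but with a constant independent of $n$:
$$
\mathds{P}\Bigl[\sup_{\bm x,\bm x'\in[-R,R]^d}\frac{D_n(\bm x,\bm x';[-R,R]^d)}{\|\bm x-\bm x'\|_\infty^{\theta}\log(4R/\|\bm x-\bm x'\|_\infty)}>M\Bigr]\le 2^{d+1}C\exp\{-2^{-\theta-1}M\}.
$$
The Arzelà–Ascoli theorem then delivers tightness in the uniform topology on $\overline{[-R,R]^d}^{\,2}$, and the modulus of continuity passes to any subsequential limit, showing it is a continuous function on $[-R,R]^{2d}$.

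For the length-space property, I would fix a subsequence $n_k$ along which $D_{n_k}(\cdot,\cdot;[-R,R]^d)\to D^{\infty}$ in law, invoke Skorohod's representation theorem to upgrade to almost-sure convergence, and then mimic the midpoint argument in Proposition \ref{Prop-lengthspace}. For any $\bm z,\bm w\in[-R,R]^d$, by Lemma \ref{DnGeodesic} (which gives existence of $D_n$-geodesics) and the definition of the internal metric, one can find $\varepsilon_k$-midpoints $\bm x_{n_k}\in[-R,R]^d$ with $\varepsilon_k\to 0$ such that
$$
\max\bigl\{D_{n_k}(\bm z,\bm x_{n_k};[-R,R]^d),\,D_{n_k}(\bm w,\bm x_{n_k};[-R,R]^d)\bigr\}\le \tfrac{1}{2}D_{n_k}(\bm z,\bm w;[-R,R]^d)+\varepsilon_k.
$$
By compactness of $[-R,R]^d$, extract a Euclidean limit $\bm x$; local uniform convergence of $D_{n_k}(\cdot,\cdot;[-R,R]^d)$ then gives $D^{\infty}(\bm z,\bm x)=D^{\infty}(\bm w,\bm x)=\tfrac12 D^{\infty}(\bm z,\bm w)$. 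Completeness of $D^{\infty}$ on the quotient $\overline{[-R,R]^d}/\sim$ follows from continuity, so \cite[Theorem 2.4.16]{BBI01} produces the length-metric structure.

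The main obstacle I anticipate is a purely bookkeeping one: verifying that the $\varepsilon_k$-midpoint can always be chosen inside $[-R,R]^d$, since the $D_{n_k}$-geodesic between $\bm z$ and $\bm w$ could in principle exit $[-R,R]^d$. This is handled by the very definition of the internal metric as an infimum over paths that stay in $[-R,R]^d$, combined with the fact that $D_{n_k}$ is realized as a genuine geodesic metric (Lemma \ref{DnGeodesic}) — given any $\varepsilon>0$ one picks a near-optimal path in $[-R,R]^d$ and takes its midpoint along the length parametrization. All other ingredients are direct transfers of earlier arguments.
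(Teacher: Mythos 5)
Your proof is correct and follows essentially the same route the paper takes: the paper simply states that the lemma is an immediate consequence of Corollary \ref{TightAcrossScale} and Proposition \ref{Prop-lengthspace}, and you have unpacked exactly those two ingredients — the chaining/Arzelà–Ascoli argument for tightness (which transfers to internal metrics since paths inside each dyadic sub-cube $I_j\subset[-R,R]^d$ are a fortiori paths in $[-R,R]^d$, so $\mathrm{diam}(I_j;D_n(\cdot,\cdot;[-R,R]^d))=\mathrm{diam}(I_j;D_n)$) and the Skorohod/midpoint argument for the length-space property. Your final observation that compactness of $[-R,R]^d$ obviates the escape-to-infinity control of Lemma \ref{equal-bounded} is the right adaptation and is the only genuinely new bookkeeping needed.
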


Recall that in this paper, we use the term ``length metric'' to refer to a metric $D$ that becomes a length metric when considered on the quotient space $\mathds{R}^d/\sim$. Here, the equivalence relation that $\sim $ is defined such that ${\bm x}\sim {\bm y}$ if and only if $D({\bm x},{\bm y})=0$.

We now enhance from internal metrics on cubes to internal metrics on closures of dyadic sets.

\begin{lemma}\label{tightDW}
Let $W\subset \mathds{R}^d$ be a dyadic set. The laws of the internal metrics $\{D_n(\cdot,\cdot;\overline{W})\}_{n\geq 1}$ are tight with respect to the uniform topology of $C(\overline{W}\times \overline{W})$ and any subsequential limit of these laws is supported on length metrics.
\end{lemma}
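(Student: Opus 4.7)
The plan is to adapt the arguments of Theorem \ref{tightmetric} and Proposition \ref{Prop-lengthspace} to the restriction to the compact set $\overline{W}$, in parallel with the omitted proof of the discrete analog Lemma \ref{dis-tightDW}. Writing $\overline{W}=\bigcup_{j=1}^{m}I_{j}$ as a finite union of closed dyadic cubes (and treating each connected component of $W$ separately if $W$ is not connected), the key monotonicity $D_{n}(\cdot,\cdot;A)\geq D_{n}(\cdot,\cdot;B)$ for $A\subset B$ reduces bounds on the internal diameters and internal distances over $\overline{W}$ to those over its constituent dyadic sub-cubes. Tightness of $\{{\rm diam}(\overline{W};D_{n})\}_{n\geq 1}$ then follows by chaining adjacent $I_{j}$, yielding ${\rm diam}(\overline{W};D_{n})\leq C\sum_{j=1}^{m}{\rm diam}(I_{j};D_{n})$, and each ${\rm diam}(I_{j};D_{n})$ is tight by Proposition \ref{MGFUpperBound}.

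For equi-continuity, first I would verify that for each $\delta>0$ sufficiently small, any pair $\bm x,\bm x'\in\overline{W}$ with $\|\bm x-\bm x'\|_{\infty}\leq\delta$ is contained in some dyadic sub-cube $J\subset\overline{W}$ of side length $O(\delta)$ (obtained from a finer dyadic refinement of the $I_{j}$), giving the local bound $D_{n}(\bm x,\bm x';\overline{W})\leq D_{n}(\bm x,\bm x';J)\leq{\rm diam}(J;D_{n})$. A Lemma \ref{HolderLemma}-type argument — whose hypothesis holds uniformly in $n$ by Proposition \ref{MGFUpperBound} applied with $\eta=1$ — yields a uniform $\theta$-Hölder-with-log modulus for $D_{n}(\cdot,\cdot;\overline{W})$. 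The triangle inequality
\[
|D_{n}(\bm x,\bm y;\overline{W})-D_{n}(\bm x',\bm y';\overline{W})|\leq D_{n}(\bm x,\bm x';\overline{W})+D_{n}(\bm y,\bm y';\overline{W})
\]
together with the Arzela-Ascoli theorem applied on the compact space $\overline{W}\times\overline{W}$ then gives tightness in $C(\overline{W}\times\overline{W})$.

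To upgrade tightness to the length-metric property of any subsequential limit $D_{W}$, I would repeat the Skorohod-representation-plus-midpoint argument used in the proof of Proposition \ref{Prop-lengthspace}. Each $D_{n}(\cdot,\cdot;\overline{W})$ is a.s.\ itself a geodesic metric on $\overline{W}/\sim_{n}$ by an immediate adaptation of Lemma \ref{DnGeodesic}: only finitely many long edges of scope $\geq 1/n$ have an endpoint in the compact set $\overline{W}$, so a minimizing path among the a.s.\ finite collection of candidates exists. Compactness of $\overline{W}$ then guarantees that approximate midpoints under $D_{n'_{k}}(\cdot,\cdot;\overline{W})$ admit subsequential limits in $\overline{W}$, which serve as exact midpoints for $D_{W}$; by the standard midpoint characterization (e.g.\ \cite[Theorem 2.4.16]{BBI01}) this shows $D_{W}$ is a length metric.

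The main obstacle will be the equi-continuity estimate near $\partial W$: if $W$ has a re-entrant corner or slit-like feature, two points of $\overline{W}$ close in Euclidean distance may lie on opposite sides of a piece of $\partial W$ and thus fail to be contained in a common small dyadic sub-cube of $W$, in which case no single local Hölder estimate suffices. Resolving this requires exploiting the axis-aligned piecewise structure of $\partial W$ (which is built into the definition of dyadic set) to bound the worst-case internal separation over $\overline{W}$ in terms of the Euclidean separation, possibly by chaining through at most a bounded number (depending only on $W$) of overlapping small sub-cubes; this produces a possibly larger but still vanishing modulus as $\delta\to 0$.
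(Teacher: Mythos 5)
Your plan follows the route the paper indicates (the paper omits this proof, pointing only to Corollary \ref{TightAcrossScale} and Proposition \ref{Prop-lengthspace} as templates), and the chaining resolution you propose is the correct one. Two refinements: (1) the containment claim in your second paragraph — that any $\delta$-close pair $\bm x,\bm x'\in\overline{W}$ lies in a common $O(\delta)$-sub-cube of $\overline{W}$ — is simply false at a re-entrant corner (in an L-shaped $\overline{W}$, take $\bm x=(1-\epsilon,1+\epsilon)$ and $\bm x'=(1+\epsilon,1-\epsilon)$), so the chaining argument from your final paragraph is not a remedy for a pathology but the mechanism that must replace the single-cube bound from the outset. (2) Your worry about ``slit-like features'' is moot: the interior of a finite union of closed axis-aligned dyadic cubes can have re-entrant corners and edges but never slits, and every re-entrant corner/edge has its apex in $\overline{W}$; consequently, for a connected dyadic set the Euclidean internal metric of $\overline{W}$ is comparable to the ambient Euclidean metric with a constant depending only on $W$, which is what makes your ``bounded number of overlapping sub-cubes'' claim hold and the resulting modulus degrade only by a fixed factor. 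With these clarifications your argument for the H\"older-with-log modulus, and hence the Arzel\`a--Ascoli tightness, goes through, and the Skorohod-plus-midpoint upgrade to a length metric on each connected component is exactly the reduction to Proposition \ref{Prop-lengthspace} via the internal-metric version of Lemma \ref{DnGeodesic} (where the finite set of hops both endpoints in the compact $\overline{W}$, and Euclidean geodesics of $\overline{W}$ for the gaps, supply the candidate paths).
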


The proof of the lemma above is similar to that of Corollary \ref{TightAcrossScale} and Proposition \ref{Prop-lengthspace}. Thus we omit the proof here. We also need the strong regularity for the subsequential limits of $\{D_n\}$, which is an  analog to Proposition \ref{SRSL}.

\begin{lemma}\label{SRSL-c}
Any subsequential limit $D$ of $\{D_n\}$ satisfies the strong regularity, i.e., the following holds almost surely. For any $\bm x,\bm y\in \mathds{R}^d$, $D(\bm x,\bm y)=0$ if and only if $\langle \bm x, \bm y\rangle\in \mathcal{E}$.
\end{lemma}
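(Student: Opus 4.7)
The plan is to mirror the strategy used to prove Proposition \ref{SRSL} (the discrete analog), substituting the continuous scaled metric $D_n = a_n^{-1} d_{(1/n,\infty)}$ for $\mathcal{D}_n = \widehat{a}_n^{-1}\widetilde{d}_n$ at every step. The backbone of the argument is a renormalization producing uniform-in-$n$ estimates that survive passage to the subsequential limit, together with the BK-type bound of Lemma \ref{hd-BK} adapted to the scaled metrics.

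First, I would introduce the obvious analog of Definition \ref{h-good}: for $s \gg 1/n$, call $V_{3s}({\bm z})$ $(3s,\alpha)$-good with respect to $D_n$ if for every $V_{3s}({\bm z})$-special pair of edges $({\bm u}_1,{\bm v}_1,{\bm u}_2,{\bm v}_2) \in \mathcal{E}$ (as in Definition \ref{h-good}) one has $|{\bm v}_1-{\bm u}_2| \ge \alpha s$ and $D_n({\bm v}_1,{\bm u}_2; V_{3s}({\bm z})) \ge (b\alpha s)^\theta$. The key point is that the ingredients used in Lemma \ref{h-probgood} — namely Lemma \ref{h-longedge} (geometry of the Poisson point process, purely scale-invariant) and Lemma \ref{h-regular-low} (which used only Axioms IV' and V1') — both transfer uniformly in $n$ here: Axiom IV' is replaced by the exact scaling $r^{-\theta} D_n(r\cdot,r\cdot) \stackrel{\rm law}{=}(nr/n')^{-\theta}D_{nr/r}$-type relations coming from scale invariance of the Poisson intensity, and Axiom V1' is replaced by tightness of $\{D_n\}$ (Corollary \ref{TightAcrossScale} and Lemma \ref{tightlowerbound} applied to all $n$ jointly, as in \eqref{cp}). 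This gives a uniform-in-$n$ version of Lemma \ref{h-probgood-2}: for sufficiently small $\alpha$ there exist $b(\alpha)>0$ and $c_1=c_1(d,\beta)$ such that $\mathds{P}[V_{3s}({\bm z}) \text{ is $(3s,\alpha)$-good w.r.t. } D_n] \ge 1-c_1\alpha\log(1/\alpha)$ for every $n$ with $s \gg 1/n$ and every ${\bm z}$.

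Second, I would carry out the renormalization exactly as in the text preceding Lemma \ref{hd-BK-2}, tiling $\mathds{R}^d$ by cubes of side $s$, and deducing the analog of Lemma \ref{hd-BK-2} for $D_n$ (the proof is identical, as Axiom II'' is a structural property of the underlying Poisson point process and holds trivially for $D_n$ because the edges used by $d_{(1/n,\infty)}$ within a region are determined by $\mathcal{E}$ restricted to that region together with $V_{1/n}$-type nuisance boundary terms which become negligible once $1/n \ll s$). This yields the continuous-model analog of Lemma \ref{Dinterval=0-2}: for $k \in \mathds{Z}$ with $1/n \ll 2^k$, $\bm i, \bm j \in 2^k\mathds{Z}^d$ with $\|\bm i - \bm j\|_1 > 3 \cdot 2^k$, and any $\delta>0$ with $C_{dis}\delta <1$,
\begin{equation*}
\mathds{P}\big[D_n(V^k(\bm i), V^k(\bm j)) < (2^k b\alpha)^\theta,\ V^k(\bm i) \not\leftrightarrow V^k(\bm j) \text{ by } \mathcal{E}\big] \le \frac{C_{dis}^2\delta}{1-C_{dis}\delta}.
\end{equation*}

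Third, letting $D$ be a subsequential limit, $n_l \to \infty$ along which $D_{n_l} \to D$ in law, I would use the continuity of $D$ (from Theorem \ref{tightmetric} and Proposition \ref{Prop-lengthspace}) and send $n_l \to \infty$ to transfer the inequality above to the limit; then sending $\delta \to 0$ gives $\mathds{P}[E_{\bm i,\bm j}^k] = 0$ for each $k \in \mathds{Z}$ and each pair $\bm i,\bm j$ with $\|\bm i - \bm j\|_1 > 3 \cdot 2^k$, which is exactly the content of the continuous analog of Lemma \ref{Dinterval=0-3}. Finally, I would run verbatim the argument from the proof of Proposition \ref{Dxy=0-local} (i.e.\ for any ${\bm x},{\bm y} \in \mathds{R}^d$ with $\langle {\bm x},{\bm y}\rangle \notin \mathcal{E}$, choose $k$ so negative that $V^k(\bm i_k({\bm x}))$ and $V^k(\bm j_k({\bm y}))$ are not connected directly by $\mathcal{E}$, possible by the local finiteness of the Poisson edge set) to conclude that almost surely $D({\bm x},{\bm y}) = 0$ iff $\langle {\bm x},{\bm y}\rangle \in \mathcal{E}$.

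The main obstacle I anticipate is verifying Axiom II''-type conditional independence for the prelimit $D_n$ with enough uniformity in $n$: the definition of $d_{(1/n,\infty)}$ involves edges with scope down to $1/n$, and when one partitions space into cubes of side $s$, short edges crossing cube boundaries mildly couple neighboring internal metrics. This coupling is what forces the condition $1/n \ll s$ throughout; once $s$ is fixed and one sends $n \to \infty$ first, the short-edge nuisance vanishes and the Poisson-locality argument of Lemma \ref{hd-BK-2} goes through. Making this passage precise — in particular verifying that the proof of Lemma \ref{hd-BK-2} only uses disjointness of the $V_{3s}$-cubes plus independence of $\mathcal{E}$ on disjoint products, which is unaffected by the $1/n$-cutoff so long as $1/n \ll \alpha s$ — is the only place where some care (rather than a literal copy of the discrete proof) is required.
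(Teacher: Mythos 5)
The paper states Lemma \ref{SRSL-c} without a written-out proof, plainly intending the reader to repeat the discrete argument (Proposition \ref{SRSL}) with $D_n$ in place of $\mathcal{D}_n$, and your proposal carries out exactly that template: $(3s,\alpha)$-good cubes with respect to $D_n$, uniform-in-$n$ versions of Lemmas \ref{h-probgood-2}, \ref{hd-BK-2}, \ref{Dinterval=0-2}, passage to the limit along $n_l$, then $\delta\to 0$, then the localization step of Proposition \ref{Dxy=0-local}. So the route is the same.

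However, the obstacle you flag at the end is not the real one, and in fact the continuous case is cleaner on that point than the discrete case you are imitating. The internal metric $D_n(\cdot,\cdot;V) = a_n^{-1}d_{(1/n,\infty)}(\cdot,\cdot;V)$ is \emph{exactly} determined by $\mathcal{E}|_{V\times V}$: any $d_{(1/n,\infty)}$-path that stays inside $V$ can only use long edges with both endpoints in $V$ (edges crossing $\partial V$ cannot be traversed by a path confined to $V$), and the Euclidean segments are deterministic. So Axiom II''-type independence for disjoint cubes holds without any boundary error, and no condition $1/n \ll \alpha s$ is needed for that particular step. This is in contrast to the discrete coupling, where the blocks $V_{1/n}(\lfloor n\cdot\rfloor/n)$ can straddle cube boundaries. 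The place where $1/n \ll s$ genuinely enters is the uniform tightness input to the analog of \eqref{cp}: the uniform lower-tail control from Proposition \ref{apriori-cont-2} and Lemma \ref{tightlowerbound} is stated only for $nr>1$ (indeed it must be, since for $r<1/n$ the metric $d_{(1/n,\infty)}$ inside $[-r,r]^d$ degenerates to the Euclidean one and the ratio $r^\theta/D_n(\bm 0,([-r,r]^d)^c)\asymp (nr)^{\theta-1}$ blows up). So the cutoff restriction is what guarantees that the uniform-in-$n$ analog of Lemma \ref{h-regular-low} has the needed tightness at the renormalization scale, not what saves the locality; once you track this through, the rest of your plan goes through as stated.
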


We now present the

\begin{proof}[Proof of Lemma \ref{limitonset}]
From Corollary \ref{TightAcrossScale} and Proposition \ref{Prop-lengthspace} we first see that metrics $D_n$ are tight with respect to the local uniform topology on $\mathds{R}^{2d}$ and any subsequential limit in law is a.s.\ 
a length metric on $\mathds{R}^d$. Now applying Lemma \ref{tightDW} and the Prokhorov theorem, we get that the joint law of the metrics on the left hand side of \eqref{jointlaw} is tight. Moreover, any subsequential limit of these joint laws is a coupling of a 
length metric $D$ on $\mathds{R}^d$ and a length metric $D_W$ on $\overline{W}$ for each $W\in\mathcal{W}$. 
We then apply Lemmas \ref{SRSL-c} and \ref{internalmetric}  and get that $D_W(\cdot,\cdot;W)=D(\cdot,\cdot;W)$ for each $W\in\mathcal{W}$.
\end{proof}

\subsection{Weak locality}\label{weak loc}
In this subsection, we will prove Axiom II'' (weak locality) for any subsequential limit of $D_n$ as follows.

\begin{lemma}\label{weaklocality2}
Let $(\mathcal{E},D)$ be any subsequential limit of the laws of $(\mathcal{E},D_n)$. Then $D$ satisfies Axiom II'' {\rm(}weak locality{\rm)}.
\end{lemma}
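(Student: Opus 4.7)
The plan is to follow the strategy of Lemma \ref{dis-weaklocality2} essentially verbatim, with the continuous model actually being slightly cleaner than the discrete one. Fix disjoint open sets $V_1,\ldots,V_N\subset \mathds{R}^d$. First I would approximate each $V_i$ from inside by dyadic sets $W_i\subset \overline{W}_i\subset V_i$, and work with the internal metrics $D_n(\cdot,\cdot;\overline{W}_i)$ on these compact sets.

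The first key observation is a pre-limit independence statement. Because $D_n=a_n^{-1}d_{(1/n,\infty)}$ is the chemical distance using only edges of scope in $(1/n,\infty)$, and because any path realizing $d_{(1/n,\infty)}(\cdot,\cdot;\overline{W}_i)$ uses only edges whose two endpoints both lie in $\overline{W}_i$, the pair $(\mathcal{E}|_{W_i\times W_i},D_n(\cdot,\cdot;\overline{W}_i))$ is a measurable function of $\mathcal{E}|_{\overline{W}_i\times \overline{W}_i}$. Since the sets $\overline{W}_1\times\overline{W}_1,\ldots,\overline{W}_N\times\overline{W}_N$ and $(\cup_i V_i\times V_i)^c$ are pairwise disjoint (up to Lebesgue-null boundary), the independence property of the Poisson point process yields
\[
\bigl\{(\mathcal{E}|_{W_i\times W_i},D_n(\cdot,\cdot;\overline{W}_i))\bigr\}_{i=1}^{N} \text{ and } \mathcal{E}|_{(\cup_i V_i\times V_i)^c} \text{ are independent}.
\]

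Next, I would invoke Lemma \ref{limitonset}: along a subsequence of the given one, passing further to a deterministic subsequence if necessary, we obtain the joint convergence in law
\[
\bigl(\mathcal{E},D_{n_k},D_{n_k}(\cdot,\cdot;\overline{W}_1),\ldots,D_{n_k}(\cdot,\cdot;\overline{W}_N)\bigr)\longrightarrow \bigl(\mathcal{E},D,D_{W_1},\ldots,D_{W_N}\bigr),
\]
with $D_{W_i}(\cdot,\cdot;W_i)=D(\cdot,\cdot;W_i)$ almost surely for each $i$. Since independence is preserved under convergence in law, the displayed pre-limit independence passes to the limit, which combined with the identification $D_{W_i}(\cdot,\cdot;W_i)=D(\cdot,\cdot;W_i)$ gives
\[
\bigl\{(\mathcal{E}|_{W_i\times W_i},D(\cdot,\cdot;W_i))\bigr\}_{i=1}^{N} \text{ and } \mathcal{E}|_{(\cup_i V_i\times V_i)^c} \text{ are independent}.
\]

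Finally, I would let each $W_i$ increase to $V_i$ through an exhausting sequence of dyadic sets. Since $D$ is a length metric (Proposition \ref{Prop-lengthspace}), $D(\cdot,\cdot;W_i)\to D(\cdot,\cdot;V_i)$ almost surely, and $\mathcal{E}|_{W_i\times W_i}\to \mathcal{E}|_{V_i\times V_i}$ as random measures. The independence is preserved under this monotone limit, yielding Axiom II''. The only step where I expect to have to be slightly careful is the last one: I need to justify that $\mathcal{E}|_{(\cup_i V_i\times V_i)^c}$ (which does not change with $W_i$) remains independent of the joint limit of the approximating sequences on each $V_i$, but this is immediate since independence of random variables with one side fixed is preserved under almost sure (indeed, any) limits on the other side. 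No new analytic input beyond Lemma \ref{limitonset} and Proposition \ref{Prop-lengthspace} is needed.
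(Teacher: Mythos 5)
Your proof is correct and takes essentially the same route as the paper: the paper proves Lemma \ref{weaklocality2} simply by stating that the argument of Lemma \ref{dis-weaklocality2} carries over with $\widehat{a}_n^{-1}\widetilde{d}_n$ replaced by $D_n$, and your proposal spells out precisely that transferred argument (pre-limit independence on dyadic subsets, passage to the joint limit via Lemma \ref{limitonset}, then letting $W_i\uparrow V_i$ using that $D$ is a length metric). Your observation that the $dn^{-1}$ buffer needed in the discrete case is unnecessary here is a correct minor simplification.
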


\begin{proof}
Using the similar arguments in the proof of Lemma \ref{dis-weaklocality2} with replacing $\widehat{a}_n^{-1}\widetilde{d}_n$ by $D_n$, we finish the proof.
\end{proof}

\subsection{Proof of Theorem \ref{subsequence-exist}}\label{proofthm1.9}

    From Corollary \ref{TightAcrossScale}, we know that $\{D_n\}_{n\geq 1}$ (viewed as a random continuous function on $C(\mathds{R}^{2d})$) is tight with respect to the local uniform topology of $C(\mathds{R}^{2d})$. Furthermore, any subsequential limiting metric $D$ satisfies the following axioms:
    \begin{itemize}
        \item Axiom I: It is implied by Proposition \ref{Prop-lengthspace}.
        \item Axiom II'': It is implied by Lemma \ref{weaklocality2}.
        \item Axiom III: It is immediately implied by local uniform convergence.
        \item Axiom IV': It is immediately implied by the translation invariance of $D_n$.
        \item Axiom V1': It is implied by Lemma \ref{tightlowerbound}.
        \item Axiom V2': It is implied by Corollary \ref{TightAcrossScale}.
    \end{itemize}
    Thus $D$ is a local $\beta$-LRP metric. From Proposition \ref{reallocal}, we get that $D$ is a weak $\beta$-LRP metric.

    Furthermore, convergence in probability comes from Lemma \ref{convergePLemma} with $X=\mathcal{E}$, $Y=D$ and $\{Y_n\}_{n\geq 1}=\{a^{-1}_{n_k} d_{n_k}\}_{k\geq 1}$. Hence, the proof is complete. \hfill $\square$

\section{Quantifying the optimality of the optimal bi-Lipschitz constants}\label{section3}

Let $\beta>0$ be fixed. Consider two weak $\beta$-LRP metrics $D$ and $\widetilde{D}$. 
As we have already observed, there exist deterministic optimal upper and lower bi-Lipschitz constants $c_*$ and $C_*$ such that a.s.\  \eqref{DDtilde} holds.
Recall from Section \ref{section1} that we aim to prove by contradiction that $c_*=C_*$.
With this aim, we assume that $c_*<C_*$ in the following two sections.

From the optimality of $c_*$ and $C_*$, we see that for every $C'<C_*$,
\begin{equation}\label{bad-event}
\mathds{P}[\exists {\bm x},{\bm y}\in \mathds{R}^d\ \text{such that }\widetilde{D}({\bm x},{\bm y})\geq C'D({\bm x},{\bm y})]>0.
\end{equation}
A similar statement holds for every $c'>c_*$. The goal of this section is to prove various quantitative versions of \eqref{bad-event}, which are required to hold uniformly over different Euclidean scales. We remark that the content in this section is in parallel to \cite[Section 3]{GM21} and \cite[Section 3]{DG23}, and the proof also draws inspiration from them.

\subsection{Events for the optimal bi-Lipschitz constants}
In this subsection, we will define some events that are stronger and more complex versions of the event in \eqref{bad-event} (see Definitions \ref{def-G} and \ref{def-H} below). We will then prove some basic facts about these events and state the main estimates we need for them (Propositions \ref{mrinS3} and \ref{mrinS3-tilde}).

The first event is a slightly stronger version of the event in \eqref{bad-event}.

\begin{definition}\label{def-G}
For $r>0$, $\gamma>0$, $q\geq 0$ and $C'>0$, denote by $G_r(\gamma,q,C')$ the event that there exist ${\bm x},{\bm y}\in V_r(\bm 0)$ such that
$$
\widetilde{D}(V_{\gamma r}({\bm x}),V_{\gamma r}(\bm y))>C' D({\bm x},{\bm y})\geq C'q r^\theta.
$$
\end{definition}

Our other event has a more complicated definition, and  includes some regularity conditions on the metric $D$.

\begin{definition}\label{def-H}
For $r>0$, $\alpha\in (0,1)$ and $C'>0$, we let $H_r(\alpha,C')$ be the event that there exist ${\bm x},{\bm y}\in  V_r(\bm 0)$ with $|\bm x-\bm y|\geq \alpha r/3$, such that
\begin{equation}\label{H-ineq}\widetilde{D}({\bm x},{\bm y})>C'D({\bm x},{\bm y}),\end{equation}
and there exists a $D$-geodesic $P$ from $\bm x$ to $\bm y$ satisfying
\begin{itemize}
\item[(1)] $P\subset  V_r(\bm 0)$;

\item[(2)] $ D({\bm x},{\bm y})\geq (b\alpha r)^\theta$ for some $b=b(\alpha)>0$ (here the constant $b$ depends only on $\beta,d,\alpha$ and the law of $D$, and will be chosen finally in Lemma \ref{good-prob});

 \item[(3)] there exists $\gamma_0=\gamma_0(\alpha)>0$ (depending only on $\beta,d,\alpha$ and the law of $D$ and will be chosen in Lemma \ref{good-prob}) such that for each $\gamma\in (0,\gamma_0]$,
$$
\max\left\{{\rm diam}(V_{\gamma r}(\bm x);D),{\rm diam}(V_{\gamma r}(\bm y);D)\right\}\leq \gamma^{\theta/2} D({\bm x},{\bm y}).
$$
\end{itemize}
\end{definition}

In what follows we will denote by Definition $\cdot$ ($\cdot$) the condition ($\cdot$) in Definition $\cdot$. For example, Definition \ref{def-H} (2) means the condition (2) in Definition \ref{def-H}.

The main result of this section, which will be proven in Section \ref{prove-mr}, tells us that if
$$\mathds{P}[G_r(\gamma,q, C'')]\geq \gamma,$$
then there are lots of ``scales'' $r'<r$ for which $\mathds{P}[H_{r'}(\alpha, C')]$ is bounded from below by a constant which does not depend on $r$ or $C'$.

\begin{proposition}\label{mrinS3}
For each small enough $\alpha\in(0,1)$ and $p\in (0,1)$ {\rm(}depending only on $\beta,d$ and the laws of $D$ and $\widetilde{D}${\rm)},
for $b=b(\alpha)>0$ and $\gamma_0=\gamma_0(\alpha)>0$ {\rm(}depending only on $\beta,d,\alpha$ and the law of $D${\rm)},  and for each $C'\in(0,C_*)$, there exists $C''=C''(C' ,\alpha)\in (C',C_*)$ such that for each $\gamma\in (0,1)$ and $q>0$, there exists   $\varepsilon_0=\varepsilon_0(\gamma,q, C')>0$ with the following property. If $r>0$ and $\mathds{P}[G_r(\gamma,q,C'')]\geq \gamma$, then $\mathds{P}[H_{\varepsilon r}(\alpha, C')]\geq p$ for each $\varepsilon\in (0,\varepsilon_0]$.

\end{proposition}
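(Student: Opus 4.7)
The plan is to argue by contradiction. Suppose the proposition fails: then for some small enough $\alpha, p$, some $C' \in (0, C_*)$, and every candidate $C'' \in (C', C_*)$, one can find $\gamma, q > 0$ and arbitrarily small $\varepsilon > 0$ together with a scale $r > 0$ such that $\mathds{P}[G_r(\gamma, q, C'')] \geq \gamma$ but $\mathds{P}[H_{\varepsilon r}(\alpha, C')] < p$. The goal will be to combine the $G_r$-event with the smallness of $\mathds{P}[H_{\varepsilon r}(\alpha, C')]$ to produce, with positive probability, a pair $\bm x, \bm y \in V_r(\bm 0)$ and a path from (a neighborhood of) $\bm x$ to (a neighborhood of) $\bm y$ whose $\widetilde{D}$-length is at most $C'' D(\bm x, \bm y)$. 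This contradicts the definition of $G_r(\gamma, q, C'')$.

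\textbf{Step 1: a renormalization at scale $\varepsilon r$.} I would partition $V_r(\bm 0)$ into sub-cubes $V_{\varepsilon r}(\bm z_k)$, and for each $\bm z_k$ let $H_{\varepsilon r}(\bm z_k; \alpha, C')$ denote the event in Definition \ref{def-H} translated to be centered at $\bm z_k$. By Axiom IV' (translation invariance) each such event has probability at most $p$. Because the event $H_{\varepsilon r}(\bm z_k; \alpha, C')$ is essentially determined by the edges and the internal metric on $V_{\varepsilon r}(\bm z_k)$, Axiom II'' (weak locality) together with the tail bounds of Axiom V2' and the good-cube estimates of Section \ref{bi-lipschitz} yield near-independence across disjoint cubes. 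Hence a union bound / Chernoff estimate shows that with probability $\geq 1 - \gamma/2$, the fraction of sub-cubes for which $H_{\varepsilon r}(\bm z_k; \alpha, C')$ occurs is at most some small quantity $\kappa = \kappa(p) \to 0$ as $p \to 0$. Call a sub-cube \emph{regular} if $H_{\varepsilon r}(\bm z_k; \alpha, C')$ fails; on a regular cube, for every pair $\bm u, \bm v \in V_{\varepsilon r}(\bm z_k)$ satisfying Definition \ref{def-H} (1)--(3) at scale $\varepsilon r$, one has $\widetilde{D}(\bm u, \bm v) \leq C' D(\bm u, \bm v)$.

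\textbf{Step 2: decomposing a $D$-geodesic.} On the event $G_r(\gamma, q, C'')$, fix $\bm x, \bm y \in V_r(\bm 0)$ with $\widetilde{D}(V_{\gamma r}(\bm x), V_{\gamma r}(\bm y)) > C'' D(\bm x, \bm y) \geq C'' q r^\theta$, and let $P$ be a $D$-geodesic from $\bm x$ to $\bm y$. I will extract from $P$ a sequence of pairs $(\bm u_i, \bm v_i)$ with $\bm u_i, \bm v_i$ in the same $V_{\varepsilon r}(\bm z_{k_i})$ such that (i) the sub-arcs of $P$ between consecutive selected pairs, together with the segments of $P$ between $\bm u_i$ and $\bm v_i$, give a disjoint decomposition of $P$; and (ii) each $(\bm u_i, \bm v_i)$ satisfies the Definition \ref{def-H} regularity conditions at scale $\varepsilon r$ relative to the cube $V_{\varepsilon r}(\bm z_{k_i})$. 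The extraction is possible on a high-probability event thanks to the good-cube machinery from Section \ref{bi-lipschitz} (Lemmas \ref{h-probgood}--\ref{h-regular-low}) which guarantees lower bounds like $D(\bm u_i, \bm v_i) \geq (b \alpha \varepsilon r)^\theta$ and controls the fraction of the $D$-length of $P$ lost to cubes in which the extraction fails. The parameters $b = b(\alpha)$ and $\gamma_0 = \gamma_0(\alpha)$ in Definition \ref{def-H} are chosen here to match those from that section.

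\textbf{Step 3: building a competitor path and closing the contradiction.} For each selected $(\bm u_i, \bm v_i)$ whose cube $V_{\varepsilon r}(\bm z_{k_i})$ is regular (which is all but a $\kappa$-fraction by Step 1), replace the segment of $P$ from $\bm u_i$ to $\bm v_i$ by a $\widetilde{D}$-geodesic; its $\widetilde{D}$-length is at most $C' D(\bm u_i, \bm v_i)$. For the remaining segments (non-regular cubes, sub-arcs of $P$ not captured in Step 2, and long-edge jumps), use the global bi-Lipschitz comparison $\widetilde{D} \leq C_* D$ from Proposition \ref{bilip} and \eqref{DDtilde}. Summing, the concatenated path from a point in $V_{\gamma r}(\bm x)$ to a point in $V_{\gamma r}(\bm y)$ has $\widetilde{D}$-length at most
\begin{equation*}
\bigl[C' (1 - \kappa') + C_* \kappa'\bigr] D(\bm x, \bm y),
\end{equation*}
where $\kappa' = \kappa'(p, \alpha, \gamma)$ can be made arbitrarily small by first taking $\alpha$ small (shrinking the fraction of $P$ lost to irregular cubes), then $p$ small (shrinking the fraction of non-regular cubes), and finally $\varepsilon$ small (making boundary effects at scale $\gamma r$ negligible relative to $D(\bm x, \bm y) \geq q r^\theta$). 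Choosing $C'' \in (C', C_*)$ with $C'' > C' + (C_* - C') \kappa'$ gives $\widetilde{D}(V_{\gamma r}(\bm x), V_{\gamma r}(\bm y)) \leq C'' D(\bm x, \bm y)$, contradicting the definition of $G_r(\gamma, q, C'')$. Finally $C''$ depends only on $C'$ and $\alpha$ as claimed, while $\varepsilon_0$ depends on $\gamma, q, C'$.

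\textbf{Main obstacle.} The principal technical difficulty—absent from the LQG analogue in \cite{GM21, DG23}—is that $D$-geodesics may jump through long edges and thus are not continuous with respect to the Euclidean topology. Consequently the extraction of regular sub-pairs $(\bm u_i, \bm v_i)$ in Step 2 cannot rely on path-continuity: one cannot simply intersect $P$ with cube boundaries. I would instead use the $(3s, \alpha)$-good cube formalism of Definition \ref{h-good} and Lemma \ref{h-probgood} to capture, for each cube the geodesic actually enters by a long edge, a genuine entrance/exit pair $(\bm u_i, \bm v_i)$ with prescribed separation and with a quantitative lower bound on $D(\bm u_i, \bm v_i; V_{3 \varepsilon r}(\bm z_{k_i}))$. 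Bounding the total $D$-length carried by long-edge jumps at scale $\geq \varepsilon r$ (and showing it is $O(\varepsilon^c) D(\bm x, \bm y)$) requires a careful hop-counting argument in the spirit of Lemma \ref{DS13lem21}, and this is where the absence of Weyl scaling forces a more delicate analysis than in the LQG case.
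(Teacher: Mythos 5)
Your overall strategy matches the paper's: the paper proves the contrapositive (Proposition \ref{(B)=>G}), while you argue by contradiction, but these are the same argument. Both reduce to the renormalization at scale $\varepsilon r$, decomposing a $D$-geodesic into segments that enter ``nice'' cubes (on which $\widetilde{D} \leq C' D$) and segments outside (on which one falls back on $\widetilde{D} \leq C_* D$), then closing with the triangle inequality. However, two details in your write-up are off the mark.

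First, Step 1 has a gap. A Chernoff bound showing that the \emph{fraction} of sub-cubes $V_{\varepsilon r}(\bm z_k)$ that are nice is close to $1$ does not imply that a $D$-geodesic passes through many nice cubes: the geodesic is adapted to the same randomness that determines which cubes are nice, and could preferentially thread itself through the small exceptional set. Moreover, the events $\{E_{3\varepsilon r}(\bm z_k)\}$ on disjoint cubes are not independent (they all depend on long edges with one endpoint far away), so even the Chernoff estimate is not literally available. What is needed is a bound that holds simultaneously for \emph{every} path in the renormalized graph $G$, uniformly over a superpolynomial union bound on the number of self-avoiding paths (Lemma \ref{number-path-k}), with the lack of independence handled via the BK inequality (Lemma \ref{hd-BK}). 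This is precisely what the paper's Lemma \ref{Fr4lemma} establishes; without it your Step 2 extraction is not justified.

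Second, your reasoning about $\kappa'$ is reversed and would introduce a spurious dependence of $C''$ on $\gamma$. The fraction of the $D$-length of $P$ that is captured inside nice-cube excursions is bounded \emph{below} by a quantity proportional to $(b\alpha)^\theta$ (see \eqref{sj-tj-1} in the paper), which is small and shrinks as $\alpha \to 0$. Hence $\kappa'$, the complementary fraction, is close to $1$ and grows as $\alpha$ decreases — the opposite of what you claim. The argument still closes because one only needs $\kappa' < 1$, giving $C'' = C' + a(C_* - C') < C_*$ with $a := 1 - c(b\alpha)^\theta/(8\cdot 3^d) \in (0,1)$; but one must also check that this $a$ depends only on $\alpha, \beta, d$ and the law of $D$ — not on $\gamma, q, \varepsilon$ or $r$ — so that $C'' = C''(C', \alpha)$ as the statement requires. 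Your assertion that $\kappa' = \kappa'(p, \alpha, \gamma)$ conflicts with the required dependence, and should be removed.
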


\begin{remark}
We would like to emphasize that Proposition \ref{mrinS3} draws strong inspiration  from  \cite[Proposition 3.3]{DG23} for the LQG metric. The difference lies in the fact that \cite[Proposition 3.3]{DG23} can only control the probability of the $H$-event for a fraction of scales since it relies on the near-independence of GFF at multiple scales to complete the proof. The reason why we obtain a stronger conclusion here is that the independence in the LRP model is available in a straightforward manner and as a result we only need to focus on one scale.
\end{remark}

We emphasize that in Proposition \ref{mrinS3}, the parameters $\alpha$ and $p$ do not depend on $r$ or $C'$. This will be crucial for our arguments in Section \ref{section4}.
In order for Proposition \ref{mrinS3} to have non-trivial content, one needs a lower bound for $\mathds{P}[G_r(\gamma,q,C')]$. It is straightforward to check that we have such a lower bound if $r=1$ and $\gamma, q$ are small.

\begin{lemma}\label{P[G]}
For each $C'<C_*$, there exist $\gamma,q>0$, depending only on $C'$ and the laws of $D$ and $\widetilde{D}$, such that $\mathds{P}[G_1(\gamma,q,C')]>0$.
\end{lemma}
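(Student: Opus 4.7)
The plan is to fix $C_1\in(C',C_*)$, say $C_1=(C'+C_*)/2$, and argue in four stages. First, by Proposition~\ref{cCdtm} the optimal upper bi-Lipschitz constant $C_*$ is a.s.\ equal to a deterministic value; since $C_1<C_*$, the event $\{\widetilde{D}(\bm x,\bm y)\leq C_1 D(\bm x,\bm y)\text{ for all }\bm x,\bm y\in\mathds{R}^d\}$ must have probability $0$, for otherwise conditioning on it would give the contradiction $C_*\leq C_1$. Hence a.s.\ there exists some bad pair at ratio $C_1$.

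I next plan to show $\mathds{P}[\exists \bm x,\bm y\in V_1(\bm 0):\widetilde{D}(\bm x,\bm y)>C_1 D(\bm x,\bm y)]>0$ by contradiction. If this probability vanishes, then by translation invariance (Axiom~IV') combined with the observation that every pair with $\|\bm x-\bm y\|_\infty\leq 1$ fits inside $V_1(\bm z)$ for $\bm z=(\bm x+\bm y)/2$, a countable union over rational midpoints together with continuity of $D$ and $\widetilde{D}$ produces the a.s.\ bound $\widetilde{D}(\bm x,\bm y)\leq C_1 D(\bm x,\bm y)$ whenever $\|\bm x-\bm y\|_\infty\leq 1$. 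For arbitrary $\bm x,\bm y$, Proposition~\ref{Dxy=0} provides a $D$-geodesic $P$, which I would decompose into Euclidean-continuous pieces alternating with jumps across edges of $\mathcal{E}$. Jumps contribute $0$ to both $D$ and $\widetilde{D}$ by Axiom~III and Proposition~\ref{Dxy=0}, and on each Euclidean-continuous sub-geodesic I subdivide by uniform continuity into sub-arcs of Euclidean diameter $<1$; since sub-arcs of geodesics are themselves geodesics, $D$-distances between successive subdivision points sum to the $D$-length of the corresponding continuous piece. Subadditivity of $\widetilde{D}$ together with the close-pair bound then gives $\widetilde{D}(\bm x,\bm y)\leq C_1 D(\bm x,\bm y)$ everywhere, contradicting the first stage.

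With bad pairs localized inside $V_1(\bm 0)$, I would upgrade to a deterministic lower bound on $D(\bm x,\bm y)$: any bad pair necessarily has $D(\bm x,\bm y)>0$, since $D(\bm x,\bm y)=0$ would force $\widetilde{D}(\bm x,\bm y)=0$ by Proposition~\ref{bilip}, contradicting the strict inequality $\widetilde{D}>C_1 D\geq 0$. The events $\{\exists \bm x,\bm y\in V_1(\bm 0):\widetilde{D}>C_1 D,\, D\geq 1/k\}$ increase as $k\to\infty$ to the full bad-pair event, so $\sigma$-additivity furnishes some $k$ for which this event has positive probability, and I set $q=1/k$. Finally, Proposition~\ref{LimitContinuousTail} applied to $\widetilde{D}$ gives $\sup_{\bm z\in V_1(\bm 0)}\mathrm{diam}(V_\gamma(\bm z);\widetilde{D})\to 0$ in probability as $\gamma\downarrow 0$, so I can choose a deterministic $\gamma>0$ small enough that this supremum is below $(C_1-C')q/2$ with probability exceeding half the probability of the lower-bound event. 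Intersecting these two events and using
\[
\widetilde{D}(V_\gamma(\bm x),V_\gamma(\bm y))\geq \widetilde{D}(\bm x,\bm y)-2\sup_{\bm z\in V_1(\bm 0)}\mathrm{diam}(V_\gamma(\bm z);\widetilde{D})>C_1 D(\bm x,\bm y)-(C_1-C')D(\bm x,\bm y)=C'D(\bm x,\bm y)\geq C'q
\]
one concludes $\mathds{P}[G_1(\gamma,q,C')]>0$.

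The hardest step will be the subdivision argument in the middle stage: one must rigorously decompose a $D$-geodesic of the (possibly fractal) limit metric into genuinely Euclidean-continuous pieces separated by edge-jumps so that $D$-arclengths add up cleanly. A direct structural analysis may be delicate because the geodesic can traverse countably many edges; a safer route is to perform the subdivision at the prelimit level, where $D_n$-geodesics are piecewise straight with finitely many long-edge hops by Lemma~\ref{DnGeodesic}, and then pass to the subsequential limit using the convergence established in Theorems~\ref{dis-subsequence-exist} and~\ref{subsequence-exist}.
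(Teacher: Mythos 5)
Your proposal follows the same overall strategy as the paper's proof (contradiction against the optimality of $C_*$, then $\sigma$-additivity to extract $q$), but it differs in two respects worth noting.

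First, you introduce an intermediate constant $C_1=(C'+C_*)/2$, prove a \emph{pointwise} positive-probability bad-pair event at ratio $C_1$, and only at the end transfer from $\widetilde{D}(\bm x,\bm y)$ to $\widetilde{D}(V_\gamma(\bm x),V_\gamma(\bm y))$ via Proposition~\ref{LimitContinuousTail} and the slack $C_1-C'$. The paper avoids the intermediate constant: it works directly with the cube event $G_1(\gamma,0,C')$, notes that negating it for every $\gamma>0$ gives an a.s.\ cube bound, and then lets $\gamma\to 0^+$ using continuity to recover the pointwise inequality $\widetilde{D}\le C'D$ (on pairs with $\|\bm x-\bm y\|_\infty<1$). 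Your variant is sound (the triangle-inequality computation at the end is correct, and $\sup_{\bm z\in V_1(\bm 0)}\sup_{\bm x,\bm y\in V_\gamma(\bm z)}\widetilde{D}(\bm x,\bm y)\to 0$ in probability does follow from Proposition~\ref{LimitContinuousTail} applied in a box $[-R,R]^d\supset V_1(\bm 0)$), but it is a detour: once you work with the cube event directly, no auxiliary $C_1$ is needed.

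Second, and more substantively: the step you rightly flag as ``the hardest'' is handled more cleverly in the paper. You propose to decompose the geodesic at \emph{every} edge it uses, obtaining Euclidean-continuous pieces, and you worry (correctly) that there can be countably many such pieces, making a finite subadditivity argument fail and pushing you toward a prelimit detour. The paper resolves this directly: it only cuts the geodesic at the long edges of scope $\ge 1$. By the Poisson structure there are only \emph{finitely} many such edges on a geodesic confined to a bounded region, so there are finitely many ``blocks''; within a block all remaining jumps have scope $<1$, so the block can be partitioned into finitely many segments whose consecutive endpoints are within $\ell^\infty$-distance $1$, which is exactly what the pointwise bound needs. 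This finite-cut at a fixed scope threshold is the missing ingredient that makes the subdivision argument clean; your prelimit workaround would also likely work but is heavier machinery than needed.

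Finally, a minor point: you invoke Proposition~\ref{bilip} to get $D(\bm x,\bm y)>0$ for a bad pair; the paper uses Proposition~\ref{Dxy=0} (edge regularity) instead. Both are valid routes to the same conclusion, so this part of the argument is fine as written.
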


\begin{proof}
We will first show that for each $C'<C_*$, there exists $\gamma>0$ such that
\begin{equation}\label{q=0}
    \mathds{P}[G_1(\gamma,0,C')]>0.
\end{equation}
We will prove its contrapositive by contradiction. To this end, let $C'>0$ and assume that
\begin{equation}\label{G_1}
\mathds{P}[G_1(\gamma,0,C')]=0,\quad \forall \gamma>0.
\end{equation}
We will show that $C'\ge C_*$ (which then arrives at a contradiction on the optimality of $C_*$). The assumption \eqref{G_1} implies that a.s.
\begin{equation}\label{G_1inverse}
\widetilde{D}(V_\gamma(\bm x),V_{\gamma}(\bm y))\leq C' D({\bm x},{\bm y}),\quad \forall {\bm x},{\bm y}\in [0,1)^d,\ \forall \gamma>0.
\end{equation}
By the continuity from Proposition \ref{LimitContinuousTail}, we get that for any ${\bm x},{\bm y}\in [0,1)^d$,
$$
\widetilde{D}({\bm x},{\bm y})=\lim_{\gamma\rightarrow 0^+}\widetilde{D}(V_\gamma(\bm x),V_\gamma(\bm y)).
$$
Thus, combining this with \eqref{G_1inverse} yields that a.s.
\begin{equation}\label{G_1inverse2}
\widetilde{D}({\bm x},{\bm y})\leq C' D({\bm x},{\bm y}),\quad \forall {\bm x},{\bm y}\in [0,1)^d.
\end{equation}
By Axiom IV' (translation invariance) of $D$ and $\widetilde{D}$, \eqref{G_1inverse2} implies that
\begin{equation}\label{G_1inverse3}
\widetilde{D}({\bm x},{\bm y})\leq C' D({\bm x},{\bm y}),\quad \forall {\bm x},{\bm y}\in\mathds{R}^d\ \text{such that }\|\bm x-\bm y\|_\infty< 1.
\end{equation}

For a general pair of points ${\bm x},{\bm y}\in \mathds{R}^d$, denote by $P:[0,D({\bm x},{\bm y})]\rightarrow \mathds{R}^d$ a $D$-geodesic from $\bm x$ to $\bm y$. Here we view $P$ as a path parameterized by time $t\in [0, D(\bm x,\bm y)]$ with $D(0, P(t)) = t$.
Define a sequence of times
$$
0=t_0<t_1<\cdots<t_N<t_{N+1}=D({\bm x},{\bm y})
$$ as follows.
To start with, the property of the Poisson point process ensure that there are only finitely many long edges in $P$ with scopes at least $1$.
Let $\langle {\bm x}_i,{\bm y}_i\rangle$ for $i\in [1,m]_\mathds{Z}$ be such long edges sorted in the order they are traversed by $P$.
We can then partition each portion of the path $P$ from ${\bm y}_i$ to ${\bm x}_{i+1},\ i\in [0,N]_\mathds{Z}$ (with ${\bm y}_0={\bm x}$ and ${\bm x}_{N+1}={\bm y}$) into segments  such that the $\ell^\infty$ distance between the two end points of every segment is less than 1. Sorting all of these segments in the ascending order, we get a sequence of times $\{t_j\}$ such that either $\|P(t_j)-P(t_{j+1})\|_\infty<1$ or $\langle P(t_j),P(t_{j+1})\rangle\in\mathcal{E}$. Since $D({\bm y}_i,{\bm x}_i)=0$ by Axiom III (regularity), we obtain
$$
D({\bm x},{\bm y})=\sum_{j: \|P(t_j)-P(t_{j+1})\|_\infty< 1} D( P(t_j),P(t_{j+1})).
$$
Applying  \eqref{G_1inverse3} to the pairs of points in the above summation, we get that a.s.
\begin{align*}
\widetilde{D}({\bm x},{\bm y})&\leq \sum_{j: \|P(t_j)-P(t_{j+1})\|_\infty< 1} \widetilde{D}(P(t_j),P(t_{j+1}))\\
&\leq C'\sum_{j: \|P(t_j)-P(t_{j+1})\|_\infty< 1} D( P(t_j),P(t_{j+1}))=C'D({\bm x},{\bm y}).
\end{align*}
By the definition of $C_*$, we obtain that $C'\geq C_*$. This implies \eqref{q=0}.

Additionally, for fixed $C'<C_*$ and $\gamma>0$, it is clear that $G_r(\gamma,q,C')$ is decreasing in $q$ and thus we get
$$
G_1(\gamma,0,C')\supseteq \bigcup_{q>0}G_1(\gamma,q,C').
$$
Moreover, we claim that the opposite inclusion relation  also holds. Indeed,
assume that $G_1(\gamma,0,C')$ occurs. Then for the points ${\bm x},{\bm y}\in[0,1)^d$ chosen in Definition \ref{def-G}, it is clear that $\widetilde{D}({\bm x},{\bm y})\geq \widetilde{D}(V_{\gamma r}(\bm x),V_{\gamma r}(\bm y))>0$. Combining this with Proposition \ref{Dxy=0} we can see that $\langle {\bm x},{\bm y}\rangle\notin\mathcal{E}$. Therefore, we get $D({\bm x},{\bm y})>0$ by using Proposition \ref{Dxy=0} again.
This implies that one can find a sufficiently small $q>0$ such that $D({\bm x},{\bm y})\geq q$. Hence,
$$
G_1(\gamma,0,C')\subseteq \bigcup_{q>0}G_1(\gamma,q,C').
$$
As a result, $G_1(\gamma,0,C')=\bigcup_{q>0}G_1(\gamma,q,C').$ 
Combining this with \eqref{q=0} yields the lemma.
\end{proof}

By combining Proposition \ref{mrinS3} and Lemma \ref{P[G]}, we get the following.

\begin{proposition}\label{Hwithr=1}
For each small enough $\alpha\in(0,1)$, $p\in(0,1)$ {\rm(}depending only on $\beta,d$ and the laws of $D$ and $\widetilde{D}${\rm)}, for $b=b(\alpha)>0$ and $\gamma_0=\gamma_0(\alpha)>0$ {\rm(}depending only on $\beta,d,\alpha$ and the law of $D${\rm)},  and for each $C'\in(0,C_*)$, there exists $C''=C''(C' ,\alpha)\in (C',C_*)$ such that the following is true.  There exists $\varepsilon_0=\varepsilon_0(C')>0$ {\rm(}depending on $C'$ and the laws of $D$ and $\widetilde{D}${\rm)} such that $\mathds{P}[H_{\varepsilon }(\alpha, C')]\geq p$ for each $\varepsilon\in (0,\varepsilon_0]$.
\end{proposition}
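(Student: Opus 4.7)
The plan is to obtain Proposition \ref{Hwithr=1} as a direct consequence of combining Proposition \ref{mrinS3} (applied at scale $r=1$) with Lemma \ref{P[G]}. The only thing to check is that the existence-of-positive-probability statement provided by Lemma \ref{P[G]} can be upgraded to the quantitative lower bound $\mathds{P}[G_1(\gamma,q,C'')]\geq \gamma$ required as the hypothesis of Proposition \ref{mrinS3}. This upgrade is achieved by exploiting the monotonicity of the event $G_1(\gamma,q,C'')$ in the parameter $\gamma$.

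First I would fix a small $\alpha\in(0,1)$ and the associated $p\in(0,1)$, $b(\alpha)>0$, $\gamma_0(\alpha)>0$ supplied by Proposition \ref{mrinS3}, and let $C'\in(0,C_*)$ be arbitrary. Proposition \ref{mrinS3} furnishes $C''=C''(C',\alpha)\in(C',C_*)$. Since $C''<C_*$, Lemma \ref{P[G]} applied to $C''$ yields $\gamma_1>0$ and $q>0$, depending only on $C''$ and the laws of $D$ and $\widetilde{D}$, such that $\mathds{P}[G_1(\gamma_1,q,C'')]>0$. Next I would observe the monotonicity: for $\gamma'\leq\gamma$ one has $V_{\gamma'}(\bm{x})\subset V_{\gamma}(\bm{x})$, so $\widetilde{D}(V_{\gamma'}(\bm{x}),V_{\gamma'}(\bm{y}))\geq \widetilde{D}(V_{\gamma}(\bm{x}),V_{\gamma}(\bm{y}))$, while the condition $D(\bm{x},\bm{y})\geq q$ appearing in Definition \ref{def-G} does not involve $\gamma$. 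Hence $G_1(\gamma_1,q,C'')\subset G_1(\gamma,q,C'')$ for every $\gamma\in(0,\gamma_1]$, and by shrinking $\gamma$ if necessary I can arrange $\gamma\leq \mathds{P}[G_1(\gamma_1,q,C'')]\leq \mathds{P}[G_1(\gamma,q,C'')]$.

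Finally, Proposition \ref{mrinS3} applied with $r=1$ and these values of $\gamma,q$ yields a threshold $\varepsilon_0=\varepsilon_0(\gamma,q,C')>0$ such that $\mathds{P}[H_{\varepsilon}(\alpha,C')]\geq p$ for all $\varepsilon\in(0,\varepsilon_0]$. Since $\gamma$ and $q$ were chosen as functions of $C''$ (and thus of $C'$ and $\alpha$) and the laws of $D,\widetilde{D}$ only, the threshold $\varepsilon_0$ has the dependence claimed in the statement. There is no substantial technical obstacle: the whole argument is a bookkeeping exercise once Proposition \ref{mrinS3} and Lemma \ref{P[G]} are in hand, with the only mildly delicate point being the verification of the direction of monotonicity of the $G$-event in $\gamma$, which is immediate from the set-to-set definition of $\widetilde{D}$.
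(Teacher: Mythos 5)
Your proof is correct and is exactly the argument the paper has in mind — the paper merely states that Proposition \ref{Hwithr=1} follows "by combining Proposition \ref{mrinS3} and Lemma \ref{P[G]}," and your observation that one passes from $\mathds{P}[G_1(\gamma_1,q,C'')]>0$ to $\mathds{P}[G_1(\gamma,q,C'')]\geq\gamma$ by shrinking $\gamma$ (using that $G_1(\gamma,q,C'')$ is a decreasing event in $\gamma$, since shrinking the cubes $V_{\gamma}(\cdot)$ only increases $\widetilde{D}(V_{\gamma}({\bm x}),V_{\gamma}({\bm y}))$) is precisely the bookkeeping step the paper leaves implicit.
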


We will also need an analog of Proposition \ref{Hwithr=1} with the events $G_r(\gamma,q,C')$ in place of the events $H_r(\alpha,C')$.

\begin{proposition}\label{strongP[G]}
For each $C'\in(0,C_*)$, there exist $\gamma,q>0$, depending on $C'$ and the laws of $D$ and $\widetilde{D}$, such that for each small enough $\varepsilon>0$ {\rm(}depending only on $C'$ and the laws of $D$ and $\widetilde{D}${\rm)}, we have $\mathds{P}[G_\varepsilon (\gamma,q,C')]\geq \gamma$.
\end{proposition}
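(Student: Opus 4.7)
My plan is to bootstrap Proposition \ref{Hwithr=1} to the conclusion of Proposition \ref{strongP[G]} by converting the point-to-point inequality inside the $H$-event into the set-to-set inequality required by the $G$-event, paying for the replacement with a $\widetilde{D}$-diameter error that is controlled scale-uniformly by Axiom V2'.

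First I would fix $C'\in(0,C_*)$ and choose an intermediate $C'_1:=(C'+C_*)/2\in(C',C_*)$. Applying Proposition \ref{Hwithr=1} with $C'_1$ in place of $C'$ supplies constants $\alpha,p\in(0,1)$, $b=b(\alpha)>0$ and $\varepsilon_0=\varepsilon_0(C'_1)>0$ such that $\mathds{P}[H_\varepsilon(\alpha,C'_1)]\geq p$ for every $\varepsilon\in(0,\varepsilon_0]$. Setting $q:=(b\alpha)^\theta$, on $H_\varepsilon(\alpha,C'_1)$ there exist $\bm x,\bm y\in V_\varepsilon(\bm 0)$ with $\widetilde{D}(\bm x,\bm y)>C'_1 D(\bm x,\bm y)$ and $D(\bm x,\bm y)\geq q\varepsilon^\theta$; the latter already delivers the lower bound on $D(\bm x,\bm y)$ demanded by the definition of $G_\varepsilon(\cdot,q,C')$.

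Next, for a parameter $\gamma'>0$ to be chosen, I would introduce the small-$\widetilde{D}$-diameter event
\[
B_{\gamma'}:=\Bigl\{\sup_{\bm z\in V_\varepsilon(\bm 0)}{\rm diam}(V_{\gamma'\varepsilon}(\bm z);\widetilde{D})\leq \tfrac{(C'_1-C')q\varepsilon^\theta}{4}\Bigr\}.
\]
Covering $V_\varepsilon(\bm 0)$ by $O((\gamma')^{-d})$ translates of a cube of side $\gamma'\varepsilon$ and combining Axiom V2' for $\widetilde{D}$ (with $\eta=1$), translation invariance, and Markov's inequality will yield
\[
\mathds{P}[B_{\gamma'}^c]\leq C(\gamma')^{-d}\exp\bigl\{-c(\gamma')^{-\theta}\bigr\}
\]
with constants $C,c>0$ depending on $C'_1-C'$, $q$ and the law of $\widetilde{D}$. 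Crucially, this bound is independent of $\varepsilon$ -- this is exactly why the scale-invariant form of Axiom V2' is being used rather than a pointwise tail -- so I may fix $\gamma'>0$ once and for all, depending only on $C'$ and the laws of $D,\widetilde{D}$, so that $\mathds{P}[B_{\gamma'}^c]\leq p/2$.

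Finally, on $H_\varepsilon(\alpha,C'_1)\cap B_{\gamma'}$ the triangle inequality gives
\[
\widetilde{D}(V_{\gamma'\varepsilon}(\bm x),V_{\gamma'\varepsilon}(\bm y))\geq \widetilde{D}(\bm x,\bm y)-\tfrac{(C'_1-C')q\varepsilon^\theta}{2} > C'_1 D(\bm x,\bm y)-\tfrac{C'_1-C'}{2}D(\bm x,\bm y) > C' D(\bm x,\bm y),
\]
where the middle step uses $q\varepsilon^\theta\leq D(\bm x,\bm y)$. Together with $D(\bm x,\bm y)\geq q\varepsilon^\theta$, this puts us on $G_\varepsilon(\gamma',q,C')$, so $\mathds{P}[G_\varepsilon(\gamma',q,C')]\geq p/2$. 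Since $G_\varepsilon(\cdot,q,C')$ is monotone decreasing in its first argument (smaller neighborhoods make the set-to-set distance larger, hence the strict inequality easier to satisfy), setting $\gamma:=\min\{\gamma',p/2\}$ yields $\mathds{P}[G_\varepsilon(\gamma,q,C')]\geq p/2\geq\gamma$ for all $\varepsilon\in(0,\varepsilon_0]$, as required. The only delicate point is the $\varepsilon$-uniformity of the choice of $\gamma'$, which hinges entirely on the scale-invariant Axiom V2'; the rest is a soft triangle-inequality manipulation.
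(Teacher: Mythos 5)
Your proposal is correct, and it does take a genuinely different route from the paper's. The paper proves the present proposition as an immediate corollary of Lemma \ref{HandG}, which is a \emph{deterministic}, pathwise implication ``$H_r(\alpha,C'')\Rightarrow G_r(\gamma,q,C')$''. That implication works because the regularity condition Definition \ref{def-H}(3) is built into the $H$-event precisely so that ${\rm diam}(V_{\rho r}(\bm x);D)$ and ${\rm diam}(V_{\rho r}(\bm y);D)$ are deterministically bounded by $\rho^{\theta/2}D(\bm x,\bm y)$, after which the bi-Lipschitz bound $\widetilde{D}\leq C_*D$ converts this to the needed $\widetilde{D}$-diameter control. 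You, on the other hand, never invoke Definition \ref{def-H}(3); instead you pay for the point-to-set passage with a separate probabilistic event $B_{\gamma'}$ whose complement you control scale-uniformly via Axiom V2', at the cost of a factor of $2$ in the probability (which is harmless since $\gamma$ is anyway taken as $p/2\wedge\gamma'$). Your argument is slightly more wasteful but also more self-contained, and it highlights that the internal/ambient diameter regularity is not actually essential for this particular implication -- only for later uses of the $H$-event. Two small things you should tighten if writing this up formally: (i) the supremum over all $\bm z\in V_\varepsilon(\bm 0)$ in the definition of $B_{\gamma'}$ is over an uncountable set, so the union bound needs a short chaining remark -- say, cover $V_{2\varepsilon}(\bm 0)$ by a grid of cubes of side $\gamma'\varepsilon/2$ and note that every $V_{\gamma'\varepsilon}(\bm z)$ is connected through a bounded number of adjacent grid cubes, so its internal $\widetilde{D}$-diameter is bounded by a dimensional constant times the largest grid-cube diameter; this only changes absolute constants in your exponential bound. (ii) Your claimed monotonicity of $G_r(\gamma,q,C')$ in $\gamma$ is indeed correct (shrinking the neighborhoods $V_{\gamma r}(\cdot)$ increases the set-to-set $\widetilde{D}$-distance), and it is exactly what you need to pass from $\gamma'$ to $\gamma=\min\{\gamma',p/2\}$ -- this matches the final bookkeeping step ``$p\wedge\gamma$'' in the paper's proof.
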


We will show Proposition \ref{strongP[G]} from Proposition \ref{Hwithr=1} and the following elementary relations between the events $H_r(\cdot,\cdot)$ and $G_r(\cdot,\cdot,\cdot)$.

\begin{lemma}\label{HandG}
If $\alpha\in(0,1)$ and $\zeta\in(0,1)$, there exist $\gamma,q>0$, depending only on $\beta,d,\alpha,\zeta$ and the laws of $D$ and $\widetilde{D}$, such that the following is true. For each $r>0$ and each $C'>\zeta$, if $H_r(\alpha,C')$ occurs, then $G_r(\gamma, q,C'-\zeta)$ occurs.
\end{lemma}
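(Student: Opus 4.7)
My plan is to extract the points ${\bm x}, {\bm y}$ witnessing $H_r(\alpha,C')$ and show that these same points witness $G_r(\gamma,q,C'-\zeta)$ for an appropriate choice of $\gamma$ and $q$. The choice $q := (b(\alpha)\alpha)^\theta$, where $b(\alpha)$ is as in Definition \ref{def-H}(2), immediately handles the lower bound $D({\bm x},{\bm y})\ge (b\alpha r)^\theta = q r^\theta$, so the only real content is to derive
\begin{equation*}
\widetilde{D}(V_{\gamma r}({\bm x}), V_{\gamma r}({\bm y})) > (C'-\zeta) D({\bm x},{\bm y})
\end{equation*}
from the strict inequality $\widetilde{D}({\bm x},{\bm y}) > C'D({\bm x},{\bm y})$ in \eqref{H-ineq}.

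The strategy for that is a triangle-inequality argument: for any ${\bm x}'\in V_{\gamma r}({\bm x})$ and ${\bm y}'\in V_{\gamma r}({\bm y})$,
\begin{equation*}
\widetilde{D}({\bm x}',{\bm y}') \ge \widetilde{D}({\bm x},{\bm y}) - {\rm diam}(V_{\gamma r}({\bm x});\widetilde{D}) - {\rm diam}(V_{\gamma r}({\bm y});\widetilde{D}).
\end{equation*}
The controls in Definition \ref{def-H}(3) are however on $D$-diameters, not $\widetilde{D}$-diameters, and this is the one place where we need an external input. I would invoke Proposition \ref{bilip}, which provides a deterministic constant (essentially $C_*$, depending only on the laws of $D$ and $\widetilde{D}$) such that $\widetilde{D}\le C_* D$ almost surely, yielding
\begin{equation*}
\max\bigl\{{\rm diam}(V_{\gamma r}({\bm x});\widetilde{D}),\,{\rm diam}(V_{\gamma r}({\bm y});\widetilde{D})\bigr\}\le C_*\gamma^{\theta/2}D({\bm x},{\bm y})
\end{equation*}
for every $\gamma\in(0,\gamma_0(\alpha)]$. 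Plugging this back gives $\widetilde{D}({\bm x}',{\bm y}') > (C'-2C_*\gamma^{\theta/2})D({\bm x},{\bm y})$ uniformly in ${\bm x}',{\bm y}'$.

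The conclusion then follows by choosing $\gamma := \min\{\gamma_0(\alpha),\,(\zeta/(2C_*))^{2/\theta}\}$, which ensures $2C_*\gamma^{\theta/2}\le \zeta$ and hence $\widetilde{D}(V_{\gamma r}({\bm x}),V_{\gamma r}({\bm y}))\ge (C'-\zeta)D({\bm x},{\bm y})\ge (C'-\zeta)qr^\theta$, with the first inequality being strict thanks to the strict inequality in \eqref{H-ineq}. Both $\gamma$ and $q$ depend only on $\beta,d,\alpha,\zeta$ and the laws of $D$ and $\widetilde{D}$ (through $b(\alpha)$, $\gamma_0(\alpha)$, $\theta$, and $C_*$), as required. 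The proof is essentially a direct calculation; the only conceptual step is recognizing that Proposition \ref{bilip} is exactly what is needed to transfer the metric-regularity hypothesis from $D$ to $\widetilde{D}$, and beyond that no geodesic or locality input from Definition \ref{def-H} is invoked.
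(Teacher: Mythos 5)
Your argument is correct and is essentially the paper's own proof: take the witnesses $\bm x,\bm y$ from $H_r(\alpha,C')$, set $q=(b\alpha)^\theta$, use the triangle inequality together with Definition~\ref{def-H}(3) and the upper bi-Lipschitz bound $\widetilde{D}\le C_*D$ (which the paper invokes via $C_*$ and you via Proposition~\ref{bilip}, which is the same thing) to peel off a $2C_*\gamma^{\theta/2}D(\bm x,\bm y)$ error, and then shrink $\gamma$. The only cosmetic difference is that you lower-bound $\widetilde{D}(\bm x',\bm y')$ directly while the paper upper-bounds $\widetilde{D}(\bm x,\bm y)$ and rearranges; these are the same inequality.
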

\begin{proof}
Assume that $H_r(\alpha,C')$ occurs and let $\bm x$ and $\bm y$ be as in Definition \ref{def-H} of $H_r(\alpha,C')$. By Definition \ref{def-G} of $G_r(\gamma,q,C'-\zeta)$, it suffices to find $\gamma,q>0$ as in the lemma statement such that
\begin{equation}\label{G(C'-zeta)}
\widetilde{D}(V_{\gamma r}(\bm x),V_{\gamma r}(\bm y))>(C'-\zeta)D({\bm x},{\bm y})\geq(C'-\zeta) q r^\theta.
\end{equation}
To this end, we first take $q=(b\alpha)^\theta$. Then from Definition \ref{def-H} (2) of $H_r(\alpha,C')$, it is clear that the second inequality in \eqref{G(C'-zeta)} holds.
Next, let $\rho<\gamma_0$, where $\gamma_0$ is the constant in Definition \ref{def-H} (3). Then by the triangle inequality and Definition \ref{def-H} (3),  we get that
\begin{equation}\label{Dtilde-interval}
\begin{split}
\widetilde{D}({\bm x},{\bm y})&\leq \widetilde{D}(V_{\rho r}(\bm x),V_{\rho r}(\bm y))+{\rm diam}(V_{\rho r}(\bm x);\widetilde{D})+{\rm diam}(V_{\rho r}(\bm y);\widetilde{D})\\
&\leq \widetilde{D}(V_{\rho r}(\bm x),V_{\rho r}(\bm y))
+C_*{\rm diam}(V_{\rho r}(\bm x);D)+C_*{\rm diam}(V_{\rho r}(\bm y);D)\\
&\leq \widetilde{D}(V_{\rho r}(\bm x),V_{\rho r}(\bm y)) +2C_*\rho^{\theta/2} D({\bm x},{\bm y}).
\end{split}
\end{equation}
By combining \eqref{H-ineq} and \eqref{Dtilde-interval}, we arrive at
$$
\widetilde{D}(V_{\rho r}(\bm x),V_{\rho r}(\bm y))\geq (C'-2C_*\rho^{\theta/2})D({\bm x},{\bm y}).
$$
Thus, we now obtain the first inequality in \eqref{G(C'-zeta)} by choosing $\rho<\gamma_0$ to be sufficiently small  (here $\gamma_0$ comes from Definition \ref{def-H} (2) of $H_r(\alpha,C')$ and  depends only on $\zeta,\alpha$ and $C_*$) and setting $\gamma=\rho$.
\end{proof}

We turn to the

\begin{proof}[Proof of Proposition \ref{strongP[G]}]
Let $\alpha,p\in(0,1)$ (depending only on the laws of $D$ and $\widetilde{D}$) be as in Proposition \ref{Hwithr=1}. Also let $C'':=(C'+C_*)/2$. By Proposition \ref{Hwithr=1} (applied with $C''$ instead of $C'$), there exists $\varepsilon_0=\varepsilon_0(C'')>0$ such that $\mathds{P}[H_\varepsilon(\alpha,C'')]\geq p$ for all $\varepsilon \leq \varepsilon_0$. By Lemma \ref{HandG}, applied with $C''$ in place of $C'$ and $\zeta=C''-C'$, we obtain that there exist $\gamma,q$, depending only on $\beta,d,\alpha,C'$ and the laws of $D$ and $\widetilde{D}$, such that if $H_r(\alpha,C'')$ occurs, then $G_r(\gamma,q,C')$ occurs. Combining the preceding two sentences gives the desired statement with $p\wedge \gamma$ in place of $\gamma$.
\end{proof}

Since our assumptions on the metrics $D$ and $\widetilde{D}$ are the same, the results above also hold with the roles of $D$ and $\widetilde{D}$ interchanged. For ease of reference, we will record some of these results here.

\begin{definition}\label{def-tildeG}
For $r>0$, $\gamma>0$, $\widetilde{q}>0$ and $c'>0$, let $\widetilde{G}_r(\gamma,\widetilde{q}, c')$ be the event that there exist ${\bm x},{\bm y}\in V_r(\bm 0)$ such that
$$
\widetilde{q}r^\theta\leq \widetilde{D}({\bm x},{\bm y})< c'D(V_{\gamma r}(\bm x),V_{\gamma r}(\bm y)).
$$
\end{definition}

\begin{definition}\label{def-tildeH}
For $r>0$, $\alpha\in (0,1)$ and $c'>0$, we let $\widetilde{H}_r(\alpha,c')$ be the event that there exist ${\bm x},{\bm y}\in  V_r(\bm 0)$ with $|\bm x-\bm y|\geq \alpha r/3$  such that
$$\widetilde{D}({\bm x},{\bm y})< c'D({\bm x},{\bm y}),$$
and there exists a $\widetilde{D}$-geodesic $\widetilde{P}$ from $\bm x$ to $\bm y$ satisfying
\begin{itemize}
\item[(1)] $\widetilde{P}\subset  V_r(\bm 0)$;

\item[(2)] $\widetilde{D}({\bm x},{\bm y})\geq (\widetilde{b}\alpha r)^\theta$ for some $\widetilde{b}=\widetilde{b}(\alpha)>0$;

\item[(3)] there exists $\widetilde{\gamma}_0=\widetilde{\gamma}_0(\alpha)>0$ such that for each $\widetilde{\gamma}\in(0,\gamma_0]$,
    $$
    \max\left\{{\rm diam}(V_{\widetilde{\gamma} r}(\bm x);\widetilde{D}),{\rm diam}(V_{\widetilde{\gamma} r}(\bm y);\widetilde{D})\right\}\leq \widetilde{\gamma}^{\theta/2} \widetilde{D}({\bm x},{\bm y}).
    $$
\end{itemize}
Here $\widetilde{b}$ and $\widetilde{\gamma}_0$ are chosen in a similar way as $b$ and $\gamma_0$ in Definition \ref{def-H}.
\end{definition}

We have the following analogs of Propositions \ref{mrinS3} and \ref{strongP[G]}, respectively.

\begin{proposition}\label{mrinS3-tilde}
For each small enough $\alpha\in(0,1)$, $p\in (0,1)$ {\rm(}depending only on $\beta,d$, the laws of $D$ and $\widetilde{D}${\rm)},  for $\widetilde{b}=\widetilde{b}(\alpha)>0$  and $\widetilde{\gamma}_0=\widetilde{\gamma}_0(\alpha)>0$ {\rm(}depending only on $\beta,d,\alpha$ and the laws of $D$ and $\widetilde{D}${\rm)}, and for each $c'>c_*$, there exists $c''=c''(c', \alpha)\in (c_*,c')$ such that
for each $\widetilde{\gamma}\in (0,1)$ and $\widetilde{q}>0$, there exists
 $\varepsilon_0=\varepsilon_0(\widetilde{\gamma},\widetilde{q},c')>0$ with the following property. If $r>0$ and $\mathds{P}[\widetilde{G}_r(\widetilde{\gamma},\widetilde{q},c'')]\geq \widetilde{\gamma}$, then $\mathds{P}[\widetilde{H}_{\varepsilon r}(\alpha,c')]\geq p$ for each $\varepsilon\in (0,\varepsilon_0]$.
\end{proposition}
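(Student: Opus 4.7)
The plan is to deduce Proposition \ref{mrinS3-tilde} directly from Proposition \ref{mrinS3} by swapping the roles of $D$ and $\widetilde{D}$. The hypotheses of Proposition \ref{mrinS3} are symmetric in the two metrics: both are weak $\beta$-LRP metrics for the same $\beta$, and no other asymmetric assumption enters. Consequently the statement of Proposition \ref{mrinS3} applies equally well to the ordered pair $(\widetilde{D}, D)$, for which the optimal bi-Lipschitz constants become $1/C_*$ (lower) and $1/c_*$ (upper), since $c_* D \leq \widetilde{D} \leq C_* D$ rearranges to $(1/C_*)\widetilde{D} \leq D \leq (1/c_*)\widetilde{D}$.

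Next I would translate the events across this swap. Comparing Definitions \ref{def-G} and \ref{def-tildeG}, the event $\widetilde{G}_r(\widetilde{\gamma}, \widetilde{q}, c'')$ coincides with the event $G_r(\widetilde{\gamma}, \widetilde{q}, 1/c'')$ applied to the pair $(\widetilde{D}, D)$: the inequality $\widetilde{D}(\bm x, \bm y) < c'' D(V_{\widetilde{\gamma} r}(\bm x), V_{\widetilde{\gamma} r}(\bm y))$ rearranges to $D(V_{\widetilde{\gamma} r}(\bm x), V_{\widetilde{\gamma} r}(\bm y)) > (1/c'')\widetilde{D}(\bm x, \bm y)$, and the bound $\widetilde{D}(\bm x, \bm y) \geq \widetilde{q} r^\theta$ plays the role of the lower bound in Definition \ref{def-G} (with $\widetilde{D}$ in the new first slot). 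An analogous check, noting that the geodesic, diameter, and regularity conditions in parts (1)--(3) of Definitions \ref{def-H} and \ref{def-tildeH} are identical up to swapping the two metrics, shows that $\widetilde{H}_r(\alpha, c')$ is exactly the event $H_r(\alpha, 1/c')$ for the pair $(\widetilde{D}, D)$. Crucially, Definition \ref{def-tildeH} already permits the constants $\widetilde{b}(\alpha), \widetilde{\gamma}_0(\alpha)$ to depend on the law of $\widetilde{D}$, matching the dependence of $b, \gamma_0$ on the law of the ``first'' metric in Definition \ref{def-H}.

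Finally I would apply the swapped version of Proposition \ref{mrinS3}. Given $c' > c_*$, set $C' := 1/c'$, which lies in $(0, 1/c_*)$ and is therefore admissible for the pair $(\widetilde{D}, D)$. Proposition \ref{mrinS3} then produces a constant $C'' \in (C', 1/c_*)$ and, for each pair $(\widetilde{\gamma}, \widetilde{q})$, some $\varepsilon_0 > 0$ such that the hypothesis $\mathds{P}[G_r(\widetilde{\gamma}, \widetilde{q}, C'')] \geq \widetilde{\gamma}$ (for the swapped pair) implies $\mathds{P}[H_{\varepsilon r}(\alpha, C')] \geq p$ for every $\varepsilon \in (0, \varepsilon_0]$. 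Setting $c'' := 1/C'' \in (c_*, c')$ and transporting the events via the dictionary above yields exactly the conclusion of Proposition \ref{mrinS3-tilde}. Since the whole argument is purely formal, no substantive obstacle arises; the only point requiring attention is the bookkeeping of which constants depend on which metric's law, which has already been accommodated in Definition \ref{def-tildeH}.
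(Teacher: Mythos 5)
Correct, and this is precisely the paper's own route: the paper introduces Proposition \ref{mrinS3-tilde} immediately after the remark ``Since our assumptions on the metrics $D$ and $\widetilde{D}$ are the same, the results above also hold with the roles of $D$ and $\widetilde{D}$ interchanged,'' so your argument is exactly the intended symmetry deduction, with the dictionary $C' = 1/c'$, $C'' = 1/c''$ between the events $G, H$ and $\widetilde{G}, \widetilde{H}$ spelled out explicitly.
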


\begin{proposition}\label{P[G]-tilde}
For each $c'>c_*$, there exist $\widetilde{\gamma},\widetilde{q}>0$, depending on $c'$ and the laws of $D$ and $\widetilde{D}$, such that for each small enough $\varepsilon>0$ {\rm(}depending only on $c'$ and the laws of $D$ and $\widetilde{D}${\rm)}, we have $\mathds{P}[\widetilde{G}_\varepsilon (\widetilde{\gamma},\widetilde{q},c')]\geq \widetilde{\gamma}$.
\end{proposition}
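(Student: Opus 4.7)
The plan is to mirror the proof of Proposition \ref{strongP[G]}, i.e., to prove a tilde-analog of Lemma \ref{HandG} and then combine it with Proposition \ref{mrinS3-tilde} after first upgrading the latter to a statement analogous to Proposition \ref{Hwithr=1} via a tilde-analog of Lemma \ref{P[G]}.

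First, I would prove a tilde-analog of Lemma \ref{P[G]}: for each $c'>c_*$ there exist $\widetilde{\gamma},\widetilde{q}>0$ such that $\mathds{P}[\widetilde{G}_1(\widetilde{\gamma},\widetilde{q},c')]>0$. The proof parallels Lemma \ref{P[G]} by contrapositive. If $\mathds{P}[\widetilde{G}_1(\widetilde{\gamma},0,c')]=0$ for every $\widetilde{\gamma}>0$, then a.s.\ $\widetilde{D}(\bm x,\bm y)\ge c'D(V_{\widetilde{\gamma}}(\bm x),V_{\widetilde{\gamma}}(\bm y))$ for all $\bm x,\bm y\in[0,1)^d$; letting $\widetilde{\gamma}\to 0$ and using the continuity of $D$ (Proposition \ref{LimitContinuousTail}) yields $\widetilde{D}(\bm x,\bm y)\ge c'D(\bm x,\bm y)$ on $[0,1)^d$, and the same segment-chopping argument as in Lemma \ref{P[G]} (using Axiom IV' together with the fact that long edges of scope $\ge 1$ contribute $0$ to $D$ by Axiom III) extends this to all of $\mathds{R}^d$, contradicting $c_*<c'$. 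The strengthening from $\widetilde{q}=0$ to $\widetilde{q}>0$ uses Proposition \ref{Dxy=0}: on the event in question $\widetilde{D}(\bm x,\bm y)>0$, so $\langle\bm x,\bm y\rangle\notin\mathcal{E}$ and hence $\widetilde{D}(\bm x,\bm y)$ is bounded below by a positive constant with positive probability.

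Next, combining this tilde-analog of Lemma \ref{P[G]} with Proposition \ref{mrinS3-tilde} (exactly as Proposition \ref{Hwithr=1} follows from Proposition \ref{mrinS3} and Lemma \ref{P[G]}) yields: for each small enough $\alpha$ and $p\in(0,1)$ and each $c'>c_*$, there exists $\varepsilon_0=\varepsilon_0(c')>0$ such that $\mathds{P}[\widetilde{H}_\varepsilon(\alpha,c')]\ge p$ for all $\varepsilon\in(0,\varepsilon_0]$.

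Then I would prove a tilde-analog of Lemma \ref{HandG}: for $\alpha,\zeta\in(0,1)$ there exist $\widetilde{\gamma},\widetilde{q}>0$ such that $\widetilde{H}_r(\alpha,c'')\subset \widetilde{G}_r(\widetilde{\gamma},\widetilde{q},c''+\zeta)$ for every $r>0$ and $c''>c_*$. Take $\widetilde{q}=(\widetilde{b}\alpha)^\theta$, which handles the lower bound on $\widetilde{D}(\bm x,\bm y)$ coming from Definition \ref{def-tildeH}(2). For the upper comparison, apply the triangle inequality in the $D$-metric,
\begin{equation*}
D(\bm x,\bm y)\le D(V_{\rho r}(\bm x),V_{\rho r}(\bm y))+\mathrm{diam}(V_{\rho r}(\bm x);D)+\mathrm{diam}(V_{\rho r}(\bm y);D),
\end{equation*}
bound each diameter by $c_*^{-1}\mathrm{diam}(V_{\rho r}(\cdot);\widetilde{D})$ via \eqref{DDtilde}, and use Definition \ref{def-tildeH}(3) to get $\mathrm{diam}(V_{\rho r}(\cdot);\widetilde{D})\le\rho^{\theta/2}\widetilde{D}(\bm x,\bm y)$. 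Combining with $\widetilde{D}(\bm x,\bm y)<c''D(\bm x,\bm y)$ gives
\begin{equation*}
\widetilde{D}(\bm x,\bm y)<\frac{c''}{1-2c''c_*^{-1}\rho^{\theta/2}}D(V_{\rho r}(\bm x),V_{\rho r}(\bm y)),
\end{equation*}
and choosing $\rho=\rho(\alpha,\zeta,c'',c_*)$ small enough makes the prefactor at most $c''+\zeta$; set $\widetilde{\gamma}=\rho$.

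Finally, given $c'>c_*$, set $c'':=(c_*+c')/2$ and $\zeta:=c'-c''>0$. Apply the analog of Proposition \ref{Hwithr=1} above (with $c''$ in place of $c'$ and scale $\varepsilon$ in place of $1$, noting that $\widetilde{H}_\varepsilon(\alpha,c'')$ is exactly what it provides) to obtain $p>0$ and $\varepsilon_0$ with $\mathds{P}[\widetilde{H}_\varepsilon(\alpha,c'')]\ge p$ for all $\varepsilon\in(0,\varepsilon_0]$, then apply the tilde-Lemma \ref{HandG} to conclude $\mathds{P}[\widetilde{G}_\varepsilon(\widetilde{\gamma},\widetilde{q},c')]\ge p$; taking $\widetilde{\gamma}$ also at most $p$ gives the statement. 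The main obstacle is step one (the tilde-Lemma \ref{P[G]}): while in Lemma \ref{P[G]} the continuity argument only needs $\widetilde{D}$ to be continuous with the comparison $D\le D(V_{\gamma r},V_{\gamma r})$ going in the easy direction, here the roles are reversed and one must appeal to the continuity of $D$ together with bi-Lipschitz equivalence (Proposition \ref{bilip}) to pass to the limit; everything else is a direct transcription with $D$ and $\widetilde{D}$ swapped.
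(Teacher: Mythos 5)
Your proof is correct, but you have taken a long road to a result the paper establishes with no separate argument at all. The paper states (in the paragraph just before Definition \ref{def-tildeG}) that since the axioms on $D$ and $\widetilde{D}$ are identical, every result proved with $D$ and $\widetilde{D}$ in their stated roles holds verbatim with the roles swapped. Under the role-swap, the lower and upper bi-Lipschitz constants become $1/C_*$ and $1/c_*$, and with the relabeling $C'\mapsto 1/c'$ the swapped event $G_r(\gamma,q,C')$ becomes precisely $\widetilde{G}_r(\gamma,q,c')$ while the hypothesis $C'\in(0,C_*)$ becomes $c'>c_*$; Proposition \ref{strongP[G]} then \emph{is} Proposition \ref{P[G]-tilde}. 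What you have written is essentially the unpacking of this symmetry — a tilde-analog of Lemma \ref{P[G]}, an analog of Proposition \ref{Hwithr=1}, a tilde-Lemma \ref{HandG}, assembled as in Proposition \ref{strongP[G]} — and the steps do go through, so the argument is a valid (if redundant) verification of what symmetry guarantees.

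Three small imprecisions are worth fixing. First, in the chopping step of your tilde-Lemma \ref{P[G]}, you must chop a $\widetilde{D}$-geodesic $\widetilde{P}$, not a $D$-geodesic: from $\widetilde{D}(\bm u,\bm v)\ge c'D(\bm u,\bm v)$ for $\|\bm u-\bm v\|_\infty<1$, one writes $\widetilde{D}(\bm x,\bm y)=\sum_j\widetilde{D}(\widetilde{P}(t_j),\widetilde{P}(t_{j+1}))\ge c'\sum_jD(\widetilde{P}(t_j),\widetilde{P}(t_{j+1}))\ge c'D(\bm x,\bm y)$, using that the long-edge terms vanish for both metrics by Axiom III and the last step is the triangle inequality for $D$; chopping the $D$-geodesic would give the inequality in the useless direction. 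Second, your justification of the $\widetilde{q}>0$ upgrade starts from ``$\widetilde{D}(\bm x,\bm y)>0$'' without grounding it in the event; the correct chain is that the strict inequality in $\widetilde{G}_1(\widetilde{\gamma},0,c')$ forces $D(V_{\widetilde{\gamma}}(\bm x),V_{\widetilde{\gamma}}(\bm y))>0$, hence $D(\bm x,\bm y)>0$, hence $\langle\bm x,\bm y\rangle\notin\mathcal{E}$ by Proposition \ref{Dxy=0}, and only then $\widetilde{D}(\bm x,\bm y)>0$. Third, passing $\widetilde{\gamma}\to 0$ needs only the continuity of $D$ supplied by Proposition \ref{LimitContinuousTail} (which applies to $D$ exactly as it does to $\widetilde{D}$); invoking the bi-Lipschitz equivalence of Proposition \ref{bilip} at that point is unnecessary.
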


\subsection{Proof of Proposition \ref{mrinS3}}\label{prove-mr}

To prove Proposition \ref{mrinS3}, we will prove its contrapositive as stated in the following proposition (note that the contrapositive only applies to the last sentence of the statement).

\begin{proposition}\label{(B)=>G}
For each small enough $\alpha\in(0,1)$ and $p\in(0,1)$ {\rm(}depending only on $\beta,d$ and the laws of $D$ and $\widetilde{D}${\rm)},
 for  $b=b(\alpha)>0$ and $\gamma_0=\gamma_0(\alpha)>0$ {\rm(}depending only $\beta,d,\alpha$  and the laws of $D$ and $\widetilde{D}${\rm)},
 and for each $C'\in(0,C_*)$,
 there exists $C''=C''(C',\alpha)\in(C',C_*)$ such that for each $\gamma\in (0,1)$ and $q>0$, there exists $\varepsilon_0=\varepsilon_0(\gamma,q,C')>0$ with the following property.
If $r>0$ and there exists $\varepsilon\in(0,\varepsilon_0]$ satisfying that $\mathds{P}[H_{\varepsilon r}(\alpha,C')]<p$, then $\mathds{P}[G_r(\gamma,q,C'')]<\gamma$.
\end{proposition}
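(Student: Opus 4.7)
The plan is to prove the contrapositive by contradiction: assume $\mathds{P}[H_{\varepsilon r}(\alpha,C')]<p$ for some small $\varepsilon$, and also that $\mathds{P}[G_r(\gamma,q,C'')]\geq\gamma$ for the target $C''\in(C',C_*)$ to be chosen. On the event $G_r(\gamma,q,C'')$ we will construct, from a $D$-geodesic between the witnessing points $\bm x,\bm y\in V_r(\bm 0)$, a competitor path whose $\widetilde{D}$-length is bounded by $C''D(\bm x,\bm y)$, contradicting the defining inequality of $G_r(\gamma,q,C'')$.

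First I would set up a renormalization of $V_r(\bm 0)$ into cubes $V_{\varepsilon r}(\bm z_k)$ of side length $\varepsilon r$. For each $\bm z_k$ let $\widehat{H}_{\bm z_k}$ denote the translate of $H_{\varepsilon r}(\alpha,C')$ centered at $\bm z_k$; by Axiom IV' we have $\mathds{P}[\widehat{H}_{\bm z_k}]<p$. Call $\bm z_k$ \emph{good} if $\widehat{H}_{\bm z_k}$ fails. The event $\widehat{H}_{\bm z_k}$ is measurable with respect to the internal metric $D(\cdot,\cdot;V_{\varepsilon r}(\bm z_k))$ together with $\widetilde{D}(\cdot,\cdot;V_{\varepsilon r}(\bm z_k))$ and $\mathcal{E}|_{V_{\varepsilon r}(\bm z_k)\times V_{\varepsilon r}(\bm z_k)}$, so by Axiom II'' (weak locality) the events $\{\widehat{H}_{\bm z_k}\}$ on disjoint cubes are jointly independent. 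A BK-type renormalization argument in the spirit of Lemmas \ref{hd-BK} and \ref{Fr4lemma-2} then produces, with probability at least $1-\gamma/2$, an event $F$ on which every self-avoiding path in the renormalized lattice of $D$-length at least $\varepsilon^{-\theta/4}$ traverses a fraction at least $1/(4\cdot 3^d)$ of good cubes (choosing $p$ small enough that $2C_{dis}p<1$).

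Second, on $F\cap G_r(\gamma,q,C'')$ I would fix witnessing points $\bm x,\bm y$ and a $D$-geodesic $P$ from $\bm x$ to $\bm y$. The renormalized skeleton of $P$ has length at least of order $q\,\varepsilon^{-\theta}$ by Lemmas \ref{Dleqd} and \ref{Fr3lemma-2}, so a definite fraction of the good cubes along $P$ contribute non-trivial $D$-length. In each good cube I identify an entrance point $\bm u_i$ and an exit point $\bm v_i$ on $P$, and use Definition \ref{def-H}(2),(3) together with Axioms V1', V2', Proposition \ref{LimitContinuousTail} and a union bound at the subscale $\gamma\varepsilon r$ to arrange that $(\bm u_i,\bm v_i)$ is a regular pair satisfying all conditions of Definition \ref{def-H} (with $b,\gamma_0$ chosen small). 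Since the cube is good, we obtain $\widetilde{D}(\bm u_i,\bm v_i)\leq C'\,D(\bm u_i,\bm v_i)$. Summing the $D$-lengths of the replaced segments gives a fraction $\kappa>0$ of $D(\bm x,\bm y)$, and on the remaining segments I use the crude bi-Lipschitz bound $\widetilde{D}\leq C_*D$ from \eqref{DDtilde}. After accounting for the boundary errors at $\bm x,\bm y$ (bounded by ${\rm diam}(V_{\gamma r}(\bm x);\widetilde{D})+{\rm diam}(V_{\gamma r}(\bm y);\widetilde{D})$, controlled via Definition \ref{def-H}(3)), I would arrive at
\begin{equation*}
\widetilde{D}\bigl(V_{\gamma r}(\bm x),V_{\gamma r}(\bm y)\bigr)\leq \bigl(\kappa C'+(1-\kappa)C_*+o_{\varepsilon,\gamma}(1)\bigr)D(\bm x,\bm y),
\end{equation*}
so defining $C'':=\kappa C'+(1-\kappa)C_*+o_{\varepsilon,\gamma}(1)<C_*$ (for $\varepsilon\leq\varepsilon_0(\gamma,q,C')$ and $\gamma$ small, with $\kappa$ depending only on $p,d,\beta,\alpha$) yields the desired contradiction.

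The main obstacle is that $D$-geodesics are discontinuous in the Euclidean topology: a long edge can make $P$ leap entirely across a small cube, so merely crossing a good cube does not produce a usable regular pair on $P$. I plan to overcome this in two steps. Step (i): use an analog of Lemma \ref{h-probgood} at scale $\varepsilon r$ to exclude, at small additional probabilistic cost, the configurations where entry/exit edges land in $\alpha$-close points so that $|\bm u_i-\bm v_i|\geq \alpha\varepsilon r/3$ automatically. Step (ii): on the complementary event, apply Axiom V2' with a union bound over a $\gamma\varepsilon r$-net inside each good cube so that both conditions Definition \ref{def-H}(2) and (3) hold simultaneously at $\bm u_i,\bm v_i$; the required concentration is exactly what Axioms V1', V2' provide uniformly across scales. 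Once these two points are established, the counting of the $D$-length inside good versus bad cubes goes through as in the discrete renormalization of Section \ref{bi-lipschitz}, and the contradiction closes the proof.
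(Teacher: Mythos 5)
Your proposal follows essentially the same architecture as the paper's proof: renormalize $V_r(\bm 0)$ into cubes of side $\varepsilon r$, show via a BK-type argument (Lemma~\ref{hd-BK}) that a macroscopic $D$-geodesic must spend a definite fraction of its $D$-length inside "good" cubes, and then replace those segments by $\widetilde D$-paths using the contrapositive of the $H$-event. The paper packages the two regularity steps you separate into Steps~(i) and~(ii) — namely the long-edge separation / lower-bound estimate from Lemma~\ref{h-regular-low} and the diameter control from Proposition~\ref{LimitContinuousTail} — into the single event ``$V_{3\varepsilon r}(\bm z)$ is $(3\varepsilon r,\alpha)$-nice'' (Definition~\ref{alp-good}) so that one BK argument suffices, but your plan invokes exactly the same ingredients.

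One small point worth being careful about when you write this up: the quantity you actually need for the geodesic-segment replacement is not that the entry/exit points $(\bm u_i,\bm v_i)$ lie close to a fixed $\gamma\varepsilon r$-net, but rather that \emph{every} $V_{3\varepsilon r}(\bm z_k)$-special pair of edges has entry and exit points separated by $\geq\alpha\varepsilon r$ in Euclidean distance and $\geq (b\alpha\varepsilon r)^\theta$ in $D$-distance. Since those entry/exit points are the endpoints of random long edges (not grid points), a net union bound alone does not reach them; what you need is precisely the ``backbone path'' estimate that Lemma~\ref{h-regular-low} delivers, which you do reference through Lemma~\ref{h-probgood} in Step~(i), so this is already in your plan — just be sure it is Lemma~\ref{h-regular-low}, and not a raw net bound, that carries the burden of the lower bound in Definition~\ref{def-H}(2).
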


The basic idea of the proof of Proposition \ref{(B)=>G} is as follows. Assuming that the probability $\mathds{P}[H_{3\varepsilon r}(\alpha, C')]<p$ for some small enough  $p\in(0,1)$ (depending only on $\beta,d$ and the laws of $D$ and $\widetilde{D}$),
we can use a similar renormalization as in Section \ref{bi-lipschitz} to conclude that each path whose $D$-length is not too short should pass through a positive fraction of the small ``nice'' cubes $V_{\varepsilon r}({\bm k})$ for ${\bm k}\in(\varepsilon r)\mathds{Z}^d$.
Here ``nice'' generally means that for each ${\bm x},{\bm y}\in V_{3\varepsilon r}({\bm k})$ with $|\bm x-\bm y|\geq \alpha \varepsilon r$, if they are joined by a geodesic $P$ satisfying the numbered conditions in Definition \ref{def-H}, we have $\widetilde{D}({\bm x},{\bm y})\leq C'D({\bm x},{\bm y})$.

By considering the times when a $D$-geodesic between two fixed points ${\bm x},{\bm y}\in \mathds{R}^d$ passes through such a ``nice'' cube, we can show that $\widetilde{D}(V_{\gamma r}(\bm x), V_{\gamma r}(\bm y))\leq C''D(\bm x, \bm y)$ for a suitable constant $C''\in(C',C_*)$. Applying this to an appropriate collection of pairs of points $({\bm x},{\bm y})$ will show that $\mathds{P}[G_r(\gamma, q,C'')]<\gamma$.

Let us define some events that will be useful in the proof of Proposition \ref{(B)=>G}.

\begin{definition}\label{def-E}
(Converse of $H_r(\alpha,C')$)
For $s>0$, $\alpha\in (0,1)$, $C'>0$ and $\bm z\in\mathds{R}^d$, we let $E_s(\bm z)$ be the event that the following is true.
For each ${\bm x},{\bm y}\in V_s(\bm z)$ with $|\bm x-\bm y|\geq \alpha s/3$, if there is a $D$-geodesic $P$ from $\bm x$ to $\bm y$ such that
\begin{itemize}
\item[(1)] $P\subset V_s(\bm z)$;

\item[(2)] $ D({\bm x},{\bm y})\geq (b\alpha s)^\theta$ for some $b=b(\alpha)>0$ (same as Definition \ref{def-H});

\item[(3)]  there exists $\gamma_0=\gamma_0(\alpha)>0$ (same as Definition \ref{def-H}) such that for each $\gamma\in (0,\gamma_0]$,
$$
\max\left\{{\rm diam}(V_{\gamma r}(\bm x);D),{\rm diam}(V_{\gamma r}(\bm y);D)\right\}\leq \gamma^{\theta/2} D({\bm x},{\bm y}),
$$
\end{itemize}
then
$$
\widetilde{D}({\bm x},{\bm y})\leq C' D({\bm x},{\bm y}).
$$
\end{definition}



Next, we define a slightly stronger version of Definition \ref{h-good} for $(s,\alpha)$-good cube, which involves the occurrence of $E_s(\bm z)$ within the cube. We refer to such cubes as ``nice'' cubes.

\begin{definition}\label{alp-good}
For $s>0$, $\bm z\in \mathds{R}^d$ and $\alpha\in(0,1)$, we say that the cube $V_{3s}(\bm z)$ is {\it$(3s,\alpha)$-nice} if the following is true.
\begin{itemize}

\item[\rm (1)] $E_{3s}(\bm z)$ occurs.

\item[\rm (2)]  
 For any two different edges $\langle {\bm u}_1,{\bm v}_1\rangle$ and $\langle {\bm u}_2,{\bm v}_2\rangle\in\mathcal{E}$,  with ${\bm u}_1\in V_s({\bm z})^c, {\bm v}_1\in V_s({\bm z}), {\bm u}_2\in V_{3s}({\bm z}), {\bm v}_2\in V_{3s}({\bm z})^c$, we have $|{\bm v}_1-{\bm u}_2|\geq \alpha s$. Additionally, there is a constant $b>0$  (the same as that in Definition \ref{def-H}) such that
    $$
      D({\bm v}_1,{\bm u}_2 ;V_{3s}({\bm z}))\geq (b\alpha s)^\theta.
    $$
     Moreover, there exists $\gamma_0=\gamma_0(\alpha)>0$ such that
    $$
    \max\left\{{\rm diam}(V_{\gamma s}({\bm v}_1);D),{\rm diam}(V_{\gamma s}({\bm u}_2);D)\right\}\leq \gamma^{\theta/2} D({\bm u}_2,{\bm v}_1).
    $$

\end{itemize}
\end{definition}

In order to control the probability for Definition \ref{alp-good} (2), we only need to consider the case for ${\bm z} ={\bm 0}$ by Axiom IV' (translation invariance).
This immediately follows from Lemma \ref{h-probgood} and the continuity of $D$ in Proposition \ref{LimitContinuousTail}, and a precise statement of this is incorporated in the next lemma.

\begin{lemma}\label{super good (ii)}
For fixed $s>0$ and sufficiently small $\alpha\in(0,1)$, there exist  constants $b=b(\alpha)>0$  and $\gamma_0=\gamma_0(\alpha)>0$ {\rm(}depending only on $\beta,d,\alpha$ and the law of $D${\rm)}, and $c>0$ {\rm(}depending only on $\beta,d$ and the law of $D${\rm)},  such that Definition \ref{alp-good} {\rm(2)} occurs with probability at least $1-c\alpha(\log(\alpha^{-1}))$.
\end{lemma}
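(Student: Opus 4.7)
The plan is to combine Lemma \ref{h-probgood} with the continuity estimate from Proposition \ref{LimitContinuousTail}. By Axiom IV' (translation invariance) of $D$, we may reduce to the case $\bm z = \bm 0$. Lemma \ref{h-probgood} already provides, for sufficiently small $\alpha$, a constant $b_0 = b_0(\alpha) > 0$ and a constant $c_1 > 0$ (depending only on $\beta, d$ and the law of $D$) such that with probability at least $1 - c_1 \alpha \log(1/\alpha)$, the first two clauses of Definition \ref{alp-good} (2) hold: every relevant boundary-crossing quadruple $(\bm u_1, \bm v_1, \bm u_2, \bm v_2)$ satisfies $|\bm v_1 - \bm u_2| \geq \alpha s$ and $D(\bm v_1, \bm u_2; V_{3s}(\bm 0)) \geq (b_0 \alpha s)^\theta$. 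What remains is to supplement this with a uniform Hölder-type control on the diameters of the small cubes $V_{\gamma s}(\bm v_1)$ and $V_{\gamma s}(\bm u_2)$.

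To this end, I would apply Proposition \ref{LimitContinuousTail} to $V_{3s}(\bm 0)$ (so $R = 3s/2$) with a parameter $M = M(\alpha) > 2^{\theta+d+1}$ chosen so that
$$
2^{d+1} C_D \exp\{-2^{-\theta-1} M\} \leq \alpha \log(1/\alpha);
$$
this forces $M \asymp \log(1/\alpha)$, so $M$ depends on $\alpha$. Let $A_M$ denote the resulting event (of probability at least $1 - \alpha \log(1/\alpha)$) on which
$$
D(\bm x, \bm x'; V_{3s}(\bm 0)) \leq M \|\bm x - \bm x'\|_\infty^\theta \log\!\bigl(6s/\|\bm x - \bm x'\|_\infty\bigr)
$$
for all $\bm x, \bm x' \in V_{3s}(\bm 0)$. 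On $A_M$, for any $\bm v \in V_{3s}(\bm 0)$ and any sufficiently small $\gamma > 0$ we obtain
$$
\mathrm{diam}\bigl(V_{\gamma s}(\bm v); D\bigr) \leq M (\gamma s)^\theta \log(6/\gamma).
$$

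Now I would choose $\gamma_0 = \gamma_0(\alpha) > 0$ small enough that $M \gamma^{\theta/2} \log(6/\gamma) \leq (b_0 \alpha)^\theta$ for every $\gamma \in (0, \gamma_0]$, which is possible because $\gamma^{\theta/2} \log(6/\gamma) \to 0$ as $\gamma \to 0$ while $M$ and $\alpha$ are fixed. On the intersection of $A_M$ with the event from Lemma \ref{h-probgood}, this yields, for every relevant quadruple,
$$
\mathrm{diam}\bigl(V_{\gamma s}(\bm v_1); D\bigr) \leq M(\gamma s)^\theta \log(6/\gamma) \leq \gamma^{\theta/2}(b_0 \alpha s)^\theta \leq \gamma^{\theta/2} D(\bm v_1, \bm u_2; V_{3s}(\bm 0)),
$$
and the analogous bound at $\bm u_2$, which is the third clause of Definition \ref{alp-good} (2) with $b := b_0$. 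A union bound then gives a total failure probability at most $c_1 \alpha \log(1/\alpha) + \alpha \log(1/\alpha) \leq c \alpha \log(1/\alpha)$ with $c$ depending only on $\beta, d$ and the law of $D$, as required.

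The only (mild) subtlety is that, because the target failure probability decays like $\alpha \log(1/\alpha)$ rather than a fixed constant, the continuity parameter $M$ is forced to grow like $\log(1/\alpha)$, and hence the threshold $\gamma_0(\alpha)$ from the diameter bound must be allowed to depend on $\alpha$; this is entirely consistent with the statement of the lemma, where both $b = b(\alpha)$ and $\gamma_0 = \gamma_0(\alpha)$ are permitted to depend on $\alpha$. No genuine obstacle arises beyond carefully bookkeeping these dependencies.
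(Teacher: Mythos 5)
Your proposal is correct and follows exactly the route the paper points to: the paper gives no explicit proof of this lemma, asserting only that it "immediately follows from Lemma \ref{h-probgood} and the continuity of $D$ in Proposition \ref{LimitContinuousTail}," which is precisely the combination you spell out (translation invariance to reduce to $\bm z = \bm 0$, Lemma \ref{h-probgood} for the first two clauses, Proposition \ref{LimitContinuousTail} with $M \asymp \log(1/\alpha)$ for the diameter clause, then a union bound). Your observation that $M$ must grow like $\log(1/\alpha)$ so that $\gamma_0$ necessarily depends on $\alpha$ is a useful bookkeeping detail that the paper leaves implicit.
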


Hence, applying a union bound, we can prove the following lemma.
\begin{lemma}\label{good-prob}
For each $\delta_0\in(0,1)$,
when  $\alpha\in(0,1)$ and $p\in(0,1)$ are sufficiently small (depending only on $\delta_0$, $\beta,d$ and the laws of $D$ and $\widetilde{D}$), we have that for each $C'\in (0,C_*)$, there exist $b=b(\delta_0)>0$  and $\gamma_0=\gamma_0(\delta_0)>0$ satisfying the following property. If $r>0$ and there exists $\varepsilon\in (0,\varepsilon_0]$ satisfying $\mathds{P}[H_{3\varepsilon r}(\alpha,C')]<p$, then for each ${\bm z}\in\mathds{R}^d$, we have:
$$
\mathds{P}[\text{the cube }V_{3\varepsilon r}({\bm z})\ \text{is }(3\varepsilon r,\alpha)\text{-nice}]\geq 1-\delta_0.
$$
\end{lemma}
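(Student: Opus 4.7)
The plan is to prove Lemma \ref{good-prob} by a direct union bound, recognizing that the two clauses in Definition \ref{alp-good} are controlled by two independent inputs: Lemma \ref{super good (ii)} for clause (2), and the hypothesis $\mathds{P}[H_{3\varepsilon r}(\alpha,C')]<p$ together with Axiom IV' for clause (1). The key observation is that $E_{3\varepsilon r}(\bm z)^c$ is, up to translation, nothing other than the event $H_{3\varepsilon r}(\alpha,C')$.

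First, I would fix $\alpha\in(0,1)$ small (to be determined) and take $b=b(\alpha)>0$ and $\gamma_0=\gamma_0(\alpha)>0$ to be the constants produced by Lemma \ref{super good (ii)} (equivalently, by Lemma \ref{h-probgood} applied to the metric $D$). With these constants in place, the parenthetical remarks in Definitions \ref{def-H} and \ref{alp-good} guarantee that the same $b$ and $\gamma_0$ appear in the definition of $E_{3\varepsilon r}(\bm z)$ through Definition \ref{def-E}. Lemma \ref{super good (ii)} then yields a constant $c>0$ depending only on $\beta,d$ and the law of $D$ such that, uniformly in $\bm z\in\mathds{R}^d$ and in the scale $3\varepsilon r$,
\begin{equation*}
\mathds{P}[\text{Definition \ref{alp-good} (2) fails at } V_{3\varepsilon r}(\bm z)]\le c\alpha\log(1/\alpha).
\end{equation*}

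Next I would compare $E_{3\varepsilon r}(\bm z)^c$ with $H_{3\varepsilon r}(\alpha,C')$ term by term. Taking $r\leftarrow 3\varepsilon r$ in Definition \ref{def-H} and comparing with Definition \ref{def-E} at scale $s=3\varepsilon r$, the constraints $\bm x,\bm y\in V_s(\cdot)$, $|\bm x-\bm y|\ge \alpha s/3$, $D(\bm x,\bm y)\ge (b\alpha s)^\theta$, the diameter control at scales $\gamma\in(0,\gamma_0]$, and the containment of a $D$-geodesic in the ambient cube all agree; the only difference is that $E_{3\varepsilon r}(\bm z)^c$ is centered at $\bm z$ while $H_{3\varepsilon r}(\alpha,C')$ is centered at $\bm 0$. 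Hence, applying Axiom IV' jointly to the coupling $(\mathcal{E},D,\widetilde{D})$ (which translates the Poisson edge set together with both metrics), we obtain
\begin{equation*}
\mathds{P}[E_{3\varepsilon r}(\bm z)^c]=\mathds{P}[H_{3\varepsilon r}(\alpha,C')]<p.
\end{equation*}

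A union bound over the two clauses of Definition \ref{alp-good} then gives
\begin{equation*}
\mathds{P}[V_{3\varepsilon r}(\bm z)\text{ is not }(3\varepsilon r,\alpha)\text{-nice}]\le p+c\alpha\log(1/\alpha),
\end{equation*}
uniformly in $\bm z$. It remains only to tune the constants in the right order: choose $\alpha=\alpha(\delta_0)$ so small that $c\alpha\log(1/\alpha)\le\delta_0/2$ (which simultaneously fixes $b=b(\delta_0)$ and $\gamma_0=\gamma_0(\delta_0)$), and then $p=p(\delta_0)\le\delta_0/2$; the hypothesis of Lemma \ref{good-prob} supplies the compatible $\varepsilon_0=\varepsilon_0(\gamma,q,C')$. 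There is essentially no obstacle: the only subtle point is the consistent choice of $b$ and $\gamma_0$ across Definitions \ref{def-H}, \ref{def-E} and \ref{alp-good}, which is pre-arranged by the parenthetical remarks in those definitions, so that the same $\alpha$ simultaneously controls both the geometric failure probability from Lemma \ref{super good (ii)} and the translated $H$-event.
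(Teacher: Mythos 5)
Your proof is correct and follows essentially the same route as the paper: identify $E_{3\varepsilon r}(\bm z)^c$ with a translate of $H_{3\varepsilon r}(\alpha,C')$ via Axiom IV', bound the probability that Definition~\ref{alp-good}~(2) fails by $c\alpha\log(1/\alpha)$ using Lemma~\ref{super good (ii)}, apply a union bound, and then choose $\alpha$ and $p$ small in terms of $\delta_0$. Your write-up is somewhat more explicit about the consistency of the constants $b,\gamma_0$ across Definitions~\ref{def-H}, \ref{def-E} and \ref{alp-good}, but this bookkeeping is exactly what the parenthetical remarks in those definitions already arrange, so there is no substantive difference.
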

\begin{proof}
Thanks to Axiom IV'(translation invariance), we only need to prove the desired assertion when ${\bm z}={\bm 0}$.

From definition \ref{def-E}, we obtain that if $\mathds{P}[H_{3\varepsilon r}(\alpha,C')]<p$, then  $\mathds{P}[E_{3\varepsilon r}(\bm 0)]\geq 1-p$.
Additionally, by Lemma \ref{super good (ii)}, the probability of the event in Definition \ref{alp-good} (2) can be arbitrarily close
to 1 as $\alpha\rightarrow 0$. By combining the above analysis, we can conclude that
$$
\mathds{P}[\text{the cube }V_{3\varepsilon r}({\bm z})\ \text{is }(3\varepsilon r,\alpha)\text{-nice}] \ge 1-p-c\alpha(\log(\alpha^{-1}))\rightarrow 1\quad \text{as }p,\alpha\rightarrow 0.
$$
This implies the desired assertion.
\end{proof}

We now prove Proposition \ref{(B)=>G}  with the help of the above lemmas.

Fix $\alpha\in(0,1)$ and $ p\in(0,1)$ which are sufficiently small and will be chosen below. We will show that for each $\gamma>0$ and $q>0$, there exists $\varepsilon_0=\varepsilon_0(\gamma,q,C')>0$ such that if $r>0$, $\varepsilon\in(0,\varepsilon_0]$ and $\mathds{P}[H_{3\varepsilon r}(\alpha,C')]<p$ holds for the above $\alpha,p,r,\varepsilon$, then there exists $C''\in(C',C_*)$ such that
 with probability at least $1-\gamma$, for all ${\bm x},{\bm y}\in  V_r(\bm 0)$  with $D({\bm x},{\bm y})\geq qr^\theta$,
\begin{equation}\label{G^c}
\widetilde{D}(V_{\gamma r}(\bm x),V_{\gamma r}(\bm y))\leq C'' D({\bm x},{\bm y}).
\end{equation}
Then by Definition \ref{def-G}, \eqref{G^c} implies that $\mathds{P}[G_r(\gamma,q,C'')^c]>1-\gamma$, which is the desired result in Proposition \ref{(B)=>G}.

Throughout the proof, we assume that $\varepsilon\ll\gamma$ and $\varepsilon\ll q$. Note that we can make this assumption since $\varepsilon_0$ is allowed to depend on $\gamma$ and $q$. Additionally, without loss of generality, we assume that $1/\varepsilon\in\mathds{Z}$. If this is not true, we could replace $1/\varepsilon$ with $\lfloor1/\varepsilon\rfloor$.

We recall the renormalization of the continuous model in Section \ref{bi-lipschitz}, which will play a crucial role throughout the proof.
We divide $\mathds{R}^d$ into small cubes of side length $\varepsilon r$, i.e., $\mathds{R}^d=\cup_{{\bm k}\in(\varepsilon r)\mathds{Z}^d}V_{\varepsilon r}({\bm k})$.
We identify the cube $V_{\varepsilon r}({\bm k})$ with the vertex ${\bm k}$ and call the resulting graph $G$.
We denote $P$ and $P^G$ for the paths in the continuous model and in $G$, respectively, and denote $d^G$ for the chemical distance of $G$.
For each ${\bm j}\in(\varepsilon r)\mathds{Z}^d$, we say that ${\bm j}$ is {\it nice} in graph $G$ if the cube $V_{3\varepsilon r}({\bm j})$ is $(3\varepsilon r, \alpha)$-nice as in Definition \ref{alp-good}. From Lemma \ref{good-prob}, we see that
\begin{equation}\label{active-pro}
\mathds{P}[{\bm j}\ \text{is nice}]\geq 1-\delta_0(\alpha,p),
\end{equation}
where $\delta_0(\alpha,p)$ can be arbitrarily close to 1 as $\alpha,p\rightarrow 0$.

 For any ${\bm i},{\bm j}\in (\varepsilon r)\mathds{Z}^d$ and $m\in\mathds{N}$, we recall $\mathcal{P}_m({\bm i},{\bm j})$ as the collection of self-avoiding paths $P^G$ from ${\bm i}$ to ${\bm j}$ in $G$ with length $m$ and $\mathcal{P}_{\geq m}({\bm i},{\bm j})=\cup_{n\geq m}\mathcal{P}_n({\bm i},{\bm j})$. Let $F'_{\varepsilon,1}$ be the event that  all paths $P^G$ in $\cup_{{\bm i},{\bm j}\in (\varepsilon r)[0,1/\varepsilon)_{\mathds{Z}}^d}\mathcal{P}_{\geq \varepsilon^{-\theta/4}{-1}}({\bm i},{\bm j})$ pass through at least $|P^G|/(4\cdot 3^d)$ of nice points.

Similar to Lemma \ref{Fr4lemma-2}, we can prove the event $F'_{\varepsilon,1}$ occurs with high probability.
\begin{lemma}\label{Fr4lemma}
For each sufficiently small $\alpha,p\in(0,1)$,  we have that $F'_{\varepsilon,1}$ occurs with probability at least $1-O_\varepsilon(\varepsilon^\mu)$ for all $\mu>0$. Here the implicit constant in the $O_\varepsilon(\cdot)$ depends only on $\beta,d,\alpha, \mu,p$  and the law of $D$.
\end{lemma}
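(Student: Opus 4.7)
The plan is to mimic the argument used to establish Lemma \ref{Fr4lemma-2}, now replacing the whole probability space with the (strengthened) event ``$\bm k$ is nice'' in the role of the events $\{E_{\bm k}\}$ of Lemma \ref{hd-BK}. The nice event has been tailored precisely so that its occurrence together with Definition \ref{alp-good}(2) implies the two bullet conditions in Definition \ref{hd-PGgood} (up to the factor $1/(4\cdot 3^d)$), so a path that is $(\{E_{\bm k}\},\alpha)$-good in the sense of Definition \ref{hd-PGgood} automatically hits a good fraction of nice sites.

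First I would fix $\delta \in (0,1)$ small enough so that $2 C_{dis} \delta < 1$ (where $C_{dis}$ is from Lemma \ref{number-path-k}), and let $p_c = p_c(\delta)$ and $\alpha_0 = \alpha_0(\delta)$ be the thresholds provided by Lemma \ref{hd-BK}. Next, invoke Lemma \ref{good-prob} with $\delta_0 \leq 1 - p_c$ to obtain constants $\alpha \in (0,\alpha_0)$ and $p \in (0,1)$, together with $b = b(\alpha)$ and $\gamma_0 = \gamma_0(\alpha)$, such that whenever $\mathds{P}[H_{3\varepsilon r}(\alpha, C')] < p$ we have $\mathds{P}[\bm k \text{ is nice}] \geq 1 - \delta_0 \geq p_c$ for every $\bm k \in (\varepsilon r)\mathds{Z}^d$. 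With these parameters fixed, Lemma \ref{hd-BK} yields, for any fixed self-avoiding $P^G$ of length $L$,
$$\mathds{P}\big[P^G \text{ is } (\{E_{\bm k}\},\alpha)\text{-bad}\big] \leq \delta^L.$$

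The remainder of the argument then parallels the proof of Lemma \ref{Fr4lemma-2} verbatim. Let $N_{\bm i\bm j}$ denote the event that $|\mathcal P_m(\bm i,\bm j)| < (2C_{dis})^m$ for all $m \geq \varepsilon^{-\theta/4}-1$. By Lemma \ref{number-path-k} and Markov's inequality,
$$\mathds P[N_{\bm i\bm j}^c] \leq \sum_{m \geq \varepsilon^{-\theta/4}-1} 2^{-m} = O_\varepsilon(\varepsilon^\mu) \quad \forall \mu > 0.$$
A union bound over the $O(\varepsilon^{-2d})$ pairs $\bm i,\bm j \in (\varepsilon r)[0,1/\varepsilon)^d_{\mathds Z}$ and over $m \geq \varepsilon^{-\theta/4}-1$ then gives
$$\mathds P[(F'_{\varepsilon,1})^c] \leq O_\varepsilon(\varepsilon^\mu) + (2\varepsilon^{-1})^{2d}\sum_{m \geq \varepsilon^{-\theta/4}-1}(2C_{dis}\delta)^m = O_\varepsilon(\varepsilon^\mu)$$
for every $\mu > 0$, since $2C_{dis}\delta < 1$ makes the geometric tail superpolynomially small in $\varepsilon$.

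The only subtlety, which I expect to be the main obstacle in writing the argument carefully, is verifying that the nice event really fits the hypotheses of Lemma \ref{hd-BK}: that it be determined by the edges in $V_{3\varepsilon r}(\bm k)$ and by $D(\cdot,\cdot;V_{3\varepsilon r}(\bm k))$, so that events at disjoint cubes are independent. For Definition \ref{alp-good}(2) this is immediate from Axiom II (locality). For the event $E_{3\varepsilon r}(\bm k)$ inside Definition \ref{alp-good}(1), the key observation is that the quantifier only considers pairs $\bm x,\bm y$ joined by a $D$-geodesic $P$ with $P \subset V_{3\varepsilon r}(\bm k)$; for such pairs, $D(\bm x,\bm y) = D(\bm x,\bm y;V_{3\varepsilon r}(\bm k))$ by the length-space property, and the reference to $\widetilde D(\bm x,\bm y)$ can be replaced by $\widetilde D(\bm x,\bm y;V_{3\varepsilon r}(\bm k))$ (a weaker, hence larger, requirement) without altering the lower bound $\mathds P[E_{3\varepsilon r}(\bm k)] \geq 1 - p$ supplied through the assumption on $H_{3\varepsilon r}(\alpha,C')$. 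This renders the nice event jointly local in $(\mathcal E, D, \widetilde D)$ over $V_{3\varepsilon r}(\bm k)$ and produces the required independence across the $3$-coloring of $(3\varepsilon r)\mathds Z^d$ used inside the proof of Lemma \ref{hd-BK}.
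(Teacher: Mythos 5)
You follow the paper's strategy closely: apply Lemma \ref{hd-BK} with a nice-cube event in place of the abstract $\{E_{\bm k}\}$, then union-bound over paths exactly as in Lemma \ref{Fr4lemma-2}. The parameter book-keeping ($\delta$ small with $2C_{dis}\delta<1$, $p_c,\alpha_0$ from Lemma \ref{hd-BK}, $\delta_0\leq 1-p_c$ via Lemma \ref{good-prob}) matches the paper's, and the final geometric estimate is right. You also correctly flag the delicate point: the event $E_{3\varepsilon r}(\bm k)$ of Definition \ref{def-E} refers to the \emph{global} metric $\widetilde D(\bm x,\bm y)$, so it is not \emph{a priori} determined by edges inside $V_{3\varepsilon r}(\bm k)$ and internal metrics, which is what Lemma \ref{hd-BK}'s hypothesis (hence the independence across disjoint boxes) requires. (The paper takes $E_{\bm k}=E_{3\varepsilon r}(\bm k)$ and folds Definition \ref{alp-good}(2) into the path-dependent check, whereas you put the whole nice event into $E_{\bm k}$; that choice is fine, since the nice event already forces the Definition \ref{hd-PGgood}(2) conditions.)

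However, your fix for the locality issue is wrong, and this is a genuine gap. You replace $\widetilde D(\bm x,\bm y)$ by the internal metric $\widetilde D(\bm x,\bm y;V_{3\varepsilon r}(\bm k))$ and claim this is ``a weaker, hence larger, requirement'' that preserves the lower bound $\mathds P[E_{3\varepsilon r}(\bm k)]\geq 1-p$. The monotonicity runs the other way: $\widetilde D(\bm x,\bm y;V_{3\varepsilon r}(\bm k))\geq\widetilde D(\bm x,\bm y)$ always, so demanding $\widetilde D(\bm x,\bm y;V_{3\varepsilon r}(\bm k))\leq C' D(\bm x,\bm y)$ is a \emph{stronger} conclusion than $\widetilde D(\bm x,\bm y)\leq C' D(\bm x,\bm y)$, and the localized event is a \emph{subset} of $E_{3\varepsilon r}(\bm k)$. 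Hence $\mathds P[E_{3\varepsilon r}(\bm k)]\geq 1-p$ supplies no lower bound whatsoever for the localized version, and the assumption $\mathds P[H_{3\varepsilon r}(\alpha,C')]<p$ — which only controls the global-$\widetilde D$ event — gives no handle on it either. You have traded independence for exactly the probability estimate you need to feed into Lemma \ref{hd-BK}. Closing the gap requires additional input, e.g.\ showing that with sufficiently high probability the relevant $\widetilde D$-geodesics also stay in $V_{3\varepsilon r}(\bm k)$ (so that the local and global $\widetilde D$-distances agree), or a conditional-independence device; neither is present in your argument.
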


\begin{proof}
Let ${\bm i},{\bm j}\in (\varepsilon r)[0,1/\varepsilon)_{\mathds{Z}}^d$ and let $P_{{\bm i}{\bm j}}^G$ be a path from ${\bm i}$ to ${\bm j}$ in $G$ with $|P_{{\bm i}{\bm j}}^G|\geq \varepsilon^{-\theta/4}{ -1}$.
From \eqref{active-pro} we can apply Lemma \ref{hd-BK} by replacing $\{E_{\bm k}\}_{{\bm k}\in(\varepsilon r)\mathds{Z}^d}$ with $\{E_{3\varepsilon r}({\bm k})\}_{{\bm k}\in(\varepsilon r)\mathds{Z}^d}$ (see Definition \ref{alp-good} (1)) and replacing Definition \ref{hd-PGgood} (2) with Definition \ref{alp-good} (2), and derive the following: for each $m\geq \varepsilon^{-\theta/4} -1$ and each $P_{{\bm i}{\bm j}}^G\in\mathcal{P}_m({\bm i},{\bm j})$,
\begin{equation*}
\begin{split}
&\mathds{P}\left[\text{the number of nice points in $P_{{\bm i}{\bm j}}^G$ is at most }|P_{{\bm i\bm j}}^G|/(4\cdot 3^d) \Big| G\right]\\
&=\mathds{P}\left[ P_{{\bm i \bm j}}^G\text{ is }(\{E_{3\varepsilon r}({\bm k})\}_{{\bm k}\in(\varepsilon r)\mathds{Z}^d},\alpha)\text{-bad}\Big| G\right]\leq \delta_0(\alpha,p)^{m}.
\end{split}
\end{equation*}
Then using the arguments in the proof of Lemma \ref{Fr4lemma-2}, we can obtain the desired statement by taking sufficiently small $\alpha$ and $p$ such that $2C_{dis}\delta_0(\alpha,p)<1$. Here $C_{dis}$ is the constant in Lemma \ref{number-path-k} depending only on $\beta$ and $d$.
\end{proof}


For the same reasons as outlined in Section \ref{bi-lipschitz}, we need the event that each geodesic between any two points ${\bm x},{\bm y}\in  V_r(\bm 0)$ lies entirely within a compact set. To achieve this, we also get from Lemma \ref{geodesic-go-not-far} that for fixed $\gamma>0$, there is a sufficiently large $u=u(\gamma)\in\mathds{N}$ (which depends only on $\beta,d,\gamma$ and the law of $D$) such that with probability at least $1-\gamma/5$, each $D$-geodesic between any two points of $ V_r(\bm 0)$ is contained in $[-ur, ur]^d$. We will refer to this event as $F'_{\gamma,2}$. That is,
\begin{equation}\label{Fr2}
\mathds{P}[F'_{\gamma,2}]\geq 1-\gamma/5.
\end{equation}


For any $\bm x\in V_r(\bm 0)$, let ${\bm k}_{\bm x}\in(\varepsilon r)[-u/\varepsilon,u/\varepsilon)_\mathds{Z}^d$ satisfy $\bm x\in V_{\varepsilon r}(\bm k_{\bm x})$. We next let $F'_{\varepsilon,3}$ be the event that for any ${\bm x},{\bm y}\in V_r(\bm 0)$ with $D({\bm x},{\bm y})\ge qr^\theta\ge 2\varepsilon^{\theta/2}r^\theta$,
    \begin{equation}\label{eq-d-longer-D-3}
        d^G({\bm k}_{\bm x},{\bm k}_{\bm y})+1\ge \max\left\{c_1(\varepsilon r)^{-\theta} D({\bm x},{\bm y}),\varepsilon^{-\theta/4}\right\}.
    \end{equation}
Then from Lemma \ref{d-longer-than-D}, we can see that  there exists a constant $c_1>0$, depending only on $\beta,d$  and the law of $D$, such that
\begin{equation}\label{d-longer-than-D-3}
\mathds{P}[F'_{\gamma,2}\cap (F'_{\varepsilon,3})^c]\leq  O_\varepsilon(\varepsilon^{\mu})\quad \text{for any\ } \mu>0,
\end{equation}
 where the implicit constant depends only on $\beta,d, \gamma,\mu$  and the law of $D$.
Recalling the fact that $\varepsilon \ll \gamma$ and combining \eqref{Fr2}, \eqref{d-longer-than-D-3} with Lemma \ref{Fr4lemma},  we get that
\begin{equation}\label{Fr3}
\mathds{P}[F'_{\varepsilon,1}\cap F'_{\gamma,2}\cap F'_{\varepsilon,3}]\geq 1-\frac{3\gamma}{5}.
\end{equation}
In what follows, we assume that $F'_{\varepsilon,1}\cap F'_{\gamma,2}\cap F'_{\varepsilon,3}$ occurs.

By using \eqref{eq-d-longer-D-3}, we can show that on the event $F'_{\varepsilon,1}\cap F'_{\gamma,2}\cap F'_{\varepsilon,3}$, for all ${\bm x},{\bm y}\in V_r(\bm 0)$ with $D({\bm x},{\bm y})\geq q r^\theta$, we can construct a skeleton path $P^G_{{\bm k}_{\bm x},{\bm k}_{\bm y}}$ in $G$ from a $D$-geodesic from $\bm x$ to $\bm y$ with
\begin{equation}\label{Lsharper}
    |P^G_{{\bm k}_{\bm x},{\bm k}_{\bm y}}|\geq \max\{c_1(\varepsilon r)^{-\theta} D({\bm x},{\bm y}),\varepsilon^{-\theta/4}\}-1\geq\max\{c_1(\varepsilon r)^{-\theta} D({\bm x},{\bm y}),\varepsilon^{-\theta/4}\}/2,
\end{equation}
which implies that the number of cubes $V_{\varepsilon r}({\bm k})$ hit by a $D$-geodesic from $\bm x$ to $\bm y$ has a lower bound given in \eqref{Lsharper}.
This allows us to only consider paths from $\bm x$ to $\bm y$ that pass through at least  $(\varepsilon^{-\theta/4}-1)$ many $V_{\varepsilon r}({\bm k})$'s (or equivalently, the paths from ${\bm k}_{\bm x}$ to ${\bm k}_{\bm y}$ in $G$ with lengths at least $\varepsilon^{-\theta/4}-1$) in the following.
To ease exposition, we denote by $\mathcal{P}_{\geq \varepsilon^{-\theta/4}-1}({\bm k}_{\bm x},{\bm k}_{\bm y})$ the set of such paths in $G$.


We will now prove \eqref{G^c}  on $F'_{\varepsilon,1}\cap F'_{\gamma,2}\cap F'_{\varepsilon,3}$. Let ${\bm x},{\bm y}\in V_r(\bm 0)$  be such that $D({\bm x},{\bm y})\ge qr^\theta$ and let $P:[0,D({\bm x},{\bm y})]\rightarrow \mathds{R}^d$ be a $D$-geodesic from $\bm x$ to $\bm y$. Assume that
\begin{equation*}
|\bm x-\bm y|\geq \gamma r\quad \text{and}\quad V_{\gamma r}(\bm x)\nsim V_{\gamma r}(\bm y).
\end{equation*}
This assumption is also without loss of generality since \eqref{G^c} is trivially satisfied when $|\bm x-\bm y|\leq \gamma r$ or when  a long edge directly connects cubes $V_{\gamma r}(\bm x)$ and  $V_{\gamma r}(\bm y)$.
More specifically, if $|\bm x-\bm y|\leq \gamma r$, it is obvious that $V_{\gamma r}(\bm x)\cap V_{\gamma r}(\bm y)\neq \emptyset$. This means the left hand side of \eqref{G^c} is 0, i.e., $\widetilde{D}(V_{\gamma r}(\bm x),V_{\gamma r}(\bm y))=0$. Thus \eqref{G^c} holds trivially. Similarly, if there is a long edge directly connecting cubes $V_{\gamma r}(\bm x)$ and  $V_{\gamma r}(\bm y)$, we have the same conclusion.

On the event $F'_{\varepsilon,1}\cap F'_{\gamma,2}\cap F'_{\varepsilon,3}$ (by a simple volume consideration), there exist at least  $c_1(\varepsilon r)^{-\theta}D({\bm x},{\bm y})/( 8\cdot 3^d)$ many $(3\varepsilon r,\alpha)$-nice cubes which do not intersect each other such that there exists
a $D$-geodesic $P$ which passes through each such nice cube (say $V_{3\varepsilon r}(\bm k)$) and also hits $V_{\varepsilon r}(\bm k)$ (see Lemma \ref{Fr4lemma} and \eqref{Lsharper}). Let $\Lambda$ be the set of all such cubes.

We next define a sequence of times
$$
0=t_0<s_1<t_1<s_2<t_2<\cdots<s_L<t_L<s_{L+1}=D({\bm x},{\bm y}) 
$$
where $L\geq  c_1(\varepsilon r)^{-\theta}D({\bm x},{\bm y})/(8\cdot 3^d)$ and our definition is by induction as follows. Let $t_0=0$. Let $j\in\mathds{N}$ be arbitrary and assume that $t_{j-1}$ has been defined. Then we let $s_j$ be the first time after $t_{j-1}$ that $P$ enters a new cube $V_{\varepsilon r}({\bm k}_j)$ which is contained in a $(3\varepsilon r,\alpha)$-nice cube $V_{3\varepsilon r}({\bm k}_j)\in\Lambda$, and let $t_j$ be the first time that $P$ exits this $(3\varepsilon r,\alpha)$-nice cube $V_{3\varepsilon r}$ after $s_j$ (see Figure \ref{Proof512}). Thus $P([s_j,t_j])\subset V_{3\varepsilon r}({\bm k}_j)$. Based on Definition \ref{alp-good} (2) for $(3\varepsilon r,\alpha)$-nice cubes, we get
\begin{align}
    \label{Ps-Pt>}
    &|P(s_j)-P(t_j)|\geq \alpha\varepsilon r,\\
    \label{D(PsPt)}
    & (b\alpha \varepsilon r)^\theta \leq D(P(s_j),P(t_j);V_{3\varepsilon r}({\bm k}_j))= D(P(s_j),P(t_j))= t_j-s_j,\\
    & \max\left\{{\rm diam}(V_{\gamma\varepsilon r}(P(s_j));D),{\rm diam}(V_{\gamma\varepsilon r}(P(t_j));D)\right\}
    \leq \gamma^{\theta/2} D(P(s_j),P(t_j))\nonumber
\end{align}
\begin{figure}[htbp]
\centering
\includegraphics[scale=0.55]{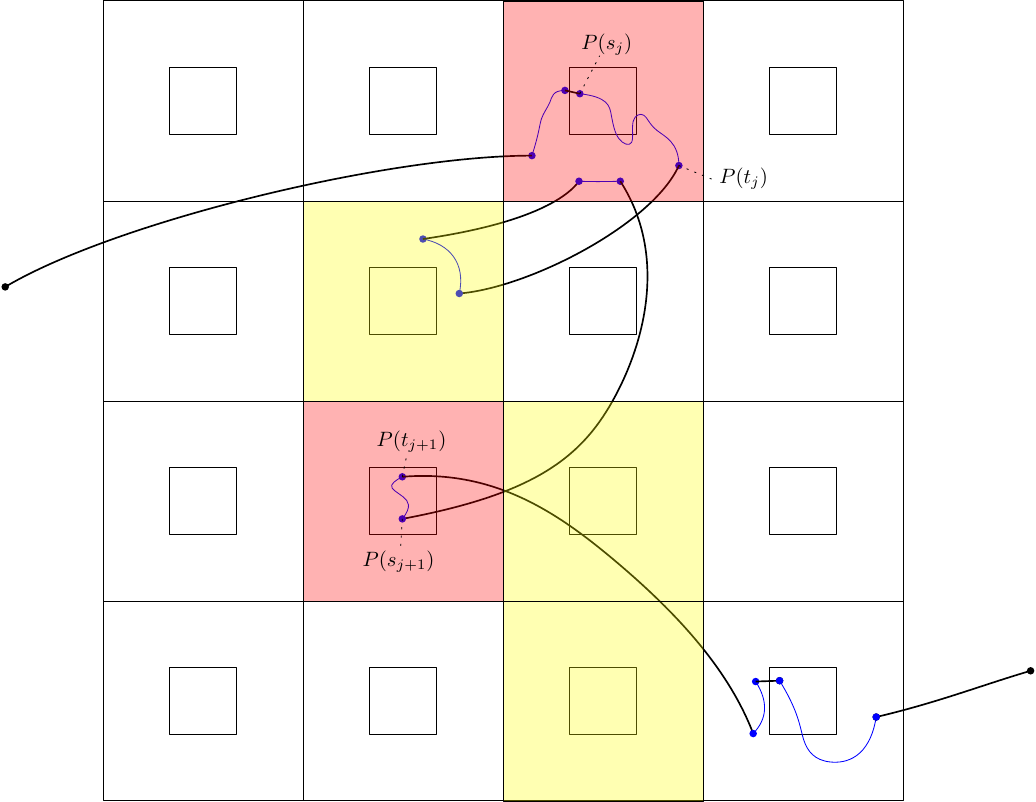}
\caption{Illustration for the definition of the times $t_j$ and $s_j$ in the two-dimensional case. Here the red cubes are  $(3\varepsilon r,\alpha)$-nice contained in $\Lambda$, the yellow cubes are  $(3\varepsilon r,\alpha)$-nice but not contained in  $\Lambda$, while other cubes are not $(3\varepsilon r,\alpha)$-nice.}
\label{Proof512}
\end{figure}
for all $j=1,2,\cdots, L$, where $b$  and $\gamma_0$ are the constants in Definition \ref{alp-good} (2) that depend only on $\beta,d$ and $\alpha$.
Therefore, combining \eqref{Ps-Pt>}, \eqref{D(PsPt)} with the fact that $P([s_j,t_j])\subset V_{3\varepsilon r}({\bm k}_j)$, all conditions in Definition \ref{def-E} of $E_{3\varepsilon r}({\bm k}_j)$ hold. Hence, since $\bm k_j$ is $(3\varepsilon r,\alpha)$-nice, Definition \ref{alp-good} (1) also implies
\begin{equation}\label{goodseg}
\widetilde{D}(P(s_j),P(t_j))\leq C' D(P(s_j),P(t_j))=C'(t_j-s_j)\quad \text{for all }j=1,2,\cdots, L.
\end{equation}

Moreover, Combining the lower bound in \eqref{D(PsPt)} with $L\geq  c_1(\varepsilon r)^{-\theta}D({\bm x},{\bm y})/(8\cdot 3^d)$ yields that
$$
\sum_{j=1}^L (t_j-s_j) \geq c_1(b\alpha)^\theta D({\bm x},{\bm y})/( 8\cdot 3^d).
$$
Hence, we have
\begin{equation}\label{sj-tj-1}
\sum_{j=1}^{L+1}(s_j-t_{j-1})=D({\bm x},{\bm y})-\sum_{j=1}^L (t_j-s_j)\leq
\left(1- \frac{c_1(b\alpha)^\theta}{8\cdot 3^d}
\right)D({\bm x},{\bm y}).
\end{equation}
Therefore, we obtain that

\begin{equation}\label{C''D}
\begin{split}
&\widetilde{D}(V_{\gamma r}(x),V_{\gamma r}(y))\leq \widetilde{D}(V_{3\varepsilon r}({\bm k}_x),V_{3\varepsilon r}({\bm k}_y))
\quad\quad \text{(by $\varepsilon \ll \gamma$)}\\
&\leq \sum_{j=1}^{L+1}\widetilde{D}(P(t_{j-1}),P(s_j))+\sum_{j=1}^L\widetilde{D}(P(s_j),P(t_j))\quad\quad \text{(by the triangle inequality)}\\
&\leq C_*\sum_{j=1}^{L+1}(s_j-t_{j-1})+C'\sum_{j=1}^L(t_j-s_j)\quad\quad \text{(by \eqref{goodseg})}\\
&=C'D({\bm x},{\bm y})+(C_*-C')\sum_{j=1}^{L+1}(s_j-t_{j-1})\quad\quad\text{(since $P$ is a $D$-geodesic from $\bm x$ to $\bm y$)}\\
&\leq \left(C'+\left(1-\frac{c_1(b\alpha)^\theta}{  8\cdot 3^d}
\right)(C_*-C')\right)D({\bm x},{\bm y})\quad\quad\text{(by \eqref{sj-tj-1})}\\
&\leq C''D({\bm x},{\bm y})
\end{split}
\end{equation}
with $C''\in (C'+a(C_*-C'),C_*)$, where $a:=1-\frac{c_1(b\alpha)^\theta}{{8\cdot 3^d}}
\in(0,1)$ does not depend on $r,{\bm x},{\bm y} ,\gamma,q$ or $\varepsilon$. Hence we complete the proof of \eqref{G^c} on the event $F'_{\varepsilon,1}\cap F'_{\gamma,2}\cap F'_{\varepsilon,3}$.

To summarize the subsection (which aims to prove Proposition \ref{(B)=>G}), we recall \eqref{Fr3}.
Therefore, we can choose $\varepsilon_0=\varepsilon_0(\gamma, q,C')>0$ small enough so that $\varepsilon \ll \gamma$ and $\varepsilon \ll q$.
Then by \eqref{C''D} and Definition \ref{def-G} of the event $G_r(\gamma,q, C'')$, we get that for $\varepsilon \in(0,\varepsilon_0]$, the condition $\mathds{P}[H_{\varepsilon r}(\alpha,C')]<p$ implies that $\mathds{P}[G_r(\gamma,q, C'')]<\gamma$. Hence, the proof of Proposition \ref{(B)=>G} is complete.

\section{The core argument}\label{section4}

 Recall that we have assumed that $c_*<C_*$. In this section, we will show that the probability of the event $H_r(\alpha, C_*-\delta)$ goes to zero as $\delta\to 0$ uniformly in $r$, which will yield  a contradiction with Proposition \ref{Hwithr=1} in light of Proposition \ref{mrinS3-tilde}. Therefore we can complete the proof of Theorem \ref{uniqueness}.

The main idea is to count the number of occurrences of events of a certain type (see Definitions \ref{F_Z} and \ref{G-}).
To this end, we will lift the probability $p$ in Proposition \ref{mrinS3-tilde} by using a multi-scale analysis.
This will allow us to generate many ``nice'' cubes in $\mathds{R}^d$, as detailed in Section \ref{def-4nice}.

After that, in order to establish the counting argument, we will introduce a family of candidate sets  $\mathcal{W}_\varepsilon$ (a collection of all ``suitable'' subsets of $[1,\varepsilon^{-d}]_\mathds{Z}$ for some sufficiently small $\varepsilon>0$) 
for the cubes on which we add or delete edges. Then for $Z\in{\mathcal{W}_\varepsilon}$ we define two new metrics with respect to the modified edge sets $D_Z^+$ and $D_Z^-$ which are obtained from adding or deleting a number of edges which concatenate the ``nice'' cubes of small scale together in some large ``nice'' cubes $V_{\varepsilon r}(\bm z_k)$ for $k\in Z$ (see Section \ref{add-delete}).
Using the behavior of $D_Z^+$-paths in these ``nice'' cubes, we can obtain a lower bound on ``shortcuts'' for the metric $D_Z^+$,
which will induce a lower bound on the number of sets $Z$ with $\#Z\leq m$ such that $\mathsf{F}_{Z, \varepsilon}$ (defined in Definition \ref{F_Z}) occurs.
Similarly, by analyzing the behavior of $D_Z^-$-paths in these ``nice'' cubes, we can obtain an upper bound on the number of sets $Z$ with $\#Z\leq m$ such that $\mathsf{G}^-_{Z, \varepsilon}$ (defined in Definitions \ref{G-}) occurs. Combining the above arguments, we could get the desired statement.


\subsection{Definitions of nice and super good cubes}\label{def-4nice}

Before stating the conditions that our events  need to satisfy, we first introduce some notation as follows.

For $\bm z\in \mathds{R}^d$ and $s>0$, recall that $V_s(\bm z)$ denotes the cube with center $\bm z$ and side length $s$.
We begin with a modified version of $(s,\alpha)$-nice cube in Definition \ref{alp-good} as follows.

\begin{definition}\label{interval-good}
 For $c'>0$, $s>0$, $\bm z\in\mathds{R}^d$, sufficiently small $\eta\in(0,1)$ and $\alpha\in(0,1)$, we say that the cube $V_s(\bm z)$ is {\it$(s,\alpha,\eta)$-nice} with respect to $\mathcal{E}$ if there exist two small cubes
$$J^{(1)},\ J^{(2)}\subset V_s(\bm z)$$
 both with side length $\eta s$ satisfying the following conditions. 

\begin{itemize}
\item[(1)] $\text{dist}(J^{(1)},J^{(2)})\geq \alpha s$.

\item[(2)]  There is a  $\widetilde{D}$-geodesic $P$ from $J^{(1)}$ to $J^{(2)}$ such that $P\subset V_s(\bm z)$, and
$$
\widetilde{D}(J^{(1)},J^{(2)})< c'D(J^{(1)},J^{(2)}).
$$

\item[(3)]  There is a constant $b_1>0$ (chosen in Lemma \ref{interval-good1} below) such that
$$
D(J^{(1)},J^{(2)})\geq b_1(\alpha s)^\theta.
$$


\item[(4)]  For any $\bm u,\bm v\in V_s(\bm z)$,
$$
D(\bm u,\bm v ;V_s(\bm z))\le C_1 \|\bm u-\bm v\|_\infty^\theta\log\frac{2s}{\|\bm u-\bm v\|_\infty},
$$
where $C_1=2^{\theta+1}\max\left\{\log(2^{d+3} C_D/p),2^{\theta+d+1}\right\}>0$ with constants {$C_D$} defined in Proposition \ref{LimitContinuousTail} and  $p$ in Proposition \ref{mrinS3}.
\end{itemize}
We refer to the above $(J^{(1)}, J^{(2)})$ as a \textit{great pair of cubes} of $V_s(\bm z)$, and say a path passes through a great pair  $(J^{(1)}, J^{(2)})$ if it hits both $J^{(1)}$ and $J^{(2)}$.
\end{definition}

\medskip
Throughout this section, we set
$$
c'=\frac{c_*+C_*}{2}
$$
which belongs to $(c_*,C_*)$ if $c_*<C_*$.

In the following lemma, we establish a lower bound on the probability for a cube $V_s(\bm z)$ to be $(s, \alpha, \eta)$-nice with suitable choices for $\alpha, \eta, p$ and $s$.

\begin{lemma}\label{interval-good1}
For $\bm z\in\mathds{R}^d$, there exist sufficiently small $\alpha_0,p,\eta_0\in(0,1)$ such that the following property holds for each $\alpha\in(0,\alpha_0)$ and $\eta\in(0,\eta_0)$. Let $\widetilde{\gamma}$, $\widetilde{q}>0$ and $r>0$ be such that $\mathds{P}[\widetilde{G}_r(\widetilde{\gamma},\widetilde{q},c'')]\geq \widetilde{\gamma}$. Here $c''=c''(c',\alpha)$ is the constant defined in Proposition \ref{mrinS3-tilde} with $c'=(c_*+C_*)/2$. Then there exist constants $\varepsilon_0=\varepsilon_0(\widetilde{\gamma},\widetilde{q},c')>0$ {\rm(}depending only on $\beta,d,\alpha,p,\widetilde{\gamma},\widetilde{q},c'$ and the laws of $D$ and $\widetilde{D}${\rm)},
$b_1=b_1(\alpha)>0 $ {\rm(}depending only on $\beta,d,\alpha$  and the laws of $D$ and $\widetilde{D}${\rm)}, and
$C_1:=2^{\theta+1}\max\left\{\log(2^{d+3} C_D/p),2^{\theta+d+1}\right\}>0$ such that for all $\varepsilon\in(0,\varepsilon_0]$,
$$
\mathds{P}[\text{the cube }V_{\varepsilon r}(\bm z)\ \text{is }(\varepsilon r,\alpha,\eta)\text{-nice with respect to $\mathcal{E}$}]\geq 3p/4.
$$
\end{lemma}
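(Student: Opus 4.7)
The plan is to derive the lemma by combining Proposition \ref{mrinS3-tilde}, which yields the point-to-point event $\widetilde{H}_{\varepsilon r}(\cdot,\cdot)$ with probability at least $p$, together with the Hölder-type regularity of $D$ from Proposition \ref{LimitContinuousTail} (which will directly give condition (4)) and the bi-Lipschitz equivalence of $D$ and $\widetilde{D}$ from Proposition \ref{bilip} (which lets me transfer bounds between the two metrics). Axiom IV' reduces the problem to the case $\bm z=\bm 0$. First I would fix some $\bar c\in(c_*,c')$ slightly smaller than $c'$ and a correspondingly enlarged separation parameter $\alpha^*\geq C_d\alpha$, and apply Proposition \ref{mrinS3-tilde} (with $\bar c$ and $\alpha^*$ in the roles of ``$c'$'' and ``$\alpha$'') to obtain $\mathds{P}[\widetilde{H}_{\varepsilon r}(\alpha^*,\bar c)]\geq p$ whenever $\varepsilon\in(0,\varepsilon_0]$, for some $\varepsilon_0=\varepsilon_0(\widetilde\gamma,\widetilde q,c')$. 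The gap $c'-\bar c$ will absorb the loss when passing from the point pair $(\bm x,\bm y)$ to a pair of sub-cubes, and the enlargement of $\alpha$ serves the same purpose for the separation bound.

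On the event $\widetilde{H}_{\varepsilon r}(\alpha^*,\bar c)$, fix $\bm x,\bm y\in V_{\varepsilon r}(\bm 0)$ and a $\widetilde{D}$-geodesic $\widetilde{P}\subset V_{\varepsilon r}(\bm 0)$ as in Definition \ref{def-tildeH}, and choose $J^{(1)}, J^{(2)}$ to be closed sub-cubes of side length $\eta\varepsilon r$ centered at $\bm x$ and $\bm y$, respectively. Condition (1) then follows from $\mathrm{dist}(J^{(1)},J^{(2)})\geq |\bm x-\bm y|-\sqrt{d}\,\eta\varepsilon r\geq \alpha\varepsilon r$ for $\eta$ small (thanks to the enlargement). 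For (3), combine the lower bound $\widetilde{D}(\bm x,\bm y)\geq(\widetilde b\,\alpha^*\varepsilon r/3)^\theta$ from Definition \ref{def-tildeH}~(2) with Proposition \ref{bilip} to get $D(\bm x,\bm y)\geq C_*^{-1}(\widetilde b\,\alpha^*\varepsilon r/3)^\theta$, and use Definition \ref{def-tildeH}~(3) (with $\widetilde\gamma=\eta<\widetilde\gamma_0(\alpha^*)$) plus bi-Lipschitz to bound $\mathrm{diam}(J^{(i)};D)\leq c_*^{-1}\eta^{\theta/2}\bar c\,D(\bm x,\bm y)$; hence
\[
D(J^{(1)},J^{(2)})\geq \bigl(1-2c_*^{-1}\bar c\,\eta^{\theta/2}\bigr)D(\bm x,\bm y)\geq b_1(\alpha\varepsilon r)^\theta
\]
for a suitable $b_1=b_1(\alpha)>0$. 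For (2), the same bounds yield
\[
\frac{\widetilde{D}(J^{(1)},J^{(2)})}{D(J^{(1)},J^{(2)})}\leq \frac{\widetilde{D}(\bm x,\bm y)}{\bigl(1-2c_*^{-1}\bar c\,\eta^{\theta/2}\bigr)D(\bm x,\bm y)}<\frac{\bar c}{1-2c_*^{-1}\bar c\,\eta^{\theta/2}}<c'
\]
once $\eta$ is chosen small enough; the required $\widetilde{D}$-geodesic between $J^{(1)}$ and $J^{(2)}$ lying in $V_{\varepsilon r}(\bm 0)$ is obtained by concatenating internal $\widetilde{D}$-geodesics inside $J^{(1)}$ and $J^{(2)}$ with $\widetilde{P}$.

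Condition (4) is independent of the above and can be handled in parallel: applying Proposition \ref{LimitContinuousTail} to $D$ with $R=\varepsilon r/2$ and $M=C_1=2^{\theta+1}\max\{\log(2^{d+3}C_D/p),\,2^{\theta+d+1}\}$, the failure probability is at most $2^{d+1}C_D\exp\{-2^{-\theta-1}C_1\}\leq p/4$. A union bound then yields a joint success probability of at least $p-p/4=3p/4$, as required. The step I expect to require the most care is the simultaneous tuning of $\bar c$, $\alpha^*$ and $\eta_0$: the gap $c'-\bar c$ must dominate the factor $\eta^{\theta/2}$ controlling the loss in (2), the enlargement $\alpha^*/\alpha$ must dominate $\eta$ in (1), and the diameter bound from Definition \ref{def-tildeH}~(3) is only available for $\eta<\widetilde\gamma_0(\alpha^*)$. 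These constraints must be resolved before invoking Proposition \ref{mrinS3-tilde}, since $\varepsilon_0$ depends on the already-fixed $\bar c$, $\alpha^*$ and $\eta$, so that the conclusion $\mathds{P}[\widetilde{H}_{\varepsilon r}(\alpha^*,\bar c)]\geq p$ holds uniformly on the entire range $\varepsilon\in(0,\varepsilon_0]$.
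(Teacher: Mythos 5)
Your proposal is correct and follows the same overall architecture as the paper (apply Proposition~\ref{mrinS3-tilde} with a strictly smaller threshold and enlarged separation to get the $\widetilde H$-event with probability $p$; pick small sub-cubes around the distinguished pair $(\bm x,\bm y)$; verify Definition~\ref{interval-good}~(1)--(3) by showing the sub-cube diameters are negligible; handle condition~(4) by Proposition~\ref{LimitContinuousTail}; union bound to get $3p/4$). The one genuinely different step is how you control $\mathrm{diam}(J^{(i)};D)$ for conditions~(2)--(3): you read off $\mathrm{diam}(V_{\eta\varepsilon r}(\bm x);\widetilde D)\leq\eta^{\theta/2}\widetilde D(\bm x,\bm y)$ directly from Definition~\ref{def-tildeH}~(3) (which is already built into the $\widetilde H$-event) and then convert to $D$ via the optimal bi-Lipschitz constants, whereas the paper instead reuses the Hölder-regularity event $\widetilde E_{\varepsilon r}$ furnished by Proposition~\ref{LimitContinuousTail} --- the same event that handles condition~(4) --- to bound $\mathrm{diam}(J^{(i)};D)\leq C_1\eta^\theta\log(2/\eta)(\varepsilon r)^\theta$ absolutely. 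Your route is a bit more modular (the $\widetilde H$-event alone delivers (1)--(3), the Hölder event only delivers (4)), at the cost of invoking an extra ingredient --- the bi-Lipschitz transfer of internal metrics --- and of an additional constraint $\eta<\widetilde\gamma_0(\alpha^*)$ coming from Definition~\ref{def-tildeH}~(3); the paper's route makes one event do double duty, so it needs only $\eta$ small enough that its own inequalities \eqref{eta1}--\eqref{eta2} hold. Both yield the same final bound.

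A few small points to tighten: (i) the cubes $J^{(1)},J^{(2)}$ must lie inside $V_{\varepsilon r}(\bm 0)$ (see Definition~\ref{interval-good}), so ``centered at $\bm x,\bm y$'' should be ``containing $\bm x,\bm y$ and contained in $V_{\varepsilon r}(\bm 0)$'', and the corresponding diameter bound should then be applied to $V_{2\eta\varepsilon r}(\bm x)\supset J^{(1)}$, which only changes the constant by $2^{\theta/2}$; (ii) Definition~\ref{def-tildeH}~(2) gives $\widetilde D(\bm x,\bm y)\geq(\widetilde b\,\alpha^*\varepsilon r)^\theta$ without the extra $/3$ (the $/3$ is only in the Euclidean separation condition); (iii) the conversion $D(\bm x,\bm y)\geq C_*^{-1}\widetilde D(\bm x,\bm y)$ is the definition of $C_*$ in \eqref{DDtilde} rather than Proposition~\ref{bilip} per se, though that is merely notational. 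None of these affect correctness, and you correctly flag the delicate simultaneous tuning of $\bar c,\alpha^*,\eta_0$ in your closing paragraph.
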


\begin{proof}
By Axiom IV' (translation invariance), it suffices to show the desired result in the case when $\bm z=\bm 0$.

Let us start by noting that Definition \ref{interval-good} (1) and (2) correspond to the conditions in Definition \ref{def-tildeH} of $\widetilde{H}_s(\alpha, c')$, with $\bm x,\bm y$ replaced by  $J^{(1)}$ and $J^{(2)}$.
With this in mind, by Propositions \ref{P[G]-tilde} and \ref{mrinS3-tilde},
we see that there exists a sufficiently small $\alpha_0\in (0,1)$ such that for each $\alpha<\alpha_0$ and sufficiently small $\varepsilon$, $\widetilde{H}_{\varepsilon r}(6\alpha, (c'+c_*)/2)$ occurs with probability at least $p$ (the constant in Proposition \ref{mrinS3-tilde}). To be precise (recall definition \ref{def-tildeH}), with probability at least $p$ the following is true. There exist $\widetilde{b}(\alpha)>0$ and $\bm x,\bm y\in V_{\varepsilon r}(\bm 0)$ such that
\begin{itemize}
    \item[(i)] $|\bm x-\bm y|\geq (6\alpha) \varepsilon r/3=2\alpha \varepsilon r$.
    \item[(ii)] There is a $\widetilde{D}$-geodesic from $\bm x$ to $\bm y$ in $V_{\varepsilon r}(\bm 0)$ such that $$\widetilde{D}(\bm x,\bm y)<(c'+c_*)D(\bm x,\bm y)/2.$$
    \item[(iii)] $\widetilde{D}(\bm x,\bm y)\geq\widetilde{b}(\alpha \varepsilon r)^\theta.$
\end{itemize}
It is worth emphasizing that the reason why we chose $6\alpha$ and $(c'+c_*)/2$ in the $\widetilde{H}$-event is that we want to use $2\alpha$ ($>\alpha$) in (i) and the fact that $(c'+c_*)/2\in (c_*,c')$ to control the error term caused by the diameters of $J^{(1)}$ and $J^{(2)}$; see \eqref{def6.1(2)} and \eqref{def6.1(3)} below.

Additionally, let $\widetilde{E}_{\varepsilon r}$ denote the event that Definition \ref{interval-good} (4) holds with $V_s(\bm z)$ replaced by $V_{\varepsilon r}(\bm 0)$. Applying Proposition \ref{LimitContinuousTail} with $C_1=2^{\theta+1}\max\left\{\log(2^{d+3} C_D/p),2^{\theta+d+1}\right\}>0$, we obtain
\begin{equation*} 
\mathds{P}[\widetilde{E}_{\varepsilon r}]\geq 1-p/4
\end{equation*}
for sufficiently small $\varepsilon>0$.

In what follows, we assume that $\widetilde{H}_{\varepsilon r}(6\alpha,(c'+c_*)/2)$ occurs. Let $J^{(1)}$ and $J^{(2)}$  be cubes contained in $V_{\varepsilon r}(\bm 0)$ such that $J^{(1)}$ and $J^{(2)}$ each contains $\bm x$ and $\bm y$, respectively, and has a side length $\eta\varepsilon r$.
Now we check that there exists some $b_1=b_1(\alpha)>0$ such that when $\eta$ is sufficiently small (depending only on $\beta,d,\alpha,p$ and the laws of $D$ and $\widetilde{D}$), $
J^{(1)}$ and $J^{(2)}$ satisfy Definition \ref{interval-good} (1)--(3). It is clear that Definition \ref{interval-good} (1) holds when $\eta<\alpha/2$ from the triangle inequality and the fact that $|\bm x-\bm y|\geq 2\alpha\varepsilon r$. For Definition \ref{interval-good} (2) and (3), we first note that by the definition of $C_*$ and the fact that $\widetilde{D}(\bm x,\bm y)\geq \widetilde{b}(\alpha \varepsilon r)^\theta$,
\begin{equation}\label{lem6.2check1}
    D(\bm x,\bm y)\geq \widetilde{D}(\bm x,\bm y)/C_*\geq \widetilde{b}(\alpha \varepsilon r)^\theta/C_*.
\end{equation}
Additionally, from the triangle inequality, we have
\begin{equation}\label{lem6.2check2}
    D(J^{(1)},J^{(2)})\geq D(\bm x,\bm y)-\sum_{i=1}^2 {\rm diam}(J^{(i)};D).
\end{equation}
Moreover,
from the definition of $\widetilde{E}_{\varepsilon r}$, we see that
\begin{equation}\label{lem6.2check3}
    {\rm diam}(J^{(i)};D)\leq (C_1\sup_{t\in[0,\eta]}t^\theta\log(2/t))(\varepsilon r)^\theta\quad \text{for }i=1,2.
\end{equation}
Thus, combining \eqref{lem6.2check1}, \eqref{lem6.2check2} and \eqref{lem6.2check3} with the definition of $\widetilde{H}_{\varepsilon r}(6\alpha,(c'+c_*)/2)$, we obtain that
\begin{equation}\label{def6.1(2)}
    \begin{split}
    &\widetilde{D}(J^{(1)},J^{(2)})\leq \widetilde{D}(\bm x,\bm y)<(c'+c_*)D(\bm x,\bm y)/2\quad\quad\text{(by (ii))}\\
    &\leq (c'+c_*)D(J^{(1)},J^{(2)})/2+C_*\sum_{i=1}^2 {\rm diam}(J^{(i)};D)\quad\quad\text{(by \eqref{lem6.2check2})}\\
    &\leq c'D(J^{(1)},J^{(2)})+2C_*C_1\sup_{t\in[0,\eta]}\left(t^\theta\log(2/t)\right)(\varepsilon r)^\theta-(c'-c_*)D(\bm x,\bm y)/2
    \quad\quad\text{(by \eqref{lem6.2check3})}\\
    &\leq c'D(J^{(1)},J^{(2)})+\left[2C_*C_1\sup_{t\in[0,\eta]}\left(t^\theta\log(2/t)\right)-\widetilde{b}\alpha^\theta(c'-c_*)/(2C_*)\right](\varepsilon r)^\theta
    \quad\quad\text{(by \eqref{lem6.2check1})}\\
    &\leq c'D(J^{(1)},J^{(2)}),
    \end{split}
\end{equation}
where the last inequality holds when $\eta$ is chosen to satisfy
\begin{equation}\label{eta1}
    2C_*C_1\sup_{t\in[0,\eta]}\left(t^\theta\log(2/t)\right)<\widetilde{b}\alpha^\theta(c'-c_*)/(2C_*).
\end{equation}
Thus, by \eqref{def6.1(2)} we see that $J^{(1)},J^{(2)}$ satisfy Definition \ref{interval-good} (2).  Moreover, from \eqref{lem6.2check1}, \eqref{lem6.2check2} and \eqref{lem6.2check3} again,
we obtain that
\begin{equation}\label{def6.1(3)}
    \begin{split}
    D(J^{(1)},J^{(2)})&\geq D(\bm x,\bm y)-\sum_{i=1}^2 {\rm diam}(J^{(i)};D)\\
    &\geq [\widetilde{b}\alpha^\theta/C_*-2C_1\sup_{t\in[0,\eta]}\left(t^\theta\log(2/t)\right)](\varepsilon r)^\theta\\
    &\geq \widetilde{b}\alpha^\theta(\varepsilon r)^\theta/(2C_*),
    \end{split}
\end{equation}
where the last inequality holds when $\eta$ is chosen to satisfy
\begin{equation}\label{eta2}
    2C_1\sup_{t\in[0,\eta]}\left(t^\theta\log(2/t)\right)<\widetilde{b}\alpha^\theta/(4C_*).
\end{equation}
So by \eqref{def6.1(3)} we see that $J^{(1)},J^{(2)}$ satisfy  Definition \ref{interval-good} (3) with $b_1=\widetilde{b}/(2C_*)$.

Hence, if $\eta<\alpha/2$ is sufficiently small such that \eqref{eta1} and \eqref{eta2} hold, then on the event $\widetilde{H}_{\varepsilon r}(6\alpha,(c'+c_*)/2)\cap \widetilde{E}_{\varepsilon r}$, there exist $J^{(1)},J^{(2)}\subset V_{\varepsilon r}(\bm 0)$ such that Definition \ref{interval-good} (1)--(3) hold. Meanwhile, note that on the event $\widetilde{H}_{\varepsilon r}(6\alpha,(c'+c_*)/2)\cap \widetilde{E}_{\varepsilon r}$, the cube $V_{\varepsilon r}(\bm 0)$ also satisfies  Definition \ref{interval-good} (4). As a result,
\begin{equation*}
    \mathds{P}[V_{\varepsilon r}(\bm 0)\text{ is }(\varepsilon r,\alpha,\eta)\text{-nice}]\geq\mathds{P}[\widetilde{H}_{\varepsilon r}(6\alpha,(c'+c_*)/2)\cap \widetilde{E}_{\varepsilon r}]\geq p+(1-p/4)-1=3p/4.
\end{equation*}
Therefore, we finish the proof.
\end{proof}

As we saw above, the probability of a cube $V_{\varepsilon r}(\bm z)$ being $(\varepsilon r,\alpha,\eta)$-nice is bounded from below but may be very small, which can cause difficulties in our counting arguments. To overcome this, we next enhance the probability by using a multi-scale analysis and taking advantage of independence. To be a bit more precise, we will show that in a relatively large cube, the probability for the existence of a nice cube is close to 1.

 From here on, we let $\alpha\in(0,\alpha_0)$ and $\eta\in(0,\eta_0)$. Fix $r>0$ and $\widetilde{\gamma},\widetilde{q}>0$ such that $\mathds{P}[\widetilde{G}_r(\widetilde{\gamma},\widetilde{q},c'')]\geq \widetilde{\gamma}$. We also let $\varepsilon\in(0,\varepsilon_0]$. For convenience, assume that $1/(3\varepsilon) \in\mathds{Z}$, otherwise we can use $\lfloor 1/(3\varepsilon)\rfloor$ to replace $1/(3\varepsilon)$ in the following.

We first divide $\mathds{R}^d$ into cubes of side length $\varepsilon r$, denoted by $V_{\varepsilon r}(\bm z)$ for $\bm z\in (\varepsilon r)\mathds{Z}^d$. In particular, let $Z_0\subset (\varepsilon r)\mathds{Z}^d$ be the collection of all $\bm z$ such that $V_{\varepsilon r}(\bm z) \cap V_r({\bm 0})  \neq \emptyset$. For convenience, let $\bm z_1,\cdots, \bm z_{\varepsilon^{-d}}$ be the elements of $Z_0$ listed in the dictionary order with respect to the centers of the cubes.

We now choose a sufficiently large integer $K$ ($\gg 1/\alpha$), which will be determined in Proposition \ref{Prop4.5DG21} below.
We then divide each $V_{3\varepsilon r}(\bm z_k)$ into $(3K)^d$ small cubes of side length $\varepsilon r/K$. We denote by $J_{k,i}$ for $i\in[1,(3K)^d]_\mathds{Z}$ the $(3K)^d$ small cubes in the dictionary order with respect to the centers of the cubes. 
Cubes of side length $\varepsilon r/K$ form our first scale (i.e., the smallest scale).
From Lemma \ref{interval-good1} we know that
for each $k\in[1,\varepsilon^{-d}]_{\mathds{Z}}$ and $i\in [1,(3K)^d]_{\mathds{Z}}$,
\begin{equation}\label{prob-good}
\mathds{P}[\text{the cube }J_{k,i}\ \text{is }(\varepsilon r/K,\alpha,\eta)\text{-nice}]\geq 3p/4.
\end{equation}

We now introduce the second scale, whose side length is $(b_2\alpha)^{2.5}\varepsilon r$. Here $b_2$ is a small positive constant depending only on $\beta,d,\alpha$ and the laws of $D$ and $\widetilde{D}$, which will be chosen below.
For convenience, we assume that $(b_2\alpha)^{-0.1}\in \mathds{Z}$; if not,  we can use $\lfloor(b_2\alpha)^{-0.1}\rfloor$ instead.
Specifically, for each $k\in[1,\varepsilon^{-d}]_{\mathds{Z}}$, we divide $V_{3\varepsilon r}(\bm z_k)$ into $3^d(b_2\alpha)^{-2.5d}$ small cubes of side length $(b_2\alpha)^{2.5}\varepsilon r$. We denote by $J'_{k,i}$ for $i\in[1,3^d(b_2\alpha)^{-2.5d}]_\mathds{Z}$ these $3^d(b_2\alpha)^{-2.5d}$ small cubes in the dictionary order with respect to the centers of the cubes. Then we denote $\bm w'_{k,i}$ as the center of $J'_{k,i}$ and we also denote the cube $J'_{k,i}$ as $J'(\bm w'_{k,i})$. 
It is clear that each $J'(\bm w'_{k,i})$ is composed of $((b_2\alpha)^{2.5}K)^d$ small cubes at the first scale.

\begin{definition}\label{very nice}
For each $k\in[1,\varepsilon^{-d}]_{\mathds{Z}}$, $i\in[1,3^d(b_2\alpha)^{-2.5d}]_\mathds{Z}$ and each $\bm w'_{k,i}\in V_{3\varepsilon r}(\bm z_k)$,
we say the cube $J'(\bm w'_{k,i})$ (with center $\bm w'_{k,i}$) is \textit{$(b_2\alpha)^{2.5}\varepsilon r$-very nice} with respect to $\mathcal{E}$ if
\begin{itemize}
\item[(1)] there exists at least one $(\varepsilon r/K,\alpha,\eta)$-nice cube with respect to $\mathcal{E}$ in $V_{(b_2\alpha)^{ 2.6}\varepsilon r/2}(\bm w'_{k,i})$;

\item[(2)]  $D(V_{(b_2\alpha)^{ 2.6}\varepsilon r/2}(\bm w'_{k,i}), (J'(\bm w'_{k,i}))^c)\geq (b_2\alpha)^{2.6\theta}(\varepsilon r)^\theta.$
\end{itemize}
\end{definition}

Note that Definition \ref{very nice} (2) is not a local condition, as there might be a long edge in some $D$-geodesic from $V_{(b_2\alpha)^{ 2.6}\varepsilon r/2}(\bm w'_{k,i})$ to $(J'(\bm w'_{k,i}))^c$, with one of its end points located in $(J'(\bm w'_{k,i}))^c$ (and as a result incurs correlation with the very nice property of the $J'$ cube (i.e., a cube of the form $J'_{k, i}$) that contains this end point).
This will result in some difficulty on calculating the probability of a cube being very nice later on. To overcome this difficulty, we will further consider very nice cubes with respect to different directions. As we will see later, on the one hand, for any fixed direction, the property of being very nice becomes independent over different cubes; on the other hand, a cube is very nice if and only if it is very nice with respect to all directions. To be precise, we start with some new notations.

\begin{definition}\label{e-direction}
For each $\bm e\in \{-1,1\}^d$, $s>0$ and $\bm z\in s\mathds{Z}^d$, we say
$$
\bigcup_{\bm y\in s\mathds{Z}^d\setminus\{\bm z\}:(\bm y-\bm z)^j\bm e^j\geq 0\text{ for all }j}V_s(\bm y)
$$
is the complement of $V_s(\bm z)$ in the direction ${\bm e}$. Here $\bm x^j$ is the $j$-th component of $\bm x\in\mathds{R}^d$.
Additionally, we will write $\mathtt{J}'_{(\bm e)}(\bm w'_{k,i})$ as the complement of $J'(\bm w'_{k,i})$ in the direction $\bm e$ and $\mathtt{V}_{(\bm e)}(\bm z_k)$ as the complement of $V_{\varepsilon r}(\bm z_k)$ in the direction $\bm e$.
\end{definition}

We note that the complements of $J'(\bm w'_{k,i})$ in different directions may intersect each other (see Figure \ref{example-intersect} as an example).
\begin{figure}[htbp]
    \centering
    \includegraphics[scale=0.6]{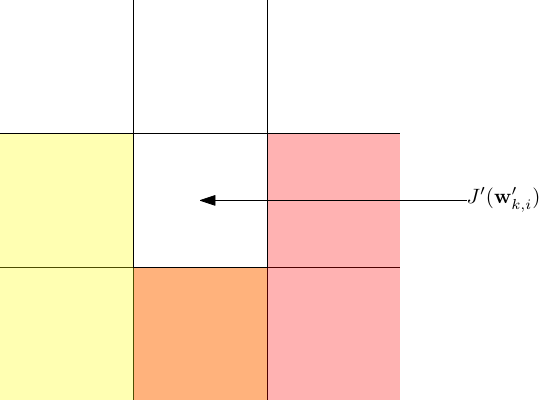}
    \caption{An example in the two-dimensional case where the complements of $V_s(\bm z)$ in two directions intersect each other. The yellow and red regions represent $\mathtt{J}'_{(-1,-1)}(\bm w'_{k,i})$ and $\mathtt{J}'_{(1,-1)}(\bm w'_{k,i})$ respectively. The orange region is the intersection of  $\mathtt{J}'_{(-1,-1)}(\bm w'_{k,i})$ and $\mathtt{J}'_{(1,-1)}(\bm w'_{k,i})$ }
    \label{example-intersect}
\end{figure}

We now can provide the definition of very nice cubes in terms of some direction.

\begin{definition}\label{very nice-e}
For each $\bm e\in \{-1,1\}^d$, $k\in [1,\varepsilon^{-d}]_{\mathds{Z}}$ and $i\in[1,3^d(b_2\alpha)^{-2.5d}]_\mathds{Z}$, we say the cube $J'(\bm w'_{k,i})$ is \textit{$(\bm e,(b_2\alpha)^{2.5}\varepsilon r)$-very nice} with respect to $\mathcal{E}$ if
\begin{itemize}
\item[(1)] there exists at least one $(\varepsilon r/K,\alpha,\eta)$-nice cube with respect to $\mathcal{E}$ in $V_{(b_2\alpha)^{ 2.6}\varepsilon r/2}(\bm w'_{k,i})$;

\item[(2)]  $D(V_{(b_2\alpha)^{ 2.6}\varepsilon r/2}(\bm w'_{k,i}), \mathtt{J}'_{(\bm e)}(\bm w'_{k,i});J'(\bm w'_{k,i})\cup \mathtt{J}'_{(\bm e)}(\bm w'_{k,i}))\geq (b_2\alpha)^{2.6\theta}(\varepsilon r)^\theta.$
\end{itemize}
\end{definition}

As mentioned earlier, we note that $J'(\bm w'_{k,i})$ is $(b_2\alpha)^{2.5}\varepsilon r$-very nice if and only if $J'(\bm w'_{k,i})$ is $(\bm e,(b_2\alpha)^{2.5}\varepsilon r)$-very nice for all $\bm e\in\{-1,1\}^d$ as follows.
\begin{lemma}\label{veryniceequ}
    For each $k\in [1,\varepsilon^{-d}]_{\mathds{Z}}$ and $i\in[1,3^d(b_2\alpha)^{-2.5d}]_\mathds{Z}$, the cube $J'(\bm w'_{k,i})$ is $(b_2\alpha)^{2.5}\varepsilon r$-very nice if and only if $J'(\bm w'_{k,i})$ is $(\bm e,(b_2\alpha)^{2.5}\varepsilon r)$-very nice for all $\bm e\in\{-1,1\}^d$.
\end{lemma}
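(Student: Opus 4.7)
The plan is to observe that condition~(1) of Definition~\ref{very nice} coincides verbatim with condition~(1) of Definition~\ref{very nice-e} for every $\bm e$, so only condition~(2) needs comparison. The underlying set-theoretic fact is that, with the half-open cube convention introduced in Section~\ref{section1}, one has the exact partition identity
\begin{equation*}
    (J'(\bm w'_{k,i}))^c=\bigcup_{\bm e\in\{-1,1\}^d}\mathtt{J}'_{(\bm e)}(\bm w'_{k,i}).
\end{equation*}
For brevity I will abbreviate $A:=V_{(b_2\alpha)^{2.6}\varepsilon r/2}(\bm w'_{k,i})$ and $c:=(b_2\alpha)^{2.6\theta}(\varepsilon r)^\theta$, and note that $A\subset J'(\bm w'_{k,i})$ since $(b_2\alpha)^{2.6}/2<(b_2\alpha)^{2.5}$.

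For the forward direction, suppose $D(A,(J'(\bm w'_{k,i}))^c)\ge c$ and fix $\bm e\in\{-1,1\}^d$. Since $\mathtt{J}'_{(\bm e)}(\bm w'_{k,i})\subset (J'(\bm w'_{k,i}))^c$, any path from $A$ to $\mathtt{J}'_{(\bm e)}(\bm w'_{k,i})$ is in particular a path from $A$ to $(J'(\bm w'_{k,i}))^c$, whence $D(A,\mathtt{J}'_{(\bm e)}(\bm w'_{k,i}))\ge D(A,(J'(\bm w'_{k,i}))^c)\ge c$. Restricting the ambient domain to $J'(\bm w'_{k,i})\cup \mathtt{J}'_{(\bm e)}(\bm w'_{k,i})$ can only increase the infimum, so condition~(2) of Definition~\ref{very nice-e} holds for this $\bm e$, and hence for every $\bm e$.

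For the reverse direction, assume condition~(2) of Definition~\ref{very nice-e} holds for every $\bm e\in\{-1,1\}^d$, and let $P$ be an arbitrary path from $A$ to $(J'(\bm w'_{k,i}))^c$ with finite $D$-length. I would consider the first point $\bm v$ along $P$ that lies in $(J'(\bm w'_{k,i}))^c$; by the half-open convention this $\bm v$ belongs to a unique grid cube $J'(\bm w'_{k',i'})$ with $\bm w'_{k',i'}\neq \bm w'_{k,i}$. Setting $(\bm e^*)^j={\rm sgn}((\bm w'_{k',i'})^j-(\bm w'_{k,i})^j)$ with any tie-breaking convention for zero coordinates, I obtain $\bm v\in \mathtt{J}'_{(\bm e^*)}(\bm w'_{k,i})$. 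The subpath $P'$ of $P$ from its starting point up to $\bm v$ has all earlier points in $J'(\bm w'_{k,i})$ by definition of the first hit, and endpoint $\bm v\in \mathtt{J}'_{(\bm e^*)}(\bm w'_{k,i})$, so $P'\subset J'(\bm w'_{k,i})\cup \mathtt{J}'_{(\bm e^*)}(\bm w'_{k,i})$. Therefore
\begin{equation*}
    {\rm len}(P;D)\ge {\rm len}(P';D)\ge D\bigl(A,\mathtt{J}'_{(\bm e^*)}(\bm w'_{k,i});\,J'(\bm w'_{k,i})\cup\mathtt{J}'_{(\bm e^*)}(\bm w'_{k,i})\bigr)\ge c,
\end{equation*}
and taking the infimum over $P$ recovers condition~(2) of Definition~\ref{very nice}.

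The only subtlety (rather than a genuine obstacle) comes from the fact that LRP paths may contain long-edge jumps: the notion of ``first point of $P$ in $(J'(\bm w'_{k,i}))^c$'' must be interpreted along the ordered sequence of Euclidean segments and jumps that constitute $P$, so that the subpath $P'$ remains an admissible path and inherits a well-defined $D$-length. Once this bookkeeping is in place, the half-open partition $\{J'(\bm w'_{k',i'})\}$ renders the choice of $\bm e^*$ unambiguous, and the argument is purely set-theoretic, using nothing about $D$ beyond its being an internal metric.
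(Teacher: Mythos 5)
Your proof is correct and follows essentially the same route as the paper's: both reduce the lemma to the identity \eqref{compare-e} by covering $(J'(\bm w'_{k,i}))^c$ with the directional complements and observing that any path from $V_{(b_2\alpha)^{2.6}\varepsilon r/2}(\bm w'_{k,i})$ exits $J'(\bm w'_{k,i})$ into some $\mathtt{J}'_{(\bm e^*)}(\bm w'_{k,i})$. One small caveat: the identity $(J'(\bm w'_{k,i}))^c=\bigcup_{\bm e\in\{-1,1\}^d}\mathtt{J}'_{(\bm e)}(\bm w'_{k,i})$ is a genuine union but not a partition — the paper's Figure \ref{example-intersect} shows the directional complements overlap — and accordingly your $\bm e^*$ is not ``unambiguous'' but merely a valid choice after tie-breaking; since your argument only uses the union identity and existence of \emph{some} $\bm e^*$, this is harmless and in fact matches the paper's treatment.
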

\begin{proof}
    From Definitions \ref{very nice} and \ref{very nice-e}, it suffices to show that
    \begin{equation}\label{compare-e}
        D(V_{(b_2\alpha)^{ 2.6}\varepsilon r/2}(\bm w'_{k,i}), (J'(\bm w'_{k,i}))^c)=\inf_{\bm e\in\{-1,1\}^d}D(V_{(b_2\alpha)^{ 2.6}\varepsilon r/2}(\bm w'_{k,i}), \mathtt{J}'_{(\bm e)}(\bm w'_{k,i});J'(\bm w'_{k,i})\cup \mathtt{J}'_{(\bm e)}(\bm w'_{k,i})).
    \end{equation}
    Assume that $P$ is a $D$-geodesic from $V_{(b_2\alpha)^{ 2.6}\varepsilon r/2}(\bm w'_{k,i})$ to $(J'(\bm w'_{k,i}))^c$ and uses a long edge $\langle \bm x,\bm y\rangle$ to escape $J'(\bm w'_{k,i})$. Without loss of generality, we let $\bm x\in J'(\bm w'_{k,i})$ and $\bm y \in (J'(\bm w'_{k,i}))^c$. Then there exists $\bm e\in\{-1,1\}^d$ such that $\bm y\in \mathtt{J}'_{(\bm e)}(\bm w'_{k,i})$, which implies that $P\subset J'(\bm w'_{k,i})\cup \mathtt{J}'_{(\bm e)}(\bm w'_{k,i})$. Thus we get that
    \begin{equation}\label{compare-e-1}
        \begin{aligned}
            &D(V_{(b_2\alpha)^{ 2.6}\varepsilon r/2}(\bm w'_{k,i}), (J'(\bm w'_{k,i}))^c)={\rm len}(P;D)\\
            \geq &D(V_{(b_2\alpha)^{2.6}\varepsilon r/2}(\bm w'_{k,i}), \mathtt{J}'_{(\bm e)}(\bm w'_{k,i});J'(\bm w'_{k,i})\cup \mathtt{J}'_{(\bm e)}(\bm w'_{k,i}))\\
            \geq &\inf_{\bm e\in\{-1,1\}^d}D(V_{(b_2\alpha)^{ 2.6}\varepsilon r/2}(\bm w'_{k,i}), \mathtt{J}'_{(\bm e)}(\bm w'_{k,i});J'(\bm w'_{k,i})\cup \mathtt{J}'_{(\bm e)}(\bm w'_{k,i})).
        \end{aligned}
    \end{equation}
    Additionally, since $\mathtt{J}'_{(\bm e)}(\bm w'_{k,i})\subset (J'(\bm w'_{k,i}))^c$ for any $\bm e\in\{-1,1\}^d$, we get that
    \begin{equation}\label{compare-e-2}
        \begin{aligned}
            &\inf_{\bm e\in\{-1,1\}^d}D(V_{(b_2\alpha)^{ 2.6}\varepsilon r/2}(\bm w'_{k,i}), \mathtt{J}'_{(\bm e)}(\bm w'_{k,i});J'(\bm w'_{k,i})\cup \mathtt{J}'_{(\bm e)}(\bm w'_{k,i}))\\
            \geq &D(V_{(b_2\alpha)^{ 2.6}\varepsilon r/2}(\bm w'_{k,i}), (J'(\bm w'_{k,i}))^c).
        \end{aligned}
    \end{equation}
    Combining \eqref{compare-e-1} and \eqref{compare-e-2} yields \eqref{compare-e}, which implies the desired result.
\end{proof}

Additionally, due to the division of directions, for each $\bm e\in\{-1,1\}^d$,
\begin{equation}\label{Jindep}
\left\{J'(\bm w'_{k,i})\ \text{is }(\bm e,(b_2\alpha)^{2.5}\varepsilon r)\text{-very nice}\right\}_{i\in [1,3^d(b_2\alpha)^{-2.5d}]_\mathds{Z}}
\end{equation}
is a collection of independent events and a.s.\ determined by $\mathcal{E}\cap (V_{\varepsilon r}(\bm z_k)\times (V_{\varepsilon r}(\bm z_k)\cup \mathtt{V}_{(\bm e)}(\bm z_k)))$, where $\mathtt{V}_{(\bm e)}(\bm z_k)$ is the  complement of $V_{\varepsilon r}(\bm z_k)$ in the direction $\bm e$ defined as in Definition \ref{e-direction}.

For fixed $\bm e\in \{-1,1\}^d$, $k\in [1,\varepsilon^{-d}]_{\mathds{Z}}$ and $i\in[1,3^d(b_2\alpha)^{-2.5d}]_\mathds{Z}$, we claim that there is a constant $\alpha_1=\alpha_1(\beta,D)>0$ (depending only on $\beta,d$ and the law of $D$) such that for each $\alpha\in(0,\alpha_1]$,
\begin{equation}\label{verynice(3)}
\mathds{P}[\text{Definition \ref{very nice-e} (2) holds}]\geq 1-4^{-d}/2.
\end{equation}
Indeed, from Axiom V1' (tightness across different scales (lower bound)), there exists a constant $\widetilde{c}_1$ (depending only on $\beta,d$ and the law of $D$) such that
\begin{equation}\label{verynice(3)step1}
    \mathds{P}\left[D(\bm w'_{k,i}, \mathtt{J}'_{(\bm e)}(\bm w'_{k.i}))\geq \widetilde{c}_1(b_2\alpha)^{2.5\theta}(\varepsilon r)^\theta\right]\geq 1-4^{-d-1}.
\end{equation}
In addition, from Axiom V2' (tightness across different scales (upper bound)), there exists a constant $\widetilde{c}_2>0$ (depending only on $\beta,d$ and the law of $D$) such that
\begin{equation}\label{verynice(3)step2}
    \mathds{P}\left[{\rm diam}(V_{(b_2\alpha)^{ 2.6}\varepsilon r/2}(\bm w'_{k,i});D)\leq \widetilde{c}_2(b_2\alpha)^{2.6\theta}(\varepsilon r)^\theta\right]\geq 1-4^{-d-1}.
\end{equation}
Thus, combining \eqref{verynice(3)step1}, \eqref{verynice(3)step2} with the following triangle inequality
\begin{equation*}
    \begin{split}
        &D\left(V_{(b_2\alpha)^{ 2.6}\varepsilon r/2}(\bm w'_{k,i}), \mathtt{J}'_{(\bm e)}(\bm w'_{k,i})\right)\\
        &\geq D(\bm w'_{k,i}, \mathtt{J}'_{(\bm e)}(\bm w'_{k,i}))-{\rm diam}(V_{(b_2\alpha)^{ 2.6}\varepsilon r/2}(\bm w'_{k,i});D),
    \end{split}
\end{equation*}
we get \eqref{verynice(3)} when
$$\widetilde{c}_1(b_2\alpha)^{2.5\theta}\geq (\widetilde{c}_2+1)(b_2\alpha)^{2.6\theta}.$$
Thus, \eqref{verynice(3)} holds with the choice that $\alpha_1=(\frac{\widetilde{c}_1}{\widetilde{c}_2+1})^{10/\theta}>0$ .
Meanwhile, from the independence (implied by Axiom II (locality)) and \eqref{prob-good}, one can see that
\begin{equation*} 
    \mathds{P}[\text{Definition \ref{very nice-e} (1) holds}]\geq 1-(1- 3p/4)^{(b_2\alpha)^{2.6d}K^d/2}.
    \end{equation*}
Therefore, we can select a sufficiently large value $K>0$ (depending only on $\beta,d,\alpha$ and the law of $D$) such that
\begin{equation*}
\mathds{P}[\text{Definition \ref{very nice-e} (1) holds}]\geq 1-4^{-d}/2.
\end{equation*}
Combined this with \eqref{verynice(3)}, this yields that
\begin{equation}\label{probverynice}
\mathds{P}[J'(\bm w'_{k,i})\text{ is $(\bm e,(b_2\alpha)^{2.5}\varepsilon r)$-very nice}]\geq  1-4^{-d}.
\end{equation}

We next define the third scale whose side length is $(b_2\alpha)^2\varepsilon r$. For each $k\in[1,\varepsilon^{-d}]_\mathds{Z}$, we divide $V_{3\varepsilon r}(\bm z_k)$ into $3^d(b_2\alpha)^{-2d}$ small cubes of side length $(b_2\alpha)^2\varepsilon r$, denoted by $J''(\bm w''_{k,i}):=V_{(b_2\alpha)^2\varepsilon r}(\bm w''_{k,i})$ for $i\in [1,3^d(b_2\alpha)^{-2d}]_\mathds{Z}$. We can see that each $J''(\bm w''_{k,i})$ is composed of $(b_2\alpha)^{-0.5d}$ small cubes at the second scale.

We also need the following definition of very very nice cube in terms of some direction.

\begin{definition}\label{very very nice-e}
For each $\bm e\in\{-1,1\}^d$, $k\in[1,\varepsilon^{-d}]_\mathds{Z}$ and $i\in[1,3^d(b_2\alpha)^{-2d}]_\mathds{Z}$, we say $J''(\bm w''_{k,i})$ is \textit{$(\bm e,(b_2\alpha)^{2}\varepsilon r)$-very very nice}   with respect to $\mathcal{E}$ if the number of $(\bm e,(b_2\alpha)^{2.5}\varepsilon r)$-very nice cubes  with respect to $\mathcal{E}$ in $J''(\bm w''_{k,i})$ is at least $(1-3^{-d})(b_2\alpha)^{-0.5d}$.
\end{definition}

\begin{definition}\label{very very nice}
For each $k\in[1,\varepsilon^{-d}]_\mathds{Z}$ and $i\in[1,3^d(b_2\alpha)^{-2d}]_\mathds{Z}$, we say $J''(\bm w''_{k,i})$ is \textit{$(b_2\alpha)^{2}\varepsilon r$-very very nice}   with respect to $\mathcal{E}$ if there exists at least one $(b_2\alpha)^{2.5}\varepsilon r$-very nice cube  with respect to $\mathcal{E}$ in $J''(\bm w''_{k,i})$.
\end{definition}

Now we will show that $J''(\bm w''_{k,i})$ is $(\bm e,(b_2\alpha)^{2}\varepsilon r)$-very very nice with very high probability for each $\bm e\in\{-1,1\}^d$.
To be precise, the following lemma holds.
\begin{lemma}\label{Lemmaveryverynice-e}
    There exists a constant $p_\star\in(0,1)$ 
    such that for any sufficiently small $\alpha$ {\rm(}depending only on $\beta,d$ and the law of $D${\rm)}, $\bm e\in\{-1,1\}^d$, $k\in[1,\varepsilon^{-d}]_\mathds{Z}$ and $i\in[1,3^d(b_2\alpha)^{-2d}]_\mathds{Z}$,
    \begin{equation*}
        \mathds{P}[J''(\bm w''_{k,i})\text{ is $(\bm e,(b_2\alpha)^{2}\varepsilon r)$-very very nice  with respect to $\mathcal{E}$}]\geq 1- (1-p_\star)^{(b_2\alpha)^{-0.5d}}.
    \end{equation*}
\end{lemma}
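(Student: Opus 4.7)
The plan is to reduce the statement to a standard large-deviation estimate for a sum of independent Bernoulli variables. Fix $\bm e\in\{-1,1\}^d$, $k$ and $i$, and set $N:=(b_2\alpha)^{-0.5d}$ for the number of second-scale sub-cubes contained in $J''(\bm w''_{k,i})$. Let $\{J'(\bm w'_{k,j})\}_{j\in I_{k,i}}$ be the enumeration of these $N$ second-scale sub-cubes (so $|I_{k,i}|=N$), and let
\[
X:=\#\bigl\{j\in I_{k,i}:J'(\bm w'_{k,j})\text{ is not }(\bm e,(b_2\alpha)^{2.5}\varepsilon r)\text{-very nice w.r.t.\ }\mathcal{E}\bigr\}.
\]
By Definition~\ref{very very nice-e}, the cube $J''(\bm w''_{k,i})$ is $(\bm e,(b_2\alpha)^2\varepsilon r)$-very very nice if and only if $X\le 3^{-d}N$. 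So our task is to bound $\mathds{P}[X>3^{-d}N]$ from above by $(1-p_\star)^{N}$.

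First I would record two inputs. By \eqref{probverynice}, provided $\alpha$ is sufficiently small (so that $K$ has been chosen large enough), each individual event
\[
A_j:=\bigl\{J'(\bm w'_{k,j})\text{ is }(\bm e,(b_2\alpha)^{2.5}\varepsilon r)\text{-very nice w.r.t.\ }\mathcal{E}\bigr\}
\]
has $\mathds{P}[A_j^c]\le 4^{-d}$. By \eqref{Jindep}, the events $\{A_j\}_{j\in I_{k,i}}$ are mutually independent, because they are indexed by second-scale cubes lying inside the single $V_{\varepsilon r}(\bm z_k)$ and determined by $\mathcal{E}$ restricted to the appropriate product sets. Consequently $X$ is stochastically dominated by $\mathrm{Bin}(N,4^{-d})$.

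Next I would apply a Chernoff bound. Since $3^{-d}>4^{-d}$, the ratio $3^{-d}/4^{-d}=(4/3)^d>1$, so Cram\'er's theorem for the Bernoulli distribution gives
\[
\mathds{P}\bigl[\mathrm{Bin}(N,4^{-d})\ge 3^{-d}N\bigr]\le \exp(-I\,N),
\]
with the positive rate function
\[
I:=3^{-d}\log\!\left(\frac{3^{-d}}{4^{-d}}\right)+(1-3^{-d})\log\!\left(\frac{1-3^{-d}}{1-4^{-d}}\right)>0.
\]
Setting $p_\star:=1-e^{-I}\in(0,1)$, which depends only on $d$, I obtain
\[
\mathds{P}[X>3^{-d}N]\le(1-p_\star)^{N}=(1-p_\star)^{(b_2\alpha)^{-0.5d}},
\]
which is exactly the stated bound on the complementary event. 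Since the argument works uniformly in $k,i,\bm e$ once $\alpha$ is sufficiently small, the lemma follows. I do not expect any real obstacle here: the only subtle point is making sure that the independence statement in \eqref{Jindep} indeed applies to second-scale sub-cubes lying in a common $V_{3\varepsilon r}(\bm z_k)$, and this is built into the construction because all relevant complements in direction $\bm e$ are disjoint from the interiors of the other sub-cubes.
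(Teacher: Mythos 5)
Your proposal is correct and takes essentially the same approach as the paper: both proofs combine the single-cube probability bound from \eqref{probverynice}, the independence from \eqref{Jindep}, and a binomial large-deviation estimate over the $(b_2\alpha)^{-0.5d}$ second-scale sub-cubes. The only difference is that you make the Chernoff/Cram\'er rate function $I$ explicit, whereas the paper simply asserts the existence of the constant $p_\star$.
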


\begin{proof}
By Axiom IV'(translation invariance), we only need to prove the statement in the case that $k=i=1$. For convenience, let $N(\bm e)$ be the number of $(\bm e,(b_2\alpha)^{2.5}\varepsilon r)$-very nice cubes  with respect to $\mathcal{E}$ in $J''(\bm w''_{1,1})$.

As we mentioned in \eqref{Jindep}, the events of the form \{$J'(\bm w'_{1,i})$ is $(\bm e,(b_2\alpha)^{2.5}\varepsilon r)$-very nice\} are independent for all $i\in[1,(b_2\alpha)^{-0.5d}]_\mathds{Z}$ (note that $\bm w'_{1, i} \in J''(\bm w''_{1, 1})$ for all such $i$). Hence, combining this with \eqref{probverynice} and the fact that each $J''(\bm w''_{1,1})$ is composed of $(b_2\alpha)^{-0.5d}$ small cubes at the second scale, we get that for each $\bm e\in\{-1,1\}^d$,
    \begin{equation*}
        \begin{split}
        &\mathds{P}[J''(\bm w''_{1,1})\text{ is $(\bm e,(b_2\alpha)^{2}\varepsilon r)$-very very nice with respect to $\mathcal{E}$}]\\
        &=\mathds{P}[N(\bm e)\geq (1-3^{-d})(b_2\alpha)^{-0.5d}]\\
        &\geq 1-\mathds{P}[{ \mathrm{Bin}}((b_2\alpha)^{-0.5d},1-4^{-d})<(1-3^{-d})(b_2\alpha)^{-0.5d}]\\
        &\geq 1-(1-p_\star)^{(b_2\alpha)^{-0.5d}}
        \end{split}
    \end{equation*}
     for a constant $p_\star\in(0,1)$.
\end{proof}

Recall that from Lemma \ref{veryniceequ} a cube is $(b_2\alpha)^{2.5}\varepsilon r$-very nice if and only if it is $(\bm e,(b_2\alpha)^{2.5}\varepsilon r)$-very nice for all $\bm e\in\{-1,1\}^d$, and
we say $J''(\bm w''_{k,i})$ is \textit{$(b_2\alpha)^{2}\varepsilon r$-very very nice}  with respect to $\mathcal{E}$ if there exists at least one $(b_2\alpha)^{2.5}\varepsilon r$-very nice cube   with respect to $\mathcal{E}$ in $J''(\bm w''_{k,i})$ (see Definition \ref{very very nice}).

\begin{lemma}\label{Lemmaveryverynice}
For any sufficiently small $\alpha$ {\rm(}depending only on $\beta,d$ and the law of $D${\rm)}, $k\in[1,\varepsilon^{-d}]_\mathds{Z}$ and $i\in[1,3^d(b_2\alpha)^{-2d}]_\mathds{Z}$, we have
    \begin{equation*}
        \mathds{P}[J''(\bm w''_{k,i})\text{ is $(b_2\alpha)^{2}\varepsilon r$-very very nice  with respect to $\mathcal{E}$}]\geq 1- 2^d(1-p_\star)^{(b_2\alpha)^{-0.5d}},
    \end{equation*}
    where $p_\star$ is the constant in Lemma \ref{Lemmaveryverynice-e}.
\end{lemma}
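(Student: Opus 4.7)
The plan is to deduce Lemma \ref{Lemmaveryverynice} from Lemma \ref{Lemmaveryverynice-e} by a simple union bound over the $2^d$ directions $\bm e \in \{-1,1\}^d$, once we establish the following deterministic implication: if $J''(\bm w''_{k,i})$ is $(\bm e,(b_2\alpha)^2\varepsilon r)$-very very nice for every $\bm e\in\{-1,1\}^d$, then it is $(b_2\alpha)^2\varepsilon r$-very very nice in the sense of Definition \ref{very very nice}. Granting this, the desired bound is immediate:
\begin{equation*}
\mathds{P}[J''(\bm w''_{k,i})\text{ is not very very nice}] \leq \sum_{\bm e\in\{-1,1\}^d}\mathds{P}[J''(\bm w''_{k,i})\text{ is not }(\bm e)\text{-very very nice}] \leq 2^d(1-p_\star)^{(b_2\alpha)^{-0.5d}}.
\end{equation*}

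To prove the implication, I would argue by a pigeonhole/inclusion-exclusion step. For each $\bm e\in\{-1,1\}^d$, let $B_{\bm e}$ be the collection of second-scale sub-cubes $J'(\bm w'_{k,j})\subset J''(\bm w''_{k,i})$ that fail to be $(\bm e,(b_2\alpha)^{2.5}\varepsilon r)$-very nice. If $J''(\bm w''_{k,i})$ is $(\bm e)$-very very nice, then Definition \ref{very very nice-e} gives $\#B_{\bm e} \leq 3^{-d}(b_2\alpha)^{-0.5d}$. Assuming this for every $\bm e$ simultaneously and taking a union over $\bm e$,
\begin{equation*}
\#\bigcup_{\bm e\in\{-1,1\}^d} B_{\bm e} \leq 2^d\cdot 3^{-d}(b_2\alpha)^{-0.5d} = (2/3)^d(b_2\alpha)^{-0.5d} < (b_2\alpha)^{-0.5d},
\end{equation*}
which is strictly less than the total number $(b_2\alpha)^{-0.5d}$ of $J'$ sub-cubes comprising $J''(\bm w''_{k,i})$. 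Thus at least one $J'(\bm w'_{k,j})\subset J''(\bm w''_{k,i})$ avoids every $B_{\bm e}$, i.e., is $(\bm e,(b_2\alpha)^{2.5}\varepsilon r)$-very nice for every $\bm e$. By Lemma \ref{veryniceequ}, such a cube is $(b_2\alpha)^{2.5}\varepsilon r$-very nice, so $J''(\bm w''_{k,i})$ contains at least one very nice sub-cube and is therefore $(b_2\alpha)^2\varepsilon r$-very very nice in the sense of Definition \ref{very very nice}.

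There is essentially no obstacle here; the proof is routine once the directional decomposition is set up. The only point worth highlighting is that the whole reason one introduces the directional notion in the first place is precisely to convert the non-local condition of Definition \ref{very nice}(2) into events that are independent across the different $J'$ cubes (cf.\ \eqref{Jindep}), which is what makes the binomial large-deviation estimate in Lemma \ref{Lemmaveryverynice-e} possible. The present lemma then pays a harmless multiplicative price of $2^d$ to translate back to the non-directional statement needed in subsequent sections.
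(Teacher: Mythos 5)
Your proof is correct and matches the paper's argument: both use a union bound over the $2^d$ directions to reduce to Lemma \ref{Lemmaveryverynice-e}, and then a pigeonhole count (at most $(2/3)^d(b_2\alpha)^{-0.5d}$ bad second-scale cubes in total, out of $(b_2\alpha)^{-0.5d}$) to extract at least one cube that is $(\bm e)$-very nice for every direction, whence very nice by Lemma \ref{veryniceequ}. The paper phrases the counting step as there being at least $(1-2^d\cdot 3^{-d})(b_2\alpha)^{-0.5d}\geq 1$ simultaneously good cubes, which is the same pigeonhole estimate.
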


\begin{proof}
By Axiom IV' (translation invariance), we also only need to prove the statement in the case that $k=i=1$.
Let $N(\bm e)$ be the number as defined in the proof of Lemma \ref{Lemmaveryverynice-e} for $\bm e\in\{-1,1\}^d$. So the event that $A(\bm e):=\{N(\bm e)\geq (1-3^{-d})(b_2\alpha)^{-0.5d}\}$ is equivalent to the event that $J''(\bm w''_{1,1})$  is $({\bm e},(b_2\alpha)^{2}\varepsilon r)$-very very nice with respect to $\mathcal{E}$. Then from Lemma \ref{Lemmaveryverynice-e} we have that
\begin{equation}\label{PA}
\mathds{P}[A(\bm e)]\geq  1- (1-p_\star)^{(b_2\alpha)^{-0.5d}}.
\end{equation}
Additionally, on the event $\cap_{\bm e\in\{-1,1\}^d}A(\bm e)$, we can see that there must exist at least $(1-2^d\cdot3^{-d})(b_2\alpha)^{-0.5d}\geq 1$ cubes (of side length $(b_2\alpha)^{2.5}\varepsilon r$) in $J''(\bm w''_{1,1})$ which are $(\bm e,(b_2\alpha)^{2.5}\varepsilon r)$-very nice for all ${\bm e}\in\{-1,1\}^d$, i.e., being $(b_2\alpha)^{2.5}\varepsilon r$-very nice. Thus,
  \begin{equation*}
        \begin{split}
        &\mathds{P}[J''(\bm w''_{1,1})\text{ is $(b_2\alpha)^{2}\varepsilon r$-very very nice  with respect to $\mathcal{E}$}]\geq \mathds{P}\left[\cap_{\bm e\in\{-1,1\}^d}A(\bm e)\right]\\
        &\geq 1-2^d(1-p_\star)^{(b_2\alpha)^{-0.5d}},
        \end{split}
    \end{equation*}
    where $p_\star$ is the constant in Lemma \ref{Lemmaveryverynice-e}.
\end{proof}

Next, we define the fourth scale as follows.

\begin{definition}\label{super good}

For any $k\in[1,\varepsilon^{-d}]_{\mathds{Z}}$, we say $V_{3\varepsilon r}(\bm z_k)$ is {\it$(3\varepsilon r,\alpha,\eta)$-super good} (resp. {\it$(\bm e, 3\varepsilon r,\alpha,\eta)$-super good}) with respect to $\mathcal{E}$ if
\begin{enumerate}
    \item  For each $i\in[1,3^d(b_2\alpha)^{-2d}]_\mathds{Z}$, $J''(\bm w''_{k,i})$ is $(b_2\alpha)^{2}\varepsilon r$-very very nice (resp. $(\bm e, (b_2\alpha)^{2}\varepsilon r)$-very very nice) with respect to $\mathcal{E}$;

    \item  There is a constant $\widetilde{C}=\widetilde{C}(\alpha)$ depending only on $\beta,d,\alpha$  and the laws of $D$ and $\widetilde{D}$ such that for all $\bm u,\bm v\in V_{3\varepsilon r}(\bm z_k)$,
         $$D(\bm u,\bm v ;V_{3\varepsilon r}(\bm z_k))\le \widetilde{C}\|\bm u-\bm v\|_\infty^\theta\log\frac{2\varepsilon r}{\|\bm u-\bm v\|_\infty}.$$

\end{enumerate}
\end{definition}


From Proposition  \ref{LimitContinuousTail}, we also have that there exists a constant $\widetilde{C}=\widetilde{C}(\alpha)$ (depending only on $\alpha,\beta,d$ and the law of $D$),
\begin{equation}\label{supergood(2)}
\mathds{P}[\text{Definition \ref{super good} (2) holds}]\geq 1-\alpha.
\end{equation}
In addition, by \eqref{PA} and a union bound (over $i$), one has that
\begin{equation}\label{esupergood-1}
\mathds{P}[\text{Definition \ref{super good} (1) with direction $\bm e$ holds in  $V_{3\varepsilon r}(\bm z_k)$}]
\geq 1-3^d(b_2\alpha)^{-2d} (1-p_\star)^{(b_2\alpha)^{-0.5d}}.
\end{equation}
It is worth noting that, as $\alpha\to 0$, the probability can be arbitrarily close to 1. Let $p_c':=p_c\vee(1-(4C_{dis})^{-8\cdot 6^d})$, where $p_c$ is the constant defined in Lemma \ref{hd-BK} with $\delta=1/(4C_{dis})$
(here $C_{dis}$ is the constant defined in Lemma \ref{number-path-k}).
Combining \eqref{esupergood-1} with \eqref{supergood(2)}, we can find sufficiently small $\alpha_2\in(0,(1-p'_c)/2)$ and sufficiently large $K>0$ such that for each $\bm e\in\{-1,1\}^d$, $\alpha\in(0,\alpha_2]$ and each $k\in [1,\varepsilon^{-d}]_\mathds{Z}$,

\begin{equation}\label{supergoodcriticalprobability}
\begin{split}
    &\mathds{P}[V_{3\varepsilon r}(\bm z_k)\text{ is $(\bm e,3\varepsilon r,\alpha,\eta)$-super good with respect to }\mathcal{E}]\\
    &\geq \mathds{P}[\text{Definition \ref{super good} (1) holds}]-\frac{1-p'_c}{2} >p'_c.
    \end{split}
\end{equation}
Additionally, let $\mathcal{Z}_\varepsilon$ be the collection of all subsets $Z\subset[1,\varepsilon^{-d}]_{\mathds{Z}}$ such that $V_{3\varepsilon r}(\bm z_k)$'s for $k\in Z$ are disjoint.
From \eqref{Jindep} we can see that for each $Z\in\mathcal{Z}_\varepsilon$ and each $\bm e\in\{-1,1\}^d$,
\begin{equation}\label{supergoodindep}
\{V_{3\varepsilon r}(\bm z_k)\text{ is $(\bm e,3\varepsilon r,\alpha,\eta)$-super good} \}_{k\in Z}
\end{equation}
is a collection of independent events. Furthermore, the event that $V_{3\varepsilon r}(\bm z_k)$ is $(\bm e,3\varepsilon r,\alpha,\eta)$-super good is a.s.\ determined by $\mathcal{E}\cap(V_{3\varepsilon r}(\bm z_k)\times (V_{3\varepsilon r}(\bm z_k)\cup  \mathtt{V}_{(\bm e)}(\bm z_k)))$ for each $k\in Z$.

We finally introduce the definition of super super good cubes, which is a slightly stronger version of super good cubes.

\begin{definition}\label{superdouble}
For any $k\in[1,\varepsilon^{-d}]_{\mathds{Z}}$, we say $V_{3\varepsilon r}(\bm z_k)$ is {\it$(3\varepsilon r,\alpha,\eta)$-super super good}  with respect to $\mathcal{E}$ if
\begin{itemize}
\item[(1)] $V_{3\varepsilon r}(\bm z_k)$ is $(3\varepsilon r,\alpha,\eta)$-super good with respect to $\mathcal{E}$;
 \item[(2)]   For any two different edges $\langle {\bm u}_1,{\bm v}_1\rangle$ and $\langle {\bm u}_2,{\bm v}_2\rangle\in\mathcal{E}$, with $\bm u_1\in V_{\varepsilon r}(\bm z_k)^c$, $\bm v_1\in V_{\varepsilon r}(\bm z_k)$, $\bm u_2\in V_{3\varepsilon r}(\bm z_k)$ and $\bm v_2\in  V_{3\varepsilon r}(\bm z_k)^c$, we have that $|\bm v_1-\bm u_2|\geq \alpha \varepsilon r$ and that
    $$
    D(\bm v_1,\bm u_2;V_{3\varepsilon r}(\bm z_k))\geq  (b_2\alpha \varepsilon r)^\theta.
    $$
    \end{itemize}
\end{definition}

As we have shown in Lemma \ref{super good (ii)}, there is a sufficiently small $\alpha_3>0$ such that  for each $\alpha\in(0,\alpha_3]$, there is a constant $C>0$ (depending only on $\beta,d$ and the law of $D$) such that
\begin{equation}\label{superdouble(2)}
\mathds{P}[\text{Definition \ref{superdouble} (2) holds}]\geq 1-C\alpha\log(\alpha^{-1}).
\end{equation}

Thus combining \eqref{superdouble(2)} with \eqref{supergoodcriticalprobability} we can see that for sufficiently small $\alpha>0$,
\begin{equation}\label{porbsupersuper}
\mathds{P}[V_{3\varepsilon r}(\bm z_k)\text{ is }(3\varepsilon r,\alpha,\eta)\text{-super super good with respect to }\mathcal{E}]\geq p_c.
\end{equation}

Intuitively, if we say $\bm z_k$ is super super good when $V_{3\varepsilon r}(\bm z_k)$ is $(3\varepsilon r,\alpha,\eta)$-super super good with respect to $\mathcal{E}$, then in light of Lemma \ref{hd-BK}, it is natural to suspect from \eqref{porbsupersuper} that the skeleton path of any path $P$ in the continuous model (whose length is not too short) hits enough such super super good points. That is, the  path $P$ hits enough super super good cubes.
Furthermore, based on the above definitions, there exist numerous $(\varepsilon r/K,\alpha,\eta)$-nice cubes within each super good cube and thus also within each super super good cube.
We hope that when a path $P$ hits some super super good cubes and spends enough time inside them (guaranteed by Definition \ref{superdouble} (2)), it will pass through some $(\varepsilon r/K,\alpha,\eta)$-great pairs of small cubes within these super super good cubes. This will enable us to utilize Definition \ref{interval-good} (2) to obtain certain estimates.

It is worth emphasizing that all the definitions and estimates mentioned above depend on the realization or on the law for the edge set $\mathcal{E}$.
In the remainder of this section, when we refer a cube as a super super good cube with respect to another edge set $\widetilde{\mathcal{E}}$, it means that we simply replace $\mathcal{E}$ with $\widetilde{\mathcal{E}}$ in Definitions \ref{interval-good}--\ref{superdouble} (here we mean Definitions \ref{interval-good}, \ref{very nice}, \ref{very nice-e}, \ref{very very nice-e}, \ref{very very nice}, \ref{super good} and \ref{superdouble}), and accordingly  the metrics change with the variation of the edge set.

Throughout the rest of this section,  we will also utilize renormalization, similar to the one used in the proof of Proposition \ref{(B)=>G}. Specifically,  we will identify $V_{\varepsilon r}(\bm z_k)$ to the vertex $\bm z_k$, resulting in a graph also denoted as $G$. Thanks to the model's self-similarity,  $G$ can be  viewed as the  critical long-range bond percolation model (see the paragraph before \eqref{connectprob}).

In the remainder of this subsection, we will discuss the selection of parameters used in this section.
Recall that $c'=\frac{c_*+C_*}{2}$ and choose the parameters as follows. We first select $p$, and then choose
 \begin{equation*} 
 \alpha<\min\{\alpha_0,\alpha_1,\alpha_2,\alpha_3\}
  \end{equation*}
  sufficiently small and then choose the corresponding $c''$ (see Proposition \ref{mrinS3-tilde} with $c'=(c_*+C_*)/2$), $b_1,b_2,\widetilde{C}$ (first choose $b_1>0$ and $\widetilde{C}>1$ and then choose a small $b_2>0$) such that \eqref{supergoodcriticalprobability} and the following two inequalities hold:
\begin{align}
    \label{M0new}
    &(b_2\alpha)^\theta> 20\cdot 2^\theta\widetilde{C}^2 (b_2\alpha)^{1.4\theta}(\log (2(b_2\alpha)^{-2}))^2,\\
    \label{b2alpha}
    &(b_2\alpha )^\theta-2^\theta \widetilde{C}(b_2\alpha)^{2\theta}\log(1/2(b_2\alpha)^2)>(b_2\alpha)^{2.6\theta}.
\end{align}
 For convenience, we assume that $(b_2\alpha)^{-0.5}\in\mathds{Z}$ in the rest of paper.

After that, we choose $\eta<\eta_0$ such that $f(t)=t^\theta\log\frac{2}{t}$ is increasing on $[0,\eta]$ and that
\begin{equation}\label{eta0}
    C_1\eta^\theta\log\frac{2}{\eta}<\frac{C_*-c'}{8C_*}b_1\alpha^\theta.
\end{equation}
 Finally we choose a sufficiently large $K>0$ such that \eqref{probverynice} and the following relation holds:
\begin{equation}\label{M0}
    4\cdot3^dK^{-\theta}\log K< \min\left\{(b_2\alpha)^{2\theta+2d}\log((b_2\alpha)^{-2}),\widetilde{C}^{-1}(b_2\alpha)^{2.6\theta}\right\}.
\end{equation}

\subsection{Add and delete edges}\label{add-delete}
In this subsection, we introduce some notations related to adding or deleting edges, which will be useful in our counting arguments.

For simplicity, unless otherwise stated, we classify cubes as \textit{nice, very nice {\rm(resp. \textit{$\bm e$-very nice})}, very very nice {\rm (resp. \textit{$\bm e$-very very nice})}, super good {\rm(resp. \textit{$\bm e$-super good})} or super super good}
if they satisfy the criteria for being  $(\varepsilon r/K,\alpha,\eta)$-nice, $(b_2\alpha)^{2.5}\varepsilon r$-very nice (resp. $(\bm e,b_2\alpha)^{2.5}\varepsilon r$-very nice), $(b_2\alpha)^{2}\varepsilon r$- very very nice (resp. $(\bm e,b_2\alpha)^{2}\varepsilon r$- very very nice),  $(3\varepsilon r,\alpha,\eta)$-super good (resp. $(\bm e,3\varepsilon r,\alpha,\eta)$-super good) or $(3\varepsilon r,\alpha,\eta)$-super super good  with respect to $\mathcal{E}$, respectively.
Recall that for each $k\in [1,\varepsilon^{-d}]_\mathds{Z}$ we have that $V_{3\varepsilon r}(\bm z_k)$ is a union of small cubes of side length $(b_2 \alpha)^2 \varepsilon r$, i.e., $V_{3\varepsilon r}(\bm z_k)  = \cup_{i=1}^{3^d (b_2 \alpha)^{-2d}} J''(\bm w''_{k, i})$. Without loss of generality, we assume that $J''(\bm w''_{k,i})$ and $J''(\bm w''_{k,i+1})$ are adjacent for all $i\in [1,3^d(b_2\alpha)^{-2d})_\mathds{Z}$.

It is worth noting that for $k\in [1,\varepsilon^{-d}]_\mathds{Z}$, if Definition \ref{super good} (1) holds for $V_{3\varepsilon r}(\bm z_k)$, then based on Definitions \ref{interval-good}--\ref{super good}, we can conclude that there are nice cubes of side length $\varepsilon r/K$ in each $J''(\bm w''_{k,i})$ for $i\in[1,3^d(b_2\alpha)^{-2d}]_\mathds{Z}$.
For each $k$ and $i$, we choose a nice cube in an arbitrary but prefixed manner, denoted by $J_{k,q(k,i)}$. Then we have chosen $3^d(b_2\alpha)^{-2d}$ nice cubes in $V_{3\varepsilon r}(\bm z_k)$, and they satisfy the regularity conditions in Definitions \ref{interval-good}--\ref{super good}. For convenience, we list these nice cubes as
\begin{equation}\label{choosenice}
J_{k,q(k,1)},\ \cdots,\ J_{k,q(k, {3^d(b_2\alpha)^{-2d}})}.
\end{equation}

Recall that $\mathcal{Z}_\varepsilon$ is the collection of all subsets $Z\subset[1,\varepsilon^{-d}]_\mathds{Z}$ such that $V_{3\varepsilon r}(\bm z_k)$'s for $k\in Z$ are disjoint. Additionally, let $\mathcal{W}_\varepsilon$ be the collection of all subsets $Z\subset[1,\varepsilon^{-d}]_\mathds{Z}$ such that $\bm z_k-\bm z_l\in(3\varepsilon r)\mathds{Z}^d$ for all $k,l\in Z$. It is clear that $\mathcal{W}_\varepsilon\subset\mathcal{Z}_\varepsilon$.
 Sample a $\widetilde{\beta}$-LRP $\widetilde{\mathcal{E}}$ independent of $\mathcal{E}$. Here $\widetilde{\beta}$ is a sufficiently large positive constant (depending only on $\beta,d,\alpha,K$ and $\eta$), which will be chosen in \eqref{Ek1prob-1} below.

\begin{definition}\label{add}
 (Add edges)
    For any $Z\in \mathcal{W}_\varepsilon$, we define $\mathcal{E}_Z^+$  as follows.
    \begin{enumerate}
        \item 
        For each $k\in Z$, if Definition \ref{super good} (1) of super good cubes holds, we choose the nice cubes defined in \eqref{choosenice}.
        Let $(J^{(1)}_{k,q(k,i)},J^{(2)}_{k,q(k,i)})$ be the great pair of small cubes in $J_{k,q(k,i)}$.
        Then set
         \begin{equation*}\label{D_k}
         \Omega_k=\Omega_k(\mathcal{E})=\bigcup_{i=1}^{3^d(b_2\alpha)^{-2d}-1}\left(J_{k,q(k,i)}^{(2)}\times J_{k,q(k,i+1)}^{(1)}\right).
         \end{equation*}
         Otherwise (if Definition  \ref{super good} (1) fails), we let $\Omega_k=\emptyset$. Finally,
         denote $\Omega_Z=\bigcup_{k\in Z}\Omega_k$ as the potential locations for us to add edges.

        \item We add edges in $\Omega_Z$ using $\widetilde{\mathcal{E}}$. To be precise, define $\mathcal{E}_Z^+=\mathcal{E}\cup(\widetilde{\mathcal{E}}\cap \Omega_Z)$ (see Figure \ref{AddEdge}).
    \end{enumerate}
\end{definition}
\begin{figure}[htbp]
\centering
\includegraphics[scale=0.6]{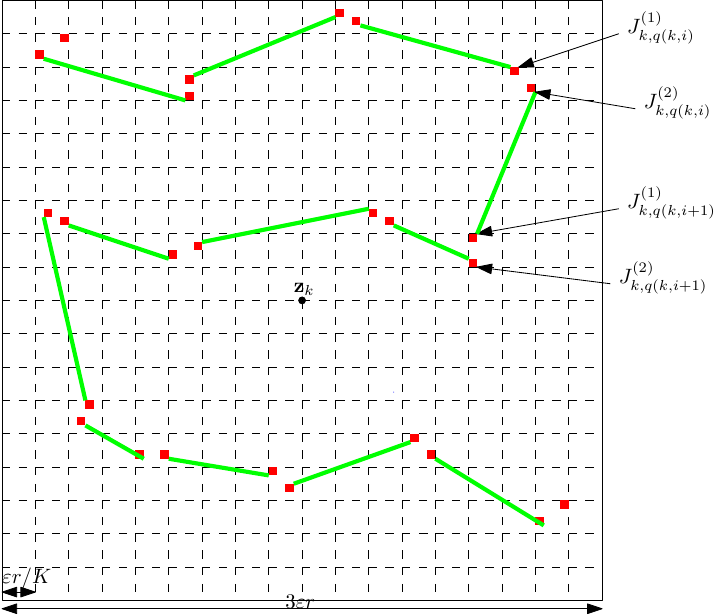}
\caption{Illustration for the definition of adding edges. The red cubes represent great pairs of cubes $(J^{(1)}_{k,q(k,i)},J^{(2)}_{k,q(k,i)})$, and the green lines indicate the edges that we have added.}
\label{AddEdge}
\end{figure}

Because $\mathcal{E}_Z^+$ does not have good local properties, we need  a new set of edges with good local properties to control it. To do this, for $k\in [1,\varepsilon^{-d}]_\mathds{Z} $, let
\begin{equation}\label{detLambda}
\Lambda_k =\left\{(\bm x,\bm y)\in V_{3\varepsilon r}(\bm z_k)\times V_{3\varepsilon r}(\bm z_k):|\bm x-\bm y|\geq (b_2\alpha)^{2.5}\varepsilon r/2\right\}
\end{equation}
be a deterministic region contained in $V_{3\varepsilon r}(\bm z_k)\times V_{3\varepsilon r}(\bm z_k)$. For $Z\in\mathcal{W}_\varepsilon$, define the edge set
\begin{equation}\label{addLambda}
\overline{\mathcal{E}}_Z^+=\mathcal{E}\cup(\widetilde{\mathcal{E}}\cap (\cup_{k\in Z}\Lambda_k)).
\end{equation}
We refer to discussions below \eqref{RNestimateEE+} for the motivation of introducing $\overline{\mathcal{E}}^+_Z$. In particular, we will write $\mathcal{E}_{\{k\}}^+$, $\overline{\mathcal{E}}_{\{k\}}^+$ as $\mathcal{E}_k^+$ and $\overline{\mathcal{E}}_k^+$ for simplicity.

For the laws of $\mathcal{E},\mathcal{E}_Z^+$ and $\overline{\mathcal{E}}_Z^+$, we have the following result.

\begin{lemma}\label{EE+}
For each $Z\in \mathcal{W}_\varepsilon$, the laws of $\mathcal{E}$ and $\mathcal{E}_Z^+$ are mutually absolutely continuous. Furthermore, let $\phi_Z$ be the Radon-Nikodym derivative of $\mathcal{E}$ with respect to $\mathcal{E}_Z^+$. Then there exists $M>0$ (which does not depend on $\varepsilon, r$) such that a.s.
$$
M^{-\#Z} \left(\frac{\beta}{\beta+\widetilde{\beta}}\right)^{|\mathcal{E}_Z^+\cap \Lambda_Z|} \leq \phi_Z(\mathcal{E}_Z^+) \leq  M^{\#Z}.
$$
\end{lemma}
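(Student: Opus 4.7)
The plan is to bridge through the auxiliary Poisson point process $\overline{\mathcal{E}}_Z^+ = \mathcal{E} \cup (\widetilde{\mathcal{E}} \cap \Lambda_Z)$ of \eqref{addLambda}. The crucial point is that, unlike the random region $\Omega_Z=\Omega_Z(\mathcal{E})$ appearing in the definition of $\mathcal{E}_Z^+$, the region $\Lambda_Z$ is deterministic given $Z$. By the superposition theorem for independent Poisson point processes, $\overline{\mathcal{E}}_Z^+$ is itself a Poisson point process on $\mathds{R}^{2d}$ with intensity $(\beta+\widetilde{\beta}\mathds{1}_{\Lambda_Z}(x,y))/|x-y|^{2d}$. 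The standard Poisson change-of-measure formula then yields the explicit expression
\[
\frac{d\mu_{\mathcal{E}}}{d\mu_{\overline{\mathcal{E}}_Z^+}}(\eta) \;=\; \exp(\Phi_Z)\,\Bigl(\tfrac{\beta}{\beta+\widetilde{\beta}}\Bigr)^{|\eta\cap \Lambda_Z|},
\qquad \Phi_Z := \int_{\Lambda_Z} \frac{\widetilde{\beta}}{|x-y|^{2d}}\,dx\,dy.
\]
A preliminary sub-step is to show $\Phi_Z\leq C_1\#Z$ for a constant $C_1$ independent of $\varepsilon,r$: by the scaling invariance of $\widetilde{\beta}/|x-y|^{2d}$ under $(x,y)\mapsto(\varepsilon r)^{-1}(x,y)$ together with the separation $|x-y|\geq (b_2\alpha)^{2.5}\varepsilon r/2$ enforced in \eqref{detLambda}, each integral $\int_{\Lambda_k}\widetilde{\beta}/|x-y|^{2d}\,dx\,dy$ reduces to a fixed scale-free constant, and summing over $k\in Z$ produces the bound.

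Next I would relate $\mathcal{E}_Z^+$ to $\overline{\mathcal{E}}_Z^+$ via the coloring/marking theorem: conditional on $\overline{\mathcal{E}}_Z^+$, each edge in $\overline{\mathcal{E}}_Z^+\cap\Lambda_Z$ is independently marked ``original'' with probability $p_0:=\beta/(\beta+\widetilde{\beta})$ or ``added'' with probability $1-p_0$, the ``original'' subset (together with $\overline{\mathcal{E}}_Z^+\cap\Lambda_Z^c$) recovers $\mathcal{E}$, and $\mathcal{E}_Z^+$ is then obtained by deleting precisely those ``added'' edges that land in $\Lambda_Z\setminus\Omega_Z(\mathcal{E})$. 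Using the chain-rule identity
$\phi_Z(\mathcal{E}_Z^+)=\bigl(d\mu_\mathcal{E}/d\mu_{\overline{\mathcal{E}}_Z^+}\bigr)(\mathcal{E}_Z^+)\big/\bigl(d\mu_{\mathcal{E}_Z^+}/d\mu_{\overline{\mathcal{E}}_Z^+}\bigr)(\mathcal{E}_Z^+)$,
I would bound the denominator above and below by summing over the admissible colorings of a given configuration $\eta$. The key observation here is that the Poisson number of ``phantom'' edges in $\Lambda_Z\setminus\Omega_Z(\mathcal{E})$ that must be integrated out is controlled by $\Phi_Z\leq C_1\#Z$, so the marking-theorem correction contributes a multiplicative factor of at most $e^{\pm C_1\#Z}$. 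Combining this with the explicit numerator, and using the trivial bound $(\beta/(\beta+\widetilde{\beta}))^{|\eta\cap\Lambda_Z|}\leq 1$ for the upper estimate, one recovers both $\phi_Z\leq M^{\#Z}$ and $\phi_Z\geq M^{-\#Z}(\beta/(\beta+\widetilde{\beta}))^{|\mathcal{E}_Z^+\cap\Lambda_Z|}$ with $M$ depending only on $\beta,\widetilde{\beta},d,\alpha,b_2$. Mutual absolute continuity of the two laws is immediate once $\phi_Z$ is shown to be positive and finite a.s.

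The main obstacle I foresee is precisely the random dependence $\Omega_Z=\Omega_Z(\mathcal{E})$: because $\Omega_Z$ is a function of $\mathcal{E}$ through the super super good property of the cubes $V_{3\varepsilon r}(\bm z_k)$, the law of $\mathcal{E}_Z^+$ is a genuine mixture rather than Poisson, so the clean Poisson change-of-measure formula cannot be applied to the pair $(\mathcal{E},\mathcal{E}_Z^+)$ directly. The whole reason $\overline{\mathcal{E}}_Z^+$ was introduced in \eqref{addLambda} is to bypass this: enlarging $\Omega_Z$ to the deterministic region $\Lambda_Z$ restores the Poisson structure, and the marking theorem then lets us pay a controllable price for projecting back down to $\mathcal{E}_Z^+$. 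The factor $M^{\pm\#Z}$ in the statement is exactly this price of decoupling, with the geometric separation built into $\Lambda_k$ ensuring that the per-cube contribution to $\Phi_Z$ remains bounded uniformly in $\varepsilon$ and $r$.
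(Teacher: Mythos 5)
Your overall plan of bridging through the deterministic-region process $\overline{\mathcal{E}}_Z^+$ of \eqref{addLambda} is the same one the paper uses, and the ingredients you identify (explicit Poisson change-of-measure for $\mathcal{E}$ versus $\overline{\mathcal{E}}_Z^+$, and the scale-free bound $\Phi_Z \le C_1\#Z$ coming from the separation built into $\Lambda_k$) are correct. However, there is a genuine gap in your treatment of the denominator $\phi_{Z,1} := d\mu_{\mathcal{E}_Z^+}/d\mu_{\overline{\mathcal{E}}_Z^+}$, specifically in the direction you need for the upper bound $\phi_Z \le M^{\#Z}$. You claim the "marking-theorem correction contributes a multiplicative factor of at most $e^{\pm C_1\#Z}$," i.e.\ that $\phi_{Z,1}$ is bounded below by $e^{-C_1\#Z}$. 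This is false. The a.s.\ lower bound on $\phi_{Z,1}$ necessarily carries its own configuration-dependent factor of the form $\bigl(\tfrac{\beta}{\beta+\widetilde\beta}\bigr)^{|\eta\cap\Lambda_Z|}$: since $\Omega_Z\subset\Lambda_Z$ has much smaller measure, a typical $\overline{\mathcal{E}}_Z^+$-configuration with many edges in $\Lambda_Z$ is much less likely under $\mathcal{E}_Z^+$, and this discrepancy grows in the number of $\Lambda_Z$-edges, not just in $\#Z$. If you insert the correct lower bound on $\phi_{Z,1}$ together with the "trivial bound $(\beta/(\beta+\widetilde\beta))^{|\eta\cap\Lambda_Z|}\le 1$" you propose on the numerator, the two $(\beta/(\beta+\widetilde\beta))$-powers fail to cancel and your upper estimate blows up as $|\eta\cap\Lambda_Z|\to\infty$; the argument only appears to close because both the dropped numerator factor and the missing denominator factor are being replaced by $1$.

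The paper avoids this pitfall entirely by proving the upper bound $\phi_Z\le M^{\#Z}$ \emph{directly}, without passing through $\overline{\mathcal{E}}_Z^+$: on the event $\{\widetilde{\mathcal{E}}\cap\Omega_Z=\emptyset\}$, one has $\mathcal{E}_Z^+=\mathcal{E}$, and since $\Omega_Z$ is $\mathcal{E}$-measurable and $\widetilde{\mathcal{E}}$ is independent of $\mathcal{E}$, this yields $\mathds{E}[f(\mathcal{E}_Z^+)]\ge \mathds{E}[f(\mathcal{E})]\cdot\inf_{\mathcal{E}}\mathds{P}[\widetilde{\mathcal{E}}\cap\Omega_Z=\emptyset\,|\,\mathcal{E}]$; the infimum is $\ge e^{-c\#Z}$ by the bound \eqref{DkRegular} on the Poisson mass of each $\Omega_k$. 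The chain rule through $\overline{\mathcal{E}}_Z^+$ is then invoked only for the lower bound on $\phi_Z$, which requires only an \emph{upper} bound on $\phi_{Z,1}$ (obtained in the same one-sided, event-conditioning way, using the trivial containment $\mathcal{E}_Z^+\subset\overline{\mathcal{E}}_Z^+$). You should either adopt this two-pronged strategy, or keep the full $(\beta/(\beta+\widetilde\beta))^{|\eta\cap\Lambda_Z|}$ factors on both sides of your chain-rule quotient so the cancellation is explicit rather than accidental.
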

\begin{proof}
For a fixed $Z\in \mathcal{W}_\varepsilon$, by Definition \ref{add} we see that for any positive measurable function $f:\mathcal{D}'\to\mathds{R}$,
\begin{equation}\label{E=E+}
\begin{split}
\mathds{E}[f(\mathcal{E}_Z^+)]&=\mathds{E}[\mathds{E}[f(\mathcal{E}_Z^+)|\mathcal{E}]]
\geq \mathds{E}[\mathds{E}[f(\mathcal{E})\I_{\{\widetilde{\mathcal{E}}\cap\Omega_Z=\emptyset\}}|\mathcal{E}]]
=\mathds{E}[f(\mathcal{E})]\mathds{E}\left[\mathds{P}[\widetilde{\mathcal{E}}\cap\Omega_Z=\emptyset|\mathcal{E}]\right].
\end{split}
\end{equation}
Moreover, note that $(J_{k,q(k,i)}^{(1)},J_{k,q(k,i)}^{(2)})$ lies in the ``center'' of $J'(\bm w'_{k,q(k,i)})$ (see Definition \ref{very nice} (1)). By this and the assumption that $J''(\bm w''_{k,i})$ and $J''(\bm w''_{k,i+1})$ are adjacent for all $i\in[1,3^d(b_2\alpha)^{-2d})_\mathds{Z}$, one can see that given $\mathcal{E}$,
$$
(b_2\alpha)^{2.5}\varepsilon r /2\le {\rm dist}(J_{k,q(k,i)}^{(2)}, J_{k,q(k,i+1)}^{(1)})\le 2d(b_2\alpha)^2 \varepsilon r.
$$
Combining this with the fact that  the side lengths of $J_{k,q(k,i)}^{(1)}$ and  $J_{k,q(k,i)}^{(2)}$ are both $\eta\varepsilon r/K$, we have that
\begin{equation}\label{DkRegular}
        (b_2\alpha)^{-8d}\eta^{2d}/(2dK)^{2d}\le\iint_{\Omega_k}\frac{1}{|\bm x-\bm y|^{2d}}\d \bm x\d \bm y\le 12^{d}(b_2\alpha)^{-9d}\eta^{2d}/K^{2d}.
    \end{equation}
Thus, applying \eqref{DkRegular} to \eqref{E=E+}, we get that
$$
\mathds{E}[f(\mathcal{E}_Z^+)]\geq\exp\left\{-12^{d}\widetilde{\beta}(b_2\alpha)^{-9d}\eta^{2d}\#Z/K^{2d}\right\}\mathds{E}[f(\mathcal{E})].
$$
This implies that the law of $\mathcal{E}$ is absolutely continuous with respect to the law of $\mathcal{E}_Z^+$. Furthermore, let $\phi_Z$ be the Radon-Nikodym derivative of $\mathcal{E}$ with respect to $\mathcal{E}_Z^+$. Then we a.s. have
\begin{equation}\label{RNestimateEE+}
    \phi_Z(\mathcal{E}_Z^+)\leq \exp\left\{12^{d}\widetilde{\beta}(b_2\alpha)^{-9d}\eta^{2d}\#Z/K^{2d}\right\}.
\end{equation}

Now let us turn to show that the other direction is also true. To this end, recall that $\Lambda_k$ is the deterministic region defined in \eqref{detLambda} and $\bar{\mathcal{E}}_Z^+$ is the associated edge set defined \eqref{addLambda}.
For convenience, for $Z\in\mathcal{W}_\varepsilon$, let $\Lambda_Z=\cup_{k\in Z}\Lambda_k$.
By the definition \ref{add} (1), it can be checked that  $\Omega_k\subset\Lambda_k$ for all $k\in Z$ and thus $\mathcal{E}_Z^+\subset \overline{\mathcal{E}}_Z^+$.
We will use $ \overline{\mathcal{E}}_Z^+ $ as an intermediate variable and we will show that (1) the law of $\mathcal{E}_Z^+$ is absolutely continuous with respect to the law of $\overline{\mathcal{E}}_Z^+$; (2) the law of $\overline{\mathcal{E}}_Z^+$ is absolutely continuous with respect to the law of $\mathcal{E}$. Combining the assertions (1) and (2) yields our desired assertion. In addition, we will also give some estimates on the Radon-Nikodym derivatives where the bounds do not depend on $\varepsilon$ and $r$.

We first prove the assertion (1). The method we use is similar to the first part of the proof. For a fixed $Z\in\mathcal{W}_\varepsilon$, we see that for any positive measurable function $f:\mathcal{D}'\to\mathds{R}$,
\begin{equation}\label{E+Ebar}
    \mathds{E}\left[f(\overline{\mathcal{E}}_Z^+)\right]\geq \mathds{E}\left[f(\mathcal{E}_Z^+)\mathds{E}[\I_{\{\widetilde{\mathcal{E}}\cap(\Lambda_Z\setminus\Omega_Z)=\emptyset\}}|\mathcal{E},\Omega_Z,\mathcal{E}_Z^+]\right]
    \geq\mathds{E}\left[f(\mathcal{E}_Z^+)\mathds{P}[\widetilde{\mathcal{E}}\cap \Lambda_Z=\emptyset]\right].
\end{equation}
Since from the definition of $\Lambda_k$, we have
\begin{equation}\label{LambdakNotlarge}
    \iint_{\Lambda_k}\frac{1}{|\bm x-\bm y|^{2d}}\d \bm x\d \bm y\leq 6^{2d}(b_2\alpha)^{-5d},
\end{equation}
thus applying \eqref{LambdakNotlarge} to \eqref{E+Ebar}, we get
$$
\mathds{E}\left[f(\overline{\mathcal{E}}_Z^+)\right]\geq \exp\left\{-6^{2d}\widetilde{\beta}(b_2\alpha)^{-5d}\#Z\right\}\mathds{E}\left[f(\mathcal{E}_Z^+)\right],
$$
which implies that the assertion (1) holds. Moreover, let $\phi_{Z,1}$ be the Radon-Nikodym derivative of $\mathcal{E}_Z^+$ with respect to the law of $\overline{\mathcal{E}}_Z^+$, and then we have that a.s.
\begin{equation}\label{RNestimateE+Ebar}
    \phi_{Z,1}(\overline{\mathcal{E}}_Z^+)\leq \exp\left\{6^{2d}\widetilde{\beta}(b_2\alpha)^{-5d}\#Z\right\}.
\end{equation}

From now on, we turn to prove the assertion (2). In fact, we can directly compute the Radon-Nikodym derivative. Note that $\overline{\mathcal{E}}_Z^+$ is also a Poisson point process with intensity $ \frac{\beta+\widetilde{\beta}\I_{\{(\bm x,\bm y)\in\Lambda_Z\}}}{|\bm x-\bm y|^{2d}}\d \bm x\d \bm y$.
For any $N\in\mathds{N}$ and any $N$ disjoint Borel sets $A_1,\cdots,A_N$ such that  $\cup_{j=1}^{j_0}A_j=\Lambda_Z$ for some $j_0\in[1,N]_\mathds{Z}$, we have that for any $N$ nonnegative integers $n_1,\cdots,n_N$,
\begin{equation*}
    \frac{\mathds{P}[|\mathcal{E}\cap A_j|=n_j,\quad j=1,\cdots,N]}{\mathds{P}[|\overline{\mathcal{E}}_Z^+\cap A_j|=n_j,\quad j=1,\cdots,N]}=\exp\left\{\widetilde{\beta}\iint_{\Lambda_Z}\frac{1}{|\bm x-\bm y|^{2d}}\d \bm x\d \bm y\right\}\left(\frac{\beta}{\beta+\widetilde{\beta}}\right)^{n_1+\cdots+n_{j_0}},
\end{equation*}
which follows from a straightforward computation.
Note that $\sigma(\mathcal{E})$ can be generated by all the events $\{|\mathcal{E}\cap A_j|=n_j\text{ for }j=1,\cdots,N\}$ with $N,A_j,n_j$ as above. Thus for any Borel subset $E\subset\mathcal{D}'$,
$$
\mathds{P}[\mathcal{E}\in E]=\mathds{E}\left[\I_{\{\overline{\mathcal{E}}_Z^+\in E\}}\exp\left\{\widetilde{\beta}\iint_{\Lambda_Z}\frac{1}{|\bm x-\bm y|^{2d}}\d \bm x\d \bm y\right\}\left(\frac{\beta}{\beta+\widetilde{\beta}}\right)^{|\overline{\mathcal{E}}_Z^+\cap\Lambda_Z|}\right].
$$

As a result, we get the assertion (2) and the fact that the laws of $\mathcal{E},\mathcal{E}_Z^+$ and $\overline{\mathcal{E}}_Z^+$ are mutually absolutely continuous.
Furthermore, let $\phi_{Z,2}$ be the Radon-Nikodym derivative of $\overline{\mathcal{E}}_Z^+$ with respect to $\mathcal{E}$. Then we have that a.s.
\begin{equation}\label{RNphi2}
\phi_{Z,2}(\mathcal{E})=\exp\left\{-\widetilde{\beta}\iint_{\Lambda_Z}\frac{1}{|\bm x-\bm y|^{2d}}\d \bm x\d \bm y\right\}\left(\frac{\beta}{\beta+\widetilde{\beta}}\right)^{-|\mathcal{E}\cap\Lambda_Z|},
\end{equation}
whose law  does not depend on $\varepsilon$ or $r$ from the scaling invariance of the Poisson point process.

Finally we will give the estimate in the lemma.  Since $\phi_Z\phi_{Z,1}\phi_{Z,2}=1$ from the mutual absolute continuity, combining with \eqref{RNestimateE+Ebar} and \eqref{RNphi2}, we get that a.s.
$$\phi_Z(\mathcal{E}_Z^+)^{-1}\leq  \exp\left\{6^{2d}\widetilde{\beta}(b_2\alpha)^{-5d}\#Z\right\}\phi_{Z,2}(\mathcal{E}_Z^+)\leq M_1^{\#Z} \exp\left(\frac{\beta+\widetilde{\beta}}{\beta}\right)^{|\mathcal{E}_Z^+\cap \Lambda_Z|}
$$
for some finite constant $M_1>0$, which does not depend on $\varepsilon $ or $r$.
In addition, we can  get the upper bound on $\phi_Z(\mathcal{E}_Z^+)$ from \eqref{RNestimateEE+}. Hence, the proof is complete.
\end{proof}

Now we introduce how we delete edges.
\begin{definition}\label{roughdelete}
    For any $Z\in\mathcal{W}_\varepsilon$, let $\Psi_Z(\mathcal{E}_Z^+)$ be the conditional distribution of $(\mathcal{E},\Omega_Z)$ given  $\mathcal{E}_Z^+$. That is, for a.s. $\nu \in \mathcal{D}'$,
    $\Psi_{Z}(\nu)$ is the conditional law of $(\mathcal{E},\Omega_{Z})$ given $\mathcal{E}_{Z}^+=\nu$.
    Then conditioned on $\mathcal{E}$, we sample $(\mathcal{E}_{Z}^-,\Omega_{Z}^-)$ according to the law $\Psi_{Z}(\mathcal{E})$.
\end{definition}

It is worth emphasizing that, from Definition \ref{roughdelete}, it is not clear that we necessarily have that $\mathcal{E}_{Z_1}^-$ is stochastically dominated by $\mathcal{E}_{Z_2}^-$ when $Z_2\subset Z_1$. In order to clarify the monotonicity, we introduce another definition which starts by deleting edges from some maximal set $Z$ and then restrict the operation of deleting edges to smaller sets which are contained in those maximal sets (such restriction will be implemented via returning some deleted edges, as in Definition \ref{delete} below). Provided with the coincidence of the two definitions (see Lemma \ref{newdeleteedge} below), it then becomes clear that the operation of deleting edges has a natural monotonicity.
For this purpose, we hope that each $Z$ is contained in a unique maximal set.  Therefore, we need to introduce the set $W_{\bm e}$ below and only consider $Z\in\mathcal{W}_\varepsilon $.
Note that there exists a partition of $[1,\varepsilon^{-d}]_\mathds{Z}=\cup_{\bm e\in\{0,1,2\}^d}W_{\bm e}$ with $W_{\bm e}:=\{k\in [1,\varepsilon^{-d}]_\mathds{Z}:\bm z_k-(\varepsilon r)\bm e\in(3\varepsilon r)\mathds{Z}^d\}\in\mathcal{W}_\varepsilon$. Thus for any nonempty $Z\in\mathcal{W}_\varepsilon$, there exists a unique $\bm e\in \{0,1,2\}^d$ such that $Z\subset W_{\bm e}$.

Now we present another definition of $(\mathcal{E}_Z^-,\Omega_Z^-)$.
\begin{definition}\label{delete}
    For each $\bm e\in\{0,1,2\}^d$, let $\Psi_{W_{\bm e}}(\mathcal{E}_{W_{\bm e}}^+)$ be the conditional distribution of $(\mathcal{E},\Omega_{W_{\bm e}})$ given  $\mathcal{E}_{W_{\bm e}}^+$. That is, for a.s. $\nu \in \mathcal{D}'$,
    $\Psi_{W_{\bm e}}(\nu)$ is the conditional law of $(\mathcal{E},\Omega_{W_{\bm e}})$ given $\mathcal{E}_{W_{\bm e}}^+=\nu. $
    Then conditioned on $\mathcal{E}$, we sample $(\mathcal{E}_{W_{\bm e}}^-,\Omega_{W_{\bm e}}^-)$ according to the law $\Psi_{W_{\bm e}}(\mathcal{E})$.
    Additionally, for any general nonempty $Z\in\mathcal{W}_\varepsilon$, choose the unique $W_{\bm e}$ such that $Z\subset W_{\bm e}$, and then let $\mathcal{E}_Z^-=\mathcal{E}_{W_{\bm e}}^-\cup (\cup_{k\in W_e\setminus Z} \mathcal{E} \cap (V_{3\varepsilon r}(\bm z_k)\times V_{3\varepsilon r}(\bm z_k)))$ and $\Omega_Z^-=\Omega_{W_{\bm e}}^-\cap (\cup_{k\in Z}V_{3\varepsilon r}(\bm z_k)\times V_{3\varepsilon r}(\bm z_k))$. When $Z=\emptyset$, we let  $\mathcal{E}_Z^-=\mathcal{E}$ and $\Omega_Z^-=\emptyset$.
\end{definition}

Note that by Definition \ref{delete}, $\mathcal{E}_{Z_1}^-\subset \mathcal{E}_{Z_2}^-$ for all $Z_1,Z_2\in \mathcal{W}_\varepsilon$ with $Z_2\subset Z_1$ (this is the aforementioned monotonicity). Furthermore, we will show that Definitions \ref{roughdelete} and \ref{delete} coincide as follows.
\begin{lemma}\label{newdeleteedge}
    Let $(\mathcal{E}_Z^-,\Omega_Z^-)$ be defined as in Definition \ref{delete}. Then conditioned on $\mathcal{E}$, we have that $(\mathcal{E}_Z^-,\Omega_Z^-)$ follows the law $\Psi_Z(\mathcal{E})$ {\rm(}recall Definition \ref{roughdelete} for the definition of $\Psi_Z${\rm)}.
\end{lemma}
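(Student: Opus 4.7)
The plan is to exploit the pairwise disjointness of the cubes $\{V_{3\varepsilon r}(\bm z_k)\}_{k\in W_{\bm e}}$ (which holds because $W_{\bm e}\in\mathcal{W}_\varepsilon$) together with the independence of $\widetilde{\mathcal{E}}$ from $\mathcal{E}$. First I would record the resulting product structure: since $\Omega_k$ is a measurable function of $\mathcal{E}|_{V_{3\varepsilon r}(\bm z_k)\times V_{3\varepsilon r}(\bm z_k)}$ and $\Omega_k\subset V_{3\varepsilon r}(\bm z_k)\times V_{3\varepsilon r}(\bm z_k)$, the triples
$$\bigl(\mathcal{E}|_{V_{3\varepsilon r}(\bm z_k)\times V_{3\varepsilon r}(\bm z_k)},\,\Omega_k,\,\widetilde{\mathcal{E}}\cap\Omega_k\bigr)_{k\in W_{\bm e}}$$
are mutually independent, and they are jointly independent of the restriction of $\mathcal{E}$ to the complement of $\cup_{k\in W_{\bm e}}V_{3\varepsilon r}(\bm z_k)\times V_{3\varepsilon r}(\bm z_k)$. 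This is immediate from the Poisson independence across disjoint regions, combined with the independence of $\widetilde{\mathcal{E}}$ from $\mathcal{E}$.

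From this I would deduce a factorization of the disintegration $\Psi_{W_{\bm e}}$. Given $\mathcal{E}_{W_{\bm e}}^+=\nu$, on the complement of $\cup_{k\in W_{\bm e}} V_{3\varepsilon r}(\bm z_k)\times V_{3\varepsilon r}(\bm z_k)$ the edge set $\mathcal{E}$ equals $\nu$ a.s.\ (no edges were ever added there); inside each $V_{3\varepsilon r}(\bm z_k)\times V_{3\varepsilon r}(\bm z_k)$ the conditional law of $(\mathcal{E}|_{V_k\times V_k},\Omega_k)$ is a kernel $\kappa_k(\nu|_{V_k\times V_k},\cdot)$ that depends only on $\nu|_{V_k\times V_k}$, and these kernels are conditionally independent across $k\in W_{\bm e}$. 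The same reasoning applied to $Z\subset W_{\bm e}$ yields the analogous factorization of $\Psi_Z$: given $\mathcal{E}_Z^+=\nu$, the restriction of $\mathcal{E}$ to $V_k\times V_k$ equals $\nu|_{V_k\times V_k}$ for $k\in W_{\bm e}\setminus Z$ (since those cubes are untouched by the addition procedure), while for $k\in Z$ the conditional law is precisely the same per-cube kernel $\kappa_k(\nu|_{V_k\times V_k},\cdot)$ as above, independently across $k\in Z$.

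Finally I would match Definition \ref{delete} with Definition \ref{roughdelete}. Conditionally on $\mathcal{E}$, sampling $(\mathcal{E}_{W_{\bm e}}^-,\Omega_{W_{\bm e}}^-)\sim\Psi_{W_{\bm e}}(\mathcal{E})$ produces, on each cube $V_k\times V_k$ with $k\in W_{\bm e}$, an independent pair with law $\kappa_k(\mathcal{E}|_{V_k\times V_k},\cdot)$, and coincides with $\mathcal{E}$ elsewhere. Intersecting $\Omega_{W_{\bm e}}^-$ with $\cup_{k\in Z}V_k\times V_k$ and overwriting the $V_k\times V_k$ components for $k\in W_{\bm e}\setminus Z$ by the original $\mathcal{E}|_{V_k\times V_k}$ then yields a pair $(\mathcal{E}_Z^-,\Omega_Z^-)$ whose conditional law given $\mathcal{E}$ factorizes exactly as $\Psi_Z(\mathcal{E})$ dictates: independent draws from $\kappa_k(\mathcal{E}|_{V_k\times V_k},\cdot)$ for $k\in Z$, $\mathcal{E}|_{V_k\times V_k}$ on the remaining cubes, and $\mathcal{E}$ elsewhere. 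The degenerate case $Z=\emptyset$ is consistent with the stipulation $\mathcal{E}_\emptyset^-=\mathcal{E}$, $\Omega_\emptyset^-=\emptyset$.

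The main technical step is the factorization of the conditional distributions, which requires a careful application of the disintegration theorem: one needs that the regular conditional probability genuinely splits into a product of per-cube kernels, which in turn follows from the mutual independence of the triples displayed above and the fact that cross-cube edges of $\mathcal{E}$ are preserved exactly by the addition procedure. Once the factorization is in place, matching the two definitions is a direct bookkeeping exercise, and the monotonicity $\mathcal{E}_{Z_1}^-\subset\mathcal{E}_{Z_2}^-$ for $Z_2\subset Z_1\subset W_{\bm e}$ falls out as a byproduct.
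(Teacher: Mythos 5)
There is a genuine gap at the very first step. You assert that $\Omega_k$ is a measurable function of $\mathcal{E}|_{V_{3\varepsilon r}(\bm z_k)\times V_{3\varepsilon r}(\bm z_k)}$, but this is false. The set $\Omega_k$ takes values in subsets of $V_{3\varepsilon r}(\bm z_k)\times V_{3\varepsilon r}(\bm z_k)$, but the \emph{choice} of nice cubes that determines it rests on the super-good property of $V_{3\varepsilon r}(\bm z_k)$, which in turn uses Definition \ref{very nice} (2): the $D$-distance from an inner subcube to the exterior of $J'(\bm w'_{k,i})$. A $D$-geodesic realizing this distance can exit via a long edge with one endpoint in $J'(\bm w'_{k,i})$ and the other far outside $V_{3\varepsilon r}(\bm z_k)$, so $\Omega_k$ actually depends on $\mathcal{E}\cap(V_{3\varepsilon r}(\bm z_k)\times\mathds{R}^d)$, not merely on the restriction to $V_{3\varepsilon r}(\bm z_k)\times V_{3\varepsilon r}(\bm z_k)$. (The paper flags exactly this non-locality in the paragraph after Definition \ref{very nice}; it is the raison d'\^etre of the directional variants.) Consequently the triples $\bigl(\mathcal{E}|_{V_k\times V_k},\Omega_k,\widetilde{\mathcal{E}}\cap\Omega_k\bigr)_{k\in W_{\bm e}}$ are \emph{not} mutually independent: $\Omega_k$ and $\Omega_l$ both depend on any edge joining $V_{3\varepsilon r}(\bm z_k)$ to $V_{3\varepsilon r}(\bm z_l)$. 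The claimed product structure of $\Psi_{W_{\bm e}}$ with per-cube kernels $\kappa_k$ that depend only on $\nu|_{V_k\times V_k}$ therefore does not hold, and the remaining bookkeeping collapses.

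The error is repairable, but the fix changes the shape of the argument. One must first condition on $\mathcal{E}_{\mathrm{cross}}:=\mathcal{E}\cap\bigl(\bigcup_{k\in W_{\bm e}}V_{3\varepsilon r}(\bm z_k)\times V_{3\varepsilon r}(\bm z_k)\bigr)^c$, which is fully observed from $\mathcal{E}_{W_{\bm e}}^+$ because all additions are supported inside the $V_k\times V_k$. Given $\mathcal{E}_{\mathrm{cross}}$, the triples $\bigl(\mathcal{E}\cap(V_k\times V_k),\Omega_k,\widetilde{\mathcal{E}}\cap\Omega_k\bigr)$ \emph{are} conditionally independent across $k$, but the per-cube kernel now depends on $\nu\cap(V_k\times\mathds{R}^d)$ rather than on $\nu|_{V_k\times V_k}$. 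The paper instead reduces Lemma \ref{newdeleteedge} to Lemma \ref{newadd}, whose proof carries out precisely this accounting via the sets $\Delta_Z=\cup_{k\in Z}(V_{3\varepsilon r}(\bm z_k)\times\mathds{R}^d)$ and the key identity $\mathcal{E}_Z^+\cap\Delta_Z=\mathcal{E}_{Z_0}^+\cap\Delta_Z$; you should either adopt that route or incorporate the conditioning on cross-cube edges explicitly before claiming any product factorization.
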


Before we prove Lemma \ref{newdeleteedge}, we will first show the following lemma about the relationship between different $\mathcal{E}_Z^+$.
\begin{lemma}\label{newadd}
    For any $Z_0,Z\in\mathcal{W}_\varepsilon$ such that $Z\subset Z_0 $, 
    the conditional law of $(\mathcal{E}_{Z_0\setminus Z}^+,\Omega_Z)$ conditioned on $\mathcal{E}_{Z_0}^+$ is $\Psi_Z(\mathcal{E}_{Z_0}^+)$.
\end{lemma}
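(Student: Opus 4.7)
My plan is to exploit the locality of the construction: $\Omega_Z$ depends on its input edge set only within a region where $\mathcal{E}$ and $\mathcal{E}_{Z_0\setminus Z}^+$ coincide, while the edges added from $\widetilde{\mathcal{E}}$ to turn $\mathcal{E}_{Z_0\setminus Z}^+$ into $\mathcal{E}_{Z_0}^+$ live in a region disjoint from where $\mathcal{E}$ and $\mathcal{E}_{Z_0\setminus Z}^+$ differ. This will let me identify the abstract prescription $\Psi_Z(\mathcal{E}_{Z_0}^+)$, which is defined via a hypothetical ``base edge set plus Poisson-in-$\Omega_Z$'' experiment applied to $\mathcal{E}_{Z_0}^+$, with the genuine conditional law of $(\mathcal{E}_{Z_0\setminus Z}^+,\Omega_Z)$ given $\mathcal{E}_{Z_0}^+$.

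First I would verify two geometric facts. Because $Z_0\in\mathcal{W}_\varepsilon$, the cubes $\{V_{3\varepsilon r}(\bm z_k):k\in Z_0\}$ are pairwise disjoint, and since $\Omega_k\subset V_{3\varepsilon r}(\bm z_k)^2$, this forces $\Omega_Z\cap\Omega_{Z_0\setminus Z}=\emptyset$ and $\Omega_{Z_0}=\Omega_Z\sqcup \Omega_{Z_0\setminus Z}$. Moreover, inspecting Definitions \ref{interval-good}--\ref{super good} and invoking Axiom II (locality), $\Omega_Z$ depends on its input edge set only through the restriction to $\bigcup_{k\in Z}V_{3\varepsilon r}(\bm z_k)^2$. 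Since edges are added only in $\Omega_{Z_0\setminus Z}$ when passing from $\mathcal{E}$ to $\mathcal{E}_{Z_0\setminus Z}^+$, and this region is disjoint from the one that governs $\Omega_Z$, we get $\Omega_Z(\mathcal{E})=\Omega_Z(\mathcal{E}_{Z_0\setminus Z}^+)$, so
\begin{equation*}
\mathcal{E}_{Z_0}^+ \;=\; \mathcal{E}_{Z_0\setminus Z}^+\cup\bigl(\widetilde{\mathcal{E}}\cap\Omega_Z(\mathcal{E}_{Z_0\setminus Z}^+)\bigr),
\end{equation*}
which is structurally identical to the defining relation $\mathcal{E}_Z^+=\mathcal{E}\cup(\widetilde{\mathcal{E}}\cap\Omega_Z(\mathcal{E}))$ with $\mathcal{E}$ replaced by $\mathcal{E}_{Z_0\setminus Z}^+$.

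Next I would match the Poisson structure on both sides. Conditionally on $\mathcal{E}$, the restrictions $\widetilde{\mathcal{E}}\cap\Omega_Z$ and $\widetilde{\mathcal{E}}\cap\Omega_{Z_0\setminus Z}$ are independent Poisson processes of intensity $\widetilde{\beta}/|\bm x-\bm y|^{2d}$, since $\widetilde{\mathcal{E}}$ is independent of $\mathcal{E}$ and the two regions are disjoint. Because $\mathcal{E}_{Z_0\setminus Z}^+$ is a measurable function of $(\mathcal{E},\widetilde{\mathcal{E}}\cap\Omega_{Z_0\setminus Z})$, the conditional law of $\widetilde{\mathcal{E}}\cap\Omega_Z$ given $\mathcal{E}_{Z_0\setminus Z}^+$ coincides with its conditional law given $\mathcal{E}$. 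Moreover, inside $\bigcup_{k\in Z}V_{3\varepsilon r}(\bm z_k)^2$ both $\mathcal{E}$ and $\mathcal{E}_{Z_0\setminus Z}^+$ restrict to the same unperturbed Poisson process of intensity $\beta/|\bm x-\bm y|^{2d}$, so the full data used by $\Psi_Z$ has the same joint law in the two experiments. Applying $\Psi_Z$ to the common realization $\mathcal{E}_{Z_0}^+$ therefore yields the conditional law of $(\mathcal{E}_{Z_0\setminus Z}^+,\Omega_Z)$ given $\mathcal{E}_{Z_0}^+$, as desired.

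I expect the main obstacle to be bookkeeping rather than any conceptual step. Formally, I would verify the identity $\mathds{E}[F(\mathcal{E}_{Z_0\setminus Z}^+,\Omega_Z)\,G(\mathcal{E}_{Z_0}^+)]=\mathds{E}[\Psi_Z(\mathcal{E}_{Z_0}^+)[F]\,G(\mathcal{E}_{Z_0}^+)]$ for bounded measurable $F,G$ by conditioning first on $\mathcal{E}_{Z_0\setminus Z}^+$, expanding $G(\mathcal{E}_{Z_0}^+)$ via Poisson superposition in $\Omega_Z$, and comparing Radon--Nikodym densities in the spirit of the proof of Lemma \ref{EE+}. The delicate point is that although $\mathcal{E}$ and $\mathcal{E}_{Z_0\setminus Z}^+$ have genuinely different global laws, their restrictions to the region that determines $\Omega_Z$ are identically distributed, and this invariance is precisely what makes $\Psi_Z$ applicable in both cases.
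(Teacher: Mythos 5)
Your proposal follows essentially the same route as the paper's proof: reduce to comparing the conditional law of $(\widetilde{\mathcal{E}}\cap\Omega_Z,\Omega_Z)$ under the two conditionings, then exploit disjointness of $\Omega_Z$ and $\Omega_{Z_0\setminus Z}$ together with Poisson independence. One imprecision is worth flagging. You assert that $\Omega_Z$ depends on its input edge set only through the restriction to $\bigcup_{k\in Z}V_{3\varepsilon r}(\bm z_k)\times V_{3\varepsilon r}(\bm z_k)$; in fact the region the paper uses (and needs) is $\Delta_Z=\bigcup_{k\in Z}(V_{3\varepsilon r}(\bm z_k)\times\mathds{R}^d)$, i.e.\ edges with at least one endpoint in $\bigcup_{k\in Z}V_{3\varepsilon r}(\bm z_k)$, since Definition \ref{very nice} (2) involves $D$-distances to complements and therefore long edges that reach out of the cube. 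Fortunately this does not damage your argument: $\Omega_{Z_0\setminus Z}\subset\bigcup_{l\in Z_0\setminus Z}V_{3\varepsilon r}(\bm z_l)^2$ is disjoint from $\Delta_Z$ as well (the $3\varepsilon r$-cubes indexed by $Z_0\in\mathcal{W}_\varepsilon$ are pairwise disjoint), so $\mathcal{E}\cap\Delta_Z=\mathcal{E}_{Z_0\setminus Z}^+\cap\Delta_Z$ and your key identity $\Omega_Z(\mathcal{E})=\Omega_Z(\mathcal{E}_{Z_0\setminus Z}^+)$ still holds. With that correction your sketch lines up with the paper's argument.
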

\begin{proof}
    First, by Definition \ref{add},
    $$\mathcal{E}_Z^+\setminus\mathcal{E}=\mathcal{E}_{Z_0}^+\setminus \mathcal{E}_{Z_0\setminus Z}^+=\widetilde{\mathcal{E}}\cap \Omega_Z.$$ Thus it suffices to show that the conditional laws of $(\widetilde{\mathcal{E}}\cap \Omega_Z,\Omega_Z)$ conditioned on $\mathcal{E}_Z^+$ and $\mathcal{E}_{Z_0}^+$ respectively are the same.
    For that, let $\Delta_Z=\cup_{k\in Z}(V_{3\varepsilon r}(\bm z_k)\times \mathds{R}^d)$. From Definitions \ref{add} and \ref{super good}, it is clear that $\Omega_Z$ is determined by $\mathcal{E}\cap \Delta_Z$. Combining this with the independence of the Poisson point process, we can see that the conditional law of $\Omega_Z$ conditioned on $\mathcal{E}_Z^+$ is the same as that conditioned on $\mathcal{E}_Z^+\cap \Delta_Z$. Similarly, we also get that the conditional law of $\Omega_Z$ conditioned on $\mathcal{E}_{Z_0}^+$ is the same as that conditioned on $\mathcal{E}_{Z_0}^+\cap \Delta_Z$. In addition, note that $\mathcal{E}_Z^+\cap \Delta_Z=\mathcal{E}_{Z_0}^+\cap\Delta_Z$. This and the above analysis imply that the conditional law of $\Omega_Z$ conditioned on $\mathcal{E}_Z^+$ is the same as that conditioned on $\mathcal{E}_{Z_0}^+$.

    Finally, since $ \widetilde{\mathcal{E}}\cap \Omega_Z $ is determined by $\Omega_Z$ and $\widetilde{\mathcal{E}}\cap(\cup_{k\in Z} (V_{3\varepsilon r}(\bm z_k)\times V_{3\varepsilon r}(\bm z_k)))$, it suffices to show that the conditional law of $\widetilde{\mathcal{E}}\cap(\cup_{k\in Z} (V_{3\varepsilon r}(\bm z_k)\times V_{3\varepsilon r}(\bm z_k)))$ conditioned on $(\Omega_Z,\mathcal{E}_Z^+)$ is the same as that conditioned on $(\Omega_Z,\mathcal{E}_{Z_0}^+)$.
    Indeed, from  $\widetilde{\mathcal{E}}\cap(\cup_{k\in Z} (V_{3\varepsilon r}(\bm z_k)\times V_{3\varepsilon r}(\bm z_k)))\subset \mathcal{E}_Z^+\cap \Delta_Z$ and the independence of the Poisson point process, we have that the conditional law of $\widetilde{\mathcal{E}}\cap(\cup_{k\in Z} (V_{3\varepsilon r}(\bm z_k)\times V_{3\varepsilon r}(\bm z_k)))$ conditioned on $(\Omega_Z,\mathcal{E}_Z^+)$ is the same as that conditioned on $(\Omega_Z,\mathcal{E}_Z^+\cap \Delta_Z)$. Moreover, as we mentioned in the previous paragraph,  $\mathcal{E}_Z^+\cap \Delta_Z=\mathcal{E}_{Z_0}^+\cap\Delta_Z$, which implies that $(\Omega_Z,\mathcal{E}_{Z}^+\cap\Delta_Z)=(\Omega_Z,\mathcal{E}_{Z_0}^+\cap\Delta_Z)$. So we get that the conditional law of $\widetilde{\mathcal{E}}\cap(\cup_{k\in Z} (V_{3\varepsilon r}(\bm z_k)\times V_{3\varepsilon r}(\bm z_k)))$ conditioned on $(\Omega_Z,\mathcal{E}_Z^+)$ is the same as that conditioned on $(\Omega_Z,\mathcal{E}_{Z_0}^+\cap \Delta_Z)$. By $\widetilde{\mathcal{E}}\cap(\cup_{k\in Z} (V_{3\varepsilon r}(\bm z_k)\times V_{3\varepsilon r}(\bm z_k)))\subset \mathcal{E}_{Z_0}^+\cap \Delta_Z$ and the independence of the Poisson point process again, we obtain the desired statement.
    Thus we complete the proof.
\end{proof}

\begin{proof}[Proof of Lemma \ref{newdeleteedge}]
    For any $Z\in\mathcal{W}_\varepsilon$, let $Z_0\in\{W_{\bm e}\}_{\bm e\in\{0,1,2\}^d}$ such that $Z\subset Z_0$.
    Let $\Phi_Z$ be the conditional law of $(\mathcal{E}_Z^-,\Omega_Z^-)$ conditioned on $\mathcal{E}$, i.e., for a.s. $\nu\in\mathcal{D}'$,
    $\Phi_Z(\nu)$ is the conditional law of $(\mathcal{E}_Z^-,\Omega_Z^-)$ given $\mathcal{E}=\nu$.
    By Definition \ref{delete}, we see that
    \begin{itemize}
    \item[(A1)] the conditional law of $(\mathcal{E},\Omega_{Z_0})$ given $\mathcal{E}_{Z_0}^+=\nu$ and the conditional law of $(\mathcal{E}_{Z_0}^-,\Omega_{Z_0}^-)$ given $\mathcal{E}=\nu$ are equal to $\Psi_{Z_0}(\nu)$ for a.s. $\nu\in\mathcal{D}'$.
        \end{itemize}
    Additionally, note that
\begin{equation}\label{A2}
\mathcal{E}_Z^-=\mathcal{E}_{Z_0}^-\cup (\mathcal{E}\cap (\cup_{k\in Z_0\setminus Z}V_{3\varepsilon r}(\bm z_k)\times V_{3\varepsilon r}(\bm z_k)))\quad \text{and}\quad \Omega_Z^-=\Omega_{Z_0}^-\cap (\cup_{k\in Z_0\setminus Z}V_{3\varepsilon r}(\bm z_k)\times V_{3\varepsilon r}(\bm z_k))\end{equation}
         Combining (A1) and \eqref{A2} implies that
         \begin{itemize}
         \item[(A2)]the conditional law of $(\mathcal{E}\cup (\mathcal{E}_{Z_0}^+\cap (\cup_{k\in Z_0\setminus Z}V_{3\varepsilon r}(\bm z_k)\times V_{3\varepsilon r}(\bm z_k))),\Omega_{Z_0}\cap (\cup_{k\in Z_0\setminus Z}V_{3\varepsilon r}(\bm z_k)\times V_{3\varepsilon r}(\bm z_k)))$ given $\mathcal{E}_{Z_0}^+=\nu$ is also $\Phi_{Z}(\nu)$ for a.s. $\nu\in\mathcal{D}'$.\end{itemize}
    Furthermore, from Definition \ref{add} we have that
 \begin{equation}\label{A3}
    (\mathcal{E}\cup (\mathcal{E}_{Z_0}^+\cap (\cup_{k\in Z_0\setminus Z}V_{3\varepsilon r}(\bm z_k)\times V_{3\varepsilon r}(\bm z_k))),\Omega_{Z_0}\cap (\cup_{k\in Z_0\setminus Z}V_{3\varepsilon r}(\bm z_k)\times V_{3\varepsilon r}(\bm z_k)))=(\mathcal{E}_{Z_0\setminus Z}^+,\Omega_Z).
 \end{equation}
    Then combining (A2) and \eqref{A3} yields that for a.s. $\nu\in\mathcal{D}'$,
    $\Phi_Z(\nu)$ is the conditional law of $(\mathcal{E}_{Z_0\setminus Z}^+,\Omega_Z)$ given $\mathcal{E}_{Z_0}^+=\nu$. Hence the result we desired is implied by Lemma \ref{newadd}.
\end{proof}



With the above definitions and lemmas at hand, we have the following relations for the laws of
$\left(\mathcal{E}_Z^+,\mathcal{E}\right)$ and $\left(\mathcal{E},\mathcal{E}_Z^-\right)$.

\begin{lemma}\label{abscont}
For each $Z\in\mathcal{W}_\varepsilon$, the laws of $\left(\mathcal{E}_Z^+,\mathcal{E}\right)$ and $\left(\mathcal{E},\mathcal{E}_Z^-\right)$ are mutually  absolutely continuous.
Furthermore, let $h_Z$ be the Radon-Nikodym derivative of $(\mathcal{E},\mathcal{E}_Z^-)$ with respect to $(\mathcal{E}_Z^+,\mathcal{E})$. Then $h_Z(\mathcal{E}_Z^+,\mathcal{E})=\phi_Z(\mathcal{E}_Z^+)$. In particular,
there exists $M>0$ {\rm(}which does not depend on $\varepsilon, r${\rm)} such that a.s.
$$
M^{-\#Z} \left(\frac{\beta}{\beta+\widetilde{\beta}}\right)^{|\mathcal{E}_Z^+\cap \Lambda_Z|}  \leq h_Z(\mathcal{E}_Z^+,\mathcal{E}) \leq  M^{\#Z}.
$$
\end{lemma}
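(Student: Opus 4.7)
The plan is to reduce Lemma \ref{abscont} to Lemma \ref{EE+} by exploiting a common kernel structure shared by the two joint laws. First I would unpack the definitions: by Definition \ref{add}, the conditional law of $\mathcal{E}$ given $\mathcal{E}_Z^+$ is, by construction, the first marginal of $\Psi_Z(\mathcal{E}_Z^+)$ (since $\Psi_Z(\nu)$ is defined as the regular conditional law of $(\mathcal{E},\Omega_Z)$ given $\mathcal{E}_Z^+=\nu$). On the other side, by Definition \ref{roughdelete}, the conditional law of $\mathcal{E}_Z^-$ given $\mathcal{E}$ is the first marginal of $\Psi_Z(\mathcal{E})$. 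Denoting by $Q(\mu,d\nu)$ the first marginal of $\Psi_Z(\mu)$, we therefore obtain the disintegrations
\begin{equation*}
\mathcal{L}(\mathcal{E}_Z^+,\mathcal{E})(d\mu\,d\nu)=\mathcal{L}(\mathcal{E}_Z^+)(d\mu)\,Q(\mu,d\nu),\qquad \mathcal{L}(\mathcal{E},\mathcal{E}_Z^-)(d\mu\,d\nu)=\mathcal{L}(\mathcal{E})(d\mu)\,Q(\mu,d\nu).
\end{equation*}

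Second, I would invoke Lemma \ref{EE+} which states that $\mathcal{L}(\mathcal{E})$ and $\mathcal{L}(\mathcal{E}_Z^+)$ are mutually absolutely continuous with Radon-Nikodym derivative $\phi_Z$ satisfying the two-sided bound in the statement. Combining this with the disintegrations above, for any nonnegative measurable $F$ we have
\begin{equation*}
\mathds{E}[F(\mathcal{E},\mathcal{E}_Z^-)]=\int F(\mu,\nu)\,Q(\mu,d\nu)\,\mathcal{L}(\mathcal{E})(d\mu)=\int F(\mu,\nu)\,\phi_Z(\mu)\,Q(\mu,d\nu)\,\mathcal{L}(\mathcal{E}_Z^+)(d\mu)=\mathds{E}[F(\mathcal{E}_Z^+,\mathcal{E})\phi_Z(\mathcal{E}_Z^+)].
\end{equation*}
This identifies the Radon-Nikodym derivative of $(\mathcal{E},\mathcal{E}_Z^-)$ with respect to $(\mathcal{E}_Z^+,\mathcal{E})$ as $h_Z(\mathcal{E}_Z^+,\mathcal{E})=\phi_Z(\mathcal{E}_Z^+)$, and the mutual absolute continuity together with the quantitative bound follows immediately by quoting the bound from Lemma \ref{EE+}.

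The only subtle point, rather than a real obstacle, is verifying that the regular conditional kernel $\Psi_Z$ is genuinely the same object appearing on both sides. This is essentially tautological from the two definitions, but to be rigorous I would note that $\Psi_Z$ is defined for a.e.\ $\nu$ with respect to $\mathcal{L}(\mathcal{E}_Z^+)$, while the use in Definition \ref{roughdelete} samples it at $\mathcal{E}$, which is only a.e.\ defined with respect to $\mathcal{L}(\mathcal{E})$; the mutual absolute continuity of the two marginals ensures these null sets agree, so the sampling is well-defined and the above calculation is justified. Once this is in place, the lemma follows in a few lines with no further estimates needed beyond the input from Lemma \ref{EE+}.
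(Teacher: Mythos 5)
Your proposal is correct and takes essentially the same route as the paper: both identify that the conditional law of $\mathcal{E}$ given $\mathcal{E}_Z^+$ coincides with that of $\mathcal{E}_Z^-$ given $\mathcal{E}$ (a shared kernel), then transport the Radon--Nikodym derivative $\phi_Z$ of the marginals from Lemma \ref{EE+} across this common kernel; the paper phrases this via a function $X(\mathcal{E}_Z^+)=\mathds{E}[f(\mathcal{E}_Z^+,\mathcal{E})\mid\mathcal{E}_Z^+]$ while you phrase it via explicit disintegrations, but the content is identical. One small point worth noting: the operative definition of $\mathcal{E}_Z^-$ in the paper is actually Definition \ref{delete} (via the maximal sets $W_{\bm e}$), so when you invoke Definition \ref{roughdelete} you are implicitly relying on Lemma \ref{newdeleteedge}, which shows the two constructions agree -- the paper cites this explicitly and you should too.
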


\begin{proof}
Let $B\subset \mathcal{D}'\times \mathcal{D}'$ (recall $\mathcal{D}'$ in Definition \ref{strongLRPmetric}) be any Borel subset such that $\mathds{P}[(\mathcal{E}_Z^+,\mathcal{E})\in B]=0$. Then it is clear that a.s.
\begin{equation}\label{condE+}
\mathds{P}[(\mathcal{E}_Z^+,\mathcal{E})\in B|\mathcal{E}_Z^+]=0.
\end{equation}
Additionally, by Definition \ref{delete} and Lemma \ref{newdeleteedge}, we can see that the conditional distribution of $\mathcal{E}$ given $\mathcal{E}_Z^+$ and the conditional distribution of $\mathcal{E}_Z^-$ given $\mathcal{E}$ are the same. Hence, combining this with \eqref{condE+}  and Lemma \ref{newdeleteedge}, we obtain that a.s.\ $\mathds{P}[(\mathcal{E},\mathcal{E}_Z^-)\in B|\mathcal{E}]=0$. Taking expectation, we get that $\mathds{P}[(\mathcal{E},\mathcal{E}_Z^-)\in B]=0$. Thus the law of $(\mathcal{E},\mathcal{E}_Z^-)$ is absolutely continuous with respect to the law of $(\mathcal{E}_Z^+,\mathcal{E})$. Similarly we can get the converse. This implies the desired mutual absolute continuity.

Recall as in Lemma \ref{EE+} $\phi_Z$ is the Radon-Nikodym derivative of $\mathcal{E}$ with respect to $\mathcal{E}_Z^+$.
Let $f(\mathcal{E}_Z^+,\mathcal{E})\in \sigma((\mathcal{E}_Z^+,\mathcal{E}))$ be any positive measurable function. Since $\mathds{E}[f(\mathcal{E}_Z^+,\mathcal{E})|\mathcal{E}_Z^+]$ is a.s. determined by $\mathcal{E}_Z^+$, we can find one measurable function $X:\mathcal{D}'\to \mathds{R}$ such that $\mathds{E}[f(\mathcal{E}_Z^+,\mathcal{E})|\mathcal{E}_Z^+]=X(\mathcal{E}_Z^+)$. Then from Definition \ref{delete} and Lemma \ref{newdeleteedge}, we get that $ \mathds{E}[f(\mathcal{E},\mathcal{E}_Z^-)|\mathcal{E}]=X(\mathcal{E})$. As a result, combining with the definition of $\phi_Z$, we get
\begin{equation*}
    \begin{aligned}
        \mathds{E}\left[f(\mathcal{E}_Z^+,\mathcal{E})\phi_Z(\mathcal{E}_Z^+)\right]
        &=\mathds{E}\left[\mathds{E}\left[f(\mathcal{E}_Z^+,\mathcal{E})|\mathcal{E}_Z^+\right]\phi_Z(\mathcal{E}_Z^+)\right]
        =\mathds{E}\left[X(\mathcal{E}_Z^+)\phi_Z(\mathcal{E}_Z^+)\right]\\
        &=\mathds{E}[X(\mathcal{E})]=\mathds{E}\left[\mathds{E}\left[f(\mathcal{E},\mathcal{E}_Z^-)|\mathcal{E}\right]\right]=\mathds{E}[f(\mathcal{E},\mathcal{E}_Z^-)].
    \end{aligned}
\end{equation*}
Thus we have that a.s. $h_Z(\mathcal{E}_Z^+,\mathcal{E})=\phi_Z(\mathcal{E}_Z^+)$. Then the desired assertion is implied by Lemma \ref{EE+}.
\end{proof}

Recall that a weak $\beta$-LRP metric $D=D(\mathcal{E})$ (resp. $\widetilde{D}=\widetilde{D}(\mathcal{E})$) is a measurable function from $\mathcal{D}'$ to the space of continuous pseudometrics on $\mathds{R}^d$ and is a.s. determined by $\mathcal{E}$.
For each $Z\in \mathcal{W}_\varepsilon$, we define $D_Z^+=D(\mathcal{E}_Z^+)$ (resp. $\widetilde{D}_Z^+=\widetilde{D}(\mathcal{E}_Z^+)$) and $D_Z^-=D(\mathcal{E}_Z^-)$ (resp. $\widetilde{D}_Z^-=\widetilde{D}(\mathcal{E}_Z^-)$). Note that $\mathcal{E}_Z^+,\mathcal{E}_Z^-\in\mathcal{D}'$ are defined in Definitions \ref{add} and \ref{delete}, thus $D_Z^+$ (resp. $\widetilde{D}_Z^+$) and $D_Z^-$ (resp. $\widetilde{D}_Z^-$) are well-defined and they are a.s.\ determined by $\mathcal{E}_Z^+,\mathcal{E}_Z^-$, respectively.

    We refer to numbers $ p,\alpha,b_1,b_2,\widetilde{C},C_1,\eta,K,\widetilde{\beta},M_0$, $\delta_1$ and $M_1$ as the parameters. Here $M_0$ is a sufficiently large number such that the following inequality holds:
\begin{equation}\label{M1-1}
    \mathds{P}[|\overline{\mathcal{E}}_{k}^+\cap \Lambda_{k}|\leq M_0]\geq 1-\frac{1-p_c}{2}\quad \text{for any } k\in [1,\varepsilon^{-d}]_\mathds{Z},
\end{equation}
where $\Lambda_k,\overline{\mathcal{E}}_k^+$ are defined in \eqref{detLambda} and \eqref{addLambda}, respectively,
and $p_c$ is the constant defined in Lemma \ref{hd-BK} with $\delta=1/(4C_{dis})$ (here $C_{dis}$ is the constant defined in Lemma \ref{number-path-k}).
   It is important to note that  $|\overline{\mathcal{E}}_{ k}^+\cap \Lambda_{k}|$ follows a Poisson distribution with parameter $(\beta+\widetilde{\beta})\mu(\Lambda_{ k})$ (here $\mu(\Omega):=\iint_{\Omega}\frac{\d \bm x\d \bm y}{|\bm x-\bm y|^{2d}}$ for $\Omega\subset \mathds{R}^{2d}$) which does not depend on $k,\varepsilon$ or $r$, this ensures that $M_0$ is  well-defined.
Moreover, we recall $M$ in Lemma \ref{abscont} and take $\delta_1$ and $M_1$ as
\begin{equation*}
\delta_1=(\beta/(\beta+\widetilde{\beta}))^{M_0},
\end{equation*}
and
\begin{equation}\label{M0-1}
M_1=\max\{M, M/\delta_1\}.
\end{equation}

         It is worth  emphasizing that none of these parameters depends on $\varepsilon$ or $r$.

\subsection{Compare old and new metrics}\label{oldandnew}
Recall from the paragraphs after the proof of Lemma \ref{interval-good1} that we fixed $r>0$ and $\widetilde{\gamma},\widetilde{q}>0$ such that $\mathds{P}[\widetilde{G}_r(\widetilde{\gamma},\widetilde{q},c'')]\geq \widetilde{\gamma}$. Here $c''$ is the constant defined in Proposition \ref{mrinS3-tilde} applied with $c'=(c_*+C_*)/2$.
For any $Z\in{ \mathcal{W}}_\varepsilon$, let $\mathcal{E}_Z^+$ and  $\mathcal{E}_Z^-$ be defined as in Definitions \ref{add} and \ref{delete}, respectively.

In the rest of paper, for $\bm x,\bm y\in V_r(\bm 0)$,  we select and fix an arbitrary $D$-geodesic from $\bm x$ to $\bm y$, denoted as $P_{\bm x\bm y}$. When there is no ambiguity, we will use the shorthand notation $P$ and refer to it as ``the $D$-geodesic from  $\bm x$ to $\bm y$''.

Our goal is to demonstrate that the probability for a particular ``bad'' event of the $D$-geodesic $P_{\bm x\bm y}$ is small, and we next define this bad event.
\begin{definition}\label{G}
 For $\bm x,\bm y\in V_r(\bm 0)$  and $\varepsilon>0$, we define the event $\mathscr{G}_r^{\varepsilon}:=\mathscr{G}_r^{\varepsilon}(\bm x,\bm y)$ as follows.
    \begin{enumerate}
        \item\label{GCondition1} $\widetilde{D}(\bm x,\bm y)\ge C_* D(\bm x,\bm y)-(\varepsilon  r)^{\theta}.$
         \item\label{GCondition2} The $D$-geodesic $P$ from $\bm x$ to $\bm y$  satisfies that $P\subset V_r(\bm 0)$.
        \item\label{GCondition3} $P$ hits at least $\varepsilon^{-\theta/4}$ cubes of the form $V_{\varepsilon r}(\bm z_k)$ for $k\in [1, \varepsilon^{-d}]_\mathds{Z}$.
        \item\label{GCondition4} There exist at least $\varepsilon^{-\theta/4}/(8\cdot 3^d)$ super super good cubes with respect to $\mathcal{E}$, denoted as $V_{3\varepsilon r}(\bm z_k)$ for $k\in \mathcal{K}$, such that $\mathcal{K}\in\mathcal{W}_\varepsilon$, $P$ hits $V_{\varepsilon r}(\bm z_k)$ for all $k\in\mathcal{K}$ and the following conditions hold.
        \begin{itemize}

        \item[\rm (i)]  For each $k\in \mathcal{K}$, we have $|\overline{\mathcal{E}}_k^+\cap\Lambda_k|\leq M_0$, where $M_0$ is defined in \eqref{M1-1}.



            \item[\rm (ii)] For each $k\in \mathcal{K}$, under $\mathcal{E}_{k}^+$ we have that $J_{k,q(k,i)}^{(2)}$ is connected  to $J_{k,q(k,i+1)}^{(1)}$ by a long edge for all $i\in[1,3^d(b_2\alpha)^{-2d}]_\mathds{Z}$.
             Here $J_{k,q(k,i)}$ is chosen in \eqref{choosenice}, and $(J_{k,q(k,i)}^{(1)},J_{k,q(k,i)}^{(2)})$ is the great pair of small cubes in it.

        \end{itemize}
    \end{enumerate}
\end{definition}

We present the main estimate for $\mathds{P}[\mathscr{G}_r^\varepsilon]$ as follows.

\begin{proposition}\label{Prop4.3DG21}
    Assume that $c_*<C_*$ and  let $r>0$. Then for $\bm x,\bm y\in V_r(\bm 0)$ with $|\bm x-\bm y|\geq \alpha r$,
    \begin{equation}\label{Inequality4.3DG21}
        \mathds{P}\left[\mathscr{G}_r^\varepsilon(\bm x,\bm y)\right]=O_\varepsilon(\varepsilon^\mu)\quad \forall \mu>0
    \end{equation}
    with the implicit constant in the $O_\varepsilon(\cdot)$ depending only on $\mu$, the parameters and the laws of $D$ and $\widetilde{D}$ {\rm(}in particular not depending on $r ,\bm x,\bm y${\rm)}.
\end{proposition}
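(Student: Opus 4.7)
The proof will proceed by a multi-scale counting argument modelled on the one sketched in Section \ref{outline} (and in the spirit of \cite[Section~4]{DG23}). The rough strategy is: on $\mathscr{G}_r^\varepsilon$ the $D$-geodesic $P$ from $\bm x$ to $\bm y$ enters at least $\varepsilon^{-\theta/4}/(8\cdot 3^d)$ disjoint super super good cubes $V_{3\varepsilon r}(\bm z_k)$, $k\in\mathcal{K}$, all satisfying the tail control (i) on $|\overline{\mathcal{E}}_k^+\cap\Lambda_k|$ from Definition \ref{G}. In each such cube the added edges from Definition \ref{add} are designed to concatenate the great pairs $(J^{(1)}_{k,q(k,i)},J^{(2)}_{k,q(k,i)})$ chosen in \eqref{choosenice}, each of which provides a quantitative shortcut: by Definition \ref{interval-good}(2), $\widetilde{D}(J^{(1)},J^{(2)})<c'D(J^{(1)},J^{(2)})$ with $c'=(c_*+C_*)/2 < C_*$, so each great pair saved roughly $(C_*-c')(b_1\alpha\eta\varepsilon r/K)^\theta$ of $\widetilde{D}$-length compared with $C_*D(J^{(1)},J^{(2)})$. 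Aggregating many such savings along a single path would force $\widetilde{D}(\bm x,\bm y)\le C_*D(\bm x,\bm y)-c_2\cdot(\#Z)(\varepsilon r)^\theta$ for the subset $Z\subset\mathcal{K}$ of super super good cubes actually ``used,'' which is incompatible with Definition \ref{G}(1) as soon as $\#Z$ exceeds $1/c_2$.

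To turn this heuristic into a polynomial bound, I will fix an arbitrary integer $m\ge1$ and, following Section \ref{outline}, consider for each $Z\in\mathcal{W}_\varepsilon$ with $\#Z\le m$ two events: $\mathsf{F}_{Z,\varepsilon}(\mathcal{E},\mathcal{E}_Z^+)$, requiring that each $V_{3\varepsilon r}(\bm z_k)$ ($k\in Z$) be super super good under $\mathcal{E}$, that the $D$-geodesic hit each $V_{\varepsilon r}(\bm z_k)$, that the tail bound (i) hold and that a $D_Z^+$-geodesic from $\bm x$ to $\bm y$ pass through at least one great pair in every $V_{3\varepsilon r}(\bm z_k)$; and the analogous event $\mathsf{G}_{Z,\varepsilon}^-(\mathcal{E}_Z^-,\mathcal{E})$ with the roles of the two edge sets swapped. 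The Radon-Nikodym estimate of Lemma \ref{abscont}, together with the deterministic cap $|\overline{\mathcal{E}}_k^+\cap\Lambda_k|\le M_0$ from \eqref{M1-1} and the choice of $M_1=M\vee(M\delta_1^{-1})$ in \eqref{M0-1}, will yield
\begin{equation*}
M_1^{-\#Z}\le \frac{\mathds{P}[\mathsf{G}_{Z,\varepsilon}^-(\mathcal{E}_Z^-,\mathcal{E})]}{\mathds{P}[\mathsf{F}_{Z,\varepsilon}(\mathcal{E},\mathcal{E}_Z^+)]}\le M_1^{\#Z},
\end{equation*}
transferring counts from the $\mathcal{E}_Z^+$-world to the $\mathcal{E}_Z^-$-world at a multiplicative cost that is bounded uniformly in $\varepsilon$ and $r$. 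The plan is then to establish a \emph{lower} bound: on $\mathscr{G}_r^\varepsilon$, by a greedy pruning of $\mathcal{K}$ that discards at each step a cube where the $D_Z^+$-geodesic fails to visit any great pair, one produces at least $\binom{\#\mathcal{K}}{m}/\mathrm{const}^m$ sets $Z$ of size $m$ with $\mathsf{F}_{Z,\varepsilon}$ occurring; and an \emph{upper} bound: an explicit union-bound argument together with the shortcut estimate above shows that the expected number of $Z$ with $\#Z=m$ such that $\mathsf{G}_{Z,\varepsilon}^-$ occurs is at most a polynomial in $\varepsilon^{-d}$ of degree independent of $m$. Chaining the two estimates via Lemma \ref{abscont} and taking $m$ proportional to $\mu$ produces a bound of the form $\mathds{P}[\mathscr{G}_r^\varepsilon]\le (M_1\varepsilon^{c_3})^m$, which yields \eqref{Inequality4.3DG21}.

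The main technical obstacle is the deterministic trapping property underpinning condition (c) in the definition of $\mathsf{F}_{Z,\varepsilon}$: once a $D_Z^+$-geodesic enters a super super good cube $V_{3\varepsilon r}(\bm z_k)$, it \emph{must} hit at least one great pair rather than leap out immediately via a freshly added long edge. This is exactly the purpose of the layered regularity built into Definitions \ref{very nice}, \ref{very very nice}, \ref{super good} and \ref{superdouble}, and of the quantitative choices \eqref{M0new}, \eqref{b2alpha}, \eqref{M0} and \eqref{eta0}: the bound $D(\bm v_1,\bm u_2;V_{3\varepsilon r}(\bm z_k))\ge(b_2\alpha\varepsilon r)^\theta$ from Definition \ref{superdouble}(2) forces any entry/exit pair of edges to cost at least $(b_2\alpha\varepsilon r)^\theta$ in $D$-length, while the chain of very very nice sub-cubes guarantees that a concatenation through the added edges together with great pairs costs strictly less, making any $D_Z^+$-geodesic choose the chain. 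Verifying this rigorously, and propagating the quantitative gains past the added edges using the Definition \ref{interval-good}(4) H\"older control, is where the bulk of the work will lie; once it is in place, the counting scheme and the Radon-Nikodym comparison close the argument uniformly in $\bm x,\bm y$ and $r$.
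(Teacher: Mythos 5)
Your proposal follows essentially the same route as the paper's proof. You correctly identify the two counting estimates — a lower bound on the number of $Z$ with $\#Z\le m$ for which $\mathsf{F}_{Z,\varepsilon}(\mathcal{E},\mathcal{E}_Z^+)$ occurs (the paper's Proposition~\ref{Prop4.5DG21}, proved by the greedy pruning argument you sketch), and an upper bound on the number of $Z$ for which $\mathsf{G}^-_{Z,\varepsilon}(\mathcal{E}_Z^-,\mathcal{E})$ occurs — together with the Radon--Nikodym comparison and the tautological identity $\mathsf{F}_{Z,\varepsilon}(\mathcal{E},\mathcal{E}_Z^+)=\mathsf{G}^-_{Z,\varepsilon}(\mathcal{E},\mathcal{E}_Z^+)$ as the three ingredients whose composition closes the argument. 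The one point where you deviate slightly from the paper is the shape of the upper bound: you claim an \emph{expectation} bound of the form $\mathds{E}\bigl[\#\{Z:\#Z= m,\,\mathsf{G}^-_{Z,\varepsilon}\text{ occurs}\}\bigr]\le\text{poly}(\varepsilon^{-d})$ with degree independent of $m$, whereas the paper proves the stronger \emph{almost-sure deterministic} bound $\#\{Z:\#Z\le m,\,\mathsf{G}^-_{Z,\varepsilon}\}\le C_2^m$ (Proposition~\ref{Prop4.6DG21}), obtained by showing via Lemma~\ref{Lemma4.16DG21} that any such $Z$ is contained in the union of at most $C_3 m$ ``excellent'' cubes, which then gives a combinatorial bound $2^{3^d C_3 m}$ independent of $\varepsilon$. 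Your weaker expectation bound would still close the argument, since one takes $m$ proportional to $\mu$ after the fact so that $c_3 m$ beats any fixed power of $\varepsilon^{-d}$, but the paper's sharper a.s.\ bound is what actually gets proved from the shortcut estimate (the contradiction with Definition~\ref{G-}(1) bounds the number of excellent points deterministically on the event $\mathsf{G}^-_{Z,\varepsilon}$). You also correctly flag that the genuinely hard step is showing that $D_Z^+$-geodesics which enter an attracted cube must visit a great pair rather than leaping away; this is indeed the content of the iterative pruning Lemma~\ref{Lemma4.13DG21} and the escape-cost Lemma~\ref{upp-strange-Ik}, and your description of the mechanism (Definition~\ref{very nice}(2) forcing any escape to cost $(b_2\alpha)^{2.6\theta}(\varepsilon r)^\theta$ in $D$-length) is on target.
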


We will now explain how to prove Proposition \ref{Prop4.3DG21} provided with Propositions \ref{Prop4.5DG21} and \ref{Prop4.6DG21} below, whose proofs will occupy most of the rest of this section. Our strategy for proving this proposition will rely on counting the occurrences of certain events. To do that, let us now define these events.

From here on,  we  fix $\bm x,\bm y\in V_r(\bm 0)$ with $|\bm x-\bm y|\geq \alpha r$.  Recall that $\mathcal{W}_\varepsilon$ is the collection of all subsets $Z$ of $[1,\varepsilon^{-d}]_\mathds{Z}$ such that $\bm z_k-\bm z_l\in(3\varepsilon r)\mathds{Z}^d$ for all $k,l\in Z$. 
 For any $Z\in \mathcal{W}_{\varepsilon}$, we define two metrics $D_Z^{+}=D(\mathcal{E}_Z^+)$ and $D_{Z}^{-}=D(\mathcal{E}_Z^-)$ as above. In what follows, we also select and fix an arbitrary $D_Z^+$-geodesic (resp. $D_Z^-$-geodesic) from $\bm x$ and $\bm y$, denoted as $P_Z^+$ (resp. $P_Z^-$). Similarly, we will refer to $P_Z^+$ (resp. $P_Z^-$) as ``the $D_Z^+$-geodesic (resp. $D_Z^-$-geodesic) from  $\bm x$ to $\bm y$''.


\begin{definition}\label{F_Z}
    For $Z\in  \mathcal{W}_{\varepsilon}$, we let  $\mathsf{F}_{Z,\varepsilon}(\mathcal{E},\mathcal{E}_Z^+):=\mathsf{F}_{Z,\varepsilon}(\mathcal{E},\mathcal{E}_Z^+;\bm x,\bm y)$ be the event that the following conditions hold.
    \begin{enumerate}
        \item $\widetilde{D}(\bm x,\bm y)\ge C_* D(\bm x,\bm y)-(\varepsilon r)^{\theta}.$
        \item For all $k\in Z$, we have that $V_{3\varepsilon r}(\bm z_k)$ is super super good with respect to $\mathcal{E}$. Furthermore, let $J_{k,q(k,1)},\cdots, J_{k,q(k,3^d(b_2\alpha)^{-2d})}$ be the nice cubes chosen in  \eqref{choosenice} and let $(J_{k,q(k,i)}^{(1)},J_{k,q(k,i)}^{(2)})$ be the great pair of small cubes in $J_{k,q(k,i)}$.
        \item For all $k\in Z$, we have that the $D$-geodesic $P$ hits $V_{\varepsilon r}(\bm z_k)$.

        \item  For all $k\in Z$, we have that $|\mathcal{E}_Z^+\cap \Lambda_k|\le M_0$, where $M_0$ is defined in \eqref{M1-1}.

        \item  Under $\mathcal{E}_Z^+$, we have that $J_{k,q(k,i)}^{(2)}$ is connected to $J_{k,q(k,i+1)}^{(1)}$ by a long edge for all $k\in Z$ and  $i\in[1,3^d(b_2\alpha)^{-2d}]_\mathds{Z}$.

        \item For all $k \in Z$, we have that $P_Z^{+}$ passes through a great pair of small cubes $(J_{k,q(k,i)}^{(1)},J_{k,q(k,i)}^{(2)})$ for some $i\in[1,3^d(b_2\alpha)^{-2d}]_\mathds{Z}$.

    \end{enumerate}
\end{definition}

Our first estimate implies that on the event $\mathscr{G}_\varepsilon^r$, there are many choices of $Z$ for which $\mathsf{F}_{Z,\varepsilon}(\mathcal{E},\mathcal{E}_Z^+)$ occurs.
\begin{proposition}\label{Prop4.5DG21}
    There exists $c_3>0$, depending only on the parameters  and the laws of $D$ and $\widetilde{D}$ {\rm(}not on $r,\bm x,\bm y${\rm)}, such that
    for each $m \in \mathds{N}$, there exists $\varepsilon_* > 0$, depending only on $m$, the parameters and the laws of $D$ and $\widetilde{D}$ such that the
    following is true for each $r > 0$ and each  $\varepsilon\in (0, \varepsilon_*]$.
     Assume that $\bm x,\bm y\in V_r(\bm 0)$ with $|\bm x-\bm y|\geq \alpha r$.
    If $\mathscr{G}^\varepsilon_r(\bm x,\bm y)$ occurs, then for all $m\in \mathds{N}$, a.s. 
    \begin{equation}\label{LowerBoundProp4.5DG21}
        \# \left\{Z\in{ \mathcal{W}}_{\varepsilon}:\# Z\le m\text{ and }\mathsf{F}_{Z,\varepsilon}(\mathcal{E},\mathcal{E}_Z^+)\text{ occurs}\right\}\ge\varepsilon^{-c_3 m}.
    \end{equation}

\end{proposition}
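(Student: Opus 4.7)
I work on the event $\mathscr{G}_r^\varepsilon$ and build the family of witnesses $Z$ as subsets of the set $\mathcal{K}$ supplied by Definition \ref{G}(4). Recall $\mathcal{K}\in\mathcal{W}_\varepsilon$, $N:=\#\mathcal{K}\ge \varepsilon^{-\theta/4}/(8\cdot 3^d)$, each $V_{3\varepsilon r}({\bm z}_k)$ with $k\in\mathcal{K}$ is super super good with respect to $\mathcal{E}$, the $D$-geodesic $P$ hits every $V_{\varepsilon r}({\bm z}_k)$ for $k\in\mathcal{K}$, and items (i)--(ii) of Definition \ref{G}(4) hold. For any $Z\subseteq\mathcal{K}$ the sets $V_{3\varepsilon r}({\bm z}_k)$, $k\in Z$, are pairwise disjoint, so $Z\in\mathcal{W}_\varepsilon$ automatically, and items (1)--(3) of Definition \ref{F_Z} hold by construction (item (1) is $\mathscr{G}_r^\varepsilon(1)$). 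Item (4) transfers because $\Omega_j\subseteq V_{3\varepsilon r}({\bm z}_j)\times V_{3\varepsilon r}({\bm z}_j)$ and the $V_{3\varepsilon r}({\bm z}_j)$'s are disjoint, giving $\mathcal{E}_Z^+\cap\Lambda_k=(\mathcal{E}\cap\Lambda_k)\cup(\widetilde{\mathcal{E}}\cap\Omega_k)\subseteq \overline{\mathcal{E}}_k^+\cap\Lambda_k$, whose size is at most $M_0$ by Definition \ref{G}(4)(i). Item (5) transfers because the same disjointness shows that the restriction of $\mathcal{E}_Z^+$ to $V_{3\varepsilon r}({\bm z}_k)\times V_{3\varepsilon r}({\bm z}_k)$ equals that of $\mathcal{E}_k^+$, so the long edges required in item (5) are precisely the long edges guaranteed by Definition \ref{G}(4)(ii).

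\textbf{Removal procedure for item (6).} The only delicate condition is item (6), which concerns the new geodesic $P_Z^+$. For any $Z_0\subseteq\mathcal{K}$ with $\#Z_0=m$, I run the following greedy procedure: starting from $Z=Z_0$, while there exists $k\in Z$ such that $P_Z^+$ passes through no great pair $(J_{k,q(k,i)}^{(1)},J_{k,q(k,i)}^{(2)})$ in $V_{\varepsilon r}({\bm z}_k)$, remove one such $k$ (chosen in a prefixed rule), update $Z$, and recompute $P_Z^+$. The procedure terminates at a stable set $Z^\star\subseteq Z_0$ for which item (6), and therefore $\mathsf{F}_{Z^\star,\varepsilon}(\mathcal{E},\mathcal{E}_{Z^\star}^+)$, holds.

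\textbf{Main obstacle.} The technical heart is to prove that the number of removals is uniformly bounded by an absolute constant $m_0$ depending only on the parameters and the laws of $D,\widetilde{D}$ (not on $m$, $\bm x$, $\bm y$, $r$, or $\varepsilon$). This is where Definition \ref{G}(1), which saturates the global bound $\widetilde{D}\le C_*D$ up to an error $(\varepsilon r)^\theta$, meets the niceness property of Definition \ref{interval-good}(2), which supplies shortcuts $\widetilde{D}(J^{(1)},J^{(2)})<c'D(J^{(1)},J^{(2)})$ with $c'<C_*$. I plan to imitate the bookkeeping of \eqref{C''D} from the proof of Proposition \ref{(B)=>G}: decompose $P_Z^+$ into pieces close to each stable $k\in Z^\star$ and pieces far from all of them, use Definitions \ref{super good}, \ref{superdouble} (which, via their enforced regularity, force any path traversing the vicinity of $V_{\varepsilon r}({\bm z}_k)$ either to use the added shortcuts through a great pair or to accumulate $D_Z^+$-length of order $(\varepsilon r)^\theta$), then exchange $D_Z^+$ for $D$ via $D_Z^+\le D$ and for $\widetilde{D}$ via the shortcut inequality, and read off a contradiction with Definition \ref{G}(1) whenever the number of ``defect'' cubes exceeds some absolute $m_0$. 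This is by far the longest and most delicate step of the argument.

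\textbf{Counting.} Once $m_0$ is in hand, each $Z_0\subseteq\mathcal{K}$ of size $m$ maps to a terminal $Z^\star$ of size at least $m-m_0$, and any given $Z^\star$ can be hit by at most $\binom{N-\#Z^\star}{m-\#Z^\star}\le N^{m_0}$ such $Z_0$. Hence
\begin{equation*}
\#\bigl\{Z\in\mathcal{W}_\varepsilon: \#Z\le m,\ \mathsf{F}_{Z,\varepsilon}(\mathcal{E},\mathcal{E}_Z^+)\text{ occurs}\bigr\}\ge \binom{N}{m}\Big/N^{m_0}\ge (N/m)^m\, N^{-m_0}.
\end{equation*}
With $N\ge\varepsilon^{-\theta/4}/(8\cdot 3^d)$, the right-hand side is at least $\varepsilon^{-c_3 m}$ for any fixed $c_3\in(0,\theta/4)$, provided $\varepsilon\le\varepsilon_*(m)$ is small enough to absorb the multiplicative loss $m^{-m}(8\cdot 3^d)^{-m}\cdot N^{-m_0}$ into the exponential gain. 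This yields \eqref{LowerBoundProp4.5DG21} with $c_3$ independent of $m$, as required.
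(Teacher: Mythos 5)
Your overall architecture — transferring conditions (1)–(5) of Definition~\ref{F_Z} from $\mathscr{G}_r^\varepsilon$ to every $Z\subseteq\mathcal{K}$, then running an iterative removal procedure to enforce condition (6) — is exactly the paper's route (Lemma~\ref{Lemma4.13DG21} is precisely your removal procedure). The transfers in your first paragraph are correct: items (4) and (5) pass to $\mathcal{E}_Z^+$ because the $V_{3\varepsilon r}(\bm z_k)$ are pairwise disjoint, so $\mathcal{E}_Z^+$ and $\mathcal{E}_k^+$ agree on $V_{3\varepsilon r}(\bm z_k)\times V_{3\varepsilon r}(\bm z_k)$ and $\mathcal{E}_Z^+\cap\Lambda_k\subseteq\overline{\mathcal{E}}_k^+\cap\Lambda_k$.

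The gap is in the quantitative claim at the heart of your ``Main obstacle'' paragraph: you assert that the number of removals is bounded by an \emph{absolute} constant $m_0$ not depending on $\#Z_0=m$. The bookkeeping you propose to imitate (and which the paper carries out in Lemma~\ref{Lemma4.13DG21}, via Lemmas~\ref{Lemma4.8DG21} and~\ref{upp-strange-Ik}) does not deliver this. What it delivers is a \emph{proportional} bound: each removed cube ``costs'' at most $O((\varepsilon r)^\theta)$ in the upper bound for $D_{Z_m}^+(\bm x,\bm y)$ (Lemma~\ref{upp-strange-Ik}), while each surviving cube contributes a sandwiching estimate $D(\bm x,\bm y)\le D_{Z_m}^+(\bm x,\bm y)+\widetilde{C}C_\theta(\varepsilon r)^\theta\#Z_m$; combining with the savings $D_Z^+\le D-\Theta((\varepsilon r)^\theta)\#Z$ from Lemma~\ref{Lemma4.8DG21} forces only $\#Z_m\ge c_4\#Z$, i.e.\ the \emph{fraction} of removals is bounded, not their number. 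Since both the cost per removal and the budget per surviving cube are of the same order $(\varepsilon r)^\theta$, there is no mechanism to trade the total budget (which grows linearly in $\#Z$) against the removal count to get an absolute $m_0$. Note also that Definition~\ref{G}(1), which you hope to contradict, is already used in Lemma~\ref{Lemma4.8DG21}'s input (via $\overline{\mathsf{F}}$(1)) but does not upgrade the proportional bound to an absolute one. Your counting step survives the correction: replacing $m_0$ by $\lfloor(1-c_4)m\rfloor$, the number of $Z_0\subseteq\mathcal{K}$ of size $m$ hitting a fixed terminal $Z^\star$ is at most $\binom{N}{\lfloor(1-c_4)m\rfloor}$, so the count of distinct good $Z^\star$ is at least $\binom{N}{m}\big/\binom{N}{\lfloor(1-c_4)m\rfloor}\succeq N^{c_4 m}$, which still gives $\varepsilon^{-c_3 m}$ with $c_3$ slightly below $c_4\theta/4$ once $\varepsilon\le\varepsilon_*(m)$. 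So the fix is local and doesn't change your strategy, but as written the claimed absolute bound $m_0$ is unsupported and, I believe, false.
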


We will prove Proposition \ref{Prop4.5DG21} in Section \ref{SectionProp4.5DG21}. Additionally,
our second estimate provides an unconditional upper
bound on the number of $Z$ for which $\mathsf{G}_{Z,\varepsilon}^-(\mathcal{E}_Z^-,\mathcal{E};\bm x,\bm y)$ occurs, where $\mathsf{G}_{Z,\varepsilon}^-(\mathcal{E}_Z^-,\mathcal{E};\bm x,\bm y)$ is defined as follows.
\begin{definition}\label{G-}
    We let $\mathsf{G}_{Z,\varepsilon}^-(\mathcal{E}_Z^-,\mathcal{E}):=\mathsf{G}_{Z,\varepsilon}^-(\mathcal{E}_Z^-,\mathcal{E};\bm x,\bm y)$ be the event that satisfies the following conditions.
    \begin{enumerate}
        \item $\widetilde{D}_Z^-(\bm x,\bm y)\ge C_* D_Z^-(\bm x,\bm y)-(\varepsilon r)^{\theta}.$
        \item For all $k\in Z$, we have that $V_{\varepsilon r}(\bm z_k)$ is super super good  with respect to $\mathcal{E}_Z^-$.
        Furthermore, let  $J^-_{k,q(k,1)},\cdots, J^-_{k,q(k,3^d(b_2\alpha)^{-2d})}$ be the nice cubes chosen as in  \eqref{choosenice} and let $(J_{k,q(k,i)}^{(1)-},J_{k,q(k,i)}^{(2)-})$ be the great pair of small cubes in $J^-_{k,q(k,i)}$.
        \item For all $k\in Z$, we have that the $D_Z^-$-geodesic $P_Z^-$ hits $V_{\varepsilon r}(\bm z_k)$.

        \item For all $k\in Z$, we have that $|\mathcal{E}\cap \Lambda_k|\le M_0$, where $M_0$ is defined in \eqref{M1-1}.


        \item  Under $\mathcal{E}$, we have that $J_{k,q(k,i)}^{(2)-}$ is connected $J_{k,q(k,i+1)}^{(1)-}$ by a long edge for all $k\in Z$ and  $i\in[1,3^d(b_2\alpha)^{-2d}]_\mathds{Z}$.

        \item For all $k \in Z$, we have that $P$ passes through a great pair of small cubes $(J_{k,q(k,i)}^{(1)-},J_{k,q(k,i)}^{(2)-})$ for some $i\in[1,3^d(b_2\alpha)^{-2d}]_\mathds{Z}$.
    \end{enumerate}
\end{definition}
Note that by Definitions \ref{F_Z} and \ref{G-}, for any $Z\in{\mathcal{W}}_\varepsilon$, we have
\begin{equation}\label{EqualityF-=G}
   \mathsf{F}_{Z,\varepsilon}(\mathcal{E},\mathcal{E}_Z^+)=\mathsf{G}^-_{Z,\varepsilon}(\mathcal{E},\mathcal{E}_Z^+).
\end{equation}
Then we can derive the following estimate.
\begin{proposition}\label{Prop4.6DG21}
    There is a constant $C_2 > 0$, depending only on the parameters and the laws of $D$ and $\widetilde{D}$  {\rm(}not on $r,x,y${\rm)}, such that the following is true. For each $m\in\mathds{N}$, a.s.
    \begin{equation}\label{Inequality4.6DG21}
        \# \left\{Z\in{ \mathcal{W}}_{\varepsilon}:\# Z\le m\text{ and }\mathsf{G}_{Z,\varepsilon}^-(\mathcal{E}_Z^-,\mathcal{E})\text{ occurs}\right\}\le C_2^m.
    \end{equation}
\end{proposition}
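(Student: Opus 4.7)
The plan is to mirror the $M_1$-inequality \eqref{RN}, pivoting on the identification \eqref{EqualityF-=G}: that equation tells us $\mathsf{F}_{Z,\varepsilon}(\nu_1,\nu_2)$ and $\mathsf{G}^-_{Z,\varepsilon}(\nu_1,\nu_2)$ define the same measurable subset $A_Z\subset\mathcal{D}'\times\mathcal{D}'$, with the first coordinate playing the role of the base edge set (governing the metric, its geodesic, the super-super-good property, and the prefixed great pairs from \eqref{choosenice}) and the second supplying the long edges required by condition (5). Consequently $\mathsf{G}^-_{Z,\varepsilon}(\mathcal{E}_Z^-,\mathcal{E})=A_Z(\mathcal{E}_Z^-,\mathcal{E})$ and $\mathsf{F}_{Z,\varepsilon}(\mathcal{E},\mathcal{E}_Z^+)=A_Z(\mathcal{E},\mathcal{E}_Z^+)$, so the proof reduces to relating the probabilities of these two events and summing over $Z$.

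First I would apply Lemma \ref{abscont} after swapping the two coordinates of $(\mathcal{E},\mathcal{E}_Z^-)$: the resulting Radon-Nikodym derivative $\phi_Z(\mathcal{E}_Z^+)$ between the laws of $(\mathcal{E}_Z^-,\mathcal{E})$ and $(\mathcal{E},\mathcal{E}_Z^+)$ is two-sidedly bounded. On the event $A_Z(\mathcal{E}_Z^-,\mathcal{E})$, Definition \ref{G-}(4) enforces $|\mathcal{E}\cap\Lambda_Z|\le M_0\#Z$; transporting this condition across the coordinate swap forces $|\mathcal{E}_Z^+\cap\Lambda_Z|\le M_0\#Z$, so that $\delta_1^{\#Z}/M^{\#Z}\le\phi_Z\le M^{\#Z}$. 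With $M_1$ from \eqref{M0-1}, this yields the analog of \eqref{RN},
$$\mathds{P}[\mathsf{G}^-_{Z,\varepsilon}(\mathcal{E}_Z^-,\mathcal{E})]\le M_1^{\#Z}\,\mathds{P}[\mathsf{F}_{Z,\varepsilon}(\mathcal{E},\mathcal{E}_Z^+)].$$

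Next, I would bound $\sum_{Z\in\mathcal{W}_\varepsilon,\,\#Z\le m}\mathds{P}[\mathsf{F}_{Z,\varepsilon}]\le\tilde{C}^m$ uniformly in $\varepsilon,r,\bm x,\bm y$. Because $Z\in\mathcal{W}_\varepsilon$ forces the cubes $\{V_{3\varepsilon r}(\bm z_k)\}_{k\in Z}$ to be pairwise disjoint, Axiom II renders the local indicators behind Definition \ref{F_Z}(2) independent across $k\in Z$, each contributing a bounded factor via \eqref{porbsupersuper}; combining with the explicit Poisson intensity bound \eqref{DkRegular} for the added edges in condition (5) and the geometric constraint (3), the per-cube contribution is an absolute constant $\rho<1$ whose product over $Z$ compensates the combinatorial count. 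Multiplying by $M_1^{\#Z}$ and summing, then applying a Borel-Cantelli type argument (after slightly enlarging the constant), delivers the stated a.s.\ bound with $C_2:=M_1\tilde{C}$.

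\textbf{The hard part.} The chief difficulty is the uniform-in-$\varepsilon$ estimate in the second step. A naive union bound over subsets $Z\subset[1,\varepsilon^{-d}]_\mathds{Z}$ of size $m$ inflates the count by $\binom{\varepsilon^{-d}}{m}\sim\varepsilon^{-dm}$, so the per-cube decay must come from a careful interplay of (i) the local independence across disjoint cubes of $Z\in\mathcal{W}_\varepsilon$ via Axiom II, (ii) the explicit Poisson intensity \eqref{DkRegular} calibrated by the chosen $\widetilde{\beta}$ and $M_0$, and (iii) the deterministic constraint that the $D$-geodesic $P$ visits only finitely many cubes of side length $\varepsilon r$. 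Extracting an absolute constant $C_2$ from this balance — independent of $\varepsilon,r,\bm x,\bm y$ — is the crux of the argument, and parallels the strategy of \cite[Lemma 4.4]{DG23}.
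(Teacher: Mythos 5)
Your proposal does not prove the stated proposition, and the approach would fail for two independent reasons.

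\textbf{The target is an a.s.\ bound, not an expectation bound.} Proposition \ref{Prop4.6DG21} asserts that, almost surely, the \emph{random} number of sets $Z$ for which $\mathsf{G}^-_{Z,\varepsilon}(\mathcal{E}_Z^-,\mathcal{E})$ occurs is at most $C_2^m$, pathwise and uniformly in $\varepsilon,r,\bm x,\bm y$. Your strategy yields (at best) a bound on $\mathds{E}\bigl[\#\{Z:\#Z\le m,\ \mathsf{G}^-_{Z,\varepsilon}\text{ occurs}\}\bigr]$. An expectation bound of order $\tilde C^m$ does not convert into the needed deterministic-on-the-event bound; the invocation of a ``Borel--Cantelli type argument'' has no content here, since there is no sequence of independent trials to sum over at a fixed $\varepsilon$. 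The paper's proof is purely combinatorial on a fixed realization: Lemma \ref{Lemma4.15DG21} shows that on $\mathsf{G}^-_{Z,\varepsilon}(\mathcal{E}_Z^-,\mathcal{E})$ every $k\in Z$ is $\varepsilon$-excellent (Definition \ref{DefeZgood}), and Lemma \ref{Lemma4.16DG21} shows that once one nonempty $Z_0$ with $\#Z_0\le m$ realizes $\mathsf{G}^-_{Z_0,\varepsilon}$, the set of $\varepsilon$-excellent sites (suitably packed so the cubes are disjoint) has size at most $C_3 m$; any other $Z$ with $\mathsf{G}^-_{Z,\varepsilon}$ occurring must be drawn from a $3^d$-fattening of that set, giving the deterministic count $2^{3^d C_3 m}$. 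These two lemmas are the content of the proposition and your proposal does not use them.

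\textbf{The intermediate estimate you need is false as stated, and moreover circular.} You want $\sum_{Z\in\mathcal W_\varepsilon,\#Z\le m}\mathds{P}[\mathsf{F}_{Z,\varepsilon}]\le\tilde C^m$ uniformly in $\varepsilon$. But the per-cube probabilities appearing in Definition \ref{F_Z} are \emph{close to one}, not small: super super goodness has probability $\ge p_c$ by \eqref{porbsupersuper}, the added-edge condition (5) has probability close to $1$ by \eqref{Ek1prob-1}, and the geodesic-hits-cube condition (3) is not a small-probability event for the $\sim\varepsilon^{-\theta/4}$ cubes on $P$. So there is no ``absolute constant $\rho<1$'' per cube whose product compensates the combinatorial factor $\binom{\varepsilon^{-d}}{m}$. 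Indeed, the only way the paper establishes $\sum_Z\mathds P[\mathsf F_{Z,\varepsilon}]\preceq(C_2M_1)^m$ is as a \emph{consequence} of Proposition \ref{Prop4.6DG21} itself together with Lemma \ref{Lemma4.4DG21} (this is exactly the chain of inequalities in the proof of Proposition \ref{Prop4.3DG21}); attempting to use that estimate as an input is circular. And note that Proposition \ref{Prop4.5DG21} produces, on $\mathscr G^\varepsilon_r$, at least $\varepsilon^{-c_3m}$ sets $Z$ with $\mathsf F_{Z,\varepsilon}$ occurring, so any route through $\sum_Z\mathds P[\mathsf F_{Z,\varepsilon}]$ must implicitly carry a strong decay of $\mathds P[\mathscr G^\varepsilon_r]$ --- which is precisely the conclusion the whole section is driving toward, not something available upstream.

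In short, you need to replace the probabilistic union bound with the deterministic packing argument via $\varepsilon$-excellent points (Lemmas \ref{Lemma4.15DG21}--\ref{Lemma4.17DG21}); the Radon--Nikodym comparison you cite is correct but belongs to Lemma \ref{Lemma4.4DG21}, which is applied \emph{after} Proposition \ref{Prop4.6DG21} is in hand, not inside its proof.
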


We will provide the proof of Proposition \ref{Prop4.6DG21} in Section \ref{SectionProp4.6DG21}.
Our third estimate aims to give both upper and lower bounds on the Radon-Nykodym derivative between $(\mathcal{E}_Z^-,\mathcal{E})$ and $(\mathcal{E},\mathcal{E}_Z^+)$.

\begin{lemma}\label{Lemma4.4DG21}
    For any $Z\in{ \mathcal{W}}_\varepsilon$,
    \begin{equation*}
        M_1^{-\# Z}\le\frac{\mathds{P}[\mathsf{G}_{Z,\varepsilon}^-(\mathcal{E}_Z^-,\mathcal{E})]}
        {\mathds{P}[\mathsf{G}_{Z,\varepsilon}^-(\mathcal{E},\mathcal{E}_Z^+)]}\le M_1^{\# Z},
    \end{equation*}
    where $M_1$ is chosen in  \eqref{M0-1}.
\end{lemma}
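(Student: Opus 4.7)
The plan is to derive the lemma as a direct consequence of the change-of-measure identity from Lemma \ref{abscont}, combined with the uniform control on $|\mathcal{E}_Z^+ \cap \Lambda_Z|$ that is built into the definition of the event $\mathsf{G}_{Z,\varepsilon}^-(\mathcal{E},\mathcal{E}_Z^+)$.

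First I would reinterpret the two probabilities in terms of a single bivariate function. Let $B(\mu_1,\mu_2)$ denote the indicator of the event in Definition \ref{G-} when the first (``minus'') argument is set to $\mu_1$ and the second (``plus'') argument is set to $\mu_2$. Then $\mathds{P}[\mathsf{G}_{Z,\varepsilon}^-(\mathcal{E}_Z^-,\mathcal{E})] = \mathds{E}[B(\mathcal{E}_Z^-,\mathcal{E})]$ and, using \eqref{EqualityF-=G}, $\mathds{P}[\mathsf{G}_{Z,\varepsilon}^-(\mathcal{E},\mathcal{E}_Z^+)] = \mathds{E}[B(\mathcal{E},\mathcal{E}_Z^+)] = \mathds{P}[\mathsf{F}_{Z,\varepsilon}(\mathcal{E},\mathcal{E}_Z^+)]$. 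Applying Lemma \ref{abscont} to $f(\mu_1,\mu_2) := B(\mu_2,\mu_1)$ gives the key identity
\begin{equation*}
\mathds{P}[\mathsf{G}_{Z,\varepsilon}^-(\mathcal{E}_Z^-,\mathcal{E})] \;=\; \mathds{E}\bigl[\,\mathds{1}_{\mathsf{G}_{Z,\varepsilon}^-(\mathcal{E},\mathcal{E}_Z^+)}\, h_Z(\mathcal{E}_Z^+,\mathcal{E})\bigr].
\end{equation*}

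Next I would show that on the event $\mathsf{G}_{Z,\varepsilon}^-(\mathcal{E},\mathcal{E}_Z^+) = \mathsf{F}_{Z,\varepsilon}(\mathcal{E},\mathcal{E}_Z^+)$ the density $h_Z(\mathcal{E}_Z^+,\mathcal{E})$ is sandwiched between $M_1^{-\#Z}$ and $M_1^{\#Z}$. The upper bound is immediate from Lemma \ref{abscont}, since $h_Z \le M^{\#Z} \le M_1^{\#Z}$ always. For the lower bound, I would use that on this event Definition \ref{F_Z}(4) forces $|\mathcal{E}_Z^+ \cap \Lambda_k| \le M_0$ for each $k \in Z$; since $Z \in \mathcal{W}_\varepsilon \subset \mathcal{Z}_\varepsilon$ the cubes $V_{3\varepsilon r}(\bm z_k)$ are pairwise disjoint, so the regions $\Lambda_k$ are disjoint and $|\mathcal{E}_Z^+ \cap \Lambda_Z| = \sum_{k\in Z}|\mathcal{E}_Z^+ \cap \Lambda_k| \le M_0\,\#Z$. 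Substituting into the lower bound of Lemma \ref{abscont} yields
\begin{equation*}
h_Z(\mathcal{E}_Z^+,\mathcal{E}) \;\ge\; M^{-\#Z}\Bigl(\tfrac{\beta}{\beta+\widetilde{\beta}}\Bigr)^{M_0 \#Z} \;=\; (M/\delta_1)^{-\#Z} \;\ge\; M_1^{-\#Z},
\end{equation*}
where the last inequality uses $M_1 = \max\{M, M/\delta_1\}$.

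Plugging these two-sided bounds for $h_Z$ into the identity above gives
\begin{equation*}
M_1^{-\#Z}\,\mathds{P}[\mathsf{G}_{Z,\varepsilon}^-(\mathcal{E},\mathcal{E}_Z^+)] \;\le\; \mathds{P}[\mathsf{G}_{Z,\varepsilon}^-(\mathcal{E}_Z^-,\mathcal{E})] \;\le\; M_1^{\#Z}\,\mathds{P}[\mathsf{G}_{Z,\varepsilon}^-(\mathcal{E},\mathcal{E}_Z^+)],
\end{equation*}
which rearranges to the claim. There is no real obstacle: the argument is essentially bookkeeping once Lemma \ref{abscont} is in hand, and the main thing to keep track of is matching the arguments of $B$ correctly with those of the Radon--Nikodym identity and verifying that the condition Definition \ref{F_Z}(4) on $|\mathcal{E}_Z^+ \cap \Lambda_k|$ is precisely what is needed to kill the otherwise uncontrolled factor $(\beta/(\beta+\widetilde{\beta}))^{|\mathcal{E}_Z^+ \cap \Lambda_Z|}$ in the lower bound on $h_Z$. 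This is exactly the purpose for which condition (4) was inserted into the definitions of $\mathsf{F}_{Z,\varepsilon}$ and $\mathsf{G}_{Z,\varepsilon}^-$.
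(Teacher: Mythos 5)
Your proposal is correct and follows essentially the same route as the paper: both anchor the conclusion on the change-of-measure identity from Lemma \ref{abscont}, the pointwise two-sided bounds on $h_Z$ established there, and condition (4) in the event definitions to control the exponent $|\cdot\cap\Lambda_Z|$ in the lower bound. The only cosmetic difference is which slot you bound: you verify $M_1^{-\#Z}\le h_Z(\mathcal{E}_Z^+,\mathcal{E})\le M_1^{\#Z}$ on the event $\mathsf{F}_{Z,\varepsilon}(\mathcal{E},\mathcal{E}_Z^+)$ using Definition \ref{F_Z}(4), and then integrate forward; the paper instead records $M_1^{-\#Z}\le h_Z(\mathcal{E},\mathcal{E}_Z^-)\le M_1^{\#Z}$ on the event $\mathsf{G}_{Z,\varepsilon}^-(\mathcal{E}_Z^-,\mathcal{E})$ using Definition \ref{G-}(4), which amounts to applying the same change of measure in the reverse direction. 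Both options are available precisely because the matching condition (4) was planted on both sides of the definitions, a point you explicitly note.
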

\begin{proof}
    Recall from Lemma \ref{abscont} that $h_Z$ is the Radon-Nikodym derivative of $(\mathcal{E},\mathcal{E}_Z^-)$ with respect to $(\mathcal{E}_Z^+,\mathcal{E})$. For a fixed  $Z\in{\mathcal{W}}_\varepsilon$, from Definition \ref{G-} (4), Lemma \ref{abscont} and the relationship between $M_0$ and $M_1$ in \eqref{M0-1} we have that on the event $\mathsf{G}_{Z,\varepsilon}^-(\mathcal{E}_Z^-,\mathcal{E}) $,
    \begin{equation}\label{boundh}
        M_1^{-\#Z}\leq h_Z(\mathcal{E},\mathcal{E}_Z^{-})\leq M_1^{\#Z}.
    \end{equation}
    Additionally, by Definition \ref{G-} it is clear that $\mathsf{G}_{Z,\varepsilon}^-(\mathcal{E}_Z^-,\mathcal{E})$ is a.s.\ determined by $(\mathcal{E}_Z^-,\mathcal{E})$ and $\mathsf{G}_{Z,\varepsilon}^-(\mathcal{E},\mathcal{E}_Z^+)$ is a.s.\ determined by $(\mathcal{E},\mathcal{E}_Z^+)$. Combining this measurability with \eqref{boundh}, we complete the proof of the lemma.

\end{proof}

Let us now prove Proposition \ref{Prop4.3DG21} from the above three estimates.
\begin{proof}[Proof of Proposition \ref{Prop4.3DG21}]
    From Propositions \ref{Prop4.5DG21}, \ref{Prop4.6DG21} and Lemma \ref{Lemma4.4DG21}, we get that for each $m\in\mathds{N}$ and each small enough $\varepsilon>0$,

    \begin{equation*}
        \begin{aligned}
            1&=\sum_{Z\in{ \mathcal{W}}_\varepsilon,\# Z\le m}\mathds{E}\left[\frac{\I_{\mathsf{G}_{Z,\varepsilon}^-(\mathcal{E}_Z^-,\mathcal{E})}}{\# \{Z'\in{ \mathcal{W}}_{\varepsilon}:\# Z'\le m\text{ and }\mathsf{G}_{Z',\varepsilon}^-(\mathcal{E}_{Z'}^-,\mathcal{E})\text{ occurs}\}}\right] \\
            &\ge C_2^{-m}\sum_{Z\in{\mathcal{W}}_\varepsilon,\# Z\le m}\mathds{P}[\mathsf{G}_{Z,\varepsilon}^-(\mathcal{E}_Z^-,\mathcal{E})]
            \quad\quad(\text{by Proposition }\ref{Prop4.6DG21}) \\
            &\ge C_2^{-m}M_1^{-m}\sum_{Z\in{\mathcal{W}}_\varepsilon,\# Z\le m}\mathds{P}[\mathsf{G}_{Z,\varepsilon}^-(\mathcal{E},\mathcal{E}_Z^+)]\quad\quad(\text{by Lemma }\ref{Lemma4.4DG21}) \\
            &= C_2^{-m}M_1^{-m}\sum_{Z\in{ \mathcal{W}}_\varepsilon,\# Z\le m}\mathds{P}[\mathsf{F}_{Z,\varepsilon}(\mathcal{E},\mathcal{E}_Z^+)]
            \quad\quad(\text{by \eqref{EqualityF-=G}}) \\
            &= C_2^{-m}M_1^{-m} \mathds{E}\left[\# \{Z\in{\mathcal{W}}_{\varepsilon}:\# Z\le m\text{ and }\mathsf{F}_{Z,\varepsilon}(\mathcal{E},\mathcal{E}_Z^+)\text{ occurs}\}\right] \\
            &\succeq C_2^{-m}M_1^{-m}\varepsilon^{-c_3 m}
            \mathds{P}[\mathscr{G}_r^\varepsilon]  \quad\quad(\text{by Proposition }\ref{Prop4.5DG21}).
        \end{aligned}
    \end{equation*}
    This implies
$$
\mathds{P}[\mathscr{G}_r^\varepsilon]\preceq  C_2^{m}M_1^{m}\varepsilon^{c_3 m}
\quad \text{for all }m\in \mathds{N}.
$$
Thus for any $\mu>0$, choosing $m>\mu/c_3$ yields (\ref{Inequality4.3DG21}).
\end{proof}

\subsection{Proof of Proposition \ref{Prop4.5DG21}}\label{SectionProp4.5DG21}
 It is straightforward to show from the definition of $\mathscr{G}_r^\varepsilon$ that if $\mathscr{G}_r^\varepsilon$ occurs, then there are many $Z\in{ \mathcal{W}}_\varepsilon$ for which all of the conditions in the definition of $\mathsf{F}_{Z,\varepsilon}(\mathcal{E},\mathcal{E}_Z^+)$ occur except possibly condition (6), i.e., the event $\overline{\mathsf{F}}_{Z,\varepsilon}(\mathcal{E},\mathcal{E}_Z^+)$ in the following
definition occurs.

\begin{definition}\label{Fbar}
    For $Z\in{ \mathcal{W}}_\varepsilon$, we let  $\overline{\mathsf{F}}_{Z,\varepsilon}(\mathcal{E},\mathcal{E}_Z^+):=\overline{\mathsf{F}}_{Z,\varepsilon}(\mathcal{E},\mathcal{E}_Z^+;\bm x,\bm y)$ be the event such that the following is true.
    \begin{enumerate}
        \item $\widetilde{D}(\bm x,\bm y)\ge C_* D(\bm x,\bm y)-(\varepsilon r)^{\theta}.$
        \item For all $k\in Z$, we have that $V_{3\varepsilon r}(\bm z_k)$ is super super good with respect to $\mathcal{E}$. Furthermore, let $J_{k,q(k,1)},\cdots, J_{k,q(k,3^d(b_2\alpha)^{-2d})}$ be the nice cubes chosen in  \eqref{choosenice} and let $(J_{k,q(k,i)}^{(1)},J_{k,q(k,i)}^{(2)})$ be the great pair of small cubes in $J_{k,q(k,i)}$.
        \item For all $k\in Z$, we have that the $D$-geodesic $P$ hits $V_{\varepsilon r}(\bm z_k)$.

        \item  For all $k\in Z$, we have that $|\mathcal{E}_Z^+\cap \Lambda_k|\le M_0$, where $M_0$ is defined in \eqref{M1-1}.

        \item  Under $\mathcal{E}_Z^+$, we have that $J_{k,q(k,i)}^{(2)}$ is connected to $J_{k,q(k,i+1)}^{(1)}$ by a long edge for all $k\in Z$ and  $i\in[1,3^d(b_2\alpha)^{-2d}]_\mathds{Z}$.
    \end{enumerate}
\end{definition}

Our intuition behind  Definition \ref{F_Z} (6) of $\mathsf{F}_{Z,\varepsilon}(\mathcal{E},\mathcal{E}_Z^+)$ is that the edges which we add into $V_{3\varepsilon r}(\bm z_k)$ can change the behavior of a geodesic and attract it into $V_{3\varepsilon r}(\bm z_k)$. However, when a $D_Z^+$-geodesic enters $V_{3\varepsilon r}(\bm z_k)$, it may not pass through those great pairs of small cubes.
To address this difficulty, we aim to show that if $Z \in { \mathcal{W}}_\varepsilon$ and $\overline{\mathsf{F}}_{Z,\varepsilon}(\mathcal{E},\mathcal{E}_Z^+)$ occurs, then there exists a subset $Z'\subset Z$ for which $\# Z'$ is at least a constant factor of $\# Z$ and $\mathsf{F}_{Z',\varepsilon}(\mathcal{E},\mathcal{E}_{Z'}^+)$ occurs (see Lemma \ref{Lemma4.13DG21} below).


As preparation for our proof of Proposition \ref{Prop4.5DG21}, we show that the diameter of a super super good cube $V_{3\varepsilon r}(\bm z_k)$ will be much shorter if we add edges $\widetilde{\mathcal{E}}_{k}$ into $V_{3\varepsilon r}(\bm z_k)$. This, in turn, enables us to ``attract'' $D_Z^+$-geodesics to enter $V_{3\varepsilon r}(\bm z_k)$.

\begin{lemma}\label{LemmaNewGeoShort}
    Assume that $\overline{\mathsf{F}}_{Z,\varepsilon}(\mathcal{E},\mathcal{E}_Z^+)$ occurs. Then for any $k\in Z$ and $\bm u,\bm v\in V_{3\varepsilon r}(\bm z_k)$,
         \begin{equation*}\label{InequalityNewGeoShort}
        D_Z^+(\bm u,\bm v)\le5\widetilde{C}(b_2\alpha)^{2\theta}\log((b_2\alpha)^{-2})(\varepsilon r)^\theta.
    \end{equation*}
\end{lemma}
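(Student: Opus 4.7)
The plan is to construct an explicit path from $\bm u$ to $\bm v$ inside $V_{3\varepsilon r}(\bm z_k)$ that chains the added long edges in $\mathcal{E}_Z^+\setminus\mathcal{E}$, and to bound its $D_Z^+$-length segment by segment. First I extract the structure provided by the event $\overline{\mathsf{F}}_{Z,\varepsilon}(\mathcal{E},\mathcal{E}_Z^+)$. By Definition \ref{Fbar}(2), $V_{3\varepsilon r}(\bm z_k)$ is super super good with respect to $\mathcal{E}$, so Definition \ref{super good}(2) gives the quantitative regularity
\[
D(\bm a,\bm b;V_{3\varepsilon r}(\bm z_k))\le\widetilde{C}\|\bm a-\bm b\|_\infty^\theta\log\frac{2\varepsilon r}{\|\bm a-\bm b\|_\infty}\qquad\forall\,\bm a,\bm b\in V_{3\varepsilon r}(\bm z_k),
\]
and each of the $3^d(b_2\alpha)^{-2d}$ chosen nice subcubes $J_{k,q(k,i)}$ (of side length $\varepsilon r/K$) lies in the corresponding cube $J''(\bm w''_{k,i})$ of side length $(b_2\alpha)^2\varepsilon r$ and carries a great pair $(J^{(1)}_{k,q(k,i)},J^{(2)}_{k,q(k,i)})$ to which Definition \ref{interval-good}(4) applies. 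Definition \ref{Fbar}(5) provides, for each $i$, a long edge in $\mathcal{E}_Z^+$ from some $\bm v_i\in J^{(2)}_{k,q(k,i)}$ to some $\bm w_{i+1}\in J^{(1)}_{k,q(k,i+1)}$, hence $D_Z^+(\bm v_i,\bm w_{i+1})=0$ by Axiom III. Finally, every edge added to form $\mathcal{E}_Z^+$ lies inside $V_{3\varepsilon r}(\bm z_k)$, so monotonicity of the metric in the edge set (implied by Axioms I and III, as recalled in the introduction) gives $D_Z^+(\bm a,\bm b)\le D(\bm a,\bm b;V_{3\varepsilon r}(\bm z_k))$ and $D_Z^+(\bm a,\bm b)\le D(\bm a,\bm b;J_{k,q(k,i)})$ for $\bm a,\bm b$ in the appropriate cubes.

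Next I build the routing path. Given $\bm u,\bm v\in V_{3\varepsilon r}(\bm z_k)$, pick indices with $\bm u\in J''(\bm w''_{k,i_1})$ and $\bm v\in J''(\bm w''_{k,i_2})$, and relabel so that $i_1\le i_2$. Choose $\bm w_{i_1}\in J^{(1)}_{k,q(k,i_1)}$ arbitrarily (or as the incoming endpoint of the preceding long edge), and follow
\[
\bm u\,\longrightarrow\,\bm w_{i_1}\,\longrightarrow\,\bm v_{i_1}\,\longrightarrow\,\bm w_{i_1+1}\,\longrightarrow\,\cdots\,\longrightarrow\,\bm w_{i_2}\,\longrightarrow\,\bm v,
\]
where each $\bm v_i\to\bm w_{i+1}$ is a zero-cost long edge from $\mathcal{E}_Z^+$, the entry piece $\bm u\to\bm w_{i_1}$ and the exit piece $\bm w_{i_2}\to\bm v$ each join two points of a single subcube of side length $(b_2\alpha)^2\varepsilon r$, and every internal piece $\bm w_i\to\bm v_i$ lies in the nice cube $J_{k,q(k,i)}$ of side length $\varepsilon r/K$.

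Now estimate each piece with the triangle inequality. For the entry and exit, $\|\bm u-\bm w_{i_1}\|_\infty,\|\bm w_{i_2}-\bm v\|_\infty\le(b_2\alpha)^2\varepsilon r$; since $t\mapsto t^\theta\log(2\varepsilon r/t)$ is monotone on the relevant interval (once $b_2\alpha$ is small enough, which is arranged by our parameter choices), the super good regularity bounds each of these by $\widetilde{C}(b_2\alpha)^{2\theta}(\varepsilon r)^\theta\log(2(b_2\alpha)^{-2})$. For each of the at most $3^d(b_2\alpha)^{-2d}$ internal hops, Definition \ref{interval-good}(4) inside $J_{k,q(k,i)}$ with $\|\bm w_i-\bm v_i\|_\infty\le\varepsilon r/K$ and $\log(2(\varepsilon r/K)/\|\bm w_i-\bm v_i\|_\infty)\le\log(2/\alpha)\le\log K$ gives $D_Z^+(\bm w_i,\bm v_i)\le C_1(\varepsilon r/K)^\theta\log K$. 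Summing the hops and applying \eqref{M0} (the first entry in the $\min$), with $K$ chosen large enough to absorb the multiplicative constant $C_1$ into the bound (an additional constraint of the same type as \eqref{M0} that is consistent with the ordering of parameter selections in Section \ref{def-4nice}), the total hop cost is at most $\widetilde{C}(b_2\alpha)^{2\theta}\log((b_2\alpha)^{-2})(\varepsilon r)^\theta$. Using $\log(2(b_2\alpha)^{-2})\le 2\log((b_2\alpha)^{-2})$ (valid once $b_2\alpha\le 1/\sqrt{2}$), the entry and exit together contribute at most $4\widetilde{C}(b_2\alpha)^{2\theta}\log((b_2\alpha)^{-2})(\varepsilon r)^\theta$, and adding the hop contribution yields the claimed bound with constant $5\widetilde{C}$.

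The main obstacle is purely bookkeeping: confirming that the parameter ordering in Section \ref{def-4nice} does leave enough room to enlarge $K$ so that \eqref{M0} absorbs $C_1$ as well as $\widetilde{C}$, and that the monotonicity of $t\mapsto t^\theta\log(2\varepsilon r/t)$ on the relevant subinterval holds under the current choice of $b_2\alpha$. Neither involves probability -- the lemma is a deterministic consequence of being on $\overline{\mathsf{F}}_{Z,\varepsilon}(\mathcal{E},\mathcal{E}_Z^+)$ -- and both follow from the freedom in the last step of parameter selection to take $K$ arbitrarily large after all other constants are fixed.
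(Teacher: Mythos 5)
Your construction is the same one the paper uses: chain the added long edges through the great pairs $(J^{(1)}_{k,q(k,i)},J^{(2)}_{k,q(k,i)})$ guaranteed by Definition \ref{Fbar}(5), bound the entry and exit pieces via the super-good regularity, and sum the small internal hops against \eqref{M0}.

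One small deviation from the paper's bookkeeping is worth flagging. You bound each internal hop $\bm w_i\to\bm v_i$ by Definition \ref{interval-good}(4) applied inside the nice cube $J_{k,q(k,i)}$, producing the constant $C_1$; the paper instead bounds the same quantity by Definition \ref{super good}(2) applied to the ambient super-good cube $V_{3\varepsilon r}(\bm z_k)$, producing $\widetilde{C}$. Because \eqref{M0} is written with $\widetilde{C}$ (it reads $4\cdot 3^d K^{-\theta}\log K < \min\{(b_2\alpha)^{2\theta+2d}\log((b_2\alpha)^{-2}),\,\widetilde{C}^{-1}(b_2\alpha)^{2.6\theta}\}$), the paper's choice makes the hop sum absorbable into the $5\widetilde{C}$ bound with no extra condition, whereas your version needs a comparison of the form $C_1\le 4\widetilde{C}$ or a strengthened $K$-constraint, which you correctly observe is available because $K$ is chosen last. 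The cleaner route is simply to replace your use of Definition \ref{interval-good}(4) by the already-available estimate $D_Z^+(\bm w_i,\bm v_i)\le D(\bm w_i,\bm v_i;V_{3\varepsilon r}(\bm z_k))\le \widetilde{C}(\varepsilon r/K)^\theta\log(2K)\le 2\widetilde{C}K^{-\theta}\log K\,(\varepsilon r)^\theta$, after which \eqref{M0} as written closes the argument without any side condition. With that substitution your proof is line-for-line the paper's.
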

\begin{proof}
Assume that  $\overline{\mathsf{F}}_{Z,\varepsilon}(\mathcal{E},\mathcal{E}_Z^+)$ occurs and fix $k\in Z$.
According to the fact that $V_{3\varepsilon r}(\bm z_k)$ is super super good and Definition \ref{super good} (2) of super good cube (which also holds for a super super cube), we have that for any $i\in[1,3^d(b_2\alpha)^{-2d}]_\mathds{Z}$ and $j\in [1,(3K)^d]_\mathds{Z}$ (recalling $J''(\bm w''_{k,i})$ from Definition \ref{very very nice} and $J_{k,j}$ from \eqref{prob-good}, respectively),
\begin{equation*} 
\begin{split}
{\rm diam}(J''(\bm w''_{k,i});D)
&\le ((b_2\alpha)^2\varepsilon r)^\theta \widetilde{C}\sup_{t\in[0,1]}(t^\theta\log(\frac{2}{(b_2\alpha)^2t}))\\
&\le 2\widetilde{C}(b_2\alpha)^{2\theta}\log(1/(b_2\alpha)^2) (\varepsilon r)^\theta
\end{split}
\end{equation*}
and
\begin{equation*} 
    {\rm diam}(J_{k,j};D)\le K^{-\theta}(\varepsilon r)^\theta \widetilde{C}\sup_{t\in[0,1]}(t^\theta\log(2K/t))\le 2\widetilde{C}K^{-\theta}\log K (\varepsilon r)^\theta
\end{equation*}
when $K$ is sufficiently large (depending only on $\beta,d,\alpha$ and the laws of $D$ and $\widetilde{D}$ here).

From Definition \ref{Fbar} (5), we see that under $\mathcal{E}_Z^+$, $J_{k,q(k,i)}^{(2)}$ is connected to $J_{k,q(k,i+1)}^{(1)}$ by a long edge for all $k\in Z$ and  $i\in[1,3^d(b_2\alpha)^{-2d}]_\mathds{Z}$.
Thus for any $\bm u,\bm v\in V_{3\varepsilon r}(\bm z_k)$, we can construct a path from $\bm u$ to $\bm v$ as follows.
Let $i'$ and $i''$ be the subscripts such that the nice cubes $J_{k,q(k,i')}^{(1)}$ and $J_{k,q(k,i'')}^{(2)}$ satisfy
that ${\rm dist}(\bm u,J_{k,q(k,i')}^{(1)};\|\cdot\|_\infty)\leq 2(b_2\alpha)^2 \varepsilon r$ and ${\rm dist}(\bm v,J_{k,q(k,i'')}^{(2)};\|\cdot\|_\infty)\leq 2(b_2\alpha)^2 \varepsilon r$, respectively.
Without loss of generality we assume $i'\leq i''$.
Then we construct the path
by starting at $\bm u$ and proceeding to  $J_{k,q(k,i')}^{(1)}$ (along a $D$-geodesic).
This path then passes through great pairs of small cubes $(J_{k,q(k,i)}^{(1)}, J_{k,q(k,i)}^{(2)})$ along the long edges connecting $J_{k,q(k,i)}^{(2)}$ to $J_{k,q(k,i+1)}^{(1)}$ for all $i\in[i',i''-1]$, before reaching $\bm v$ at $J_{k,q(k,i'')}^{(2)}$. Finally, the path goes along a $D$-geodesic from $J_{k,q(k,i'')}^{(2)}$ to $\bm v$ (see Figure \ref{PathWithNewEdge}).
\begin{figure}[htbp]
\centering
\includegraphics[scale=0.6]{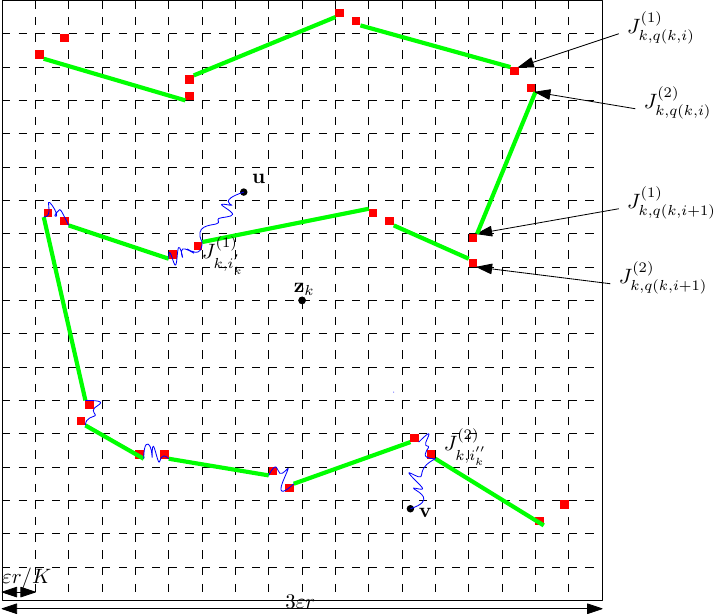}
\caption{Illustration for the proof of Lemma \ref{LemmaNewGeoShort}. The red cubes represent great pairs of cubes $(J^{(1)}_{k,q(k,i)},J^{(2)}_{k,q(k,i)})$, and the green lines indicate the edges that we have added. The path starts at the point $\bm u$ and goes along a $D$-geodesic to find the great pair of cubes $(J_{k,q(k,i')}^{(1)}, J_{k,q(k,i')}^{(2)})$ closest to $\bm u$. Then the path passes through some great pairs of cubes $(J_{k,q(k,i)}^{(1)}, J_{k,q(k,i)}^{(2)})$ and uses a new edge jumping from $J_{k,q(k,i)}^{(2)}$ to $J_{k,q(k,i+1)}^{(1)}$. Finally, the path walks from $(J_{k,q(k,i'')}^{(1)}, J_{k,q(k,i'')}^{(2)})$ (which is the great pair of cubes closest to $\bm v$) to $\bm v$ through a $D$-geodesic. The detailed relationship of inclusion between $J_{k,i}$, $J'(\bm w'_{k,i})$, and $J''(\bm w''_{k,i})$ can be found in Figure \ref{Proof733} (replacing $\bm k$, $J_{\bm k,q(\bm k,i)}$, $J'(\bm w'_{\bm k,q(\bm k,i)})$ and $J''(\bm w''_{\bm k,i})$ with $z_k$, $J_{k,i}$, $J'(\bm w'_{k,i})$, and $J''(\bm w''_{k,i})$, respectively).}
  \label{PathWithNewEdge}
\end{figure}
These facts combined with \eqref{M0} yield that
    \begin{equation*}
    \begin{split}
        D_Z^+(\bm u,\bm v)&\le 4\widetilde{C}(b_2\alpha)^{2\theta}\log(1/(b_2\alpha)^2)(\varepsilon r)^\theta+ 2\cdot 3^d(b_2\alpha)^{-2d}\widetilde{C} K^{-\theta} (\log K)(\varepsilon r)^\theta\\
        &\leq 4\widetilde{C}\left[(b_2\alpha)^{2\theta}\log(1/(b_2\alpha)^2)+ 3^d(b_2\alpha)^{-2d} K^{-\theta} (\log K)\right](\varepsilon r)^\theta\\
        &\leq  5\widetilde{C}(b_2\alpha)^{2\theta}\log((b_2\alpha)^{-2})(\varepsilon r)^\theta\quad \text{by \eqref{M0}}.
        \end{split}
    \end{equation*}
    Hence, the proof is complete.
\end{proof}

By Lemma \ref{LemmaNewGeoShort}, we can then establish an upper bound for $D_Z^+ (\bm x,\bm y)$ in terms of $D(\bm x,\bm y)$ conditioned on the event $\overline{\mathsf{F}}_{Z,\varepsilon}(\mathcal{E},\mathcal{E}_Z^+)$. This bound is valid due to the fact that the $D$-geodesic from  $\bm x$ to $\bm y$ must enter the super super good cubes $V_{3\varepsilon r}(\bm z_k)$ for $k \in Z$ (by Definition \ref{Fbar} (3)). Since a plethora of edges are added to $V_{3\varepsilon r}(\bm z_k)$, their $D_Z^+$ -distance is reduced significantly. This, in turn, enables us to identify many (with the number of $\# Z$) ``shortcuts''
along the $D$-geodesic with small $D_Z^+$ -length.
\begin{lemma}\label{Lemma4.8DG21}
     Let $Z \in { \mathcal{W}}_\varepsilon$ and assume that $\overline{\mathsf{F}}_{Z,\varepsilon}(\mathcal{E},\mathcal{E}_Z^+)$ occurs.
     Then
    \begin{equation}\label{InequalityLemma4.8DG21}
        D_Z^+ (\bm x,\bm y) \le D(\bm x,\bm y) - \left((b_2\alpha)^\theta -5\widetilde{C}(b_2\alpha)^{2\theta}\log((b_2\alpha)^{-2})\right) ( \varepsilon r)^\theta \# Z.
    \end{equation}
\end{lemma}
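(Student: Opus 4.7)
The plan is to construct a $D_Z^+$-competitor path from $\bm x$ to $\bm y$ by surgically modifying the $D$-geodesic $P$ inside each super super good cube $V_{3\varepsilon r}(\bm z_k)$, $k\in Z$, exploiting the fact that the edges added in Definition~\ref{add} produce shortcuts much shorter than what $P$ must spend traversing these cubes. First I will parameterize $P:[0,D(\bm x,\bm y)]\to\mathds{R}^d$ by $D$-length. For each $k\in Z$, since Definition~\ref{Fbar}(3) guarantees that $P$ hits $V_{\varepsilon r}(\bm z_k)$, I let $s_k$ be the first time $P$ enters $V_{\varepsilon r}(\bm z_k)$ via a long edge $\langle \bm u_{1,k},\bm v_{1,k}\rangle\in\mathcal{E}$ (so $\bm u_{1,k}\in V_{\varepsilon r}(\bm z_k)^c$ and $\bm v_{1,k}=P(s_k)$) and $t_k$ the first time after $s_k$ when $P$ leaves $V_{3\varepsilon r}(\bm z_k)$ via a long edge $\langle \bm u_{2,k},\bm v_{2,k}\rangle\in\mathcal{E}$ (so $\bm u_{2,k}=P(t_k)$ and $\bm v_{2,k}\in V_{3\varepsilon r}(\bm z_k)^c$). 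By the minimality in the choice of $t_k$, $P$ remains inside $V_{3\varepsilon r}(\bm z_k)$ throughout $[s_k,t_k]$, and since $Z\in\mathcal{W}_\varepsilon$ makes the cubes $\{V_{3\varepsilon r}(\bm z_k)\}_{k\in Z}$ pairwise disjoint, the intervals $\{[s_k,t_k]\}_{k\in Z}$ are pairwise disjoint as well.

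Two complementary estimates will then deliver a per-cube saving. On the one hand, the super super good property (Definition~\ref{superdouble}(2)) applied to the pair $(\langle\bm u_{1,k},\bm v_{1,k}\rangle,\langle\bm u_{2,k},\bm v_{2,k}\rangle)$ yields $D(\bm v_{1,k},\bm u_{2,k};V_{3\varepsilon r}(\bm z_k))\ge (b_2\alpha\varepsilon r)^\theta$. Since $P|_{[s_k,t_k]}$ is a path in $V_{3\varepsilon r}(\bm z_k)$ of $D$-length $t_k-s_k$, this forces $t_k-s_k\ge (b_2\alpha\varepsilon r)^\theta$. On the other hand, Lemma~\ref{LemmaNewGeoShort} bounds the shortcut: $D_Z^+(\bm v_{1,k},\bm u_{2,k})\le 5\widetilde{C}(b_2\alpha)^{2\theta}\log((b_2\alpha)^{-2})(\varepsilon r)^\theta$. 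Concatenating $P$ on the complement of $\bigcup_{k\in Z}[s_k,t_k]$ (which remains a valid $D_Z^+$-path because $\mathcal{E}\subset\mathcal{E}_Z^+$ forces $D_Z^+\le D$ pointwise) with $D_Z^+$-geodesics joining $\bm v_{1,k}$ to $\bm u_{2,k}$ inside each cube produces a path from $\bm x$ to $\bm y$ of $D_Z^+$-length at most $D(\bm x,\bm y)-\sum_{k\in Z}(t_k-s_k)+\sum_{k\in Z}D_Z^+(\bm v_{1,k},\bm u_{2,k})$, and substituting the two bounds above yields \eqref{InequalityLemma4.8DG21}.

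The main obstacle is rigorously justifying the existence of the long-edge entry and exit times $s_k,t_k$ for every $k\in Z$. In the LRP model, $D$-geodesics should morally be concatenations of continuous Euclidean motion with zero-cost jumps along edges of $\mathcal{E}$ (consistent with Proposition~\ref{Dxy=0}), and one must argue that the visit of $P$ to $V_{\varepsilon r}(\bm z_k)$ guaranteed by Definition~\ref{Fbar}(3) occurs through a long-edge entry and that $P$ later escapes $V_{3\varepsilon r}(\bm z_k)$ through a long-edge exit, rather than via purely continuous boundary crossings; the super super good definition, together with Definition~\ref{Fbar}(3), is designed precisely to ensure such long-edge transitions exist. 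A minor secondary point is the degenerate case where $\bm x$ or $\bm y$ happens to lie inside some $V_{3\varepsilon r}(\bm z_k)$, which can occur for at most two values of $k\in Z$ by disjointness; this is handled by treating $\bm x$ or $\bm y$ themselves as the relevant endpoint of the corresponding segment, leaving the bound intact.
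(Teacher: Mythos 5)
Your argument follows the same structure as the paper's: identify disjoint traversal intervals $[s_k,t_k]$ inside each super super good cube, obtain a per-cube lower bound $t_k-s_k\ge (b_2\alpha\varepsilon r)^\theta$ from Definition~\ref{superdouble}(2), a per-cube upper bound on the shortcut from Lemma~\ref{LemmaNewGeoShort}, and concatenate to form a $D_Z^+$-competitor via the triangle inequality together with $D_Z^+\le D$. The arithmetic in your final bound matches \eqref{InequalityLemma4.8DG21}.

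The one place you diverge is the definition of $s_k,t_k$: you define them to be the first entry into $V_{\varepsilon r}(\bm z_k)$ through a long edge and the first departure from $V_{3\varepsilon r}(\bm z_k)$ through a long edge, whereas the paper simply takes $s_k$ to be the first hitting time of $V_{\varepsilon r}(\bm z_k)$ (guaranteed to exist by Definition~\ref{Fbar}(3)) and $t_k$ the first subsequent exit time from $V_{3\varepsilon r}(\bm z_k)$; both are always well-defined and require no existence argument. This matters because the obstacle you flag — having to \emph{justify} that a long-edge entry and a long-edge exit exist — is an artifact of your restricted choice of $s_k,t_k$; the paper's definition eliminates that step. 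Of course, once the paper invokes Definition~\ref{superdouble}(2) to assert $D(P(s_k),P(t_k))\ge (b_2\alpha\varepsilon r)^\theta$, it is implicitly identifying $P(s_k)$ and $P(t_k)$ with the inner endpoints $\bm v_1,\bm u_2$ of a $V_{3\varepsilon r}(\bm z_k)$-special pair of edges, so the underlying observation you raise (that the traversal should be book-ended by long edges, not purely Euclidean boundary crossings) is present in the paper too, just left implicit. So: same proof, and your extra layer of care correctly identifies the one genuinely delicate step, but the cleaner way to set it up is the paper's first-hit/first-exit parameterization followed by a direct appeal to Definition~\ref{superdouble}(2), rather than building the long-edge requirement into the definition of $s_k,t_k$ and then having to prove those times exist.
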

\begin{proof} Recall that $P$ is the fixed $D$-geodesic from $\bm x$ and $\bm y$. By Definition \ref{Fbar} (3), we see that $P$ hits $V_{\varepsilon r}(\bm z_k)$ for all $k\in Z$. Let $s_k$ be the first time that $P$ first hits $V_{\varepsilon r}(\bm z_k)$, and let $t_k$ be the first time after $s_k$ that $P$ exits  $V_{3\varepsilon r}(\bm z_k)$ (see Figure \ref{Proof725}).
    \begin{figure}[htbp]
\centering
\includegraphics[scale=0.8]{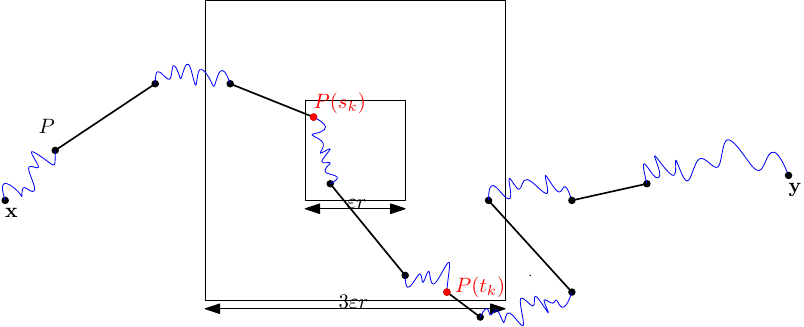}
\caption{Illustration for the time $s_k$ and $t_k$. Specifically, $s_k$ is the first time that $P$ first hits the cube $V_{\varepsilon r}(\bm z_k)$, while $t_k$ is the first time after $s_k$ that  $P$ exits the cubes $V_{3\varepsilon r}(\bm z_k)$.}
\label{Proof725}
\end{figure}
    Then by $D_Z^+\leq D$,

    \begin{equation}\label{InequalityLem4.8Shortcut}
        \begin{split}
            D_Z^+(\bm x,\bm y)
                &\le \mathrm{len}\left(P-\bigcup_{k\in Z}P([s_k,t_k]);D_Z^+\right)+\sum_{k\in Z}D_Z^+(P(s_k),P(t_k)) \\
                &\leq \mathrm{len}\left(P-\bigcup_{k\in Z}P([s_k,t_k]);D\right)+\sum_{k\in Z}D(P(s_k),P(t_k))\\&~~-\sum_{k\in Z}\left[D(P(s_k),P(t_k))-D_Z^+(P(s_k),P(t_k))\right] \\
                &= D(\bm x,\bm y)-\sum_{k\in Z}\left[D(P(s_k),P(t_k))-D_Z^+(P(s_k),P(t_k))\right].
        \end{split}
    \end{equation}
   On the one hand, by  Definition \ref{superdouble} (2) of super super good cube,
    \begin{equation}\label{InequalityLem4.8DLong}
        |P(s_k)-P(t_k)|\geq \alpha\varepsilon r\quad \text{and}\quad D(P(s_k),P(t_k))\ge (b_2\alpha)^\theta (\varepsilon r)^\theta.
    \end{equation}
    On the other hand, by Lemma \ref{LemmaNewGeoShort}, we get that
\begin{equation*}
\begin{split}
        D_Z^+(P(s_k),P(t_k))
        &\leq 5\widetilde{C}(b_2\alpha)^{2\theta}\log((b_2\alpha)^{-2})(\varepsilon r)^\theta.
        \end{split}
\end{equation*}

    Combining this with (\ref{InequalityLem4.8Shortcut}) and (\ref{InequalityLem4.8DLong}), we get (\ref{InequalityLemma4.8DG21}).
    Note that
    $$(b_2\alpha)^\theta- 5\widetilde{C}(b_2\alpha)^{2\theta}\log((b_2\alpha)^{-2})>0
    $$
    due to \eqref{M0new}.
\end{proof}

The following lemma plays an important role in proving Proposition \ref{Prop4.5DG21}, as it enables us to generate configurations $Z$ for which $\mathsf{F}_{Z,\varepsilon}(\mathcal{E},\mathcal{E}_Z^+)$, instead of just $\overline{\mathsf{F}}_{Z,\varepsilon}(\mathcal{E},\mathcal{E}_Z^+)$, occurs.
\begin{lemma}\label{Lemma4.13DG21}
    There is a constant $c_4 > 0$, depending only on the parameters and the laws of $D$ and $\widetilde{D}$, such that the following is true. Let $ Z \in {\mathcal{W}}_\varepsilon$, and assume that $\overline{\mathsf{F}}_{Z,\varepsilon}(\mathcal{E},\mathcal{E}_Z^+)$ occurs. There exists $Z^\prime \subset Z$ such that $\mathsf{F}_{Z^\prime,\varepsilon}(\mathcal{E},\mathcal{E}_{Z'}^+)$ occurs and $\# Z^\prime \ge c_4 \# Z$.
\end{lemma}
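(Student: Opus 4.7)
I would prove this by an iterative pruning procedure. Set $Z^{(0)} := Z$ and, inductively, given $Z^{(j)}$, let $P_{Z^{(j)}}^+$ be the fixed $D_{Z^{(j)}}^+$-geodesic from $\bm x$ to $\bm y$ and define
\[
B^{(j)} := \bigl\{\, k \in Z^{(j)} : P_{Z^{(j)}}^+ \text{ passes through no great pair in } V_{\varepsilon r}(\bm z_k)\,\bigr\}.
\]
If $B^{(j)} = \emptyset$, terminate and set $Z' := Z^{(j)}$; otherwise let $Z^{(j+1)} := Z^{(j)} \setminus B^{(j)}$ and continue. Since $\#Z^{(j)}$ strictly decreases until termination, the procedure stops in finitely many steps. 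Conditions (1)--(5) of $\mathsf{F}_{Z',\varepsilon}(\mathcal{E},\mathcal{E}_{Z'}^+)$ are inherited from $\overline{\mathsf{F}}_{Z,\varepsilon}(\mathcal{E},\mathcal{E}_Z^+)$ because they only reference indices in $Z' \subset Z$ (for condition (4) one uses $\mathcal{E}_{Z'}^+ \subset \mathcal{E}_Z^+$; for condition (5) the required added edges at indices $k \in Z'$ are still present in $\mathcal{E}_{Z'}^+$), while condition (6) holds by the stopping criterion.

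The quantitative core is the per-step estimate
\[
D_{Z^{(j+1)}}^+(\bm x,\bm y)\;\leq\; D_{Z^{(j)}}^+(\bm x,\bm y)\;+\;\widehat C\,(\varepsilon r)^\theta\,\#B^{(j)},
\]
for a constant $\widehat C$ depending only on the parameters and the laws of $D$ and $\widetilde D$. The key observation is that for $k \in B^{(j)}$, the geodesic $P_{Z^{(j)}}^+$ uses at most one added edge inside $V_{\varepsilon r}(\bm z_k)$: using two consecutive added edges of the form $\langle\cdot,J^{(1)}_{k,q(k,i)}\rangle$ and $\langle J^{(2)}_{k,q(k,i)},\cdot\rangle$ would force $P_{Z^{(j)}}^+$ to touch both $J^{(1)}_{k,q(k,i)}$ and $J^{(2)}_{k,q(k,i)}$, contradicting $k \in B^{(j)}$. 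One may therefore reroute $P_{Z^{(j)}}^+$ around the (at most one) used added edge inside each $V_{\varepsilon r}(\bm z_k)$, $k \in B^{(j)}$, by substituting a $D$-geodesic inside $V_{3\varepsilon r}(\bm z_k)$, whose length is uniformly bounded by Definition \ref{super good}(2); the resulting path is admissible in $D_{Z^{(j+1)}}^+$. Telescoping gives
\[
D_{Z'}^+(\bm x,\bm y)\;\leq\; D_Z^+(\bm x,\bm y)\;+\;\widehat C\,(\varepsilon r)^\theta\,(\#Z-\#Z').
\]

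Combining this telescoped bound with Lemma \ref{Lemma4.8DG21} applied to $Z$, which gives $D_Z^+(\bm x,\bm y) \leq D(\bm x,\bm y) - c(\varepsilon r)^\theta \#Z$ with $c := (b_2\alpha)^\theta - 5\widetilde C(b_2\alpha)^{2\theta}\log((b_2\alpha)^{-2})>0$, yields $D_{Z'}^+(\bm x,\bm y) \leq D(\bm x,\bm y) - c(\varepsilon r)^\theta \#Z + \widehat C(\varepsilon r)^\theta(\#Z - \#Z')$. A complementary lower bound on $D_{Z'}^+(\bm x,\bm y)$ comes from $P_{Z'}^+$, which by condition (6) of $\mathsf{F}_{Z',\varepsilon}$ passes through one great pair per $k \in Z'$: splitting $P_{Z'}^+$ into segments inside used great pairs (where Definition \ref{interval-good}(2) gives $\widetilde D(J^{(1)},J^{(2)}) \leq c' D(J^{(1)},J^{(2)})$) and complementary segments (where the bilipschitz bound $\widetilde D_{Z'}^+\leq C_* D_{Z'}^+$ applies), then using $\widetilde D_{Z'}^+ \leq \widetilde D$ together with $\overline{\mathsf{F}}_{Z,\varepsilon}(1)$, $\widetilde D(\bm x,\bm y) \geq C_* D(\bm x,\bm y) - (\varepsilon r)^\theta$, produces an inequality that links $D_{Z'}^+(\bm x,\bm y)$ and $\#Z'$ through a strictly positive gain $(C_*-c')$ per used great pair (itself at least $b_1(\alpha\varepsilon r/K)^\theta$ by Definition \ref{interval-good}(3)). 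Algebraic combination of the two inequalities produces $\#Z - \#Z' \leq (1-c_4)\#Z$ for an appropriate constant $c_4 \in (0,1)$ depending only on the parameters and the laws of $D$ and $\widetilde D$.

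\textbf{Main obstacle.} The delicate step is the final algebraic balance: the ``gain'' $(C_*-c')b_1(\alpha/K)^\theta$ per great pair used by $P_{Z'}^+$ must dominate the ``cost'' $\widehat C$ of the per-step reroute, so that the extracted fraction $c_4$ is strictly positive and independent of $\varepsilon$ and $r$. This forces one to combine carefully the scale separation between $\varepsilon r/K$ (great-pair scale) and $\varepsilon r$ (super super good scale) with the small-$(b_2\alpha)$ inequalities \eqref{M0new} and \eqref{b2alpha} used elsewhere in the section, which were precisely tuned to guarantee $c>0$ in Lemma \ref{Lemma4.8DG21}. Verifying that the parameter choices of Section~\ref{def-4nice} also make the great-pair gain exceed the rerouting cost---thereby yielding a uniform $c_4$---is the chief technical hurdle.
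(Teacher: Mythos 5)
Your overall architecture — iteratively prune ``bad'' indices, bound the per-step increase in $D^+(\bm x,\bm y)$, telescope, and combine with Lemma \ref{Lemma4.8DG21} — coincides with the paper's. The conclusion that conditions (1)--(5) of $\mathsf F_{Z',\varepsilon}$ are inherited and that the iteration terminates with (6) is correct. But there is a genuine gap in the justification of the per-step estimate, and your Step 3 takes a route that the paper avoids and that, as stated, runs in the wrong direction.

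\textbf{The per-step gap.} You assert that for $k\in B^{(j)}$ the geodesic $P_{Z^{(j)}}^+$ uses at most one added edge inside $V_{3\varepsilon r}(\bm z_k)$, on the ground that using two added edges sandwiching the same great pair would force the geodesic through that pair. The observation about two consecutive (sandwiching) edges is correct, but it does not imply ``at most one edge.'' The geodesic may enter $J^{(1)}_{k,q(k,i+1)}$ via the edge $\langle J^{(2)}_{k,q(k,i)},J^{(1)}_{k,q(k,i+1)}\rangle$, exit the surrounding very nice cube $J'(\bm w'_{k,q(k,i+1)})$ on foot, wander, and later use $\langle J^{(2)}_{k,q(k,j)},J^{(1)}_{k,q(k,j+1)}\rangle$ for some $j$ with $|j-(i+1)|\ge 2$, never touching both halves of any single great pair. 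Without an a priori bound on the number $\kappa$ of added edges used, the rerouting cost is $\kappa$ times the per-edge cost, and the claimed bound $\widehat C(\varepsilon r)^\theta$ per removed index does not follow. The paper closes this with Lemma \ref{upp-strange-Ik}: instead of $\kappa\le 1$, it counts the number $\kappa_l$ of \emph{distinct indices} $i$ whose added edge is used in each excursion, observes (via Definition \ref{very nice}(2), since no great pair is traversed, the last such use forces exiting the very nice cube) that $D_Z^+(P_Z^+(s_l),P_Z^+(t_l))\ge\kappa_l(b_2\alpha)^{2.6\theta}(\varepsilon r)^\theta$, and invokes the $D_Z^+$-diameter bound of Lemma \ref{LemmaNewGeoShort} to obtain $\sum_l\kappa_l\le 5\widetilde C(b_2\alpha)^{-0.6\theta}\log(1/(b_2\alpha)^2)$. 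That is the missing ingredient.

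\textbf{Step 3.} The paper never invokes $\widetilde D$, $C_*$, or the $(C_*-c')$ gain here; it simply writes $D(\bm x,\bm y)\le D_{Z_m}^+(\bm x,\bm y)+\sum_{k\in Z_m}\mathrm{diam}(V_{3\varepsilon r}(\bm z_k);D)\le D_{Z_m}^+(\bm x,\bm y)+C_\theta\widetilde C(\varepsilon r)^\theta\#Z_m$ via Definition \ref{super good}(2), then combines this with the telescoped Step 2 bound to read off $\#Z_m\ge c_4\#Z$ directly. Your route through great-pair shortcuts gives an \emph{upper} bound on $\widetilde D_{Z'}^+(\bm x,\bm y)$, while condition (1) lower-bounds $\widetilde D(\bm x,\bm y)$; but $\widetilde D_{Z'}^+\le\widetilde D$ (adding edges shortens distances), so the two bounds cannot be chained in the direction you need. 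That machinery is precisely what the paper deploys in Lemma \ref{Lemma4.16DG21}, where one \emph{wants} an upper bound on $\#Z'$, not a lower one. Using the paper's Step 3 resolves your ``main obstacle'' outright: the parameter inequality \eqref{M0new} is exactly what guarantees $c_4>0$, and the great-pair scale $\varepsilon r/K$ plays no role at this step.
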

\begin{proof}
    Step 1: iteratively removing ``bad'' points. It is immediate from the definition of $\overline{\mathsf{F}}_{Z,\varepsilon}(\mathcal{E},\mathcal{E}_Z^+)$ that if $\overline{\mathsf{F}}_{Z,\varepsilon}(\mathcal{E},\mathcal{E}_Z^+)$ occurs and $Z^\prime \subset Z$ is non-empty, then $\overline{\mathsf{F}}_{Z^\prime,\varepsilon}(\mathcal{E},\mathcal{E}_{Z'}^+)$ occurs. So, we need to produce a set $Z^\prime\subset Z$
    such that $\# Z^\prime$ is at least a constant times $\# Z$ and Definition \ref{F_Z} (6) of $\mathsf{F}_{Z^\prime,\varepsilon}(\mathcal{E},\mathcal{E}_{Z'}^+)$ holds.

    Recall that $P_Z^{+}$ is the fixed $D_Z^{+}$-geodesic from $\bm x$ to $\bm y$. We will construct $Z^\prime$ by iteratively removing the ``bad'' points $k \in Z$ such that $P_{Z}^+$ does not use any edge in $\mathcal{E}_k^+ \setminus\mathcal{E}$.
    To this end, let $Z_0 := Z$. Inductively, suppose that $m \in \mathds{N}$ and $Z_m \subset Z$
    has been defined. Let $P_{Z_m}^+$ be the $D_{Z_m}^+$-geodesic from $\bm x$ to $\bm y$ and let $Z_{m+1}$ be the set of $k \in Z_m$ such that $P_{Z_m}^+$ passes through some great pairs of cubes $(J_{k,q(k,i)}^{(1)},J_{k,q(k,i)}^{(2)})$ in $V_{3\varepsilon r}(\bm z_k)$.

    If $Z_{m+1} = Z_m$, then the event $\mathsf{F}_{Z_m,\varepsilon}(\mathcal{E},\mathcal{E}_{Z_m}^+)$ occurs. So, to prove the lemma, it suffices to show that the above procedure stabilizes before $\# Z_m$ gets too much smaller
    than $\# Z$. More precisely, we will show that there exists $c_4 > 0$ as in the lemma statement such that
    \begin{equation}\label{LowerBoundGoalLemma4.13}
        \# Z_m \ge c_4\# Z,\quad \forall m \in \mathds{N}.
    \end{equation}

    Since $Z_{m+1} \subset Z_m$ for each $m \in \mathds{N}$ and $Z_0$ is finite, it follows that there must be some $m \in \mathds{N}$
    such that $Z_m = Z_{m+1}$. We know that $\mathsf{F}_{Z_m,\varepsilon}(\mathcal{E},\mathcal{E}_{Z_m}^+)$ occurs for any such $m$, so (\ref{LowerBoundGoalLemma4.13}) implies the lemma statement.

    It remains to prove \eqref{LowerBoundGoalLemma4.13}. The idea of the proof is as follows. At each step of our iterative
    procedure, we only remove points $k \in Z_m$ for which $P_{Z_m}^+\cap{V_{3\varepsilon r}(\bm z_k)}$ 
    is small, in a certain sense. Using this, we can show that $D_{Z_{m+1}}^+(\bm x,\bm y)$ is not too much bigger than $D_{Z_m}^+(\bm x,\bm y)$
    (see \eqref{InequalityDmNotTooBiggerDm+1} below). Iterating this leads to an upper bound for $D_{Z_m}^+(\bm x,\bm y)$ in terms of $D_Z^+(\bm x,\bm y)$
    (see \eqref{InequalityDNotTooBiggerDm} below). We then use the fact that $D_{Z}^+(\bm x,\bm y)$ has to be substantially smaller than
    $D(\bm x,\bm y)$ (Lemma \ref{Lemma4.8DG21}) together with our upper bound for the amount of time that $P_{Z_m}^+$ spends
    in each of the $V_{3\varepsilon r}(\bm z_k)$ (Definition \ref{superdouble} (2) of super super good cubes), in order to obtain (\ref{LowerBoundGoalLemma4.13}).

    Step 2: Comparison of $D_{Z_m}^+(\bm x,\bm y)$ and $D(\bm x,\bm y)$. For $k\in Z_m$,  let $\sigma_k$  be the time $P_{Z_m}^+$ first hits $V_{3\varepsilon r}(\bm z_k)$ and let $\tau_k$ be the time $P_{Z_m}^+$ last leaves $V_{3\varepsilon r}(\bm z_k)$.
    Divide $P_{Z_m}^+\cap V_{3\varepsilon r}(\bm z_k)$ into $L_k$ distinct excursions $P_{Z_m}^+([s_l,t_l])$, ensuring that each excursion is contained within $V_{3\varepsilon r}(\bm z_k)$.
    To be precise, we define $s_{k,1}<t_{k,1}<\cdots<s_{k,L_k}<t_{k,L_k}$ iteratively. Let $t_{k,0}=0$. If we have defined $t_{k,l}$ for $l\in\mathds{N}$, then let $s_{k,l+1}$ be the time $P_{Z_m}^{+}$ first exits $V_{3\varepsilon r}(\bm z_k)$ after $t_{k,l}$. In particular, if the time does not exist, we let $s_{k,l+1}=\infty$. Then let $t_{k,l+1}$ be the time $P_{Z_m}^{+}$ first hits $V_{3\varepsilon r}(\bm z_k)$ after $s_{k,l+1}$. Let $L_k$ be the number of finite $s_{k,l}$'s.
    Since $P_{Z_m}^+$ does not pass through any great pair of small cubes $(J_{k,q(k,i)}^{(1)},J_{k,q(k,i)}^{(2)})$ in $V_{3\varepsilon r}(\bm z_k)$ for $k\in Z_m\setminus Z_{m+1}$,
    we get from Lemma \ref{upp-strange-Ik} below that

    \begin{equation}\label{uppD_Z_m+1}
    \begin{split}
         \sum_{l=1}^{L_k} D_{Z_{m+1}}^+(P_{Z_m}^+(s_{k,l}),P_{Z_m}^+(t_{k,l}))&\leq\sum_{l=1}^{L_k}   D(P_{Z_m}^+(s_{k,l}),P_{Z_m}^+(t_{k,l}))\quad \quad \text{(since $D_{Z_{m+1}}^+(\cdot,\cdot)\leq D(\cdot,\cdot)$)}\\
         \leq \sum_{l=1}^{L_k}D_{Z_m}^+(P_{Z_m}^+(s_{k,l}),P_{Z_m}^+(t_{k,l}))& + 10\cdot 2^\theta\widetilde{C}^2 (b_2\alpha)^{1.4\theta}(\log (2(b_2\alpha)^{-2}))^2(\varepsilon r)^\theta\quad  \text{(by Lemma \ref{upp-strange-Ik} below)}
         \end{split}
    \end{equation}
    for any $k\in Z_m\setminus Z_{m+1}$. 
    Then we get
    \begin{equation}\label{InequalityDmNotTooBiggerDm+1}
        \begin{aligned}
            &D_{Z_{m+1}}^+(\bm x,\bm y)\\
            &\le D_{Z_m}^+(\bm x,\bm y)+\sum_{k\in Z_m \setminus Z_{m+1}}\sum_{l=1}^{L_k} \left[D_{Z_{m+1}}^+(P_{Z_m}^+(s_l),P_{Z_m}^+(t_l))-D_{Z_m}^+(P_{Z_m}^+(s_l),P_{Z_m}^+(t_l))\right]\\
            &\le D_{Z_m}^+(\bm x,\bm y)+  10\cdot 2^\theta\widetilde{C}^2 (b_2\alpha)^{1.4\theta}(\log (2(b_2\alpha)^{-2}))^2(\varepsilon r)^\theta(\# Z_m - \# Z_{m+1}),
        \end{aligned}
    \end{equation}
    where the last inequality used \eqref{uppD_Z_m+1}.
    Similarly, for $n=0,1,\cdots,m-1$, we have
    \begin{equation}\label{DmNotTooBigIterate}
        D_{Z_{n+1}}^+(\bm x,\bm y)\le D_{Z_n}^+(\bm x,\bm y)+  10\cdot 2^\theta\widetilde{C}^2 (b_2\alpha)^{1.4\theta}(\log (2(b_2\alpha)^{-2}))^2(\varepsilon r)^\theta(\# Z_n - \# Z_{n+1}).
    \end{equation}
    Summing \eqref{DmNotTooBigIterate} over $n\in[0,m-1]_\mathds{Z}$, we get that
    \begin{equation}\label{DZmDZ}
        \begin{aligned}
            D_{Z_m}^+(\bm x,\bm y)&\le D_Z^+(\bm x,\bm y)+ 10\cdot 2^\theta\widetilde{C}^2 (b_2\alpha)^{1.4\theta}(\log (2(b_2\alpha)^{-2}))^2(\varepsilon r)^\theta(\# Z - \# Z_m)\\
            &\le D_Z^+(\bm x,\bm y)+ 10\cdot 2^\theta\widetilde{C}^2 (b_2\alpha)^{1.4\theta}(\log (2(b_2\alpha)^{-2}))^2(\varepsilon r)^\theta\# Z.
        \end{aligned}
    \end{equation}
    Here the last inequality holds because $\# Z_m\geq 0$. Then applying Lemma \ref{Lemma4.8DG21} to $Z$ gives that
    \begin{equation}\label{applyLem48}
        D_Z^+(\bm x,\bm y)\leq D(\bm x,\bm y)-((b_2\alpha)^\theta-5\cdot 2^\theta\widetilde{C}^2 (b_2\alpha)^{1.4\theta}(\log (2(b_2\alpha)^{-2}))^2) \# Z.
    \end{equation}
    Now we plug \eqref{applyLem48} into \eqref{DZmDZ} and  get that
    \begin{equation}\label{InequalityDNotTooBiggerDm}
            D_{Z_m}^+(\bm x,\bm y)\le  D(\bm x,\bm y)-\left((b_2\alpha)^\theta-  15\cdot 2^\theta\widetilde{C}^2 (b_2\alpha)^{1.4\theta}(\log (2(b_2\alpha)^{-2}))^2\right)(\varepsilon r)^\theta\# Z.
    \end{equation}


    Step 3: conclusion. Note that by Definition \ref{super good} (2) of super good cubes again,
    \begin{equation*}
        \mathrm{diam}(V_{3\varepsilon r}(\bm z_k);D)\le\widetilde{C}(3\varepsilon r)^\theta\sup_{t\in[0,3]}\left(t^\theta\log(2/t)\right)\le C_\theta \widetilde{C}(\varepsilon r)^\theta.
    \end{equation*}
    Here $C_\theta=3^\theta\sup_{t\in[0,3]}(t^\theta\log(2/t))>0$ is a constant only depending on $\theta(\beta,d)$. Then we get
    \begin{equation}\label{Inequality4.5Step3DmNotSmall}
        D(\bm x,\bm y)\le D_{Z_m}^+(\bm x,\bm y)+\sum_{k\in Z_m}\mathrm{diam}(V_{3\varepsilon r}(\bm z_k);D)\le D_{Z_m}^+(\bm x,\bm y)+ C_\theta\widetilde{C}(\varepsilon r)^\theta\# Z_m.
    \end{equation}
    Then combine (\ref{InequalityDNotTooBiggerDm}) and (\ref{Inequality4.5Step3DmNotSmall}) and we get
    \begin{equation*}
    \begin{split}
        &D(\bm x,\bm y)- C_\theta \widetilde{C}(\varepsilon r)^\theta\# Z_m\\
        &\le D(\bm x,\bm y)-\left((b_2\alpha)^\theta-  15\cdot 2^\theta\widetilde{C}^2 (b_2\alpha)^{1.4\theta}(\log (2(b_2\alpha)^{-2}))^2\right)(\varepsilon r)^\theta\# Z.
        \end{split}
    \end{equation*}
     Since $\alpha,b_2$ and $\widetilde{C}$ satisfy \eqref{M0new}, we get
    \begin{equation*}
        \# Z_m\ge c_4\# Z
    \end{equation*}
    for $c_4:=\left((b_2\alpha)^\theta-  15\cdot 2^\theta\widetilde{C}^2 (b_2\alpha)^{1.4\theta}(\log (2(b_2\alpha)^{-2}))^2\right)/(C_\theta\widetilde{C})>0$.
\end{proof}

\begin{lemma}\label{upp-strange-Ik}
      Assume that $\overline{\mathsf{F}}_{Z,\varepsilon}(\mathcal{E},\mathcal{E}_Z^+)$ occurs. For any $k\in Z$, let $\{P_{Z}^+([s_l,t_l])\}_{l=1}^L$ be all the $L$ excursions of $P_Z^+$ in $V_{3\varepsilon r}(\bm z_k)$ {\rm(}the dependence on $k$ is suppressed, see the first paragraph in step 2 in the proof of Lemma \ref{Lemma4.13DG21}{\rm)}.
      If $P_{Z}^{+}$ does not pass through any great pair of $(J^{(1)}_{k,q(k,i)},J^{(2)}_{k,q(k,i)})$ in $V_{3\varepsilon r}(\bm z_k)$, then
        \begin{equation*}\label{upp-strange-Ik-ineq}
            \sum_{l=1}^L D(P_Z^+(s_l),P_Z^+(t_l))\leq \sum_{l=1}^L D_Z^+(P_Z^+(s_l),P_Z^+(t_l))+5\cdot 2^\theta\widetilde{C}^2 (b_2\alpha)^{1.4\theta}(\log (1/(2(b_2\alpha)^2)))^2(\varepsilon r)^\theta.
        \end{equation*}
    \end{lemma}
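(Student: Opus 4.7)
The plan is to relate $\sum_l D(P_Z^+(s_l), P_Z^+(t_l))$ to $\sum_l D_Z^+(P_Z^+(s_l), P_Z^+(t_l))$ by accounting precisely for the shortcut edges (those in $\mathcal{E}_Z^+\setminus\mathcal{E}=\widetilde{\mathcal{E}}\cap\Omega_Z$) that $P_Z^+$ actually traverses inside the excursions. Let $\{\langle \bm u_j,\bm v_j\rangle\}_{j=1}^N$ enumerate these shortcut edges; by Definition \ref{add}, each has $\bm u_j\in J^{(2)}_{k,q(k,i_j)}$ and $\bm v_j\in J^{(1)}_{k,q(k,i_j+1)}$ for some $i_j$. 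Since the restriction $P_Z^+|_{[s_l,t_l]}$ is itself a $D_Z^+$-geodesic, $D_Z^+(P_Z^+(s_l),P_Z^+(t_l))$ equals its $D_Z^+$-length. Building a $D$-connection from $P_Z^+(s_l)$ to $P_Z^+(t_l)$ by following $P_Z^+$ on the $\mathcal{E}$-segments (which carry the same length in $D$ and in $D_Z^+$) and replacing each shortcut jump $\bm u_j\to\bm v_j$ by a $D$-geodesic of length $D(\bm u_j,\bm v_j)$ gives the key inequality
\begin{equation*}
\sum_{l=1}^L D(P_Z^+(s_l),P_Z^+(t_l))\le \sum_{l=1}^L D_Z^+(P_Z^+(s_l),P_Z^+(t_l))+\sum_{j=1}^N D(\bm u_j,\bm v_j).
\end{equation*}

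Next I would bound $N$ by the constant $M_0$. The crucial observation is that every shortcut edge used by $P_Z^+$ has scope at least $(b_2\alpha)^{2.5}\varepsilon r/2$, and therefore belongs to $\Lambda_k$ (cf.\ \eqref{detLambda}). This follows from tracking the nested scales: by Definition \ref{very nice} (1), the great pair $(J^{(1)},J^{(2)})$ sits inside $V_{(b_2\alpha)^{2.6}\varepsilon r/2}(\bm w'_{k,q(k,i)})$, while $\bm w'_{k,q(k,i)}$ lies in a very nice cube of side $(b_2\alpha)^{2.5}\varepsilon r$ inside $J''(\bm w''_{k,i})$ of side $(b_2\alpha)^2\varepsilon r$. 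Since consecutive cubes $J''(\bm w''_{k,i})$ and $J''(\bm w''_{k,i+1})$ are adjacent, the centers $\bm w'_{k,q(k,i)}$ and $\bm w'_{k,q(k,i+1)}$ lie at $\ell^\infty$-distance at least $(b_2\alpha)^{2.5}\varepsilon r$ from each other, and the $(b_2\alpha)^{2.6}\varepsilon r$ fluctuations around them are negligible for small $b_2\alpha$. Definition \ref{Fbar} (4) then yields $N\le |\mathcal{E}_Z^+\cap\Lambda_k|\le M_0$.

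The third step is to bound each $D(\bm u_j,\bm v_j)$. A symmetric argument gives the upper bound $\|\bm u_j-\bm v_j\|_\infty\le 2(b_2\alpha)^2\varepsilon r$, since $\bm u_j$ and $\bm v_j$ lie in adjacent $J''$-cubes of side $(b_2\alpha)^2\varepsilon r$. Invoking the H\"older regularity of $D$ inside a super good cube (Definition \ref{super good} (2)) gives
\begin{equation*}
D(\bm u_j,\bm v_j)\le D(\bm u_j,\bm v_j;V_{3\varepsilon r}(\bm z_k))\le \widetilde{C}\bigl(2(b_2\alpha)^2\varepsilon r\bigr)^\theta\log\frac{2\varepsilon r}{2(b_2\alpha)^2\varepsilon r}\le 2^\theta\widetilde{C}(b_2\alpha)^{2\theta}(\varepsilon r)^\theta\log\frac{1}{2(b_2\alpha)^2}.
\end{equation*}
Summing over $j\le N\le M_0$ and using that $M_0(b_2\alpha)^{0.6\theta}$ is bounded by $5\widetilde{C}\log(1/(2(b_2\alpha)^2))$ for small enough $b_2\alpha$ (this follows from the parameter choices underlying \eqref{M0new}), one arrives at the claimed bound $5\cdot 2^\theta\widetilde{C}^2(b_2\alpha)^{1.4\theta}(\log(1/(2(b_2\alpha)^2)))^2(\varepsilon r)^\theta$.

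The main obstacle is the step bounding $N$: the hypothesis that $P_Z^+$ avoids every great pair does not itself cap the number of shortcuts, so one must exploit the geometric separation between great pairs (forcing every shortcut to lie in $\Lambda_k$) together with Definition \ref{Fbar} (4) to get the uniform bound $N\le M_0$. Everything else is essentially bookkeeping with the scales $(b_2\alpha)^{2.5}$, $(b_2\alpha)^2$, and verifying that the constants are absorbed by the slack built into the target (the $\widetilde{C}^2$ and squared-log factors). The hypothesis about not passing through any great pair is not directly used in the present lemma, but rather sets the scene in which the lemma is invoked within the proof of Lemma \ref{Lemma4.13DG21}.
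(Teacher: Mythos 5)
Your surgical step—replacing each shortcut jump by a $D$-geodesic and summing—is fine, and it matches the paper's first step in spirit. The genuine gap is in your bound on the number $N$ of shortcuts used. You appeal to Definition \ref{Fbar} (4) to get $N\le|\mathcal{E}_Z^+\cap\Lambda_k|\le M_0$, and then claim $M_0(b_2\alpha)^{0.6\theta}\le 5\widetilde{C}\log(1/(2(b_2\alpha)^2))$. This last inequality is false. The constant $M_0$ is defined in \eqref{M1-1} as a high quantile of a Poisson random variable with parameter $(\beta+\widetilde{\beta})\mu(\Lambda_k)$, where $\mu(\Lambda_k)=\iint_{\Lambda_k}|\bm x-\bm y|^{-2d}\,\d\bm x\,\d\bm y\asymp(b_2\alpha)^{-2.5d}$ (the paper's \eqref{LambdakNotlarge} even records an upper bound of order $(b_2\alpha)^{-5d}$). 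Since $d\ge 1$ and $\theta\in(0,1)$, the quantity $(b_2\alpha)^{-2.5d}$ dwarfs $(b_2\alpha)^{-0.6\theta}\log(1/(2(b_2\alpha)^2))$ as $b_2\alpha\to 0$, so $M_0$ cannot be absorbed into the target constant; nothing in the parameter choices underlying \eqref{M0new} fixes this.

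Your closing remark that the hypothesis ``$P_Z^+$ does not pass through any great pair'' is not directly used in the lemma is precisely where the argument breaks. That hypothesis is the load-bearing ingredient: it forces every shortcut to be followed by a costly escape. Concretely, define $\kappa_l$ to be the number of indices $i$ for which the $l$-th excursion uses at least one new edge joining $J^{(2)}_{k,q(k,i)}$ and $J^{(1)}_{k,q(k,i+1)}$. Because the geodesic never passes through a great pair, after the last such edge at index $i$ it must exit the enclosing very nice cube without using a new edge, and Definition \ref{very nice} (2) prices this exit at $D_Z^+$-length at least $(b_2\alpha)^{2.6\theta}(\varepsilon r)^\theta$. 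Hence $D_Z^+(P_Z^+(s_l),P_Z^+(t_l))\ge\kappa_l(b_2\alpha)^{2.6\theta}(\varepsilon r)^\theta$. Combining this with the upper bound ${\rm diam}(V_{3\varepsilon r}(\bm z_k);D_Z^+)\le 5\widetilde{C}(b_2\alpha)^{2\theta}\log((b_2\alpha)^{-2})(\varepsilon r)^\theta$ from Lemma \ref{LemmaNewGeoShort} gives $\sum_l\kappa_l\le 5\widetilde{C}(b_2\alpha)^{-0.6\theta}\log((b_2\alpha)^{-2})$. This is the tight count of shortcuts you need, and it is unavailable if you ignore the great-pair hypothesis; the crude bound via $M_0$ simply will not do.
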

\begin{proof}
    For each $l\in [1,L]_\mathds{Z}$, let $I_l$ be the set of subscripts $i$ such that there exists at least one newly-added edge in $P_Z^+([s_l, t_l])$ connecting $J_{k,q(k,i)}^{(2)}$ to $J_{k,q(k,i+1)}^{(1)}$. Denote $\kappa_l=\#I_l$ (see Figure \ref{Proof727-0}).
        \begin{figure}[htbp]
\centering
\includegraphics[scale=0.6]{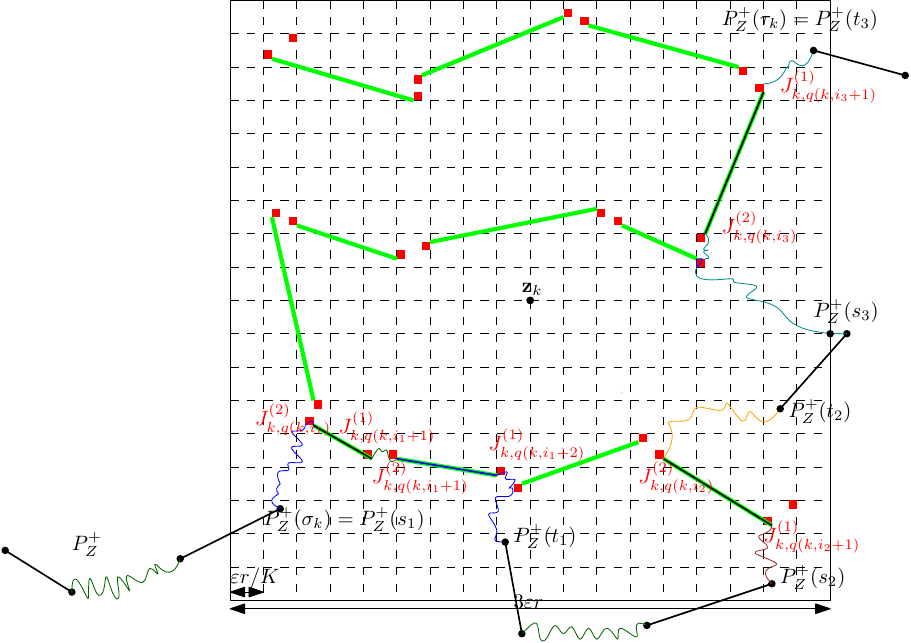}
\caption{Illustration of the definitions of $I_l$ and $\kappa_l$. In this picture, $P_Z^+$ has three distinct excursions in $V_{3\varepsilon r}(\bm z_k)$, i.e., $L=3$. The first one $P_Z^+([s_1,t_1])$ (formed with black lines and blue curves) uses two newly-added edges (green lines) connecting $J_{k,q(k,i_1)}^{(2)}$ to $J_{k,q(k,i_1+1)}^{(1)}$ (red cubes) and $J_{k,q(k,i_1+1)}^{(2)}$ to $J_{k,q(k,i_1+2)}^{(1)}$, respectively. Thus $I_1=\{i_1,i_1+1\}$ and $\kappa_1=2$. Similarly, we can see from the picture that the second excursion is formed with black lines and orange curves, while the third excursion is formed with black lines and darkcyan curves.  Moreover, $I_2=\{i_2\}$, $\kappa_2=1$, $I_3=\{i_3\}$ and $\kappa_3=1$.
}
\label{Proof727-0}
\end{figure}
   Now for each such $i\in I_l$, let $e(i,1),\cdots, e(i,N(i))$ be the sequence of newly-added edges connecting $J_{k,q(k,i)}^{(2)}$ to $J_{k,q(k,i+1)}^{(1)}$ used by $P_Z^+([s_l,t_l])$ in order. Denote $\bm x(i)\in J_{k,q(k,i)}^{(2)}$ by the one end point of $e(i,1)$ and $\bm y(i)\in J_{k,q(k,i+1)}^{(1)}$ by the one end point of $e(i,N(i))$.
         \begin{figure}[htbp]
\centering
\includegraphics[scale=0.6]{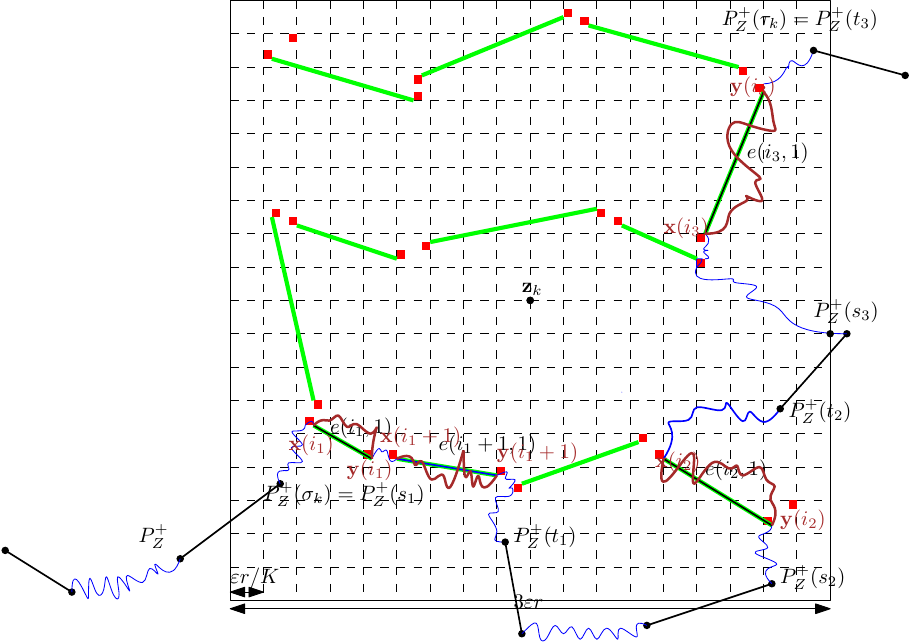}
\caption{Illustration of the definitions of $e(i,\cdot)$, $\bm x(i)$ and $\bm y(i)$. In this picture, $e(i_1,1)=\langle \bm x(i_1),\bm y(i_1) \rangle, e(i_1+1,1)=\langle \bm x(i_1+1),\bm y(i_1+1)\rangle, e(i_2,1)=\langle \bm x(i_2),\bm y(i_2) \rangle$ and $e(i_3,1)=\langle \bm x(i_3),\bm y(i_3) \rangle$ are newly-added edges in $P_Z^+([\sigma_k,\tau_k])$. For each such edge, we use a $D$-geodesic $P_l$ (brown curve) to replace this edge.
}
\label{Proof727-1}
\end{figure}

   Then from $P_Z^+([s_l, t_l])$ we can construct a corresponding path $P_l$ that does not use the newly-added edge as follows: For each $i\in I_l$, the path $P_l$ walks from $\bm x(i)$ to $\bm y(i)$ by a $D$-geodesic (see Figure \ref{Proof727-1}); in all other parts of $P_Z^+([s_l, t_l])$, $P_l$ follows $P_Z^+([s_l, t_l])$. Thus, we can see that
    \begin{equation}\label{originUB}
    \begin{split}
        D(P_Z^+(s_l),P_Z^+(t_l))&\leq \text{len}(P_l;D)\\
        &\leq \kappa_l\sup_i \sup_{\bm u\in J_{k,q(k,i)}^{(2)},\bm v\in J_{k,q(k,i+1)}^{(1)}}D(\bm u,\bm v;V_{3\varepsilon r}(\bm z_k))+D_Z^+(P_Z^+(s_l),P_Z^+(t_l)).
        \end{split}
    \end{equation}
    In addition, since ${\rm dist}(\bm u,\bm v;\|\cdot\|_\infty)\le 2(b_2\alpha)^2\varepsilon r$ for each $\bm u\in J_{k,q(k,i)}^{(2)}$ and $\bm v\in J_{k,q(k,i+1)}^{(1)}$, we get from Definition \ref{super good} (2) that
    \begin{equation}\label{supDJk}
       \sup_{\bm u\in J_{k,q(k,i)}^{(2)},\bm v\in J_{k,q(k,i+1)}^{(1)}}D(\bm u,\bm v;V_{3\varepsilon r}(\bm z_k))\leq 2^\theta \widetilde{C} (b_2\alpha)^{2\theta} \log (1/(2(b_2\alpha)^2))(\varepsilon r)^\theta.
    \end{equation}
    Plugging \eqref{supDJk} into \eqref{originUB} and then summing over $l\in[1,L]_\mathds{Z}$, we get that
        \begin{equation}\label{delete-point-step1}
            \sum_{l=1}^LD(P_Z^+(s_l),P_Z^+(t_l))\le \sum_{l=1}^L\left[\kappa_l 2^\theta \widetilde{C} (b_2\alpha)^{2\theta} \log (1/(2(b_2\alpha)^2))(\varepsilon r)^\theta+D_Z^+(P_Z^+(s_l),P_Z^+(t_l))\right].
        \end{equation}
    Thus it suffices to give an upper bound on $\kappa_l$. Since $P_Z^+$ is a geodesic under $D_Z^+$, we have that
    \begin{equation}\label{delete-point-step2}
        \sum_{l=1}^L D^+_Z(P^+_Z(s_l),P^+_Z(t_l))\le {\rm diam}(V_{3\varepsilon r}(\bm z_k);D_Z^+)\le 5\widetilde{C}(b_2\alpha)^{2\theta}\log(1/(b_2\alpha)^2)(\varepsilon r)^\theta.
    \end{equation}
    Here the last inequality used Lemma \ref{LemmaNewGeoShort}.

Next, we will give an upper bound of $\kappa_l$ in terms of $ D_Z^+(P_Z^+(s_l),P_Z^+(t_l))$ (which, combined with \eqref{delete-point-step2}, gives a lower bound on $\kappa_l$).
For any $l$, let $e_1, \cdots, e_{q_l}$ be the sequence of newly-added edges used by $P^+_Z([s_l, t_l])$ in order. For $1\leq j \leq  q_l$, let $i_j$ be such that $e_j$ connects $J^{(2)}_{k, q(k,i_j)}$ and $J^{(1)}_{k, q(k,i_{j}+1)}$. It is clear that these $i_j$'s form the set $I_l$ defined in the first paragraph in this proof and $\# I_l=\kappa_l$.
Note that for each $i\in I_l$, there might be different $j_1,j_2\in [1,q_l]_\mathds{Z}$ such that $i_{j_1}=i_{j_2}=i$.
For convenience, we denote $\widetilde{e}_{i}$ as the last edge connecting $J^{(2)}_{k, q(k,i)}$ and $J^{(1)}_{k, q(k,i+1)}$ among $e_1,\cdots, e_N$.
By Definition \ref{very nice} (2) and the assumption that $P_Z^+$ does not pass through any great pair in $V_{3\varepsilon r}(\bm z_k)$, we have that for each $i\in I_l$, after using $\widetilde{e}_i$,
 $P_Z^+$ must escape the very nice cube (which contains $ J^{(2)}_{k,q(k,i)}$ (or $J^{(1)}_{k,q(k,i+1)}$) and is contained in a very very nice cube $ J''(\bm w''_{k,i}) $ (or $J''(\bm w''_{k,i+1}$))) before using new edges or leaving $V_{3\varepsilon r}(\bm z_k)$.
    From Definition \ref{very nice} (2), we see that in this case,  $P_{Z}^{+}$ must spend certain amount of time before leaving some very nice cube contained in $J''(\bm w''_{k,i})$ (or $J''(\bm w''_{k,i+1})$) (see Figure \ref{Proof727}).
    \begin{figure}[htbp]
\centering
\includegraphics[scale=0.6]{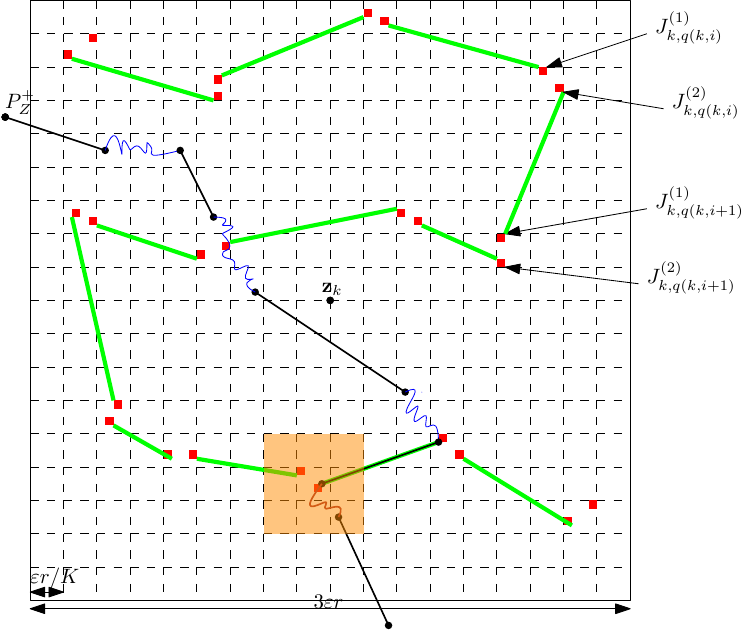}
\caption{Illustration that $P_Z^+$ does not pass through any great pair in $V_{3\varepsilon r}(\bm z_k)$. In this picture, $L=1$. Since $P_Z^+$ does not pass through any great pair in $V_{3\varepsilon r}(\bm z_k)$, although $P_Z^+$ maybe can hit one of cubes in the pair of $(J_{k,q(k,i)}^{(1)}, J_{k,q(k,i)}^{(2)})$ (marked with red) contained in a very nice cube (marked with orange) by a new edge (marked with green), it will not leave the cube through a new edge after using $\widetilde{e}_i$, so it will spend some time to leave the very nice cube (see the path marked with brown).}
\label{Proof727}
\end{figure}
    Hence, summing over such distances, we get that  under the assumption that $P_{Z}^{+}$ does not pass through any great pair of $(J^{(1)}_{k,q(k,i)},J^{(2)}_{k,q(k,i)})$,
    \begin{equation}\label{delete-point-step3}
        D_Z^+(P_Z^+(s_l),P_Z^+(t_l))\geq \kappa_l(b_2\alpha)^{2.6\theta}(\varepsilon r)^\theta.
    \end{equation}
    when $\kappa_l>0$ (note $q_l \geq \kappa_l$). Note that the lower bound on the right hand side is also true when $\kappa_l=0$.
    Thus \eqref{delete-point-step3} holds for any $l$. Summing \eqref{delete-point-step3} over $l$ and combining this with \eqref{delete-point-step2}, we get
    \begin{equation}\label{jUpperbound}
        \sum_{l=1}^L \kappa_l \leq 5\widetilde{C}(b_2\alpha)^{-0.6\theta}\log(1/(b_2\alpha)^2).
    \end{equation}

    Applying \eqref{jUpperbound} to \eqref{delete-point-step1}, we get
    \begin{equation*}
        \begin{split}
            \sum_{l=1}^L &D(P_Z^+(s_l),P_Z^+(t_l))\\
            &\leq \sum_{l=1}^LD_Z^+(P_Z^+(s_l),P_Z^+(t_l))+5\cdot 2^\theta\widetilde{C}^2 (b_2\alpha)^{1.4\theta}(\log (1/(2(b_2\alpha)^2)))^2(\varepsilon r)^\theta.
        \end{split}
    \end{equation*}
    Hence we complete the proof.
\end{proof}

We now present the
\begin{proof}[Proof of Proposition \ref{Prop4.5DG21}]
    By  Definition \ref{G} (4) of $\mathscr{G}_r^\varepsilon$, there exists $\mathcal{S}\in{ \mathcal{W}}_\varepsilon$ such that $\overline{\mathsf{F}}_{Z,\varepsilon}(\mathcal{E},\mathcal{E}_Z^+)$ occurs for every $Z\subset \mathcal{S}$ and $\# \mathcal{S}\ge\varepsilon^{-\theta/4}/(8\cdot 3^d)$.  Note that Definition \ref{Fbar} (4) holds because $\mathcal{E}_Z^+\subset \overline{\mathcal{E}}_Z^+$, which we have shown in the proof of Lemma \ref{EE+} (the paragraph after \eqref{RNestimateEE+}). 



    By Lemma \ref{Lemma4.13DG21}, for each $Z \subset \mathcal{S}$, there exists $Z^\prime \subset Z$ such that $\mathsf{F}_{Z^\prime,\varepsilon}(\mathcal{E},\mathcal{E}_{Z'}^+)$ occurs and $\# Z^\prime \ge c_4 \# Z$.
Fix (in some arbitrary manner) a choice of $Z^\prime$ for each $Z$, so that $Z \mapsto Z^\prime$ is a function from subsets
of $\mathcal{S}$ to subsets of $\mathcal{S}$ for which $\mathsf{F}_{Z^\prime,\varepsilon}$ occurs. For each $m \in \mathds{N}$, the number of sets $Z \subset \mathcal{S}$ such
that $\# Z = m $ is $\binom{\# \mathcal{S}}{m }$. Furthermore, since $Z^\prime \subset Z$, for each $\widetilde{Z} \subset Z$ for which $F_{\widetilde{Z},\varepsilon}(\mathcal{E})$ occurs and
 $\# \widetilde{Z} \in [c_4 m , m ]$, the number of $Z \subset \mathcal{S}$ such that $\# Z =m $ and $Z^\prime =  \widetilde{Z}$ is at most
\begin{equation*}
    \binom{\# \mathcal{S}}{m - \# \widetilde{Z}}\le\binom{\# \mathcal{S}}{\lfloor(1-c_4)m \rfloor}.
\end{equation*}
Consequently, the number of sets $\widetilde{Z} \subset \mathcal{S}$ for which $\mathsf{F}_{\widetilde{Z},\varepsilon}(\mathcal{E},\mathcal{E}_{\widetilde{Z}}^+)$ occurs and $\# \widetilde{Z} \in [c_4 m , m ]$ is at least
\begin{equation*}
    \binom{\# \mathcal{S}}{m}\left(\binom{\# \mathcal{S}}{\lfloor(1-c_4)m \rfloor}\right)^{-1}\succeq(\# \mathcal{S})^{c_4 m }\succeq (\varepsilon^{-\theta /4}/(8\cdot 3^d))^{c_4m}
\end{equation*}
with the implicit constant depending only on the parameters and $m $ and the laws of $D$ and $\widetilde{D}$ (in the last inequality we
used $\# \mathcal{S}\ge\varepsilon^{-\theta/4}/(8\cdot 3^d)$). This gives (\ref{LowerBoundProp4.5DG21}) for $c_3$ slightly smaller than $\theta c_4/(8\cdot 3^d)$.
\end{proof}

\subsection{Proof of Proposition \ref{Prop4.6DG21}}\label{SectionProp4.6DG21}
Recall that we have fixed $\bm x,\bm y\in V_r(\bm 0)$ with $|\bm x-\bm y|\geq \alpha r$ and $P$ is the fixed $D$-geodesic from $\bm x$ to $\bm y$.
To prove Proposition \ref{Prop4.6DG21}, we first identify the number of points $k \in [1,\varepsilon^{-d}]_{\mathds{Z}}$ that are potentially part of a $Z \in {\mathcal{W}}_{\varepsilon}$ for which $\mathsf{G}_{Z,\varepsilon}^-(\mathcal{E}_Z^-,\mathcal{E})$ occurs. To this end, we introduce the following definition for exposition convenience.

\begin{definition}\label{DefeZgood}
    Fix $\varepsilon>0$. 
    We say $k\in [1,\varepsilon^{-d}]_\mathds{Z}$ is {\it$\varepsilon$-excellent} if
    \begin{enumerate}
        \item $V_{3\varepsilon r}(z_k)$ is super super good with respect to $\mathcal{E}_{ k}^-$. Denote by $J^-_{k,q(k,1)},\cdots, J^-_{k,q(k,3^d(b_2\alpha)^{-2d})}$ the nice cubes selected as in  \eqref{choosenice} and let $(J_{k,q(k,i)}^{(1)-},J_{k,q(k,i)}^{(2)-})$ be the great pair of small cubes in $J^-_{k,q(k,i)}$.

        \item $P$ passes through at least one great pair of small cubes $(J^{(1)-}_{k,q(k,i)},J^{(2)-}_{k,q(k,i)})$ in $V_{3\varepsilon r}(\bm z_k)$.
    \end{enumerate}
\end{definition}

\begin{lemma}\label{Lemma4.15DG21}
For any $Z\in { \mathcal{W}}_\varepsilon$ such that $\mathsf{G}_{Z,\varepsilon}^-(\mathcal{E}_Z^-,\mathcal{E})$ occurs, we have that $k$ is $\varepsilon$-excellent whenever $k\in Z$.
\end{lemma}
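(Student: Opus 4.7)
The plan is to reduce each condition of $\varepsilon$-excellence (Definition~\ref{DefeZgood}) to the corresponding condition of $\mathsf{G}_{Z,\varepsilon}^-(\mathcal{E}_Z^-,\mathcal{E})$ (Definition~\ref{G-}) by exploiting the fact that $\mathcal{E}_Z^-$ and $\mathcal{E}_k^-$ coincide on every edge that is incident to $V_{3\varepsilon r}(\bm z_k)$.

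First I would verify this coincidence. By Definition~\ref{delete}, the symmetric difference $\mathcal{E}_Z^- \triangle \mathcal{E}_k^-$ is supported on $\bigcup_{k' \in Z \setminus \{k\}} V_{3\varepsilon r}(\bm z_{k'}) \times V_{3\varepsilon r}(\bm z_{k'})$, since the sampling of deleted edges only touches the $\Omega_{k'}$ regions which lie inside those product cubes. Because $Z \in \mathcal{W}_\varepsilon \subset \mathcal{Z}_\varepsilon$, the cubes $\{V_{3\varepsilon r}(\bm z_{k'})\}_{k' \in Z}$ are pairwise disjoint, so every edge in the symmetric difference has \emph{both} endpoints outside $V_{3\varepsilon r}(\bm z_k)$. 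Hence
\[
\mathcal{E}_Z^- \cap \bigl(V_{3\varepsilon r}(\bm z_k)\times\mathds{R}^d\bigr) \;=\; \mathcal{E}_k^- \cap \bigl(V_{3\varepsilon r}(\bm z_k)\times\mathds{R}^d\bigr).
\]

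Next I would apply this to both conditions in turn. The measurability remark after \eqref{supergoodindep}, together with the fact that Definition~\ref{superdouble}(2) only invokes the internal metric $D(\cdot,\cdot;V_{3\varepsilon r}(\bm z_k))$ and edges with an endpoint in $V_{3\varepsilon r}(\bm z_k)$, shows that the super super good property at $V_{3\varepsilon r}(\bm z_k)$ is measurable with respect to $\mathcal{E}\cap(V_{3\varepsilon r}(\bm z_k)\times \mathds{R}^d)$. Therefore $V_{3\varepsilon r}(\bm z_k)$ is super super good with respect to $\mathcal{E}_Z^-$ if and only if it is so with respect to $\mathcal{E}_k^-$, yielding Condition~(1) of Definition~\ref{DefeZgood} from Condition~(2) of Definition~\ref{G-}. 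The same argument shows that the collection of nice sub-cubes selected by the prefixed rule in \eqref{choosenice}, and the great pairs $(J^{(1)}_{k,q(k,i)},J^{(2)}_{k,q(k,i)})$ chosen within them, are identical functions of $\mathcal{E}_Z^-$ and of $\mathcal{E}_k^-$. Consequently Condition~(6) of Definition~\ref{G-}, which states that $P$ passes through one such great pair, delivers Condition~(2) of Definition~\ref{DefeZgood} verbatim.

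The main technicality to verify will be that the niceness of a sub-cube of $V_{3\varepsilon r}(\bm z_k)$ is genuinely locally determined: Definition~\ref{interval-good}(2)--(3) is phrased using the full metrics $D,\widetilde{D}$, which a priori can depend on distant edges. The key observation is that the super super good property at $V_{3\varepsilon r}(\bm z_k)$ (specifically Definition~\ref{superdouble}(2) combined with the H\"older regularity from Definition~\ref{super good}(2) and the parameter choices \eqref{M0new}--\eqref{M0}) forces any excursion of a candidate $D$- or $\widetilde{D}$-geodesic between two points of a sub-cube $J_{k,i}$ outside $V_{3\varepsilon r}(\bm z_k)$ to cost at least $(b_2\alpha\varepsilon r)^\theta$, which is strictly larger than the internal diameter of $V_{3\varepsilon r}(\bm z_k)$ up to constants. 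Thus the infima defining these distances are attained by internal paths, making the nice property a measurable function of $\mathcal{E}\cap(V_{3\varepsilon r}(\bm z_k)\times\mathds{R}^d)$, and the argument above goes through.
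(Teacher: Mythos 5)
Your core argument matches the paper's proof exactly: you reduce $\varepsilon$-excellence of each $k\in Z$ to the observation that $\mathcal{E}_Z^-\triangle\mathcal{E}_k^-$ lives in $\bigcup_{k'\in Z\setminus\{k\}}V_{3\varepsilon r}(\bm z_{k'})\times V_{3\varepsilon r}(\bm z_{k'})$, which is disjoint from $V_{3\varepsilon r}(\bm z_k)\times\mathds{R}^d$, and then invoke the locality of the super-super-good event (and of the prefixed choice of nice cubes and great pairs), exactly as the paper does via the measurability remark following \eqref{supergoodindep}.

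The last paragraph of your proposal, however, contains an incorrect inequality. You claim the excursion cost $(b_2\alpha\varepsilon r)^\theta$ from Definition~\ref{superdouble}(2) is ``strictly larger than the internal diameter of $V_{3\varepsilon r}(\bm z_k)$ up to constants.'' This is backwards: ${\rm diam}(V_{3\varepsilon r}(\bm z_k);D)$ is of order $\widetilde{C}(\varepsilon r)^\theta$ by Definition~\ref{super good}(2), while $(b_2\alpha)^\theta\ll 1$, so $(b_2\alpha\varepsilon r)^\theta$ is much \emph{smaller} than the internal diameter of the full cube. The comparison that actually does the work is with the internal diameter of the tiny sub-cube $J_{k,i}$ of side $\varepsilon r/K$ (or the very nice cube $J'(\bm w'_{k,i})$ via Definition~\ref{very nice}(2)), which by Definition~\ref{super good}(2) and the parameter choice \eqref{M0} is indeed dominated by $(b_2\alpha)^{2.6\theta}(\varepsilon r)^\theta$. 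Since the paper simply asserts the locality without re-deriving it, your extra verification paragraph is supplementary; as written it does not establish what it claims, though the main proof goes through unaffected once you cite the paper's locality assertion directly.
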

\begin{proof}
    By the definition of $\varepsilon$-excellent, it suffices to show that for any $k\in Z$, $V_{3\varepsilon r}(\bm z_k)$ is super super good with respect to $\mathcal{E}_k^-$. Note that from Definition \ref{G-} (1), $V_{3\varepsilon r}(\bm z_k)$ is super super good with respect to $\mathcal{E}_Z^-$. Since the event that
    $$\text{$V_{3\varepsilon r}(\bm z_k)$ is super super good with respect to $\mathcal{E}_Z^-$}$$
     is determined by all edges in $\mathcal{E}_Z^- $ that have at least one end point in $V_{3\varepsilon r}(\bm z_k)$, we have that $V_{3\varepsilon r}(\bm z_k)$ is also super super good with respect to $\mathcal{E}_k^-$ since the end points of any edge in $\mathcal{E}_k^-\setminus \mathcal{E}_Z^-$ are not in $V_{3\varepsilon r}(\bm z_k)$.
    Thus the desired statement is true.
\end{proof}

\begin{lemma}\label{Lemma4.16DG21}
    There is a constant $C_3> 0$, depending only on the parameters and the laws of $D$ and $\widetilde{D}$, such that the following is true.
    Let $\varepsilon>0$, and let $Z\in  \mathcal{W}_\varepsilon$  with $Z\neq\emptyset$ and $Z'\in\mathcal{Z}_\varepsilon$. Assume that the event $\mathsf{G}_{Z,\varepsilon}^-(\mathcal{E}_Z^-,\mathcal{E})$ occurs, and that each $k\in Z'$ is
    $\varepsilon$-excellent. Then
    \begin{equation*}\label{Inequality4.16DG21}
        \# Z'\le C_3 \# Z.
    \end{equation*}
\end{lemma}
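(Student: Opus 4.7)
The plan is to argue by contradiction: suppose $\#Z'$ were much larger than $\#Z$. First, split $Z'=Z'_{\mathrm{near}}\sqcup Z'_{\mathrm{far}}$, where $Z'_{\mathrm{near}}$ consists of those $k'\in Z'$ with $V_{3\varepsilon r}(\bm z_{k'})\cap V_{3\varepsilon r}(\bm z_k)\neq\emptyset$ for some $k\in Z$. Since the cubes $V_{3\varepsilon r}(\bm z_{k'})$ are disjoint over $k'\in Z'$ (as $Z'\in\mathcal{Z}_\varepsilon$) and each $V_{3\varepsilon r}(\bm z_k)$ meets at most $3^d$ of them, a packing estimate gives $\#Z'_{\mathrm{near}}\le 3^d\#Z$. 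It therefore suffices to show $\#Z'_{\mathrm{far}}\le C_3'\#Z$ for some constant $C_3'$ depending only on the parameters.

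Fix $k'\in Z'_{\mathrm{far}}$. Because $V_{3\varepsilon r}(\bm z_{k'})$ is disjoint from every $V_{3\varepsilon r}(\bm z_k)$ with $k\in Z$, the edge sets $\mathcal{E}$ and $\mathcal{E}_Z^-$ agree on $V_{3\varepsilon r}(\bm z_{k'})\times V_{3\varepsilon r}(\bm z_{k'})$. Moreover, by Definition \ref{add}, the only edges in which $\mathcal{E}_{k'}^-$ differs from $\mathcal{E}$ lie in $\Omega_{k'}$, and no such edge has both endpoints in a single nice cube $J_{k',q(k',i)}$. Hence the internal metrics on each $J_{k',q(k',i)}$ coincide for the three edge sets $\mathcal{E},\mathcal{E}_{k'}^-,\mathcal{E}_Z^-$. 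Combined with the nice cube property (Definition \ref{interval-good} (2)–(3)) applied under $\mathcal{E}_{k'}^-$ and the fact that the $\widetilde D_{k'}^-$-geodesic stays inside $J_{k',q(k',i)}$, this yields the crucial shortcut estimate
\[ \widetilde D\bigl(J^{(1)-}_{k',q(k',i)},J^{(2)-}_{k',q(k',i)};J_{k',q(k',i)}\bigr) \;<\; c'\, D\bigl(J^{(1)-}_{k',q(k',i)},J^{(2)-}_{k',q(k',i)};J_{k',q(k',i)}\bigr), \]
with the right-hand side at least $b_1(\alpha\varepsilon r/K)^\theta$.

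Using this, modify the $D$-geodesic $P$ from $\bm x$ to $\bm y$ by replacing, for each $k'\in Z'_{\mathrm{far}}$, the $P$-segment between the first hit of $J^{(1)-}_{k',q(k',i)}$ and the last hit of $J^{(2)-}_{k',q(k',i)}$ by an internal $\widetilde D$-geodesic in the nice cube. The triangle inequality, the bound $\widetilde D\le C_*D$, Definition \ref{interval-good} (4), and the smallness choice of $\eta$ in \eqref{eta0} give a net savings of at least $S:=\tfrac{3(C_*-c')}{4}b_1(\alpha\varepsilon r/K)^\theta$ per replacement, hence
\[ \widetilde D(\bm x,\bm y) \;\le\; C_*D(\bm x,\bm y) - S\,\#Z'_{\mathrm{far}}. \]
Independently, convert any $\widetilde D$-geodesic into a $\widetilde D_Z^-$-path by rerouting the first-entry-to-last-exit portion inside each $V_{3\varepsilon r}(\bm z_k)$ ($k\in Z$) via an internal $\widetilde D_Z^-$-geodesic; since the deleted edges all lie in these cubes and Definition \ref{super good} (2) (under $\mathcal{E}_Z^-$) bounds detour diameters by $O((\varepsilon r)^\theta)$, we obtain $\widetilde D_Z^-(\bm x,\bm y)\le \widetilde D(\bm x,\bm y)+C_4\#Z(\varepsilon r)^\theta$. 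Combining the two displays with $D_Z^-\ge D$ and Definition \ref{G-} (1) yields $S\#Z'_{\mathrm{far}}\le (C_4\#Z+1)(\varepsilon r)^\theta$, and since $Z\neq\emptyset$ this rearranges to $\#Z'_{\mathrm{far}}\le C_3'\#Z$, completing the proof with $C_3=3^d+C_3'$.

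The main obstacle is verifying that $t_{k'}-s_{k'}$ (the $D$-length of the replaced segment of $P$) really is bounded below by a quantity of order $(\varepsilon r/K)^\theta$ in the shortcut step; a priori $P$ could skip between $J^{(1)-}_{k',q(k',i)}$ and $J^{(2)-}_{k',q(k',i)}$ via a long edge exiting $V_{3\varepsilon r}(\bm z_{k'})$, making the ``trivial'' part of the budget too small. This is resolved by a case split: if $P([s_{k'},t_{k'}])\subset V_{3\varepsilon r}(\bm z_{k'})$, apply the internal-distance lower bound from the shortcut estimate; if $P$ exits $V_{3\varepsilon r}(\bm z_{k'})$, invoke Definition \ref{superdouble} (2) to force at least $(b_2\alpha\varepsilon r)^\theta$ of $D$-length to be spent inside the cube, which dominates the shortcut and yields the same savings.
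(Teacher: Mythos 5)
Your proposal shares the paper's overall skeleton (per-$k'$ shortcut savings along the $D$-geodesic, a conversion step between the plain and $(\cdot)_Z^-$ metrics, then combining with Definition~\ref{G-}~(1)), but with two modifications: you split $Z'$ into a near part handled by packing and a far part where $\mathcal{E}$ and $\mathcal{E}_Z^-$ agree near $\bm z_{k'}$, and you run the replacement argument in the $(D,\widetilde D)$ metrics and only afterwards pass to $\widetilde D_Z^-$ (the paper works inside the $D_Z^-$ world throughout, via Lemma~\ref{Lemma4.17DG21}). These re-arrangements are fine and the final arithmetic combining your three displays with $D_Z^-\ge D$ and Definition~\ref{G-}~(1) is correct.

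The gap is in establishing the per-replacement savings $S$. You correctly obtain the internal-metric shortcut inside the nice cube $J_{k',q(k',i)}$, together with the lower bound $D(J^{(1)-},J^{(2)-};J_{k',q(k',i)})\ge b_1(\alpha\varepsilon r/K)^\theta$. But for the savings you must lower bound the $D$-length of the replaced segment of $P$, i.e.\ $D(P(t^{(1)}_{k'}),P(\bar t^{(2)}_{k'}))$, and the only general lower bound is $D(J^{(1)-},J^{(2)-})$ — the \emph{unrestricted} $D$-distance, not the internal one. Since $\mathcal{E}_{k'}^-\subset\mathcal{E}$, you only know $D\le D_{k'}^-$, which is the wrong direction: the extra edges of $\mathcal{E}\setminus\mathcal{E}_{k'}^-$ could \emph{a priori} produce a short $D$-path between the great pair even though $D_{k'}^-(J^{(1)-},J^{(2)-})$ is large, so $D(J^{(1)-},J^{(2)-})$ may be small. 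The paper bridges exactly this gap with Claim~\ref{claim1}, a nontrivial combinatorial argument showing that any $D$-geodesic between $J^{(1)-}$ and $J^{(2)-}$ must exit a very nice cube without using a deleted edge, hence incurs $D_{k'}^-$-length $\ge (b_2\alpha)^{2.6\theta}(\varepsilon r)^\theta$, contradicting the super-good diameter bound via \eqref{M0}; this gives $D(J^{(1)-},J^{(2)-})=D_{k'}^-(J^{(1)-},J^{(2)-})$.

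Your proposed resolution (the case split at the end) does not supply this. In Case~1 ($P$ stays in $V_{3\varepsilon r}(\bm z_{k'})$), you invoke ``the internal-distance lower bound from the shortcut estimate,'' but that bound lives in the nice cube $J_{k',q(k',i)}$ of side $\varepsilon r/K$, while the deleted edges (scope $\gtrsim (b_2\alpha)^{2.5}\varepsilon r\gg \varepsilon r/K$) have both endpoints in $V_{3\varepsilon r}(\bm z_{k'})$ but not in a single nice cube; staying in $V_{3\varepsilon r}(\bm z_{k'})$ does not prevent $P$ from leaving $J_{k',q(k',i)}$ and taking such a shortcut, so the internal-metric bound in the nice cube does not control $t_{k'}-s_{k'}$. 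In Case~2, Definition~\ref{superdouble}~(2) only forces $D$-length for crossings in/out of the $V_{\varepsilon r}$/$V_{3\varepsilon r}$ annulus, not for shortcuts via deleted edges inside $V_{3\varepsilon r}(\bm z_{k'})$. So the obstacle you identify is real, but the fix is missing; something like the paper's Claim~\ref{claim1} (or a direct comparison of $D$ and $D_{k'}^-$ on the great pair) is required to close the argument.
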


To prove Lemma \ref{Lemma4.16DG21}, We need the following lemma.
\begin{lemma}\label{Lemma4.17DG21}
    There exists $C_4>0$ depending only on the parameters and the laws of $D$ and $\widetilde{D}$ such that for any $Z\in \mathcal{Z}_\varepsilon$, if $\mathsf{G}_{Z,\varepsilon}^-(\mathcal{E}_Z^-,\mathcal{E};\bm x,\bm y)$ occurs, then the $D$-geodesic $P$ from $\bm x$ to $\bm y$ satisfies
    \begin{equation}\label{Inequality4.17DG21}
        \mathrm{len}(P;D_Z^-)\le D(\bm x,\bm y)+C_4 (\varepsilon r)^\theta\# Z.
    \end{equation}
\end{lemma}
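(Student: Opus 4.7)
The approach is to exploit that $\mathcal{E}_Z^-\subset\mathcal{E}$, so $D_Z^-\ge D$ pointwise and the only way $\mathrm{len}(P;D_Z^-)$ can exceed $\mathrm{len}(P;D)=D(\bm x,\bm y)$ is through long edges that $P$ uses in $\mathcal{E}$ but which have been removed in $\mathcal{E}_Z^-$. I would begin by decomposing $P$ according to its jumps: enumerate the long edges $e_1,\ldots,e_N$ that $P$ traverses in $\mathcal{E}$, observing that between consecutive jumps $P$ is a Euclidean walking segment (any additional long edge used would already be in our enumeration). Along such a Euclidean segment the $D$- and $D_Z^-$-lengths coincide, since both are at most the Euclidean length and $D\le D_Z^-$ gives equality. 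The jumps contribute $0$ to $\mathrm{len}(P;D)$, and also $0$ to $\mathrm{len}(P;D_Z^-)$ when the corresponding edge survives in $\mathcal{E}_Z^-$, leaving
$$\mathrm{len}(P;D_Z^-)-D(\bm x,\bm y)\;\le\;\sum_{i:\,e_i\notin\mathcal{E}_Z^-} D_Z^-(\bm u_i,\bm v_i),$$
where $(\bm u_i,\bm v_i)$ are the endpoints of $e_i$.

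The next step is to bound this sum. Each deleted edge lies in $\Omega_k\subset\Lambda_k$ for some $k\in Z$, since the inclusion $\Omega_k\subset\Lambda_k$ follows from the spacing estimate $\mathrm{dist}(J^{(2)-}_{k,q(k,i)},J^{(1)-}_{k,q(k,i+1)})\ge (b_2\alpha)^{2.5}\varepsilon r/2$ already used in the proof of Lemma~\ref{EE+}. Condition~(4) of $\mathsf{G}^-_{Z,\varepsilon}(\mathcal{E}_Z^-,\mathcal{E})$ then gives $|\mathcal{E}\cap\Lambda_k|\le M_0$, so $P$ uses at most $M_0$ deleted edges inside each $V_{3\varepsilon r}(\bm z_k)$. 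For any such deleted edge, both endpoints lie in $V_{3\varepsilon r}(\bm z_k)$, and condition~(2) (super super good with respect to $\mathcal{E}_Z^-$) together with Definition~\ref{super good}(2) applied to $D_Z^-$ gives
$$D_Z^-(\bm u_i,\bm v_i)\;\le\;D_Z^-(\bm u_i,\bm v_i;V_{3\varepsilon r}(\bm z_k))\;\le\;\widetilde{C}\,\|\bm u_i-\bm v_i\|_\infty^{\theta}\log\tfrac{2\varepsilon r}{\|\bm u_i-\bm v_i\|_\infty}\;\le\;C_\theta\widetilde{C}\,(\varepsilon r)^{\theta}$$
for a purely $\theta$-dependent $C_\theta$. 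Summing at most $M_0$ contributions over each of the $\#Z$ cubes yields the claim with $C_4:=M_0 C_\theta\widetilde{C}$.

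The main obstacle I expect is making the jump decomposition rigorous, since a $D$-geodesic is typically discontinuous in the Euclidean topology and $\mathrm{len}(\cdot;D_Z^-)$ is defined only as a supremum over partitions. The cleanest route is to use that $P$ has finite $D$-length and the Poisson process yields at most finitely many long edges of a given minimal scope in any bounded set, so $P$ has (at most) countably many jumps, and one can take refining partitions that isolate all jump times while making the walking portions as fine as desired. On each walking portion $D$ and $D_Z^-$ agree (since $D\le D_Z^-\le|\cdot|$ and the path already realizes the Euclidean length under $D$), so the partition sums converge to $\mathrm{len}(P;D)$ plus exactly the deleted-edge contributions above; the remaining steps are then routine.
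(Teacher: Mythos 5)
Your approach is genuinely different from the paper's, and the final constant you obtain ($C_4 = M_0 C_\theta \widetilde{C}$ via at most $M_0$ deleted edges per cube, each contributing $\lesssim \widetilde{C}(\varepsilon r)^\theta$) is a reasonable alternative to the paper's $C_4 = 2\cdot 3^d\widetilde{C}K^{d-\theta}\log K$. The paper does not track deleted edges at all: it splits $P$ into the part inside $\bigcup_{k\in Z}V_{3\varepsilon r}(\bm z_k)$ and the part outside, uses the fact that $\mathcal{E}$ and $\mathcal{E}_Z^-$ agree outside the cubes (so by locality $D$- and $D_Z^-$-lengths of the outside portion coincide and are at most $D(\bm x,\bm y)$), and bounds the $D_Z^-$-length inside each cube by $\sum_{j=1}^{(3K)^d}\mathrm{diam}(J_{k,j};D_Z^-)$ via the H\"older regularity in Definition~\ref{super good}(2). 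Your use of condition~(4) of $\mathsf G^-_{Z,\varepsilon}$ (the $M_0$ bound) is extra input the paper never needs.

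However, there is a genuine gap in your decomposition. You write that ``between consecutive jumps $P$ is a Euclidean walking segment'' and that on such a segment ``$D$ and $D_Z^-$ agree, since $D\le D_Z^-\le|\cdot|$ and the path already realizes the Euclidean length under $D$.'' Both halves of this are false for the limit metric. First, $D_Z^-\le|\cdot|$ does \emph{not} hold: by Theorems~\ref{discrete-dist} and~\ref{continuous-dist}, $D(\bm u,\bm v)\asymp|\bm u-\bm v|^\theta$ with $\theta\in(0,1)$, so for separations $\ll 1$ the metric $D$ (hence $D_Z^-$) is much \emph{larger} than the Euclidean distance; there was a factor of $a_n^{-1}$ in the prelimiting metric $D_n=a_n^{-1}d_{(1/n,\infty)}$ that destroys the crude bound $d_{(1/n,\infty)}\le|\cdot|$ in the limit. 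Second, a $D$-geodesic is not an alternation of Euclidean straight-line segments and discrete jumps. Away from the finitely many deleted edges (which have scope $\gtrsim(b_2\alpha)^{2.5}\varepsilon r$), a $D$-geodesic still makes a dense cascade of jumps across edges of arbitrarily small scope, which is precisely what produces the $|\cdot|^\theta$ scaling. So there is no ``walking portion'' on which your argument applies, and the claim that the $D$- and $D_Z^-$-lengths of $P$ agree outside the deleted jumps needs a different justification. The clean way to recover your decomposition is not a Euclidean comparison but Axiom~II (locality): outside $\bigcup_{k\in Z}V_{3\varepsilon r}(\bm z_k)$ the two edge sets coincide, hence so do the internal metrics and hence the path lengths --- but once you invoke that, you are already doing the paper's cube decomposition rather than a per-edge decomposition. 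Your bound on the count and on each $D_Z^-(\bm u_i,\bm v_i)$ is otherwise fine; the argument could in principle be repaired along those lines, but as written the key step is unsupported.
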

\begin{proof}
For $Z\in\mathcal{Z}_\varepsilon$, assume that $\mathsf{G}_{Z,\varepsilon}^-(\mathcal{E}_Z^-,\mathcal{E};\bm x,\bm y)$ occurs.
Then it is clear that $V_{3\varepsilon r}(\bm z_k)$ is super super good with respect to $\mathcal{E}_Z^-$ for all $k\in Z$. Thus, from the triangle inequality and Definition \ref{super good} (2) of super good cube with respect to $\mathcal{E}_Z^-$,
    \begin{equation}\label{lenP-V}
        \mathrm{len}(P\cap V_{3\varepsilon r}(\bm z_k);D_Z^-)\le \sum_{j=1}^{(3K)^d}\mathrm{diam}(J_{k,j};D_Z^-)\le  2\cdot 3^d\widetilde{C}K^{d-\theta}(\log K)(\varepsilon r)^\theta.
    \end{equation}
Since $D$ and $D^-_Z$ lengths outside the cubes $V_{3\varepsilon r}(\bm z_k)$ for $k\in Z$ are the same, we have
    \begin{equation*}
    \begin{split}
        \mathrm{len}(P;D_Z^-)&\le\sum_{k\in Z}\mathrm{len}(P\cap V_{3\varepsilon r}(\bm z_k);D_Z^-)+\mathrm{len}\left(P\setminus\left(\bigcup_{k\in Z} V_{3\varepsilon r}(\bm z_k)\right);D\right)\\
        \end{split}
    \end{equation*}
We combine this with the inequality \eqref{lenP-V} to derive
$$
\mathrm{len}(P;D_Z^-)\leq 2\cdot 3^d\widetilde{C}K^{d-\theta}(\log K)(\varepsilon r)^\theta\#Z+D(\bm x,\bm y).
$$
Thus the proof is complete by taking $C_4=2\cdot 3^d\widetilde{C}K^{d-\theta}\log K$.
\end{proof}

\begin{proof}[Proof of Lemma \ref{Lemma4.16DG21}]
Assume nonempty $Z\in  \mathcal{W}_\varepsilon$  such that  $\mathsf{G}_{Z,\varepsilon}^-(\mathcal{E}_Z^-,\mathcal{E})$ occurs, and $Z'\in\mathcal{Z}_\varepsilon$ such that each $k\in Z'$ is $\varepsilon$-excellent.
For the sake of convenience, let $N$ be the cardinality of the set $Z'$ and let $Z'=\{k_1,\cdots,k_N\}$.
Based on  Definition \ref{DefeZgood} of $\varepsilon$-excellent, for any $k\in Z'$, there exists a great pair of cubes $(J_{k,q(k,i_k)}^{(1)-},J_{k,q(k,i_k)}^{(2)-})$ within $V_{3\varepsilon r}(\bm z_k)$ for some $i_k\in [1,3^d(b_2\alpha)^{-2d}]_{\mathds{Z}}$, such that $P$ passes through it.
Therefore, we can define $t^{(1)}_k$ as the first time that $P$ hits $J_{k,q(k,i_k)}^{(1)-}$, and $t^{(2)}_k$ as the first time that $P$ hits $J_{k,q(k,i_k)}^{(2)-}$ for each $k\in Z'$.

    Without loss of generality, we assume that $0<t^{(1)}_{k_1}<t^{(2)}_{k_1}<t^{(1)}_{k_2}<t^{(2)}_{k_2}<\cdots<t^{(1)}_{k_N}<t^{(2)}_{k_N}$. Denote  $t^{(2)}_{k_0}=0$ and $t^{(1)}_{k_{N+1}}=D(\bm x,\bm y).$  Then by Definition \ref{interval-good} (2) of nice cube, we see that
     \begin{equation}\label{SegmentShortcut}
        \widetilde{D}_{k_l}^-(J^{(1)-}_{k_l,q(k_l,i_{k_l})},J^{(2)-}_{k_l,q(k_l,i_{k_l})})
        \le c'D_{k_l}^-(J^{(1)-}_{k_l,q(k_l,i_{k_l})},J^{(2)-}_{k_l,q(k_l,i_{k_l})}).
     \end{equation}
     Moreover, by  Definition \ref{interval-good} (2) again, there exists a $\widetilde{D}^-_{k_l}$-geodesic between $J^{(1)-}_{k_l,q(k_l,i_{k_l})}$ and $J^{(2)-}_{k_l,q(k_l,i_{k_l})}$ such that it is contained in $J_{k_l,i_{k_l}}$. Combining this with the fact that the scopes of edges in $\mathcal{E}\setminus \mathcal{E}_Z^-$ are at least $(b_a\alpha)^{2.5}\varepsilon r$,  we get that this $\widetilde{D}^-_{k_l}$-geodesic does not use the edges in $\mathcal{E}\setminus \mathcal{E}_Z^-$. Thus we have
     \begin{equation}\label{DZ=DKL}
        \widetilde{D}_Z^-(J^{(1)-}_{k_l,q(k_l,i_{k_l})},J^{(2)-}_{k_l,q(k_l,i_{k_l})})\leq \widetilde{D}_{k_l}^-(J^{(1)-}_{k_l,q(k_l,i_{k_l})},J^{(2)-}_{k_l,q(k_l,i_{k_l})}).
     \end{equation}
     Recalling Definition \ref{interval-good} (4) of nice cube with respect to $\mathcal{E}_{k_l}^-$ together with the chosen value of $\eta$ in \eqref{eta0}, we get that
     \begin{equation}\label{JkiSmall}
     \begin{split}
        \sum_{j=1}^2{\rm diam}(J_{k_l,q(k_l,i_{k_l})}^{(j)-};D_Z^-)&= \sum_{j=1}^2{\rm diam}(J_{k_l,q(k_l,i_{k_l})}^{(j)-};D_{k_l}^-)\\
        &\le 2K^{-\theta}(\varepsilon r)^\theta C_0\sup_{t\in[0,\eta]}\left(t^\theta\log(2/t)\right)<\frac{C_*-c'}{4C_*}b_1\alpha^\theta K^{-\theta}(\varepsilon r)^\theta.
        \end{split}
     \end{equation}

     Additionally, we will show that any $D$-geodesic between $J^{(1)-}_{k_l,q(k_l,i_{k_l})}$ and $J^{(2)-}_{k_l,q(k_l,i_{k_l})}$ (denoted by $P_{k_l}$) does not use any edge in $\mathcal{E}\setminus \mathcal{E}_{k_l}^-$ by an argument similar to the proof of Lemma \ref{upp-strange-Ik}. To prove this, we employ a proof by contradiction as follows.

     Assume (otherwise) that $P_{k_l}$ uses $e_1,e_2,\cdots,e_N\in\mathcal{E}\setminus \mathcal{E}_{k_l}^-$ in order.
     For convenience, we let $\mathcal{J}^-_{k_l}$ be the collection of $J_{k_l,q(k_l,i)}^{(j)-}$ for $j=1,2$ and $i\in [1,3^d(b_2\alpha)^{-2d}]_{\mathds{Z}}$.
     \begin{claim}\label{claim1}
      There exists some $J \in \mathcal{J}^-_{k_l}$ such that $P_{k_l}$ passes through it and then leaves the very nice cube containing $J$ before using any $e_n\in\{e_1,\cdots,e_N\}$ or leaving $V_{3\varepsilon r}(\bm z_{k_l})$.
     \end{claim}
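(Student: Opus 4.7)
The plan is to prove Claim \ref{claim1} by contradiction, mirroring the structure of the argument around $\kappa_l$ in the proof of Lemma \ref{upp-strange-Ik}. Assume that no such $J$ exists; equivalently, setting $J_0^+ := J^{(1)-}_{k_l,q(k_l,i_{k_l})}$ and $J_{N+1}^- := J^{(2)-}_{k_l,q(k_l,i_{k_l})}$, and writing $J_n^-, J_n^+ \in \mathcal{J}^-_{k_l}$ for the entry and exit endpoints of $e_n$, the hypothesis asserts that for every $n\in\{0,1,\ldots,N\}$ the segment of $P_{k_l}$ from $J_n^+$ to $J_{n+1}^-$ (which uses only edges in $\mathcal{E}_{k_l}^-$) does not exit the $J'$ cube containing $J_n^+$ before the next edge is used. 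Since each $e_n$ lies in $\Omega_{k_l}^-$ and joins great-pair cubes in two adjacent (in the $q$-labeling) $J'$ cubes, and since \eqref{choosenice} selects at most one nice cube per $J'$ cube, this forces $\{J_n^+, J_{n+1}^-\}$ to be contained in a single great pair $\{J^{(1)-}_{k_l,q(k_l,i_n)}, J^{(2)-}_{k_l,q(k_l,i_n)}\}$ for every $n$. The sequence of $J'$-indices visited via $e_1,\ldots,e_N$ therefore becomes a closed walk on the integers with $\pm 1$ steps based at $i_{k_l}$.

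Second, I would run a parity argument to produce a traversal inside $J'_0$. The key observation is that, because each step of the walk is $\pm 1$ and because the path between two consecutive visits to $J'_0$ stays outside $J'_0$, an exit from $J'_0$ through the $+1$ (respectively $-1$) side must be followed by a re-entry from the same side: consequently the type (in $\{J^{(1)-}, J^{(2)-}\}$) of the exit endpoint from one visit coincides with the type of the entry endpoint of the next visit. Telescoping over all visits to $J'_0$, the mismatch between the overall start-type ($J^{(1)-}$) and end-type ($J^{(2)-}$) forces at least one visit in which entry and exit occur in different elements of the great pair of $J'_0$; that visit is a full traversal of the great pair. By Definition \ref{interval-good} (3) applied to $\mathcal{E}_{k_l}^-$, the $D$-length of this segment of $P_{k_l}$ (equal to its $D_{k_l}^-$-length, since only $\mathcal{E}_{k_l}^-$-edges are used) is at least $b_1(\alpha\varepsilon r/K)^\theta$.

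Finally, I would derive a contradiction by comparing this lower bound with the upper bound coming from the geodesic property of $P_{k_l}$. Its total $D$-length equals $D(J^{(1)-}_{k_l,q(k_l,i_{k_l})}, J^{(2)-}_{k_l,q(k_l,i_{k_l})})$, which in turn is bounded above by the length of an $\mathcal{E}_{k_l}^-$-only path inside the nice cube $J^-_{k_l,q(k_l,i_{k_l})}$. Combining Definition \ref{interval-good} (4) with the great-pair-cube diameter bound already exploited in \eqref{JkiSmall} and the choice of $\eta$ in \eqref{eta0}, this upper bound is strictly smaller than $b_1(\alpha\varepsilon r/K)^\theta$, yielding the desired contradiction. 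The main obstacle I anticipate is the parity step: one must verify the exit/re-entry type matching rigorously for an arbitrary closed $\pm 1$ walk, handling excursions of arbitrary length and accounting carefully for the boundary visits at the start and end. A secondary subtlety is tuning the final metric comparison so that the $\log$-factor from Definition \ref{interval-good} (4) does not swamp the gap between the two bounds, which is precisely what the parameter choice \eqref{eta0} (together with \eqref{M0}) is designed to guarantee.
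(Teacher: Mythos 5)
The proposal takes a genuinely different route from the paper and, as written, has a gap at the final step.

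The paper proves the claim by a one-sided induction: after WLOG placing one endpoint of $e_1$ in $J^{(1)-}_{k_l,q(k_l,i_{k_l})}$, it shows that after every $e_n$ the path sits in a very nice cube whose $q$-index is \emph{strictly below} $i_{k_l}$, because a geodesic cannot re-enter the starting cube $J^{(1)-}_{k_l,q(k_l,i_{k_l})}$. Hence after $e_N$ the path is still stranded below $i_{k_l}$ and must escape a very nice cube without using an $\mathcal{E}\setminus\mathcal{E}_{k_l}^-$ edge, contradicting the negation of the claim. Your parity/telescoping argument is a reasonable alternative packaging of the combinatorics (the $\pm 1$ walk structure is the same), and the conclusion that entry-type matching forces a visit to $J'_0$ whose entry and exit lie in different members of the great pair is correct in spirit (modulo checking that the path cannot leave $V_{3\varepsilon r}(\bm z_{k_l})$ in between, an edge case the negation of the claim permits and which your walk bookkeeping ignores).

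The genuine error is in the final contradiction. You assert that Definition \ref{interval-good} (4), \eqref{JkiSmall} and \eqref{eta0} combine to give an upper bound on $D(J^{(1)-}_{k_l,q(k_l,i_{k_l})},J^{(2)-}_{k_l,q(k_l,i_{k_l})})$ that is \emph{strictly smaller} than $b_1(\alpha\varepsilon r/K)^\theta$. This cannot be true: the nice cube is $(\varepsilon r/K,\alpha,\eta)$-nice with respect to $\mathcal{E}_{k_l}^-$, so Definition \ref{interval-good} (3) forces $D_{k_l}^-(J^{(1)-},J^{(2)-})\geq b_1(\alpha\varepsilon r/K)^\theta$ for the very same pair of cubes, and Definition \ref{interval-good} (4) must be consistent with this (its bound is $\geq C_1(\alpha\varepsilon r/K)^\theta\log 2$, with $C_1$ large and $b_1$ small by construction). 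The quantity controlled in \eqref{JkiSmall} is the \emph{diameter} of each cube $J^{(j)-}$ individually (they have Euclidean side $\eta\varepsilon r/K$, hence tiny diameter), not the \emph{distance} between $J^{(1)-}$ and $J^{(2)-}$, which is bounded below by $\alpha\varepsilon r/K$ Euclidean. Conflating these is the source of the false contradiction. Note also that the conclusion your parity argument produces, a traversal of the great pair within $J'_0$, is not the conclusion of the claim (an exit from a very nice cube), which is what the ensuing argument needs to invoke Definition \ref{very nice} (2) against \eqref{M0}. Your argument could perhaps be salvaged qualitatively: since the traversal segment has $D$-length at least $D(J^{(1)-},J^{(2)-})$, which equals the total $D$-length of $P_{k_l}$, it must exhaust the entire geodesic, so $P_{k_l}$ uses only $\mathcal{E}_{k_l}^-$ edges, contradicting the standing assumption. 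But that is not the argument you wrote, and it would need careful handling of zero-$D$-length tails.
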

     Now we finish the proof assuming the claim. From Claim \ref{claim1} and  Definition \ref{very nice} (2), we get that $$D(J^{(1)-}_{k_l,q(k_l,i_{k_l})},J^{(2)-}_{k_l,q(k_l,i_{k_l})})\geq (b_2\alpha)^{2.6\theta}(\varepsilon r)^\theta. $$
     However, from Definition \ref{super good} (2), we get that
     $$
     D(J^{(1)-}_{k_l,q(k_l,i_{k_l})},J^{(2)-}_{k_l,q(k_l,i_{k_l})})\leq D_{k_l}^-(J^{(1)-}_{k_l,q(k_l,i_{k_l})},J^{(2)-}_{k_l,q(k_l,i_{k_l})})\leq 2\widetilde{C}K^{-\theta}\log K (\varepsilon r)^\theta.
     $$
     Thus we get a contradiction recalling \eqref{M0}. Hence, one can see that any $D$-geodesic between $J^{(1)-}_{k_l,q(k_l,i_{k_l})}$ and $J^{(2)-}_{k_l,q(k_l,i_{k_l})}$ (denoted by $P_{k_l}$) does not use any edge in $\mathcal{E}\setminus \mathcal{E}_{k_l}^-$.

     As a result, we get that $P_{k_l}$ is also a path on $\mathcal{E}_{k_l}^-$ and
     \begin{equation}\label{DKLleDZ-}
         D_{k_l}^-(J^{(1)-}_{k_l,q(k_l,i_{k_l})},J^{(2)-}_{k_l,q(k_l,i_{k_l})})
         =D(J^{(1)-}_{k_l,q(k_l,i_{k_l})},J^{(2)-}_{k_l,q(k_l,i_{k_l})})\leq D_Z^-(J^{(1)-}_{k_l,q(k_l,i_{k_l})},J^{(2)-}_{k_l,q(k_l,i_{k_l})}).
     \end{equation}
     Plugging \eqref{DZ=DKL} and \eqref{DKLleDZ-} into \eqref{SegmentShortcut}, we get that
     \begin{equation}\label{newshortcut}
        \widetilde{D}_Z^-(J^{(1)-}_{k_l,q(k_l,i_{k_l})},J^{(2)-}_{k_l,q(k_l,i_{k_l})})
        \le c'D_Z^-(J^{(1)-}_{k_l,q(k_l,i_{k_l})},J^{(2)-}_{k_l,q(k_l,i_{k_l})}).
     \end{equation}

     Combining \eqref{newshortcut}, \eqref{JkiSmall} with the triangle inequality, we obtain that for any $l\in[1,N]_{\mathds{Z}}$,
     \begin{equation}\label{PointShortcut}
        \begin{split}
        &\widetilde{D}_{Z}^-(P(t^{(1)}_{k_l}),P(t^{(2)}_{k_l}))\\
        &\le\widetilde{D}_{Z}^-(J^{(1)-}_{k_l,q(k_l,i_{k_l})},J^{(2)-}_{k_l,q(k_l,i_{k_l})})
        +\sum_{j=1}^2{\rm diam}(J_{k_l,q(k_l,i_{k_l})}^{(j)-};\widetilde{D}_Z^{-})\quad
        (\text{by the triangle inequality})\\
        &\le c'D_Z^-(J^{(1)-}_{k_l,q(k_l,i_{k_l})},J^{(2)-}_{k_l,q(k_l,i_{k_l})})+\sum_{j=1}^2{\rm diam}(J_{k_l,q(k_l,i_{k_l})}^{(j)-};\widetilde{D})\quad (\text{by } \eqref{newshortcut})\\
        &\le c'D_Z^-(J^{(1)-}_{k_l,q(k_l,i_{k_l})},J^{(2)-}_{k_l,q(k_l,i_{k_l})})+C_*\sum_{j=1}^2{\rm diam}(J_{k_l,q(k_l,i_{k_l})}^{(j)-};D)\quad (\text{by the definition of }C_*)\\
        &\le c'D_Z^-(P(t^{(1)}_{k_l}),P(t^{(2)}_{k_l}))+C_*\sum_{j=1}^2{\rm diam}(J_{k_l,q(k_l,i_{k_l})}^{(j)-};D)\quad (\text{by the triangle inequality}) \\
        &\le C_* D_Z^-(P(t^{(1)}_{k_l}),P(t^{(2)}_{k_l}))-(C_*-c')b_1\alpha^\theta K^{-\theta}(\varepsilon r)^\theta+C_*\sum_{j=1}^2{\rm diam}(J_{k_l,q(k_l,i_{k_l})}^{(j)-};D)\\
        &\quad\quad  \text{(by Definition \ref{interval-good} (3) of nice cube)}\\
        &\le C_*D_Z^-(P(t^{(1)}_{k_l}),P(t^{(2)}_{k_l}))-\frac{(C_*-c')b_1\alpha^\theta}{2} K^{-\theta}(\varepsilon r)^\theta. \quad (\text{by }\eqref{JkiSmall})
        \end{split}
     \end{equation}


  Moreover, it follows from the definition of $C_*$ that
    \begin{equation}\label{LipschitzDeleteEdge}
        \widetilde{D}_{Z}^-(P(t^{(2)}_{k_l}),P(t^{(1)}_{k_{l+1}}))\le C_* D_{Z}^-(P(t_{k_l}),P(s_{k_{l+1}})).
    \end{equation}
    Therefore,  we get that

     \begin{equation}\label{Inequality4.16Compare}
        \begin{split}
            \widetilde{D}_{Z}^-(\bm x,\bm y)&\le\sum_{l=1}^N \widetilde{D}_{Z}^-(P(t^{(1)}_{k_l}),P(t^{(2)}_{k_l}))+\sum_{l=0}^N \widetilde{D}_{Z}^-(P(t^{(2)}_{k_l}),P(t^{(1)}_{k_{l+1}}))\quad \quad
            \text{(by the triangle inequality)}\\
            &\le C_*\sum_{l=1}^N D^-_Z(P(t^{(1)}_{k_l}),P(t^{(2)}_{k_l}))+C_*\sum_{i=0}^N D_{Z}^-(P(t^{(2)}_{k_l}),P(t^{(1)}_{k_{l+1}}))\\
            &\quad\quad -\frac{(C_*-c')b_1\alpha^\theta}{2} K^{-\theta}(\varepsilon r)^\theta\# Z'\quad \text{(by \eqref{PointShortcut} and \eqref{LipschitzDeleteEdge})}\\
            &= C_*\mathrm{len}(P;D_Z^-)-\frac{(C_*-c')b_1\alpha^\theta}{2} K^{-\theta}(\varepsilon r)^\theta\# Z'\\
            &\le C_* D(\bm x,\bm y)+C_* C_4 (\varepsilon r)^\theta \# Z-\frac{(C_*-c')b_1\alpha^\theta}{2} K^{-\theta}(\varepsilon r)^\theta\# Z'
            \quad\quad \text{(by \eqref{Inequality4.17DG21})}\\
            &\le C_* D_Z^-(\bm x,\bm y)+C_* C_4 (\varepsilon r)^\theta \# Z-\frac{(C_*-c')b_1\alpha^\theta}{2} K^{-\theta}(\varepsilon r)^\theta\# Z'\quad \quad \text{(by $\mathcal{E}_Z^-\subset\mathcal{E}$)}.
        \end{split}
    \end{equation}
    Additionally, by Definition \ref{G-} (1) of $\mathsf{G}_{Z,\varepsilon}^-(\mathcal{E})$ and $Z\neq\emptyset$, we have
    \begin{equation*}\label{Inequality4.16EventG1}
        \widetilde{D}_{Z}^-(\bm x,\bm y)\ge C_* D_Z^-(\bm x,\bm y)-(\varepsilon r)^\theta\ge  C_*D_Z^-(\bm x,\bm y)-(\varepsilon r)^\theta\# Z.
    \end{equation*}
    Combining this with (\ref{Inequality4.16Compare}), we get
    \begin{equation*}
        -(\varepsilon r)^\theta\# Z\le C_* C_4 (\varepsilon r)^\theta \# Z-(C_*-c')b_1 K^{-\theta}(\alpha\varepsilon r)^\theta \# Z'/2
    \end{equation*}
    which implies
    \begin{equation*}
        \# Z'\le C_3\# Z\quad\text{for}\quad C_3=\frac{2(1+C_4C_*)}{b_1\alpha^\theta K^{-\theta}(C_*-c')}.
    \end{equation*}

     All that now remains is to give a proof of the claim.

\noindent \textbf{Proof of Claim \ref{claim1}.}
Recall that $\mathcal{J}^-_{k_l}$ is the collection of $J_{k_l,q(k_l,i)}^{(j)-}$ for $j=1,2$ and $i\in [1,3^d(b_2\alpha)^{-2d}]_{\mathds{Z}}$. Without loss of generality, assume that the end point $\bm x$ of $e_1=\langle \bm x,\bm y\rangle $ is in $J_{k_l,q(k_l,i_{k_l})}^{(1)-}=:J(\bm x)$. The reason why we can make such assumption is that otherwise, $P_{k_l}$ passes through $J_{k_l,q(k_l,i_{k_l})}^{(1)-}$ and then leaves the nice cube containing $J_{k_l,q(k_l,i_{k_l})}^{(1)-}$ before using any $e_n\in \{e_1,\cdots,e_N\}$, which implies Claim \ref{claim1} holds.

    From the assumption, we can see that $\bm x$ is the start point of $P_{k_l}$. Note that after walking along  $e_1$, $P_{k_l}$ must pass through the nice cube $J(\bm y):=J_{k_l,q(k_l,i_{k_l}-1)}^{(2)-}$ containing $\bm y$.

    We now prove Claim \ref{claim1} by contradiction. Suppose that Claim \ref{claim1} does not hold. We then can see that $P_{k_l}$ must be a union of some $e_n$'s and some $D_{k_l}^-$-geodesics such that each of these geodesics only intersects one very nice cube. Recall that $P_{k_l}$ is a $D$-geodesic between $J(\bm x)=J_{k_l,q(k_l,i_{k_l})}^{(1)-}$ and $J_{k_l,q(k_l,i_{k_l})}^{(2)-}$. From Definitions \ref{add} and \ref{delete}, we only remove edges in the region
      $$
      \Omega_{k_l}^-=\bigcup_{i=1}^{3^d(b_2\alpha)^{-2d}-1}\left(J_{k_l,q(k_l,i)}^{(2)-}\times J_{k_l,q(k_l,i+1)}^{(1)-}\right).
      $$
        Recall that for any $i\in[1,3^d(b_2\alpha)^{-2d}]_\mathds{Z}$, $J_{k_l,i}$ is the very nice cube containing $J_{k_l,q(k_l,i)}^{(1)-} $ and $J_{k_l,q(k_l,i)}^{(2)-} $. We will show that for any $n\in[1,N]_\mathds{N}$, after walking along $e_n$, $P_{k_l}$ enters $J_{k_l,i_n}$ for some $i_n<i_{k_l}$ by induction on $n$. The case when $n=1$ has been shown since after walking along $e_1$, $P_{k_l}$ enters $J(\bm y)\subset J_{k_l,i_{k_l-1}}$. Assume that after walking along $e_n$, $P_{k_l}$ enters $J_{k_l,i_n}$ for some $n<N$ and $i_n<i_{k_l}$, and we next consider the case for $n+1$. By the assumption that Claim \ref{claim1} is not true, after walking along $e_n$, $P_{k_l}$ must use an edge in $\mathcal{E}\setminus \mathcal{E}_{k_l}^-$ to escape $J_{k_l,i_n}$. By the definition of $e_1,e_2,\cdots,e_N$ before Claim \ref{claim1}, $P_{k_l}$ must use $e_{n+1}$ to escape $J_{k_l,i_n}$. Thus after walking along $e_{n+1}$, $P_{k_l}$ enters either $J^{(1)-}_{k_l,q(k_l,i_n+1)}$ or $J^{(2)-}_{k_l,q(k_l,i_n-1)}$. However, since $P_{k_l}$ is a $D$-geodesic between $J(\bm x)=J_{k_l,q(k_l,i_{k_l})}^{(1)-}$ and $J_{k_l,q(k_l,i_{k_l})}^{(2)-}$, it cannot enter $J_{k_l,q(k_l,i_{k_l})}^{(1)-}$ after walking along $e_{n+1}$. As a result, recalling that $i_n<i_{k_l}$, we get that after walking along $e_{n+1}$, $P_{k_l}$ enters $J_{k_l,i_{n+1}}$ for some $i_{n+1}<i_{k_l}$.

        From the preceding paragraph, we get that after walking along $e_N$, $P_{k_l}$ enters a very nice cube $J_{k_l,i}$ for some $i<i_{k_l}$. Since $P_{k_l}$ is a $D$-geodesic between $J(\bm x)=J_{k_l,q(k_l,i_{k_l})}^{(1)-}$ and $J_{k_l,q(k_l,i_{k_l})}^{(2)-}$, it must escape $J_{k_l,i}$ after walking along $e_N$. Then from the assumption that Claim \ref{claim1} is not true, $P_{k_l}$ must use an edge in $\mathcal{E}\setminus\mathcal{E}_{k_l}^-$ to escape $J_{k_l,i}$ after walking along $e_N$. Then this leads to a contradiction since we have assumed that $e_N$ is the last edge in $\mathcal{E}\setminus\mathcal{E}_{k_l}^-$ used by $P_{k_l}$.
    \begin{figure}[htbp]
        \centering
        \includegraphics[scale=0.6]{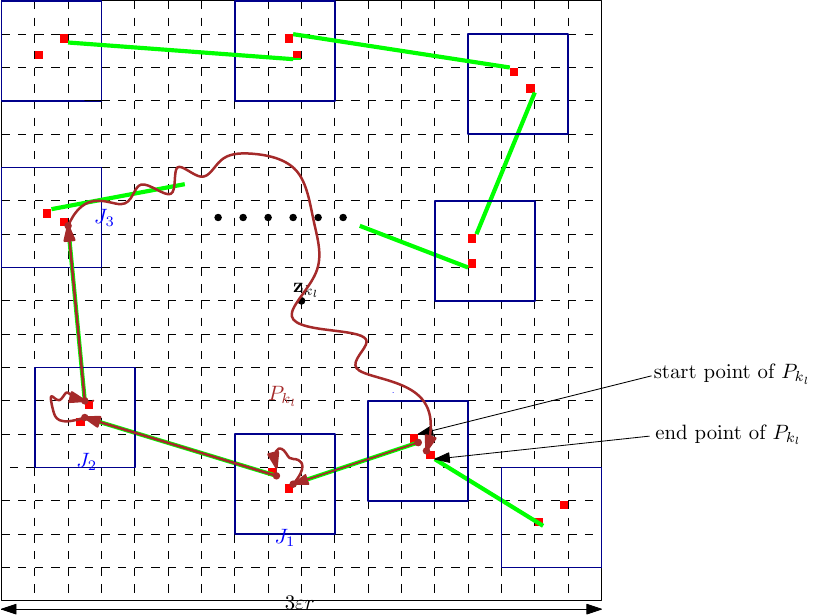}
        \caption{The illustration for Claim \ref{claim1}. The brown path $P_{k_l}$ uses  edges in $\mathcal{E}\setminus\mathcal{E}_{k_l}^-$ to arrive at $J_1, J_2, J_3\in \mathcal{J}^-_{k_l}$ respectively. 
        Then, there exists at least one very nice cube (i.e. blue cubes) containing some $J\in\mathcal{J}_{k_l}^-$ (i.e. $J_3$ in the figure) such that $P$ leaves it without using any edge in $\mathcal{E}\setminus\mathcal{E}_{k_l}^-$ (green lines) at least once.}
        \label{claim1T}
    \end{figure}
    Hence we complete the proof of Claim \ref{claim1}.
\end{proof}

\begin{proof}[Proof of Proposition \ref{Prop4.6DG21}]
Let us assume the existence of a nonempty set $Z_0 \in \mathcal{W}_\varepsilon$ with $\# Z_0 \le m$ such that $\mathsf{G}_{Z_0,\varepsilon}^-(\mathcal{E}_{Z_0}^-,\mathcal{E})$ occurs (otherwise, (\ref{Inequality4.6DG21}) holds vacuously).
We define $Z_1 \in \mathcal{Z}_\varepsilon$ to be a set such that every $k \in Z_1$ is $\varepsilon$-excellent (Definition \ref{DefeZgood}). We further assume that $\# Z_1$ is maximal among all subsets of $\mathcal{Z}_\varepsilon$ with this property. Applying Lemma \ref{Lemma4.16DG21}, we find that $\# Z_1 \le C_3 m$.

Now suppose that $Z\in\mathcal{Z}_\varepsilon$ such that $\mathsf{G}_{Z,\varepsilon}^-(\mathcal{E}_Z^-,\mathcal{E})$ occurs. We claim that for any $k\in Z$, there exists $k'\in  Z_1$ such that $V_{3\varepsilon r}(\bm z_k)\cap V_{3\varepsilon r}(\bm z_{k'})\neq \emptyset$. Otherwise, If there exists some $k\in Z$  such that  $k$ is $\varepsilon$-excellent and $V_{3\varepsilon r}(\bm z_k)$ does not intersect with $ V_{3\varepsilon r}(\bm z_{k'})$ for all $k'\in Z_1$, then $Z_1\cup\{k\}$ satisfies all requirements for $Z_1$, which contradicts the maximality of $\# Z_1$. Since the number of the cubes of the form $V_{3\varepsilon r}(\bm z_k)$ that intersect $V_{3\varepsilon r}(\bm z_{k_0})$ is at most $3^d$ for any $k_0\in  Z_1$, thus we conclude that
    \begin{equation*}
        \# \{ Z\in\mathcal{Z}_\varepsilon: \#Z\leq m\text{ and }\mathsf{G}_{Z,\varepsilon}^-(\mathcal{E}_Z^-,\mathcal{E})\text{ occurs} \}\le 2^{3^d\# Z_1}\le 2^{3^dC_3m}.
    \end{equation*}
    This gives (\ref{Inequality4.6DG21}) with $C_2=2^{3^dC_3}$.
\end{proof}

\subsection{Proof of uniqueness}


Before proceeding, we verify that auxiliary conditions (3) and (4) in Definition \ref{G} of the event $\mathscr{G}_r^\varepsilon(\bm x,\bm y)$ hold with high probability for small values of $\varepsilon$. Combined with  Proposition \ref{Prop4.3DG21}, this allows us to place an upper bound on the probability of the main condition (1).

\begin{lemma}\label{PG(2)(3)}
Let $r>0$ and let $\widetilde{\gamma},\widetilde{q}>0$ such that $\mathds{P}[\widetilde{G}_r(\widetilde{\gamma},\widetilde{q},c'')]\geq \widetilde{\gamma}$. It holds with probability tending to 1 as $\varepsilon\rightarrow 0$ {\rm(}at a rate depending only on $\beta,d$  and the laws of $D$ and $\widetilde{D}$, not on $r${\rm)} that Definition \ref{G} {\rm(3)} and {\rm (4)} of $\mathscr{G}_r^\varepsilon(\bm x,\bm y)$
hold for any $\bm x,\bm y\in V_r(\bm 0)$ with $|\bm x-\bm y|\geq \alpha r$ and $D(\bm x,\bm y)\geq (b\alpha r)^\theta/2$ {\rm(}recall $b=b(\alpha)$ is the constant in Definition \ref{def-H}{\rm)}.
\end{lemma}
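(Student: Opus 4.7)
The plan is to combine the renormalization framework of Section \ref{bi-lipschitz} with a BK-type estimate in the spirit of Lemma \ref{hd-BK}, applied to a local event that packages all three requirements of Definition \ref{G}(4) at once. Throughout, for each admissible pair $\bm x,\bm y\in V_r(\bm 0)$ I fix a $D$-geodesic $P=P_{\bm x\bm y}$ and consider its skeleton $P^G_{\bm x\bm y}$ in the renormalized graph $G$ on $(\varepsilon r)\mathds{Z}^d$.

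For Definition \ref{G}(3), note first that the hypothesis $D(\bm x,\bm y)\geq (b\alpha r)^\theta/2$ automatically gives $D(\bm x,\bm y)\geq 2\varepsilon^{\theta/2}r^\theta$ once $\varepsilon$ is small enough (depending only on $\alpha,b,\theta$). The event $F_{\varepsilon,3}$ constructed in the proof of Lemma \ref{d-longer-than-D} then forces, uniformly in all such pairs,
\begin{equation*}
|P^G_{\bm x\bm y}|\ \geq\ d^G(\bm k_{\bm x},\bm k_{\bm y})\ \geq\ \varepsilon^{-\theta/4}-1,
\end{equation*}
with probability at least $1-O_\varepsilon(\varepsilon^\mu)$ for every $\mu>0$, which is exactly (3) after adjusting constants.

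For (4), introduce the local event $E_k$ that the following three properties hold simultaneously: (a) $V_{3\varepsilon r}(\bm z_k)$ is super super good with respect to $\mathcal{E}$; (b) $|\overline{\mathcal{E}}_k^+\cap\Lambda_k|\leq M_0$; and (c) each consecutive pair $(J_{k,q(k,i)}^{(2)},J_{k,q(k,i+1)}^{(1)})$ is connected by a long edge in $\mathcal{E}_k^+$. The factor (a) has probability at least $p_c$ by \eqref{porbsupersuper}; (b) at least $1-(1-p_c)/2$ by \eqref{M1-1}; and conditionally on (a) (which determines the nice cubes $J_{k,q(k,i)}$), (c) is the intersection of $3^d(b_2\alpha)^{-2d}-1$ independent events each of probability at least $1-\exp\{-c\widetilde{\beta}\}$, by the lower bound in \eqref{DkRegular} on the intensity of $\widetilde{\mathcal{E}}$. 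With $\widetilde{\beta}$ as large as required in \eqref{Ek1prob-1}, this makes $\mathds{P}[E_k]\geq p^\star$ for some constant $p^\star$ large enough that an adaptation of Lemma \ref{hd-BK}, using the independence in \eqref{supergoodindep} together with the independence of $\widetilde{\mathcal{E}}$ across disjoint cubes, gives that every fixed self-avoiding $P^G$ of graph length $L\geq \varepsilon^{-\theta/4}$ is $(\{E_k\},\alpha)$-good with probability at least $1-\delta^L$ for some $\delta<1/(2C_{dis})$. Being $(\{E_k\},\alpha)$-good produces a subset $\mathcal{K}\subset P^G$ with $\#\mathcal{K}\geq L/(4\cdot 3^d)\geq \varepsilon^{-\theta/4}/(8\cdot 3^d)$ on which $E_k$ holds; item~(3) of Definition \ref{hd-PGgood} gives $\mathcal{K}\in\mathcal{W}_\varepsilon$ for free, and the fact that $P$ hits $V_{\varepsilon r}(\bm z_k)$ for each $k\in\mathcal{K}$ is built into the skeleton construction.

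Uniformity over all admissible $(\bm x,\bm y)$ is obtained by the union-bound argument in the proof of Lemma \ref{Fr4lemma-2}: the number of endpoints $(\bm k_{\bm x},\bm k_{\bm y})$ is polynomial in $1/\varepsilon$, and Lemma \ref{number-path-k} combined with Markov's inequality bounds the number of self-avoiding paths of length $m$ in $G$ by $(2C_{dis})^m$ up to a superpolynomially small exception. Summing $(2C_{dis})^m\delta^m$ over $m\geq \varepsilon^{-\theta/4}$ is summable because $2\delta C_{dis}<1$, and yields the $O_\varepsilon(\varepsilon^\mu)$ bound claimed in the lemma. The main technical obstacle is the tension in the parameter choices: making $\mathds{P}[E_k]$ large enough that the corresponding $\delta$ from Lemma \ref{hd-BK} satisfies $2\delta C_{dis}<1$, while keeping $\widetilde{\beta}$ compatible with the Radon--Nikodym bound of Lemma \ref{EE+} used elsewhere in the paper. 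This tension is resolved precisely by the calibration \eqref{Ek1prob-1}, which ensures that the probabilities of (b) and (c) given (a) are arbitrarily close to~$1$.
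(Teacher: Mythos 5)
Your proposed $E_k$ bundles together (a) ``$V_{3\varepsilon r}(\bm z_k)$ is super super good with respect to $\mathcal{E}$'', (b) the bound on $|\overline{\mathcal{E}}_k^+\cap\Lambda_k|$, and (c) the connectivity of consecutive great pairs under $\mathcal{E}_k^+$, and then you apply a BK/Lemma~\ref{hd-BK}--type estimate to this family. But Lemma~\ref{hd-BK} requires $E_{\bm k}$ to be determined by edges in $V_{3\varepsilon R}(\bm k)$ and the internal metric $D(\cdot,\cdot;V_{3\varepsilon R}(\bm k))$, and neither (a) nor (c) has that locality. For (a), Definition~\ref{very nice}(2) asks that $D(V_{(b_2\alpha)^{2.6}\varepsilon r/2}(\bm w'_{k,i}),(J'(\bm w'_{k,i}))^c)$ be large, and $(J'(\bm w'_{k,i}))^c$ extends far outside $V_{3\varepsilon r}(\bm z_k)$; a long edge with one endpoint outside the cube could short-circuit this distance. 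You cite the independence in~\eqref{supergoodindep}, but that equation asserts independence of the \emph{directional} events ``$\bm e$-super good'' for a \emph{fixed} $\bm e$, not of the plain super good events; the passage from the directional version to the undirected one (Lemma~\ref{veryniceequ} plus a union bound over $\bm e\in\{-1,1\}^d$) is exactly the device introduced by Lemma~\ref{supersuper1}, and your sketch skips it entirely.

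For (c) the gap is of a different nature but equally real: the potential-edge region $\Omega_k$ is \emph{random}, being determined by which small cubes are nice, and niceness is determined by $\mathcal{E}$ in a way that reaches outside $V_{3\varepsilon r}(\bm z_k)$. Consequently the events $E'_{\bm k,1}$ (connectivity of the random great pairs under $\widetilde{\mathcal{E}}$) are not mutually independent across $k$, even though $\widetilde{\mathcal{E}}$ itself is an independent Poisson process. Your line ``conditionally on (a) ... (c) is the intersection of independent events'' is correct for a single $k$ but says nothing about the joint behavior over a path. The paper resolves this by introducing the \emph{deterministic} covering $\{U_{\bm k,i}\}$ with spacing $\eta\varepsilon r/(2K)$, defining the stronger but purely local event $E_{\bm k,1}$ that \emph{all} sufficiently separated pairs $U_{\bm k,i},U_{\bm k,j}$ are $\widetilde{\mathcal{E}}$-connected (so that $E_{\bm k,1}\subset E'_{\bm k,1}$ whenever $\bm k$ is super good and the $E_{\bm k,1}$ are i.i.d. across disjoint cubes, measurable in $\widetilde{\mathcal{E}}|_{V_{3\varepsilon r}(\bm k)\times V_{3\varepsilon r}(\bm k)}$), and only then applying Lemma~\ref{hd-BK} to $E_{\bm k,1}\cap E_{\bm k,2}$ \emph{separately} from the super-good count obtained via Lemma~\ref{supersuper1}, intersecting the two at the end. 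Your calibration discussion of $\widetilde\beta$ and $C_{dis}$ is in the right spirit, and your handling of (3) via $F_{\varepsilon,3}$ is correct, but as written the argument for (4) invokes an independence structure the lemma's ingredients do not have.
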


The main challenge in proving Lemma  \ref{PG(2)(3)} lies in the fact that the event described in Definition \ref{G} (4) is nonlocal, as condition (2) in Definition \ref{very nice} undermines the local property.
To overcome this challenge, similar to the proof of Lemma \ref{EE+}, we will introduce some deterministic regions (see \eqref{detLambda} and \eqref{det-region-1} below) that allow us to control the (random) regions where edges are added. In order to achieve this, we make some preparations before the proof of Lemma \ref{PG(2)(3)}.

We first apply \eqref{d-longer-than-D-3} and get that
\begin{equation}\label{P(2)}
\begin{split}
&\mathds{P}\left[ \text{Definition \ref{G} (3) holds  for any }  \bm x,\bm y\in V_r(\bm 0) \text{ with }|\bm x-\bm y|\geq\alpha r\text{ and }D(\bm x,\bm y)\geq (b\alpha r)^\theta/2\right]\\
&\to 1\quad \text{as } \varepsilon\rightarrow 0.
\end{split}
\end{equation}
For convenience, we denote the event in \eqref{P(2)} as $A_{r,\varepsilon}$.

We also need the renormalization introduced in Section \ref{bi-lipschitz}.
Specifically,
we divide $\mathds{R}^d$ into small cubes of side length $\varepsilon r$, i.e., $\mathds{R}^d=\cup_{\bm k\in (\varepsilon r)\mathds{Z}^d}V_{\varepsilon r}(\bm k)$. 
Note that under this partition, the cube $V_r(\bm 0)=\cup_{\bm k\in (\varepsilon r)[-1/(2\varepsilon),1/(2\varepsilon)]_\mathds{Z}^d}V_{\varepsilon r}(\bm k)$.
In the following, we write $J_{\bm k,\cdot}$, $J'(\bm w'_{\bm k,\cdot})$, $J''(\bm w''_{\bm k,\cdot})$ as the nice, very nice, very very nice cubes, and write $(J^{(1)}_{\bm k,\cdot},J^{(2)}_{\bm k,\cdot})$ as the great pair of small cubes in $V_{3\varepsilon r}(\bm k)$. If $V_{3\varepsilon r}(\bm k)$ is super super good, we also refer to $J_{\bm k, q(\bm k,\cdot)}$ as the nice cubes in it chosen in \eqref{choosenice}.

We identify the cube $V_{\varepsilon r}(\bm k)$ with the vertex  $\bm k$ and call the resulting graph $G$. Denote $P$ and $P^G$ for the paths in the continuous model and in $G$, respectively.

Now for any $\bm x,\bm y\in V_r(\bm 0)$, let $\bm k_{\bm x},\bm k_{\bm y}\in (\varepsilon r)[-1/(2\varepsilon),1/(2\varepsilon)]_\mathds{Z}^d$ be such that $\bm x\in V_{\varepsilon r}(\bm k_x)$ and $\bm y\in V_{\varepsilon r}(\bm k_y)$. Then using the renormalization, we can see that on the event $A_{r,\varepsilon}$, the skeleton path (recall Definition \ref{pathc-d}) $P^G_{\bm k_{\bm x} \bm k_{\bm y}}$ of any $D$-geodesic $P$ from $\bm x$ to $\bm y$ satisfies
\begin{equation}\label{PG>}
|P^G_{\bm k_x\bm k_y}|\geq \varepsilon^{-\theta/4}.
\end{equation}
This implies that we only need to consider the paths in $G$ that satisfy \eqref{PG>}. To ease exposition, recall that $\mathcal{P}_m(\bm i,\bm j)$ is the collection of self-avoiding paths from $\bm i$ to $\bm j$ in $G$ with length $m$ and that $\mathcal{P}_{\geq m}(\bm i,\bm j)=\cup_{n\geq m}\mathcal{P}_n(\bm i,\bm j)$. For the sake of convenience, for each $\bm k\in (\varepsilon r)\mathds{Z}^d$ if $V_{3\varepsilon r}(\bm k)$ is super good (resp. $\bm e$-super good) (see Definition \ref{super good}), we say $\bm k$ is super good (resp. $\bm e$-super good).

We also note that for a path $P_{\bm i\bm j}^G$ from $\bm i$ to $\bm j$ with length $m$ for some $m\geq \varepsilon^{-\theta/4}$ and $\bm i,\bm j\in (\varepsilon r)[-1/(2\varepsilon),1/(2\varepsilon)]_\mathds{Z}^d$, it must pass through at least $\varepsilon^{-\theta/4}m/3^d$ points $\bm k\in (\varepsilon r)\mathds{Z}^d$ such that $V_{3\varepsilon r}(\bm k)$'s are disjoint. Denote the set of those points as $\mathcal{K}(P_{\bm i\bm j}^G)$.

Using \eqref{supergoodcriticalprobability}, we can show that with high probability, $P_{\bm i\bm j}^G$ passes through lots of super good sites (here a site is super good if the cube centered at it is super good) as follows.
\begin{lemma}\label{supersuper1}
For $m\geq \varepsilon^{-\theta/4}$ and $\bm i,\bm j\in (\varepsilon r)[-1/(2\varepsilon),1/(2\varepsilon)]_\mathds{Z}^d$, let $P_{\bm i\bm j}^G$ be a path from $\bm i$ to $\bm j$ with length $m$. Then we have
$$
\mathds{P}[P^G_{\bm i\bm j} \text{ passes through at most $ 7m/(8\cdot3^d)$ super good sites}|G]\leq 2^d(1-p'_c)^{m/(8\cdot6^d)},
$$
where $p_c':=p_c\vee (1-(4C_{dis})^{-8\cdot 6^d})$ and $p_c$ is the  constant defined in Lemma \ref{hd-BK} with $\delta=1/(4C_{dis})$ {\rm(}here $C_{dis}$ is the constant defined in Lemma \ref{number-path-k}{\rm)}.
\end{lemma}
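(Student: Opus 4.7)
\smallskip

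The plan is to control the number of super good sites by decomposing the super good event along the $2^d$ coordinate directions, where independence across disjoint cubes is available, and then to run an exponential Markov (Chernoff) bound against the independent directional events. The motivation for going through directions is that the super good property is nonlocal (the internal-metric lower bound in Definition \ref{very nice} (2) involves $V_{(b_2\alpha)^{2.6}\varepsilon r/2}(\bm w'_{k,i})$ and the whole $(J'(\bm w'_{k,i}))^c$), so the super good events on distinct cubes need not be jointly independent; however, by Lemma \ref{veryniceequ} together with Definitions \ref{very very nice-e} and \ref{super good}, being super good is equivalent to being $\bm e$-super good for every $\bm e\in\{-1,1\}^d$, and for each fixed $\bm e$ the events $\{V_{3\varepsilon r}(\bm z_k)\text{ is $\bm e$-super good}\}$ on disjoint cubes $\{V_{3\varepsilon r}(\bm z_k)\}_{\bm k\in\mathcal{K}}$ \emph{are} independent by \eqref{supergoodindep}, each of probability at least $p_c'$ by \eqref{supergoodcriticalprobability}.

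First, by the pigeonhole translation argument used in the proof of Lemma \ref{hd-BK}, the set $\mathcal{K}=\mathcal{K}(P^G_{\bm i\bm j})$ has $\#\mathcal{K}\geq m/3^d$ and the cubes $V_{3\varepsilon r}(\bm k)$ for $\bm k\in\mathcal{K}$ are pairwise disjoint. If $P^G_{\bm i\bm j}$ passes through at most $7m/(8\cdot 3^d)$ super good sites, then in particular at most $7m/(8\cdot 3^d)$ elements of $\mathcal{K}$ are super good, so at least $\#\mathcal{K}-7m/(8\cdot 3^d)\geq m/(8\cdot 3^d)$ elements of $\mathcal{K}$ are \emph{not} super good. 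Since a site fails to be super good only if it fails to be $\bm e$-super good for some $\bm e\in\{-1,1\}^d$, the pigeonhole principle produces an $\bm e^\star\in\{-1,1\}^d$ and a subset $\mathcal{K}^\star\subset\mathcal{K}$ of size $\geq m/(8\cdot 6^d)$ all of whose elements are not $\bm e^\star$-super good.

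A union bound over the $2^d$ choices of $\bm e$ therefore gives
\begin{equation*}
\mathds{P}\bigl[P^G_{\bm i\bm j}\text{ passes through at most }7m/(8\cdot 3^d)\text{ super good sites}\,\big|\,G\bigr]
\leq \sum_{\bm e\in\{-1,1\}^d}\mathds{P}\bigl[N_{\bm e}\geq m/(8\cdot 6^d)\,\big|\,G\bigr],
\end{equation*}
where $N_{\bm e}:=\#\{\bm k\in\mathcal{K}:V_{3\varepsilon r}(\bm k)\text{ is not }\bm e\text{-super good}\}$. For each fixed $\bm e$ the summands in $N_{\bm e}$ are independent Bernoulli variables of parameter $\leq 1-p_c'$ by \eqref{supergoodindep} and \eqref{supergoodcriticalprobability} (and this independence is preserved after conditioning on $G$ because $G$ is determined by the existence of cross-cube long edges, a coarsening of the same edge data on which the factorization \eqref{supergoodindep} is based). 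Applying the exponential Markov inequality with $e^{t}=1/(1-p_c')$ yields the desired tail estimate for $\mathrm{Bin}(\#\mathcal{K},1-p_c')$, and the quantitative choice $p_c'\geq 1-(4C_{dis})^{-8\cdot 6^d}$ is precisely what will let the resulting Chernoff bound beat the combinatorial prefactor so as to produce a bound of the form $(1-p_c')^{m/(8\cdot 6^d)}$.

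The main technical point to be careful about is Step 5 above: the $\bm e$-super good event for $V_{3\varepsilon r}(\bm z_{\bm k})$ is determined by $\mathcal{E}\cap(V_{3\varepsilon r}(\bm z_{\bm k})\times(V_{3\varepsilon r}(\bm z_{\bm k})\cup\mathtt{V}_{(\bm e)}(\bm z_{\bm k})))$, so to justify independence across disjoint $\bm k\in\mathcal{K}$ (conditional on $G$) we need to quote \eqref{supergoodindep} and check that the sigma-algebra generated by $G$ is compatible with the relevant product decomposition of the Poisson point process; beyond that the argument is a clean union bound together with a standard exponential estimate.
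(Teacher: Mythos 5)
Your proposal is correct and follows essentially the same route as the paper: decompose along the $2^d$ coordinate directions $\bm e\in\{-1,1\}^d$, use the independence \eqref{supergoodindep} and the probability bound \eqref{supergoodcriticalprobability} for $\bm e$-super good events on the disjoint cubes of $\mathcal{K}(P^G_{\bm i\bm j})$, pigeonhole over directions, and then union-bound. One minor slip worth noting: being super good is not \emph{equivalent} to being $\bm e$-super good for all $\bm e$ (the implication only runs one way, since being very very nice — at least one very nice cube — is strictly weaker than being $\bm e$-very very nice for all $\bm e$, which demands a large fraction of $\bm e$-very nice cubes in each direction), but the contrapositive you actually invoke — not super good implies not $\bm e$-super good for some $\bm e$ — is precisely the direction the argument needs, so the proof goes through.
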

\begin{proof}
For each direction $\bm e\in\{-1,1\}^d$, as we have shown in \eqref{supergoodcriticalprobability}, there exist $\alpha,\eta$ and $K$, depending only on $\beta,d$ and the laws of $D$ and $\widetilde{D}$ (not on $r$), such that for each $\bm k\in (\varepsilon r)\mathds{Z}^d$,
\begin{equation}\label{e-supergoodprob}
 \mathds{P}[V_{3\varepsilon r}(\bm k)\text{ is $\bm e$-super good}]
 \geq  p'_c.
\end{equation}
 Moreover, as we mentioned in \eqref{supergoodindep}, for each $Z\subset (\varepsilon r)\mathds{Z}^d$ such that $V_{3\varepsilon r}(\bm k)$'s for $\bm k\in Z$ are disjoint,
\begin{equation}\label{e-super-indep}
\left\{V_{3\varepsilon r}(\bm k)\ \text{is $\bm e$-super good}\right\}_{\bm k\in Z}
\end{equation}
is a collection of independent events.
Therefore, combining the definition of $\mathcal{K}(P_{\bm i\bm j}^G)$   with \eqref{e-supergoodprob} and \eqref{e-super-indep}, we can see that
$$
\mathds{P}[\text{there are at most $(1-2^{-d-3})m/3^d$ $\bm e$-super good sites in }\mathcal{K}(P_{\bm i\bm j}^G) |G ]\leq (1-p'_c)^{m/(8\cdot6^d)}.
$$
Taking a union bound over all directions $\bm e$, we get that
\begin{equation*}
\begin{split}
&\mathds{P}[P^G_{\bm i\bm j} \text{ pass through at most $ 7m/(8\cdot3^d)$ super good sites}|G]\\
&\leq \mathds{P}[\text{there are at most $7 m/(8\cdot3^d)$ super good sites in }\mathcal{K}(P_{\bm i\bm j}^G) |G]\\
&\leq 2^d(1-p'_c)^{m/(8\cdot6^d)}.
\end{split}
\end{equation*}
Hence the proof is complete.
\end{proof}

We next introduce a deterministic region, which will contain the region where edges are added (see Definition \ref{add}). Let $\bm k\in (\varepsilon r)\mathds{Z}^d$. We divide $V_{3\varepsilon r}(\bm k)$ into $(6K/\eta)^d$ small cubes of side length $\eta\varepsilon r/(2K)$. Denote those small cubes as $U_{\bm k,i}$ for $i\in[1,(6K/\eta)^d]_\mathds{Z}$.
We observe that for any cube $V$ with side length $\eta\varepsilon r/K$ (which is also the side length of the cube in the great pair) that is contained in $V_{3\varepsilon r}({\bm k})$ (especially for $J_{\bm k,q(\bm k,i)}^{(l)}$ for $l=1,2$ if $V_{3\varepsilon r}(\bm k)$ is super good), there must exist an $i\in[1,(6K/\eta)^d]_{\mathds{Z}}$ such that $U_{{\bm k},i}\subset V$ (see Figure \ref{Proof733}).
Additionally, by \eqref{choosenice}, we see that if $V_{3\varepsilon r}(\bm k)$ is super good, then the great pairs of cubes in $V_{3\varepsilon r}({\bm k})$ all satisfy ${\rm dist}(J_{\bm k,q(\bm k,i)}^{(2)},J_{\bm k,q(\bm k,i+1)}^{(1)};\|\cdot\|_\infty)>(b_2\alpha)^{2.5}\varepsilon r$ (see Definition \ref{very nice} (2) which implies that $J_{\bm k,q(\bm k,i)}$ lies in the ``center'' of the associated very nice cube with side length $(b_2\alpha)^{2.5}\varepsilon r$).
    \begin{figure}[htbp]
        \centering
        \includegraphics[scale=0.7]{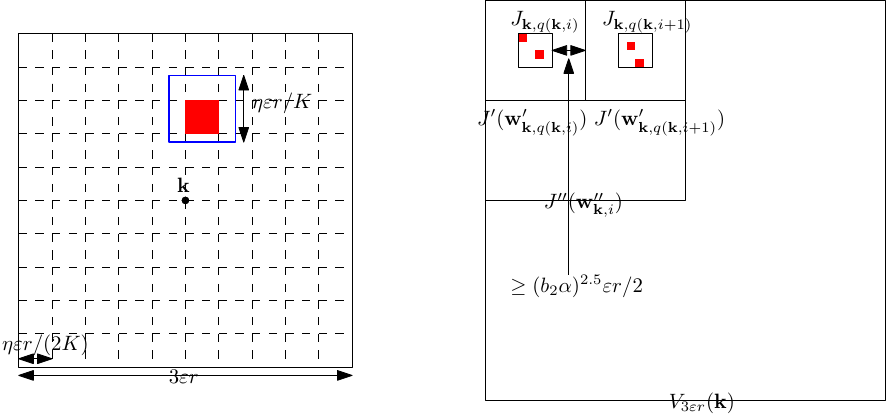}
        \caption{The left figure provides an explanation for the fact that the cube with side length $\eta\varepsilon r/(2K)$ (red cube) must be contained in a cube with side length $\eta\varepsilon r/K$ (blue box). The right figure provides an explanation for the $\ell^\infty$-distance between $J_{\bm k,q(\bm k,i)}^{(2)}$ and $J_{\bm k,q(\bm k,i+1)}^{(1)}$.}
        \label{Proof733}
    \end{figure}
Therefore, combining the above analysis with Definition \ref{add}, one has
\begin{equation}\label{det-region-1}
\Omega_{\bm k}\subset \bigcup_{i,j\in[1,(6K/\eta)^d]_\mathds{Z}:{\rm dist}(U_{\bm k,i},U_{\bm k,j})>(b_2\alpha)^{2.5}\varepsilon r} (U_{\bm k,i}\times U_{\bm k,j}).
\end{equation}

Now let $E_{\bm k,1}$ be the event that $U_{{\bm k},i}$ and $U_{{\bm k},j}$ are connected by $\widetilde{\mathcal{E}}$ whenever ${\rm dist}(U_{{\bm k},i},U_{{\bm k},j};\|\cdot\|_\infty)> (b_2\alpha)^{2.5}\varepsilon r $,
and let $E'_{\bm k, 1}$ be the event that $J_{\bm k,q(\bm k,i)}^{(2)}$ and $J_{\bm k,q(\bm k,i+1)}^{(1)}$ are connected by $\mathcal{E}^+_{\bm k}$
 for all $i\in[1,3^d(b_2\alpha)^{-2d}]_\mathds{Z}$ when $\bm k$ is super good. Then it is clear that $E_{\bm k,1}\subset E'_{\bm k,1}$ when $\bm k$ is super good. By the independence of the Poisson point process $\widetilde{\mathcal{E}}$, we also see that for each $Z\in (\varepsilon r)\mathds{Z}^d$ such that $V_{3\varepsilon r}(\bm k)$'s for $\bm k\in Z$ are disjoint,
\begin{equation}\label{E1-indep}
\{E_{\bm k,1}\}_{\bm k\in Z} \quad \text{is a collection of independent events}.
\end{equation}
Furthermore, from the locality of the Poisson point process $\widetilde{\mathcal{E}}$, we have that $E_{{\bm k,1}}$ is a.s.\ determined by $\widetilde{\mathcal{E}}|_{V_{3\varepsilon r}({\bm k})\times V_{3\varepsilon r}({\bm k})}$. It is worth emphasizing that $\{E'_{\bm k,1}\}_{\bm k\in Z}$ are not independent since Definition \ref{very nice} yields that the random regions $(J^{(2)}_{k,q(k,i)},J^{(1)}_{k,q(k,i+1)})$ in $\Omega_k$ are not independent. The introduction of the deterministic sets $U_{\bm k,i}\times U_{\bm k,j}$ allows us to define a collection of independent events $\{E_{\bm k,1}\}$ that are contained in $\{E'_{\bm k,1}\}$. Therefore, we can apply Lemma \ref{hd-BK} with $\{E_{\bm k,1}\}$ instead of $\{E'_{\bm k,1}\}$.

The following lemma provides a lower bound on $\mathds{P}[E_{\bm k,1}]$, which is also a lower bound on $\mathds{P}[E'_{\bm k,1}]$ when $\bm k$ is super good.

\begin{lemma}\label{Ek1porb}
For each $\bm k\in (\varepsilon r)\mathds{Z}^d$,
$$
\mathds{P}[E_{\bm k,1}]\geq  1-(6K/\eta)^{2d}\exp\{-\widetilde{\beta}(\eta/(2K))^{2d}\}.
$$
\end{lemma}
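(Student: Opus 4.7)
The plan is to prove Lemma~\ref{Ek1porb} by a direct union bound over pairs of sub-cubes, using the Poisson point process formula for $\widetilde{\mathcal{E}}$. Recall that $V_{3\varepsilon r}(\bm k)$ has been partitioned into $(6K/\eta)^d$ cubes $U_{\bm k,i}$ of common side length $\eta\varepsilon r/(2K)$, and $E_{\bm k,1}$ is the event that every pair $(U_{\bm k,i}, U_{\bm k,j})$ with $\ell^\infty$-distance exceeding $(b_2\alpha)^{2.5}\varepsilon r$ is connected by at least one edge of $\widetilde{\mathcal{E}}$.

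First, I would fix such a pair $(i,j)$ and compute, using the Poisson property of $\widetilde{\mathcal{E}}$ with intensity $\widetilde{\beta}/|\bm u-\bm v|^{2d}$ and its independence across disjoint product regions, that
\begin{equation*}
\mathds{P}\bigl[U_{\bm k,i}\text{ and }U_{\bm k,j}\text{ are not connected by }\widetilde{\mathcal{E}}\bigr]
=\exp\!\left\{-\widetilde{\beta}\iint_{U_{\bm k,i}\times U_{\bm k,j}}\frac{\d\bm u\,\d\bm v}{|\bm u-\bm v|^{2d}}\right\}.
\end{equation*}
Next I would lower-bound the double integral. Since both $U_{\bm k,i}$ and $U_{\bm k,j}$ lie in $V_{3\varepsilon r}(\bm k)$, every pair $(\bm u,\bm v)$ in the product satisfies $|\bm u-\bm v|\le 3\sqrt{d}\,\varepsilon r$, and each cube has volume $(\eta\varepsilon r/(2K))^d$. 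Thus the integral is at least a $d$-dependent constant times $(\eta/(2K))^{2d}$, which after absorbing this constant into the choice of $\widetilde\beta$ gives the per-pair bound $\exp\{-\widetilde{\beta}(\eta/(2K))^{2d}\}$.

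Finally, I would apply a union bound over the at most $(6K/\eta)^{2d}$ pairs $(i,j)$ to conclude
\begin{equation*}
\mathds{P}[E_{\bm k,1}^c]\le (6K/\eta)^{2d}\exp\{-\widetilde{\beta}(\eta/(2K))^{2d}\},
\end{equation*}
which is precisely the stated estimate. There is no genuine obstacle here: the bound follows immediately once one notes that the diameter of $V_{3\varepsilon r}(\bm k)$ is comparable to $\varepsilon r$ and that taking $\widetilde{\beta}$ large (which is allowed since it depends on $\beta,d,\alpha,K,\eta$) swamps any dimensional prefactor in front of $(\eta/(2K))^{2d}$. The mildly delicate point is simply being careful to use the Poisson formula on the disjoint region $U_{\bm k,i}\times U_{\bm k,j}$, so that the events across different pairs with disjoint supports remain independent and the union bound is clean.
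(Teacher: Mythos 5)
Your proposal matches the paper's proof essentially step for step: a union bound over the $(6K/\eta)^{2d}$ pairs of sub-cubes, the Poisson non-connection probability $\exp\{-\widetilde\beta\iint_{U_{\bm k,i}\times U_{\bm k,j}}|\bm u-\bm v|^{-2d}\,\d\bm u\,\d\bm v\}$, and a lower bound on the integral via (volume product)/(diameter$^{2d}$). Your remark that a dimensional constant (of order $(3\sqrt d)^{-2d}$) can be absorbed into the choice of $\widetilde\beta$ is a fair observation about the paper's slightly informal arithmetic, but is immaterial since $\widetilde\beta$ is in any case chosen large; the parenthetical about independence across pairs is unnecessary for a union bound but harmless.
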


\begin{proof}
By the definitions of small cubes $U_{\bm k,i}$ and the event $E_{\bm k,1}$, we obtain
\begin{equation*}
    \begin{split}
        \mathds{P}[E_{{\bm k},1}]&\geq 1- \sum_{i,j\in[1,(6K/\eta)^d]_\mathds{Z}:i\neq j}\exp\left\{-\int_{U_{{\bm k},i}}\int_{U_{{\bm k},j}}\frac{\widetilde{\beta}\d \bm u\d \bm v}{|\bm u-\bm v|^{2d}}\right\}\\
        &\geq 1-(6K/\eta)^{2d}\exp\{-\widetilde{\beta}(\eta/(2K))^{2d}\}.
    \end{split}
\end{equation*}
Thus we complete the proof.
\end{proof}

From Lemma \ref{Ek1porb} one can find that the probability of $E_{\bm k,1}$ can be arbitrarily close to 1 as $\widetilde{\beta}$ increases to infinity. Thus,  we choose a sufficiently large $\widetilde{\beta}>0$ such that for each $\bm k\in (\varepsilon r)\mathds{Z}^d$,
\begin{equation}\label{Ek1prob-1}
\mathds{P}[E_{\bm k,1}]\geq 1-\frac{1-p_c}{2},
\end{equation}
where $p_c$ is the  constant defined in Lemma \ref{hd-BK} with $\delta=1/(4C_{dis})$ (here $C_{dis}$ is the constant defined in Lemma \ref{number-path-k}).

We now recall another deterministic region, which is defined in \eqref{detLambda}. More precisely,
\begin{equation*} 
\Lambda_{\bm k} =\left\{(\bm x,\bm y)\in V_{3\varepsilon r}(\bm k)\times V_{3\varepsilon r}(\bm k):|\bm x-\bm y|\geq (b_2\alpha)^{2.5}\varepsilon r/2\right\}
\end{equation*}
is a deterministic region contained in $V_{3\varepsilon r}(\bm k)\times V_{3\varepsilon r}(\bm k)$. For each $Z\in (\varepsilon r)\mathds{Z}^d$ such that $V_{3\varepsilon r}(\bm k)$'s for $\bm k\in Z$ are disjoint, set $\Lambda_Z=\cup_{\bm k\in Z}\Lambda_{\bm k}$. Recall that $\overline{\mathcal{E}}_Z^+=\mathcal{E}\cup (\widetilde{\mathcal{E}}\cap (\cup_{\bm k\in Z}\Lambda_{\bm k}))$ and  we write $\overline{\mathcal{E}}_{\{\bm k\}}^+$ as $\overline{\mathcal{E}}_{\bm k}^+$.
As we mentioned in the proof of Lemma \ref{EE+} (the paragraph below \eqref{RNestimateEE+}), we have that $\Omega_{\bm k}\subset \Lambda_{\bm k}$ by Definition \ref{add}, and thus we have
$\mathcal{E}_Z^+\subset \overline{\mathcal{E}}_Z^+$.

Since $\{\overline{\mathcal{E}}_{\bm k}^+\cap\Lambda_{\bm k}\}_{\bm k\in Z}$ are i.i.d., to estimate the term associated to $\overline{\mathcal{E}}_Z^+$ in Definition \ref{G} (4)(i), we only need to consider that of each $\bm k$ as follows.
Recall that $M_0$ is a sufficiently large number chosen in \eqref{M1-1}, that is,
\begin{equation}\label{E2}
    \mathds{P}[|\overline{\mathcal{E}}_{\bm k}^+\cap \Lambda_{\bm k}|\leq M_0]\geq 1-\frac{1-p_c}{2}.
\end{equation}
%
We will denote the event on the left hand side of \eqref{E2} as $E_{\bm k,2}$. For each $Z\subset (\varepsilon r)\mathds{Z}^d$ such that $V_{3\varepsilon r}(\bm k)$'s for $\bm k\in Z$ are disjoint,
from the independence of $\{\overline{\mathcal{E}}_{\bm k}^+\}_{\bm k\in Z}$ we get that
\begin{equation}\label{E2-indep}
\{E_{\bm k,2}\}_{\bm k\in Z}\quad \text{is a collection of independent events.}
\end{equation}

With \eqref{E2} at hand, we can present the
\begin{proof}[Proof of Lemma \ref{PG(2)(3)}]
For any $\bm k\in (\varepsilon r)\mathds{Z}^d$, let $E_{\bm k}=E_{\bm k,1}\cap E_{\bm k,2}$.
Then by \eqref{E1-indep} and \eqref{E2-indep}, for each $Z\subset (\varepsilon r)\mathds{Z}^d$ such that $V_{3\varepsilon r}(\bm k)$'s for $\bm k\in Z$ are disjoint,  we have that $\{E_{\bm k}\}_{\bm k\in Z}$ is also a collection of independent events. Moreover, by \eqref{Ek1prob-1} and \eqref{E2}, we get that $\mathds{P}[E_{\bm k}]\geq p_c$ for each $\bm k\in (\varepsilon r)\mathds{Z}^d$.

For $m\geq \varepsilon^{-\theta/4}$ and $\bm i,\bm j\in (\varepsilon r)[-1/(2\varepsilon),1/(2\varepsilon)]_\mathds{Z}^d$, let $P_{\bm i\bm j}^G$ be a path from $\bm i$ to $\bm j$ with length $m$. Recall that $\mathcal{K}(P_{\bm i\bm j}^G)$ is the set of points $\bm k$ in $P_{\bm i\bm j}^G$ such that all $V_{3\varepsilon r}(\bm k)$'s are disjoint.
In addition, recall that Definition \ref{superdouble} (2) is Definition \ref{hd-PGgood} (2), wherein $R$ and $b_0$ are replaced with $r$ and $b_2$, respectively. Therefore, we can apply Lemma \ref{hd-BK} with the above event $\{E_{\bm k}\}_{\bm k\in \mathcal{K}(P^G_{\bm i\bm j})}$ and $\delta=1/(4C_{dis})$ to obtain
\begin{equation*}
\mathds{P}[P^G_{\bm i\bm j}\ \text{is } (\{E_{\bm k}\}_{\bm k\in \mathcal{K}(P^G_{\bm i\bm j})},\alpha)\text{-good}]\geq 1-(4C_{dis})^{-m}.
\end{equation*}
That is to say, similar to Lemma \ref{hd-BK}, we can get that with probability at least $1-(4C_{dis})^{-m}$ there exist at least $m/(4\cdot 3^d)$ $\bm k$'s in $\mathcal{K}(P_{\bm i\bm j}^G)$ such that $E_{\bm k}$ happens and Definition \ref{superdouble} (2) holds in $V_{3\varepsilon r}({\bm k})$. Additionally, from Definition \ref{hd-PGgood} (3) we can see that for any two different such subscripts ${\bm k_1}$ and ${\bm k_2}$, ${\bm k_1}-{\bm k_2}\in(3\varepsilon r)\mathds{Z}^d$.

Note that if $P_{\bm i\bm j}^G$ is $(\{E_{\bm k}\}_{\bm k\in \mathcal{K}(P^G_{\bm i\bm j})},\alpha)$-good and passes through at least $7m/(8\cdot 3^d)$ super good sites, then there are at least $(1/(4\cdot 3^d)-1/(8\cdot 3^d))m=m/(8\cdot 3^d) $ many $\bm k$'s such that (1) for two such $\bm k_1$ and $\bm k_2$, we have $\bm k_1-\bm k_2\in (3\varepsilon r)\mathds{Z}^d$;
(2) $V_{3\varepsilon r}({\bm k})$ is super super good; (3) $E_{\bm k} $ occurs. Then from the analysis before \eqref{E1-indep} and the definitions of $E_{{\bm k},1}$ and $E_{{\bm k},2}$ for ${\bm k}\in(\varepsilon r)\mathds{Z}^d$, 
we can get that $P_{\bm i\bm j}^G$ satisfies Definition \ref{G} (4).
Combining this with Lemma \ref{supersuper1}, we obtain that
\begin{equation}\label{G(4)}
\begin{split}
\mathds{P}[P_{\bm i\bm j}^G\text{ does not satisfy Definition \ref{G} (4)}] &\leq 2^d(1-p'_c)^{m/(8\cdot6^d)}+(4C_{dis})^{-m}\\
&\leq (2^d+1)(4C_{dis})^{-m}.
\end{split}
\end{equation}

Now let $B_\varepsilon$ be the event that for any $m\geq \varepsilon^{-\theta/4}$ and  $\bm i,\bm j\in (\varepsilon r)[-1/(2\varepsilon),1/(\varepsilon r)]_\mathds{Z}^d$ we have that $|\mathcal{P}_m(\bm i,\bm j)|\leq (2C_{dis})^m$, and let  $H_\varepsilon$ be the event that Definition \ref{G} (3) holds but Definition \ref{G} (4) does not hold for some $\bm x,\bm y\in V_r(\bm 0)$  with $|\bm x-\bm y|\geq\alpha r\text{ and }D(\bm x,\bm y)\geq (b\alpha r)^\theta/2$. Using the similar arguments in the proof of Lemma \ref{Fr4lemma-2}, we can see that
\begin{equation}\label{Beps}
\begin{split}
\mathds{P}[B^c_\varepsilon]&\leq \sum_{\bm i,\bm j\in (\varepsilon r)[-1/(2\varepsilon),1/(2\varepsilon)]_\mathds{Z}^d}\mathds{P}\left[ \exists m\geq \varepsilon^{-\theta/4}\ \text{such that }|\mathcal{P}_m(\bm i ,\bm j)|\geq (2C_{dis})^m\right]\\
&=O_\varepsilon(\varepsilon^{\mu})\quad \forall\mu>0.
\end{split}
\end{equation}
Here the implicit constant in $O_\varepsilon(\cdot)$ depends only on $\beta,d, \mu$ and the law of $D$. In addition, from \eqref{G(4)} we get that
\begin{equation}\label{H-prob}
\begin{split}
\mathds{P}[B_\varepsilon \cap H_\varepsilon]\leq (2^d+1)\sum_{\bm i,\bm j:|\bm i-\bm j|\geq \alpha/\varepsilon }\sum_{m\geq \varepsilon^{-\theta/4}} (2C_{dis})^m(4C_{dis})^{-m} \leq 4^d\varepsilon^{-d}2^{-\varepsilon^{-\theta/4}+1}.
\end{split}
\end{equation}
Hence, combining this with \eqref{P(2)}, \eqref{Beps} and \eqref{H-prob}, we have
\begin{equation*}
\begin{split}
&\mathds{P}[\text{Definition \ref{G} (3) and (4) hold  for any } \bm x,\bm y \text{ with }|\bm x-\bm y|\geq\alpha r\text{ and }D(\bm x,\bm y)\geq (b\alpha r)^\theta/2]\\
&\geq 1-\left(\mathds{P}[\text{Definition \ref{G} (3) does not hold}]+\mathds{P}[B_\varepsilon ^c]+\mathds{P}[B_\varepsilon\cap H_{\varepsilon}]\right)\to 1
\end{split}
\end{equation*}
as $\varepsilon\to 0$.
\end{proof}

Assuming that the parameters $\alpha,\eta, K,\widetilde{\beta}$ and $M$ have been chosen according to Lemma \ref{PG(2)(3)}, we can combine Proposition \ref{Prop4.3DG21} with Lemma \ref{PG(2)(3)} to arrive at the following result.

\begin{lemma}\label{DGLem4.19}
 Let $r>0$ and let $\widetilde{\gamma},\widetilde{q}>0$ be such that $\mathds{P}[\widetilde{G}_r(\widetilde{\gamma},\widetilde{q},c'')]\geq \widetilde{\gamma}$. Also let $\nu\geq 1$. It holds with probability tending to 1 as $\varepsilon\rightarrow 0$ {\rm(}at a rate depending only on $\beta,d,\nu$, the parameters and the laws of $D$ and $\widetilde{D}${\rm)} that
$$
\widetilde{D}(\bm z,\bm w)\leq C_*D(\bm z,\bm w)-(\varepsilon r)^\theta
$$
 for all $\bm z,\bm w\in \varepsilon^\nu r\mathds{Z}^d\cap V_r(\bm 0)$ with $|\bm z-\bm w|\geq \alpha r$, $D(\bm z,\bm w)\geq (b\alpha r)^\theta/2$  and the $D$-geodesic $P$ from $\bm z$ to $\bm w$ satisfying $P\subset V_r(\bm 0)$.
\end{lemma}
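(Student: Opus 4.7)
The plan is to deduce the lemma as a straightforward combination of Proposition \ref{Prop4.3DG21} and Lemma \ref{PG(2)(3)}, coupled with a union bound over the polynomially many lattice pairs. I will first observe that for a pair $(\bm z,\bm w)$ satisfying the hypotheses of the lemma, the failure of the target inequality $\widetilde D(\bm z,\bm w)\le C_* D(\bm z,\bm w)-(\varepsilon r)^\theta$ is exactly Definition \ref{G}(1), while the hypothesis $P\subset V_r(\bm 0)$ is Definition \ref{G}(2). Hence the target inequality can fail for some lattice pair only in one of two situations: either Definition \ref{G}(3) or (4) fails for that pair, or the full event $\mathscr{G}_r^\varepsilon(\bm z,\bm w)$ occurs.

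To control the first situation, I would invoke Lemma \ref{PG(2)(3)}, which (at a rate depending only on $\beta,d$ and the laws of $D,\widetilde D$) guarantees with probability $1-o_\varepsilon(1)$ that Definition \ref{G}(3) and (4) hold simultaneously for \emph{all} $\bm x,\bm y\in V_r(\bm 0)$ with $|\bm x-\bm y|\ge \alpha r$ and $D(\bm x,\bm y)\ge (b\alpha r)^\theta/2$; this event in particular covers every lattice pair we need. For the second situation, Proposition \ref{Prop4.3DG21} gives $\mathds{P}[\mathscr{G}_r^\varepsilon(\bm z,\bm w)]=O_\varepsilon(\varepsilon^\mu)$ for every $\mu>0$, with implicit constant independent of the specific pair. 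Since $\#(\varepsilon^\nu r\mathds{Z}^d\cap V_r(\bm 0))=O(\varepsilon^{-\nu d})$, there are at most $O(\varepsilon^{-2\nu d})$ pairs to consider, so taking $\mu=2\nu d+1$ and applying a union bound shows that with probability $1-O_\varepsilon(\varepsilon)$ no lattice pair realises $\mathscr{G}_r^\varepsilon$. Intersecting with the event from Lemma \ref{PG(2)(3)} will yield an event of probability tending to $1$ on which the claimed inequality holds for every lattice pair in the hypotheses.

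The main point here, rather than an obstacle, is to ensure the constants in both inputs are uniform in the quantities over which we union bound. This is guaranteed: Proposition \ref{Prop4.3DG21} explicitly states that its implicit constant depends only on $\mu$, the parameters, and the laws of $D$ and $\widetilde D$ (in particular not on $r$, $\bm x$ or $\bm y$), while Lemma \ref{PG(2)(3)} is uniform in $r$ and over all admissible $\bm x,\bm y$. Given this uniformity, the proof reduces to the routine calibration $\mu>2\nu d$ that makes the polynomial density $\varepsilon^{-2\nu d}$ of the lattice pairs negligible against the superpolynomial probability decay from Proposition \ref{Prop4.3DG21}, and the dependence of the rate on $\nu$ enters the final estimate only through this choice of $\mu$.
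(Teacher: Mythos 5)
Your proof is correct and follows essentially the same route as the paper's: apply Proposition \ref{Prop4.3DG21} pointwise, union-bound over the $O(\varepsilon^{-2\nu d})$ lattice pairs, and intersect with the high-probability event from Lemma \ref{PG(2)(3)}, noting that Definition \ref{G}(2) is already imposed by the lemma's hypotheses so only (1) can fail. The only cosmetic difference is that you fix $\mu=2\nu d+1$ and land on probability $1-O_\varepsilon(\varepsilon)$, while the paper lets $\mu$ be arbitrary and concludes that the pointwise union bound succeeds with superpolynomially high probability; both suffice.
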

\begin{proof}
From Proposition \ref{Prop4.3DG21}, we see that
\begin{equation}\label{G(z,w)}
\mathds{P}[\mathscr{G}_r^\varepsilon(\bm z,\bm w)^c]=1-O_\varepsilon (\varepsilon^\mu)\quad \forall \mu>0
\end{equation}
for any $\bm z,\bm w\in \varepsilon^\nu r\mathds{Z}^d\cap V_r(\bm 0)$ such that $|\bm z-\bm w|\geq \alpha r$  and $D(\bm z,\bm w)\geq (b\alpha r)^\theta/2$. Thus, we sum \eqref{G(z,w)} over all possible $\bm z,\bm w$ to get that with superpolynomially  high probability as $\varepsilon\rightarrow 0$,
$\mathscr{G}_r^\varepsilon(\bm z,\bm w)$ does not occur for any $\bm z,\bm w\in \varepsilon^\nu r\mathds{Z}^d\cap V_r(\bm 0)$ such that $|\bm z-\bm w|\geq \alpha r$  and $D(\bm z,\bm w)\geq (b\alpha r)^\theta/2$.
Thus, the event $\mathscr{G}_r^\varepsilon$ has to fail with probability tending to 1, while by Lemma \ref{PG(2)(3)} we have that with probability tending to 1 Definition \ref{G} (3) and (4) hold.
Therefore,  Definition \ref{G} (1) or (2) has to fail with probability tending to 1.
Hence, we conclude that with probability tending to 1 as $\varepsilon\to 0$, the event in Lemma \ref{DGLem4.19} occurs.
\end{proof}

Recall $H_r(\alpha ,C')$ as defined in Definition \ref{def-H}.

\begin{lemma}\label{delta to 0}
For sufficiently small $\alpha \in(0,1)$ the following holds.
Let $\widetilde{\gamma},\widetilde{q}>0$ and $r>0$ be such that $\mathds{P}[\widetilde{G}_r(\widetilde{\gamma},\widetilde{q},c'')]>\widetilde{\gamma}$. Then we have
$$
\lim_{\delta\to0}\mathds{P}[H_r(\alpha,C_*-\delta)]=0
$$
at a rate depending only on $\beta,d,\widetilde{\gamma}, \widetilde{q}$, the parameters and the laws of $D$ and $\widetilde{D}$.
\end{lemma}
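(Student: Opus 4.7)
The plan is to argue by contradiction via a grid discretization: granting a witness pair $(\bm x,\bm y)$ for $H_r(\alpha,C_*-\delta)$, I will approximate it by grid points $(\bm z,\bm w) \in \varepsilon^\nu r \mathds{Z}^d$, transfer the strict inequality $\widetilde{D}(\bm x,\bm y) > (C_*-\delta)D(\bm x,\bm y)$ onto $(\bm z,\bm w)$, and derive a contradiction with Lemma \ref{DGLem4.19}. Throughout I will fix $\nu \geq 3$ so that $\nu\theta/2 > \theta$, and couple the scale $\varepsilon$ to $\delta$ so that $\varepsilon \to 0$ with $\delta$ but $\delta \ll \varepsilon^\theta$ (for instance $\varepsilon = \delta^{1/(2\theta)}$). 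The argument will take place on the intersection of three events of probability $\geq 1-\zeta$ for any prescribed $\zeta > 0$: the good event from Lemma \ref{DGLem4.19} applied to a slightly enlarged cube $V_{(1+\kappa)r}(\bm 0)$ (for fixed small $\kappa > 0$); the event $\{\mathrm{diam}(V_r(\bm 0);D) \leq M r^\theta\}$ guaranteed by Axiom V2' for a large constant $M=M(\zeta)$; and the event from Lemma \ref{geodesic-go-not-far} ensuring that $D$-geodesics between points of $V_r(\bm 0)$ remain in $V_{u r}(\bm 0)$.

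On this intersection, choosing $\bm z,\bm w$ to be grid points nearest to $\bm x,\bm y$, the regularity condition Definition \ref{def-H}(3) at scale $\gamma = \varepsilon^\nu$ gives
\[
D(\bm z,\bm x) \vee D(\bm w,\bm y) \leq \varepsilon^{\nu\theta/2} D(\bm x,\bm y),
\]
and hence also $\widetilde{D}(\bm z,\bm x) \vee \widetilde{D}(\bm w,\bm y) \leq C_* \varepsilon^{\nu\theta/2} D(\bm x,\bm y)$ by the definition of $C_*$. Two triangle inequalities then yield
\[
\widetilde{D}(\bm z,\bm w) \geq C_* D(\bm z,\bm w) - \bigl(\delta + 4C_*\varepsilon^{\nu\theta/2}\bigr) D(\bm x,\bm y) \geq C_* D(\bm z,\bm w) - \bigl(\delta + 4C_*\varepsilon^{\nu\theta/2}\bigr) M r^\theta.
\]
With our coupling $\delta = \varepsilon^{2\theta}$ and $\nu\theta/2 > \theta$, the parenthesized factor is $o(\varepsilon^\theta)$, so the right-hand side strictly exceeds $C_* D(\bm z,\bm w) - (\varepsilon r)^\theta$, contradicting Lemma \ref{DGLem4.19} provided its hypotheses on $(\bm z,\bm w)$ are verified.

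The main obstacle is precisely verifying these hypotheses, especially that the $D$-geodesic from $\bm z$ to $\bm w$ lies in the relevant cube: $H_r$ provides such a geodesic only between $\bm x$ and $\bm y$. I will handle this by applying Lemma \ref{DGLem4.19} on $V_{(1+\kappa)r}(\bm 0)$ rather than $V_r(\bm 0)$, and by constructing a candidate path from $\bm z$ to $\bm w$ by concatenating a $D$-geodesic from $\bm z$ to $\bm x$ inside $V_{\varepsilon^\nu r}(\bm x)$, the hypothesised in-$V_r(\bm 0)$ $D$-geodesic from $\bm x$ to $\bm y$, and a $D$-geodesic from $\bm y$ to $\bm w$ inside $V_{\varepsilon^\nu r}(\bm y)$; this path has $D$-length at most $\bigl(1+O(\varepsilon^{\nu\theta/2})\bigr)D(\bm x,\bm y)$, forcing any true $D$-geodesic from $\bm z$ to $\bm w$ to remain (on the event from Lemma \ref{geodesic-go-not-far}) within a $D$-ball around $\bm x$ which is Euclidean-bounded and thus contained in $V_{(1+\kappa)r}(\bm 0)$ for $\kappa$ fixed and $\varepsilon$ small. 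The remaining separation condition $|\bm z-\bm w| \geq \alpha r$ (respectively $D(\bm z,\bm w) \geq (b\alpha r)^\theta/2$) is only a routine perturbation of $|\bm x-\bm y| \geq \alpha r/3$ (respectively $D(\bm x,\bm y) \geq (b\alpha r)^\theta$); the mismatch between $\alpha r/3$ and $\alpha r$ will be absorbed by invoking Lemma \ref{DGLem4.19} with $\alpha$ replaced by $\alpha/4$, a substitution permitted by the "sufficiently small $\alpha$" proviso in the statement. Sending $\delta \to 0$ (hence $\varepsilon \to 0$) and then $\zeta \to 0$ gives $\mathds{P}[H_r(\alpha,C_*-\delta)] \to 0$ with rate depending only on the parameters and the laws of $D$ and $\widetilde{D}$.
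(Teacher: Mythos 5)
Your proposal follows the same route as the paper's proof: approximate the witness pair $(\bm x,\bm y)$ of $H_r(\alpha,C_*-\delta)$ by nearby grid points $(\bm z,\bm w)$, transfer the inequality through two triangle-inequality steps using Definition~\ref{def-H}(3) and the definition of $C_*$, absorb the error by a suitable coupling of $\delta$ to $\varepsilon$ (the paper uses $\delta=\varepsilon^{3\theta/2}/2$ together with the diameter event $B=\{\mathrm{diam}(V_r(\bm 0);D)\geq\varepsilon^{-\theta/2}r^\theta\}$, you use $\delta=\varepsilon^{2\theta}$ together with a fixed diameter cutoff $M$; both work), and derive a contradiction with Lemma~\ref{DGLem4.19}. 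The triangle-inequality chain and the $\delta\ll\varepsilon^\theta$ arithmetic in your sketch are correct.

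You are right that Lemma~\ref{DGLem4.19} imposes a hypothesis on the $D$-geodesic from $\bm z$ to $\bm w$, which is not automatically inherited from the geodesic condition for $(\bm x,\bm y)$ in Definition~\ref{def-H}(1). The gap is in your proposed repair. You construct a candidate $\bm z\to\bm x\to\bm y\to\bm w$ path of $D$-length $\leq(1+O(\varepsilon^{\nu\theta/2}))D(\bm x,\bm y)$, conclude that any $D$-geodesic from $\bm z$ to $\bm w$ lies in the $D$-ball around $\bm x$ of comparable radius, and then claim this ball is contained in $V_{(1+\kappa)r}(\bm 0)$ for $\kappa$ fixed small. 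This last implication does not follow. Lemma~\ref{equal-bounded} gives only that the $D$-ball is Euclidean-bounded, and Lemma~\ref{geodesic-go-not-far} gives containment of $D$-geodesics in $V_{ur}(\bm 0)$ for some $u=u(\zeta)>1$ that is generally \emph{not} close to $1$; there is no control forcing this $u$ to be $1+\kappa$ with $\kappa$ small. So if you enlarge the cube you must take the enlarged cube $V_{ur}(\bm 0)$ with $u$ possibly large, not $V_{(1+\kappa)r}(\bm 0)$. Applying Lemma~\ref{DGLem4.19} at scale $ur$ is possible, but then you must also verify its hypothesis $\mathds{P}[\widetilde{G}_{ur}(\cdot,\cdot,c'')]\geq\cdot$ from the given $\mathds{P}[\widetilde{G}_r(\widetilde{\gamma},\widetilde{q},c'')]\geq\widetilde{\gamma}$ — this does transfer using the inclusion $\widetilde{G}_r(\gamma,\widetilde{q},c)\subset\widetilde{G}_{ur}(\gamma/u,\widetilde{q}u^{-\theta},c)$, but your sketch never addresses it, and the grid spacing must be reconciled between the two scales as well. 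Note, for what it is worth, that the paper's own proof also does not verify the geodesic-containment and $|\bm z-\bm w|\geq\alpha r$ hypotheses when it invokes \eqref{veps^2} for the grid points; it simply verifies $D(\bm z,\bm w)\geq(b\alpha r)^\theta/2$ and moves on. So you are right that there is a subtlety here, but your attempted repair, as written, does not close it.
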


\begin{proof}
Applying Lemma \ref{DGLem4.19} with $\nu=4$, it holds with probability tending to 1 as $\varepsilon \to 0$ that
\begin{equation}\label{veps^2}
\widetilde{D}(\bm z,\bm w)\leq C_*D(\bm z,\bm w)-(\varepsilon r)^\theta
\end{equation}
 for all $\bm z,\bm w\in (\varepsilon^4 r\mathds{Z}^d)\cap V_r(\bm 0)$ with $|\bm z-\bm w|\geq \alpha r$, $D(\bm z,\bm w)\geq (b\alpha r)^\theta/2$,  and the $D$-geodesic $P$ from $\bm z$ to $\bm w$ satisfying $P\subset V_r(\bm 0)$.

Recalling Definition \ref{def-H} of $H_r(\alpha, C')$, we consider points $\bm x,\bm y\in V_r(\bm 0)$ with $|\bm x-\bm y|\geq \alpha r$ such that the $D$-geodesic $P$ from $\bm x$ to $\bm y$ satisfies
\begin{itemize}
\item[\rm (i)] $P\subset V_r(\bm 0)$;

\item[\rm (ii)] $ D(\bm x,\bm y)\geq (b\alpha r)^\theta$ for some $b=b(\alpha)>0$.

\end{itemize}
We will show that
for each small enough  $\delta=\varepsilon^{3\theta/2}/2>0$, depending only on $\beta,d, \alpha, \eta,K,\widetilde{\gamma}, \widetilde{q}$ and the laws of $D$ and $\widetilde{D}$, we have that with probability tending to 1 as $\delta$ tends to 0,
\begin{equation}\label{Dtilde<(C*-)D}
\widetilde{D}(\bm x,\bm y)\leq (C_*-\delta)D(\bm x,\bm y)\quad \forall \bm x,\bm y\text{ satisfying the above conditions}.
\end{equation}
By Definition \ref{def-H} for $H_r(\alpha,C')$, \eqref{Dtilde<(C*-)D} implies that $H_r(\alpha,C_*-\delta)$ does not occur. Since $\varepsilon$ (so does $\delta$) can be made arbitrarily small, this conclusion implies the statement of the lemma.

Next we carry out the proof as outlined above.  Let $\bm z,\bm w\in \varepsilon^4 r\mathds{Z}^d$ be such that $\bm x\in V_{\varepsilon^4 r}(\bm z)$ and $\bm y\in V_{\varepsilon^4 r}(\bm w)$.
First, we need to show that $D(\bm z,\bm w)\geq (b\alpha r)^\theta/2$, which then allows us to utilize \eqref{veps^2}. To do so, by the triangle inequality and the fact that $D(\bm x,\bm y)\geq (b\alpha r)^\theta$,
\begin{equation}\label{lowerboundDzw}
\begin{split}
D(\bm z,\bm w)&\geq D(\bm x,\bm y)-(D(\bm x,\bm z)+D(\bm y,\bm w))\\
&\geq (b\alpha r)^\theta-\left[{\rm diam}\left(V_{\varepsilon^4 r}(\bm z);D\right)+{\rm diam}\left(V_{\varepsilon^4 r}(\bm w);D\right)\right].
\end{split}
\end{equation}
Denoting by ${\rm diam}_{\bm z,\varepsilon^4 r}$ and ${\rm diam}_{\bm w,\varepsilon^4 r}$ the last two terms for diameters above, by Corollary \ref{LimitContinuousTail} we see that
\begin{equation}\label{p[A]-diam}
\mathds{P}[A]:=\mathds{P}\left[{\rm diam}_{\bm z,\varepsilon^4 r}\vee {\rm diam}_{\bm w,\varepsilon^4 r}\geq (\varepsilon^{2} r)^{\theta}\right]= O_\varepsilon(\varepsilon^\mu)\quad \forall \mu>0,
\end{equation}
where the implicit constant in the $O_\varepsilon(\cdot)$ depends only on $\beta,d,\mu$ and the laws of $D$ and $\widetilde{D}$. Combining this with \eqref{lowerboundDzw}, we find that, on the event $A^c$,
\begin{equation}\label{lowerboundDzw2}
D(\bm z,\bm w)\geq (b\alpha r)^\theta-2(\varepsilon^{2} r)^{\theta}\geq (b\alpha r)^\theta/2,
\end{equation}
where the last inequality holds when  $\varepsilon$ is small enough. This implies \eqref{veps^2} holds for $\bm z,\bm w\in \varepsilon^4 r\mathds{Z}^d$  with $\bm x\in V_{\varepsilon^4 r}(\bm z)$ and $\bm y\in V_{\varepsilon^4 r}(\bm w)$.

In addition, let $B$ be the event that $\text{diam}(V_r(\bm 0);D)\geq \varepsilon^{-\theta/2}r^\theta$. By Axiom V2' (tightness across different scales (upper bound)), we see that there is a constant $C>0$ (does not depend on $r$) such that
\begin{equation}\label{p[B]-diam}
\mathds{P}[B]\leq C\e^{-\varepsilon^{-\theta/2}}.
\end{equation}

Thus, we conclude from \eqref{veps^2}, \eqref{lowerboundDzw2} and the bi-Lipschitz equivalence of $D$ and $\widetilde{D}$ that, conditioned on the event $(A\cup B)^c$,
\begin{equation*}\label{Dtilde diam}
\begin{split}
\widetilde{D}(\bm x,\bm y)&\leq \widetilde{D}(\bm z,\bm w)+ \widetilde{D}(\bm x,\bm z)+ \widetilde{D}(\bm y,\bm w)\quad\quad  \text{(by the triangle inequality)}\\
&\leq C_*D(\bm z,\bm w)-(\varepsilon r)^\theta+ \widetilde{D}(\bm x,\bm z)+ \widetilde{D}(\bm y,\bm w)\quad\quad \text{(by $A^c$ which implies \eqref{veps^2})}\\
&\leq C_*D(\bm z,\bm w)-(\varepsilon r)^\theta+ C_*D(\bm x,\bm z)+C_*D(\bm y,\bm w)\quad\quad \text{(by the optimality of $C_*$)}\\
&\leq C_*D(\bm x,\bm y)-(\varepsilon r)^\theta+2C_*[D(\bm x,\bm z)+D(\bm y,\bm w)]\quad\quad \text{(by the triangle inequality)}\\
&\leq C_*D(\bm x,\bm y)-(\varepsilon r)^\theta + 2C_*\left[{\rm diam}_{\bm z,\varepsilon^4 r}+ {\rm diam}_{\bm w,\varepsilon^4 r}\right]\quad\quad\text{(by the definition of diameter)}\\
&\leq  C_*D(\bm x,\bm y)-(\varepsilon r)^\theta +4C_*(\varepsilon^{2} r)^{\theta}\quad \quad \text{(by the definition of the event $A$ and \eqref{p[A]-diam})}\\
&\leq C_*D(\bm x,\bm y)-(\varepsilon r)^\theta/2\quad\quad \text{(if we choose $\varepsilon$ small enough)}\\
&\leq (C_*-\varepsilon^{3\theta/2}/2)D(\bm x,\bm y)\quad\quad \text{(by the definition of the event $B$)}.
\end{split}
\end{equation*}
Consequently, if we choose $\varepsilon$  small enough, then we can infer from \eqref{Dtilde<(C*-)D} on $(A\cup B)^c$ by taking $\delta=\varepsilon^{3\theta/2}/2$. In other words, we can conclude that
\begin{equation}\label{HcapBc}
\lim_{\delta\to 0}\mathds{P}[H_r(\alpha,C_*-\delta)\cap (A\cup B)^c]=0.
\end{equation}

Moreover, by \eqref{p[A]-diam} and \eqref{p[B]-diam} we obtain
\begin{equation}\label{HcapB}
\lim_{\delta\to 0}\mathds{P}[H_r(\alpha,C_*-\delta)\cap (A\cup B)]\leq \lim_{\delta\to 0}(\mathds{P}[A]+\mathds{P}[B])=0.
\end{equation}
Hence, we obtain the desired statement by combining \eqref{HcapBc} and \eqref{HcapB}.
\end{proof}

\begin{proof}[Proof of Theorem \ref{uniqueness} {\rm(}uniqueness{\rm)}.] By Propositions  \ref{Hwithr=1} and  \ref{strongP[G]}, 
there exist sufficiently small  $\alpha,p\in(0,1)$ and $\gamma,q>0$ (depending only on $\beta,d$ and the laws of $D$ and $\widetilde{D}$), such that for each $\delta>0$, there exists $\varepsilon_0=\varepsilon_0(\gamma,q,\delta)>0$ (depending only on $\gamma,q,\delta,\beta,d$ and the laws of $D$ and $\widetilde{D}$) such that the following is true. For all $r\in(0,\varepsilon_0]$,
\begin{equation}\label{P[H]>p}
\mathds{P}[H_{r }(\alpha, C_*-\delta)]\geq p.
\end{equation}

 Moreover, for the above choice of $\alpha,p$ and sufficiently small $\eta>0$, let $c''$ be the constant defined in Proposition \ref{mrinS3-tilde} with  $c'=(c_*+C_*)/2$ (depending only on $\alpha$ and the laws of $D$ and $\widetilde{D}$).
From Proposition \ref{P[G]-tilde},
 there exist $\widetilde{\gamma},\widetilde{q}>0$ and $\widetilde{\varepsilon}_0>0$ (depending only $\beta,d,c''$ and the laws of $D$ and $\widetilde{D}$) such that $\mathds{P}[\widetilde{G}_r(\widetilde{\gamma},\widetilde{q},c'')]>\widetilde{\gamma}$ for each $r\in(0,\widetilde{\varepsilon}_0]$.
 Hence, we get from Lemma \ref{delta to 0} that, there is $\delta_0=\delta_0(\alpha,p,\eta, \widetilde{\gamma},\widetilde{q})>0$ such that for each $\delta\in(0,\delta_0]$ and each $r\in(0,\widetilde{\varepsilon}_0]$,
\begin{equation}\label{P[H]<p/2}
\mathds{P}[H_{r}(\alpha,C_*-\delta)]\leq p/2.
\end{equation}

Now for the above choice of $\alpha,p$ and $\eta$, we first choose $\delta<\delta_0$ such that \eqref{P[H]<p/2} holds uniformly for all $r\in(0,\widetilde{\varepsilon}_0]$.  For this fixed $\delta$, we then choose a $r\in(0,\varepsilon_0(\delta)\wedge \widetilde{\varepsilon}_0]$ such that \eqref{P[H]>p} holds. Therefore, by applying \eqref{P[H]<p/2} with this $r $, we arrive at a contradiction and therefore conclude that $c_*=C_*$.
\end{proof}

\bigskip

\noindent{\bf Acknowledgement.} \rm
We thank Ewain Gwynne and Jian Wang for helpful discussions, and we thank Ewain for helpful comments on an earlier version of the manuscript. L.-J. Huang would like to thank School of Mathematical Sciences, Peking University for their hospitality during her visit. J. Ding is partially supported by NSFC Key Program Project No. 12231002. L.-J. Huang is partially supported by National Key R\&D Program of China No. 2022YFA1006003.

\bibliographystyle{plain}
\bibliography{LRP}

\end{document}